\newtheorem{theorem}{Theorem}[section]
\newtheorem{proposition}[theorem]{Proposition}
\newtheorem{definition}[theorem]{Definition}
\newtheorem{example}[theorem]{\bf Example}
\newtheorem{remark}[theorem]{\bf Remark}
\numberwithin{equation}{section}
\DeclareMathOperator{\wt}{wt}
\def\wt{\mbox{\sl wt}\,}
\title[A new Young wall realization of $B(\lambda)$ and $B(\infty)$]{A new Young wall realization of $B(\lambda)$ and $B(\infty)$}
\author[Zhaobing Fan]{Zhaobing Fan}
\address{Harbin Engineering University, Harbin, China}
\email{fanzhaobing@hrbeu.edu.cn}
\author[Shaolong Han]{Shaolong Han}
\address{Harbin Engineering University, Harbin, China}
\email{algebra@hrbeu.edu.cn}
\author[Seok-Jin Kang]{Seok-Jin Kang}
\address{Korea Research Institute of Arts and Mathematics, Asan-si, Chungcheongnam-do, 31551, Korea}
\email{soccerkang@hotmail.com}
\author[Young Rock Kim]{Young Rock Kim${}^{*}$}
\address{Graduate School of Education, Hankuk University of Foreign Studies, Seoul, 02450,  Korea}
\email{rocky777@hufs.ac.kr}
\thanks{${}^{*}$ Corresponding author.}
\keywords{quantum affine algebra, crystal basis, highest weight crystal, Young wall}
\subjclass[2010] {17B37, 17B67, 16G20}
\begin{document}
\title[A new Young wall realization of $B(\lambda)$ and $B(\infty)$]{A new Young wall realization of $B(\lambda)$ and $B(\infty)$}

\begin{abstract}	
Using new combinatorics of Young walls, we give a new construction of the arbitrary level highest weight crystal $B(\lambda)$ for the quantum affine algebras of types $A^{(2)}_{2n}$, $D^{(2)}_{n+1}$, $A^{(2)}_{2n-1}$, $D^{(1)}_n$, $B^{(1)}_n$ and $C^{(1)}_n$. We show that the crystal consisting of reduced Young walls is isomorphic to the crystal $B(\lambda)$. Moreover, we provide a new realization of the crystal $B(\infty)$ in terms of reduced virtual Young walls and reduced extended Young walls.
\end{abstract}
	\maketitle
	\tableofcontents
	
\section{Introduction}

\vskip 3mm 

Arguably, one of the most remarkable achievements in representation theory in 1990's would be the canonical basis theory and crystal basis theory for quantum groups, which were developed 
by Lusztig and Kashiwara, respectively \cite{Lu90,Lu91,Kash90,Kash91}. They have made
very deep and important impacts on a wide variety of areas in mathematics. 

\vskip 2mm 

Among others, when we narrow down to the case of quantum affine algebras, 
it was discovered that their representation theory has a close connection with mathematical physics
such as soliton equations, conformal field theory and vertex models.
For example, the one point function for the 6-vertex model can be expressed as the
quotient of the string function by the character of $V(\Lambda_0)$ over $ U_{q}(\widehat{\mathfrak{sl}}_2)$. 
Thus it is a very important problem to give an explicit realization of integrable highest weight
modules over quantum affine algebras. 

\vskip 2mm 

Motivated by this problem, Kang, Kashiwara, Misra, Miwa, Nakashima and Nakayashiki developed the 
theory of {\it perfect crystals} \cite{KMN1,KMN2}. 
Using perfect crystals, one can give a {\it path realization} of the crystal $B(\lambda)$ of the integrable 
highest weight module $V(\lambda)$. In this way, the theory of vertex models can be explained 
in the language of crystal basis theory of quantum affine algebras. 

\vskip 2mm 

The aim of this paper is to provide a new combinatorial realization of the crystal $B(\lambda)$ and $B(\infty)$ for the quantum affine algebras of types $A_{2n}^{(2)}$, $D_{n+1}^{(2)}$, $A_{2n-1}^{(2)}$, $D_{n}^{(1)}$, $B_{n}^{(1)}$ and $C_{n}^{(1)}$. Here, $\lambda$ is a dominant integral weight of arbitrary level. 

\vskip 2mm 

The basic idea behind this realization is {\it  combinatorics of Young walls}; i.e., playing a game with 
various blocks of different colors and different shapes. 
The combinatorics of Young walls and its connection with crystal bases were introduced by Kang \cite{Kang03}. 
More precisely, he first introduced the notion of Young walls and reduced Young walls, 
and explained their combinatorics.  
The action of Kashiwara operators was defined in terms of combinatorics of Young walls,
which provides a crystal structure on the set of (reduced) Young walls. 
Then he  proved that the crystal consisting of reduced Young walls is isomorphic to the level-$1$ highest weight 
crystal $B(\lambda)$. 

\vskip 2mm 

Kang's construction was extended to the case of higher level Young walls by Kang and Lee \cite{KL06,KL09}. 
As a natural approach, to define higher level Young walls, they used multiple layers of level-$1$ Young walls. 
They defined the notion of Young walls and reduced Young walls, and proved that 
the crystal consisting of reduced Young walls is isomorphic to the crystal $B(\lambda)$ 
with a dominant integral weight of higher level. 
However, in this approach, it is quite complicated to describe the combinatorics.
In particular, the definition of Kashiwara operators is not easy to follow. 
Thus it is desirable to find another combinatorial construction which has 
much simpler description of  Young walls, reduced 
Young walls and Kashiwara operators. We think it is worth to mention that 
Lee gave a Young wall realization of the crystal $B(\infty)$ as the direct limit of the crystals $B(\lambda)$
using marginally large Young walls \cite{Lee2014}. Also, in \cite{Sav06}, Savage gave a combinatorial realization of 
higher level crystals for $U_q(A_{n}^{(1)})$ in terms of Young pyramids. 

\vskip 2mm 

In this paper, we take a different approach from the one given in \cite{KL06,KL09}. 
As usual, we will denote the size of a block by width $\times$ depth $\times$ height. 
Let us begin with a unit cube, i.e., a block of size $1 \times 1 \times 1$. 
Given a positive integer $l \ge 1$, we cut this unit cube vertically to obtain the blocks
of size $\frac{1}{l} \times 1 \times 1$, $\frac{1}{l} \times \frac{1}{2} \times 1$, $\frac{1}{2l} \times 1 \times 1$ and $\frac{1}{2l} \times \frac{1}{2} \times 1$. 
Note that the {\it vertical cutting} includes two vertical directions. 

\vskip 2mm 

We give a coloring to these blocks depending on the type of affine Cartan datum. Then we provide explicit 
stacking patterns for blocks and introduce the notion of $\delta$-slices, Young columns and 
reduced Young columns. Each reduced Young column corresponds to an element in a given perfect 
crystal.   We now define and describe new combinatorics of all these materials, 
which is much simpler and easier to deal with than the one given in \cite{KL06,KL09}.  
Based on this new combinatorics, the set of  all (reduced) Young columns is endowed with a crystal
structure. In particular, the description of Kashiwara operators, defined by adding and removing colored
blocks,  is quite explicit and easy to follow. 

\vskip 2mm 

Now we go ahead and provide a combinatorial construction of the level-$l$ highest weight crystal $B(\lambda)$.
A level-$l$ Young wall is an infinite sequence of level-$l$ Young columns extending to the left satisfying certain conditions  given in Section \ref{Level-$l$ Young Wall}. We introduce the notion of $\delta$-slices, {\it ground-state} walls of weight $\lambda$, {\it reduced} Young walls and define a crystal structure on the set of (reduced) Young walls.
As one of our main results, we prove that the crystal  consisting of reduced Young walls on $\lambda$  is isomorphic to the crystal $B(\lambda)$. 

\vskip 2mm

Finally, we take $l= 0$ and allow the {\it negative integers} to {\it count} the number of slices 
in the reduced Young columns.  
Under this expanded concept, we now follow a similar process of defining Young columns and Young walls for higher level case. We introduce the notion of virtual Young columns, extended Young columns, virtual Young walls and extended Young walls. Using the expression of the crystal $B(\infty)$ in \cite{KKM},
we obtain an explicit construction of $B(\infty)$ in terms of virtual Young walls and extended Young walls.

\vskip 2mm 

The combinatorial theory of level-$1$ Young walls has many applications in representation theory. In \cite{BK},  Brundan and Kleshchev demonstrated that 
the irreducible representations of the Hecke-Clifford superalgebra $\mathcal H_{N}(\zeta)$
with $\zeta$ a primitive $(2n+1)$-th root of unity are parametrized by the set of reduced  Young walls of type $A_{2n}^{(2)}$ with $N$ blocks. 
In \cite{KK08}, Kang and Kwon constructed a Fock space representation of all quantum 
affine algebras of classical type using combinatorics of Young walls. 
In addition, they discovered a generalized version of Lascoux-Leclerc-Thibon algorithm of computing the lower 
global basis elements corresponding to reduced Young walls. 
It is an interesting problem to understand the coefficients in the algorithm as in the Lascoux-Leclerc-Thibon-Ariki theory.  

\vskip 2mm

The Young wall realization is closely related to the representation theory of Khovanov-Lauda-Rouquier algebras. For instance, in \cite{OP14}, Oh and Park obtained the graded dimension formulas for the  cyclotomic quotient of Khovanov-Lauda-Rouquier algebra at $\Lambda_0$ of types $A_{2n}^{(2)}$ and $D_{n+1}^{(2)}$ in terms of Young walls. In \cite{BKOP}, Benkart, Kang, Oh and Park gave a combinatorial construction of irreducible modules over Khovanov-Lauda-Rouquier algebras of finite classcal type. In the forthcoming work, we will give a Young wall construction of finite dimensional irreducible modules over Khovanov-Lauda-Rouquier algebras associated with quantum affine algebras.

\vskip 2mm 

As is the case with combinatorics of level-$1$ Young walls, we expect that this new combinatorics of 
higher level Young walls will give rise to  a lot more interesting developments in the near future.

\vskip 2mm 

This paper is organized as follows.	In Section \ref{Quantum affine algebra}, we 
review some of the basic theory of quantum affine algebras, perfect crystals and path realization. 
In Section \ref{Level-$l$ Young Column}, we introduce the new notion of blocks, higher level slices and describe 
their stacking patterns, which yields the definition of higher level (reduced) Young columns. 
We define the action of Kashiwara operators carefully on the set of (reduced) Young columns
and show that the crystal consisting of reduced Young columns is isomorphic to the given 
perfect crystal. 
In Section \ref{Level-$l$ Young Wall}, we define the notions of ground-state walls, level-$l$ Young walls and level-$l$ reduced Young walls. We define the Kashiwara operators on level-$l$ Young walls using the tensor product rule. Thus the set of level-$l$ reduced Young walls is endowed with a crystal structure. 
We prove that the crystal consisting of reduced Young walls is isomorphic to the crystal $B(\lambda)$.
We illustrate the top parts of the level-$2$ highest weight crystals $B(\lambda)$
for types $B_3^{(1)}$, $A_2^{(2)}$, $D_3^{(2)}$ and $C_2^{(1)}$. 
The final section presents the new Young wall realization of $B(\infty)$ in terms of reduced virtual Young walls and reduced extended Young walls. As an example, we use reduced virtual Young walls to describe the top part of the crystal $B(\infty)$ of type $D_3^{(2)}$.
 
\vskip 3mm

{\bf Acknowledgements.}

\vskip 2mm 

Z. Fan was partially supported by the NSF of China grant 11671108, the NSF of Heilongjiang Province grant JQ2020A001, and the Fundamental Research Funds for the central universities. S.-J.  Kang was supported by  the NSF of China grant 11671108. Young Rock Kim was supported by the National Research Foundation of Korea (NRF) grant funded by the Korea government (MSIT) (No. 2021R1A2C1011467) and was supported by Hankuk University of Foreign Studies Research fund.

\vskip 5mm 

\section{Quantum affine algebras and perfect crystals}\label{Quantum affine algebra}
	Let $I=\{0,1,\cdots,n\}$ be a finite index set and let $(A,
	P, \Pi, P^\vee, \Pi^\vee)$ be an affine Cartan datum consisting of: 
	\begin{enumerate}
		\item a symmetrizable Cartan matrix $A = (a_{ij})_{i, j \in I}$ of affine type,
		\item a free abelian group $P$ of rank $n+2$,
		\item a $\mathbf Q$-linearly independent set $\Pi = \{\alpha_i \in P ~|~ i \in I\}$,
		\item $P^\vee = \text{Hom}_\mathbf Z (P, \mathbf Z)$,
		\item $\Pi^\vee = \{h_i \in P^\vee ~|~ i \in I\}$
	\end{enumerate}
such that $\langle h_i, \alpha_j \rangle = a_{ij}$ $(i, j \in I)$.
\vskip 2mm
We can write $P^\vee=\oplus_{i=0}^{n}\mathbf Z h_i\oplus\mathbf Z  d$	satisfying $\alpha_i(d) = \delta_{i,0}$ $(i \in I)$. Let $\mathfrak h = \mathbf Q \otimes_\mathbf Z P^\vee$. The
\textit{fundamental weights} $\Lambda_i \in \mathfrak h^\ast$ $(i \in I)$ are
given by
\begin{equation*}
	\begin{array}{ll}
		\Lambda_i (h_j) = \delta_{ij}, & \Lambda_i(d) = 0 \ \ \text{for $i, j \in I$}.
	\end{array}
\end{equation*}

There is a non-degenerate symmetric bilinear form $(\ ,\ )$ on $\mathfrak h^\ast$ such that
\begin{equation*}
	\langle h_i, \lambda \rangle = \frac{2(\alpha_i, \lambda)}{(\alpha_i, \alpha_i)} \ \ \text{for $\lambda \in P$, $i \in I$}.
\end{equation*}

Let $q$ be an indeterminate and let
\begin{equation*}
	\begin{array}{ll}
		q_i = q^{(\alpha_i, \alpha_i)/2}, & [n]_i = \displaystyle \frac{q_i^n - q_i^{-n}}{q_i - q_i^{-1}}.
	\end{array}
\end{equation*}

\begin{definition}\label{quantum affine algebra}
	The \textit{quantum affine algebra} $U_q(\mathfrak{g})$ associated with $(A, P, \Pi, $ $P^\vee, \Pi^\vee)$
	is the $\mathbf Q(q)$-algebra with 1 generated by $e_i$, $f_i$ $(i \in I)$, $q^h$ $(h \in P^\vee)$ subject to the defining relations
	\begin{equation*}
		\begin{aligned}
			& q^0 = 1, \ q^h q^{h'} = q^{h+h'} \ (h, h' \in P^\vee), \\
			& q^h e_j q^{-h} = q^{\langle h, \alpha_j \rangle} e_j, \\
			& q^h f_j q^{-h} = q^{-\langle h, \alpha_j \rangle} f_j \ \text{$(h \in P^\vee$, $j \in I)$}, \\
			& e_i f_j - f_j e_i = \delta_{ij} \frac{K_i - K_i^{-1}}{q_i - q_i^{-1}}, \\
			& \sum_{k=0}^{1 - a_{ij}} (-1)^k e_i^{(1 - a_{ij} - k)} e_j e_i^{(k)} = 0 \ (i \neq j), \\
			& \sum_{k=0}^{1 - a_{ij}} (-1)^k f_i^{(1 - a_{ij} - k)} f_j f_i^{(k)} = 0 \ (i \neq j),
		\end{aligned}
	\end{equation*}
	where $K_i = q^{\frac{(\alpha_i, \alpha_i)}{2} h_i}$, $e_i^{(k)} = e_i^k / [k]_i !$ and $f_i^{(k)} = f_i^k / [k]_i !$.
\end{definition}

The subalgebra $U_{q}'(\mathfrak g)$ generated by $e_{i}$, $f_{i}$, $K_{i}^{\pm 1}$ $(i \in I)$ is also called the {\it quantum affine algebra}. 

\vskip 2mm

Let $c = c_0 h_0 + \cdots + c_n h_n$ be the \textit{canonical central element} and let $\delta = d_0 \alpha_0 + \cdots + d_n \alpha_n$ be the \textit{null root}. Here the coefficients $c_{i}$ and $d_{i}\ (i\in I)$ are non-negative integers given in \cite[Section 6.2]{Kac90}. Then we have 

\begin{equation*}
		P= \bigoplus_{i \in I} \mathbf Z \Lambda_i \oplus \mathbf Z \left( \frac{1}{d_0} \delta \right).
\end{equation*}

Let $\text{cl}: P \to P / \mathbf Z(\frac{1}{d_0} \delta)$ be the canonical projection and let
\begin{equation*}
	P_\text{cl} := \text{cl}(P) = \bigoplus_{i \in I} \mathbf Z \text{cl}(\Lambda_i).
\end{equation*}

We define
\begin{equation*}
	P^+ := \{\lambda \in P ~|~ \langle h_i, \lambda \rangle \geq 0 \ \ \text{for all $i \in I$}\}
\end{equation*}
and set $P_\text{cl}^+ := \text{cl} (P^+)$, the set of {\it dominant integral weights}. 
The integer $\langle c, \lambda \rangle$ is called the \textit{level} of $\lambda \in P$. 

\vskip 2mm

The difference between $U_{q}(\mathfrak g)$ and $U_{q}'(\mathfrak g)$ are as follows. 
The non-trivial 
integrable irreducible $U_{q}(\mathfrak g)$-modules should be the  highest weight modules $V(\lambda)$ with dominant integral weights. They are of infinite dimension but  the weight spaces have finite dimension. On the other hand, $U_{q}'(\mathfrak g)$ may have finite dimensional irreducible modules but the weight spaces of $V(\lambda)$ should be of infinite dimension.

\vskip 2mm	
   In the following figures, we list the Dynkin diagrams, the canonical central
	elements and the null roots for affine Cartan data of types $A_{2n}^{(2)}$, $D_{n+1}^{(2)}$, $A_{2n-1}^{(2)}$, $D_{n}^{(1)}$, $B_n^{(1)}$ and $C_n^{(1)}$:
	
	\vskip 2mm

(1)  $A_{2n}^{(2)}$ ($n\geq 2$)

\vskip 2mm
\begin{center}
	\begin{texdraw}
		\small
		\drawdim em
		\setunitscale 0.3
		\move(0 0)
		\lcir r:1
		\move(1 0)\lvec(9 0)
		\move(10 0)\lcir r:1
		\move(11 0)\lvec(19 0)
		\htext(23.6 0){$\cdots$}
		\move(28 0)\lvec(36 0)
		\move(37 0)\lcir r:1
		\move(38 0.4)\lvec(46 0.4)
		\move(38 -0.4)\lvec(46 -0.4)
		\move(39.2 1.2)\lvec(38 0)\lvec(39.2 -1.2)
		\move(47 0)\lcir r:1
		\move(-1 0.4)\lvec(-9 0.4)
		\move(-1 -0.4)\lvec(-9 -0.4)
		\move(-10 0)\lcir r:1
		\move(-7.8 1.2)\lvec(-9 0)\lvec(-7.8 -1.2)
		\htext(-10 -3){$0$}
		\htext(0 -3){$1$}
		\htext(10 -3){$2$}
		\htext(37 -3){$n-1$}
		\htext(47 -3){$n$}
		\move(-12 -4.6)\move(49 3)
	\end{texdraw}
\end{center}

\vskip 2mm
\begin{equation*}
	\begin{aligned}\mbox{}
		& c = h_0+2h_1+\cdots+2h_{n-1}+2h_n,\\
		& \delta = 2\alpha_0 + 2\alpha_1 + \cdots + 2\alpha_{n-1}+\alpha_{n}.
	\end{aligned}
\end{equation*}

\vskip 5mm
(2) $D_{n+1}^{(2)}$ ($n\geq2$)

\vskip 2mm
\begin{center}
	\begin{texdraw}
		\small
		\drawdim em
		\setunitscale 0.3
		\move(0 0)
		\lcir r:1
		\move(1 0)\lvec(9 0)
		\move(10 0)\lcir r:1
		\move(11 0)\lvec(19 0)
		\htext(23.6 0){$\cdots$}
		\move(28 0)\lvec(36 0)
		\move(37 0)\lcir r:1
		\move(38 0.4)\lvec(46 0.4)
		\move(38 -0.4)\lvec(46 -0.4)
		\move(44.8 1.2)\lvec(46 0)\lvec(44.8 -1.2)
		\move(47 0)\lcir r:1
		\move(-1 0.4)\lvec(-9 0.4)
		\move(-1 -0.4)\lvec(-9 -0.4)
		\move(-10 0)\lcir r:1
		\move(-7.8 1.2)\lvec(-9 0)\lvec(-7.8 -1.2)
		\htext(-10 -3){$0$}
		\htext(0 -3){$1$}
		\htext(10 -3){$2$}
		\htext(37 -3){$n-1$}
		\htext(47 -3){$n$}
		\move(-12 -4.6)\move(49 3)
	\end{texdraw}
\end{center}

\vskip 2mm
\begin{equation*}
	\begin{aligned}\mbox{}
		& c = h_0+2h_1+\cdots+2h_{n-1}+h_n,\\
		& \delta=\alpha_0+\alpha_1+ \cdots+\alpha_{n-1}+\alpha_n.
	\end{aligned}
\end{equation*}

\vskip 5mm

(3) $A_{2n-1}^{(2)}$ ($n\geq 3$)
\vskip 2mm
\begin{center}
	\begin{texdraw}
		\small
		\drawdim em
		\setunitscale 0.3
		\move(0 0)
		\lcir r:1
		\move(1 0)\lvec(9 0)
		\move(10 0)\lcir r:1
		\move(11 0)\lvec(19 0)
		\htext(23.6 0){$\cdots$}
		\move(28 0)\lvec(36 0)
		\move(37 0)\lcir r:1
		\move(38 0.4)\lvec(46 0.4)
		\move(38 -0.4)\lvec(46 -0.4)
		\move(39.2 1.2)\lvec(38 0)\lvec(39.2 -1.2)
		\move(47 0)\lcir r:1
		\move(10 1)\lvec(10 9)
		\move(10 10)\lcir r:1
		\htext(0 -3){$1$}
		\htext(10 -3){$2$}
		\htext(37 -3){$n-1$}
		\htext(47 -3){$n$}
		\htext(12.5 11){$0$}
		\move(-2 -4.6)\move(49 12.6)
	\end{texdraw}
\end{center}

\vskip 2mm
\begin{equation*}
	\begin{aligned}\mbox{}
		& c = h_0 + h_1 + 2 h_2 +\cdots + 2h_{n-1}+2 h_n, \\
		& \delta = \alpha_0 + \alpha_1 
		+ 2\alpha_2 + \cdots + 2\alpha_{n-1} + \alpha_n.
	\end{aligned}
\end{equation*}

\vskip 5mm
(4) $D_n^{(1)}$ ($n\geq 4$)

\vskip 2mm
\begin{center}
	\begin{texdraw}
		\small
		\drawdim em
		\setunitscale 0.3
		\move(0 0)
		\lcir r:1
		\move(1 0)\lvec(9 0)
		\move(10 0)\lcir r:1
		\move(11 0)\lvec(19 0)
		\htext(23.6 0){$\cdots$}
		\move(28 0)\lvec(36 0)
		\move(37 0)\lcir r:1
		\move(38 0)\lvec(46 0)
		\move(47 0)\lcir r:1
		\move(10 1)\lvec(10 9)\move(10 10)\lcir r:1
		\move(37 1)\lvec(37 9)\move(37 10)\lcir r:1
		\htext(0 -3){$1$}
		\htext(10 -3){$2$}
		\htext(37 -3){$n-2$}
		\htext(47 -3){$n$}
		\htext(12.5 11){$0$}
		\htext(42.5 11){$n-1$}
		\move(-2 -4.6)\move(49 12.6)
	\end{texdraw}
\end{center}

\vskip 2mm
\begin{equation*}
	\begin{aligned}\mbox{}
		& c = h_0+h_1+2h_2+\cdots+2h_{n-2}+h_{n-1}+h_n,\\
		& \delta = \alpha_0+\alpha_1+2\alpha_2+\cdots+2\alpha_{n-2}+\alpha_{n-1}
		+\alpha_n.
	\end{aligned}
\end{equation*}

\vskip 5mm
(5) $B_n^{(1)}$ ($n\geq3$)

\vskip 2mm
\begin{center}
	\begin{texdraw}
		\small
		\drawdim em
		\setunitscale 0.3
		\move(0 0)
		\lcir r:1
		\move(1 0)\lvec(9 0)
		\move(10 0)\lcir r:1
		\move(11 0)\lvec(19 0)
		\htext(23.6 0){$\cdots$}
		\move(28 0)\lvec(36 0)
		\move(37 0)\lcir r:1
		\move(38 0.4)\lvec(46 0.4)
		\move(38 -0.4)\lvec(46 -0.4)
		\move(44.8 1.2)\lvec(46 0)\lvec(44.8 -1.2)
		\move(47 0)\lcir r:1
		\move(10 1)\lvec(10 9)
		\move(10 10)\lcir r:1
		\htext(0 -3){$1$}
		\htext(10 -3){$2$}
		\htext(37 -3){$n-1$}
		\htext(47 -3){$n$}
		\htext(12.5 11){$0$}
		\move(-2 -4.6)\move(49 12.6)
	\end{texdraw}
\end{center}

\vskip 2mm
\begin{equation*}
	\begin{aligned}\mbox{}
		& c = h_0 + h_1 + 2h_2 + \cdots + 2h_{n-1} + h_n,\\
		& \delta = \alpha_0+\alpha_1+2\alpha_2+\cdots+2\alpha_{n-1}+2\alpha_n.
	\end{aligned}
\end{equation*}

\vskip 5mm
(6) $C_{n}^{(1)}$ ($n\geq2$)

\vskip 2mm
\begin{center}
	\begin{texdraw}
		\small
		\drawdim em
		\setunitscale 0.3
		\move(0 0)
		\lcir r:1
		\move(1 0)\lvec(9 0)
		\move(10 0)\lcir r:1
		\move(11 0)\lvec(19 0)
		\htext(23.6 0){$\cdots$}
		\move(28 0)\lvec(36 0)
		\move(37 0)\lcir r:1
		\move(38 0.4)\lvec(46 0.4)
		\move(38 -0.4)\lvec(46 -0.4)
		
		\move(39.2 1.2)\lvec(38 0)\lvec(39.2 -1.2)
		\move(47 0)\lcir r:1
		\move(-1 0.4)\lvec(-9 0.4)
		\move(-1 -0.4)\lvec(-9 -0.4)
		\move(-10 0)\lcir r:1
		\move(-2.1 1.2)\lvec(-1 0)\lvec(-2.1 -1.2)
		\htext(-10 -3){$0$}
		\htext(0 -3){$1$}
		\htext(10 -3){$2$}
		\htext(37 -3){$n-1$}
		\htext(47 -3){$n$}
		\move(-12 -4.6)\move(49 3)
	\end{texdraw}
\end{center}

\vskip 2mm
\begin{equation*}
	\begin{aligned}\mbox{}
		& c = h_0+h_1+\cdots+h_{n-1}+h_n,\\
		& \delta=\alpha_0+2\alpha_1+ \cdots+2\alpha_{n-1}+\alpha_n.
	\end{aligned}
\end{equation*}

\begin{definition}
	A $U_q(\mathfrak g)$-crystal (resp. a $U_{q}'(\mathfrak g)$-crystal) is a set $B$ together with the maps $\tilde{e}_i$, $\tilde{f}_i: B \to B ~ \cup ~ \{0\}$, $\varepsilon_i$, $\varphi_i: B \to \mathbf Z ~ \cup ~ \{-\infty\}$, $\wt: B \to P$ (resp. $\overline{\wt}: B \to P_\text{cl}$) satisfying the following conditions:
	\begin{enumerate}
		\item for each $i \in I$, we have $\varphi_i(b) = \varepsilon_i(b) + \langle h_i, \wt(b) \rangle$ (resp. $\varphi_i(b) = \varepsilon_i(b) + \langle h_i, \overline{\wt}(b) \rangle$),
		\item $\wt(\tilde{e}_i b) = \wt (b) + \alpha_i$ (resp.  $\overline{\wt}(\tilde{e}_i b) = \overline{\wt}(b) + \alpha_i$) if $\tilde{e}_i b \in B$,
		\item $\wt(\tilde{f}_i b) = \wt (b) - \alpha_i$ (resp.  $\overline{\wt}(\tilde{f}_i b) = \overline{\wt}(b) - \alpha_i$) if $\tilde{f}_i b \in B$,
		\item $\varepsilon_i (\tilde{e}_i b) = \varepsilon_i (b) - 1$, $\varphi_i (\tilde{e}_i b) = \varphi_i (b) + 1$ if $\tilde{e}_i b \in B$,
		\item $\varepsilon_i (\tilde{f}_i b) = \varepsilon_i (b) + 1$, $\varphi_i (\tilde{f}_i b) = \varphi_i (b) - 1$ if $\tilde{f}_i b \in B$,
		\item $\tilde{f}_i b = b^{'}$ if and only if $b = \tilde{e}_i b^{'}$ for all $b, b^{'} \in B$, $i \in I$,
		\item if $\varphi_i (b) = -\infty$, then $\tilde{e}_i b = \tilde{f}_i b = 0$.
	\end{enumerate}
\end{definition}

Let $B$ be a $U_q'(\mathfrak g)$-crystal. For an element $b \in B$, we define
\begin{equation*}
		\varepsilon (b)   = \sum_{i \in I} \varepsilon_i (b) \text{cl} (\Lambda_i), \quad
		\varphi (b)   = \sum_{i \in I} \varphi_i (b) \text{cl} (\Lambda_i).
\end{equation*}

For simplicity, we often write $\Lambda_i$ for $\text{cl}(\Lambda_i)$ $(i \in I)$ and $\wt (b)$ for $\overline{\wt}(b)$ if there is no danger of confusion.

\vskip 2mm

Let $B_1$, $B_2$ be $U_q(\mathfrak g)$-crystals (or $U_{q}'(\mathfrak g)$-crystals). The \textit{tensor product} $B_1 \otimes B_2 = B_1 \times B_2$ is defined by

\begin{equation} \label{eq:tensor}
	\begin{aligned}
		\wt (b_1 \otimes b_2)           & = \wt (b_1) + \wt (b_2)\ ({\rm resp}.\ \overline{\wt}(b_1\otimes b_2)= \overline{\wt}(b_1)+\overline{\wt}(b_2)), \\
		\varepsilon_i (b_1 \otimes b_2)       & = \max (\varepsilon_i (b_1), \varepsilon_i (b_2) - \langle h_i, \wt (b_1) \rangle), \\
		\varphi_i (b_1 \otimes b_2)       & = \max (\varphi_i (b_2), \varphi_i (b_1) + \langle h_i, \wt (b_2) \rangle), \\
		\tilde{e}_i (b_1 \otimes b_2)   & = \begin{cases}
			\tilde{e}_i b_1 \otimes b_2 & \text{if $\varphi_i (b_1) \geq \varepsilon_i (b_2)$}, \\
			b_1 \otimes \tilde{e}_i b_2 & \text{if $\varphi_i (b_1) < \varepsilon_i (b_2)$},
		\end{cases} \\
		\tilde{f}_i (b_1 \otimes b_2)   & = \begin{cases}
			\tilde{f}_i b_1 \otimes b_2 & \text{if $\varphi_i (b_1) > \varepsilon_i (b_2)$}, \\
			b_1 \otimes \tilde{f}_i b_2 & \text{if $\varphi_i (b_1) \leq \varepsilon_i (b_2)$}.
		\end{cases}
	\end{aligned}
\end{equation}

\begin{definition}
	Let $l$ be a positive integer. A finite $U_q'(\mathfrak g)$-crystal\ $B$ is called a \textit{perfect crystal of level $l$} if $B$ satisfies the following conditions:
	\begin{enumerate}
		\item there exists a finite dimensional $U_q'(\mathfrak g)$-module with crystal $B$,
		\item $B \otimes B$ is connected,
		\item there is $\lambda \in P_\text{cl}$ such that $\wt(B) \subset \lambda - \sum_{i \neq 0} \mathbf Z_{\geq 0} ~ \alpha_i$, $\# (B_\lambda) = 1$,
		\item for any $b \in B$, we have $\langle c, \varepsilon_i(b) \rangle \geq l$,
		\item for each $\lambda \in P_\text{cl}^+$ with $\langle c, \lambda \rangle = l$, there exist uniquely determined elements $b^\lambda$ and $b_\lambda$ such that $\varepsilon(b^\lambda) = \varphi (b_\lambda) = \lambda$.
	\end{enumerate}
\end{definition}

The elements $b^\lambda$ and $b_\lambda$ are called the \textit{minimal vectors}.

\vskip 2mm

Let $V(\lambda)$ be the irreducible highest weight $U_q'(\mathfrak g)$-module with a dominant integral weight $\lambda \in P^+_{\text{cl}}$ and let $B(\lambda)$ be the crystal of $V(\lambda)$. Then by \cite{KMN1,KMN2}, we have a $U_q'(\mathfrak g)$-crystal isomorphism
\begin{equation} \label{eqn: Psi}
	\begin{array}{rccc}
		\Psi_\lambda:   & B(\lambda)  & \stackrel{\sim}{\longrightarrow}  & B(\varepsilon(b_\lambda)) \otimes B \\
		& u_\lambda     & \longmapsto                       & u_{\varepsilon(b_\lambda)} \otimes b_\lambda,
	\end{array}
\end{equation}
where $u_\lambda$ denotes the highest weight vector of $B(\lambda)$. Applying the isomorphism (\ref{eqn: Psi}) repeatedly, we obtain a sequence of $U_q(\mathfrak g)$-crystal isomorphisms
\begin{equation*}
	\begin{array}{ccccccl}
		B(\lambda) & \stackrel{\sim}{\longrightarrow} & B(\lambda_1) \otimes B & \stackrel{\sim}{\longrightarrow} & B(\lambda_2) \otimes B \otimes B & \stackrel{\sim}{\longrightarrow} & \cdots \\
		u_\lambda & \longmapsto & u_{\lambda_1} \otimes b_0 & \longmapsto & u_{\lambda_2} \otimes b_1 \otimes b_0 & \longmapsto & \cdots,
	\end{array}
\end{equation*}
where
\begin{equation*}
	\begin{array}{ll}
		\lambda_0 = \lambda,        & b_0 = b_{\lambda_0} = b_\lambda, \\
		\lambda_{k+1} = \varepsilon(b_k), & b_{k+1} = b_{\lambda_{k+1}} \ \ \text{for $k \geq 0$}.
	\end{array}
\end{equation*}

\begin{definition}
The infinite sequence $\mathbf{b}_\lambda =(b_k)_{k \geq 0} = \cdots \otimes b_k \otimes \cdots \otimes b_1 \otimes b_0 \in B^{\otimes \infty}$ is called the \textit{ground-state path of weight $\lambda$}. The infinite sequence $\mathbf{p} = (p_k)_{k \geq 0} \in B^{\otimes \infty}$ is called a \textit{$\lambda$-path} if $p_k = b_k$ for $k \gg 0$.	
	\end{definition}

Let $\mathcal{P}(\lambda)$ denote the set of all $\lambda$-paths. By the tensor
product rule \eqref{eq:tensor}, $\mathcal{P}(\lambda)$ has a $U_q'(\mathfrak g)$-crystal
structure. It is proved in \cite{KMN1,KMN2} that $\mathcal{P}(\lambda)$ provides
a path realization of $B(\lambda)$ as a $U_q'(\mathfrak g)$-crystal.

\begin{proposition} [\cite{KMN1,KMN2}]
	For each $\lambda \in P^+_{\text{cl}}$, there exists a $U_q'(\mathfrak g)$-crystal isomorphism
	\begin{equation*}
		\begin{array}{ll}
			\Phi_\lambda: B(\lambda) \longrightarrow \mathcal{P}(\lambda) \ \ \  \text{given by} & u_\lambda \longmapsto \mathbf{b}_\lambda.
		\end{array}
	\end{equation*}
\end{proposition}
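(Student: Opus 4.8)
The plan is to build $\Phi_\lambda$ by iterating the isomorphism $\Psi_\lambda$ of \eqref{eqn: Psi} and passing to a direct limit, and then to identify that limit with the crystal $\mathcal P(\lambda)$ of $\lambda$-paths. The only input needed is that $B$ is a perfect crystal of level $l=\langle c,\lambda\rangle$, so that $\Psi_\mu$ is available for every dominant $\mu$ of level $l$.

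First I would iterate $\Psi$. Each weight $\lambda_{k+1}=\varepsilon(b_k)$ lies in $P^+_{\text{cl}}$ and again has level $l$, so $\Psi_{\lambda_k}$ is defined for all $k$; composing the first $N$ of them gives, for each $N\ge 1$, a $U_q'(\mathfrak g)$-crystal isomorphism
\[
\Psi_\lambda^{(N)}\colon B(\lambda)\ \stackrel{\sim}{\longrightarrow}\ B(\lambda_N)\otimes B^{\otimes N},\qquad u_\lambda\ \longmapsto\ u_{\lambda_N}\otimes b_{N-1}\otimes\cdots\otimes b_0 .
\]
Because $\Psi_\lambda^{(N+1)}=(\Psi_{\lambda_N}\otimes\mathrm{id}_{B^{\otimes N}})\circ\Psi_\lambda^{(N)}$ by construction, for a fixed $b\in B(\lambda)$ the rightmost $N$ tensor components of $\Psi_\lambda^{(N)}(b)$ are unchanged when $N$ grows. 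I would define $\Phi_\lambda(b)\in B^{\otimes\infty}$ to be this stabilized sequence of components; tracking $u_\lambda$ through the identifications gives $\Phi_\lambda(u_\lambda)=\mathbf b_\lambda$.

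Next I must check that $\Phi_\lambda(b)$ really is a $\lambda$-path. Writing $b=\tilde f_{i_1}\cdots\tilde f_{i_r}u_\lambda$ (possible since $B(\lambda)$ is connected and generated by $u_\lambda$), this reduces to showing that in $B(\lambda_N)\otimes B^{\otimes N}$, once $N$ is large relative to $r$, the element $\tilde f_{i_1}\cdots\tilde f_{i_r}(u_{\lambda_N}\otimes b_{N-1}\otimes\cdots\otimes b_0)$ still has leftmost factor $u_{\lambda_N}$ and differs from $u_{\lambda_N}\otimes b_{N-1}\otimes\cdots\otimes b_0$ in only a bounded number of the remaining factors. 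The engine is the minimality of $b_k=b_{\lambda_k}$: from $\varphi(b_{\lambda_k})=\lambda_k$ and $\lambda_{k+1}=\varepsilon(b_k)$ one obtains $\varepsilon_i(b_{N-1}\otimes\cdots\otimes b_0)=\langle h_i,\lambda_N\rangle=\varphi_i(u_{\lambda_N})$, so by the tensor product rule \eqref{eq:tensor} no $\tilde f_i$ ever acts on the $B(\lambda_N)$-slot of $u_{\lambda_N}\otimes b_{N-1}\otimes\cdots\otimes b_0$; an induction on $r$, using the level bound from perfectness of $B$ to keep $\varepsilon_i$ of the perturbed tails under control, then shows the disturbance cannot propagate arbitrarily far to the left. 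Hence $\Phi_\lambda$ maps $B(\lambda)$ into $\mathcal P(\lambda)$.

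Finally, to see that $\Phi_\lambda$ is a crystal isomorphism I would present $\mathcal P(\lambda)$ as the direct limit $\varinjlim_N\bigl(B(\lambda_N)\otimes B^{\otimes N}\bigr)$, with connecting maps $\Psi_{\lambda_N}\otimes\mathrm{id}_{B^{\otimes N}}$ and with $B(\lambda_N)\otimes B^{\otimes N}$ embedded in $B^{\otimes\infty}$ by replacing the first factor with the ground-state tail of $B(\lambda_N)$. These are strict crystal morphisms with nested images whose union is all of $\mathcal P(\lambda)$, and the crystal structure agrees with that on $\mathcal P(\lambda)$ for exactly the reason in the previous step: for $N$ large, the Kashiwara operators applied to a $\lambda$-path whose ground-state tail has length $\ge N$ do not touch that tail. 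Since the $\Psi_\lambda^{(N)}$ form a compatible family of isomorphisms out of the single crystal $B(\lambda)$, they induce an isomorphism $B(\lambda)\stackrel{\sim}{\longrightarrow}\varinjlim_N\bigl(B(\lambda_N)\otimes B^{\otimes N}\bigr)=\mathcal P(\lambda)$, which is exactly $\Phi_\lambda$ and sends $u_\lambda$ to $\mathbf b_\lambda$. The hard part is the stabilization invoked above — making rigorous that the Kashiwara action eventually leaves the ground-state tail untouched — and this, which is precisely where perfectness of $B$ (the level condition together with the existence and uniqueness of minimal vectors) enters, is the substance of the argument in \cite{KMN1,KMN2}.
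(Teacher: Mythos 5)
The paper does not prove this proposition at all: it is quoted directly from \cite{KMN1,KMN2}, and the text simply records the path realization as known input for the later Young wall constructions. Your sketch reproduces the standard argument of those references — iterating $\Psi_\lambda$, reading off the stabilized tail, and identifying $\mathcal P(\lambda)$ with the direct limit of $B(\lambda_N)\otimes B^{\otimes N}$ — and it is correct in outline, with the one substantive point (that the Kashiwara operators eventually leave the ground-state tail untouched, which rests on $\varepsilon_i(b_{N-1}\otimes\cdots\otimes b_0)=\langle h_i,\lambda_N\rangle=\varphi_i(u_{\lambda_N})$ and the perfectness of $B$) honestly flagged rather than glossed, exactly as in \cite{KMN1,KMN2}.
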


For the quantum affine algebras of types $A_{2n}^{(2)}$, $D_{n+1}^{(2)}$, $A_{2n-1}^{(2)}$, $D_{n}^{(1)}$, $B_n^{(1)}$ and $C_{n}^{(1)}$, it was
shown in \cite{KKM,KMN2} that there exists a
\emph{coherent family of perfect crystals} $\{ \mathcal B^{(l)} |\ l \in
\mathbf Z_{>0} \}$. We give an explicit description of these perfect crystals as follows:

\begin{enumerate}
\item $A_{2n}^{(2)}$ $(n\geq 1)$\\[1.5mm]
$\mathcal B^{(l)} =
\Big\{ (x_1,\dots,x_n | \bar{x}_n,\dots, \bar{x}_1) \,\Big\vert\,
x_i, \bar{x}_i \in \mathbf Z_{\geq0},\
\textstyle\sum_{i=1}^{n} (x_i + \bar{x}_i) \leq \textnormal{$l$} \Big\}$,
\item $D_{n+1}^{(2)}$ $(n\geq 2)$\\[1.5mm]
$\mathcal B^{(l)} =
\left\{
(x_1,\dots,x_n |x_0| \bar{x}_n,\dots, \bar{x}_1) \,\Bigg\vert\,
\begin{aligned}
	& x_0 = 0 \textup{ or } 1,\ x_i, \bar{x}_i \in \mathbf Z_{\geq0},\\
	& x_0 + \textstyle\sum_{i=1}^{n} (x_i + \bar{x}_i) \leq \textnormal{$l$}
\end{aligned}
\right\}$,
	\item $C_n^{(1)}$ $(n \geq 2)$\\[1.5mm]
$\mathcal B^{(l)} =
\Big\{ (x_1,\dots,x_n | \bar{x}_n,\dots, \bar{x}_1) \,\Big\vert\,
x_i, \bar{x}_i \in \mathbf Z_{\geq0},\
\textnormal{$2l$} \geq \textstyle\sum_{i=1}^{n} (x_i +\bar{x}_i) \in 2\mathbf Z \Big\}$,
\item $A_{2n-1}^{(2)}$ $(n\geq 3)$\\[1.5mm]
	$\mathcal B^{(l)} =
	\Big\{ (x_1,\dots,x_n | \bar{x}_n,\dots, \bar{x}_1) \,\Big\vert\,
	x_i, \bar{x}_i \in \mathbf Z_{\geq0},\
	\textstyle\sum_{i=1}^{n} (x_i + \bar{x}_i) = \textnormal{$l$} \Big\}$,
	\item $D_{n}^{(1)}$ $(n\geq 4)$\\[1.5mm]
	$\mathcal B^{(l)} =
	\left\{
	(x_1,\dots,x_n| \bar{x}_n,\dots, \bar{x}_1) \,\Bigg\vert\,
	\begin{aligned}
		& x_n = 0 \textup{ or } \bar{x}_n=0,\ x_i, \bar{x}_i \in \mathbf Z_{\geq0},\\
		& \textstyle\sum_{i=1}^{n} (x_i + \bar{x}_i) = \textnormal{$l$}
	\end{aligned}
	\right\}$,
	\item $B_n^{(1)}$ $(n\ge 3)$\\[1.5mm]
$\mathcal B^{(l)} =
\left\{
(x_1,\dots,x_n |x_0| \bar{x}_n,\dots, \bar{x}_1) \, \Bigg\vert\,
\begin{aligned}
	& x_0=0 \textup{ or } 1,\ x_i, \bar{x}_i \in \mathbf Z_{\geq0}, \\
	& x_0 + \textstyle\sum_{i=1}^{n} (x_i + \bar{x}_i) = \textnormal{$l$}
\end{aligned}
\right\}$.

\end{enumerate}

\vskip 2mm

 For the convenience of readers, we give an explicit description of the maps $\wt$, $\tilde{e}_i$, $\tilde{f}_i$,
$\varepsilon_i$ and $\varphi_i$ $(i\in I)$ for the perfect crystal ${\mathcal B}^{(l)}$ of type $B^{(1)}_{n}$. Let  $b = (x_1,\dots,x_n|x_0 |\bar{x}_n,\dots, \bar{x}_1) \in
\mathcal B^{(l)}$ such that $x_0=0$ or $1$, $x_i, \bar{x}_i \in
\mathbf Z_{\ge 0}$, and $x_0 + \sum_{i=1}^{n}(x_i+\bar{x}_i) = l$.
\vskip 2mm
For $i=0$ and $i=n$, the Kashiwara operators $\tilde{f}_i$ are given by
\begin{equation}\label{Kashiwara operators1} 
	\begin{aligned}
\tilde{f}_0 b &=
\begin{cases}
	(x_1,x_2+1,x_3,\dots,x_n|x_0|\bar{x}_n,\dots,\bar{x}_2,\bar{x}_1-1)
	&
	\textnormal{if $x_2 \geq \bar{x}_2$,}\\
	(x_1+1,x_2,\dots,x_n|x_0|\bar{x}_n,\dots,\bar{x}_3,\bar{x}_2-1,\bar{x}_1)
	&
	\textnormal{if $x_2 < \bar{x}_2$,}
\end{cases}\\	
		\tilde{f}_n b & =
\begin{cases}
	(x_1,\dots,x_{n-1},x_n - 1|x_0+1|\bar{x}_n,\dots,\bar{x}_1) &
	\textnormal{if $x_0=0$,}\\
	(x_1,\dots,x_n|x_0-1|\bar{x}_n + 1,\bar{x}_{n-1},\dots,\bar{x}_1)
	& \textnormal{if $x_0=1$.}
\end{cases}
	\end{aligned}
\end{equation}

For $i=1,\dots,n-1$,  the Kashiwara operators $\tilde{f}_i$ are given by
\begin{equation}\label{Kashiwara operators2} 
	\begin{aligned}
		\tilde{f}_i b & =
		\begin{cases}
			(x_1,\dots,x_i-1,x_{i+1}+1,\dots,x_n|x_0|
			\bar{x}_n,\dots,\bar{x}_1) &
			\textnormal{if $x_{i+1} \geq \bar{x}_{i+1}$,}\\
			(x_1,\dots,x_n|x_0| \bar{x}_n,\dots,,\bar{x}_{i+1}-1,
			\bar{x}_{i}+1,\dots,\bar{x}_1) & \textnormal{if $x_{i+1} <
				\bar{x}_{i+1}$.}
		\end{cases}
	\end{aligned}
\end{equation}

The action of Kashiwara operators $\tilde{e}_i$ is given by $\tilde{e}_i b=b'$ if and only if $\tilde{f}_i b'=b$.

The maps $\varepsilon_i$, $\varphi_i$
 and $\wt$  are given by
\begin{align*}
	&\varepsilon_0 (b) =
	x_1+(x_2-\bar{x}_2)_+,\ \varepsilon_n (b) =
	2\bar{x}_n+x_0,\\
	&\varphi_0 (b) =
	\bar{x}_1+(\bar{x}_2-x_2)_+,\ \varphi_n (b) =
	2x_n+x_0,\\
	&\varepsilon_i (b) =
	\bar{x}_i+({x}_{i+1}-\bar{x}_{i+1})_+ \quad (i=1,\dots,n-1),\\
    &\varphi_i (b) =
	x_i + (\bar{x}_{i+1} - x_{i+1})_+ \quad (i=1,\dots,n-1),\\
	&\wt(b) =
	\sum_{i=0}^n (\varphi_i(b) - \varepsilon_i(b))\Lambda_i,
\end{align*}
where $(a)_+ = \max(0,a)$ for $a\in\mathbf Z$.

\vskip 2mm 

For other types, we can find precise definitions of maps $\tilde{e}_i$ $\tilde{f}_i$ , $\varepsilon_i$, $\varphi_i$ and $\wt$ in \cite{KKM,KMOTU}.

\vskip 5mm

\section{Higher level  Young columns}\label{Level-$l$ Young Column}

\vskip 2mm 

In this section, we define the level-$l$ Young columns and level-$l$ reduced Young columns. We also 
define the crystal structure on the set ${\mathcal C}^{(l)}$ of level-$l$ reduced Young columns.
Then we will prove that ${\mathcal C}^{(l)}$ has the same crystal structure as ${\mathcal B}^{(l)}$.

 \vskip 3mm 
 
 \subsection{Level-$l$ stacking patterns and Young columns}\label{stacking patterns and Young Columns}
 \hfill  
 
 \vskip 2mm
  
 The colored blocks we will work with are one of the following sizes which are obtained by splitting the unit cube: (we use width $\times$ depth $\times$ height)
 
 \vskip 5mm 
 
 \begin{center}
 	\begin{texdraw}
 		\fontsize{3}{3}\selectfont
 		\drawdim em
 		\setunitscale 2.3
	\move(0 0)
\bsegment
 	
\move(1.4 0)\lvec(2 0)

\move(1.4 2)\lvec(2 2)

\move(2 0)\lvec(2 2)

\move(2 2)\lvec(2.8 2.5)

\move(2 0)\lvec(2.8 0.5)

\move(2.8 2.5)\lvec(2.8 0.5)

\move(2.2 2.5)\lvec(2.8 2.5)

\move(1.4 0)\lvec(1.4 2)
\move(1.4 2)\lvec(2.2 2.5)

\htext(-1 1){\fontsize{10}{10}\selectfont{Type 1:}}

\htext(1.7 -0.7){\fontsize{8}{8}\selectfont{  $1/l  \times 1 \times 1$}}

\esegment

\move(8 0)
\bsegment

\move(1.4 0)\lvec(2 0)
\move(1.4 2)\lvec(2 2)
\move(2 0)\lvec(2 2)
\move(2 2)\lvec(2.4 2.25)
\move(2 0)\lvec(2.4 0.25)
\move(2.4 0.25)\lvec(2.4 2.25)
\move(2.4 2.25)\lvec(1.8 2.25)
\move(1.4 0)\lvec(1.4 2)
\move(1.4 2)\lvec(1.8 2.25)

\htext(-1 1){\fontsize{10}{10}\selectfont{Type 2:}}

\htext(1.7 -0.7){\fontsize{8}{8}\selectfont{  $1/l  \times 1/2 \times 1$}}

\esegment 	
	
 		\move(16 0)
 		\bsegment
 		
 			\move(1.4 0)\lvec(1.7 0)
 		
 		\move(1.4 2)\lvec(1.7 2)
 		
 		\move(1.7 0)\lvec(1.7 2)
 		
 		\move(1.7 2)\lvec(2.5 2.5)
 		
 		\move(1.7 0)\lvec(2.5 0.5)
 		
 		\move(2.5 2.5)\lvec(2.5 0.5)
 		
 		\move(2.2 2.5)\lvec(2.5 2.5)
 		
 		\move(1.4 0)\lvec(1.4 2)
 		\move(1.4 2)\lvec(2.2 2.5)
 		
 		\htext(-1 1){\fontsize{10}{10}\selectfont{Type 3:}}	
 		
 		\htext(1.7 -0.7){\fontsize{8}{8}\selectfont{  $1/2l  \times 1 \times 1$}}
 
 		\esegment
 		
	\move(24 0)
\bsegment

  	\move(1.4 0)\lvec(1.7 0)
 \move(1.4 2)\lvec(1.7 2)
 \move(1.7 0)\lvec(1.7 2)
 \move(1.7 2)\lvec(2.1 2.25)
 \move(1.7 0)\lvec(2.1 0.25)
 \move(2.1 0.25)\lvec(2.1 2.25)
 \move(2.1 2.25)\lvec(1.8 2.25)
 \move(1.4 0)\lvec(1.4 2)
 \move(1.4 2)\lvec(1.8 2.25)
 
 \htext(-1 1){\fontsize{10}{10}\selectfont{Type 4:}}
 
 \htext(1.7 -0.7){\fontsize{8}{8}\selectfont{  $1/2l  \times 1/2 \times 1$}}

\esegment 		
 	\end{texdraw}
 \end{center}  
 
 \vskip 1.5mm
 
 Here, $l$ stands for the level. 
 
 \vskip 2mm
 
 For each affine Cartan datum, we use different coloring and shapes of blocks given below. 
 
 \vskip 2mm 
 
 (1) $ A_{2n}^{(2)}\ (1\le i\le n)$
 
 \vskip 3mm
 
  \begin{center}
 	\begin{texdraw}
 		\fontsize{3}{3}\selectfont
 		\drawdim em
 		\setunitscale 2.3
 		\move(0 0)
 		\bsegment
 		
 		\move(1.4 0)\lvec(2 0)
 		\move(1.4 2)\lvec(2 2)
 		\move(2 0)\lvec(2 2)
 		\move(2 2)\lvec(2.4 2.25)
 		\move(2 0)\lvec(2.4 0.25)
 		\move(2.4 0.25)\lvec(2.4 2.25)
 		\move(2.4 2.25)\lvec(1.8 2.25)
 		\move(1.4 0)\lvec(1.4 2)
 		\move(1.4 2)\lvec(1.8 2.25)
 			\htext(1.7 1){$0$}

 		\htext(1.7 -0.7){\fontsize{8}{8}\selectfont{  $1/l  \times 1/2 \times 1$}}
 		
 		\esegment
 		
 		\move(6 0)
 		\bsegment
 		\move(1.4 0)\lvec(2 0)
 		
 		\move(1.4 2)\lvec(2 2)
 		
 		\move(2 0)\lvec(2 2)
 		
 		\move(2 2)\lvec(2.8 2.5)
 		
 		\move(2 0)\lvec(2.8 0.5)
 		
 		\move(2.8 2.5)\lvec(2.8 0.5)
 		
 		\move(2.2 2.5)\lvec(2.8 2.5)
 		
 		\move(1.4 0)\lvec(1.4 2)
 		\move(1.4 2)\lvec(2.2 2.5)

 			\htext(1.7 1){$i$}
 		\htext(1.7 -0.7){\fontsize{8}{8}\selectfont{  $1/l  \times 1 \times 1$}}
 		\esegment 	
 		
 	\end{texdraw}
 \end{center}

 \vskip 5mm 
 
 (2) $D_{n+1}^{(2)}\ (1\le i\le n-1)$
 
 \vskip 3mm 
 
 \begin{center}
 	\begin{texdraw}
 		\fontsize{3}{3}\selectfont
 		\drawdim em
 		\setunitscale 2.3
 		\move(0 0)
 		\bsegment
 		
 		\move(1.4 0)\lvec(2 0)
 		\move(1.4 2)\lvec(2 2)
 		\move(2 0)\lvec(2 2)
 		\move(2 2)\lvec(2.4 2.25)
 		\move(2 0)\lvec(2.4 0.25)
 		\move(2.4 0.25)\lvec(2.4 2.25)
 		\move(2.4 2.25)\lvec(1.8 2.25)
 		\move(1.4 0)\lvec(1.4 2)
 		\move(1.4 2)\lvec(1.8 2.25)
 		\htext(1.7 1){$0$}

 		\htext(1.7 -0.7){\fontsize{8}{8}\selectfont{  $1/l  \times 1/2 \times 1$}}
 		
 		\esegment
 		
 		\move(6 0)
 		\bsegment
 		\move(1.4 0)\lvec(2 0)
 	\move(1.4 2)\lvec(2 2)
 	\move(2 0)\lvec(2 2)
 	\move(2 2)\lvec(2.4 2.25)
 	\move(2 0)\lvec(2.4 0.25)
 	\move(2.4 0.25)\lvec(2.4 2.25)
 	\move(2.4 2.25)\lvec(1.8 2.25)
 	\move(1.4 0)\lvec(1.4 2)
 	\move(1.4 2)\lvec(1.8 2.25)
 	\htext(1.7 1){$n$}

 	\htext(1.7 -0.7){\fontsize{8}{8}\selectfont{  $1/l  \times 1/2 \times 1$}}
 		\esegment 	
 	
 		\move(12 0)
 	\bsegment
 	\move(1.4 0)\lvec(2 0)
 	
 	\move(1.4 2)\lvec(2 2)
 	
 	\move(2 0)\lvec(2 2)
 	
 	\move(2 2)\lvec(2.8 2.5)
 	
 	\move(2 0)\lvec(2.8 0.5)
 	
 	\move(2.8 2.5)\lvec(2.8 0.5)
 	
 	\move(2.2 2.5)\lvec(2.8 2.5)
 	
 	\move(1.4 0)\lvec(1.4 2)
 	\move(1.4 2)\lvec(2.2 2.5)
 	
 	\htext(1.7 1){$i$}
 	\htext(1.7 -0.7){\fontsize{8}{8}\selectfont{  $1/l  \times 1 \times 1$}}
 	\esegment 	
 		
 	\end{texdraw}
 \end{center}

 \vskip 5mm

 (3) $ A_{2n-1}^{(2)}\ (2\le i\le n)$
 
 \vskip 3mm 
 
  \begin{center}
 	\begin{texdraw}
 		\fontsize{3}{3}\selectfont
 		\drawdim em
 		\setunitscale 2.3
 		\move(0 0)
 		\bsegment
 		
 		\move(1.4 0)\lvec(2 0)
 		\move(1.4 2)\lvec(2 2)
 		\move(2 0)\lvec(2 2)
 		\move(2 2)\lvec(2.4 2.25)
 		\move(2 0)\lvec(2.4 0.25)
 		\move(2.4 0.25)\lvec(2.4 2.25)
 		\move(2.4 2.25)\lvec(1.8 2.25)
 		\move(1.4 0)\lvec(1.4 2)
 		\move(1.4 2)\lvec(1.8 2.25)
 		\htext(1.7 1){$0$}

 		\htext(1.7 -0.7){\fontsize{8}{8}\selectfont{  $1/l  \times 1/2 \times 1$}}
 		
 		\esegment
 		
 		\move(6 0)
 		\bsegment
 		\move(1.4 0)\lvec(2 0)
 		\move(1.4 2)\lvec(2 2)
 		\move(2 0)\lvec(2 2)
 		\move(2 2)\lvec(2.4 2.25)
 		\move(2 0)\lvec(2.4 0.25)
 		\move(2.4 0.25)\lvec(2.4 2.25)
 		\move(2.4 2.25)\lvec(1.8 2.25)
 		\move(1.4 0)\lvec(1.4 2)
 		\move(1.4 2)\lvec(1.8 2.25)
 		\htext(1.7 1){$1$}

 		\htext(1.7 -0.7){\fontsize{8}{8}\selectfont{  $1/l  \times 1/2 \times 1$}}
 		\esegment 	
 		
 		\move(12 0)
 		\bsegment
 		\move(1.4 0)\lvec(2 0)
 		
 		\move(1.4 2)\lvec(2 2)
 		
 		\move(2 0)\lvec(2 2)
 		
 		\move(2 2)\lvec(2.8 2.5)
 		
 		\move(2 0)\lvec(2.8 0.5)
 		
 		\move(2.8 2.5)\lvec(2.8 0.5)
 		
 		\move(2.2 2.5)\lvec(2.8 2.5)
 		
 		\move(1.4 0)\lvec(1.4 2)
 		\move(1.4 2)\lvec(2.2 2.5)
 		
 		\htext(1.7 1){$i$}
 		\htext(1.7 -0.7){\fontsize{8}{8}\selectfont{  $1/l  \times 1 \times 1$}}
 		\esegment 	
 		
 	\end{texdraw}
 \end{center}
 
 \vskip 5mm 
 
 (4) $D_{n}^{(1)}\ (2\le i\le n-2)$
 
 \vskip 3mm 
 
  \begin{center}
 	\begin{texdraw}
 		\fontsize{3}{3}\selectfont
 		\drawdim em
 		\setunitscale 2.3
 		\move(0 0)
 		\bsegment
 		
 		\move(1.4 0)\lvec(2 0)
 		\move(1.4 2)\lvec(2 2)
 		\move(2 0)\lvec(2 2)
 		\move(2 2)\lvec(2.4 2.25)
 		\move(2 0)\lvec(2.4 0.25)
 		\move(2.4 0.25)\lvec(2.4 2.25)
 		\move(2.4 2.25)\lvec(1.8 2.25)
 		\move(1.4 0)\lvec(1.4 2)
 		\move(1.4 2)\lvec(1.8 2.25)
 		\htext(1.7 1){$0$}

 		\htext(1.7 -0.7){\fontsize{8}{8}\selectfont{  $1/l  \times 1/2 \times 1$}}
 		
 		\esegment
 		
 		\move(5 0)
 		\bsegment
 		\move(1.4 0)\lvec(2 0)
 		\move(1.4 2)\lvec(2 2)
 		\move(2 0)\lvec(2 2)
 		\move(2 2)\lvec(2.4 2.25)
 		\move(2 0)\lvec(2.4 0.25)
 		\move(2.4 0.25)\lvec(2.4 2.25)
 		\move(2.4 2.25)\lvec(1.8 2.25)
 		\move(1.4 0)\lvec(1.4 2)
 		\move(1.4 2)\lvec(1.8 2.25)
 		\htext(1.7 1){$1$}

 		\htext(1.7 -0.7){\fontsize{8}{8}\selectfont{  $1/l  \times 1/2 \times 1$}}
 		\esegment 	
 		
 		\move(10 0)
 	\bsegment
 	\move(1.4 0)\lvec(2 0)
 	\move(1.4 2)\lvec(2 2)
 	\move(2 0)\lvec(2 2)
 	\move(2 2)\lvec(2.4 2.25)
 	\move(2 0)\lvec(2.4 0.25)
 	\move(2.4 0.25)\lvec(2.4 2.25)
 	\move(2.4 2.25)\lvec(1.8 2.25)
 	\move(1.4 0)\lvec(1.4 2)
 	\move(1.4 2)\lvec(1.8 2.25)
 	\htext(1.7 1){$n$\!-\!$1$}

 	\htext(1.7 -0.7){\fontsize{8}{8}\selectfont{  $1/l  \times 1/2 \times 1$}}
 	\esegment 	
 	
 		\move(15 0)
 		\bsegment
 	\move(1.4 0)\lvec(2 0)
 	\move(1.4 2)\lvec(2 2)
 	\move(2 0)\lvec(2 2)
 	\move(2 2)\lvec(2.4 2.25)
 	\move(2 0)\lvec(2.4 0.25)
 	\move(2.4 0.25)\lvec(2.4 2.25)
 	\move(2.4 2.25)\lvec(1.8 2.25)
 	\move(1.4 0)\lvec(1.4 2)
 	\move(1.4 2)\lvec(1.8 2.25)
 	\htext(1.7 1){$n$}

 	\htext(1.7 -0.7){\fontsize{8}{8}\selectfont{  $1/l  \times 1/2 \times 1$}}
 	\esegment 	
 	
 		\move(20 0)
 	\bsegment
 	\move(1.4 0)\lvec(2 0)
 	
 	\move(1.4 2)\lvec(2 2)
 	
 	\move(2 0)\lvec(2 2)
 	
 	\move(2 2)\lvec(2.8 2.5)
 	
 	\move(2 0)\lvec(2.8 0.5)
 	
 	\move(2.8 2.5)\lvec(2.8 0.5)
 	
 	\move(2.2 2.5)\lvec(2.8 2.5)
 	
 	\move(1.4 0)\lvec(1.4 2)
 	\move(1.4 2)\lvec(2.2 2.5)
 	
 	\htext(1.7 1){$i$}
 	\htext(1.7 -0.7){\fontsize{8}{8}\selectfont{  $1/l  \times 1 \times 1$}}
 	\esegment 	
 		
 	\end{texdraw}
 \end{center}
 
\vskip 5mm 

 (5) $B_{n}^{(1)}\ (2\le i\le n-1)$
 
 \vskip 3mm 
 
   \begin{center}
 	\begin{texdraw}
 		\fontsize{3}{3}\selectfont
 		\drawdim em
 		\setunitscale 2.3
 		\move(0 0)
 		\bsegment
 		
 		\move(1.4 0)\lvec(2 0)
 		\move(1.4 2)\lvec(2 2)
 		\move(2 0)\lvec(2 2)
 		\move(2 2)\lvec(2.4 2.25)
 		\move(2 0)\lvec(2.4 0.25)
 		\move(2.4 0.25)\lvec(2.4 2.25)
 		\move(2.4 2.25)\lvec(1.8 2.25)
 		\move(1.4 0)\lvec(1.4 2)
 		\move(1.4 2)\lvec(1.8 2.25)
 		\htext(1.7 1){$0$}

 		\htext(1.7 -0.7){\fontsize{8}{8}\selectfont{  $1/l  \times 1/2 \times 1$}}
 		
 		\esegment
 		
 		\move(6 0)
 		\bsegment
 		\move(1.4 0)\lvec(2 0)
 		\move(1.4 2)\lvec(2 2)
 		\move(2 0)\lvec(2 2)
 		\move(2 2)\lvec(2.4 2.25)
 		\move(2 0)\lvec(2.4 0.25)
 		\move(2.4 0.25)\lvec(2.4 2.25)
 		\move(2.4 2.25)\lvec(1.8 2.25)
 		\move(1.4 0)\lvec(1.4 2)
 		\move(1.4 2)\lvec(1.8 2.25)
 		\htext(1.7 1){$1$}

 		\htext(1.7 -0.7){\fontsize{8}{8}\selectfont{  $1/l  \times 1/2 \times 1$}}
 		\esegment 	
 		
 		\move(12 0)
 		\bsegment
 			\move(1.4 0)\lvec(2 0)
 		\move(1.4 2)\lvec(2 2)
 		\move(2 0)\lvec(2 2)
 		\move(2 2)\lvec(2.4 2.25)
 		\move(2 0)\lvec(2.4 0.25)
 		\move(2.4 0.25)\lvec(2.4 2.25)
 		\move(2.4 2.25)\lvec(1.8 2.25)
 		\move(1.4 0)\lvec(1.4 2)
 		\move(1.4 2)\lvec(1.8 2.25)
 		\htext(1.7 1){$n$}

 		\htext(1.7 -0.7){\fontsize{8}{8}\selectfont{  $1/l  \times 1/2 \times 1$}}
 		\esegment 	
 	
 	 		\move(18 0)
 	\bsegment
 	\move(1.4 0)\lvec(2 0)
 	
 	\move(1.4 2)\lvec(2 2)
 	
 	\move(2 0)\lvec(2 2)
 	
 	\move(2 2)\lvec(2.8 2.5)
 	
 	\move(2 0)\lvec(2.8 0.5)
 	
 	\move(2.8 2.5)\lvec(2.8 0.5)
 	
 	\move(2.2 2.5)\lvec(2.8 2.5)
 	
 	\move(1.4 0)\lvec(1.4 2)
 	\move(1.4 2)\lvec(2.2 2.5)
 	
 	\htext(1.7 1){$i$}
 	\htext(1.7 -0.7){\fontsize{8}{8}\selectfont{  $1/l  \times 1 \times 1$}}
 	\esegment	
 	\end{texdraw}
 \end{center}

\vskip 5mm

 (6) $C_{n}^{(1)}\ (1\le i\le n)$
 
 \vskip 3mm 
 
    \begin{center}
 	\begin{texdraw}
 		\fontsize{3}{3}\selectfont
 		\drawdim em
 		\setunitscale 2.3
 		\move(0 0)
 		\bsegment
 		
\move(1.4 0)\lvec(1.7 0)
\move(1.4 2)\lvec(1.7 2)
\move(1.7 0)\lvec(1.7 2)
\move(1.7 2)\lvec(2.1 2.25)
\move(1.7 0)\lvec(2.1 0.25)
\move(2.1 0.25)\lvec(2.1 2.25)
\move(2.1 2.25)\lvec(1.8 2.25)
\move(1.4 0)\lvec(1.4 2)
\move(1.4 2)\lvec(1.8 2.25)
\htext(1.55 1){$0$}

\htext(1.7 -0.7){\fontsize{8}{8}\selectfont{  $1/2l  \times 1/2 \times 1$}}
 		
 		\esegment
 		
 	\move(6 0)
 		\bsegment
	\move(1.4 0)\lvec(1.7 0)

\move(1.4 2)\lvec(1.7 2)

\move(1.7 0)\lvec(1.7 2)

\move(1.7 2)\lvec(2.5 2.5)

\move(1.7 0)\lvec(2.5 0.5)

\move(2.5 2.5)\lvec(2.5 0.5)

\move(2.2 2.5)\lvec(2.5 2.5)

\move(1.4 0)\lvec(1.4 2)
\move(1.4 2)\lvec(2.2 2.5)	
\htext(1.55 1){$i$}
\htext(1.7 -0.7){\fontsize{8}{8}\selectfont{  $1/2l  \times 1 \times 1$}}
 		\esegment	
 	\end{texdraw}
 \end{center}

\vskip 5mm 

For simplicity, we use a planar representation of our blocks. 
The triangle on the upper-left corner represents the block in the front, and the triangle on the lower-right corner represents the block at the back.

\vskip 3mm

\begin{center}
	\begin{texdraw}
		\fontsize{3}{3}\selectfont
		\drawdim em
		\setunitscale 2.3
		\move(-8 0)
		\bsegment
		\move(0 0)\lvec(-0.6 0)
		\move(0 0)\lvec(0 2)
		\move(-0.6 0)\lvec(-0.6 2)
		
		\move(-0.6 2)\lvec(0 2)
		
		\htext(-0.3 1){$i$}
		\htext(0.7 1){\fontsize{12}{12}\selectfont{$=$}}
		
		\htext(1 -0.7){\fontsize{8}{8}\selectfont{$1/l\times1\times1$}}
		
		\move(1.4 0)\lvec(2 0)
		
		\move(1.4 2)\lvec(2 2)
		
		\move(2 0)\lvec(2 2)
		
		\move(2 2)\lvec(2.8 2.5)
		
		\move(2 0)\lvec(2.8 0.5)
		
		\move(2.8 2.5)\lvec(2.8 0.5)
		
		\move(2.2 2.5)\lvec(2.8 2.5)
		
		\move(1.4 0)\lvec(1.4 2)
		\move(1.4 2)\lvec(2.2 2.5)
		\htext(1.7 1){$i$}
		\esegment

		\move(0 0)
		\bsegment
		\move(0 0)\lvec(-0.6 0)
		\move(0 0)\lvec(0 2)
		\move(-0.6 0)\lvec(-0.6 2)
		
		\move(-0.6 2)\lvec(0 2)
		\move(-0.6 0)\lvec(0 2)

		\htext(-0.35 1.5){$i$}
		\htext(0.8 1){\fontsize{12}{12}\selectfont{$=$}}
		
		\htext(1 -0.7){\fontsize{8}{8}\selectfont{$1/l\times1/2\times1$}}
			
		\move(1.4 0)\lvec(2 0)
		\move(1.4 2)\lvec(2 2)
		\move(2 0)\lvec(2 2)
		\move(2 2)\lvec(2.4 2.25)
		\move(2 0)\lvec(2.8 0.5)
		\move(2.4 0.25)\lvec(2.4 2.25)
		\move(2.4 2.25)\lvec(1.8 2.25)
		\move(1.4 0)\lvec(1.4 2)
		\move(1.4 2)\lvec(1.8 2.25)
		\move(2.4 0.5)\lvec(2.8 0.5)
		\htext(1.7 1){$i$}

		\esegment
		
		\move(8 0)
		\bsegment
		\move(0 0)\lvec(-0.6 0)
		\move(0 0)\lvec(0 2)
		\move(-0.6 0)\lvec(-0.6 2)
		
		\move(-0.6 2)\lvec(0 2)
		\move(-0.6 0)\lvec(0 2)
		
		\htext(-0.18 0.5){$i$}
		
		\htext(0.8 1){\fontsize{12}{12}\selectfont{$=$}}
		
		\htext(1 -0.7){\fontsize{8}{8}\selectfont{$1/l\times1/2\times1$}}
		
		\move(1.4 0)\lvec(2 0)

		\move(2.4 2.25)\lvec(2.8 2.5)
		\move(2 0)\lvec(2.8 0.5)
		\move(1.4 0)\lvec(1.8 0.25)
		\move(2.8 2.5)\lvec(2.8 0.5)
		\move(2.4 0.25)\lvec(2.4 2.25)
		\move(1.8 0.25)\lvec(1.8 2.25)
		\htext(2.1 1.25){$i$}
		\move(2.4 2.25)\lvec(1.8 2.25)
		\move(2.4 0.25)\lvec(1.8 0.25)
		
		\move(2.2 2.5)\lvec(2.8 2.5)

		\move(1.8 2.25)\lvec(2.2 2.5)

		\esegment
	\end{texdraw}
\end{center}  

\vskip 2mm

We will now explain how we stack a block on another block:

\vskip 3mm 

We can place a set $A$ of blocks on top of another set $B$ of blocks if
\begin{itemize}
	\item The width of $B$ is $\frac{i}{l}\ (1\le i\le l)$, the depth of $B$ is $1$ or $\frac{1}{2}$.
	\vskip 2mm
	\item The width and depth of $A$ is less than or equal to the width and depth of $B$.
\end{itemize}

\begin{example}
We consider the case of $l=2$. 

\vskip 2mm

(a) The ways of placing blocks below are allowed.

\vskip 2mm

\begin{center}
	\begin{texdraw}
		\fontsize{3}{3}\selectfont
		\drawdim em
		\setunitscale 2.1
		\move(0 0)
		\bsegment		
		\move(0 0)\lvec(2 0)		
		\move(0 0)\lvec(0 2)
		\move(0 2)\lvec(2 2)	
		\move(2 0)\lvec(2 2)
		\move(2 0)\lvec(2.8 0.5)	
		\move(2.8 0.5)\lvec(2.8 2.5)	
		\move(2 2)\lvec(2.8 2.5)
		\move(1 2)\lvec(1.8 2.5)
		\move(1.8 2.5)\lvec(2.8 2.5)				
		\move(1.8 2.5)\lvec(1.8 4.5)				
		\move(1 2)\lvec(1 4)				
		\move(1 4)\lvec(1.8 4.5)				
		\move(0 2)\lvec(0 4)				
		\move(0 4)\lvec(1 4)				
		\move(0 4)\lvec(0.8 4.5)				
		\move(0.8 4.5)\lvec(1.8 4.5)				
		\htext(0.5 3){$A$}				
		\htext(1 1){$B$}				
		\esegment
		
		\move(7 0)
		\bsegment		
		\move(0 0)\lvec(2 0)		
		\move(0 0)\lvec(0 2)
		\move(0 2)\lvec(2 2)	
		\move(2 0)\lvec(2 2)
		\move(2 0)\lvec(2.8 0.5)	
		\move(2.8 0.5)\lvec(2.8 2.5)	
		\move(2 2)\lvec(2.8 2.5)
		\move(0 2)\lvec(0.8 2.5)
		\move(0.8 2.5)	\lvec(1 2.5)			
		\move(2.8 2.5)\lvec(2.8 4.5)				
		\move(2 2)\lvec(2 4)				
		\move(2 4)\lvec(2.8 4.5)				
		\move(1 2)\lvec(1 4)				
		\move(1 4)\lvec(2 4)				
		\move(1 4)\lvec(1.8 4.5)				
		\move(1.8 4.5)\lvec(2.8 4.5)				
		\htext(1.5 3){$A$}				
		\htext(1 1){$B$}				
		\esegment
		
		\move(14 0)
		\bsegment		
		\move(0 0)\lvec(1 0)		
		\move(0 0)\lvec(0 2)
		\move(0 2)\lvec(1 2)	
		\move(1 0)\lvec(1 2)		
		\move(1 0)\lvec(1.8 0.5)
		\move(1 2)\lvec(1.8 2.5)
		\move(1.8 0.5)\lvec(1.8 2.5)				
		\move(1.4 2.25)\lvec(1.4 4.25)				
		\move(1 2)\lvec(1 4)				
		\move(1 4)\lvec(1.4 4.25)				
		\move(0 2)\lvec(0 4)				
		\move(0 4)\lvec(1 4)				
		\move(0 4)\lvec(0.4 4.25)				
		\move(0.4 4.25)\lvec(1.4 4.25)	
		\move(1.4 2.5)\lvec(1.8 2.5)		
		\htext(0.5 3){$A$}				
		\htext(0.5 1){$B$}					
		\esegment	
		
		\move(21 0)
		\bsegment		
		\move(0 0)\lvec(1 0)		
		\move(0 0)\lvec(0 2)
		\move(0 2)\lvec(1 2)	
		\move(1 0)\lvec(1 2)		
		\move(1 0)\lvec(1.8 0.5)
		\move(1 2)\lvec(1.8 2.5)
		\move(1.8 0.5)\lvec(1.8 2.5)				
		\move(1.4 2.25)\lvec(1.4 4.25)				
		\move(1.8 2.5)\lvec(1.8 4.5)				
		\move(0.4 2.25)\lvec(1.4 2.25)			
		\move(0.4 2.25)\lvec(0.4 4.25)	
		\move(0 2)\lvec(0.4 2.25)
		\move(0.4 4.25)\lvec(1.4 4.25)	
		\move(0.4 4.25)\lvec(0.8 4.5)	
		\move(1.4 4.25)\lvec(1.8 4.5)	
		\move(0.8 4.5)\lvec(1.8 4.5)			
		\htext(0.9 3.25){$A$}				
		\htext(0.5 1){$B$}					
		\esegment					
	\end{texdraw}
\end{center}

\vskip 2mm

(b) The ways of placing blocks below are not allowed.

\vskip 2mm

\begin{center}
	\begin{texdraw}
		\fontsize{3}{3}\selectfont
		\drawdim em
		\setunitscale 2.1
		\move(0 0)
		\bsegment		
		\move(0 0)\lvec(1 0)	
		\move(0 0)\lvec(0 4)
		\move(1 0)\lvec(1 2)
		\move(0 4)\lvec(2 4)
		\move(0 2)\lvec(2 2)
		\move(2 2)\lvec(2 4)
		\move(1 0)\lvec(1.8 0.5)
		\move(1.8 0.5)\lvec(1.8 2)
		\move(2 2)\lvec(2.8 2.5)
		\move(2 4)\lvec(2.8 4.5)
		\move(2.8 2.5)\lvec(2.8 4.5)
		\move(0.8 4.5)\lvec(2.8 4.5)
		\move(0 4)\lvec(0.8 4.5)
		\htext(1 3){$A$}				
		\htext(0.5 1){$B$}
		\esegment
		
		\move(7 0)
		\bsegment		
		
		\move(0 2)\lvec(0 4)
		
		\move(0 4)\lvec(2 4)
		\move(0 2)\lvec(2 2)
		\move(2 2)\lvec(2 4)
		\move(2 2)\lvec(2.8 2.5)
		\move(2 4)\lvec(2.8 4.5)
		\move(2.8 2.5)\lvec(2.8 4.5)
		\move(0.8 4.5)\lvec(2.8 4.5)
		\move(0 4)\lvec(0.8 4.5)

		\move(2 0)\lvec(2 2)
		\move(1 0)\lvec(2 0)
		\move(2 0)\lvec(2.8 0.5)
		\move(2.8 0.5)\lvec(2.8 2.5)
		\move(1 0)\lvec(1 2)

		\htext(1 3){$A$}				
		\htext(1.5 1){$B$}			
		\esegment
		
		\move(14 0)
		\bsegment		
		\move(0 0)\lvec(1 0)		
		\move(0 0)\lvec(0 4)
		\move(1 0)\lvec(1 4)	
		\move(0 2)\lvec(1 2)
		\move(0 4)\lvec(1 4)
		\move(1 0)\lvec(1.4 0.25)
		\move(1.4 0.25)\lvec(1.4 2.25)	
		\move(1 2)\lvec(1.8 2.5)
		\move(1.8 2.5)\lvec(1.8 4.5)
		\move(1 4)\lvec(1.8 4.5)
		\move(0 4)\lvec(0.8 4.5)
		\move(0.8 4.5)\lvec(1.8 4.5)

		\htext(0.5 1){$B$}				
		\htext(0.5 3){$A$}							
		\esegment	
		
		\move(21 0)
		\bsegment		
		\move(0 2)\lvec(0 4)		
		\move(1 2)\lvec(1 4)
		\move(1.8 2.5)\lvec(1.8 4.5)
		\move(1.8 0.5)\lvec(1.8 2.5)
		\move(1 2)\lvec(1.8 2.5)
		\move(0 2)\lvec(1 2)	
		\move(0 4)\lvec(1 4)
		\move(1 4)\lvec(1.8 4.5)	
		\move(0 4)\lvec(0.8 4.5)
		\move(0.8 4.5)\lvec(1.8 4.5)
		\move(1.4 0.25)\lvec(1.8 0.5)
		\move(1.4 0.25)\lvec(1.4 2.25)
		\move(0.4 0.25)\lvec(1.4 0.25)
		\move(0.4 0.25)\lvec(0.4 2)	
		\htext(0.9 1.25){$B$}				
		\htext(0.5 3){$A$}			
		\esegment					
	\end{texdraw}
\end{center}

\end{example}

\begin{definition}
A level-$l$ slice is defined to be a set of finitely many blocks stacked in a column of unit depth and $\frac{1}{l}$ or $\frac{1}{2l}$ width. 
\end{definition}

In the following text, a level-$l$ slice is referred to as a slice for short.

\begin{definition}\label{stacking patterns}
	For each type of affine Cartan datum, a level-$l$ Young Column is defined to be a set of finitely many blocks in a column of unit width and unit depth stacked following the patterns given in the Picture $\mathbf{Stacking}\ \mathbf{Patterns}$ such that the number of blocks in the slice is weakly decreasing from right to left.

\vskip 10mm

\begin{center}
	\begin{texdraw}
		\fontsize{3}{3}\selectfont
		\drawdim em
		\setunitscale 2.1
		\htext(-13.8 24.5){\htext(2 -1){\fontsize{10}{10}\selectfont{$A_{2n}^{(2)}$}}}	
		\move(-10 0)\rlvec(-4.3 0) \move(-10 2)\rlvec(-4.3 0)\move(-10 4)\rlvec(-4.3 0) \move(-10 6)\rlvec(-4.3 0) \move(-10 8)\rlvec(-4.3 0)
		\move(-10 9)\rlvec(-4.3 0) \move(-10 11)\rlvec(-4.3 0)\move(-10 13)\rlvec(-4.3 0)\move(-10 15)\rlvec(-4.3 0)\move(-10 16)\rlvec(-4.3 0) \move(-10 18)\rlvec(-4.3 0) \move(-10 20)\rlvec(-4.3 0) \move(-10 22)\rlvec(-4.3 0)
		\move(-10 0)\rlvec(0 22.3) \move(-10.6 0)\rlvec(0 8)  \move(-11.4 0)\rlvec(0 8) \move(-12 0)\rlvec(0 22.3) \move(-10.6 9)\rlvec(0 6) \move(-11.4 9)\rlvec(0 6)  \move(-14 0)\rlvec(0 22.3)
		\move(-12.6 0)\rlvec(0 8)\move(-13.4 0)\rlvec(0 8)
		\move(-12.6 9)\rlvec(0 6)\move(-12.6 16)\rlvec(0 6) \move(-13.4 9)\rlvec(0 6)
		\move(-13.4 16)\rlvec(0 6)\move(-10.6 0)\rlvec(0.6 2) \move(-12 0)\rlvec(0.6 2) 
		\move(-10.6 20)\rlvec(0 2)\move(-11.4 20)\rlvec(0 2)
		\move(-10.6 16)\rlvec(0 2)\move(-11.4 16)\rlvec(0 2)
		\move(-10.6 18)\rlvec(0 2)\move(-11.4 18)\rlvec(0 2)
		\move(-12.6 0)\rlvec(0.6 2)\move(-14 0)\rlvec(0.6 2)
		\htext(-10.95 1){$\cdots$}\htext(-10.95 3){$\cdots$}\htext(-10.95 5){$\cdots$}\htext(-10.95 7){$\cdots$}\vtext(-10.95 8.5){$\cdots$}  \htext(-10.95 10){$\cdots$}  \htext(-10.95 12){$\cdots$}  \htext(-10.95 14){$\cdots$}\htext(-10.95 17){$\cdots$}  \htext(-10.95 19){$\cdots$} \vtext(-10.95 15.5){$\cdots$} \vtext(-12.95 15.5){$\cdots$} \htext(-12.95 19){$\cdots$}        \htext(-10.95 21){$\cdots$} 
		\htext(-10.2 0.5){$0$} \htext(-10.4 1.5){$0$} 
		\htext(-11.6 0.5){$0$} \htext(-11.8 1.5){$0$} 
		\htext(-10.3 3){$1$}\htext(-11.7 3){$1$}
		\htext(-10.3 5){$2$}\htext(-11.7 5){$2$}
		\htext(-10.3 7){$3$}\htext(-11.7 7){$3$}
		\htext(-10.3 10){\fontsize{5.0pt}{\baselineskip}\selectfont{$n$}\!-\!\fontsize{5.0pt}{\baselineskip}\selectfont{$1$}}
		\htext(-11.7 10){\fontsize{5.0pt}{\baselineskip}\selectfont{$n$}\!-\!\fontsize{5.0pt}{\baselineskip}\selectfont{$1$}} 
		\htext(-10.3 12){$n$}\htext(-11.7 12){$n$} 
		\htext(-10.3 14){\fontsize{5.0pt}{\baselineskip}\selectfont{$n$}\!-\!\fontsize{5.0pt}{\baselineskip}\selectfont{$1$}}
		\htext(-11.7 14){\fontsize{5.0pt}{\baselineskip}\selectfont{$n$}\!-\!\fontsize{5.0pt}{\baselineskip}\selectfont{$1$}} 
		\htext(-10.3 17){$3$} \htext(-11.7 17){$3$}
		\htext(-10.3 19){$2$} \htext(-11.7 19){$2$}
		\htext(-10.3 21){$1$} \htext(-11.7 21){$1$}
		\htext(-12.95 1){$\cdots$}\htext(-12.95 3){$\cdots$}\htext(-12.95 5){$\cdots$}\htext(-12.95 7){$\cdots$}\vtext(-12.95 8.5){$\cdots$}  \htext(-12.95 10){$\cdots$}  \htext(-12.95 12){$\cdots$}  \htext(-12.95 14){$\cdots$} \htext(-12.95 21){$\cdots$}\htext(-12.95 17){$\cdots$}
		\htext(-10.3 -0.4){$\frac{1}{l}$} \htext(-11.7 -0.4){$\frac{1}{l}$}  \htext(-12.3 -0.4){$\frac{1}{l}$}\htext(-10.95 -0.4){$\cdots$} \htext(-12.95 -0.4){$\cdots$}\htext(-13.7 -0.4){$\frac{1}{l}$}
		\htext(-11 -1){$\underbrace{\rule{3em}{0em}}$}
		\htext(-13 -1){$\underbrace{\rule{3em}{0em}}$}
		\htext(-11 -1.6){$l$}
		\htext(-13 -1.6){$l$}
		\htext(-12.2 0.5){$0$} \htext(-12.4 1.5){$0$} 
		\htext(-13.6 0.5){$0$} \htext(-13.8 1.5){$0$} 
		\htext(-12.3 3){$1$}\htext(-13.7 3){$1$}
		\htext(-12.3 5){$2$}\htext(-13.7 5){$2$}
		\htext(-12.3 7){$3$}\htext(-13.7 7){$3$}
		\htext(-12.3 10){\fontsize{5.0pt}{\baselineskip}\selectfont{$n$}\!-\!\fontsize{5.0pt}{\baselineskip}\selectfont{$1$}}
		\htext(-13.7 10){\fontsize{5.0pt}{\baselineskip}\selectfont{$n$}\!-\!\fontsize{5.0pt}{\baselineskip}\selectfont{$1$}} 
		\htext(-12.3 12){$n$}\htext(-13.7 12){$n$} 
		\htext(-12.3 14){\fontsize{5.0pt}{\baselineskip}\selectfont{$n$}\!-\!\fontsize{5.0pt}{\baselineskip}\selectfont{$1$}}
		\htext(-13.7 14){\fontsize{5.0pt}{\baselineskip}\selectfont{$n$}\!-\!\fontsize{5.0pt}{\baselineskip}\selectfont{$1$}} 
		\htext(-12.3 17){$3$} \htext(-13.7 17){$3$}
		\htext(-12.3 19){$2$} \htext(-13.7 19){$2$}
		\htext(-12.3 21){$1$} \htext(-13.7 21){$1$}
		%
		\htext(-3.8 24.5){\htext(2 -1){\fontsize{10}{10}\selectfont{$D_{n+1}^{(2)}$}}}
		\move(0 0)\rlvec(-4.3 0) \move(0 2)\rlvec(-4.3 0)\move(0 4)\rlvec(-4.3 0) \move(0 6)\rlvec(-4.3 0) \move(0 8)\rlvec(-4.3 0)
		\move(0 9)\rlvec(-4.3 0) \move(0 11)\rlvec(-4.3 0)\move(0 13)\rlvec(-4.3 0)\move(0 15)\rlvec(-4.3 0)\move(0 16)\rlvec(-4.3 0) \move(0 18)\rlvec(-4.3 0) \move(0 20)\rlvec(-4.3 0) \move(0 22)\rlvec(-4.3 0)
		\move(0 0)\rlvec(0 22.3) \move(-0.6 0)\rlvec(0 8)  \move(-1.4 0)\rlvec(0 8) \move(-2 0)\rlvec(0 22.3) \move(-0.6 9)\rlvec(0 6) \move(-1.4 9)\rlvec(0 6)  \move(-4 0)\rlvec(0 22.3)
		\move(-2.6 0)\rlvec(0 8)\move(-3.4 0)\rlvec(0 8)
		\move(-2.6 9)\rlvec(0 6)\move(-2.6 16)\rlvec(0 6) \move(-3.4 9)\rlvec(0 6)
		\move(-3.4 16)\rlvec(0 6)\move(-0.6 0)\rlvec(0.6 2) \move(-2 0)\rlvec(0.6 2) 
		\move(-0.6 20)\rlvec(0 2)\move(-1.4 20)\rlvec(0 2)
		\move(-0.6 16)\rlvec(0 2)\move(-1.4 16)\rlvec(0 2)
		\move(-0.6 18)\rlvec(0 2)\move(-1.4 18)\rlvec(0 2)
		\move(-2.6 0)\rlvec(0.6 2)\move(-4 0)\rlvec(0.6 2)
		\move(-0.6 11)\rlvec(0.6 2)\move(-2 11)\rlvec(0.6 2)
		\move(-2.6 11)\rlvec(0.6 2)\move(-4 11)\rlvec(0.6 2)
		\htext(-0.95 1){$\cdots$}\htext(-0.95 3){$\cdots$}\htext(-0.95 5){$\cdots$}\htext(-0.95 7){$\cdots$}\vtext(-0.95 8.5){$\cdots$}  \htext(-0.95 10){$\cdots$}  \htext(-0.95 12){$\cdots$}  \htext(-0.95 14){$\cdots$}\htext(-0.95 17){$\cdots$}  \htext(-0.95 19){$\cdots$} \vtext(-0.95 15.5){$\cdots$} \vtext(-2.95 15.5){$\cdots$} \htext(-2.95 19){$\cdots$}        \htext(-0.95 21){$\cdots$} 
		\htext(-0.2 0.5){$0$} \htext(-0.4 1.5){$0$} 
		\htext(-1.6 0.5){$0$} \htext(-1.8 1.5){$0$} 
		\htext(-0.2 11.5){$n$} \htext(-0.4 12.5){$n$} 
		\htext(-1.6 11.5){$n$} \htext(-1.8 12.5){$n$}
		\htext(-2.2 11.5){$n$} \htext(-2.4 12.5){$n$} 
		\htext(-3.6 11.5){$n$} \htext(-3.8 12.5){$n$}
		\htext(-0.3 3){$1$}\htext(-1.7 3){$1$}
		\htext(-0.3 5){$2$}\htext(-1.7 5){$2$}
		\htext(-0.3 7){$3$}\htext(-1.7 7){$3$}
		\htext(-0.3 10){\fontsize{5.0pt}{\baselineskip}\selectfont{$n$}\!-\!\fontsize{5.0pt}{\baselineskip}\selectfont{$1$}}
		\htext(-1.7 10){\fontsize{5.0pt}{\baselineskip}\selectfont{$n$}\!-\!\fontsize{5.0pt}{\baselineskip}\selectfont{$1$}} 
		
		\htext(-0.3 14){\fontsize{5.0pt}{\baselineskip}\selectfont{$n$}\!-\!\fontsize{5.0pt}{\baselineskip}\selectfont{$1$}}
		\htext(-1.7 14){\fontsize{5.0pt}{\baselineskip}\selectfont{$n$}\!-\!\fontsize{5.0pt}{\baselineskip}\selectfont{$1$}} 
		\htext(-0.3 17){$3$} \htext(-1.7 17){$3$}
		\htext(-0.3 19){$2$} \htext(-1.7 19){$2$}
		\htext(-0.3 21){$1$} \htext(-1.7 21){$1$}
		\htext(-2.95 1){$\cdots$}\htext(-2.95 3){$\cdots$}\htext(-2.95 5){$\cdots$}\htext(-2.95 7){$\cdots$}\vtext(-2.95 8.5){$\cdots$}  \htext(-2.95 10){$\cdots$}  \htext(-2.95 12){$\cdots$}  \htext(-2.95 14){$\cdots$} \htext(-2.95 21){$\cdots$}\htext(-2.95 17){$\cdots$}
		\htext(-0.3 -0.4){$\frac{1}{l}$} \htext(-1.7 -0.4){$\frac{1}{l}$}  \htext(-2.3 -0.4){$\frac{1}{l}$}\htext(-0.95 -0.4){$\cdots$} \htext(-2.95 -0.4){$\cdots$}\htext(-3.7 -0.4){$\frac{1}{l}$}
		\htext(-1 -1){$\underbrace{\rule{3em}{0em}}$}
		\htext(-3 -1){$\underbrace{\rule{3em}{0em}}$}
		\htext(-1 -1.6){$l$}
		\htext(-3 -1.6){$l$}
		\htext(-2.2 0.5){$0$} \htext(-2.4 1.5){$0$} 
		\htext(-3.6 0.5){$0$} \htext(-3.8 1.5){$0$} 
		\htext(-2.3 3){$1$}\htext(-3.7 3){$1$}
		\htext(-2.3 5){$2$}\htext(-3.7 5){$2$}
		\htext(-2.3 7){$3$}\htext(-3.7 7){$3$}
		\htext(-2.3 10){\fontsize{5.0pt}{\baselineskip}\selectfont{$n$}\!-\!\fontsize{5.0pt}{\baselineskip}\selectfont{$1$}}
		\htext(-3.7 10){\fontsize{5.0pt}{\baselineskip}\selectfont{$n$}\!-\!\fontsize{5.0pt}{\baselineskip}\selectfont{$1$}} 
		
		\htext(-2.3 14){\fontsize{5.0pt}{\baselineskip}\selectfont{$n$}\!-\!\fontsize{5.0pt}{\baselineskip}\selectfont{$1$}}
		\htext(-3.7 14){\fontsize{5.0pt}{\baselineskip}\selectfont{$n$}\!-\!\fontsize{5.0pt}{\baselineskip}\selectfont{$1$}} 
		\htext(-2.3 17){$3$} \htext(-3.7 17){$3$}
		\htext(-2.3 19){$2$} \htext(-3.7 19){$2$}
		\htext(-2.3 21){$1$} \htext(-3.7 21){$1$}
		%
		\htext(6.3 24.5){\htext(2 -1){\fontsize{10}{10}\selectfont{$A_{2n-1}^{(2)}$}}}
		\move(10 0)\rlvec(-4.3 0) \move(10 2)\rlvec(-4.3 0)\move(10 4)\rlvec(-4.3 0) \move(10 6)\rlvec(-4.3 0) \move(10 8)\rlvec(-4.3 0)
		\move(10 9)\rlvec(-4.3 0) \move(10 11)\rlvec(-4.3 0)\move(10 13)\rlvec(-4.3 0)\move(10 15)\rlvec(-4.3 0)\move(10 16)\rlvec(-4.3 0) \move(10 18)\rlvec(-4.3 0) \move(10 20)\rlvec(-4.3 0) \move(10 22)\rlvec(-4.3 0)
		\move(10 0)\rlvec(0 22.3) \move(9.4 0)\rlvec(0 8)  \move(8.6 0)\rlvec(0 8) \move(8 0)\rlvec(0 22.3) \move(9.4 9)\rlvec(0 6) \move(8.6 9)\rlvec(0 6)  \move(6 0)\rlvec(0 22.3)
		\move(7.4 0)\rlvec(0 8)\move(6.6 0)\rlvec(0 8)
		\move(7.4 9)\rlvec(0 6)\move(7.4 16)\rlvec(0 6) \move(6.6 9)\rlvec(0 6)
		\move(6.6 16)\rlvec(0 6)\move(9.4 0)\rlvec(0.6 2) \move(8 0)\rlvec(0.6 2) 
		\move(9.4 20)\rlvec(0 2)\move(8.6 20)\rlvec(0 2)
		\move(9.4 16)\rlvec(0 2)\move(8.6 16)\rlvec(0 2)
		\move(9.4 18)\rlvec(0 2)\move(8.6 18)\rlvec(0 2)
		\move(7.4 0)\rlvec(0.6 2)\move(6 0)\rlvec(0.6 2)
		\htext(9.05 1){$\cdots$}\htext(9.05 3){$\cdots$}\htext(9.05 5){$\cdots$}\htext(9.05 7){$\cdots$}\vtext(9.05 8.5){$\cdots$}  \htext(9.05 10){$\cdots$}  \htext(9.05 12){$\cdots$}  \htext(9.05 14){$\cdots$}\htext(9.05 17){$\cdots$}  \htext(9.05 19){$\cdots$} \vtext(9.05 15.5){$\cdots$} \vtext(7.05 15.5){$\cdots$} \htext(7.05 19){$\cdots$}        \htext(7.05 21){$\cdots$} 
		\htext(9.8 0.5){$1$} \htext(9.6 1.5){$0$} 
		\htext(8.4 0.5){$1$} \htext(8.2 1.5){$0$} 
		\htext(9.7 3){$2$}\htext(8.3 3){$2$}
		\htext(9.7 5){$3$}\htext(8.3 5){$3$}
		\htext(9.7 7){$4$}\htext(8.3 7){$4$}
		\htext(9.7 10){\fontsize{5.0pt}{\baselineskip}\selectfont{$n$}\!-\!\fontsize{5.0pt}{\baselineskip}\selectfont{$1$}}
		\htext(8.3 10){\fontsize{5.0pt}{\baselineskip}\selectfont{$n$}\!-\!\fontsize{5.0pt}{\baselineskip}\selectfont{$1$}} 
		\htext(9.7 12){$n$}\htext(8.3 12){$n$} 
		\htext(9.7 14){\fontsize{5.0pt}{\baselineskip}\selectfont{$n$}\!-\!\fontsize{5.0pt}{\baselineskip}\selectfont{$1$}}
		\htext(8.3 14){\fontsize{5.0pt}{\baselineskip}\selectfont{$n$}\!-\!\fontsize{5.0pt}{\baselineskip}\selectfont{$1$}} 
		\htext(9.7 17){$4$} \htext(8.3 17){$4$}
		\htext(9.7 19){$3$} \htext(8.3 19){$3$}
		\htext(9.7 21){$2$} \htext(8.3 21){$2$}		
		\htext(7.05 1){$\cdots$}\htext(7.05 3){$\cdots$}\htext(7.05 5){$\cdots$}\htext(7.05 7){$\cdots$}\vtext(7.05 8.5){$\cdots$}  \htext(7.05 10){$\cdots$}  \htext(7.05 12){$\cdots$}  \htext(7.05 14){$\cdots$} \htext(7.05 21){$\cdots$}\htext(7.05 17){$\cdots$}
		\htext(9.7 -0.4){$\frac{1}{l}$} \htext(8.3 -0.4){$\frac{1}{l}$}  \htext(7.7 -0.4){$\frac{1}{l}$}\htext(9.05 -0.4){$\cdots$} \htext(7.05 -0.4){$\cdots$}\htext(6.3 -0.4){$\frac{1}{l}$}
		\htext(9 -1){$\underbrace{\rule{3em}{0em}}$}
		\htext(7 -1){$\underbrace{\rule{3em}{0em}}$}
		\htext(9 -1.6){$l$}
		\htext(7 -1.6){$l$}
		\htext(7.8 0.5){$1$} \htext(7.6 1.5){$0$} 
		\htext(6.4 0.5){$1$} \htext(6.2 1.5){$0$} 
		\htext(7.7 3){$2$}\htext(6.3 3){$2$}
		\htext(7.7 5){$3$}\htext(6.3 5){$3$}
		\htext(7.7 7){$4$}\htext(6.3 7){$4$}
		\htext(7.7 10){\fontsize{5.0pt}{\baselineskip}\selectfont{$n$}\!-\!\fontsize{5.0pt}{\baselineskip}\selectfont{$1$}}
		\htext(6.3 10){\fontsize{5.0pt}{\baselineskip}\selectfont{$n$}\!-\!\fontsize{5.0pt}{\baselineskip}\selectfont{$1$}} 
		\htext(7.7 12){$n$}\htext(6.3 12){$n$} 
		\htext(7.7 14){\fontsize{5.0pt}{\baselineskip}\selectfont{$n$}\!-\!\fontsize{5.0pt}{\baselineskip}\selectfont{$1$}}
		\htext(6.3 14){\fontsize{5.0pt}{\baselineskip}\selectfont{$n$}\!-\!\fontsize{5.0pt}{\baselineskip}\selectfont{$1$}} 
		\htext(7.7 17){$4$} \htext(6.3 17){$4$}
		\htext(7.7 19){$3$} \htext(6.3 19){$3$}
		\htext(7.7 21){$2$} \htext(6.3 21){$2$}
		\move(0 -27)
		\bsegment
		\htext(-13.7 51.4){\htext(2 -1){\fontsize{10}{10}\selectfont{$D_n^{(1)}$}}}
		\move(-10 0)\rlvec(-4.3 0) \move(-10 2)\rlvec(-4.3 0)\move(-10 4)\rlvec(-4.3 0) \move(-10 6)\rlvec(-4.3 0) \move(-10 8)\rlvec(-4.3 0)
		\move(-10 9)\rlvec(-4.3 0) \move(-10 11)\rlvec(-4.3 0)\move(-10 13)\rlvec(-4.3 0)\move(-10 15)\rlvec(-4.3 0)\move(-10 16)\rlvec(-4.3 0) \move(-10 18)\rlvec(-4.3 0) \move(-10 20)\rlvec(-4.3 0) \move(-10 22)\rlvec(-4.3 0)
		\move(-10 0)\rlvec(0 22.3) \move(-10.6 0)\rlvec(0 8)  \move(-11.4 0)\rlvec(0 8) \move(-12 0)\rlvec(0 22.3) \move(-10.6 9)\rlvec(0 6) \move(-11.4 9)\rlvec(0 6)  \move(-14 0)\rlvec(0 22.3)
		\move(-12.6 0)\rlvec(0 8)\move(-13.4 0)\rlvec(0 8)
		\move(-12.6 9)\rlvec(0 6)\move(-12.6 16)\rlvec(0 6) \move(-13.4 9)\rlvec(0 6)
		\move(-13.4 16)\rlvec(0 6)\move(-10.6 0)\rlvec(0.6 2) \move(-12 0)\rlvec(0.6 2) 
		\move(-10.6 20)\rlvec(0 2)\move(-11.4 20)\rlvec(0 2)
		\move(-10.6 16)\rlvec(0 2)\move(-11.4 16)\rlvec(0 2)
		\move(-10.6 18)\rlvec(0 2)\move(-11.4 18)\rlvec(0 2)
		\move(-12.6 0)\rlvec(0.6 2)\move(-14 0)\rlvec(0.6 2)
		\move(-10.6 11)\rlvec(0.6 2)\move(-12 11)\rlvec(0.6 2)
		\move(-12.6 11)\rlvec(0.6 2)\move(-14 11)\rlvec(0.6 2)
		\htext(-10.2 11.5){$n$} \htext(-10.38 12.6){\fontsize{5.0pt}{\baselineskip}\selectfont{$n$}\!-\!\fontsize{5.0pt}{\baselineskip}\selectfont{$1$}} 
		\htext(-11.6 11.5){$n$} \htext(-11.78 12.6){\fontsize{5.0pt}{\baselineskip}\selectfont{$n$}\!-\!\fontsize{5.0pt}{\baselineskip}\selectfont{$1$}}
		\htext(-12.2 11.5){$n$} \htext(-12.38 12.6){\fontsize{5.0pt}{\baselineskip}\selectfont{$n$}\!-\!\fontsize{5.0pt}{\baselineskip}\selectfont{$1$}} 
		\htext(-13.6 11.5){$n$} \htext(-13.78 12.6){\fontsize{5.0pt}{\baselineskip}\selectfont{$n$}\!-\!\fontsize{5.0pt}{\baselineskip}\selectfont{$1$}}
		\htext(-10.95 1){$\cdots$}\htext(-10.95 3){$\cdots$}\htext(-10.95 5){$\cdots$}\htext(-10.95 7){$\cdots$}\vtext(-10.95 8.5){$\cdots$}  \htext(-10.95 10){$\cdots$}  \htext(-10.95 12){$\cdots$}  \htext(-10.95 14){$\cdots$}\htext(-10.95 17){$\cdots$}  \htext(-10.95 19){$\cdots$} \vtext(-10.95 15.5){$\cdots$} \vtext(-12.95 15.5){$\cdots$} \htext(-12.95 19){$\cdots$}        \htext(-10.95 21){$\cdots$} 
		\htext(-10.2 0.5){$1$} \htext(-10.4 1.5){$0$} 
		\htext(-11.6 0.5){$1$} \htext(-11.8 1.5){$0$} 
		\htext(-10.3 3){$2$}\htext(-11.7 3){$2$}
		\htext(-10.3 5){$3$}\htext(-11.7 5){$3$}
		\htext(-10.3 7){$4$}\htext(-11.7 7){$4$}
		\htext(-10.3 10){\fontsize{5.0pt}{\baselineskip}\selectfont{$n$}\!-\!\fontsize{5.0pt}{\baselineskip}\selectfont{$2$}}
		\htext(-11.7 10){\fontsize{5.0pt}{\baselineskip}\selectfont{$n$}\!-\!\fontsize{5.0pt}{\baselineskip}\selectfont{$2$}} 
		
		\htext(-10.3 14){\fontsize{5.0pt}{\baselineskip}\selectfont{$n$}\!-\!\fontsize{5.0pt}{\baselineskip}\selectfont{$2$}}
		\htext(-11.7 14){\fontsize{5.0pt}{\baselineskip}\selectfont{$n$}\!-\!\fontsize{5.0pt}{\baselineskip}\selectfont{$2$}} 
		\htext(-10.3 17){$4$} \htext(-11.7 17){$4$}
		\htext(-10.3 19){$3$} \htext(-11.7 19){$3$}
		\htext(-10.3 21){$2$} \htext(-11.7 21){$2$}
		\htext(-12.95 1){$\cdots$}\htext(-12.95 3){$\cdots$}\htext(-12.95 5){$\cdots$}\htext(-12.95 7){$\cdots$}\vtext(-12.95 8.5){$\cdots$}  \htext(-12.95 10){$\cdots$}  \htext(-12.95 12){$\cdots$}  \htext(-12.95 14){$\cdots$} \htext(-12.95 21){$\cdots$}\htext(-12.95 17){$\cdots$}
		\htext(-10.3 -0.4){$\frac{1}{l}$} \htext(-11.7 -0.4){$\frac{1}{l}$}  \htext(-12.3 -0.4){$\frac{1}{l}$}\htext(-10.95 -0.4){$\cdots$} \htext(-12.95 -0.4){$\cdots$}\htext(-13.7 -0.4){$\frac{1}{l}$}
		\htext(-11 -1){$\underbrace{\rule{3em}{0em}}$}
		\htext(-13 -1){$\underbrace{\rule{3em}{0em}}$}
		\htext(-11 -1.6){$l$}
		\htext(-13 -1.6){$l$}
		\htext(-12.2 0.5){$1$} \htext(-12.4 1.5){$0$} 
		\htext(-13.6 0.5){$1$} \htext(-13.8 1.5){$0$} 
		\htext(-12.3 3){$2$}\htext(-13.7 3){$2$}
		\htext(-12.3 5){$3$}\htext(-13.7 5){$3$}
		\htext(-12.3 7){$4$}\htext(-13.7 7){$4$}
		\htext(-12.3 10){\fontsize{5.0pt}{\baselineskip}\selectfont{$n$}\!-\!\fontsize{5.0pt}{\baselineskip}\selectfont{$2$}}
		\htext(-13.7 10){\fontsize{5.0pt}{\baselineskip}\selectfont{$n$}\!-\!\fontsize{5.0pt}{\baselineskip}\selectfont{$2$}} 
		
		\htext(-12.3 14){\fontsize{5.0pt}{\baselineskip}\selectfont{$n$}\!-\!\fontsize{5.0pt}{\baselineskip}\selectfont{$2$}}
		\htext(-13.7 14){\fontsize{5.0pt}{\baselineskip}\selectfont{$n$}\!-\!\fontsize{5.0pt}{\baselineskip}\selectfont{$2$}} 
		\htext(-12.3 17){$4$} \htext(-13.7 17){$4$}
		\htext(-12.3 19){$3$} \htext(-13.7 19){$3$}
		\htext(-12.3 21){$2$} \htext(-13.7 21){$2$}
		%
		
		\htext(-3.7 51.4){\htext(2 -1){\fontsize{10}{10}\selectfont{$B_n^{(1)}$}}}
		
		\htext(-2 -4){\fontsize{12}{12}\selectfont{$\mathbf{Stacking}\  \mathbf{Patterns}$}}
		
		\move(0 0)\rlvec(-4.3 0) \move(0 2)\rlvec(-4.3 0)\move(0 4)\rlvec(-4.3 0) \move(0 6)\rlvec(-4.3 0) \move(0 8)\rlvec(-4.3 0)
		\move(0 9)\rlvec(-4.3 0) \move(0 11)\rlvec(-4.3 0)\move(0 13)\rlvec(-4.3 0)\move(0 15)\rlvec(-4.3 0)\move(0 16)\rlvec(-4.3 0) \move(0 18)\rlvec(-4.3 0) \move(0 20)\rlvec(-4.3 0) \move(0 22)\rlvec(-4.3 0)
		\move(0 0)\rlvec(0 22.3) \move(-0.6 0)\rlvec(0 8)  \move(-1.4 0)\rlvec(0 8) \move(-2 0)\rlvec(0 22.3) \move(-0.6 9)\rlvec(0 6) \move(-1.4 9)\rlvec(0 6)  \move(-4 0)\rlvec(0 22.3)
		\move(-2.6 0)\rlvec(0 8)\move(-3.4 0)\rlvec(0 8)
		\move(-2.6 9)\rlvec(0 6)\move(-2.6 16)\rlvec(0 6) \move(-3.4 9)\rlvec(0 6)
		\move(-3.4 16)\rlvec(0 6)\move(-0.6 0)\rlvec(0.6 2) \move(-2 0)\rlvec(0.6 2) 
		\move(-0.6 20)\rlvec(0 2)\move(-1.4 20)\rlvec(0 2)
		\move(-0.6 16)\rlvec(0 2)\move(-1.4 16)\rlvec(0 2)
		\move(-0.6 18)\rlvec(0 2)\move(-1.4 18)\rlvec(0 2)
		\move(-2.6 0)\rlvec(0.6 2)\move(-4 0)\rlvec(0.6 2)
		\move(-0.6 11)\rlvec(0.6 2)\move(-2 11)\rlvec(0.6 2)
		\move(-2.6 11)\rlvec(0.6 2)\move(-4 11)\rlvec(0.6 2)
		\htext(-0.2 11.5){$n$} \htext(-0.4 12.5){$n$} 
		\htext(-1.6 11.5){$n$} \htext(-1.8 12.5){$n$}
		\htext(-2.2 11.5){$n$} \htext(-2.4 12.5){$n$} 
		\htext(-3.6 11.5){$n$} \htext(-3.8 12.5){$n$}
		\htext(-0.95 1){$\cdots$}\htext(-0.95 3){$\cdots$}\htext(-0.95 5){$\cdots$}\htext(-0.95 7){$\cdots$}\vtext(-0.95 8.5){$\cdots$}  \htext(-0.95 10){$\cdots$}  \htext(-0.95 12){$\cdots$}  \htext(-0.95 14){$\cdots$}\htext(-0.95 17){$\cdots$}  \htext(-0.95 19){$\cdots$} \vtext(-0.95 15.5){$\cdots$} \vtext(-2.95 15.5){$\cdots$} \htext(-2.95 19){$\cdots$}        \htext(-0.95 21){$\cdots$} 
		\htext(-0.2 0.5){$1$} \htext(-0.4 1.5){$0$} 
		\htext(-1.6 0.5){$1$} \htext(-1.8 1.5){$0$} 
		\htext(-0.3 3){$2$}\htext(-1.7 3){$2$}
		\htext(-0.3 5){$3$}\htext(-1.7 5){$3$}
		\htext(-0.3 7){$4$}\htext(-1.7 7){$4$}
		\htext(-0.3 10){\fontsize{5.0pt}{\baselineskip}\selectfont{$n$}\!-\!\fontsize{5.0pt}{\baselineskip}\selectfont{$1$}}
		\htext(-1.7 10){\fontsize{5.0pt}{\baselineskip}\selectfont{$n$}\!-\!\fontsize{5.0pt}{\baselineskip}\selectfont{$1$}} 
		
		\htext(-0.3 14){\fontsize{5.0pt}{\baselineskip}\selectfont{$n$}\!-\!\fontsize{5.0pt}{\baselineskip}\selectfont{$1$}}
		\htext(-1.7 14){\fontsize{5.0pt}{\baselineskip}\selectfont{$n$}\!-\!\fontsize{5.0pt}{\baselineskip}\selectfont{$1$}} 
		\htext(-0.3 17){$4$} \htext(-1.7 17){$4$}
		\htext(-0.3 19){$3$} \htext(-1.7 19){$3$}
		\htext(-0.3 21){$2$} \htext(-1.7 21){$2$}
		\htext(-2.95 1){$\cdots$}\htext(-2.95 3){$\cdots$}\htext(-2.95 5){$\cdots$}\htext(-2.95 7){$\cdots$}\vtext(-2.95 8.5){$\cdots$}  \htext(-2.95 10){$\cdots$}  \htext(-2.95 12){$\cdots$}  \htext(-2.95 14){$\cdots$} \htext(-2.95 21){$\cdots$}\htext(-2.95 17){$\cdots$}
		\htext(-0.3 -0.4){$\frac{1}{l}$} \htext(-1.7 -0.4){$\frac{1}{l}$}  \htext(-2.3 -0.4){$\frac{1}{l}$}\htext(-0.95 -0.4){$\cdots$} \htext(-2.95 -0.4){$\cdots$}\htext(-3.7 -0.4){$\frac{1}{l}$}
		\htext(-1 -1){$\underbrace{\rule{3em}{0em}}$}
		\htext(-3 -1){$\underbrace{\rule{3em}{0em}}$}
		\htext(-1 -1.6){$l$}
		\htext(-3 -1.6){$l$}
		\htext(-2.2 0.5){$1$} \htext(-2.4 1.5){$0$} 
		\htext(-3.6 0.5){$1$} \htext(-3.8 1.5){$0$} 
		\htext(-2.3 3){$2$}\htext(-3.7 3){$2$}
		\htext(-2.3 5){$3$}\htext(-3.7 5){$3$}
		\htext(-2.3 7){$4$}\htext(-3.7 7){$4$}
		\htext(-2.3 10){\fontsize{5.0pt}{\baselineskip}\selectfont{$n$}\!-\!\fontsize{5.0pt}{\baselineskip}\selectfont{$1$}}
		\htext(-3.7 10){\fontsize{5.0pt}{\baselineskip}\selectfont{$n$}\!-\!\fontsize{5.0pt}{\baselineskip}\selectfont{$1$}} 
		
		\htext(-2.3 14){\fontsize{5.0pt}{\baselineskip}\selectfont{$n$}\!-\!\fontsize{5.0pt}{\baselineskip}\selectfont{$1$}}
		\htext(-3.7 14){\fontsize{5.0pt}{\baselineskip}\selectfont{$n$}\!-\!\fontsize{5.0pt}{\baselineskip}\selectfont{$1$}} 
		\htext(-2.3 17){$4$} \htext(-3.7 17){$4$}
		\htext(-2.3 19){$3$} \htext(-3.7 19){$3$}
		\htext(-2.3 21){$2$} \htext(-3.7 21){$2$}

		%
		\htext(6.3 51.4){\htext(2 -1){\fontsize{10}{10}\selectfont{$C_n^{(1)}$}}}
		\move(9.4 0)\lvec(10 2)
		\move(8 0)\lvec(8.6 2)
			\move(7.4 0)\lvec(8 2)
		\move(6 0)\lvec(6.6 2)
		\move(10 0)\rlvec(-4.3 0) \move(10 2)\rlvec(-4.3 0)\move(10 4)\rlvec(-4.3 0) \move(10 6)\rlvec(-4.3 0) \move(10 8)\rlvec(-4.3 0)
		\move(10 9)\rlvec(-4.3 0) \move(10 11)\rlvec(-4.3 0)\move(10 13)\rlvec(-4.3 0)\move(10 15)\rlvec(-4.3 0)\move(10 16)\rlvec(-4.3 0) \move(10 18)\rlvec(-4.3 0) \move(10 20)\rlvec(-4.3 0) \move(10 22)\rlvec(-4.3 0)
		\move(10 0)\rlvec(0 22.3) \move(9.4 0)\rlvec(0 8)  \move(8.6 0)\rlvec(0 8) \move(8 0)\rlvec(0 22.3) \move(9.4 9)\rlvec(0 6) \move(8.6 9)\rlvec(0 6)  \move(6 0)\rlvec(0 22.3)
		\move(7.4 0)\rlvec(0 8)\move(6.6 0)\rlvec(0 8)
		\move(7.4 9)\rlvec(0 6)\move(7.4 16)\rlvec(0 6) \move(6.6 9)\rlvec(0 6)
		\move(6.6 16)\rlvec(0 6)
		\move(9.4 20)\rlvec(0 2)\move(8.6 20)\rlvec(0 2)
		\move(9.4 16)\rlvec(0 2)\move(8.6 16)\rlvec(0 2)
		\move(9.4 18)\rlvec(0 2)\move(8.6 18)\rlvec(0 2)

		\htext(9.05 1){$\cdots$}\htext(9.05 3){$\cdots$}\htext(9.05 5){$\cdots$}\htext(9.05 7){$\cdots$}\vtext(9.05 8.5){$\cdots$}  \htext(9.05 10){$\cdots$}  \htext(9.05 12){$\cdots$}  \htext(9.05 14){$\cdots$}\htext(9.05 17){$\cdots$}  \htext(9.05 19){$\cdots$} \vtext(9.05 15.5){$\cdots$} \vtext(7.05 15.5){$\cdots$} \htext(7.05 19){$\cdots$}        \htext(7.05 21){$\cdots$} 
	    \htext(9.8 0.5){$0$} \htext(9.6 1.5){$0$} 
	    \htext(8.4 0.5){$0$} \htext(8.2 1.5){$0$}
		\htext(9.7 3){$1$}\htext(8.3 3){$1$}
		\htext(9.7 5){$2$}\htext(8.3 5){$2$}
		\htext(9.7 7){$3$}\htext(8.3 7){$3$}
		\htext(9.7 10){\fontsize{5.0pt}{\baselineskip}\selectfont{$n$}\!-\!\fontsize{5.0pt}{\baselineskip}\selectfont{$1$}}
		\htext(8.3 10){\fontsize{5.0pt}{\baselineskip}\selectfont{$n$}\!-\!\fontsize{5.0pt}{\baselineskip}\selectfont{$1$}} 
		\htext(9.7 12){$n$}\htext(8.3 12){$n$} 
		\htext(9.7 14){\fontsize{5.0pt}{\baselineskip}\selectfont{$n$}\!-\!\fontsize{5.0pt}{\baselineskip}\selectfont{$1$}}
		\htext(8.3 14){\fontsize{5.0pt}{\baselineskip}\selectfont{$n$}\!-\!\fontsize{5.0pt}{\baselineskip}\selectfont{$1$}} 
		\htext(9.7 17){$3$} \htext(8.3 17){$3$}
		\htext(9.7 19){$2$} \htext(8.3 19){$2$}
		\htext(9.7 21){$1$} \htext(8.3 21){$1$}		
		\htext(7.05 1){$\cdots$}\htext(7.05 3){$\cdots$}\htext(7.05 5){$\cdots$}\htext(7.05 7){$\cdots$}\vtext(7.05 8.5){$\cdots$}  \htext(7.05 10){$\cdots$}  \htext(7.05 12){$\cdots$}  \htext(7.05 14){$\cdots$} \htext(7.05 21){$\cdots$}\htext(7.05 17){$\cdots$}
		\htext(9.7 -0.4){$\frac{1}{2l}$} \htext(8.3 -0.4){$\frac{1}{2l}$}  \htext(7.7 -0.4){$\frac{1}{2l}$}\htext(9.05 -0.4){$\cdots$} \htext(7.05 -0.4){$\cdots$}\htext(6.3 -0.4){$\frac{1}{2l}$}
		\htext(9 -1){$\underbrace{\rule{3em}{0em}}$}
		\htext(7 -1){$\underbrace{\rule{3em}{0em}}$}
		\htext(9 -1.6){$2l$}
		\htext(7 -1.6){$2l$}
		\htext(7.8 0.5){$0$} \htext(7.6 1.5){$0$} 
        \htext(6.4 0.5){$0$} \htext(6.2 1.5){$0$} 
		\htext(7.7 3){$1$}\htext(6.3 3){$1$}
		\htext(7.7 5){$2$}\htext(6.3 5){$2$}
		\htext(7.7 7){$3$}\htext(6.3 7){$3$}
		\htext(7.7 10){\fontsize{5.0pt}{\baselineskip}\selectfont{$n$}\!-\!\fontsize{5.0pt}{\baselineskip}\selectfont{$1$}}
		\htext(6.3 10){\fontsize{5.0pt}{\baselineskip}\selectfont{$n$}\!-\!\fontsize{5.0pt}{\baselineskip}\selectfont{$1$}} 
		\htext(7.7 12){$n$}\htext(6.3 12){$n$} 
		\htext(7.7 14){\fontsize{5.0pt}{\baselineskip}\selectfont{$n$}\!-\!\fontsize{5.0pt}{\baselineskip}\selectfont{$1$}}
		\htext(6.3 14){\fontsize{5.0pt}{\baselineskip}\selectfont{$n$}\!-\!\fontsize{5.0pt}{\baselineskip}\selectfont{$1$}} 
		\htext(7.7 17){$3$} \htext(6.3 17){$3$}
		\htext(7.7 19){$2$} \htext(6.3 19){$2$}
		\htext(7.7 21){$1$} \htext(6.3 21){$1$}
		\esegment	
	\end{texdraw}
\end{center}
\end{definition}

\vskip 2mm 
\clearpage
\begin{definition}\label{level l delta slice}
For each stacking pattern, the first complete cycle  of  a slice with $\frac{1}{l}$-width (or $\frac{1}{2l}$-width in $C_{n}^{(1)}$-case) from the bottom of a pattern is called a level-$l$ $\delta$-slice.
\end{definition}

The stacking patterns in Definition \ref{stacking patterns} consist of repeated $\delta$-slices extending to upward and left infinitely. In the following text, a level-$l$ $\delta$-slice is referred to as a  $\delta$-slice for short. For simplicity, we could present a $\delta$-slice as a sequence of colors corresponding to it. For example:

\vspace{-1pt}

$$
\ A_{2n}^{(2)}: (0,0,1,2,\cdots, n-1, n, n-1,\cdots,2,1),\quad\ 
$$
$$
\ D_{n+1}^{(2)}: (0,0,1,2,\cdots, n-1, n,n, n-1,\cdots,2,1),
$$
$$
A_{2n-1}^{(2)}: (0,1,2,\cdots, n-1, n, n-1,\cdots,2),\quad\quad\
$$
$$
D_{n}^{(1)}: (0,1,2,\cdots, n-2, n-1, n, n-2,\cdots,2),\
$$
$$
B_{n}^{(1)}: (0,1,2,\cdots, n-1, n, n, n-1,\cdots,2),\quad\ \ \
$$
$$
C_{n}^{(1)}: (0,1,2,\cdots, n-1, n, n-1,\cdots,2,1).\quad\ \ \
$$

\vspace{12pt}

\begin{example}
We list the stacking patterns of types $A_4^{(2)}$, $A_5^{(2)}$, $B_3^{(1)}$ for level-$2$ cases.

\vskip 6mm 
 
	\begin{center}  	
	\begin{texdraw}	
		\fontsize{5}{5}\selectfont
		\drawdim em
		\setunitscale 2.3
		\move(0 0)\lvec(0 16)	
		\move(0 0)\lvec(4 0)	
		\move(4 0)\lvec(4 16)	
		\move(0 16)\lvec(4 16)	
		\move(1 0)\lvec(1 16)	
		\move(2 0)\lvec(2 16)	
		\move(3 0)\lvec(3 16)	
		\move(0 2)\lvec(4 2)	
		\move(0 4)\lvec(4 4)
		\move(0 6)\lvec(4 6)
		\move(0 8)\lvec(4 8)
		\move(0 10)\lvec(4 10)
		\move(0 12)\lvec(4 12)
		\move(0 14)\lvec(4 14)
		\move(0 16)\lvec(0.8 16.5)
		\move(1 16)\lvec(1.8 16.5)
		\move(2 16)\lvec(2.8 16.5)
		\move(3 16)\lvec(3.8 16.5)
		\move(4 16)\lvec(4.8 16.5)
		\move(4 14)\lvec(4.8 14.5)
		\move(4 12)\lvec(4.8 12.5)
		\move(4 10)\lvec(4.8 10.5)
		\move(4 8)\lvec(4.8 8.5)
		\move(4 6)\lvec(4.8 6.5)
		\move(4 4)\lvec(4.8 4.5)
		\move(4 2)\lvec(4.8 2.5)
		\move(4 0)\lvec(4.8 0.5)
		\move(4.8 0.5)\lvec(4.8 16.5)
		\move(0.8 16.5)\lvec(4.8 16.5)
		\move(4.4 8.25)\lvec(4.4 10.25)
		\move(4.4 0.25)\lvec(4.4 2.25)
		
		\htext(0.5 1){$0$}
		\htext(0.5 3){$1$}
		\htext(0.5 5){$2$}
		\htext(0.5 7){$1$}
		\htext(0.5 9){$0$}
		\htext(0.5 11){$1$}
		\htext(0.5 13){$2$}
		\htext(0.5 15){$1$}
		
		\move(1 0)
		\bsegment
		\htext(0.5 1){$0$}
		\htext(0.5 3){$1$}
		\htext(0.5 5){$2$}
		\htext(0.5 7){$1$}
		\htext(0.5 9){$0$}
		\htext(0.5 11){$1$}
		\htext(0.5 13){$2$}
		\htext(0.5 15){$1$}
		\esegment
		
		\move(2 0)
		\bsegment
		\htext(0.5 1){$0$}
		\htext(0.5 3){$1$}
		\htext(0.5 5){$2$}
		\htext(0.5 7){$1$}
		\htext(0.5 9){$0$}
		\htext(0.5 11){$1$}
		\htext(0.5 13){$2$}
		\htext(0.5 15){$1$}
		\esegment
		
		\move(3 0)
		\bsegment
		\htext(0.5 1){$0$}
		\htext(0.5 3){$1$}
		\htext(0.5 5){$2$}
		\htext(0.5 7){$1$}
		\htext(0.5 9){$0$}
		\htext(0.5 11){$1$}
		\htext(0.5 13){$2$}
		\htext(0.5 15){$1$}
		\esegment
		\htext(4.6 1.36){$0$}	
		\htext(4.6 9.36){$0$}
		\vtext(4.3 17.1){\fontsize{8}{8}\selectfont{$\cdots$}}
		\vtext(3.3 17.1){\fontsize{8}{8}\selectfont{$\cdots$}}
		\vtext(2.3 17.1){\fontsize{8}{8}\selectfont{$\cdots$}}
		\vtext(1.3 17.1){\fontsize{8}{8}\selectfont{$\cdots$}}	
		\htext(-0.5 1){\fontsize{8}{8}\selectfont{$\cdots$}}	
		\htext(-0.5 3){\fontsize{8}{8}\selectfont{$\cdots$}}
		\htext(-0.5 5){\fontsize{8}{8}\selectfont{$\cdots$}}
		\htext(-0.5 7){\fontsize{8}{8}\selectfont{$\cdots$}}
		\htext(-0.5 9){\fontsize{8}{8}\selectfont{$\cdots$}}	
		\htext(-0.5 11){\fontsize{8}{8}\selectfont{$\cdots$}}
		\htext(-0.5 13){\fontsize{8}{8}\selectfont{$\cdots$}}
		\htext(-0.5 15){\fontsize{8}{8}\selectfont{$\cdots$}}
		\htext(2 -1){\fontsize{10}{10}\selectfont{$A_4^{(2)}$}}
		\move(10 0)
		\bsegment
		\move(0 0)\lvec(0 16)	
		\move(0 0)\lvec(4 0)	
		\move(4 0)\lvec(4 16)	
		\move(0 16)\lvec(4 16)	
		\move(1 0)\lvec(1 16)	
		\move(2 0)\lvec(2 16)	
		\move(3 0)\lvec(3 16)	
		\move(0 2)\lvec(4 2)	
		\move(0 4)\lvec(4 4)
		\move(0 6)\lvec(4 6)
		\move(0 8)\lvec(4 8)
		\move(0 10)\lvec(4 10)
		\move(0 12)\lvec(4 12)
		\move(0 14)\lvec(4 14)
		\move(0 16)\lvec(0.8 16.5)
		\move(1 16)\lvec(1.8 16.5)
		\move(2 16)\lvec(2.8 16.5)
		\move(3 16)\lvec(3.8 16.5)
		\move(4 16)\lvec(4.8 16.5)
		\move(4 14)\lvec(4.8 14.5)
		\move(4 12)\lvec(4.8 12.5)
		\move(4 10)\lvec(4.8 10.5)
		\move(4 8)\lvec(4.8 8.5)
		\move(4 6)\lvec(4.8 6.5)
		\move(4 4)\lvec(4.8 4.5)
		\move(4 2)\lvec(4.8 2.5)
		\move(4 0)\lvec(4.8 0.5)
		\move(4.8 0.5)\lvec(4.8 16.5)
		\move(0.8 16.5)\lvec(4.8 16.5)
		\move(4.4 8.25)\lvec(4.4 10.25)
		\move(4.4 0.25)\lvec(4.4 2.25)
		
		\htext(0.5 1){$0$}
		\htext(0.5 3){$2$}
		\htext(0.5 5){$3$}
		\htext(0.5 7){$2$}
		\htext(0.5 9){$0$}
		\htext(0.5 11){$2$}
		\htext(0.5 13){$3$}
		\htext(0.5 15){$2$}
		
		\move(1 0)
		\bsegment
		\htext(0.5 1){$0$}
		\htext(0.5 3){$2$}
		\htext(0.5 5){$3$}
		\htext(0.5 7){$2$}
		\htext(0.5 9){$0$}
		\htext(0.5 11){$2$}
		\htext(0.5 13){$3$}
		\htext(0.5 15){$2$}
		\esegment
		
		\move(2 0)
		\bsegment
		\htext(0.5 1){$0$}
		\htext(0.5 3){$2$}
		\htext(0.5 5){$3$}
		\htext(0.5 7){$2$}
		\htext(0.5 9){$0$}
		\htext(0.5 11){$2$}
		\htext(0.5 13){$3$}
		\htext(0.5 15){$2$}
		\esegment
		
		\move(3 0)
		\bsegment
		\htext(0.5 1){$0$}
		\htext(0.5 3){$2$}
		\htext(0.5 5){$3$}
		\htext(0.5 7){$2$}
		\htext(0.5 9){$0$}
		\htext(0.5 11){$2$}
		\htext(0.5 13){$3$}
		\htext(0.5 15){$2$}
		\esegment
		\htext(4.6 1.36){$1$}	
		\htext(4.6 9.36){$1$}
		\vtext(4.3 17.1){\fontsize{8}{8}\selectfont{$\cdots$}}
		\vtext(3.3 17.1){\fontsize{8}{8}\selectfont{$\cdots$}}
		\vtext(2.3 17.1){\fontsize{8}{8}\selectfont{$\cdots$}}
		\vtext(1.3 17.1){\fontsize{8}{8}\selectfont{$\cdots$}}	
		\htext(-0.5 1){\fontsize{8}{8}\selectfont{$\cdots$}}	
		\htext(-0.5 3){\fontsize{8}{8}\selectfont{$\cdots$}}
		\htext(-0.5 5){\fontsize{8}{8}\selectfont{$\cdots$}}
		\htext(-0.5 7){\fontsize{8}{8}\selectfont{$\cdots$}}
		\htext(-0.5 9){\fontsize{8}{8}\selectfont{$\cdots$}}	
		\htext(-0.5 11){\fontsize{8}{8}\selectfont{$\cdots$}}
		\htext(-0.5 13){\fontsize{8}{8}\selectfont{$\cdots$}}
		\htext(-0.5 15){\fontsize{8}{8}\selectfont{$\cdots$}}
		\htext(2 -1){\fontsize{10}{10}\selectfont{$A_5^{(2)}$}}
		\esegment

		\move(20 0)
		\bsegment
		\move(0 0)\lvec(0 16)	
		\move(0 0)\lvec(4 0)	
		\move(4 0)\lvec(4 16)	
		\move(0 16)\lvec(4 16)	
		\move(1 0)\lvec(1 16)	
		\move(2 0)\lvec(2 16)	
		\move(3 0)\lvec(3 16)	
		\move(0 2)\lvec(4 2)	
		\move(0 4)\lvec(4 4)
		\move(0 6)\lvec(4 6)
		\move(0 8)\lvec(4 8)
		\move(0 10)\lvec(4 10)
		\move(0 12)\lvec(4 12)
		\move(0 14)\lvec(4 14)
		\move(0 16)\lvec(0.8 16.5)
		\move(1 16)\lvec(1.8 16.5)
		\move(2 16)\lvec(2.8 16.5)
		\move(3 16)\lvec(3.8 16.5)
		\move(4 16)\lvec(4.8 16.5)
		\move(4 14)\lvec(4.8 14.5)
		\move(4 12)\lvec(4.8 12.5)
		\move(4 10)\lvec(4.8 10.5)
		\move(4 8)\lvec(4.8 8.5)
		\move(4 6)\lvec(4.8 6.5)
		\move(4 4)\lvec(4.8 4.5)
		\move(4 2)\lvec(4.8 2.5)
		\move(4 0)\lvec(4.8 0.5)
		\move(4.8 0.5)\lvec(4.8 16.5)
		\move(0.8 16.5)\lvec(4.8 16.5)
		\move(4.4 8.25)\lvec(4.4 10.25)
		\move(4.4 0.25)\lvec(4.4 2.25)
		\move(4.4 4.25)\lvec(4.4 6.25)
		\move(4.4 12.25)\lvec(4.4 14.25)
		\htext(0.5 1){$0$}
		\htext(0.5 3){$2$}
		\htext(0.5 5){$3$}
		\htext(0.5 7){$2$}
		\htext(0.5 9){$0$}
		\htext(0.5 11){$2$}
		\htext(0.5 13){$3$}
		\htext(0.5 15){$2$}
		
		\move(1 0)
		\bsegment
		\htext(0.5 1){$0$}
		\htext(0.5 3){$2$}
		\htext(0.5 5){$3$}
		\htext(0.5 7){$2$}
		\htext(0.5 9){$0$}
		\htext(0.5 11){$2$}
		\htext(0.5 13){$3$}
		\htext(0.5 15){$2$}
		\esegment
		
		\move(2 0)
		\bsegment
		\htext(0.5 1){$0$}
		\htext(0.5 3){$2$}
		\htext(0.5 5){$3$}
		\htext(0.5 7){$2$}
		\htext(0.5 9){$0$}
		\htext(0.5 11){$2$}
		\htext(0.5 13){$3$}
		\htext(0.5 15){$2$}
		\esegment
		
		\move(3 0)
		\bsegment
		\htext(0.5 1){$0$}
		\htext(0.5 3){$2$}
		\htext(0.5 5){$3$}
		\htext(0.5 7){$2$}
		\htext(0.5 9){$0$}
		\htext(0.5 11){$2$}
		\htext(0.5 13){$3$}
		\htext(0.5 15){$2$}
		\esegment
		\htext(4.6 1.36){$1$}
		\htext(4.6 5.36){$3$}	
		\htext(4.6 9.36){$1$}
		\htext(4.6 13.36){$3$}

		\vtext(4.3 17.1){\fontsize{8}{8}\selectfont{$\cdots$}}
		\vtext(3.3 17.1){\fontsize{8}{8}\selectfont{$\cdots$}}
		\vtext(2.3 17.1){\fontsize{8}{8}\selectfont{$\cdots$}}
		\vtext(1.3 17.1){\fontsize{8}{8}\selectfont{$\cdots$}}	
		\htext(-0.5 1){\fontsize{8}{8}\selectfont{$\cdots$}}	
		\htext(-0.5 3){\fontsize{8}{8}\selectfont{$\cdots$}}
		\htext(-0.5 5){\fontsize{8}{8}\selectfont{$\cdots$}}
		\htext(-0.5 7){\fontsize{8}{8}\selectfont{$\cdots$}}
		\htext(-0.5 9){\fontsize{8}{8}\selectfont{$\cdots$}}	
		\htext(-0.5 11){\fontsize{8}{8}\selectfont{$\cdots$}}
		\htext(-0.5 13){\fontsize{8}{8}\selectfont{$\cdots$}}
		\htext(-0.5 15){\fontsize{8}{8}\selectfont{$\cdots$}}
		\htext(2 -1){\fontsize{10}{10}\selectfont{$B_3^{(1)}$}}
		\esegment
		
	\end{texdraw}
\end{center}   
	
\end{example}

\vskip 3mm

\begin{example}\label{level-15 Young Column}
A level-$15$ Young column of type $B_{4}^{(1)}$ is given below.

\vskip 2mm

\begin{center}
	\begin{texdraw}
		\fontsize{5}{5}\selectfont
		\drawdim em
		\setunitscale 2.1
		
		\move(0 0)\rlvec(-15 0)	
		\move(0 0)\rlvec(0 12)
		\move(0 2)\rlvec(-15 0)
		\move(-1 0 )\rlvec(0 12)
		\move(-2 0 )\rlvec(0 12)
		\move(-2 0 )\rlvec(0 12)
		\move(-3 0 )\rlvec(0 10)
		\move(-4 0 )\rlvec(0 10)
		\move(-5 0 )\rlvec(0 4)
		\move(-6 0 )\rlvec(0 8)
		\move(-7 0 )\rlvec(0 8)
		\move(-8 0 )\rlvec(0 6)
		\move(-9 0 )\rlvec(0 4)
		\move(-10 0 )\rlvec(0 4)
		\move(-11 0 )\rlvec(0 2)
		\move(-12 0 )\rlvec(0 2)
		\move(-13 0 )\rlvec(0 2)
		\move(-14 0 )\rlvec(0 2)
		\move(-15 0 )\rlvec(0 2)
		\move(0 12)\rlvec(-2 0)
		\move(-3 10)\rlvec(-1 0)
		
		\move(-3 12)\lvec(-2 12)
		\move(-3 10)\lvec(-2 10)	
		\move(-3 8)\lvec(-2 8)
		\move(-3 6)\lvec(-2 6)
		
		\move(-6 8)\rlvec(-1 0)	
		
		\move(0 14)\rlvec(0 2)
		\move(-2 12)\lvec(-2 14)
		\move(-2 12)\lvec(-1 14) 
		\move(-2 14)\lvec(-1 14)
		\move(0 16)\rlvec(-1 0)
		\move(-1 14)\rlvec(0 2)	
		\htext(-0.5 15){$2$}	
		\move(0 4)\rlvec(-10 0)
		\move(-6 6)\rlvec(-2 0)
		\move(-3 6)\rlvec(-1 0)
		\move(-3 8)\rlvec(-1 0)
		\move(0 6)\rlvec(-2 0)
		\move(0 8)\rlvec(-2 0)
		\move(0 10)\rlvec(-2 0)
		\move(-1 0)\rlvec(1 2)
		\move(-2 0)\rlvec(1 2)
		\move(-3 0)\rlvec(1 2)
		\move(-3 10)\rlvec(0 2)
		\move(-5 8)\lvec(-5 10)
		\move(-5 10)\lvec(-4 10)
		\htext(-4.5 9){$3$}	
		\move(-6 6)\rlvec(1 0)
		\move(-6 8)\rlvec(1 0)
		\move(-6 6)\rlvec(1 2)
		\htext(-5.5 5){$3$}

		\move(-4 0)\rlvec(1 2)
		\move(-5 0)\rlvec(1 2)
		\move(-5 4)\rlvec(0 4)
		\move(-4 8)\rlvec(-1 0)
		\move(-4 6)\rlvec(-1 0)
		\htext(-4.5 5){$3$}
		\move(-5 6)\rlvec(1 2)
		\htext(-4.3 6.5){$4$}
		\htext(-4.7 7.5){$4$}
		\htext(-5.3 6.5){$4$}
		\htext(-5.7 7.5){$4$}
		
		\move(-6 0)\rlvec(1 2)
		\move(-7 0)\rlvec(1 2)
		\move(-8 0)\rlvec(1 2)
		\move(-9 0)\rlvec(1 2)
		\move(-10 0)\rlvec(1 2)
		\move(-11 0)\rlvec(1 2)
		\move(-12 0)\rlvec(1 2)
		\move(-13 0)\rlvec(1 2)
		\move(-14 0)\rlvec(1 2)
		\move(-15 0)\rlvec(1 2)
		\move(-7 6)\rlvec(1 2)
		
		\move(-4 6)\rlvec(1 2)
		\move(-2 6)\rlvec(1 2)
		\move(-1 6)\rlvec(1 2)
		\htext(-0.3 0.5){$1$}
		\htext(-0.7 1.5){$0$}
		\htext(-0.7 13.5){$0$}
		\htext(-0.5 3){$2$}
		\htext(-0.5 5){$3$}
		\htext(-0.3 6.5){$4$}
		\htext(-0.7 7.5){$4$}
		\htext(-0.5 9){$3$}
		\htext(-0.5 11){$2$}
		\move(0 12)\rlvec(0 2)
		\move(-1 12)\rlvec(0 2)
		\move(0 14)\rlvec(-1 0)	
		\move(-1 12)\rlvec(1 2)	
		\htext(-0.3 12.5){$1$}
		\htext(-1.3 0.5){$1$}
		\htext(-1.3 12.5){$1$}
		\htext(-1.7 1.5){$0$}
		\htext(-1.5 3){$2$}
		\htext(-1.5 5){$3$}
		\htext(-1.3 6.5){$4$}
		\htext(-1.7 7.5){$4$}
		\htext(-1.5 9){$3$}
		\htext(-1.5 11){$2$}
		
		\htext(-2.3 0.5){$1$}
		\htext(-2.7 1.5){$0$}
		\htext(-2.5 3){$2$}	
		\htext(-2.5 5){$3$}
		\move(-3 6)\lvec(-2 8)	
		\htext(-2.3 6.5){$4$}
		\htext(-2.7 7.5){$4$}	
		\htext(-2.5 9){$3$}	
		\htext(-2.5 11){$2$}				
		\htext(-3.3 0.5){$1$}
		\htext(-3.7 1.5){$0$}
		\htext(-3.5 3){$2$}
		\htext(-3.5 5){$3$}	
		\htext(-3.3 6.5){$4$}
		\htext(-3.7 7.5){$4$}
		\htext(-3.5 9){$3$}
		
		\htext(-4.3 0.5){$1$}
		\htext(-4.7 1.5){$0$}
		\htext(-4.5 3){$2$}
		
		\htext(-5.3 0.5){$1$}
		\htext(-5.7 1.5){$0$}
		\htext(-5.5 3){$2$}
		
		\htext(-6.3 0.5){$1$}
		\htext(-6.7 1.5){$0$}
		\htext(-6.5 3){$2$}
		\htext(-6.5 5){$3$}
		\htext(-6.3 6.5){$4$}

		\htext(-7.3 0.5){$1$}
		\htext(-7.7 1.5){$0$}
		\htext(-7.5 3){$2$}
		\htext(-7.5 5){$3$}

		\htext(-8.3 0.5){$1$}
		\htext(-8.7 1.5){$0$}
		\htext(-8.5 3){$2$}

		\htext(-9.3 0.5){$1$}
		\htext(-9.7 1.5){$0$}
		\htext(-9.5 3){$2$}

		\htext(-10.3 0.5){$1$}
		\htext(-11.3 0.5){$1$}
		\htext(-12.3 0.5){$1$}
		\htext(-10.7 1.5){$0$}
		\htext(-13.7 1.5){$0$}
		\htext(-14.7 1.5){$0$}
	\end{texdraw}
\end{center}

\end{example}

\vskip 3mm

 \subsection{Level-$l$ reduced Young columns}\label{Level-$l$ reduced Young Column}
 \hfill 
 
 \vskip 2mm 
 
  A $\delta$-slice in a level-$l$ Young column is called {\it removable} if there is at least one block on the top of the $\delta$-slice. A level-$l$ Young column is called {\it reduced} if there is no removable $\delta$-slice. We list the level-$l$ reduced Young columns corresponding to the vectors in the perfect crystals as follows:

\vskip 2mm 

We denote the number of slices by $x_i$, $\bar{x}_i$ and $l'$ in the following statements.

\vskip 2mm 
\begin{enumerate}
\item\label{(1)} $A_{2n}^{(2)}$ case :
Level-$l$ reduced Young column $C_{(x_1,\cdots,x_n,\bar{x}_n,\cdots,\bar{x}_1)}$,
where $\sum_{i=1}^{n}(x_i+\bar{x}_i)\le l$, $x_i, \bar{x}_i\ge 0$ and $l'=l-\sum_{i=1}^{n}(x_i+\bar{x}_i)$.

\vskip 2mm 

\begin{center}
	\begin{texdraw}
		\fontsize{3}{3}\selectfont
		\drawdim em
		\setunitscale 2
		\move(0 0)\lvec(2 0)
		\move(0 0)\lvec(0 2)
		\move(0 2)\lvec(2 2)
		\move(0.6 0)\lvec(0.6 2)
		\move(1.4 0)\lvec(1.4 2)
		\move(0 0)\lvec(0.6 2)
		\move(1.4 0)\lvec(2 2)
		\htext(0.4 0.4){$0$}
		\htext(1.8 0.4){$0$}
		\htext(1.05 1){$\cdots$}
		\htext(1 -0.3){$\underbrace{\rule{3.5em}{0em}}$}
		\htext(1 -0.6){$l'$}
		\move(2 0)
		\bsegment
		\move(0 0)\lvec(2 0)
		\move(0 0)\lvec(0 2)
		\move(0 2)\lvec(2 2)
		\move(0.6 0)\lvec(0.6 2)
		\move(1.4 0)\lvec(1.4 2)
		\htext(1.05 1){$\cdots$}
		\move(0 0)\lvec(0.6 2)
		\move(1.4 0)\lvec(2 2)
		\htext(0.2 1.6){$0$}
		\htext(1.6 1.6){$0$}
		\htext(0.4 0.4){$0$}
		\htext(1.8 0.4){$0$}
		\htext(1 -0.3){$\underbrace{\rule{3.5em}{0em}}$}
		\htext(1 -0.7){$x_1$}
		\esegment
		
		\move(4 0)
		\bsegment
		\move(0 0)\lvec(2 0)
		\move(0 0)\lvec(0 2)
		\move(0 2)\lvec(2 2)
		\move(0.6 0)\lvec(0.6 4)
		\move(1.4 0)\lvec(1.4 4)
		\htext(1.05 1){$\cdots$}
		\htext(1.05 3){$\cdots$}
		\move(0 2)\lvec(0 4)
		\move(2 2)\lvec(2 4)
		\move(0 4)\lvec(2 4)
		\move(0 0)\lvec(0.6 2)
		\move(1.4 0)\lvec(2 2)
		\htext(0.2 1.6){$0$}
		\htext(1.6 1.6){$0$}
		\htext(0.4 0.4){$0$}
		\htext(1.8 0.4){$0$}
		\htext(0.3 3){$1$}
		\htext(1.7 3){$1$}
		\htext(1 -0.3){$\underbrace{\rule{3.5em}{0em}}$}
		\htext(1 -0.7){$x_2$}
		\esegment
		
		\move(6 0)
		\bsegment
		\move(0 0)\lvec(2 0)
		\move(0 0)\lvec(0 2)
		\move(0 2)\lvec(2 2)
		\move(2 0)\lvec(2 6)
		\move(0.6 0)\lvec(0.6 6)
		\move(1.4 0)\lvec(1.4 6)
		\htext(1.05 1){$\cdots$}
		\htext(1.05 3){$\cdots$}
		\htext(1.05 5){$\cdots$}
		\move(0 4)\lvec(0 6)
		\move(0 6)\lvec(2 6)
		\move(0 4)\lvec(2 4)
		\htext(2.55 1){\fontsize{8}{8}\selectfont$\cdots$}
		\move(0 0)\lvec(0.6 2)
		\move(1.4 0)\lvec(2 2)
		\htext(0.2 1.6){$0$}
		\htext(1.6 1.6){$0$}
		\htext(0.4 0.4){$0$}
		\htext(1.8 0.4){$0$}
		\htext(0.3 3){$1$}
		\htext(1.7 3){$1$}
		\htext(0.3 5){$2$}
		\htext(1.7 5){$2$}
		\htext(1 -0.3){$\underbrace{\rule{3.5em}{0em}}$}
		\htext(1 -0.7){$x_3$}
		\esegment

		\move(9 0)
		\bsegment
		\move(0 0)\lvec(2 0)
		\move(0 0)\lvec(0 2)
		\move(0 2)\lvec(2 2)
		\move(0 2)\lvec(0 7)
		\move(0 7)\lvec(2 7)
		\move(0 4)\lvec(2 4)
		\move(0 5)\lvec(2 5)
		\move(0.6 0)\lvec(0.6 4)
		\move(1.4 0)\lvec(1.4 4)
		\move(0.6 5)\lvec(0.6 7)
		\move(1.4 5)\lvec(1.4 7)
		\htext(1.05 1){$\cdots$}
		\htext(1.05 3){$\cdots$}
		\vtext(1.05 4.5){$\cdots$}
		\htext(1.05 6){$\cdots$}
		\move(0 0)\lvec(0.6 2)
		\move(1.4 0)\lvec(2 2)
		\htext(0.2 1.6){$0$}
		\htext(1.6 1.6){$0$}
		\htext(0.4 0.4){$0$}
		\htext(1.8 0.4){$0$}
		\htext(0.3 3){$1$}
		\htext(1.7 3){$1$}
		\htext(0.3 6){$i$-$1$}
		\htext(1.7 6){$i$-$1$}
		\htext(1 -0.3){$\underbrace{\rule{3.5em}{0em}}$}
		\htext(1 -0.7){$x_i$}
		\esegment
		
		\move(11 0)
		\bsegment
		\move(0 0)\lvec(2 0)
		\move(0 0)\lvec(0 2)
		\move(0 2)\lvec(2 2)
		\move(0 2)\lvec(0 9)
		\move(2 0)\lvec(2 9)
		\move(0 9)\lvec(2 9)
		\move(0 5)\lvec(2 5)
		\move(0 4)\lvec(2 4)
		\move(0 7)\lvec(2 7)
		\move(0.6 0)\lvec(0.6 4)
		\move(1.4 0)\lvec(1.4 4)
		\move(0.6 5)\lvec(0.6 9)
		\move(1.4 5)\lvec(1.4 9)
		\htext(1.05 1){$\cdots$}
		\htext(1.05 3){$\cdots$}
		\vtext(1.05 4.5){$\cdots$}
		\htext(1.05 6){$\cdots$}
		\htext(1.05 8){$\cdots$}
		\move(0 0)\lvec(0.6 2)
		\move(1.4 0)\lvec(2 2)
		\htext(0.2 1.6){$0$}
		\htext(1.6 1.6){$0$}
		\htext(0.4 0.4){$0$}
		\htext(1.8 0.4){$0$}
		\htext(0.3 3){$1$}
		\htext(1.7 3){$1$}
		\htext(0.3 6){$i$-$1$}
		\htext(1.7 6){$i$-$1$}
		\htext(0.3 8){$i$}
		\htext(1.7 8){$i$}
		\htext(2.55 1){\fontsize{8}{8}\selectfont$\cdots$}
		\htext(1 -0.3){$\underbrace{\rule{3.5em}{0em}}$}
		\htext(1 -0.7){$x_{i \!+\! 1}$}
		\esegment
		
		\move(14 0)
		\bsegment
		\move(0 0)\lvec(2 0)
		\move(0 0)\lvec(0 2)
		\move(0 2)\lvec(2 2)
		\move(0 2)\lvec(0 11)
		\move(0 11)\lvec(2 11)
		
		\move(0 4)\lvec(2 4)
		\move(0 6)\lvec(2 6)
		\move(0 8)\lvec(2 8)
		\move(0 9)\lvec(2 9)
		\move(0.6 0)\lvec(0.6 8)
		\move(1.4 0)\lvec(1.4 8)
		\move(0.6 9)\lvec(0.6 11)
		\move(1.4 9)\lvec(1.4 11)
		
		\htext(1.05 1){$\cdots$}
		\htext(1.05 3){$\cdots$}
		\htext(1.05 5){$\cdots$}
		\htext(1.05 7){$\cdots$}
		\vtext(1.05 8.5){$\cdots$}
		\htext(1.05 10){$\cdots$}
		
		\move(0 0)\lvec(0.6 2)
		\move(1.4 0)\lvec(2 2)
		\htext(0.2 1.6){$0$}
		\htext(1.6 1.6){$0$}
		\htext(0.4 0.4){$0$}
		\htext(1.8 0.4){$0$}
		\htext(0.3 3){$1$}
		\htext(1.7 3){$1$}
		\htext(0.3 5){$2$}
		\htext(1.7 5){$2$}
		\htext(0.3 7){$3$}
		\htext(1.7 7){$3$}
		\htext(0.3 10){$n$\!-\!$1$}
		\htext(1.7 10){$n$\!-\!$1$}
		\htext(1 -0.3){$\underbrace{\rule{3.5em}{0em}}$}
		\htext(1 -0.7){$x_{n}$}
		\esegment
		
		\move(16 0)
		\bsegment
		\move(0 0)\lvec(2 0)
		\move(0 0)\lvec(0 2)
		\move(0 2)\lvec(2 2)
		\move(0 2)\lvec(0 13)
		\move(2 0)\lvec(2 13)
		\move(0 13)\lvec(2 13)
		
		\move(0 4)\lvec(2 4)
		\move(0 6)\lvec(2 6)
		\move(0 8)\lvec(2 8)
		\move(0 9)\lvec(2 9)
		\move(0 11)\lvec(2 11)
		
		\move(0.6 0)\lvec(0.6 8)
		\move(1.4 0)\lvec(1.4 8)
		
		\move(0.6 9)\lvec(0.6 13)
		\move(1.4 9)\lvec(1.4 13)
		
		\htext(1.05 1){$\cdots$}
		\htext(1.05 3){$\cdots$}
		\htext(1.05 5){$\cdots$}
		\htext(1.05 7){$\cdots$}
		\vtext(1.05 8.5){$\cdots$}
		\htext(1.05 10){$\cdots$}
		\htext(1.05 12){$\cdots$}
		\move(0 0)\lvec(0.6 2)
		\move(1.4 0)\lvec(2 2)
		\htext(0.2 1.6){$0$}
		\htext(1.6 1.6){$0$}
		\htext(0.4 0.4){$0$}
		\htext(1.8 0.4){$0$}
		\htext(0.3 3){$1$}
		\htext(1.7 3){$1$}
		\htext(0.3 5){$2$}
		\htext(1.7 5){$2$}
		\htext(0.3 7){$3$}
		\htext(1.7 7){$3$}
		\htext(0.3 10){$n$\!-\!$1$}
		\htext(1.7 10){$n$\!-\!$1$}
		\htext(0.3 12){$n$}
		\htext(1.7 12){$n$}
		\htext(1 -0.3){$\underbrace{\rule{3.5em}{0em}}$}
		\htext(1 -0.7){$\bar{x}_{n}$}
		\htext(2.55 1){\fontsize{8}{8}\selectfont$\cdots$}
		\esegment
		
		\move(19 0)
		\bsegment
		\move(0 0)\lvec(2 0)
		\move(0 0)\lvec(0 2)
		\move(0 2)\lvec(2 2)
		\move(0 2)\lvec(0 14)
		\move(0 14)\lvec(2 14)
		\move(0 12)\lvec(2 12)
		\move(0 11)\lvec(2 11)
		\move(0 9)\lvec(2 9)
		\move(0 8)\lvec(2 8)
		\move(0 6)\lvec(2 6)
		\move(0 4)\lvec(2 4)
		
		\move(0.6 9)\lvec(0.6 11)
		\move(1.4 9)\lvec(1.4 11)
		
		\move(0.6 12)\lvec(0.6 14)
		\move(1.4 12)\lvec(1.4 14)
		
		\move(0.6 0)\lvec(0.6 8)
		\move(1.4 0)\lvec(1.4 8)
		\htext(1.05 1){$\cdots$}
		\htext(1.05 3){$\cdots$}
		\htext(1.05 5){$\cdots$}
		\htext(1.05 7){$\cdots$}
		\vtext(1.05 8.5){$\cdots$}
		\htext(1.05 10){$\cdots$}
		\vtext(1.05 11.5){$\cdots$}
		\htext(1.05 13){$\cdots$}
		\move(0 0)\lvec(0.6 2)
		\move(1.4 0)\lvec(2 2)
		\htext(0.2 1.6){$0$}
		\htext(1.6 1.6){$0$}
		\htext(0.4 0.4){$0$}
		\htext(1.8 0.4){$0$}
		\htext(0.3 3){$1$}
		\htext(1.7 3){$1$}
		\htext(0.3 5){$2$}
		\htext(1.7 5){$2$}
		\htext(0.3 7){$3$}
		\htext(1.7 7){$3$}
		\htext(0.3 10){$n$\!-\!$1$}
		\htext(1.7 10){$n$\!-\!$1$}
		\htext(0.3 13){$i$\!+\!$1$}
		\htext(1.7 13){$i$\!+\!$1$}
		\htext(1 -0.3){$\underbrace{\rule{3.5em}{0em}}$}
		\htext(1 -0.7){$\bar{x}_{i\!+\! 1}$}
		\esegment
		
		\move(21 0)
		\bsegment
		\move(0 0)\lvec(2 0)
		\move(0 0)\lvec(0 2)
		\move(0 2)\lvec(2 2)
		\move(0 2)\lvec(0 16)
		\move(2 0)\lvec(2 16)
		\move(0 16)\lvec(2 16)
		
		\move(0 14)\lvec(2 14)
		\move(0 12)\lvec(2 12)
		\move(0 11)\lvec(2 11)
		\move(0 9)\lvec(2 9)
		\move(0 8)\lvec(2 8)
		\move(0 6)\lvec(2 6)
		\move(0 4)\lvec(2 4)
		
		\move(0.6 9)\lvec(0.6 11)
		\move(1.4 9)\lvec(1.4 11)
		
		\move(0.6 12)\lvec(0.6 16)
		\move(1.4 12)\lvec(1.4 16)
		
		\move(0.6 0)\lvec(0.6 8)
		\move(1.4 0)\lvec(1.4 8)
		\htext(1.05 1){$\cdots$}
		\htext(1.05 3){$\cdots$}
		\htext(1.05 5){$\cdots$}
		\htext(1.05 7){$\cdots$}
		\vtext(1.05 8.5){$\cdots$}
		\htext(1.05 10){$\cdots$}
		\vtext(1.05 11.5){$\cdots$}
		\htext(1.05 13){$\cdots$}
		\htext(1.05 15){$\cdots$}
		\move(0 0)\lvec(0.6 2)
		\move(1.4 0)\lvec(2 2)
		\htext(0.2 1.6){$0$}
		\htext(1.6 1.6){$0$}
		\htext(0.4 0.4){$0$}
		\htext(1.8 0.4){$0$}
		\htext(0.3 3){$1$}
		\htext(1.7 3){$1$}
		\htext(0.3 5){$2$}
		\htext(1.7 5){$2$}
		\htext(0.3 7){$3$}
		\htext(1.7 7){$3$}
		\htext(0.3 10){$n$\!-\!$1$}
		\htext(1.7 10){$n$\!-\!$1$}
		\htext(0.3 13){$i$\!+\!$1$}
		\htext(1.7 13){$i$\!+\!$1$}
		\htext(0.3 15){$i$}
		\htext(1.7 15){$i$}
		\htext(2.55 1){\fontsize{8}{8}\selectfont$\cdots$}
		\htext(1 -0.3){$\underbrace{\rule{3.5em}{0em}}$}
		\htext(1 -0.7){$\bar{x}_{i}$}
		\esegment
		
		\move(24 0)
		\bsegment
		\move(0 0)\lvec(2 0)
		\move(0 0)\lvec(0 2)
		\move(0 2)\lvec(2 2)
		\move(0 2)\lvec(0 18)
		\move(0 4)\lvec(2 4)
		\move(0 6)\lvec(2 6)
		\move(0 8)\lvec(2 8)
		\move(0 9)\lvec(2 9)
		\move(0 11)\lvec(2 11)
		\move(0 13)\lvec(2 13)
		\move(0 15)\lvec(2 15)
		\move(0 16)\lvec(2 16)
		\move(0 18)\lvec(2 18)
		\move(0.6 0)\lvec(0.6 8)
		\move(1.4 0)\lvec(1.4 8)
		\move(0.6 9)\lvec(0.6 15)
		\move(1.4 9)\lvec(1.4 15)
		\move(0.6 16)\lvec(0.6 18)
		\move(1.4 16)\lvec(1.4 18)
		\htext(1.05 1){$\cdots$}
		\htext(1.05 3){$\cdots$}
		\htext(1.05 5){$\cdots$}
		\htext(1.05 7){$\cdots$}
		\vtext(1.05 8.5){$\cdots$}
		\htext(1.05 10){$\cdots$}
		\htext(1.05 12){$\cdots$}
		\htext(1.05 14){$\cdots$}
		\vtext(1.05 15.5){$\cdots$}
		\htext(1.05 17){$\cdots$}
		\move(0 0)\lvec(0.6 2)
		\move(1.4 0)\lvec(2 2)
		\htext(0.2 1.6){$0$}
		\htext(1.6 1.6){$0$}
		\htext(0.4 0.4){$0$}
		\htext(1.8 0.4){$0$}
		\htext(0.3 3){$1$}
		\htext(1.7 3){$1$}
		\htext(0.3 5){$2$}
		\htext(1.7 5){$2$}
		\htext(0.3 7){$3$}
		\htext(1.7 7){$3$}
		\htext(0.3 10){$n$\!-\!$1$}
		\htext(1.7 10){$n$\!-\!$1$}
		\htext(0.3 12){$n$}
		\htext(1.7 12){$n$}
		\htext(0.3 14){$n$\!-\!$1$}
		\htext(1.7 14){$n$\!-\!$1$}
		\htext(0.3 17){$3$}
		\htext(1.7 17){$3$}
		\htext(1 -0.3){$\underbrace{\rule{3.5em}{0em}}$}
		\htext(1 -0.7){$\bar{x}_{3}$}
		\esegment
		
		\move(26 0)
		\bsegment
		\move(0 0)\lvec(2 0)
		\move(0 0)\lvec(0 2)
		\move(0 2)\lvec(2 2)
		\move(0 2)\lvec(0 20)
		\move(0 4)\lvec(2 4)
		\move(0 6)\lvec(2 6)
		\move(0 8)\lvec(2 8)
		\move(0 9)\lvec(2 9)
		\move(0 11)\lvec(2 11)
		\move(0 13)\lvec(2 13)
		\move(0 15)\lvec(2 15)
		\move(0 16)\lvec(2 16)
		\move(0 18)\lvec(2 18)
		\move(0 20)\lvec(2 20)
		
		\move(0.6 0)\lvec(0.6 8)
		\move(1.4 0)\lvec(1.4 8)
		\move(0.6 9)\lvec(0.6 15)
		\move(1.4 9)\lvec(1.4 15)
		\move(0.6 16)\lvec(0.6 20)
		\move(1.4 16)\lvec(1.4 20)
		\htext(1.05 1){$\cdots$}
		\htext(1.05 3){$\cdots$}
		\htext(1.05 5){$\cdots$}
		\htext(1.05 7){$\cdots$}
		\vtext(1.05 8.5){$\cdots$}
		\htext(1.05 10){$\cdots$}
		\htext(1.05 12){$\cdots$}
		\htext(1.05 14){$\cdots$}
		\vtext(1.05 15.5){$\cdots$}
		\htext(1.05 17){$\cdots$}
		\htext(1.05 19){$\cdots$}
		\move(0 0)\lvec(0.6 2)
		\move(1.4 0)\lvec(2 2)
		\htext(0.2 1.6){$0$}
		\htext(1.6 1.6){$0$}
		\htext(0.4 0.4){$0$}
		\htext(1.8 0.4){$0$}
		\htext(0.3 3){$1$}
		\htext(1.7 3){$1$}
		\htext(0.3 5){$2$}
		\htext(1.7 5){$2$}
		\htext(0.3 7){$3$}
		\htext(1.7 7){$3$}
		\htext(0.3 10){$n$\!-\!$1$}
		\htext(1.7 10){$n$\!-\!$1$}
		\htext(0.3 12){$n$}
		\htext(1.7 12){$n$}
		\htext(0.3 14){$n$\!-\!$1$}
		\htext(1.7 14){$n$\!-\!$1$}
		\htext(0.3 17){$3$}
		\htext(1.7 17){$3$}
		\htext(0.3 19){$2$}
		\htext(1.7 19){$2$}
		\htext(1 -0.3){$\underbrace{\rule{3.5em}{0em}}$}
		\htext(1 -0.7){$\bar{x}_{2}$}
		\esegment
		
		\move(28 0)
		\bsegment
		\move(0 0)\lvec(2 0)
		\move(0 0)\lvec(0 2)
		\move(0 2)\lvec(2 2)
		\move(2 0)\lvec(2 22)
		\move(0 2)\lvec(0 22)
		\move(0 22)\lvec(2 22)
		
		\move(0 4)\lvec(2 4)
		\move(0 6)\lvec(2 6)
		\move(0 8)\lvec(2 8)
		\move(0 9)\lvec(2 9)
		\move(0 11)\lvec(2 11)
		\move(0 13)\lvec(2 13)
		\move(0 15)\lvec(2 15)
		\move(0 16)\lvec(2 16)
		\move(0 18)\lvec(2 18)
		\move(0 20)\lvec(2 20)

		\move(0.6 0)\lvec(0.6 8)
		\move(1.4 0)\lvec(1.4 8)
		\move(0.6 9)\lvec(0.6 15)
		\move(1.4 9)\lvec(1.4 15)
		\move(0.6 16)\lvec(0.6 22)
		\move(1.4 16)\lvec(1.4 22)
		\htext(1.05 1){$\cdots$}
		\htext(1.05 3){$\cdots$}
		\htext(1.05 5){$\cdots$}
		\htext(1.05 7){$\cdots$}
		\vtext(1.05 8.5){$\cdots$}
		\htext(1.05 10){$\cdots$}
		\htext(1.05 12){$\cdots$}
		\htext(1.05 14){$\cdots$}
		\vtext(1.05 15.5){$\cdots$}
		\htext(1.05 17){$\cdots$}
		\htext(1.05 19){$\cdots$}
		\htext(1.05 21){$\cdots$}
		\move(0 0)\lvec(0.6 2)
		\move(1.4 0)\lvec(2 2)
		\htext(0.2 1.6){$0$}
		\htext(1.6 1.6){$0$}
		\htext(0.4 0.4){$0$}
		\htext(1.8 0.4){$0$}
		\htext(0.3 3){$1$}
		\htext(1.7 3){$1$}
		\htext(0.3 5){$2$}
		\htext(1.7 5){$2$}
		\htext(0.3 7){$3$}
		\htext(1.7 7){$3$}
		\htext(0.3 10){$n$\!-\!$1$}
		\htext(1.7 10){$n$\!-\!$1$}
		\htext(0.3 12){$n$}
		\htext(1.7 12){$n$}
		\htext(0.3 14){$n$\!-\!$1$}
		\htext(1.7 14){$n$\!-\!$1$}
		\htext(0.3 17){$3$}
		\htext(1.7 17){$3$}
		\htext(0.3 19){$2$}
		\htext(1.7 19){$2$}
		\htext(0.3 21){$1$}
		\htext(1.7 21){$1$}
		\htext(1 -0.3){$\underbrace{\rule{3.5em}{0em}}$}
		\htext(1 -0.7){$\bar{x}_{1}$}
		\esegment
	\end{texdraw}
\end{center}

\vskip 2mm

\item\label{(2)} $D_{n+1}^{(2)}$ case :
Level-$l$ reduced Young column $C_{(x_1,\cdots,x_n,x_0,\bar{x}_n,\cdots,\bar{x}_1)}$,
where $x_0+\sum_{i=1}^{n}(x_i+\bar{x}_i)\le l$, $x_0=0$ or $1$, $x_i, \bar{x}_i\ge 0$ and $l'=l-\sum_{i=1}^{n}(x_i+\bar{x}_i)-x_0$.

\vskip 2mm

\begin{center}
	\begin{texdraw}
		\fontsize{3}{3}\selectfont
		\drawdim em
		\setunitscale 1.9
		
		\move(0 0)
		\bsegment
		\move(0 0)\lvec(2 0)
		\move(0 0)\lvec(0 2)
		\move(0 2)\lvec(2 2)
		\move(0.6 0)\lvec(0.6 2)
		\move(1.4 0)\lvec(1.4 2)
		\move(0 0)\lvec(0.6 2)
		\move(1.4 0)\lvec(2 2)
		\htext(0.4 0.4){$0$}
		\htext(1.8 0.4){$0$}
		\htext(1.05 1){$\cdots$}
		\htext(1 -0.3){$\underbrace{\rule{3.5em}{0em}}$}
		\htext(1 -0.6){$l'$}
		\esegment
		\move(2 0)
		\bsegment
		\move(0 0)\lvec(2 0)
		\move(0 0)\lvec(0 2)
		\move(0 2)\lvec(2 2)
		\move(0.6 0)\lvec(0.6 2)
		\move(1.4 0)\lvec(1.4 2)
		\htext(1.05 1){$\cdots$}
		\htext(0.2 1.6){$0$}
		\htext(1.6 1.6){$0$}
		\htext(0.4 0.4){$0$}
		\htext(1.8 0.4){$0$}
		\move(0 0)\lvec(0.6 2)
		\move(1.4 0)\lvec(2 2)
		\htext(1 -0.3){$\underbrace{\rule{3.5em}{0em}}$}
		\htext(1 -0.7){$x_1$}
		\esegment
		
		\move(4 0)
		\bsegment
		\move(0 0)\lvec(2 0)
		\move(0 0)\lvec(0 2)
		\move(0 2)\lvec(2 2)
		\move(0.6 0)\lvec(0.6 4)
		\move(1.4 0)\lvec(1.4 4)
		\htext(1.05 1){$\cdots$}
		\htext(1.05 3){$\cdots$}
		\move(0 2)\lvec(0 4)
		\move(2 2)\lvec(2 4)
		\move(0 4)\lvec(2 4)
		\htext(0.2 1.6){$0$}
		\htext(1.6 1.6){$0$}
		\htext(0.4 0.4){$0$}
		\htext(1.8 0.4){$0$}
		\move(0 0)\lvec(0.6 2)
		\move(1.4 0)\lvec(2 2)
		\htext(0.3 3){$1$}
		\htext(1.7 3){$1$}
		\htext(1 -0.3){$\underbrace{\rule{3.5em}{0em}}$}
		\htext(1 -0.7){$x_2$}
		\esegment
		
		\move(6 0)
		\bsegment
		\move(0 0)\lvec(2 0)
		\move(0 0)\lvec(0 2)
		\move(0 2)\lvec(2 2)
		\move(2 0)\lvec(2 6)
		\move(0.6 0)\lvec(0.6 6)
		\move(1.4 0)\lvec(1.4 6)
		\htext(1.05 1){$\cdots$}
		\htext(1.05 3){$\cdots$}
		\htext(1.05 5){$\cdots$}
		\move(0 4)\lvec(0 6)
		\move(0 6)\lvec(2 6)
		\move(0 4)\lvec(2 4)
		\htext(2.55 1){\fontsize{8}{8}\selectfont$\cdots$}
		\htext(0.2 1.6){$0$}
		\htext(1.6 1.6){$0$}
		\htext(0.4 0.4){$0$}
		\htext(1.8 0.4){$0$}
		\move(0 0)\lvec(0.6 2)
		\move(1.4 0)\lvec(2 2)	
		\htext(0.3 3){$1$}
		\htext(1.7 3){$1$}
		\htext(0.3 5){$2$}
		\htext(1.7 5){$2$}
		\htext(1 -0.3){$\underbrace{\rule{3.5em}{0em}}$}
		\htext(1 -0.7){$x_3$}
		\esegment

		\move(9 0)
		\bsegment
		\move(0 0)\lvec(2 0)
		\move(0 0)\lvec(0 2)
		\move(0 2)\lvec(2 2)
		\move(0 2)\lvec(0 7)
		\move(0 7)\lvec(2 7)
		\move(0 4)\lvec(2 4)
		\move(0 5)\lvec(2 5)
		\move(0.6 0)\lvec(0.6 4)
		\move(1.4 0)\lvec(1.4 4)
		\move(0.6 5)\lvec(0.6 7)
		\move(1.4 5)\lvec(1.4 7)
		\htext(1.05 1){$\cdots$}
		\htext(1.05 3){$\cdots$}
		\vtext(1.05 4.5){$\cdots$}
		\htext(1.05 6){$\cdots$}
		\htext(0.2 1.6){$0$}
		\htext(1.6 1.6){$0$}
		\htext(0.4 0.4){$0$}
		\htext(1.8 0.4){$0$}
		\move(0 0)\lvec(0.6 2)
		\move(1.4 0)\lvec(2 2)	
		\htext(0.3 3){$1$}
		\htext(1.7 3){$1$}
		\htext(0.3 6){$i$-$1$}
		\htext(1.7 6){$i$-$1$}
		\htext(1 -0.3){$\underbrace{\rule{3.5em}{0em}}$}
		\htext(1 -0.7){$x_i$}
		\esegment
		
		\move(11 0)
		\bsegment
		\move(0 0)\lvec(2 0)
		\move(0 0)\lvec(0 2)
		\move(0 2)\lvec(2 2)
		\move(0 2)\lvec(0 9)
		\move(2 0)\lvec(2 9)
		\move(0 9)\lvec(2 9)
		\move(0 5)\lvec(2 5)
		\move(0 4)\lvec(2 4)
		\move(0 7)\lvec(2 7)
		\move(0.6 0)\lvec(0.6 4)
		\move(1.4 0)\lvec(1.4 4)
		\move(0.6 5)\lvec(0.6 9)
		\move(1.4 5)\lvec(1.4 9)
		\htext(1.05 1){$\cdots$}
		\htext(1.05 3){$\cdots$}
		\vtext(1.05 4.5){$\cdots$}
		\htext(1.05 6){$\cdots$}
		\htext(1.05 8){$\cdots$}
		\htext(0.2 1.6){$0$}
		\htext(1.6 1.6){$0$}
		\htext(0.4 0.4){$0$}
		\htext(1.8 0.4){$0$}
		\move(0 0)\lvec(0.6 2)
		\move(1.4 0)\lvec(2 2)
		\htext(0.3 3){$1$}
		\htext(1.7 3){$1$}
		\htext(0.3 6){$i$-$1$}
		\htext(1.7 6){$i$-$1$}
		\htext(0.3 8){$i$}
		\htext(1.7 8){$i$}
		\htext(2.55 1){\fontsize{8}{8}\selectfont$\cdots$}
		\htext(1 -0.3){$\underbrace{\rule{3.5em}{0em}}$}
		\htext(1 -0.7){$x_{i \!+\! 1}$}
		\esegment
		
		\move(14 0)
		\bsegment
		\move(0 0)\lvec(2 0)
		\move(0 0)\lvec(0 2)
		\move(0 2)\lvec(2 2)
		\move(0 2)\lvec(0 11)
		\move(0 11)\lvec(2 11)

		\htext(0.2 1.6){$0$}
		\htext(1.6 1.6){$0$}
		\htext(0.4 0.4){$0$}
		\htext(1.8 0.4){$0$}
		\move(0 0)\lvec(0.6 2)
		\move(1.4 0)\lvec(2 2)		
		\move(0 4)\lvec(2 4)
		\move(0 6)\lvec(2 6)
		\move(0 8)\lvec(2 8)
		\move(0 9)\lvec(2 9)
		\move(0.6 0)\lvec(0.6 8)
		\move(1.4 0)\lvec(1.4 8)
		\move(0.6 9)\lvec(0.6 11)
		\move(1.4 9)\lvec(1.4 11)
		
		\htext(1.05 1){$\cdots$}
		\htext(1.05 3){$\cdots$}
		\htext(1.05 5){$\cdots$}
		\htext(1.05 7){$\cdots$}
		\vtext(1.05 8.5){$\cdots$}
		\htext(1.05 10){$\cdots$}

		\htext(0.3 3){$1$}
		\htext(1.7 3){$1$}
		\htext(0.3 5){$2$}
		\htext(1.7 5){$2$}
		\htext(0.3 7){$3$}
		\htext(1.7 7){$3$}
		\htext(0.3 10){$n$\!-\!$1$}
		\htext(1.7 10){$n$\!-\!$1$}
		\htext(1 -0.3){$\underbrace{\rule{3.5em}{0em}}$}
		\htext(1 -0.7){$x_{n}$}
		
		\esegment
		
		\move(16 0)
		\bsegment
		\move(0 0)\lvec(2 0)
		\move(0 0)\lvec(0 2)
		\move(0 2)\lvec(2 2)
		\move(0 2)\lvec(0 11)
		\move(0 11)\lvec(2 11)

		\htext(0.2 1.6){$0$}
		\htext(1.6 1.6){$0$}
		\htext(0.4 0.4){$0$}
		\htext(1.8 0.4){$0$}
		\move(0 0)\lvec(0.6 2)
		\move(1.4 0)\lvec(2 2)		
		\move(0 4)\lvec(2 4)
		\move(0 6)\lvec(2 6)
		\move(0 8)\lvec(2 8)
		\move(0 9)\lvec(2 9)
		\move(0.6 0)\lvec(0.6 8)
		\move(1.4 0)\lvec(1.4 8)
		\move(0.6 9)\lvec(0.6 11)
		\move(1.4 9)\lvec(1.4 11)
		
		\htext(1.05 1){$\cdots$}
		\htext(1.05 3){$\cdots$}
		\htext(1.05 5){$\cdots$}
		\htext(1.05 7){$\cdots$}
		\vtext(1.05 8.5){$\cdots$}
		\htext(1.05 10){$\cdots$}

		\htext(0.3 3){$1$}
		\htext(1.7 3){$1$}
		\htext(0.3 5){$2$}
		\htext(1.7 5){$2$}
		\htext(0.3 7){$3$}
		\htext(1.7 7){$3$}
		\htext(0.3 10){$n$\!-\!$1$}
		\htext(1.7 10){$n$\!-\!$1$}
		\htext(1 -0.3){$\underbrace{\rule{3.5em}{0em}}$}
		\htext(1 -0.7){$x_{0}$}
		\move(0 11)\lvec(0 13)
		\move(0.6 11)\lvec(0.6 13)
		\move(1.4 11)\lvec(1.4 13)
		\move(0 13)\lvec(2 13)
		\move(0 11)\lvec(0.6 13)
		\move(1.4 11)\lvec(2 13)
		\htext(0.4 11.4){$n$}
		\htext(1.8 11.4){$n$}
		\esegment	
		
		\move(18 0)
		\bsegment
		\move(0 0)\lvec(2 0)
		\move(0 0)\lvec(0 2)
		\move(0 2)\lvec(2 2)
		\move(0 2)\lvec(0 13)
		\move(2 0)\lvec(2 13)
		\move(0 13)\lvec(2 13)
		\htext(0.2 1.6){$0$}
		\htext(1.6 1.6){$0$}
		\htext(0.4 0.4){$0$}
		\htext(1.8 0.4){$0$}
		\move(0 0)\lvec(0.6 2)
		\move(1.4 0)\lvec(2 2)		
		\move(0 4)\lvec(2 4)
		\move(0 6)\lvec(2 6)
		\move(0 8)\lvec(2 8)
		\move(0 9)\lvec(2 9)
		\move(0 11)\lvec(2 11)
		
		\move(0.6 0)\lvec(0.6 8)
		\move(1.4 0)\lvec(1.4 8)
		
		\move(0.6 9)\lvec(0.6 13)
		\move(1.4 9)\lvec(1.4 13)
		
		\htext(1.05 1){$\cdots$}
		\htext(1.05 3){$\cdots$}
		\htext(1.05 5){$\cdots$}
		\htext(1.05 7){$\cdots$}
		\vtext(1.05 8.5){$\cdots$}
		\htext(1.05 10){$\cdots$}
		\htext(1.05 12){$\cdots$}

		\htext(0.3 3){$1$}
		\htext(1.7 3){$1$}
		\htext(0.3 5){$2$}
		\htext(1.7 5){$2$}
		\htext(0.3 7){$3$}
		\htext(1.7 7){$3$}
		\htext(0.3 10){$n$\!-\!$1$}
		\htext(1.7 10){$n$\!-\!$1$}
		
		\htext(1 -0.3){$\underbrace{\rule{3.5em}{0em}}$}
		\htext(1 -0.7){$\bar{x}_{n}$}
		\htext(2.55 1){\fontsize{8}{8}\selectfont$\cdots$}
		\move(0 11)\lvec(0.6 13)
		\htext(0.4 11.4){$n$}
		\move(1.4 11)\lvec(2 13)
		\htext(1.8 11.4){$n$}
		\htext(0.2 12.6){$n$}
		\htext(1.6 12.6){$n$}
		\esegment
		
		\move(21 0)
		\bsegment
		\move(0 0)\lvec(2 0)
		\move(0 0)\lvec(0 2)
		\move(0 2)\lvec(2 2)
		\move(0 2)\lvec(0 14)
		\move(0 14)\lvec(2 14)
		\move(0 12)\lvec(2 12)
		\move(0 11)\lvec(2 11)
		\move(0 9)\lvec(2 9)
		\move(0 8)\lvec(2 8)
		\move(0 6)\lvec(2 6)
		\move(0 4)\lvec(2 4)
		
		\move(0.6 9)\lvec(0.6 11)
		\move(1.4 9)\lvec(1.4 11)
		
		\move(0.6 12)\lvec(0.6 14)
		\move(1.4 12)\lvec(1.4 14)
		
		\move(0.6 0)\lvec(0.6 8)
		\move(1.4 0)\lvec(1.4 8)
		\htext(1.05 1){$\cdots$}
		\htext(1.05 3){$\cdots$}
		\htext(1.05 5){$\cdots$}
		\htext(1.05 7){$\cdots$}
		\vtext(1.05 8.5){$\cdots$}
		\htext(1.05 10){$\cdots$}
		\vtext(1.05 11.5){$\cdots$}
		\htext(1.05 13){$\cdots$}
		\htext(0.2 1.6){$0$}
		\htext(1.6 1.6){$0$}
		\htext(0.4 0.4){$0$}
		\htext(1.8 0.4){$0$}
		\move(0 0)\lvec(0.6 2)
		\move(1.4 0)\lvec(2 2)	
		\htext(0.3 3){$1$}
		\htext(1.7 3){$1$}
		\htext(0.3 5){$2$}
		\htext(1.7 5){$2$}
		\htext(0.3 7){$3$}
		\htext(1.7 7){$3$}
		\htext(0.3 10){$n$\!-\!$1$}
		\htext(1.7 10){$n$\!-\!$1$}
		\htext(0.3 13){$i$\!+\!$1$}
		\htext(1.7 13){$i$\!+\!$1$}
		\htext(1 -0.3){$\underbrace{\rule{3.5em}{0em}}$}
		\htext(1 -0.7){$\bar{x}_{i\!+\! 1}$}
		\esegment
		
		\move(23 0)
		\bsegment
		\move(0 0)\lvec(2 0)
		\move(0 0)\lvec(0 2)
		\move(0 2)\lvec(2 2)
		\move(0 2)\lvec(0 16)
		\move(2 0)\lvec(2 16)
		\move(0 16)\lvec(2 16)
		
		\move(0 14)\lvec(2 14)
		\move(0 12)\lvec(2 12)
		\move(0 11)\lvec(2 11)
		\move(0 9)\lvec(2 9)
		\move(0 8)\lvec(2 8)
		\move(0 6)\lvec(2 6)
		\move(0 4)\lvec(2 4)
		
		\move(0.6 9)\lvec(0.6 11)
		\move(1.4 9)\lvec(1.4 11)
		
		\move(0.6 12)\lvec(0.6 16)
		\move(1.4 12)\lvec(1.4 16)
		\htext(0.2 1.6){$0$}
		\htext(1.6 1.6){$0$}
		\htext(0.4 0.4){$0$}
		\htext(1.8 0.4){$0$}
		\move(0 0)\lvec(0.6 2)
		\move(1.4 0)\lvec(2 2)		
		\move(0.6 0)\lvec(0.6 8)
		\move(1.4 0)\lvec(1.4 8)
		\htext(1.05 1){$\cdots$}
		\htext(1.05 3){$\cdots$}
		\htext(1.05 5){$\cdots$}
		\htext(1.05 7){$\cdots$}
		\vtext(1.05 8.5){$\cdots$}
		\htext(1.05 10){$\cdots$}
		\vtext(1.05 11.5){$\cdots$}
		\htext(1.05 13){$\cdots$}
		\htext(1.05 15){$\cdots$}
		
		\htext(0.3 3){$1$}
		\htext(1.7 3){$1$}
		\htext(0.3 5){$2$}
		\htext(1.7 5){$2$}
		\htext(0.3 7){$3$}
		\htext(1.7 7){$3$}
		\htext(0.3 10){$n$\!-\!$1$}
		\htext(1.7 10){$n$\!-\!$1$}
		\htext(0.3 13){$i$\!+\!$1$}
		\htext(1.7 13){$i$\!+\!$1$}
		\htext(0.3 15){$i$}
		\htext(1.7 15){$i$}
		\htext(2.55 1){\fontsize{8}{8}\selectfont$\cdots$}
		\htext(1 -0.3){$\underbrace{\rule{3.5em}{0em}}$}
		\htext(1 -0.7){$\bar{x}_{i}$}
		\esegment
		
		\move(26 0)
		\bsegment
		\move(0 0)\lvec(2 0)
		\move(0 0)\lvec(0 2)
		\move(0 2)\lvec(2 2)
		\move(0 2)\lvec(0 18)
		\move(0 4)\lvec(2 4)
		\move(0 6)\lvec(2 6)
		\move(0 8)\lvec(2 8)
		\move(0 9)\lvec(2 9)
		\move(0 11)\lvec(2 11)
		\move(0 13)\lvec(2 13)
		\move(0 15)\lvec(2 15)
		\move(0 16)\lvec(2 16)
		\move(0 18)\lvec(2 18)
		\move(0.6 0)\lvec(0.6 8)
		\move(1.4 0)\lvec(1.4 8)
		\move(0.6 9)\lvec(0.6 15)
		\move(1.4 9)\lvec(1.4 15)
		\move(0.6 16)\lvec(0.6 18)
		\move(1.4 16)\lvec(1.4 18)
		\htext(1.05 1){$\cdots$}
		\htext(1.05 3){$\cdots$}
		\htext(1.05 5){$\cdots$}
		\htext(1.05 7){$\cdots$}
		\vtext(1.05 8.5){$\cdots$}
		\htext(1.05 10){$\cdots$}
		\htext(1.05 12){$\cdots$}
		\htext(1.05 14){$\cdots$}
		\vtext(1.05 15.5){$\cdots$}
		\htext(1.05 17){$\cdots$}
		\htext(0.2 1.6){$0$}
		\htext(1.6 1.6){$0$}
		\htext(0.4 0.4){$0$}
		\htext(1.8 0.4){$0$}
		\move(0 0)\lvec(0.6 2)
		\move(1.4 0)\lvec(2 2)
		\htext(0.3 3){$1$}
		\htext(1.7 3){$1$}
		\htext(0.3 5){$2$}
		\htext(1.7 5){$2$}
		\htext(0.3 7){$3$}
		\htext(1.7 7){$3$}
		\htext(0.3 10){$n$\!-\!$1$}
		\htext(1.7 10){$n$\!-\!$1$}
		
		\htext(0.3 14){$n$\!-\!$1$}
		\htext(1.7 14){$n$\!-\!$1$}
		\htext(0.3 17){$3$}
		\htext(1.7 17){$3$}
		\htext(1 -0.3){$\underbrace{\rule{3.5em}{0em}}$}
		\htext(1 -0.7){$\bar{x}_{3}$}
		
		\move(0 11)\lvec(0.6 13)
		\htext(0.4 11.4){$n$}
		\move(1.4 11)\lvec(2 13)
		\htext(1.8 11.4){$n$}
		\htext(0.2 12.6){$n$}
		\htext(1.6 12.6){$n$}
		\esegment
		
		\move(28 0)
		\bsegment
		\move(0 0)\lvec(2 0)
		\move(0 0)\lvec(0 2)
		\move(0 2)\lvec(2 2)
		\move(0 2)\lvec(0 20)
		\move(0 4)\lvec(2 4)
		\move(0 6)\lvec(2 6)
		\move(0 8)\lvec(2 8)
		\move(0 9)\lvec(2 9)
		\move(0 11)\lvec(2 11)
		\move(0 13)\lvec(2 13)
		\move(0 15)\lvec(2 15)
		\move(0 16)\lvec(2 16)
		\move(0 18)\lvec(2 18)
		\move(0 20)\lvec(2 20)
		\htext(0.2 1.6){$0$}
		\htext(1.6 1.6){$0$}
		\htext(0.4 0.4){$0$}
		\htext(1.8 0.4){$0$}
		\move(0 0)\lvec(0.6 2)
		\move(1.4 0)\lvec(2 2)		
		\move(0.6 0)\lvec(0.6 8)
		\move(1.4 0)\lvec(1.4 8)
		\move(0.6 9)\lvec(0.6 15)
		\move(1.4 9)\lvec(1.4 15)
		\move(0.6 16)\lvec(0.6 20)
		\move(1.4 16)\lvec(1.4 20)
		\htext(1.05 1){$\cdots$}
		\htext(1.05 3){$\cdots$}
		\htext(1.05 5){$\cdots$}
		\htext(1.05 7){$\cdots$}
		\vtext(1.05 8.5){$\cdots$}
		\htext(1.05 10){$\cdots$}
		\htext(1.05 12){$\cdots$}
		\htext(1.05 14){$\cdots$}
		\vtext(1.05 15.5){$\cdots$}
		\htext(1.05 17){$\cdots$}
		\htext(1.05 19){$\cdots$}
		
		\htext(0.3 3){$1$}
		\htext(1.7 3){$1$}
		\htext(0.3 5){$2$}
		\htext(1.7 5){$2$}
		\htext(0.3 7){$3$}
		\htext(1.7 7){$3$}
		\htext(0.3 10){$n$\!-\!$1$}
		\htext(1.7 10){$n$\!-\!$1$}
		\move(0 11)\lvec(0.6 13)
		\htext(0.4 11.4){$n$}
		\move(1.4 11)\lvec(2 13)
		\htext(1.8 11.4){$n$}
		\htext(0.2 12.6){$n$}
		\htext(1.6 12.6){$n$}
		\htext(0.3 14){$n$\!-\!$1$}
		\htext(1.7 14){$n$\!-\!$1$}
		\htext(0.3 17){$3$}
		\htext(1.7 17){$3$}
		\htext(0.3 19){$2$}
		\htext(1.7 19){$2$}
		\htext(1 -0.3){$\underbrace{\rule{3.5em}{0em}}$}
		\htext(1 -0.7){$\bar{x}_{2}$}
		\esegment
		
		\move(30 0)
		\bsegment
		\move(0 0)\lvec(2 0)
		\move(0 0)\lvec(0 2)
		\move(0 2)\lvec(2 2)
		\move(2 0)\lvec(2 22)
		\move(0 2)\lvec(0 22)
		\move(0 22)\lvec(2 22)
		
		\move(0 4)\lvec(2 4)
		\move(0 6)\lvec(2 6)
		\move(0 8)\lvec(2 8)
		\move(0 9)\lvec(2 9)
		\move(0 11)\lvec(2 11)
		\move(0 13)\lvec(2 13)
		\move(0 15)\lvec(2 15)
		\move(0 16)\lvec(2 16)
		\move(0 18)\lvec(2 18)
		\move(0 20)\lvec(2 20)
		
		\htext(0.2 1.6){$0$}
		\htext(1.6 1.6){$0$}
		\htext(0.4 0.4){$0$}
		\htext(1.8 0.4){$0$}
		\move(0 0)\lvec(0.6 2)
		\move(1.4 0)\lvec(2 2)		
		\move(0.6 0)\lvec(0.6 8)
		\move(1.4 0)\lvec(1.4 8)
		\move(0.6 9)\lvec(0.6 15)
		\move(1.4 9)\lvec(1.4 15)
		\move(0.6 16)\lvec(0.6 22)
		\move(1.4 16)\lvec(1.4 22)
		\htext(1.05 1){$\cdots$}
		\htext(1.05 3){$\cdots$}
		\htext(1.05 5){$\cdots$}
		\htext(1.05 7){$\cdots$}
		\vtext(1.05 8.5){$\cdots$}
		\htext(1.05 10){$\cdots$}
		\htext(1.05 12){$\cdots$}
		\htext(1.05 14){$\cdots$}
		\vtext(1.05 15.5){$\cdots$}
		\htext(1.05 17){$\cdots$}
		\htext(1.05 19){$\cdots$}
		\htext(1.05 21){$\cdots$}
		
		\htext(0.3 3){$1$}
		\htext(1.7 3){$1$}
		\htext(0.3 5){$2$}
		\htext(1.7 5){$2$}
		\htext(0.3 7){$3$}
		\htext(1.7 7){$3$}
		\htext(0.3 10){$n$\!-\!$1$}
		\htext(1.7 10){$n$\!-\!$1$}
		\move(0 11)\lvec(0.6 13)
		\htext(0.4 11.4){$n$}
		\move(1.4 11)\lvec(2 13)
		\htext(1.8 11.4){$n$}
		\htext(0.2 12.6){$n$}
		\htext(1.6 12.6){$n$}
		\htext(0.3 14){$n$\!-\!$1$}
		\htext(1.7 14){$n$\!-\!$1$}
		\htext(0.3 17){$3$}
		\htext(1.7 17){$3$}
		\htext(0.3 19){$2$}
		\htext(1.7 19){$2$}
		\htext(0.3 21){$1$}
		\htext(1.7 21){$1$}
		\htext(1 -0.3){$\underbrace{\rule{3.5em}{0em}}$}
		\htext(1 -0.7){$\bar{x}_{1}$}
		\esegment
	\end{texdraw}
\end{center}

\vskip 2mm

\item\label{(3)} $A_{2n-1}^{(2)}$ case :
Level-$l$ reduced Young column $C_{(x_1,\cdots,x_n,\bar{x}_n,\cdots,\bar{x}_1)}$,
where $\sum_{i=1}^{n}(x_i+\bar{x}_i)=l$ and $x_i, \bar{x}_i\ge 0$.

\vskip 2mm

\begin{center}
	\begin{texdraw}
		\fontsize{3}{3}\selectfont
		\drawdim em
		\setunitscale 1.9
		
		\move(-2 0)
		\bsegment
		\move(0 0)\lvec(2 0)
		\move(0 0)\lvec(0 2)
		\move(0 2)\lvec(2 2)
		\move(0.6 0)\lvec(0.6 2)
		\move(1.4 0)\lvec(1.4 2)
		\move(0 0)\lvec(0.6 2)
		\move(1.4 0)\lvec(2 2)
		\htext(0.2 1.6){$0$}
		\htext(1.6 1.6){$0$}
		
		\htext(1.05 1){$\cdots$}
		\htext(1 -0.3){$\underbrace{\rule{3.5em}{0em}}$}
		\htext(1 -0.7){$x_1$}
		\esegment
		
		\move(0 0)
		\bsegment
		\move(0 0)\lvec(2 0)
		\move(0 0)\lvec(0 2)
		\move(0 2)\lvec(2 2)
		\move(0.6 0)\lvec(0.6 2)
		\move(1.4 0)\lvec(1.4 2)
		\move(0 0)\lvec(0.6 2)
		\move(1.4 0)\lvec(2 2)
		\htext(0.4 0.4){$1$}
		\htext(1.8 0.4){$1$}
		\htext(1.05 1){$\cdots$}
		\htext(1 -0.3){$\underbrace{\rule{3.5em}{0em}}$}
		\htext(1 -0.7){$\bar{x}_1$}
		\esegment
		\move(2 0)
		\bsegment
		\move(0 0)\lvec(2 0)
		\move(0 0)\lvec(0 2)
		\move(0 2)\lvec(2 2)
		\move(0.6 0)\lvec(0.6 2)
		\move(1.4 0)\lvec(1.4 2)
		\htext(1.05 1){$\cdots$}
		\htext(0.2 1.6){$0$}
		\htext(1.6 1.6){$0$}
		
		\htext(0.4 0.4){$1$}
		\htext(1.8 0.4){$1$}
		\move(0 0)\lvec(0.6 2)
		\move(1.4 0)\lvec(2 2)
		\htext(1 -0.3){$\underbrace{\rule{3.5em}{0em}}$}
		\htext(1 -0.7){$x_2$}
		\esegment
		
		\move(4 0)
		\bsegment
		\move(0 0)\lvec(2 0)
		\move(0 0)\lvec(0 2)
		\move(0 2)\lvec(2 2)
		\move(0.6 0)\lvec(0.6 4)
		\move(1.4 0)\lvec(1.4 4)
		\htext(1.05 1){$\cdots$}
		\htext(1.05 3){$\cdots$}
		\move(0 2)\lvec(0 4)
		\move(2 2)\lvec(2 4)
		\move(0 4)\lvec(2 4)
		\htext(0.2 1.6){$0$}
		\htext(1.6 1.6){$0$}
		\htext(0.4 0.4){$1$}
		\htext(1.8 0.4){$1$}
		\move(0 0)\lvec(0.6 2)
		\move(1.4 0)\lvec(2 2)
		\htext(0.3 3){$2$}
		\htext(1.7 3){$2$}
		\htext(1 -0.3){$\underbrace{\rule{3.5em}{0em}}$}
		\htext(1 -0.7){$x_3$}
		\esegment
		
		\move(6 0)
		\bsegment
		\move(0 0)\lvec(2 0)
		\move(0 0)\lvec(0 2)
		\move(0 2)\lvec(2 2)
		\move(2 0)\lvec(2 6)
		\move(0.6 0)\lvec(0.6 6)
		\move(1.4 0)\lvec(1.4 6)
		\htext(1.05 1){$\cdots$}
		\htext(1.05 3){$\cdots$}
		\htext(1.05 5){$\cdots$}
		\move(0 4)\lvec(0 6)
		\move(0 6)\lvec(2 6)
		\move(0 4)\lvec(2 4)
		\htext(2.55 1){\fontsize{8}{8}\selectfont$\cdots$}
		\htext(0.2 1.6){$0$}
		\htext(1.6 1.6){$0$}
		\htext(0.4 0.4){$1$}
		\htext(1.8 0.4){$1$}
		\move(0 0)\lvec(0.6 2)
		\move(1.4 0)\lvec(2 2)	
		\htext(0.3 3){$2$}
		\htext(1.7 3){$2$}
		\htext(0.3 5){$3$}
		\htext(1.7 5){$3$}
		\htext(1 -0.3){$\underbrace{\rule{3.5em}{0em}}$}
		\htext(1 -0.7){$x_4$}
		\esegment

		\move(9 0)
		\bsegment
		\move(0 0)\lvec(2 0)
		\move(0 0)\lvec(0 2)
		\move(0 2)\lvec(2 2)
		\move(0 2)\lvec(0 7)
		\move(0 7)\lvec(2 7)
		\move(0 4)\lvec(2 4)
		\move(0 5)\lvec(2 5)
		\move(0.6 0)\lvec(0.6 4)
		\move(1.4 0)\lvec(1.4 4)
		\move(0.6 5)\lvec(0.6 7)
		\move(1.4 5)\lvec(1.4 7)
		\htext(1.05 1){$\cdots$}
		\htext(1.05 3){$\cdots$}
		\vtext(1.05 4.5){$\cdots$}
		\htext(1.05 6){$\cdots$}
		\htext(0.2 1.6){$0$}
		\htext(1.6 1.6){$0$}
		\htext(0.4 0.4){$1$}
		\htext(1.8 0.4){$1$}
		\move(0 0)\lvec(0.6 2)
		\move(1.4 0)\lvec(2 2)	
		\htext(0.3 3){$2$}
		\htext(1.7 3){$2$}
		\htext(0.3 6){$i$-$1$}
		\htext(1.7 6){$i$-$1$}
		\htext(1 -0.3){$\underbrace{\rule{3.5em}{0em}}$}
		\htext(1 -0.7){$x_i$}
		\esegment
		
		\move(11 0)
		\bsegment
		\move(0 0)\lvec(2 0)
		\move(0 0)\lvec(0 2)
		\move(0 2)\lvec(2 2)
		\move(0 2)\lvec(0 9)
		\move(2 0)\lvec(2 9)
		\move(0 9)\lvec(2 9)
		\move(0 5)\lvec(2 5)
		\move(0 4)\lvec(2 4)
		\move(0 7)\lvec(2 7)
		\move(0.6 0)\lvec(0.6 4)
		\move(1.4 0)\lvec(1.4 4)
		\move(0.6 5)\lvec(0.6 9)
		\move(1.4 5)\lvec(1.4 9)
		\htext(1.05 1){$\cdots$}
		\htext(1.05 3){$\cdots$}
		\vtext(1.05 4.5){$\cdots$}
		\htext(1.05 6){$\cdots$}
		\htext(1.05 8){$\cdots$}
		\htext(0.2 1.6){$0$}
		\htext(1.6 1.6){$0$}
		\htext(0.4 0.4){$1$}
		\htext(1.8 0.4){$1$}
		\move(0 0)\lvec(0.6 2)
		\move(1.4 0)\lvec(2 2)
		\htext(0.3 3){$2$}
		\htext(1.7 3){$2$}
		\htext(0.3 6){$i$-$1$}
		\htext(1.7 6){$i$-$1$}
		\htext(0.3 8){$i$}
		\htext(1.7 8){$i$}
		\htext(2.55 1){\fontsize{8}{8}\selectfont$\cdots$}
		\htext(1 -0.3){$\underbrace{\rule{3.5em}{0em}}$}
		\htext(1 -0.7){$x_{i \!+\! 1}$}
		\esegment
		
		\move(14 0)
		\bsegment
		\move(0 0)\lvec(2 0)
		\move(0 0)\lvec(0 2)
		\move(0 2)\lvec(2 2)
		\move(0 2)\lvec(0 11)
		\move(0 11)\lvec(2 11)
		\htext(0.2 1.6){$0$}
		\htext(1.6 1.6){$0$}
		\htext(0.4 0.4){$1$}
		\htext(1.8 0.4){$1$}
		\move(0 0)\lvec(0.6 2)
		\move(1.4 0)\lvec(2 2)		
		\move(0 4)\lvec(2 4)
		\move(0 6)\lvec(2 6)
		\move(0 8)\lvec(2 8)
		\move(0 9)\lvec(2 9)
		\move(0.6 0)\lvec(0.6 8)
		\move(1.4 0)\lvec(1.4 8)
		\move(0.6 9)\lvec(0.6 11)
		\move(1.4 9)\lvec(1.4 11)
		
		\htext(1.05 1){$\cdots$}
		\htext(1.05 3){$\cdots$}
		\htext(1.05 5){$\cdots$}
		\htext(1.05 7){$\cdots$}
		\vtext(1.05 8.5){$\cdots$}
		\htext(1.05 10){$\cdots$}		
		\htext(0.3 3){$2$}
		\htext(1.7 3){$2$}
		\htext(0.3 5){$3$}
		\htext(1.7 5){$3$}
		\htext(0.3 7){$4$}
		\htext(1.7 7){$4$}
		\htext(0.3 10){$n$\!-\!$1$}
		\htext(1.7 10){$n$\!-\!$1$}
		\htext(1 -0.3){$\underbrace{\rule{3.5em}{0em}}$}
		\htext(1 -0.7){$x_{n}$}

		\esegment
		
		\move(16 0)
		\bsegment
		\move(0 0)\lvec(2 0)
		\move(0 0)\lvec(0 2)
		\move(0 2)\lvec(2 2)
		\move(0 2)\lvec(0 13)
		\move(2 0)\lvec(2 13)
		\move(0 13)\lvec(2 13)
		\htext(0.2 1.6){$0$}
		\htext(1.6 1.6){$0$}
		\htext(0.4 0.4){$1$}
		\htext(1.8 0.4){$1$}
		\move(0 0)\lvec(0.6 2)
		\move(1.4 0)\lvec(2 2)		
		\move(0 4)\lvec(2 4)
		\move(0 6)\lvec(2 6)
		\move(0 8)\lvec(2 8)
		\move(0 9)\lvec(2 9)
		\move(0 11)\lvec(2 11)
		
		\move(0.6 0)\lvec(0.6 8)
		\move(1.4 0)\lvec(1.4 8)
		
		\move(0.6 9)\lvec(0.6 13)
		\move(1.4 9)\lvec(1.4 13)
		
		\htext(1.05 1){$\cdots$}
		\htext(1.05 3){$\cdots$}
		\htext(1.05 5){$\cdots$}
		\htext(1.05 7){$\cdots$}
		\vtext(1.05 8.5){$\cdots$}
		\htext(1.05 10){$\cdots$}
		\htext(1.05 12){$\cdots$}

		\htext(0.3 3){$2$}
		\htext(1.7 3){$2$}
		\htext(0.3 5){$3$}
		\htext(1.7 5){$3$}
		\htext(0.3 7){$4$}
		\htext(1.7 7){$4$}
		\htext(0.3 10){$n$\!-\!$1$}
		\htext(1.7 10){$n$\!-\!$1$}
		\htext(0.3 12){$n$}
		\htext(1.7 12){$n$}
		\htext(1 -0.3){$\underbrace{\rule{3.5em}{0em}}$}
		\htext(1 -0.7){$\bar{x}_{n}$}
		\htext(2.55 1){\fontsize{8}{8}\selectfont$\cdots$}

		\esegment
		
		\move(19 0)
		\bsegment
		\move(0 0)\lvec(2 0)
		\move(0 0)\lvec(0 2)
		\move(0 2)\lvec(2 2)
		\move(0 2)\lvec(0 14)
		\move(0 14)\lvec(2 14)
		\move(0 12)\lvec(2 12)
		\move(0 11)\lvec(2 11)
		\move(0 9)\lvec(2 9)
		\move(0 8)\lvec(2 8)
		\move(0 6)\lvec(2 6)
		\move(0 4)\lvec(2 4)
		
		\move(0.6 9)\lvec(0.6 11)
		\move(1.4 9)\lvec(1.4 11)
		
		\move(0.6 12)\lvec(0.6 14)
		\move(1.4 12)\lvec(1.4 14)
		
		\move(0.6 0)\lvec(0.6 8)
		\move(1.4 0)\lvec(1.4 8)
		\htext(1.05 1){$\cdots$}
		\htext(1.05 3){$\cdots$}
		\htext(1.05 5){$\cdots$}
		\htext(1.05 7){$\cdots$}
		\vtext(1.05 8.5){$\cdots$}
		\htext(1.05 10){$\cdots$}
		\vtext(1.05 11.5){$\cdots$}
		\htext(1.05 13){$\cdots$}
		\htext(0.2 1.6){$0$}
		\htext(1.6 1.6){$0$}
		\htext(0.4 0.4){$1$}
		\htext(1.8 0.4){$1$}
		\move(0 0)\lvec(0.6 2)
		\move(1.4 0)\lvec(2 2)	
		\htext(0.3 3){$2$}
		\htext(1.7 3){$2$}
		\htext(0.3 5){$3$}
		\htext(1.7 5){$3$}
		\htext(0.3 7){$4$}
		\htext(1.7 7){$4$}
		\htext(0.3 10){$n$\!-\!$1$}
		\htext(1.7 10){$n$\!-\!$1$}
		\htext(0.3 13){$i$\!+\!$1$}
		\htext(1.7 13){$i$\!+\!$1$}
		\htext(1 -0.3){$\underbrace{\rule{3.5em}{0em}}$}
		\htext(1 -0.7){$\bar{x}_{i\!+\! 1}$}
		\esegment
		
		\move(21 0)
		\bsegment
		\move(0 0)\lvec(2 0)
		\move(0 0)\lvec(0 2)
		\move(0 2)\lvec(2 2)
		\move(0 2)\lvec(0 16)
		\move(2 0)\lvec(2 16)
		\move(0 16)\lvec(2 16)
		
		\move(0 14)\lvec(2 14)
		\move(0 12)\lvec(2 12)
		\move(0 11)\lvec(2 11)
		\move(0 9)\lvec(2 9)
		\move(0 8)\lvec(2 8)
		\move(0 6)\lvec(2 6)
		\move(0 4)\lvec(2 4)
		
		\move(0.6 9)\lvec(0.6 11)
		\move(1.4 9)\lvec(1.4 11)
		
		\move(0.6 12)\lvec(0.6 16)
		\move(1.4 12)\lvec(1.4 16)
		\htext(0.2 1.6){$0$}
		\htext(1.6 1.6){$0$}
		\htext(0.4 0.4){$1$}
		\htext(1.8 0.4){$1$}
		\move(0 0)\lvec(0.6 2)
		\move(1.4 0)\lvec(2 2)		
		\move(0.6 0)\lvec(0.6 8)
		\move(1.4 0)\lvec(1.4 8)
		\htext(1.05 1){$\cdots$}
		\htext(1.05 3){$\cdots$}
		\htext(1.05 5){$\cdots$}
		\htext(1.05 7){$\cdots$}
		\vtext(1.05 8.5){$\cdots$}
		\htext(1.05 10){$\cdots$}
		\vtext(1.05 11.5){$\cdots$}
		\htext(1.05 13){$\cdots$}
		\htext(1.05 15){$\cdots$}
		
		\htext(0.3 3){$2$}
		\htext(1.7 3){$2$}
		\htext(0.3 5){$3$}
		\htext(1.7 5){$3$}
		\htext(0.3 7){$4$}
		\htext(1.7 7){$4$}
		\htext(0.3 10){$n$\!-\!$1$}
		\htext(1.7 10){$n$\!-\!$1$}
		\htext(0.3 13){$i$\!+\!$1$}
		\htext(1.7 13){$i$\!+\!$1$}
		\htext(0.3 15){$i$}
		\htext(1.7 15){$i$}
		\htext(2.55 1){\fontsize{8}{8}\selectfont$\cdots$}
		\htext(1 -0.3){$\underbrace{\rule{3.5em}{0em}}$}
		\htext(1 -0.7){$\bar{x}_{i}$}
		\esegment
		
		\move(24 0)
		\bsegment
		\move(0 0)\lvec(2 0)
		\move(0 0)\lvec(0 2)
		\move(0 2)\lvec(2 2)
		\move(0 2)\lvec(0 18)
		\move(0 4)\lvec(2 4)
		\move(0 6)\lvec(2 6)
		\move(0 8)\lvec(2 8)
		\move(0 9)\lvec(2 9)
		\move(0 11)\lvec(2 11)
		\move(0 13)\lvec(2 13)
		\move(0 15)\lvec(2 15)
		\move(0 16)\lvec(2 16)
		\move(0 18)\lvec(2 18)
		\move(0.6 0)\lvec(0.6 8)
		\move(1.4 0)\lvec(1.4 8)
		\move(0.6 9)\lvec(0.6 15)
		\move(1.4 9)\lvec(1.4 15)
		\move(0.6 16)\lvec(0.6 18)
		\move(1.4 16)\lvec(1.4 18)
		\htext(1.05 1){$\cdots$}
		\htext(1.05 3){$\cdots$}
		\htext(1.05 5){$\cdots$}
		\htext(1.05 7){$\cdots$}
		\vtext(1.05 8.5){$\cdots$}
		\htext(1.05 10){$\cdots$}
		\htext(1.05 12){$\cdots$}
		\htext(1.05 14){$\cdots$}
		\vtext(1.05 15.5){$\cdots$}
		\htext(1.05 17){$\cdots$}
		\htext(0.2 1.6){$0$}
		\htext(1.6 1.6){$0$}
		\htext(0.4 0.4){$1$}
		\htext(1.8 0.4){$1$}
		\move(0 0)\lvec(0.6 2)
		\move(1.4 0)\lvec(2 2)
		\htext(0.3 3){$2$}
		\htext(1.7 3){$2$}
		\htext(0.3 5){$3$}
		\htext(1.7 5){$3$}
		\htext(0.3 7){$4$}
		\htext(1.7 7){$4$}
		\htext(0.3 10){$n$\!-\!$1$}
		\htext(1.7 10){$n$\!-\!$1$}
		\htext(0.3 12){$n$}
		\htext(1.7 12){$n$}
		\htext(0.3 14){$n$\!-\!$1$}
		\htext(1.7 14){$n$\!-\!$1$}
		\htext(0.3 17){$4$}
		\htext(1.7 17){$4$}
		\htext(1 -0.3){$\underbrace{\rule{3.5em}{0em}}$}
		\htext(1 -0.7){$\bar{x}_{4}$}
		\esegment
		
		\move(26 0)
		\bsegment
		\move(0 0)\lvec(2 0)
		\move(0 0)\lvec(0 2)
		\move(0 2)\lvec(2 2)
		\move(0 2)\lvec(0 20)
		\move(0 4)\lvec(2 4)
		\move(0 6)\lvec(2 6)
		\move(0 8)\lvec(2 8)
		\move(0 9)\lvec(2 9)
		\move(0 11)\lvec(2 11)
		\move(0 13)\lvec(2 13)
		\move(0 15)\lvec(2 15)
		\move(0 16)\lvec(2 16)
		\move(0 18)\lvec(2 18)
		\move(0 20)\lvec(2 20)
		\htext(0.2 1.6){$0$}
		\htext(1.6 1.6){$0$}
		\htext(0.4 0.4){$1$}
		\htext(1.8 0.4){$1$}
		\move(0 0)\lvec(0.6 2)
		\move(1.4 0)\lvec(2 2)		
		\move(0.6 0)\lvec(0.6 8)
		\move(1.4 0)\lvec(1.4 8)
		\move(0.6 9)\lvec(0.6 15)
		\move(1.4 9)\lvec(1.4 15)
		\move(0.6 16)\lvec(0.6 20)
		\move(1.4 16)\lvec(1.4 20)
		\htext(1.05 1){$\cdots$}
		\htext(1.05 3){$\cdots$}
		\htext(1.05 5){$\cdots$}
		\htext(1.05 7){$\cdots$}
		\vtext(1.05 8.5){$\cdots$}
		\htext(1.05 10){$\cdots$}
		\htext(1.05 12){$\cdots$}
		\htext(1.05 14){$\cdots$}
		\vtext(1.05 15.5){$\cdots$}
		\htext(1.05 17){$\cdots$}
		\htext(1.05 19){$\cdots$}
		
		\htext(0.3 3){$2$}
		\htext(1.7 3){$2$}
		\htext(0.3 5){$3$}
		\htext(1.7 5){$3$}
		\htext(0.3 7){$4$}
		\htext(1.7 7){$4$}
		\htext(0.3 10){$n$\!-\!$1$}
		\htext(1.7 10){$n$\!-\!$1$}
		\htext(0.3 12){$n$}
		\htext(1.7 12){$n$}
		
		\htext(0.3 14){$n$\!-\!$1$}
		\htext(1.7 14){$n$\!-\!$1$}
		\htext(0.3 17){$4$}
		\htext(1.7 17){$4$}
		\htext(0.3 19){$3$}
		\htext(1.7 19){$3$}
		\htext(1 -0.3){$\underbrace{\rule{3.5em}{0em}}$}
		\htext(1 -0.7){$\bar{x}_{3}$}
		\esegment
		
		\move(28 0)
		\bsegment
		\move(0 0)\lvec(2 0)
		\move(0 0)\lvec(0 2)
		\move(0 2)\lvec(2 2)
		\move(2 0)\lvec(2 22)
		\move(0 2)\lvec(0 22)
		\move(0 22)\lvec(2 22)
		
		\move(0 4)\lvec(2 4)
		\move(0 6)\lvec(2 6)
		\move(0 8)\lvec(2 8)
		\move(0 9)\lvec(2 9)
		\move(0 11)\lvec(2 11)
		\move(0 13)\lvec(2 13)
		\move(0 15)\lvec(2 15)
		\move(0 16)\lvec(2 16)
		\move(0 18)\lvec(2 18)
		\move(0 20)\lvec(2 20)
		
		\htext(0.2 1.6){$0$}
		\htext(1.6 1.6){$0$}
		\htext(0.4 0.4){$1$}
		\htext(1.8 0.4){$1$}
		\move(0 0)\lvec(0.6 2)
		\move(1.4 0)\lvec(2 2)		
		\move(0.6 0)\lvec(0.6 8)
		\move(1.4 0)\lvec(1.4 8)
		\move(0.6 9)\lvec(0.6 15)
		\move(1.4 9)\lvec(1.4 15)
		\move(0.6 16)\lvec(0.6 22)
		\move(1.4 16)\lvec(1.4 22)
		\htext(1.05 1){$\cdots$}
		\htext(1.05 3){$\cdots$}
		\htext(1.05 5){$\cdots$}
		\htext(1.05 7){$\cdots$}
		\vtext(1.05 8.5){$\cdots$}
		\htext(1.05 10){$\cdots$}
		\htext(1.05 12){$\cdots$}
		\htext(1.05 14){$\cdots$}
		\vtext(1.05 15.5){$\cdots$}
		\htext(1.05 17){$\cdots$}
		\htext(1.05 19){$\cdots$}
		\htext(1.05 21){$\cdots$}
		
		\htext(0.3 3){$2$}
		\htext(1.7 3){$2$}
		\htext(0.3 5){$3$}
		\htext(1.7 5){$3$}
		\htext(0.3 7){$4$}
		\htext(1.7 7){$4$}
		\htext(0.3 10){$n$\!-\!$1$}
		\htext(1.7 10){$n$\!-\!$1$}
		\htext(0.3 12){$n$}
		\htext(1.7 12){$n$}

		\htext(0.3 14){$n$\!-\!$1$}
		\htext(1.7 14){$n$\!-\!$1$}
		\htext(0.3 17){$4$}
		\htext(1.7 17){$4$}
		\htext(0.3 19){$3$}
		\htext(1.7 19){$3$}
		\htext(0.3 21){$2$}
		\htext(1.7 21){$2$}
		\htext(1 -0.3){$\underbrace{\rule{3.5em}{0em}}$}
		\htext(1 -0.7){$\bar{x}_{2}$}
		\esegment
	\end{texdraw}
\end{center}

\vskip 2mm

\item\label{(4)} $D_{n}^{(1)}$ case :
Level-$l$ reduced Young column $C_{(x_1,\cdots,x_n,\bar{x}_n,\cdots,\bar{x}_1)}$,
where $x_n=0$ or $\bar{x}_n=0$, $x_i, \bar{x}_i\ge 0$ and $\sum_{i=1}^{n}(x_i+\bar{x}_i)=l$.

\vskip 2mm

\begin{center}
	\begin{texdraw}
		\fontsize{3}{3}\selectfont
		\drawdim em
		\setunitscale 1.9
		\move(-2 0)
		\bsegment
		\move(0 0)\lvec(2 0)
		\move(0 0)\lvec(0 2)
		\move(0 2)\lvec(2 2)
		\move(0.6 0)\lvec(0.6 2)
		\move(1.4 0)\lvec(1.4 2)
		\move(0 0)\lvec(0.6 2)
		\move(1.4 0)\lvec(2 2)
		\htext(0.2 1.6){$0$}
		\htext(1.6 1.6){$0$}
		\htext(1.05 1){$\cdots$}
		\htext(1 -0.3){$\underbrace{\rule{3.5em}{0em}}$}
		\htext(1 -0.7){$x_1$}
		\esegment
		\move(0 0)
		\bsegment
		\move(0 0)\lvec(2 0)
		\move(0 0)\lvec(0 2)
		\move(0 2)\lvec(2 2)
		\move(0.6 0)\lvec(0.6 2)
		\move(1.4 0)\lvec(1.4 2)
		\move(0 0)\lvec(0.6 2)
		\move(1.4 0)\lvec(2 2)
		\htext(0.4 0.4){$1$}
		\htext(1.8 0.4){$1$}
		\htext(1.05 1){$\cdots$}
		\htext(1 -0.3){$\underbrace{\rule{3.5em}{0em}}$}
		\htext(1 -0.7){$\bar{x}_1$}
		\esegment
		\move(2 0)
		\bsegment
		\move(0 0)\lvec(2 0)
		\move(0 0)\lvec(0 2)
		\move(0 2)\lvec(2 2)
		\move(0.6 0)\lvec(0.6 2)
		\move(1.4 0)\lvec(1.4 2)
		\htext(1.05 1){$\cdots$}
		\htext(0.2 1.6){$0$}
		\htext(1.6 1.6){$0$}
		\htext(0.4 0.4){$1$}
		\htext(1.8 0.4){$1$}
		\move(0 0)\lvec(0.6 2)
		\move(1.4 0)\lvec(2 2)
		\htext(1 -0.3){$\underbrace{\rule{3.5em}{0em}}$}
		\htext(1 -0.7){$x_2$}
		\esegment
		
		\move(4 0)
		\bsegment
		\move(0 0)\lvec(2 0)
		\move(0 0)\lvec(0 2)
		\move(0 2)\lvec(2 2)
		\move(0.6 0)\lvec(0.6 4)
		\move(1.4 0)\lvec(1.4 4)
		\htext(1.05 1){$\cdots$}
		\htext(1.05 3){$\cdots$}
		\move(0 2)\lvec(0 4)
		\move(2 2)\lvec(2 4)
		\move(0 4)\lvec(2 4)
		\htext(0.2 1.6){$0$}
		\htext(1.6 1.6){$0$}
		\htext(0.4 0.4){$1$}
		\htext(1.8 0.4){$1$}
		\move(0 0)\lvec(0.6 2)
		\move(1.4 0)\lvec(2 2)
		\htext(0.3 3){$2$}
		\htext(1.7 3){$2$}
		\htext(1 -0.3){$\underbrace{\rule{3.5em}{0em}}$}
		\htext(1 -0.7){$x_3$}
		\esegment
		
		\move(6 0)
		\bsegment
		\move(0 0)\lvec(2 0)
		\move(0 0)\lvec(0 2)
		\move(0 2)\lvec(2 2)
		\move(2 0)\lvec(2 6)
		\move(0.6 0)\lvec(0.6 6)
		\move(1.4 0)\lvec(1.4 6)
		\htext(1.05 1){$\cdots$}
		\htext(1.05 3){$\cdots$}
		\htext(1.05 5){$\cdots$}
		\move(0 4)\lvec(0 6)
		\move(0 6)\lvec(2 6)
		\move(0 4)\lvec(2 4)
		\htext(2.55 1){\fontsize{8}{8}\selectfont$\cdots$}
		\htext(0.2 1.6){$0$}
		\htext(1.6 1.6){$0$}
		\htext(0.4 0.4){$1$}
		\htext(1.8 0.4){$1$}
		\move(0 0)\lvec(0.6 2)
		\move(1.4 0)\lvec(2 2)	
		\htext(0.3 3){$2$}
		\htext(1.7 3){$2$}
		\htext(0.3 5){$3$}
		\htext(1.7 5){$3$}
		\htext(1 -0.3){$\underbrace{\rule{3.5em}{0em}}$}
		\htext(1 -0.7){$x_4$}
		\esegment

		\move(9 0)
		\bsegment
		\move(0 0)\lvec(2 0)
		\move(0 0)\lvec(0 2)
		\move(0 2)\lvec(2 2)
		\move(0 2)\lvec(0 7)
		\move(0 7)\lvec(2 7)
		\move(0 4)\lvec(2 4)
		\move(0 5)\lvec(2 5)
		\move(0.6 0)\lvec(0.6 4)
		\move(1.4 0)\lvec(1.4 4)
		\move(0.6 5)\lvec(0.6 7)
		\move(1.4 5)\lvec(1.4 7)
		\htext(1.05 1){$\cdots$}
		\htext(1.05 3){$\cdots$}
		\vtext(1.05 4.5){$\cdots$}
		\htext(1.05 6){$\cdots$}
		\htext(0.2 1.6){$0$}
		\htext(1.6 1.6){$0$}
		\htext(0.4 0.4){$1$}
		\htext(1.8 0.4){$1$}
		\move(0 0)\lvec(0.6 2)
		\move(1.4 0)\lvec(2 2)	
		\htext(0.3 3){$2$}
		\htext(1.7 3){$2$}
		\htext(0.3 6){$i$-$1$}
		\htext(1.7 6){$i$-$1$}
		\htext(1 -0.3){$\underbrace{\rule{3.5em}{0em}}$}
		\htext(1 -0.7){$x_i$}
		\esegment
		
		\move(11 0)
		\bsegment
		\move(0 0)\lvec(2 0)
		\move(0 0)\lvec(0 2)
		\move(0 2)\lvec(2 2)
		\move(0 2)\lvec(0 9)
		\move(2 0)\lvec(2 9)
		\move(0 9)\lvec(2 9)
		\move(0 5)\lvec(2 5)
		\move(0 4)\lvec(2 4)
		\move(0 7)\lvec(2 7)
		\move(0.6 0)\lvec(0.6 4)
		\move(1.4 0)\lvec(1.4 4)
		\move(0.6 5)\lvec(0.6 9)
		\move(1.4 5)\lvec(1.4 9)
		\htext(1.05 1){$\cdots$}
		\htext(1.05 3){$\cdots$}
		\vtext(1.05 4.5){$\cdots$}
		\htext(1.05 6){$\cdots$}
		\htext(1.05 8){$\cdots$}
		\htext(0.2 1.6){$0$}
		\htext(1.6 1.6){$0$}
		\htext(0.4 0.4){$1$}
		\htext(1.8 0.4){$1$}
		\move(0 0)\lvec(0.6 2)
		\move(1.4 0)\lvec(2 2)
		\htext(0.3 3){$2$}
		\htext(1.7 3){$2$}
		\htext(0.3 6){$i$-$1$}
		\htext(1.7 6){$i$-$1$}
		\htext(0.3 8){$i$}
		\htext(1.7 8){$i$}
		\htext(2.55 1){\fontsize{8}{8}\selectfont$\cdots$}
		\htext(1 -0.3){$\underbrace{\rule{3.5em}{0em}}$}
		\htext(1 -0.7){$x_{i \!+\! 1}$}
		\esegment
		
		\move(14 0)
		\bsegment
		\move(0 0)\lvec(2 0)
		\move(0 0)\lvec(0 2)
		\move(0 2)\lvec(2 2)
		\move(0 2)\lvec(0 11)
		\move(0 11)\lvec(2 11)
		\htext(0.2 1.6){$0$}
		\htext(1.6 1.6){$0$}
		\htext(0.4 0.4){$1$}
		\htext(1.8 0.4){$1$}
		\move(0 0)\lvec(0.6 2)
		\move(1.4 0)\lvec(2 2)		
		\move(0 4)\lvec(2 4)
		\move(0 6)\lvec(2 6)
		\move(0 8)\lvec(2 8)
		\move(0 9)\lvec(2 9)
		\move(0.6 0)\lvec(0.6 8)
		\move(1.4 0)\lvec(1.4 8)
		\move(0.6 9)\lvec(0.6 11)
		\move(1.4 9)\lvec(1.4 11)
		
		\htext(1.05 1){$\cdots$}
		\htext(1.05 3){$\cdots$}
		\htext(1.05 5){$\cdots$}
		\htext(1.05 7){$\cdots$}
		\vtext(1.05 8.5){$\cdots$}
		\htext(1.05 10){$\cdots$}
		\htext(1.05 12){$\cdots$}
		
		\htext(0.3 3){$2$}
		\htext(1.7 3){$2$}
		\htext(0.3 5){$3$}
		\htext(1.7 5){$3$}
		\htext(0.3 7){$4$}
		\htext(1.7 7){$4$}
		\htext(0.3 10){$n$\!-\!$2$}
		\htext(1.7 10){$n$\!-\!$2$}
		\htext(1 -0.3){$\underbrace{\rule{3.5em}{0em}}$}
		\htext(1 -0.7){$x_{n}$}
		\move(0 11)\lvec(0 13)
		\move(0.6 11)\lvec(0.6 13)
		\move(1.4 11)\lvec(1.4 13)	
		\move(0 13)\lvec(2 13)
		\move(0 11)\lvec(0.6 13)
		
		\move(1.4 11)\lvec(2 13)
		
		\htext(0.23 12.6){$n$\!-\!$1$}
		\htext(1.63 12.6){$n$\!-\!$1$}		
		\esegment
		
		\move(16 0)
		\bsegment
		\move(0 0)\lvec(2 0)
		\move(0 0)\lvec(0 2)
		\move(0 2)\lvec(2 2)
		\move(0 2)\lvec(0 13)
		\move(2 0)\lvec(2 13)
		\move(0 13)\lvec(2 13)
		\htext(0.2 1.6){$0$}
		\htext(1.6 1.6){$0$}
		\htext(0.4 0.4){$1$}
		\htext(1.8 0.4){$1$}
		\move(0 0)\lvec(0.6 2)
		\move(1.4 0)\lvec(2 2)		
		\move(0 4)\lvec(2 4)
		\move(0 6)\lvec(2 6)
		\move(0 8)\lvec(2 8)
		\move(0 9)\lvec(2 9)
		\move(0 11)\lvec(2 11)
		
		\move(0.6 0)\lvec(0.6 8)
		\move(1.4 0)\lvec(1.4 8)
		
		\move(0.6 9)\lvec(0.6 13)
		\move(1.4 9)\lvec(1.4 13)
		
		\htext(1.05 1){$\cdots$}
		\htext(1.05 3){$\cdots$}
		\htext(1.05 5){$\cdots$}
		\htext(1.05 7){$\cdots$}
		\vtext(1.05 8.5){$\cdots$}
		\htext(1.05 10){$\cdots$}
		\htext(1.05 12){$\cdots$}

		\htext(0.3 3){$2$}
		\htext(1.7 3){$2$}
		\htext(0.3 5){$3$}
		\htext(1.7 5){$3$}
		\htext(0.3 7){$4$}
		\htext(1.7 7){$4$}
		\htext(0.3 10){$n$\!-\!$2$}
		\htext(1.7 10){$n$\!-\!$2$}
		
		\htext(1 -0.3){$\underbrace{\rule{3.5em}{0em}}$}
		\htext(1 -0.7){$\bar{x}_{n}$}
		\htext(2.55 1){\fontsize{8}{8}\selectfont$\cdots$}
		\move(0 11)\lvec(0.6 13)
		\htext(0.4 11.4){$n$}
		\move(1.4 11)\lvec(2 13)
		\htext(1.8 11.4){$n$}
		
		\esegment
		
		\move(19 0)
		\bsegment
		\move(0 0)\lvec(2 0)
		\move(0 0)\lvec(0 2)
		\move(0 2)\lvec(2 2)
		\move(0 2)\lvec(0 14)
		\move(0 14)\lvec(2 14)
		\move(0 12)\lvec(2 12)
		\move(0 11)\lvec(2 11)
		\move(0 9)\lvec(2 9)
		\move(0 8)\lvec(2 8)
		\move(0 6)\lvec(2 6)
		\move(0 4)\lvec(2 4)
		
		\move(0.6 9)\lvec(0.6 11)
		\move(1.4 9)\lvec(1.4 11)
		
		\move(0.6 12)\lvec(0.6 14)
		\move(1.4 12)\lvec(1.4 14)
		
		\move(0.6 0)\lvec(0.6 8)
		\move(1.4 0)\lvec(1.4 8)
		\htext(1.05 1){$\cdots$}
		\htext(1.05 3){$\cdots$}
		\htext(1.05 5){$\cdots$}
		\htext(1.05 7){$\cdots$}
		\vtext(1.05 8.5){$\cdots$}
		\htext(1.05 10){$\cdots$}
		\vtext(1.05 11.5){$\cdots$}
		\htext(1.05 13){$\cdots$}
		\htext(0.2 1.6){$0$}
		\htext(1.6 1.6){$0$}
		\htext(0.4 0.4){$1$}
		\htext(1.8 0.4){$1$}
		\move(0 0)\lvec(0.6 2)
		\move(1.4 0)\lvec(2 2)	
		\htext(0.3 3){$2$}
		\htext(1.7 3){$2$}
		\htext(0.3 5){$3$}
		\htext(1.7 5){$3$}
		\htext(0.3 7){$4$}
		\htext(1.7 7){$4$}
		\htext(0.3 10){$n$\!-\!$2$}
		\htext(1.7 10){$n$\!-\!$2$}
		\htext(0.3 13){$i$\!+\!$1$}
		\htext(1.7 13){$i$\!+\!$1$}
		\htext(1 -0.3){$\underbrace{\rule{3.5em}{0em}}$}
		\htext(1 -0.7){$\bar{x}_{i\!+\! 1}$}
		\esegment
		
		\move(21 0)
		\bsegment
		\move(0 0)\lvec(2 0)
		\move(0 0)\lvec(0 2)
		\move(0 2)\lvec(2 2)
		\move(0 2)\lvec(0 16)
		\move(2 0)\lvec(2 16)
		\move(0 16)\lvec(2 16)
		
		\move(0 14)\lvec(2 14)
		\move(0 12)\lvec(2 12)
		\move(0 11)\lvec(2 11)
		\move(0 9)\lvec(2 9)
		\move(0 8)\lvec(2 8)
		\move(0 6)\lvec(2 6)
		\move(0 4)\lvec(2 4)
		
		\move(0.6 9)\lvec(0.6 11)
		\move(1.4 9)\lvec(1.4 11)
		
		\move(0.6 12)\lvec(0.6 16)
		\move(1.4 12)\lvec(1.4 16)
		\htext(0.2 1.6){$0$}
		\htext(1.6 1.6){$0$}
		\htext(0.4 0.4){$1$}
		\htext(1.8 0.4){$1$}
		\move(0 0)\lvec(0.6 2)
		\move(1.4 0)\lvec(2 2)		
		\move(0.6 0)\lvec(0.6 8)
		\move(1.4 0)\lvec(1.4 8)
		\htext(1.05 1){$\cdots$}
		\htext(1.05 3){$\cdots$}
		\htext(1.05 5){$\cdots$}
		\htext(1.05 7){$\cdots$}
		\vtext(1.05 8.5){$\cdots$}
		\htext(1.05 10){$\cdots$}
		\vtext(1.05 11.5){$\cdots$}
		\htext(1.05 13){$\cdots$}
		\htext(1.05 15){$\cdots$}
		
		\htext(0.3 3){$2$}
		\htext(1.7 3){$2$}
		\htext(0.3 5){$3$}
		\htext(1.7 5){$3$}
		\htext(0.3 7){$4$}
		\htext(1.7 7){$4$}
		\htext(0.3 10){$n$\!-\!$2$}
		\htext(1.7 10){$n$\!-\!$2$}
		\htext(0.3 13){$i$\!+\!$1$}
		\htext(1.7 13){$i$\!+\!$1$}
		\htext(0.3 15){$i$}
		\htext(1.7 15){$i$}
		\htext(2.55 1){\fontsize{8}{8}\selectfont$\cdots$}
		\htext(1 -0.3){$\underbrace{\rule{3.5em}{0em}}$}
		\htext(1 -0.7){$\bar{x}_{i}$}
		\esegment
		
		\move(24 0)
		\bsegment
		\move(0 0)\lvec(2 0)
		\move(0 0)\lvec(0 2)
		\move(0 2)\lvec(2 2)
		\move(0 2)\lvec(0 18)
		\move(0 4)\lvec(2 4)
		\move(0 6)\lvec(2 6)
		\move(0 8)\lvec(2 8)
		\move(0 9)\lvec(2 9)
		\move(0 11)\lvec(2 11)
		\move(0 13)\lvec(2 13)
		\move(0 15)\lvec(2 15)
		\move(0 16)\lvec(2 16)
		\move(0 18)\lvec(2 18)
		\move(0.6 0)\lvec(0.6 8)
		\move(1.4 0)\lvec(1.4 8)
		\move(0.6 9)\lvec(0.6 15)
		\move(1.4 9)\lvec(1.4 15)
		\move(0.6 16)\lvec(0.6 18)
		\move(1.4 16)\lvec(1.4 18)
		\htext(1.05 1){$\cdots$}
		\htext(1.05 3){$\cdots$}
		\htext(1.05 5){$\cdots$}
		\htext(1.05 7){$\cdots$}
		\vtext(1.05 8.5){$\cdots$}
		\htext(1.05 10){$\cdots$}
		\htext(1.05 12){$\cdots$}
		\htext(1.05 14){$\cdots$}
		\vtext(1.05 15.5){$\cdots$}
		\htext(1.05 17){$\cdots$}
		\htext(0.2 1.6){$0$}
		\htext(1.6 1.6){$0$}
		\htext(0.4 0.4){$1$}
		\htext(1.8 0.4){$1$}
		\move(0 0)\lvec(0.6 2)
		\move(1.4 0)\lvec(2 2)
		\htext(0.3 3){$2$}
		\htext(1.7 3){$2$}
		\htext(0.3 5){$3$}
		\htext(1.7 5){$3$}
		\htext(0.3 7){$4$}
		\htext(1.7 7){$4$}
		\htext(0.3 10){$n$\!-\!$2$}
		\htext(1.7 10){$n$\!-\!$2$}
		
		\htext(0.3 14){$n$\!-\!$2$}
		\htext(1.7 14){$n$\!-\!$2$}
		\htext(0.3 17){$4$}
		\htext(1.7 17){$4$}
		\htext(1 -0.3){$\underbrace{\rule{3.5em}{0em}}$}
		\htext(1 -0.7){$\bar{x}_{4}$}
		
		\move(0 11)\lvec(0.6 13)
		\htext(0.4 11.4){$n$}
		\move(1.4 11)\lvec(2 13)
		\htext(1.8 11.4){$n$}
		
		\htext(0.23 12.6){$n$\!-\!$1$}
		\htext(1.63 12.6){$n$\!-\!$1$}
		\esegment
		
		\move(26 0)
		\bsegment
		\move(0 0)\lvec(2 0)
		\move(0 0)\lvec(0 2)
		\move(0 2)\lvec(2 2)
		\move(0 2)\lvec(0 20)
		\move(0 4)\lvec(2 4)
		\move(0 6)\lvec(2 6)
		\move(0 8)\lvec(2 8)
		\move(0 9)\lvec(2 9)
		\move(0 11)\lvec(2 11)
		\move(0 13)\lvec(2 13)
		\move(0 15)\lvec(2 15)
		\move(0 16)\lvec(2 16)
		\move(0 18)\lvec(2 18)
		\move(0 20)\lvec(2 20)
		\htext(0.2 1.6){$0$}
		\htext(1.6 1.6){$0$}
		\htext(0.4 0.4){$1$}
		\htext(1.8 0.4){$1$}
		\move(0 0)\lvec(0.6 2)
		\move(1.4 0)\lvec(2 2)		
		\move(0.6 0)\lvec(0.6 8)
		\move(1.4 0)\lvec(1.4 8)
		\move(0.6 9)\lvec(0.6 15)
		\move(1.4 9)\lvec(1.4 15)
		\move(0.6 16)\lvec(0.6 20)
		\move(1.4 16)\lvec(1.4 20)
		\htext(1.05 1){$\cdots$}
		\htext(1.05 3){$\cdots$}
		\htext(1.05 5){$\cdots$}
		\htext(1.05 7){$\cdots$}
		\vtext(1.05 8.5){$\cdots$}
		\htext(1.05 10){$\cdots$}
		\htext(1.05 12){$\cdots$}
		\htext(1.05 14){$\cdots$}
		\vtext(1.05 15.5){$\cdots$}
		\htext(1.05 17){$\cdots$}
		\htext(1.05 19){$\cdots$}
		
		\htext(0.3 3){$2$}
		\htext(1.7 3){$2$}
		\htext(0.3 5){$3$}
		\htext(1.7 5){$3$}
		\htext(0.3 7){$4$}
		\htext(1.7 7){$4$}
		\htext(0.3 10){$n$\!-\!$2$}
		\htext(1.7 10){$n$\!-\!$2$}
		\move(0 11)\lvec(0.6 13)
		\htext(0.4 11.4){$n$}
		\move(1.4 11)\lvec(2 13)
		\htext(1.8 11.4){$n$}
		
		\htext(0.3 14){$n$\!-\!$2$}
		\htext(1.7 14){$n$\!-\!$2$}
		\htext(0.3 17){$4$}
		\htext(1.7 17){$4$}
		\htext(0.3 19){$3$}
		\htext(1.7 19){$3$}
		\htext(1 -0.3){$\underbrace{\rule{3.5em}{0em}}$}
		\htext(1 -0.7){$\bar{x}_{3}$}

		\htext(0.23 12.6){$n$\!-\!$1$}
		\htext(1.63 12.6){$n$\!-\!$1$}
		\esegment
		
		\move(28 0)
		\bsegment
		\move(0 0)\lvec(2 0)
		\move(0 0)\lvec(0 2)
		\move(0 2)\lvec(2 2)
		\move(2 0)\lvec(2 22)
		\move(0 2)\lvec(0 22)
		\move(0 22)\lvec(2 22)
		
		\move(0 4)\lvec(2 4)
		\move(0 6)\lvec(2 6)
		\move(0 8)\lvec(2 8)
		\move(0 9)\lvec(2 9)
		\move(0 11)\lvec(2 11)
		\move(0 13)\lvec(2 13)
		\move(0 15)\lvec(2 15)
		\move(0 16)\lvec(2 16)
		\move(0 18)\lvec(2 18)
		\move(0 20)\lvec(2 20)
		
		\htext(0.2 1.6){$0$}
		\htext(1.6 1.6){$0$}
		\htext(0.4 0.4){$1$}
		\htext(1.8 0.4){$1$}
		\move(0 0)\lvec(0.6 2)
		\move(1.4 0)\lvec(2 2)		
		\move(0.6 0)\lvec(0.6 8)
		\move(1.4 0)\lvec(1.4 8)
		\move(0.6 9)\lvec(0.6 15)
		\move(1.4 9)\lvec(1.4 15)
		\move(0.6 16)\lvec(0.6 22)
		\move(1.4 16)\lvec(1.4 22)
		\htext(1.05 1){$\cdots$}
		\htext(1.05 3){$\cdots$}
		\htext(1.05 5){$\cdots$}
		\htext(1.05 7){$\cdots$}
		\vtext(1.05 8.5){$\cdots$}
		\htext(1.05 10){$\cdots$}
		\htext(1.05 12){$\cdots$}
		\htext(1.05 14){$\cdots$}
		\vtext(1.05 15.5){$\cdots$}
		\htext(1.05 17){$\cdots$}
		\htext(1.05 19){$\cdots$}
		\htext(1.05 21){$\cdots$}
		
		\htext(0.3 3){$2$}
		\htext(1.7 3){$2$}
		\htext(0.3 5){$3$}
		\htext(1.7 5){$3$}
		\htext(0.3 7){$4$}
		\htext(1.7 7){$4$}
		\htext(0.3 10){$n$\!-\!$2$}
		\htext(1.7 10){$n$\!-\!$2$}
		\move(0 11)\lvec(0.6 13)
		\htext(0.4 11.4){$n$}
		\move(1.4 11)\lvec(2 13)
		\htext(1.8 11.4){$n$}
		
		\htext(0.3 14){$n$\!-\!$2$}
		\htext(1.7 14){$n$\!-\!$2$}
		\htext(0.3 17){$4$}
		\htext(1.7 17){$4$}
		\htext(0.3 19){$3$}
		\htext(1.7 19){$3$}
		\htext(0.3 21){$2$}
		\htext(1.7 21){$2$}
		\htext(1 -0.3){$\underbrace{\rule{3.5em}{0em}}$}
		\htext(1 -0.7){$\bar{x}_{2}$}
		
		\htext(0.23 12.6){$n$\!-\!$1$}
		\htext(1.63 12.6){$n$\!-\!$1$}
		\esegment
	\end{texdraw}
\end{center}

\vskip 2mm

\item\label{(5)} $B_{n}^{(1)}$ case :
Level-$l$ reduced Young column $C_{(x_1,\cdots,x_n,x_0,\bar{x}_n,\cdots,\bar{x}_1)}$,
where $x_0=0$ or $1$, $x_i, \bar{x}_i\ge 0$ and $x_0+\sum_{i=1}^{n}(x_i+\bar{x}_i)=l$.

\vskip 2mm

\begin{center}
	\begin{texdraw}
		\fontsize{3}{3}\selectfont
		\drawdim em
		\setunitscale 1.8
		\move(-2 0)
		\bsegment
		\move(0 0)\lvec(2 0)
		\move(0 0)\lvec(0 2)
		\move(0 2)\lvec(2 2)
		\move(0.6 0)\lvec(0.6 2)
		\move(1.4 0)\lvec(1.4 2)
		\move(0 0)\lvec(0.6 2)
		\move(1.4 0)\lvec(2 2)
		\htext(0.2 1.6){$0$}
		\htext(1.6 1.6){$0$}
		\htext(1.05 1){$\cdots$}
		\htext(1 -0.3){$\underbrace{\rule{3.5em}{0em}}$}
		\htext(1 -0.7){$x_1$}
		\esegment
		\move(0 0)
		\bsegment
		\move(0 0)\lvec(2 0)
		\move(0 0)\lvec(0 2)
		\move(0 2)\lvec(2 2)
		\move(0.6 0)\lvec(0.6 2)
		\move(1.4 0)\lvec(1.4 2)
		\move(0 0)\lvec(0.6 2)
		\move(1.4 0)\lvec(2 2)
		\htext(0.4 0.4){$1$}
		\htext(1.8 0.4){$1$}
		\htext(1.05 1){$\cdots$}
		\htext(1 -0.3){$\underbrace{\rule{3.5em}{0em}}$}
		\htext(1 -0.7){$\bar{x}_1$}
		\esegment
		\move(2 0)
		\bsegment
		\move(0 0)\lvec(2 0)
		\move(0 0)\lvec(0 2)
		\move(0 2)\lvec(2 2)
		\move(0.6 0)\lvec(0.6 2)
		\move(1.4 0)\lvec(1.4 2)
		\htext(1.05 1){$\cdots$}
		\htext(0.2 1.6){$0$}
		\htext(1.6 1.6){$0$}
		\htext(0.4 0.4){$1$}
		\htext(1.8 0.4){$1$}
		\move(0 0)\lvec(0.6 2)
		\move(1.4 0)\lvec(2 2)
		\htext(1 -0.3){$\underbrace{\rule{3.5em}{0em}}$}
		\htext(1 -0.7){$x_2$}
		\esegment
		
		\move(4 0)
		\bsegment
		\move(0 0)\lvec(2 0)
		\move(0 0)\lvec(0 2)
		\move(0 2)\lvec(2 2)
		\move(0.6 0)\lvec(0.6 4)
		\move(1.4 0)\lvec(1.4 4)
		\htext(1.05 1){$\cdots$}
		\htext(1.05 3){$\cdots$}
		\move(0 2)\lvec(0 4)
		\move(2 2)\lvec(2 4)
		\move(0 4)\lvec(2 4)
		\htext(0.2 1.6){$0$}
		\htext(1.6 1.6){$0$}
		\htext(0.4 0.4){$1$}
		\htext(1.8 0.4){$1$}
		\move(0 0)\lvec(0.6 2)
		\move(1.4 0)\lvec(2 2)
		\htext(0.3 3){$2$}
		\htext(1.7 3){$2$}
		\htext(1 -0.3){$\underbrace{\rule{3.5em}{0em}}$}
		\htext(1 -0.7){$x_3$}
		\esegment
		
		\move(6 0)
		\bsegment
		\move(0 0)\lvec(2 0)
		\move(0 0)\lvec(0 2)
		\move(0 2)\lvec(2 2)
		\move(2 0)\lvec(2 6)
		\move(0.6 0)\lvec(0.6 6)
		\move(1.4 0)\lvec(1.4 6)
		\htext(1.05 1){$\cdots$}
		\htext(1.05 3){$\cdots$}
		\htext(1.05 5){$\cdots$}
		\move(0 4)\lvec(0 6)
		\move(0 6)\lvec(2 6)
		\move(0 4)\lvec(2 4)
		\htext(2.55 1){\fontsize{8}{8}\selectfont$\cdots$}
		\htext(0.2 1.6){$0$}
		\htext(1.6 1.6){$0$}
		\htext(0.4 0.4){$1$}
		\htext(1.8 0.4){$1$}
		\move(0 0)\lvec(0.6 2)
		\move(1.4 0)\lvec(2 2)	
		\htext(0.3 3){$2$}
		\htext(1.7 3){$2$}
		\htext(0.3 5){$3$}
		\htext(1.7 5){$3$}
		\htext(1 -0.3){$\underbrace{\rule{3.5em}{0em}}$}
		\htext(1 -0.7){$x_4$}
		\esegment

		\move(9 0)
		\bsegment
		\move(0 0)\lvec(2 0)
		\move(0 0)\lvec(0 2)
		\move(0 2)\lvec(2 2)
		\move(0 2)\lvec(0 7)
		\move(0 7)\lvec(2 7)
		\move(0 4)\lvec(2 4)
		\move(0 5)\lvec(2 5)
		\move(0.6 0)\lvec(0.6 4)
		\move(1.4 0)\lvec(1.4 4)
		\move(0.6 5)\lvec(0.6 7)
		\move(1.4 5)\lvec(1.4 7)
		\htext(1.05 1){$\cdots$}
		\htext(1.05 3){$\cdots$}
		\vtext(1.05 4.5){$\cdots$}
		\htext(1.05 6){$\cdots$}
		\htext(0.2 1.6){$0$}
		\htext(1.6 1.6){$0$}
		\htext(0.4 0.4){$1$}
		\htext(1.8 0.4){$1$}
		\move(0 0)\lvec(0.6 2)
		\move(1.4 0)\lvec(2 2)	
		\htext(0.3 3){$2$}
		\htext(1.7 3){$2$}
		\htext(0.3 6){$i$-$1$}
		\htext(1.7 6){$i$-$1$}
		\htext(1 -0.3){$\underbrace{\rule{3.5em}{0em}}$}
		\htext(1 -0.7){$x_i$}
		\esegment
		
		\move(11 0)
		\bsegment
		\move(0 0)\lvec(2 0)
		\move(0 0)\lvec(0 2)
		\move(0 2)\lvec(2 2)
		\move(0 2)\lvec(0 9)
		\move(2 0)\lvec(2 9)
		\move(0 9)\lvec(2 9)
		\move(0 5)\lvec(2 5)
		\move(0 4)\lvec(2 4)
		\move(0 7)\lvec(2 7)
		\move(0.6 0)\lvec(0.6 4)
		\move(1.4 0)\lvec(1.4 4)
		\move(0.6 5)\lvec(0.6 9)
		\move(1.4 5)\lvec(1.4 9)
		\htext(1.05 1){$\cdots$}
		\htext(1.05 3){$\cdots$}
		\vtext(1.05 4.5){$\cdots$}
		\htext(1.05 6){$\cdots$}
		\htext(1.05 8){$\cdots$}
		\htext(0.2 1.6){$0$}
		\htext(1.6 1.6){$0$}
		\htext(0.4 0.4){$1$}
		\htext(1.8 0.4){$1$}
		\move(0 0)\lvec(0.6 2)
		\move(1.4 0)\lvec(2 2)
		\htext(0.3 3){$2$}
		\htext(1.7 3){$2$}
		\htext(0.3 6){$i$-$1$}
		\htext(1.7 6){$i$-$1$}
		\htext(0.3 8){$i$}
		\htext(1.7 8){$i$}
		\htext(2.55 1){\fontsize{8}{8}\selectfont$\cdots$}
		\htext(1 -0.3){$\underbrace{\rule{3.5em}{0em}}$}
		\htext(1 -0.7){$x_{i \!+\! 1}$}
		\esegment
		
		\move(14 0)
		\bsegment
		\move(0 0)\lvec(2 0)
		\move(0 0)\lvec(0 2)
		\move(0 2)\lvec(2 2)
		\move(0 2)\lvec(0 11)
		\move(0 11)\lvec(2 11)
		\htext(0.2 1.6){$0$}
		\htext(1.6 1.6){$0$}
		\htext(0.4 0.4){$1$}
		\htext(1.8 0.4){$1$}
		\move(0 0)\lvec(0.6 2)
		\move(1.4 0)\lvec(2 2)		
		\move(0 4)\lvec(2 4)
		\move(0 6)\lvec(2 6)
		\move(0 8)\lvec(2 8)
		\move(0 9)\lvec(2 9)
		\move(0.6 0)\lvec(0.6 8)
		\move(1.4 0)\lvec(1.4 8)
		\move(0.6 9)\lvec(0.6 11)
		\move(1.4 9)\lvec(1.4 11)
		
		\htext(1.05 1){$\cdots$}
		\htext(1.05 3){$\cdots$}
		\htext(1.05 5){$\cdots$}
		\htext(1.05 7){$\cdots$}
		\vtext(1.05 8.5){$\cdots$}
		\htext(1.05 10){$\cdots$}

		\htext(0.3 3){$2$}
		\htext(1.7 3){$2$}
		\htext(0.3 5){$3$}
		\htext(1.7 5){$3$}
		\htext(0.3 7){$4$}
		\htext(1.7 7){$4$}
		\htext(0.3 10){$n$\!-\!$1$}
		\htext(1.7 10){$n$\!-\!$1$}
		\htext(1 -0.3){$\underbrace{\rule{3.5em}{0em}}$}
		\htext(1 -0.7){$x_{n}$}
		
		\esegment
		
		\move(16 0)
		\bsegment
		\move(0 0)\lvec(2 0)
		\move(0 0)\lvec(0 2)
		\move(0 2)\lvec(2 2)
		\move(0 2)\lvec(0 11)
		\move(0 11)\lvec(2 11)
		\htext(0.2 1.6){$0$}
		\htext(1.6 1.6){$0$}
		\htext(0.4 0.4){$1$}
		\htext(1.8 0.4){$1$}
		\move(0 0)\lvec(0.6 2)
		\move(1.4 0)\lvec(2 2)		
		\move(0 4)\lvec(2 4)
		\move(0 6)\lvec(2 6)
		\move(0 8)\lvec(2 8)
		\move(0 9)\lvec(2 9)
		\move(0.6 0)\lvec(0.6 8)
		\move(1.4 0)\lvec(1.4 8)
		\move(0.6 9)\lvec(0.6 11)
		\move(1.4 9)\lvec(1.4 11)
		
		\htext(1.05 1){$\cdots$}
		\htext(1.05 3){$\cdots$}
		\htext(1.05 5){$\cdots$}
		\htext(1.05 7){$\cdots$}
		\vtext(1.05 8.5){$\cdots$}
		\htext(1.05 10){$\cdots$}
		\htext(1.05 12){$\cdots$}
		
		\htext(0.3 3){$2$}
		\htext(1.7 3){$2$}
		\htext(0.3 5){$3$}
		\htext(1.7 5){$3$}
		\htext(0.3 7){$4$}
		\htext(1.7 7){$4$}
		\htext(0.3 10){$n$\!-\!$1$}
		\htext(1.7 10){$n$\!-\!$1$}
		\htext(1 -0.3){$\underbrace{\rule{3.5em}{0em}}$}
		\htext(1 -0.7){$x_{0}$}
		\move(0 11)\lvec(0 13)
		\move(0.6 11)\lvec(0.6 13)
		\move(1.4 11)\lvec(1.4 13)
		\move(0 13)\lvec(2 13)
		\move(0 11)\lvec(0.6 13)
		\move(1.4 11)\lvec(2 13)
		\htext(0.4 11.4){$n$}
		\htext(1.8 11.4){$n$}	
		\esegment

		\move(18 0)
		\bsegment
		\move(0 0)\lvec(2 0)
		\move(0 0)\lvec(0 2)
		\move(0 2)\lvec(2 2)
		\move(0 2)\lvec(0 13)
		\move(2 0)\lvec(2 13)
		\move(0 13)\lvec(2 13)
		\htext(0.2 1.6){$0$}
		\htext(1.6 1.6){$0$}
		\htext(0.4 0.4){$1$}
		\htext(1.8 0.4){$1$}
		\move(0 0)\lvec(0.6 2)
		\move(1.4 0)\lvec(2 2)		
		\move(0 4)\lvec(2 4)
		\move(0 6)\lvec(2 6)
		\move(0 8)\lvec(2 8)
		\move(0 9)\lvec(2 9)
		\move(0 11)\lvec(2 11)
		
		\move(0.6 0)\lvec(0.6 8)
		\move(1.4 0)\lvec(1.4 8)
		
		\move(0.6 9)\lvec(0.6 13)
		\move(1.4 9)\lvec(1.4 13)
		
		\htext(1.05 1){$\cdots$}
		\htext(1.05 3){$\cdots$}
		\htext(1.05 5){$\cdots$}
		\htext(1.05 7){$\cdots$}
		\vtext(1.05 8.5){$\cdots$}
		\htext(1.05 10){$\cdots$}
		\htext(1.05 12){$\cdots$}

		\htext(0.3 3){$2$}
		\htext(1.7 3){$2$}
		\htext(0.3 5){$3$}
		\htext(1.7 5){$3$}
		\htext(0.3 7){$4$}
		\htext(1.7 7){$4$}
		\htext(0.3 10){$n$\!-\!$1$}
		\htext(1.7 10){$n$\!-\!$1$}
		
		\htext(1 -0.3){$\underbrace{\rule{3.5em}{0em}}$}
		\htext(1 -0.7){$\bar{x}_{n}$}
		\htext(2.55 1){\fontsize{8}{8}\selectfont$\cdots$}
		\move(0 11)\lvec(0.6 13)
		\htext(0.4 11.4){$n$}
		\move(1.4 11)\lvec(2 13)
		\htext(1.8 11.4){$n$}
		\htext(0.2 12.6){$n$}
		\htext(1.6 12.6){$n$}
		\esegment
		
		\move(21 0)
		\bsegment
		\move(0 0)\lvec(2 0)
		\move(0 0)\lvec(0 2)
		\move(0 2)\lvec(2 2)
		\move(0 2)\lvec(0 14)
		\move(0 14)\lvec(2 14)
		\move(0 12)\lvec(2 12)
		\move(0 11)\lvec(2 11)
		\move(0 9)\lvec(2 9)
		\move(0 8)\lvec(2 8)
		\move(0 6)\lvec(2 6)
		\move(0 4)\lvec(2 4)
		
		\move(0.6 9)\lvec(0.6 11)
		\move(1.4 9)\lvec(1.4 11)
		
		\move(0.6 12)\lvec(0.6 14)
		\move(1.4 12)\lvec(1.4 14)
		
		\move(0.6 0)\lvec(0.6 8)
		\move(1.4 0)\lvec(1.4 8)
		\htext(1.05 1){$\cdots$}
		\htext(1.05 3){$\cdots$}
		\htext(1.05 5){$\cdots$}
		\htext(1.05 7){$\cdots$}
		\vtext(1.05 8.5){$\cdots$}
		\htext(1.05 10){$\cdots$}
		\vtext(1.05 11.5){$\cdots$}
		\htext(1.05 13){$\cdots$}
		\htext(0.2 1.6){$0$}
		\htext(1.6 1.6){$0$}
		\htext(0.4 0.4){$1$}
		\htext(1.8 0.4){$1$}
		\move(0 0)\lvec(0.6 2)
		\move(1.4 0)\lvec(2 2)	
		\htext(0.3 3){$2$}
		\htext(1.7 3){$2$}
		\htext(0.3 5){$3$}
		\htext(1.7 5){$3$}
		\htext(0.3 7){$4$}
		\htext(1.7 7){$4$}
		\htext(0.3 10){$n$\!-\!$1$}
		\htext(1.7 10){$n$\!-\!$1$}
		\htext(0.3 13){$i$\!+\!$1$}
		\htext(1.7 13){$i$\!+\!$1$}
		\htext(1 -0.3){$\underbrace{\rule{3.5em}{0em}}$}
		\htext(1 -0.7){$\bar{x}_{i\!+\! 1}$}
		\esegment
		
		\move(23 0)
		\bsegment
		\move(0 0)\lvec(2 0)
		\move(0 0)\lvec(0 2)
		\move(0 2)\lvec(2 2)
		\move(0 2)\lvec(0 16)
		\move(2 0)\lvec(2 16)
		\move(0 16)\lvec(2 16)
		
		\move(0 14)\lvec(2 14)
		\move(0 12)\lvec(2 12)
		\move(0 11)\lvec(2 11)
		\move(0 9)\lvec(2 9)
		\move(0 8)\lvec(2 8)
		\move(0 6)\lvec(2 6)
		\move(0 4)\lvec(2 4)
		
		\move(0.6 9)\lvec(0.6 11)
		\move(1.4 9)\lvec(1.4 11)
		
		\move(0.6 12)\lvec(0.6 16)
		\move(1.4 12)\lvec(1.4 16)
		\htext(0.2 1.6){$0$}
		\htext(1.6 1.6){$0$}
		\htext(0.4 0.4){$1$}
		\htext(1.8 0.4){$1$}
		\move(0 0)\lvec(0.6 2)
		\move(1.4 0)\lvec(2 2)		
		\move(0.6 0)\lvec(0.6 8)
		\move(1.4 0)\lvec(1.4 8)
		\htext(1.05 1){$\cdots$}
		\htext(1.05 3){$\cdots$}
		\htext(1.05 5){$\cdots$}
		\htext(1.05 7){$\cdots$}
		\vtext(1.05 8.5){$\cdots$}
		\htext(1.05 10){$\cdots$}
		\vtext(1.05 11.5){$\cdots$}
		\htext(1.05 13){$\cdots$}
		\htext(1.05 15){$\cdots$}
		
		\htext(0.3 3){$2$}
		\htext(1.7 3){$2$}
		\htext(0.3 5){$3$}
		\htext(1.7 5){$3$}
		\htext(0.3 7){$4$}
		\htext(1.7 7){$4$}
		\htext(0.3 10){$n$\!-\!$1$}
		\htext(1.7 10){$n$\!-\!$1$}
		\htext(0.3 13){$i$\!+\!$1$}
		\htext(1.7 13){$i$\!+\!$1$}
		\htext(0.3 15){$i$}
		\htext(1.7 15){$i$}
		\htext(2.55 1){\fontsize{8}{8}\selectfont$\cdots$}
		\htext(1 -0.3){$\underbrace{\rule{3.5em}{0em}}$}
		\htext(1 -0.7){$\bar{x}_{i}$}
		\esegment
		
		\move(26 0)
		\bsegment
		\move(0 0)\lvec(2 0)
		\move(0 0)\lvec(0 2)
		\move(0 2)\lvec(2 2)
		\move(0 2)\lvec(0 18)
		\move(0 4)\lvec(2 4)
		\move(0 6)\lvec(2 6)
		\move(0 8)\lvec(2 8)
		\move(0 9)\lvec(2 9)
		\move(0 11)\lvec(2 11)
		\move(0 13)\lvec(2 13)
		\move(0 15)\lvec(2 15)
		\move(0 16)\lvec(2 16)
		\move(0 18)\lvec(2 18)
		\move(0.6 0)\lvec(0.6 8)
		\move(1.4 0)\lvec(1.4 8)
		\move(0.6 9)\lvec(0.6 15)
		\move(1.4 9)\lvec(1.4 15)
		\move(0.6 16)\lvec(0.6 18)
		\move(1.4 16)\lvec(1.4 18)
		\htext(1.05 1){$\cdots$}
		\htext(1.05 3){$\cdots$}
		\htext(1.05 5){$\cdots$}
		\htext(1.05 7){$\cdots$}
		\vtext(1.05 8.5){$\cdots$}
		\htext(1.05 10){$\cdots$}
		\htext(1.05 12){$\cdots$}
		\htext(1.05 14){$\cdots$}
		\vtext(1.05 15.5){$\cdots$}
		\htext(1.05 17){$\cdots$}
		\htext(0.2 1.6){$0$}
		\htext(1.6 1.6){$0$}
		\htext(0.4 0.4){$1$}
		\htext(1.8 0.4){$1$}
		\move(0 0)\lvec(0.6 2)
		\move(1.4 0)\lvec(2 2)
		\htext(0.3 3){$2$}
		\htext(1.7 3){$2$}
		\htext(0.3 5){$3$}
		\htext(1.7 5){$3$}
		\htext(0.3 7){$4$}
		\htext(1.7 7){$4$}
		\htext(0.3 10){$n$\!-\!$1$}
		\htext(1.7 10){$n$\!-\!$1$}
		
		\htext(0.3 14){$n$\!-\!$1$}
		\htext(1.7 14){$n$\!-\!$1$}
		\htext(0.3 17){$4$}
		\htext(1.7 17){$4$}
		\htext(1 -0.3){$\underbrace{\rule{3.5em}{0em}}$}
		\htext(1 -0.7){$\bar{x}_{4}$}
		
		\move(0 11)\lvec(0.6 13)
		\htext(0.4 11.4){$n$}
		\move(1.4 11)\lvec(2 13)
		\htext(1.8 11.4){$n$}
		\htext(0.2 12.6){$n$}
		\htext(1.6 12.6){$n$}
		\esegment
		
		\move(28 0)
		\bsegment
		\move(0 0)\lvec(2 0)
		\move(0 0)\lvec(0 2)
		\move(0 2)\lvec(2 2)
		\move(0 2)\lvec(0 20)
		\move(0 4)\lvec(2 4)
		\move(0 6)\lvec(2 6)
		\move(0 8)\lvec(2 8)
		\move(0 9)\lvec(2 9)
		\move(0 11)\lvec(2 11)
		\move(0 13)\lvec(2 13)
		\move(0 15)\lvec(2 15)
		\move(0 16)\lvec(2 16)
		\move(0 18)\lvec(2 18)
		\move(0 20)\lvec(2 20)
		\htext(0.2 1.6){$0$}
		\htext(1.6 1.6){$0$}
		\htext(0.4 0.4){$1$}
		\htext(1.8 0.4){$1$}
		\move(0 0)\lvec(0.6 2)
		\move(1.4 0)\lvec(2 2)		
		\move(0.6 0)\lvec(0.6 8)
		\move(1.4 0)\lvec(1.4 8)
		\move(0.6 9)\lvec(0.6 15)
		\move(1.4 9)\lvec(1.4 15)
		\move(0.6 16)\lvec(0.6 20)
		\move(1.4 16)\lvec(1.4 20)
		\htext(1.05 1){$\cdots$}
		\htext(1.05 3){$\cdots$}
		\htext(1.05 5){$\cdots$}
		\htext(1.05 7){$\cdots$}
		\vtext(1.05 8.5){$\cdots$}
		\htext(1.05 10){$\cdots$}
		\htext(1.05 12){$\cdots$}
		\htext(1.05 14){$\cdots$}
		\vtext(1.05 15.5){$\cdots$}
		\htext(1.05 17){$\cdots$}
		\htext(1.05 19){$\cdots$}
		
		\htext(0.3 3){$2$}
		\htext(1.7 3){$2$}
		\htext(0.3 5){$3$}
		\htext(1.7 5){$3$}
		\htext(0.3 7){$4$}
		\htext(1.7 7){$4$}
		\htext(0.3 10){$n$\!-\!$1$}
		\htext(1.7 10){$n$\!-\!$1$}
		\move(0 11)\lvec(0.6 13)
		\htext(0.4 11.4){$n$}
		\move(1.4 11)\lvec(2 13)
		\htext(1.8 11.4){$n$}
		\htext(0.2 12.6){$n$}
		\htext(1.6 12.6){$n$}
		\htext(0.3 14){$n$\!-\!$1$}
		\htext(1.7 14){$n$\!-\!$1$}
		\htext(0.3 17){$4$}
		\htext(1.7 17){$4$}
		\htext(0.3 19){$3$}
		\htext(1.7 19){$3$}
		\htext(1 -0.3){$\underbrace{\rule{3.5em}{0em}}$}
		\htext(1 -0.7){$\bar{x}_{3}$}
		\esegment
		
		\move(30 0)
		\bsegment
		\move(0 0)\lvec(2 0)
		\move(0 0)\lvec(0 2)
		\move(0 2)\lvec(2 2)
		\move(2 0)\lvec(2 22)
		\move(0 2)\lvec(0 22)
		\move(0 22)\lvec(2 22)
		
		\move(0 4)\lvec(2 4)
		\move(0 6)\lvec(2 6)
		\move(0 8)\lvec(2 8)
		\move(0 9)\lvec(2 9)
		\move(0 11)\lvec(2 11)
		\move(0 13)\lvec(2 13)
		\move(0 15)\lvec(2 15)
		\move(0 16)\lvec(2 16)
		\move(0 18)\lvec(2 18)
		\move(0 20)\lvec(2 20)
		
		\htext(0.2 1.6){$0$}
		\htext(1.6 1.6){$0$}
		\htext(0.4 0.4){$1$}
		\htext(1.8 0.4){$1$}
		\move(0 0)\lvec(0.6 2)
		\move(1.4 0)\lvec(2 2)		
		\move(0.6 0)\lvec(0.6 8)
		\move(1.4 0)\lvec(1.4 8)
		\move(0.6 9)\lvec(0.6 15)
		\move(1.4 9)\lvec(1.4 15)
		\move(0.6 16)\lvec(0.6 22)
		\move(1.4 16)\lvec(1.4 22)
		\htext(1.05 1){$\cdots$}
		\htext(1.05 3){$\cdots$}
		\htext(1.05 5){$\cdots$}
		\htext(1.05 7){$\cdots$}
		\vtext(1.05 8.5){$\cdots$}
		\htext(1.05 10){$\cdots$}
		\htext(1.05 12){$\cdots$}
		\htext(1.05 14){$\cdots$}
		\vtext(1.05 15.5){$\cdots$}
		\htext(1.05 17){$\cdots$}
		\htext(1.05 19){$\cdots$}
		\htext(1.05 21){$\cdots$}
		
		\htext(0.3 3){$2$}
		\htext(1.7 3){$2$}
		\htext(0.3 5){$3$}
		\htext(1.7 5){$3$}
		\htext(0.3 7){$4$}
		\htext(1.7 7){$4$}
		\htext(0.3 10){$n$\!-\!$1$}
		\htext(1.7 10){$n$\!-\!$1$}
		\move(0 11)\lvec(0.6 13)
		\htext(0.4 11.4){$n$}
		\move(1.4 11)\lvec(2 13)
		\htext(1.8 11.4){$n$}
		\htext(0.2 12.6){$n$}
		\htext(1.6 12.6){$n$}
		\htext(0.3 14){$n$\!-\!$1$}
		\htext(1.7 14){$n$\!-\!$1$}
		\htext(0.3 17){$4$}
		\htext(1.7 17){$4$}
		\htext(0.3 19){$3$}
		\htext(1.7 19){$3$}
		\htext(0.3 21){$2$}
		\htext(1.7 21){$2$}
		\htext(1 -0.3){$\underbrace{\rule{3.5em}{0em}}$}
		\htext(1 -0.7){$\bar{x}_{2}$}
		\esegment
	\end{texdraw}
\end{center}

\vskip 2mm

\item\label{(6)} $C_{n}^{(1)}$ case :
Level-$l$ reduced Young column $C_{(x_1,\cdots,x_n,\bar{x}_n,\cdots,\bar{x}_1)}$, where $2l\ge\sum_{i=1}^{n}(x_i+\bar{x}_i)\in 2\mathbf Z$, $x_i, \bar{x}_i\ge 0$ and $l'=2l-\sum_{i=1}^{n}(x_i+\bar{x}_i)$.

\vskip 2mm

\begin{center}
	\begin{texdraw}
		\fontsize{3}{3}\selectfont
		\drawdim em
		\setunitscale 1.95
		\move(0 0)\lvec(2 0)
		\move(0 0)\lvec(0 2)
		\move(0 2)\lvec(2 2)
		\move(0.6 0)\lvec(0.6 2)
		\move(1.4 0)\lvec(1.4 2)
		\move(0 0)\lvec(0.6 2)
		\move(1.4 0)\lvec(2 2)
		\htext(0.4 0.4){$0$}
		\htext(1.8 0.4){$0$}
		\htext(1.05 1){$\cdots$}
		\htext(1 -0.3){$\underbrace{\rule{3.5em}{0em}}$}
		\htext(1 -0.6){$l'$}
		\move(2 0)
		\bsegment
		\move(0 0)\lvec(2 0)
		\move(0 0)\lvec(0 2)
		\move(0 2)\lvec(2 2)
		\move(0.6 0)\lvec(0.6 2)
		\move(1.4 0)\lvec(1.4 2)
		\htext(0.2 1.6){$0$}
		\htext(1.6 1.6){$0$}
		\htext(0.4 0.4){$0$}
		\htext(1.8 0.4){$0$}
		\move(0 0)\lvec(0.6 2)
		\move(1.4 0)\lvec(2 2)
		
		\htext(1.05 1){$\cdots$}
	    \htext(1 -0.3){$\underbrace{\rule{3.5em}{0em}}$}
		\htext(1 -0.7){$x_1$}
		\esegment
		
		\move(4 0)
		\bsegment
		\move(0 0)\lvec(2 0)
		\move(0 0)\lvec(0 2)
		\move(0 2)\lvec(2 2)
		\move(0.6 0)\lvec(0.6 4)
		\move(1.4 0)\lvec(1.4 4)
		\htext(1.05 1){$\cdots$}
		\htext(1.05 3){$\cdots$}
		\move(0 2)\lvec(0 4)
		\move(2 2)\lvec(2 4)
		\move(0 4)\lvec(2 4)
		\htext(0.2 1.6){$0$}
	    \htext(1.6 1.6){$0$}
	    \htext(0.4 0.4){$0$}
	    \htext(1.8 0.4){$0$}
     	\move(0 0)\lvec(0.6 2)
    	\move(1.4 0)\lvec(2 2)
		\htext(0.3 3){$1$}
		\htext(1.7 3){$1$}
		\htext(1 -0.3){$\underbrace{\rule{3.5em}{0em}}$}
		\htext(1 -0.7){$x_2$}
		\esegment
		
		\move(6 0)
		\bsegment
		\move(0 0)\lvec(2 0)
		\move(0 0)\lvec(0 2)
		\move(0 2)\lvec(2 2)
		\move(2 0)\lvec(2 6)
		\move(0.6 0)\lvec(0.6 6)
		\move(1.4 0)\lvec(1.4 6)
		\htext(1.05 1){$\cdots$}
		\htext(1.05 3){$\cdots$}
		\htext(1.05 5){$\cdots$}
		\move(0 4)\lvec(0 6)
		\move(0 6)\lvec(2 6)
		\move(0 4)\lvec(2 4)
		\htext(2.55 1){\fontsize{8}{8}\selectfont$\cdots$}
	\htext(0.2 1.6){$0$}
	\htext(1.6 1.6){$0$}
	\htext(0.4 0.4){$0$}
	\htext(1.8 0.4){$0$}
	\move(0 0)\lvec(0.6 2)
	\move(1.4 0)\lvec(2 2)
		\htext(0.3 3){$1$}
		\htext(1.7 3){$1$}
		\htext(0.3 5){$2$}
		\htext(1.7 5){$2$}
		\htext(1 -0.3){$\underbrace{\rule{3.5em}{0em}}$}
		\htext(1 -0.7){$x_3$}
		\esegment

		\move(9 0)
		\bsegment
		\move(0 0)\lvec(2 0)
		\move(0 0)\lvec(0 2)
		\move(0 2)\lvec(2 2)
		\move(0 2)\lvec(0 7)
		\move(0 7)\lvec(2 7)
		\move(0 4)\lvec(2 4)
		\move(0 5)\lvec(2 5)
		\move(0.6 0)\lvec(0.6 4)
		\move(1.4 0)\lvec(1.4 4)
		\move(0.6 5)\lvec(0.6 7)
		\move(1.4 5)\lvec(1.4 7)
		\htext(1.05 1){$\cdots$}
		\htext(1.05 3){$\cdots$}
		\vtext(1.05 4.5){$\cdots$}
		\htext(1.05 6){$\cdots$}
	\htext(0.2 1.6){$0$}
	\htext(1.6 1.6){$0$}
	\htext(0.4 0.4){$0$}
	\htext(1.8 0.4){$0$}
	\move(0 0)\lvec(0.6 2)
	\move(1.4 0)\lvec(2 2)
		\htext(0.3 3){$1$}
		\htext(1.7 3){$1$}
		\htext(0.3 6){$i$-$1$}
		\htext(1.7 6){$i$-$1$}
		\htext(1 -0.3){$\underbrace{\rule{3.5em}{0em}}$}
		\htext(1 -0.7){$x_i$}
		\esegment
		
		\move(11 0)
		\bsegment
		\move(0 0)\lvec(2 0)
		\move(0 0)\lvec(0 2)
		\move(0 2)\lvec(2 2)
		\move(0 2)\lvec(0 9)
		\move(2 0)\lvec(2 9)
		\move(0 9)\lvec(2 9)
		\move(0 5)\lvec(2 5)
		\move(0 4)\lvec(2 4)
		\move(0 7)\lvec(2 7)
		\move(0.6 0)\lvec(0.6 4)
		\move(1.4 0)\lvec(1.4 4)
		\move(0.6 5)\lvec(0.6 9)
		\move(1.4 5)\lvec(1.4 9)
		\htext(1.05 1){$\cdots$}
		\htext(1.05 3){$\cdots$}
		\vtext(1.05 4.5){$\cdots$}
		\htext(1.05 6){$\cdots$}
		\htext(1.05 8){$\cdots$}
	\htext(0.2 1.6){$0$}
	\htext(1.6 1.6){$0$}
	\htext(0.4 0.4){$0$}
	\htext(1.8 0.4){$0$}
	\move(0 0)\lvec(0.6 2)
	\move(1.4 0)\lvec(2 2)
		\htext(0.3 3){$1$}
		\htext(1.7 3){$1$}
		\htext(0.3 6){$i$-$1$}
		\htext(1.7 6){$i$-$1$}
		\htext(0.3 8){$i$}
		\htext(1.7 8){$i$}
		\htext(2.55 1){\fontsize{8}{8}\selectfont$\cdots$}
		\htext(1 -0.3){$\underbrace{\rule{3.5em}{0em}}$}
		\htext(1 -0.7){$x_{i \!+\! 1}$}
		\esegment
		
		\move(14 0)
		\bsegment
		\move(0 0)\lvec(2 0)
		\move(0 0)\lvec(0 2)
		\move(0 2)\lvec(2 2)
		\move(0 2)\lvec(0 11)
		\move(0 11)\lvec(2 11)
		
		\move(0 4)\lvec(2 4)
		\move(0 6)\lvec(2 6)
		\move(0 8)\lvec(2 8)
		\move(0 9)\lvec(2 9)
		\move(0.6 0)\lvec(0.6 8)
		\move(1.4 0)\lvec(1.4 8)
		\move(0.6 9)\lvec(0.6 11)
		\move(1.4 9)\lvec(1.4 11)
		
		\htext(1.05 1){$\cdots$}
		\htext(1.05 3){$\cdots$}
		\htext(1.05 5){$\cdots$}
		\htext(1.05 7){$\cdots$}
		\vtext(1.05 8.5){$\cdots$}
		\htext(1.05 10){$\cdots$}
		\htext(0.2 1.6){$0$}
		\htext(1.6 1.6){$0$}
		\htext(0.4 0.4){$0$}
		\htext(1.8 0.4){$0$}
		\move(0 0)\lvec(0.6 2)
		\move(1.4 0)\lvec(2 2)
		\htext(0.3 3){$1$}
		\htext(1.7 3){$1$}
		\htext(0.3 5){$2$}
		\htext(1.7 5){$2$}
		\htext(0.3 7){$3$}
		\htext(1.7 7){$3$}
		\htext(0.3 10){$n$\!-\!$1$}
		\htext(1.7 10){$n$\!-\!$1$}
		\htext(1 -0.3){$\underbrace{\rule{3.5em}{0em}}$}
		\htext(1 -0.7){$x_{n}$}
		\esegment
		
		\move(16 0)
		\bsegment
		\move(0 0)\lvec(2 0)
		\move(0 0)\lvec(0 2)
		\move(0 2)\lvec(2 2)
		\move(0 2)\lvec(0 13)
		\move(2 0)\lvec(2 13)
		\move(0 13)\lvec(2 13)
		
		\move(0 4)\lvec(2 4)
		\move(0 6)\lvec(2 6)
		\move(0 8)\lvec(2 8)
		\move(0 9)\lvec(2 9)
		\move(0 11)\lvec(2 11)
		
		\move(0.6 0)\lvec(0.6 8)
		\move(1.4 0)\lvec(1.4 8)
		
		\move(0.6 9)\lvec(0.6 13)
		\move(1.4 9)\lvec(1.4 13)
		
		\htext(1.05 1){$\cdots$}
		\htext(1.05 3){$\cdots$}
		\htext(1.05 5){$\cdots$}
		\htext(1.05 7){$\cdots$}
		\vtext(1.05 8.5){$\cdots$}
		\htext(1.05 10){$\cdots$}
		\htext(1.05 12){$\cdots$}
	\htext(0.2 1.6){$0$}
	\htext(1.6 1.6){$0$}
	\htext(0.4 0.4){$0$}
	\htext(1.8 0.4){$0$}
	\move(0 0)\lvec(0.6 2)
	\move(1.4 0)\lvec(2 2)
		\htext(0.3 3){$1$}
		\htext(1.7 3){$1$}
		\htext(0.3 5){$2$}
		\htext(1.7 5){$2$}
		\htext(0.3 7){$3$}
		\htext(1.7 7){$3$}
		\htext(0.3 10){$n$\!-\!$1$}
		\htext(1.7 10){$n$\!-\!$1$}
		\htext(0.3 12){$n$}
		\htext(1.7 12){$n$}
		\htext(1 -0.3){$\underbrace{\rule{3.5em}{0em}}$}
		\htext(1 -0.7){$\bar{x}_{n}$}
		\htext(2.55 1){\fontsize{8}{8}\selectfont$\cdots$}
		\esegment
		
		\move(19 0)
		\bsegment
		\move(0 0)\lvec(2 0)
		\move(0 0)\lvec(0 2)
		\move(0 2)\lvec(2 2)
		\move(0 2)\lvec(0 14)
		\move(0 14)\lvec(2 14)
		\move(0 12)\lvec(2 12)
		\move(0 11)\lvec(2 11)
		\move(0 9)\lvec(2 9)
		\move(0 8)\lvec(2 8)
		\move(0 6)\lvec(2 6)
		\move(0 4)\lvec(2 4)
		
		\move(0.6 9)\lvec(0.6 11)
		\move(1.4 9)\lvec(1.4 11)
		
		\move(0.6 12)\lvec(0.6 14)
		\move(1.4 12)\lvec(1.4 14)
		
		\move(0.6 0)\lvec(0.6 8)
		\move(1.4 0)\lvec(1.4 8)
		\htext(1.05 1){$\cdots$}
		\htext(1.05 3){$\cdots$}
		\htext(1.05 5){$\cdots$}
		\htext(1.05 7){$\cdots$}
		\vtext(1.05 8.5){$\cdots$}
		\htext(1.05 10){$\cdots$}
		\vtext(1.05 11.5){$\cdots$}
		\htext(1.05 13){$\cdots$}
\htext(0.2 1.6){$0$}
\htext(1.6 1.6){$0$}
\htext(0.4 0.4){$0$}
\htext(1.8 0.4){$0$}
\move(0 0)\lvec(0.6 2)
\move(1.4 0)\lvec(2 2)
		\htext(0.3 3){$1$}
		\htext(1.7 3){$1$}
		\htext(0.3 5){$2$}
		\htext(1.7 5){$2$}
		\htext(0.3 7){$3$}
		\htext(1.7 7){$3$}
		\htext(0.3 10){$n$\!-\!$1$}
		\htext(1.7 10){$n$\!-\!$1$}
		\htext(0.3 13){$i$\!+\!$1$}
		\htext(1.7 13){$i$\!+\!$1$}
		\htext(1 -0.3){$\underbrace{\rule{3.5em}{0em}}$}
		\htext(1 -0.7){$\bar{x}_{i\!+\! 1}$}
		\esegment
		
		\move(21 0)
		\bsegment
		\move(0 0)\lvec(2 0)
		\move(0 0)\lvec(0 2)
		\move(0 2)\lvec(2 2)
		\move(0 2)\lvec(0 16)
		\move(2 0)\lvec(2 16)
		\move(0 16)\lvec(2 16)
		
		\move(0 14)\lvec(2 14)
		\move(0 12)\lvec(2 12)
		\move(0 11)\lvec(2 11)
		\move(0 9)\lvec(2 9)
		\move(0 8)\lvec(2 8)
		\move(0 6)\lvec(2 6)
		\move(0 4)\lvec(2 4)
		
		\move(0.6 9)\lvec(0.6 11)
		\move(1.4 9)\lvec(1.4 11)
		
		\move(0.6 12)\lvec(0.6 16)
		\move(1.4 12)\lvec(1.4 16)
		
		\move(0.6 0)\lvec(0.6 8)
		\move(1.4 0)\lvec(1.4 8)
		\htext(1.05 1){$\cdots$}
		\htext(1.05 3){$\cdots$}
		\htext(1.05 5){$\cdots$}
		\htext(1.05 7){$\cdots$}
		\vtext(1.05 8.5){$\cdots$}
		\htext(1.05 10){$\cdots$}
		\vtext(1.05 11.5){$\cdots$}
		\htext(1.05 13){$\cdots$}
		\htext(1.05 15){$\cdots$}
\htext(0.2 1.6){$0$}
\htext(1.6 1.6){$0$}
\htext(0.4 0.4){$0$}
\htext(1.8 0.4){$0$}
\move(0 0)\lvec(0.6 2)
\move(1.4 0)\lvec(2 2)
		\htext(0.3 3){$1$}
		\htext(1.7 3){$1$}
		\htext(0.3 5){$2$}
		\htext(1.7 5){$2$}
		\htext(0.3 7){$3$}
		\htext(1.7 7){$3$}
		\htext(0.3 10){$n$\!-\!$1$}
		\htext(1.7 10){$n$\!-\!$1$}
		\htext(0.3 13){$i$\!+\!$1$}
		\htext(1.7 13){$i$\!+\!$1$}
		\htext(0.3 15){$i$}
		\htext(1.7 15){$i$}
		\htext(2.55 1){\fontsize{8}{8}\selectfont$\cdots$}
		\htext(1 -0.3){$\underbrace{\rule{3.5em}{0em}}$}
		\htext(1 -0.7){$\bar{x}_{i}$}
		\esegment
		
		\move(24 0)
		\bsegment
		\move(0 0)\lvec(2 0)
		\move(0 0)\lvec(0 2)
		\move(0 2)\lvec(2 2)
		\move(0 2)\lvec(0 18)
		\move(0 4)\lvec(2 4)
		\move(0 6)\lvec(2 6)
		\move(0 8)\lvec(2 8)
		\move(0 9)\lvec(2 9)
		\move(0 11)\lvec(2 11)
		\move(0 13)\lvec(2 13)
		\move(0 15)\lvec(2 15)
		\move(0 16)\lvec(2 16)
		\move(0 18)\lvec(2 18)
		\move(0.6 0)\lvec(0.6 8)
		\move(1.4 0)\lvec(1.4 8)
		\move(0.6 9)\lvec(0.6 15)
		\move(1.4 9)\lvec(1.4 15)
		\move(0.6 16)\lvec(0.6 18)
		\move(1.4 16)\lvec(1.4 18)
		\htext(1.05 1){$\cdots$}
		\htext(1.05 3){$\cdots$}
		\htext(1.05 5){$\cdots$}
		\htext(1.05 7){$\cdots$}
		\vtext(1.05 8.5){$\cdots$}
		\htext(1.05 10){$\cdots$}
		\htext(1.05 12){$\cdots$}
		\htext(1.05 14){$\cdots$}
		\vtext(1.05 15.5){$\cdots$}
		\htext(1.05 17){$\cdots$}
	\htext(0.2 1.6){$0$}
	\htext(1.6 1.6){$0$}
	\htext(0.4 0.4){$0$}
	\htext(1.8 0.4){$0$}
	\move(0 0)\lvec(0.6 2)
	\move(1.4 0)\lvec(2 2)
		\htext(0.3 3){$1$}
		\htext(1.7 3){$1$}
		\htext(0.3 5){$2$}
		\htext(1.7 5){$2$}
		\htext(0.3 7){$3$}
		\htext(1.7 7){$3$}
		\htext(0.3 10){$n$\!-\!$1$}
		\htext(1.7 10){$n$\!-\!$1$}
		\htext(0.3 12){$n$}
		\htext(1.7 12){$n$}
		\htext(0.3 14){$n$\!-\!$1$}
		\htext(1.7 14){$n$\!-\!$1$}
		\htext(0.3 17){$3$}
		\htext(1.7 17){$3$}
		\htext(1 -0.3){$\underbrace{\rule{3.5em}{0em}}$}
		\htext(1 -0.7){$\bar{x}_{3}$}
		\esegment
		
		\move(26 0)
		\bsegment
		\move(0 0)\lvec(2 0)
		\move(0 0)\lvec(0 2)
		\move(0 2)\lvec(2 2)
		\move(0 2)\lvec(0 20)
		\move(0 4)\lvec(2 4)
		\move(0 6)\lvec(2 6)
		\move(0 8)\lvec(2 8)
		\move(0 9)\lvec(2 9)
		\move(0 11)\lvec(2 11)
		\move(0 13)\lvec(2 13)
		\move(0 15)\lvec(2 15)
		\move(0 16)\lvec(2 16)
		\move(0 18)\lvec(2 18)
		\move(0 20)\lvec(2 20)
		
		\move(0.6 0)\lvec(0.6 8)
		\move(1.4 0)\lvec(1.4 8)
		\move(0.6 9)\lvec(0.6 15)
		\move(1.4 9)\lvec(1.4 15)
		\move(0.6 16)\lvec(0.6 20)
		\move(1.4 16)\lvec(1.4 20)
		\htext(1.05 1){$\cdots$}
		\htext(1.05 3){$\cdots$}
		\htext(1.05 5){$\cdots$}
		\htext(1.05 7){$\cdots$}
		\vtext(1.05 8.5){$\cdots$}
		\htext(1.05 10){$\cdots$}
		\htext(1.05 12){$\cdots$}
		\htext(1.05 14){$\cdots$}
		\vtext(1.05 15.5){$\cdots$}
		\htext(1.05 17){$\cdots$}
		\htext(1.05 19){$\cdots$}
	\htext(0.2 1.6){$0$}
	\htext(1.6 1.6){$0$}
	\htext(0.4 0.4){$0$}
	\htext(1.8 0.4){$0$}
	\move(0 0)\lvec(0.6 2)
	\move(1.4 0)\lvec(2 2)
		\htext(0.3 3){$1$}
		\htext(1.7 3){$1$}
		\htext(0.3 5){$2$}
		\htext(1.7 5){$2$}
		\htext(0.3 7){$3$}
		\htext(1.7 7){$3$}
		\htext(0.3 10){$n$\!-\!$1$}
		\htext(1.7 10){$n$\!-\!$1$}
		\htext(0.3 12){$n$}
		\htext(1.7 12){$n$}
		\htext(0.3 14){$n$\!-\!$1$}
		\htext(1.7 14){$n$\!-\!$1$}
		\htext(0.3 17){$3$}
		\htext(1.7 17){$3$}
		\htext(0.3 19){$2$}
		\htext(1.7 19){$2$}
		\htext(1 -0.3){$\underbrace{\rule{3.5em}{0em}}$}
		\htext(1 -0.7){$\bar{x}_{2}$}
		\esegment
		
		\move(28 0)
		\bsegment
		\move(0 0)\lvec(2 0)
		\move(0 0)\lvec(0 2)
		\move(0 2)\lvec(2 2)
		\move(2 0)\lvec(2 22)
		\move(0 2)\lvec(0 22)
		\move(0 22)\lvec(2 22)
		
		\move(0 4)\lvec(2 4)
		\move(0 6)\lvec(2 6)
		\move(0 8)\lvec(2 8)
		\move(0 9)\lvec(2 9)
		\move(0 11)\lvec(2 11)
		\move(0 13)\lvec(2 13)
		\move(0 15)\lvec(2 15)
		\move(0 16)\lvec(2 16)
		\move(0 18)\lvec(2 18)
		\move(0 20)\lvec(2 20)
		
		\move(0.6 0)\lvec(0.6 8)
		\move(1.4 0)\lvec(1.4 8)
		\move(0.6 9)\lvec(0.6 15)
		\move(1.4 9)\lvec(1.4 15)
		\move(0.6 16)\lvec(0.6 22)
		\move(1.4 16)\lvec(1.4 22)
		\htext(1.05 1){$\cdots$}
		\htext(1.05 3){$\cdots$}
		\htext(1.05 5){$\cdots$}
		\htext(1.05 7){$\cdots$}
		\vtext(1.05 8.5){$\cdots$}
		\htext(1.05 10){$\cdots$}
		\htext(1.05 12){$\cdots$}
		\htext(1.05 14){$\cdots$}
		\vtext(1.05 15.5){$\cdots$}
		\htext(1.05 17){$\cdots$}
		\htext(1.05 19){$\cdots$}
		\htext(1.05 21){$\cdots$}
		\htext(0.2 1.6){$0$}
		\htext(1.6 1.6){$0$}
		\htext(0.4 0.4){$0$}
		\htext(1.8 0.4){$0$}
		\move(0 0)\lvec(0.6 2)
		\move(1.4 0)\lvec(2 2)
	
		\htext(0.3 3){$1$}
		\htext(1.7 3){$1$}
		\htext(0.3 5){$2$}
		\htext(1.7 5){$2$}
		\htext(0.3 7){$3$}
		\htext(1.7 7){$3$}
		\htext(0.3 10){$n$\!-\!$1$}
		\htext(1.7 10){$n$\!-\!$1$}
		\htext(0.3 12){$n$}
		\htext(1.7 12){$n$}
		\htext(0.3 14){$n$\!-\!$1$}
		\htext(1.7 14){$n$\!-\!$1$}
		\htext(0.3 17){$3$}
		\htext(1.7 17){$3$}
		\htext(0.3 19){$2$}
		\htext(1.7 19){$2$}
		\htext(0.3 21){$1$}
		\htext(1.7 21){$1$}
		\htext(1 -0.3){$\underbrace{\rule{3.5em}{0em}}$}
		\htext(1 -0.7){$\bar{x}_{1}$}
		\esegment
	\end{texdraw}
\end{center}
\end{enumerate}

\vskip 2mm

For convenience, we denote 

\begin{equation*}
	*=	
	\begin{cases}
0\ \ \text{for types $A_{2n}^{(2)}$, $D_{n+1}^{(2)}$ and $C_{n}^{(1)}$,}\\
1\ \ \text{for types $A_{2n-1}^{(2)}$, $D_{n}^{(1)}$ and $B_{n}^{(1)}$.}
		\end{cases}
	\end{equation*}

\vskip 2mm

The level-$l$ reduced Young columns defined above should satisfy the following conditions:

\vskip 3mm 

\begin{itemize}
\item For types $A_{2n}^{(2)}\ (1\le i\le n)$, $D_{n+1}^{(2)}\ (1\le i\le n-1)$,  $A_{2n-1}^{(2)}\ (3\le i\le n)$, $D_{n}^{(1)}\ (3\le i\le n-2)$,  $B_{n}^{(1)}\ (3\le i\le n-1)$ and $C_{n}^{(1)}\ (1\le i\le n)$, we choose a part of level-$l$ reduced Young columns 3.2\eqref{(1)}-3.2\eqref{(6)} as in Picture  (\romannumeral1). We could not add $(i-1)$-block or remove $(i-1)$-block on this part if $x_i=\bar{x}_i$.
\vspace{7pt}
\item For types $A_{2n-1}^{(2)}$, $D_{n}^{(1)}$ and $B_{n}^{(1)}$, we choose a part of level-$l$ reduced Young columns 3.2\eqref{(3)}-3.2\eqref{(5)} as in Picture (\romannumeral2). We could not add $0$-block or remove $0$-block, and also not add $1$-block or remove $1$-block on this part if $x_2=\bar{x}_2$.
\vspace{7pt}
\item For types $D_{n+1}^{(2)}$ and $B_{n}^{(1)}$, we choose a part of level-$l$ reduced Young columns 3.2\eqref{(2)} and 3.2\eqref{(5)} as in Picture (\romannumeral3). We could not add $(n-1)$-block or remove $(n-1)$-block on this part if $x_n=\bar{x}_n$.
\end{itemize}

\begin{center}
	\begin{texdraw}
		\fontsize{3}{3}\selectfont
		\drawdim em
		\setunitscale 2.3
		
		\move(0 0)
		\bsegment
		\move(0 0)
		\bsegment
		\move(0 0)\lvec(2 0)
		\move(0 0)\lvec(0 2)
		\move(0 2)\lvec(2 2)
		\move(0 2)\lvec(0 7)
		\move(0 7)\lvec(2 7)
		\move(0 4)\lvec(2 4)
		\move(0 5)\lvec(2 5)
		\move(0.6 0)\lvec(0.6 4)
		\move(1.4 0)\lvec(1.4 4)
		\move(0.6 5)\lvec(0.6 7)
		\move(1.4 5)\lvec(1.4 7)
		\htext(1.05 1){$\cdots$}
		\htext(1.05 3){$\cdots$}
		\vtext(1.05 4.5){$\cdots$}
		\htext(1.05 6){$\cdots$}
		\htext(0.2 1.6){$0$}
		\htext(1.6 1.6){$0$}
		\htext(0.4 0.4){$*$}
		\htext(1.8 0.4){$*$}
		\move(0 0)\lvec(0.6 2)
		\move(1.4 0)\lvec(2 2)	
		\htext(0.3 3){$2$}
		\htext(1.7 3){$2$}
		\htext(0.3 6){$i$-$1$}
		\htext(1.7 6){$i$-$1$}
		\htext(1 -0.3){$\underbrace{\rule{3.5em}{0em}}$}
		
		\htext(1 -0.7){$x_i$}
		\move(2 0)\lvec(2 7)
		\htext(2.55 1){\fontsize{8}{8}\selectfont$\cdots$}
		\htext(2.55 -1.5){\fontsize{10}{10}\selectfont{(\romannumeral1)}}
		\esegment
		\move(3 0)
		\bsegment
		\move(0 0)\lvec(2 0)
		\move(0 0)\lvec(0 2)
		\move(0 2)\lvec(2 2)
		\move(0 2)\lvec(0 16)
		\move(2 0)\lvec(2 16)
		\move(0 16)\lvec(2 16)
		
		\move(0 14)\lvec(2 14)
		\move(0 12)\lvec(2 12)
		\move(0 11)\lvec(2 11)
		\move(0 9)\lvec(2 9)
		\move(0 8)\lvec(2 8)
		\move(0 6)\lvec(2 6)
		\move(0 4)\lvec(2 4)
		
		\move(0.6 9)\lvec(0.6 11)
		\move(1.4 9)\lvec(1.4 11)
		
		\move(0.6 12)\lvec(0.6 16)
		\move(1.4 12)\lvec(1.4 16)
		\htext(0.2 1.6){$0$}
		\htext(1.6 1.6){$0$}
		\htext(0.4 0.4){$*$}
		\htext(1.8 0.4){$*$}
		\move(0 0)\lvec(0.6 2)
		\move(1.4 0)\lvec(2 2)		
		\move(0.6 0)\lvec(0.6 8)
		\move(1.4 0)\lvec(1.4 8)
		\htext(1.05 1){$\cdots$}
		\htext(1.05 3){$\cdots$}
		\htext(1.05 5){$\cdots$}
		\htext(1.05 7){$\cdots$}
		\vtext(1.05 8.5){$\cdots$}
		\htext(1.05 10){$\cdots$}
		\vtext(1.05 11.5){$\cdots$}
		\htext(1.05 13){$\cdots$}
		\htext(1.05 15){$\cdots$}
		
		\htext(0.3 3){$2$}
		\htext(1.7 3){$2$}
		\htext(0.3 5){$3$}
		\htext(1.7 5){$3$}
		\htext(0.3 7){$4$}
		\htext(1.7 7){$4$}
		\htext(0.3 10){$n$\!-\!$1$}
		\htext(1.7 10){$n$\!-\!$1$}
		\htext(0.3 13){$i$\!+\!$1$}
		\htext(1.7 13){$i$\!+\!$1$}
		\htext(0.3 15){$i$}
		\htext(1.7 15){$i$}
		
		\htext(1 -0.3){$\underbrace{\rule{3.5em}{0em}}$}
		\htext(1 -0.7){$\bar{x}_{i}$}
		\esegment		
		\esegment

		\move(10 0)
		\bsegment	
		\move(0 0)
		\bsegment
		\move(0 0)\lvec(2 0)
		\move(0 0)\lvec(0 2)
		\move(0 2)\lvec(2 2)

		\move(0.6 0)\lvec(0.6 2)
		\move(1.4 0)\lvec(1.4 2)
		
		\htext(1.05 1){$\cdots$}
		
		\htext(0.2 1.6){$0$}
		\htext(1.6 1.6){$0$}
		\htext(0.4 0.4){$1$}
		\htext(1.8 0.4){$1$}
		\move(0 0)\lvec(0.6 2)
		\move(1.4 0)\lvec(2 2)

		\htext(1 -0.3){$\underbrace{\rule{3.5em}{0em}}$}
		\htext(1 -0.7){$x_2$}
		\move(2 0)\lvec(2 2)
		\htext(2.55 1){\fontsize{8}{8}\selectfont$\cdots$}
		\htext(2.55 -1.5){\fontsize{10}{10}\selectfont{(\romannumeral2)}}
		\esegment
		\move(3 0)
		\bsegment
		\move(0 0)\lvec(2 0)
		\move(0 0)\lvec(0 2)
		\move(0 2)\lvec(2 2)
		\move(0 2)\lvec(0 16)
		\move(2 0)\lvec(2 16)
		\move(0 16)\lvec(2 16)
		
		\move(0 14)\lvec(2 14)
		\move(0 12)\lvec(2 12)
		\move(0 11)\lvec(2 11)
		\move(0 9)\lvec(2 9)
		\move(0 8)\lvec(2 8)
		\move(0 6)\lvec(2 6)
		\move(0 4)\lvec(2 4)
		
		\move(0.6 9)\lvec(0.6 11)
		\move(1.4 9)\lvec(1.4 11)
		
		\move(0.6 12)\lvec(0.6 16)
		\move(1.4 12)\lvec(1.4 16)
		\htext(0.2 1.6){$0$}
		\htext(1.6 1.6){$0$}
		\htext(0.4 0.4){$1$}
		\htext(1.8 0.4){$1$}
		\move(0 0)\lvec(0.6 2)
		\move(1.4 0)\lvec(2 2)		
		\move(0.6 0)\lvec(0.6 8)
		\move(1.4 0)\lvec(1.4 8)
		\htext(1.05 1){$\cdots$}
		\htext(1.05 3){$\cdots$}
		\htext(1.05 5){$\cdots$}
		\htext(1.05 7){$\cdots$}
		\vtext(1.05 8.5){$\cdots$}
		\htext(1.05 10){$\cdots$}
		\vtext(1.05 11.5){$\cdots$}
		\htext(1.05 13){$\cdots$}
		\htext(1.05 15){$\cdots$}
		
		\htext(0.3 3){$2$}
		\htext(1.7 3){$2$}
		\htext(0.3 5){$3$}
		\htext(1.7 5){$3$}
		\htext(0.3 7){$4$}
		\htext(1.7 7){$4$}
		\htext(0.3 10){$n$\!-\!$1$}
		\htext(1.7 10){$n$\!-\!$1$}
		\htext(0.3 13){$3$}
		\htext(1.7 13){$3$}
		\htext(0.3 15){$2$}
		\htext(1.7 15){$2$}
		
		\htext(1 -0.3){$\underbrace{\rule{3.5em}{0em}}$}
		\htext(1 -0.7){$\bar{x}_{2}$}
		\esegment
		\esegment

		\move(20 0)
		\bsegment	
		\move(0 0)
		\bsegment
		\move(0 0)\lvec(2 0)
		\move(0 0)\lvec(0 2)
		\move(0 2)\lvec(2 2)
		\move(0 2)\lvec(0 11)
		\move(0 11)\lvec(2 11)
		\move(2 0)\lvec(2 13)
		\move(2 0)\lvec(2.6 0)
		\move(2 2)\lvec(2.6 2)
		\move(2 4)\lvec(2.6 4)
		\move(2 6)\lvec(2.6 6)
		\move(2 8)\lvec(2.6 8)
		\move(2 9)\lvec(2.6 9)
		\move(2 11)\lvec(2.6 11)
		\move(2 13)\lvec(2.6 13)
		\move(2 0)\lvec(2.6 2)
		\htext(2.2 1.6){$0$}
		\htext(2.4 0.4){$1$}
		\htext(2.3 3){$2$}
		\htext(2.3 5){$3$}
		\htext(2.3 7){$4$}
		\htext(2.3 10){$n$\!-\!$1$}		
		\move(2 11)\lvec(2.6 13)
		\htext(2.4 11.4){$n$}

		\htext(0.2 1.6){$0$}
		\htext(1.6 1.6){$0$}
		\htext(0.4 0.4){$1$}
		\htext(1.8 0.4){$1$}
		\move(0 0)\lvec(0.6 2)
		\move(1.4 0)\lvec(2 2)		
		\move(0 4)\lvec(2 4)
		\move(0 6)\lvec(2 6)
		\move(0 8)\lvec(2 8)
		\move(0 9)\lvec(2 9)
		\move(0.6 0)\lvec(0.6 8)
		\move(1.4 0)\lvec(1.4 8)
		\move(0.6 9)\lvec(0.6 11)
		\move(1.4 9)\lvec(1.4 11)
		
		\htext(1.05 1){$\cdots$}
		\htext(1.05 3){$\cdots$}
		\htext(1.05 5){$\cdots$}
		\htext(1.05 7){$\cdots$}
		\vtext(1.05 8.5){$\cdots$}
		\htext(1.05 10){$\cdots$}

		\htext(0.3 3){$2$}
		\htext(1.7 3){$2$}
		\htext(0.3 5){$3$}
		\htext(1.7 5){$3$}
		\htext(0.3 7){$4$}
		\htext(1.7 7){$4$}
		\htext(0.3 10){$n$\!-\!$1$}
		\htext(1.7 10){$n$\!-\!$1$}
		\htext(1 -0.3){$\underbrace{\rule{3.5em}{0em}}$}
		\htext(1 -0.7){$x_{n}$}
		\htext(2.3 -0.3){$\underbrace{\rule{0em}{0em}}$}
		\htext(2.3 -0.7){$x_{0}$}
		\htext(2.3 -1.5){\fontsize{10}{10}\selectfont{(\romannumeral3)}}	
		\esegment
		
		\move(2.6 0)
		\bsegment
		\move(0 0)\lvec(2 0)
		\move(0 0)\lvec(0 2)
		\move(0 2)\lvec(2 2)
		\move(0 2)\lvec(0 13)
		\move(2 0)\lvec(2 13)
		\move(0 13)\lvec(2 13)
		\htext(0.2 1.6){$0$}
		\htext(1.6 1.6){$0$}
		\htext(0.4 0.4){$1$}
		\htext(1.8 0.4){$1$}
		\move(0 0)\lvec(0.6 2)
		\move(1.4 0)\lvec(2 2)		
		\move(0 4)\lvec(2 4)
		\move(0 6)\lvec(2 6)
		\move(0 8)\lvec(2 8)
		\move(0 9)\lvec(2 9)
		\move(0 11)\lvec(2 11)
		
		\move(0.6 0)\lvec(0.6 8)
		\move(1.4 0)\lvec(1.4 8)
		
		\move(0.6 9)\lvec(0.6 13)
		\move(1.4 9)\lvec(1.4 13)
		
		\htext(1.05 1){$\cdots$}
		\htext(1.05 3){$\cdots$}
		\htext(1.05 5){$\cdots$}
		\htext(1.05 7){$\cdots$}
		\vtext(1.05 8.5){$\cdots$}
		\htext(1.05 10){$\cdots$}
		\htext(1.05 12){$\cdots$}

		\htext(0.3 3){$2$}
		\htext(1.7 3){$2$}
		\htext(0.3 5){$3$}
		\htext(1.7 5){$3$}
		\htext(0.3 7){$4$}
		\htext(1.7 7){$4$}
		\htext(0.3 10){$n$\!-\!$1$}
		\htext(1.7 10){$n$\!-\!$1$}
		
		\htext(1 -0.3){$\underbrace{\rule{3.5em}{0em}}$}
		\htext(1 -0.7){$\bar{x}_{n}$}
		
		\move(0 11)\lvec(0.6 13)
		\htext(0.4 11.4){$n$}
		\move(1.4 11)\lvec(2 13)
		\htext(1.8 11.4){$n$}
		\htext(0.2 12.6){$n$}
		\htext(1.6 12.6){$n$}
		\esegment
		\esegment
		
	\end{texdraw}	
\end{center}

\vskip 2mm 

We need to make some special rules for the Young columns of type $C_n^{(1)}$.  When we count the number of $0$-blocks which can be added or removed, we should treat two  $0$-blocks of shape $\frac{1}{2l}\times \frac{1}{2}\times 1$ as a whole block.

\vskip 3mm

\begin{example}
A level-$15$ reduced Young column	 of type $B_4^{(1)}$ is given below.	

\vskip 3mm

\begin{center}
	\begin{texdraw}
		
		\fontsize{5}{5}\selectfont
		\drawdim em
		\setunitscale 2.3
		
		\move(-2 0)\rlvec(-15 0)	
		
		\move(-2 2)\rlvec(-15 0)
		
		\move(-2 0 )\rlvec(0 12)
		\move(-2 0 )\rlvec(0 12)
		\move(-3 0 )\rlvec(0 10)
		\move(-4 0 )\rlvec(0 10)
		\move(-5 0 )\rlvec(0 4)
		\move(-6 0 )\rlvec(0 8)
		\move(-7 0 )\rlvec(0 8)
		\move(-8 0 )\rlvec(0 6)
		\move(-9 0 )\rlvec(0 4)
		\move(-10 0 )\rlvec(0 4)
		\move(-11 0 )\rlvec(0 2)
		\move(-12 0 )\rlvec(0 2)
		\move(-13 0 )\rlvec(0 2)
		\move(-14 0 )\rlvec(0 2)
		\move(-15 0 )\rlvec(0 2)
		
		\move(-3 10)\rlvec(-1 0)
		
		\move(-3 12)\lvec(-2 12)
		\move(-3 10)\lvec(-2 10)	
		\move(-3 8)\lvec(-2 8)
		\move(-3 6)\lvec(-2 6)
		
		\move(-6 8)\rlvec(-1 0)

		\move(-2 4)\rlvec(-8 0)
		\move(-6 6)\rlvec(-2 0)
		\move(-3 6)\rlvec(-1 0)
		\move(-3 8)\rlvec(-1 0)

		\move(-3 0)\rlvec(1 2)
		\move(-3 10)\rlvec(0 2)
		\move(-5 8)\lvec(-5 10)
		\move(-5 10)\lvec(-4 10)
		\htext(-4.5 9){$3$}	
		\move(-6 6)\rlvec(1 0)
		\move(-6 8)\rlvec(1 0)
		\move(-6 6)\rlvec(1 2)
		\htext(-5.5 5){$3$}

		\move(-4 0)\rlvec(1 2)
		\move(-5 0)\rlvec(1 2)
		\move(-5 4)\rlvec(0 4)
		\move(-4 8)\rlvec(-1 0)
		\move(-4 6)\rlvec(-1 0)
		\htext(-4.5 5){$3$}
		\move(-5 6)\rlvec(1 2)
		\htext(-4.3 6.5){$4$}
		\htext(-4.7 7.5){$4$}
		\htext(-5.3 6.5){$4$}
		\htext(-5.7 7.5){$4$}
		
		\move(-6 0)\rlvec(1 2)
		\move(-7 0)\rlvec(1 2)
		\move(-8 0)\rlvec(1 2)
		\move(-9 0)\rlvec(1 2)
		\move(-10 0)\rlvec(1 2)
		\move(-11 0)\rlvec(1 2)
		\move(-12 0)\rlvec(1 2)
		\move(-13 0)\rlvec(1 2)
		\move(-14 0)\rlvec(1 2)
		\move(-15 0)\rlvec(1 2)
		\move(-7 6)\rlvec(1 2)
		
		\move(-4 6)\rlvec(1 2)
		
		\move(-11 2)\lvec(-11 4)
		\move(-11 4)\lvec(-10 4)
		\move(-16 0)\lvec(-16 2)
		\move(-17 0)\lvec(-17 2)
		\move(-17 0)\lvec(-16 2)
		\move(-16 0)\lvec(-15 2)
		\htext(-10.5 3){$2$}
		\htext(-2.3 0.5){$1$}
		\htext(-2.7 1.5){$0$}
		\htext(-2.5 3){$2$}	
		\htext(-2.5 5){$3$}
		\move(-3 6)\lvec(-2 8)	
		\htext(-2.3 6.5){$4$}
		\htext(-2.7 7.5){$4$}	
		\htext(-2.5 9){$3$}	
		\htext(-2.5 11){$2$}				
		\htext(-3.3 0.5){$1$}
		\htext(-3.7 1.5){$0$}
		\htext(-3.5 3){$2$}
		\htext(-3.5 5){$3$}	
		\htext(-3.3 6.5){$4$}
		\htext(-3.7 7.5){$4$}
		\htext(-3.5 9){$3$}
		
		\htext(-4.3 0.5){$1$}
		\htext(-4.7 1.5){$0$}
		\htext(-4.5 3){$2$}
		
		\htext(-5.3 0.5){$1$}
		\htext(-5.7 1.5){$0$}
		\htext(-5.5 3){$2$}
		
		\htext(-6.3 0.5){$1$}
		\htext(-6.7 1.5){$0$}
		\htext(-6.5 3){$2$}
		\htext(-6.5 5){$3$}
		\htext(-6.3 6.5){$4$}

		\htext(-7.3 0.5){$1$}
		\htext(-7.7 1.5){$0$}
		\htext(-7.5 3){$2$}
		\htext(-7.5 5){$3$}

		\htext(-8.3 0.5){$1$}
		\htext(-8.7 1.5){$0$}
		\htext(-8.5 3){$2$}

		\htext(-9.3 0.5){$1$}
		\htext(-9.7 1.5){$0$}
		\htext(-9.5 3){$2$}
		
		\htext(-11.7 1.5){$0$}
		\htext(-10.3 0.5){$1$}
		\htext(-11.3 0.5){$1$}
		\htext(-12.3 0.5){$1$}
		\htext(-13.3 0.5){$1$}
		\htext(-14.3 0.5){$1$}
		\htext(-10.7 1.5){$0$}
		
		\htext(-15.7 1.5){$0$}
		\htext(-16.7 1.5){$0$}
	\end{texdraw}
\end{center}
\end{example}

Note that the Young column in Example \ref{level-15 Young Column} is not reduced.

\vskip 3mm 

\subsection{The crystal structure on Young columns}\label{The crystal structure}

\hfill

\vskip 2mm 

We denote the set of level-$l$ reduced Young columns of types $A_{2n}^{(2)}$, $D_{n+1}^{(2)}$, $A_{2n-1}^{(2)}$, $D_n^{(1)}$, $B_n^{(1)}$ and $C_n^{(1)}$ by ${\mathcal C}^{(l)}$ uniformly. We shall define the crystal structure on the set ${\mathcal C}^{(l)}$.

\vskip 3mm 

We define the maps $\tilde{\mathfrak f}_i$ ($i\in I$) from ${\mathcal C}^{(l)}$ to ${\mathcal C}^{(l)}$ by adding an $i$-block on level-$l$ reduced Young  columns. We list the definition of $\tilde{\mathfrak f}_i$ for each type as follows:

\vskip 3mm

\begin{enumerate}
	\item\label{1}  For types $A_{2n}^{(2)}\ (1\le i\le n-1)$, $D_{n+1}^{(2)}\ (1\le i\le n-1)$, $C_{n}^{(1)}\ (1\le i\le n-1)$, $A_{2n-1}^{(2)}\ (1< i\le n-1)$, $D_{n}^{(1)}\ (1< i\le n-2)$ and $B_{n}^{(1)}\ (1< i\le n-1)$, 
	 
\vskip 3mm

if $x_{i+1}\ge \bar{x}_{i+1}$, the map $\tilde{\mathfrak f}_i$ is given by

\begin{center}
	\begin{texdraw}
		\fontsize{3}{3}\selectfont
		\drawdim em
		\setunitscale 2.3
		\arrowheadsize l:0.3 w:0.3
		\arrowheadtype t:V
		\move(0 0)
		\bsegment
		\move(0 0)
		\bsegment
		\move(0 0)\lvec(2 0)
		\move(0 0)\lvec(0 2)
		\move(0 2)\lvec(2 2)
		\move(0 2)\lvec(0 7)
		\move(0 7)\lvec(2 7)
		\move(0 4)\lvec(2 4)
		\move(0 5)\lvec(2 5)
		\move(0.6 0)\lvec(0.6 4)
		\move(1.4 0)\lvec(1.4 4)
		\move(0.6 5)\lvec(0.6 7)
		\move(1.4 5)\lvec(1.4 7)
		\htext(1.05 1){$\cdots$}
		\htext(1.05 3){$\cdots$}
		\vtext(1.05 4.5){$\cdots$}
		\htext(1.05 6){$\cdots$}
		\htext(0.2 1.6){$0$}
		\htext(1.6 1.6){$0$}
		\htext(0.4 0.4){$*$}
		\htext(1.8 0.4){$*$}
		\move(0 0)\lvec(0.6 2)
		\move(1.4 0)\lvec(2 2)	
		\htext(0.3 3){$*$\!+\!$1$}
		\htext(1.7 3){$*$\!+\!$1$}
		\htext(0.3 6){$i$-$1$}
		\htext(1.7 6){$i$-$1$}
		\htext(1 -0.3){$\underbrace{\rule{4.5em}{0em}}$}
		\htext(1 -0.7){$x_i$}
		
		\esegment
		
		\move(2 0)
		\bsegment
		\move(0 0)\lvec(2 0)
		\move(0 0)\lvec(0 2)
		\move(0 2)\lvec(2 2)
		\move(0 2)\lvec(0 9)
		\move(2 0)\lvec(2 9)
		\move(0 9)\lvec(2 9)
		\move(0 5)\lvec(2 5)
		\move(0 4)\lvec(2 4)
		\move(0 7)\lvec(2 7)
		\move(0.6 0)\lvec(0.6 4)
		\move(1.4 0)\lvec(1.4 4)
		\move(0.6 5)\lvec(0.6 9)
		\move(1.4 5)\lvec(1.4 9)
		\htext(1.05 1){$\cdots$}
		\htext(1.05 3){$\cdots$}
		\vtext(1.05 4.5){$\cdots$}
		\htext(1.05 6){$\cdots$}
		\htext(1.05 8){$\cdots$}
		\htext(0.2 1.6){$0$}
		\htext(1.6 1.6){$0$}
		\htext(0.4 0.4){$*$}
		\htext(1.8 0.4){$*$}
		\move(0 0)\lvec(0.6 2)
		\move(1.4 0)\lvec(2 2)
		\htext(0.3 3){$*$\!+\!$1$}
		\htext(1.7 3){$*$\!+\!$1$}
		\htext(0.3 6){$i$-$1$}
		\htext(1.7 6){$i$-$1$}
		\htext(0.3 8){$i$}
		\htext(1.7 8){$i$}
		
		\htext(1 -0.3){$\underbrace{\rule{4.5em}{0em}}$}
		\htext(1 -0.7){$x_{i \!+\! 1}$}
		
		\move(2.5 4.5)\avec(4.5 4.5)
		\htext(3.5 5.3){	\fontsize{12}{12}\selectfont$\tilde{\mathfrak f}_i$}
		\esegment
		\esegment

		\move(7 0)
		\bsegment
		\move(0 0)
		\bsegment
		\move(0 0)\lvec(2 0)
		\move(0 0)\lvec(0 2)
		\move(0 2)\lvec(2 2)
		\move(0 2)\lvec(0 7)
		\move(0 7)\lvec(2 7)
		\move(0 4)\lvec(2 4)
		\move(0 5)\lvec(2 5)
		\move(0.6 0)\lvec(0.6 4)
		\move(1.4 0)\lvec(1.4 4)
		\move(0.6 5)\lvec(0.6 7)
		\move(1.4 5)\lvec(1.4 7)
		\htext(1.05 1){$\cdots$}
		\htext(1.05 3){$\cdots$}
		\vtext(1.05 4.5){$\cdots$}
		\htext(1.05 6){$\cdots$}
		\htext(0.2 1.6){$0$}
		\htext(1.6 1.6){$0$}
		\htext(0.4 0.4){$*$}
		\htext(1.8 0.4){$*$}
		\move(0 0)\lvec(0.6 2)
		\move(1.4 0)\lvec(2 2)	
		\htext(0.3 3){$*$\!+\!$1$}
		\htext(1.7 3){$*$\!+\!$1$}
		\htext(0.3 6){$i$-$1$}
		\htext(1.7 6){$i$-$1$}
		\htext(1 -0.3){$\underbrace{\rule{4.5em}{0em}}$}
		\htext(1 -0.7){$x_i$}
		
		\move(1.4 7)\lvec(1.4 9)\lvec(2 9)\lvec(2 7)\lvec(1.4 7) \lfill f:0.8
		\htext(1.7 8){$i$}
		\esegment
		
		\move(2 0)
		\bsegment
		\move(0 0)\lvec(2 0)
		\move(0 0)\lvec(0 2)
		\move(0 2)\lvec(2 2)
		\move(0 2)\lvec(0 9)
		\move(2 0)\lvec(2 9)
		\move(0 9)\lvec(2 9)
		\move(0 5)\lvec(2 5)
		\move(0 4)\lvec(2 4)
		\move(0 7)\lvec(2 7)
		\move(0.6 0)\lvec(0.6 4)
		\move(1.4 0)\lvec(1.4 4)
		\move(0.6 5)\lvec(0.6 9)
		\move(1.4 5)\lvec(1.4 9)
		\htext(1.05 1){$\cdots$}
		\htext(1.05 3){$\cdots$}
		\vtext(1.05 4.5){$\cdots$}
		\htext(1.05 6){$\cdots$}
		\htext(1.05 8){$\cdots$}
		\htext(0.2 1.6){$0$}
		\htext(1.6 1.6){$0$}
		\htext(0.4 0.4){$*$}
		\htext(1.8 0.4){$*$}
		\move(0 0)\lvec(0.6 2)
		\move(1.4 0)\lvec(2 2)
		\htext(0.3 3){$*$\!+\!$1$}
		\htext(1.7 3){$*$\!+\!$1$}
		\htext(0.3 6){$i$-$1$}
		\htext(1.7 6){$i$-$1$}
		\htext(0.3 8){$i$}
		\htext(1.7 8){$i$}
		
		\htext(1 -0.3){$\underbrace{\rule{4.5em}{0em}}$}
		\htext(1 -0.7){$x_{i \!+\! 1}$}
		\esegment
		\esegment	
	\end{texdraw}
\end{center}

\vskip 2mm

if $x_{i+1}< \bar{x}_{i+1}$, the map $\tilde{\mathfrak f}_i$ is given by

\vskip 7mm

\begin{center}
	\begin{texdraw}
		\fontsize{3}{3}\selectfont
		\drawdim em
		\setunitscale 2.3
		\arrowheadsize l:0.3 w:0.3
		\arrowheadtype t:V
		\move(0 0)
		\bsegment
		\move(0 0)
		\bsegment
		\move(0 0)\lvec(2 0)
		\move(0 0)\lvec(0 2)
		\move(0 2)\lvec(2 2)
		\move(0 2)\lvec(0 7)
		\move(0 7)\lvec(2 7)
		\move(0 4)\lvec(2 4)
		\move(0 5)\lvec(2 5)
		\move(0.6 0)\lvec(0.6 4)
		\move(1.4 0)\lvec(1.4 4)
		\move(0.6 5)\lvec(0.6 7)
		\move(1.4 5)\lvec(1.4 7)
		\htext(1.05 1){$\cdots$}
		\htext(1.05 3){$\cdots$}
		\vtext(1.05 4.5){$\cdots$}
		\htext(1.05 6){$\cdots$}
		\htext(0.2 1.6){$0$}
		\htext(1.6 1.6){$0$}
		\htext(0.4 0.4){$*$}
		\htext(1.8 0.4){$*$}
		\move(0 0)\lvec(0.6 2)
		\move(1.4 0)\lvec(2 2)	
		\htext(0.3 3){$*$\!+\!$1$}
		\htext(1.7 3){$*$\!+\!$1$}
		\htext(0.3 6){$i$\!+\!$1$}
		\htext(1.7 6){$i$\!+\!$1$}
		\htext(1 -0.3){$\underbrace{\rule{4.5em}{0em}}$}
		\htext(1 -0.7){$\bar{x}_{i\!+\!1}$}
		
		\esegment
		
		\move(2 0)
		\bsegment
		\move(0 0)\lvec(2 0)
		\move(0 0)\lvec(0 2)
		\move(0 2)\lvec(2 2)
		\move(0 2)\lvec(0 9)
		\move(2 0)\lvec(2 9)
		\move(0 9)\lvec(2 9)
		\move(0 5)\lvec(2 5)
		\move(0 4)\lvec(2 4)
		\move(0 7)\lvec(2 7)
		\move(0.6 0)\lvec(0.6 4)
		\move(1.4 0)\lvec(1.4 4)
		\move(0.6 5)\lvec(0.6 9)
		\move(1.4 5)\lvec(1.4 9)
		\htext(1.05 1){$\cdots$}
		\htext(1.05 3){$\cdots$}
		\vtext(1.05 4.5){$\cdots$}
		\htext(1.05 6){$\cdots$}
		\htext(1.05 8){$\cdots$}
		\htext(0.2 1.6){$0$}
		\htext(1.6 1.6){$0$}
		\htext(0.4 0.4){$*$}
		\htext(1.8 0.4){$*$}
		\move(0 0)\lvec(0.6 2)
		\move(1.4 0)\lvec(2 2)
		\htext(0.3 3){$*$\!+\!$1$}
		\htext(1.7 3){$*$\!+\!$1$}
		\htext(0.3 6){$i$\!+\!$1$}
		\htext(1.7 6){$i$\!+\!$1$}
		\htext(0.3 8){$i$}
		\htext(1.7 8){$i$}
		
		\htext(1 -0.3){$\underbrace{\rule{4.5em}{0em}}$}
		\htext(1 -0.7){$\bar{x}_i$}
		
		\move(2.5 4.5)\avec(4.5 4.5)
		\htext(3.5 5.3){	\fontsize{12}{12}\selectfont$\tilde{\mathfrak f}_i$}
		\esegment
		\esegment

		\move(7 0)
		\bsegment
		\move(0 0)
		\bsegment
		\move(0 0)\lvec(2 0)
		\move(0 0)\lvec(0 2)
		\move(0 2)\lvec(2 2)
		\move(0 2)\lvec(0 7)
		\move(0 7)\lvec(2 7)
		\move(0 4)\lvec(2 4)
		\move(0 5)\lvec(2 5)
		\move(0.6 0)\lvec(0.6 4)
		\move(1.4 0)\lvec(1.4 4)
		\move(0.6 5)\lvec(0.6 7)
		\move(1.4 5)\lvec(1.4 7)
		\htext(1.05 1){$\cdots$}
		\htext(1.05 3){$\cdots$}
		\vtext(1.05 4.5){$\cdots$}
		\htext(1.05 6){$\cdots$}
		\htext(0.2 1.6){$0$}
		\htext(1.6 1.6){$0$}
		\htext(0.4 0.4){$*$}
		\htext(1.8 0.4){$*$}
		\move(0 0)\lvec(0.6 2)
		\move(1.4 0)\lvec(2 2)	
		\htext(0.3 3){$*$\!+\!$1$}
		\htext(1.7 3){$*$\!+\!$1$}
		\htext(0.3 6){$i$\!+\!$1$}
		\htext(1.7 6){$i$\!+\!$1$}
		\htext(1 -0.3){$\underbrace{\rule{4.5em}{0em}}$}
		\htext(1 -0.7){$\bar{x}_{i\!+\!1}$}
		
		\move(1.4 7)\lvec(1.4 9)\lvec(2 9)\lvec(2 7)\lvec(1.4 7) \lfill f:0.8
		\htext(1.7 8){$i$}
		\esegment
		
		\move(2 0)
		\bsegment
		\move(0 0)\lvec(2 0)
		\move(0 0)\lvec(0 2)
		\move(0 2)\lvec(2 2)
		\move(0 2)\lvec(0 9)
		\move(2 0)\lvec(2 9)
		\move(0 9)\lvec(2 9)
		\move(0 5)\lvec(2 5)
		\move(0 4)\lvec(2 4)
		\move(0 7)\lvec(2 7)
		\move(0.6 0)\lvec(0.6 4)
		\move(1.4 0)\lvec(1.4 4)
		\move(0.6 5)\lvec(0.6 9)
		\move(1.4 5)\lvec(1.4 9)
		\htext(1.05 1){$\cdots$}
		\htext(1.05 3){$\cdots$}
		\vtext(1.05 4.5){$\cdots$}
		\htext(1.05 6){$\cdots$}
		\htext(1.05 8){$\cdots$}
		\htext(0.2 1.6){$0$}
		\htext(1.6 1.6){$0$}
		\htext(0.4 0.4){$*$}
		\htext(1.8 0.4){$*$}
		\move(0 0)\lvec(0.6 2)
		\move(1.4 0)\lvec(2 2)
		\htext(0.3 3){$*$\!+\!$1$}
		\htext(1.7 3){$*$\!+\!$1$}
		\htext(0.3 6){$i$\!+\!$1$}
		\htext(1.7 6){$i$\!+\!$1$}
		\htext(0.3 8){$i$}
		\htext(1.7 8){$i$}
		
		\htext(1 -0.3){$\underbrace{\rule{4.5em}{0em}}$}
		\htext(1 -0.7){$\bar{x}_i$}
		\esegment
		\esegment	
	\end{texdraw}
\end{center}

\item\label{2} For types $A_{2n}^{(2)}$ and $D_{n+1}^{(2)}$, 

\vskip 2mm

if $x_1\ge \bar{x}_1$,  the map $\tilde{\mathfrak f}_0$ is given by

\vskip 7mm

\begin{center}
	\begin{texdraw}
		\fontsize{3}{3}\selectfont
		\drawdim em
		\setunitscale 2.3
		\arrowheadsize l:0.3 w:0.3
		\arrowheadtype t:V
		\move(0 0)
		\bsegment
		\move(0 0)\lvec(2 0)
		\move(0 0)\lvec(0 2)
		\move(0 2)\lvec(2 2)
		\move(0.6 0)\lvec(0.6 2)
		\move(1.4 0)\lvec(1.4 2)
		\move(0 0)\lvec(0.6 2)
		\move(1.4 0)\lvec(2 2)
		\htext(0.4 0.4){$0$}
		\htext(1.8 0.4){$0$}
		\htext(1.05 1){$\cdots$}
		\htext(1 -0.3){$\underbrace{\rule{4.5em}{0em}}$}
		\htext(1 -0.6){$l'$}
		\move(2 0)
		\bsegment
		\move(0 0)\lvec(2 0)
		\move(0 0)\lvec(0 2)
		\move(0 2)\lvec(2 2)
		\move(0.6 0)\lvec(0.6 2)
		\move(1.4 0)\lvec(1.4 2)
		\htext(1.05 1){$\cdots$}
		\move(0 0)\lvec(0.6 2)
		\move(1.4 0)\lvec(2 2)
		\move(2 0)\lvec(2 2)
		\htext(0.2 1.6){$0$}
		\htext(1.6 1.6){$0$}
		\htext(0.4 0.4){$0$}
		\htext(1.8 0.4){$0$}
		\htext(1 -0.3){$\underbrace{\rule{4.5em}{0em}}$}
		\htext(1 -0.7){$x_1$}
		\move(2.5 1)\avec(4.5 1)
		\htext(3.5 1.8){	\fontsize{12}{12}\selectfont$\tilde{\mathfrak f}_0$}
		\esegment
		\esegment
		
		\move(7 0)
		\bsegment
		\move(0 0)\lvec(2 0)
		\move(0 0)\lvec(0 2)
		\move(0 2)\lvec(2 2)
		\move(0.6 0)\lvec(0.6 2)
		\move(1.4 0)\lvec(1.4 2)
		\move(0 0)\lvec(0.6 2)
		\move(1.4 0)\lvec(2 2)
		\htext(0.4 0.4){$0$}
		\htext(1.8 0.4){$0$}
		\htext(1.05 1){$\cdots$}
		\htext(1 -0.3){$\underbrace{\rule{4.5em}{0em}}$}
		\htext(1 -0.6){$l'$}
		\htext(1.6 1.6){$0$}
		\move(1.4 0)\lvec(1.4 2)\lvec(2 2)\lvec(1.4 0) \lfill f:0.8
		\move(2 0)
		\bsegment
		\move(0 0)\lvec(2 0)
		\move(0 0)\lvec(0 2)
		\move(0 2)\lvec(2 2)
		\move(0.6 0)\lvec(0.6 2)
		\move(1.4 0)\lvec(1.4 2)
		\htext(1.05 1){$\cdots$}
		\move(0 0)\lvec(0.6 2)
		\move(1.4 0)\lvec(2 2)
		\move(2 0)\lvec(2 2)
		\htext(0.2 1.6){$0$}
		\htext(1.6 1.6){$0$}
		\htext(0.4 0.4){$0$}
		\htext(1.8 0.4){$0$}
		\htext(1 -0.3){$\underbrace{\rule{4.5em}{0em}}$}
		\htext(1 -0.7){$x_1$}
		\esegment
		\esegment
	\end{texdraw}
\end{center}

\vskip 2mm

if $x_1< \bar{x}_1$,  the map $\tilde{\mathfrak f}_0$ is given by

\vskip 2mm

\begin{center}
	\begin{texdraw}
		\fontsize{3}{3}\selectfont
		\drawdim em
		\setunitscale 2.3
		\arrowheadsize l:0.3 w:0.3
		\arrowheadtype t:V
		\move(0 0)
		\bsegment
		\move(-1 0)
		\bsegment
		\move(0 0)\lvec(2 0)
		\move(0 0)\lvec(0 2)
		\move(0 2)\lvec(2 2)
		\move(2 0)\lvec(2 2)
		\move(0.6 0)\lvec(0.6 2)
		\move(1.4 0)\lvec(1.4 2)
		\move(0 0)\lvec(0.6 2)
		\move(1.4 0)\lvec(2 2)
		\htext(0.4 0.4){$0$}
		\htext(1.8 0.4){$0$}
		\htext(1.05 1){$\cdots$}
		\htext(1 -0.3){$\underbrace{\rule{4.5em}{0em}}$}
		\htext(1 -0.6){$l'$}
		\htext(2.55 1){\fontsize{8}{8}\selectfont$\cdots$}
		\esegment
		\move(2 0)
		\bsegment
		\move(0 0)\lvec(2 0)
		\move(0 0)\lvec(0 2)
		\move(0 2)\lvec(2 2)
		\move(0.6 0)\lvec(0.6 2)
		\move(1.4 0)\lvec(1.4 2)
		\htext(1.05 1){$\cdots$}
		\htext(1.05 3){$\cdots$}
		\htext(1.05 5){$\cdots$}
		\vtext(1.05 6.5){$\cdots$}
		\htext(1.05 8){$\cdots$}
		\htext(1.05 10){$\cdots$}
		\move(0 0)\lvec(0.6 2)
		\move(1.4 0)\lvec(2 2)
		\move(2 0)\lvec(2 2)
		\move(2 2)\lvec(2 6)
		\move(1.4 2)\lvec(1.4 6)
		\move(0.6 2)\lvec(0.6 6)
		\move(0 2)\lvec(0 6)
		\move(0 6)\lvec(0 7)
		\move(2 6)\lvec(2 7)
		\move(2 7)\lvec(2 11)
		\move(1.4 7)\lvec(1.4 11)
		\move(0.6 7)\lvec(0.6 11)
		\move(0 7)\lvec(0 11)
		
		\move(0 2)\lvec(2 2)
		\move(0 11)\lvec(2 11)
		\move(0 4)\lvec(2 4)
		\move(0 6)\lvec(2 6)
		\move(0 7)\lvec(2 7)
		\move(0 9)\lvec(2 9)
		\htext(0.3 3){$1$}
		\htext(0.3 5){$2$}
		\htext(0.3 8){$2$}
		\htext(0.3 10){$1$}
		\htext(1.7 3){$1$}
		\htext(1.7 5){$2$}
		\htext(1.7 8){$2$}
		\htext(1.7 10){$1$}
		
		\htext(0.2 1.6){$0$}
		\htext(1.6 1.6){$0$}
		\htext(0.4 0.4){$0$}
		\htext(1.8 0.4){$0$}
		\htext(1 -0.3){$\underbrace{\rule{4.5em}{0em}}$}
		\htext(1 -0.7){$\bar{x}_1$}
		\move(2.5 1)\avec(4.5 1)
		\htext(3.5 1.8){	\fontsize{12}{12}\selectfont$\tilde{\mathfrak f}_0$}
		\esegment
		\esegment
		
		\move(8 0)
		
		\bsegment
		\move(-1 0)
		\bsegment
		\move(0 0)\lvec(2 0)
		\move(0 0)\lvec(0 2)
		\move(0 2)\lvec(2 2)
		\move(2 0)\lvec(2 2)
		\move(0.6 0)\lvec(0.6 2)
		\move(1.4 0)\lvec(1.4 2)
		\move(0 0)\lvec(0.6 2)
		\move(1.4 0)\lvec(2 2)
		\htext(0.4 0.4){$0$}
		\htext(1.8 0.4){$0$}
		\htext(1.05 1){$\cdots$}
		\htext(1 -0.3){$\underbrace{\rule{4.5em}{0em}}$}
		\htext(1 -0.6){$l'$}
		\htext(2.55 1){\fontsize{8}{8}\selectfont$\cdots$}
		\esegment
		\move(2 0)
		\bsegment
		\move(0 0)\lvec(2 0)
		\move(0 0)\lvec(0 2)
		\move(0 2)\lvec(2 2)
		\move(0.6 0)\lvec(0.6 2)
		\move(1.4 0)\lvec(1.4 2)
		\htext(1.05 1){$\cdots$}
		\htext(1.05 3){$\cdots$}
		\htext(1.05 5){$\cdots$}
		\vtext(1.05 6.5){$\cdots$}
		\htext(1.05 8){$\cdots$}
		\htext(1.05 10){$\cdots$}
		\move(0 0)\lvec(0.6 2)
		\move(1.4 0)\lvec(2 2)
		\move(2 0)\lvec(2 2)
		\move(2 2)\lvec(2 6)
		\move(1.4 2)\lvec(1.4 6)
		\move(0.6 2)\lvec(0.6 6)
		\move(0 2)\lvec(0 6)
		\move(0 6)\lvec(0 7)
		\move(2 6)\lvec(2 7)
		\move(2 7)\lvec(2 11)
		\move(1.4 7)\lvec(1.4 11)
		\move(0.6 7)\lvec(0.6 11)
		\move(0 7)\lvec(0 11)
		
		\move(0 2)\lvec(2 2)
		\move(0 11)\lvec(2 11)
		\move(0 4)\lvec(2 4)
		\move(0 6)\lvec(2 6)
		\move(0 7)\lvec(2 7)
		\move(0 9)\lvec(2 9)
		\htext(0.3 3){$1$}
		\htext(0.3 5){$2$}
		\htext(0.3 8){$2$}
		\htext(0.3 10){$1$}
		\htext(1.7 3){$1$}
		\htext(1.7 5){$2$}
		\htext(1.7 8){$2$}
		\htext(1.7 10){$1$}
		
		\htext(0.2 1.6){$0$}
		\htext(1.6 1.6){$0$}
		\htext(0.4 0.4){$0$}
		\htext(1.8 0.4){$0$}
		\htext(1 -0.3){$\underbrace{\rule{4.5em}{0em}}$}
		\htext(1 -0.7){$\bar{x}_1$}
		\move(1.4 11)\lvec(1.4 13)\lvec(2 13) 
		
		\move(2 11)\lvec(2 13)\lvec(1.4 11)\lvec(2 11) \lfill f:0.8
		
		\htext(1.8 11.4){$0$}
		\esegment
		\esegment
	\end{texdraw}
\end{center}
where $l'=l-\sum_{i=1}^{n}(x_i+\bar{x}_i)$ for type $A_{2n}^{(2)}$ and  $l'=l-\sum_{i=1}^{n}(x_i+\bar{x}_i)-x_0$ for type $D_{n+1}^{(2)}$.

\vskip 9mm

\item\label{3} For types $A_{2n-1}^{(2)}$, $D_{n}^{(1)}$ and $B_{n}^{(1)}$, 

\vskip 2mm

if $x_2\ge \bar{x}_2$,  the map $\tilde{\mathfrak f}_1$ is given by

\vskip 10mm

\begin{center}
	\begin{texdraw}
		\fontsize{3}{3}\selectfont
		\drawdim em
		\setunitscale 2.3
		\arrowheadsize l:0.3 w:0.3
		\arrowheadtype t:V
		\move(0 0)
		\bsegment
		\move(-2 0)
		\bsegment
		\move(0 0)\lvec(2 0)
		\move(0 0)\lvec(0 2)
		\move(0 2)\lvec(2 2)
		\move(0.6 0)\lvec(0.6 2)
		\move(1.4 0)\lvec(1.4 2)
		\move(0 0)\lvec(0.6 2)
		\move(1.4 0)\lvec(2 2)
		\htext(0.2 1.6){$0$}
		\htext(1.6 1.6){$0$}
		\htext(1.05 1){$\cdots$}
		\htext(1 -0.3){$\underbrace{\rule{4.5em}{0em}}$}
		\htext(1 -0.7){$x_1$}
		\esegment
		\move(0 0)
		\bsegment
		\move(0 0)\lvec(2 0)
		\move(0 0)\lvec(0 2)
		\move(0 2)\lvec(2 2)
		\move(0.6 0)\lvec(0.6 2)
		\move(1.4 0)\lvec(1.4 2)
		\move(0 0)\lvec(0.6 2)
		\move(1.4 0)\lvec(2 2)
		\htext(0.4 0.4){$1$}
		\htext(1.8 0.4){$1$}
		\htext(1.05 1){$\cdots$}
		\htext(1 -0.3){$\underbrace{\rule{4.5em}{0em}}$}
		\htext(1 -0.7){$\bar{x}_1$}
		\esegment
		\move(2 0)
		\bsegment
		\move(0 0)\lvec(2 0)
		\move(0 0)\lvec(0 2)
		\move(0 2)\lvec(2 2)
		\move(0.6 0)\lvec(0.6 2)
		\move(1.4 0)\lvec(1.4 2)
		\htext(1.05 1){$\cdots$}
		\htext(0.2 1.6){$0$}
		\htext(1.6 1.6){$0$}
		\htext(0.4 0.4){$1$}
		\htext(1.8 0.4){$1$}
		\move(0 0)\lvec(0.6 2)
		\move(1.4 0)\lvec(2 2)
		\move(2 0)\lvec(2 2)
		\htext(1 -0.3){$\underbrace{\rule{4.5em}{0em}}$}
		\htext(1 -0.7){$x_2$}
		
		\move(2.5 1)\avec(4.5 1)
		\htext(3.5 1.8){	\fontsize{12}{12}\selectfont$\tilde{\mathfrak f}_1$}
		\esegment
		\esegment

		\move(9 0)
		\bsegment
		\move(-2 0)
		\bsegment
		\move(0 0)\lvec(2 0)
		\move(0 0)\lvec(0 2)
		\move(0 2)\lvec(2 2)
		\move(0.6 0)\lvec(0.6 2)
		\move(1.4 0)\lvec(1.4 2)
		\move(0 0)\lvec(0.6 2)
		\move(1.4 0)\lvec(2 2)
		\htext(0.2 1.6){$0$}
		\htext(1.6 1.6){$0$}
		\htext(1.05 1){$\cdots$}
		\htext(1 -0.3){$\underbrace{\rule{4.5em}{0em}}$}
		\htext(1 -0.7){$x_1$}
		\htext(1.8 0.4){$1$}
		\move(1.4 0)\lvec(2 0)\lvec(2 2)\lvec(1.4 0)\lfill f:0.8
		\esegment
		\move(0 0)
		\bsegment
		\move(0 0)\lvec(2 0)
		\move(0 0)\lvec(0 2)
		\move(0 2)\lvec(2 2)
		\move(0.6 0)\lvec(0.6 2)
		\move(1.4 0)\lvec(1.4 2)
		\move(0 0)\lvec(0.6 2)
		\move(1.4 0)\lvec(2 2)
		\htext(0.4 0.4){$1$}
		\htext(1.8 0.4){$1$}
		\htext(1.05 1){$\cdots$}
		\htext(1 -0.3){$\underbrace{\rule{4.5em}{0em}}$}
		\htext(1 -0.7){$\bar{x}_1$}
		
		\esegment
		\move(2 0)
		\bsegment
		\move(0 0)\lvec(2 0)
		\move(0 0)\lvec(0 2)
		\move(0 2)\lvec(2 2)
		\move(0.6 0)\lvec(0.6 2)
		\move(1.4 0)\lvec(1.4 2)
		\htext(1.05 1){$\cdots$}
		\htext(0.2 1.6){$0$}
		\htext(1.6 1.6){$0$}
		\htext(0.4 0.4){$1$}
		\htext(1.8 0.4){$1$}
		\move(0 0)\lvec(0.6 2)
		\move(1.4 0)\lvec(2 2)
		\move(2 0)\lvec(2 2)
		\htext(1 -0.3){$\underbrace{\rule{4.5em}{0em}}$}
		\htext(1 -0.7){$x_2$}

		\esegment
		\esegment
	\end{texdraw}
\end{center}

\vskip 2mm

if $x_2< \bar{x}_2$,  the map $\tilde{\mathfrak f}_1$ is given by

\vskip 2mm

\begin{center}
	\begin{texdraw}
		\fontsize{3}{3}\selectfont
		\drawdim em
		\setunitscale 2.3
		\arrowheadsize l:0.3 w:0.3
		\arrowheadtype t:V
		\move(0 0)
		\bsegment
		\move(-1 0)
		\bsegment
		\move(0 0)\lvec(2 0)
		\move(0 0)\lvec(0 2)
		\move(0 2)\lvec(2 2)
		\move(2 0)\lvec(2 2)
		\move(0.6 0)\lvec(0.6 2)
		\move(1.4 0)\lvec(1.4 2)
		\move(0 0)\lvec(0.6 2)
		\move(1.4 0)\lvec(2 2)
		\htext(0.4 0.4){$1$}
		\htext(1.8 0.4){$1$}
		\htext(1.05 1){$\cdots$}
		\htext(1 -0.3){$\underbrace{\rule{4.5em}{0em}}$}
		\htext(1 -0.7){$\bar{x}_1$}
		\htext(2.55 1){\fontsize{8}{8}\selectfont$\cdots$}
		\esegment
		\move(2 0)
		\bsegment
		\move(0 0)\lvec(2 0)
		\move(0 0)\lvec(0 2)
		\move(0 2)\lvec(2 2)
		\move(0.6 0)\lvec(0.6 2)
		\move(1.4 0)\lvec(1.4 2)
		\htext(1.05 1){$\cdots$}
		\htext(1.05 3){$\cdots$}
		\htext(1.05 5){$\cdots$}
		\vtext(1.05 6.5){$\cdots$}
		\htext(1.05 8){$\cdots$}
		\htext(1.05 10){$\cdots$}
		\move(0 0)\lvec(0.6 2)
		\move(1.4 0)\lvec(2 2)
		\move(2 0)\lvec(2 2)
		\move(2 2)\lvec(2 6)
		\move(1.4 2)\lvec(1.4 6)
		\move(0.6 2)\lvec(0.6 6)
		\move(0 2)\lvec(0 6)
		\move(0 6)\lvec(0 7)
		\move(2 6)\lvec(2 7)
		\move(2 7)\lvec(2 11)
		\move(1.4 7)\lvec(1.4 11)
		\move(0.6 7)\lvec(0.6 11)
		\move(0 7)\lvec(0 11)
		
		\move(0 2)\lvec(2 2)
		\move(0 11)\lvec(2 11)
		\move(0 4)\lvec(2 4)
		\move(0 6)\lvec(2 6)
		\move(0 7)\lvec(2 7)
		\move(0 9)\lvec(2 9)
		\htext(0.3 3){$2$}
		\htext(0.3 5){$3$}
		\htext(0.3 8){$3$}
		\htext(0.3 10){$2$}
		\htext(1.7 3){$2$}
		\htext(1.7 5){$3$}
		\htext(1.7 8){$3$}
		\htext(1.7 10){$2$}
		
		\htext(0.2 1.6){$0$}
		\htext(1.6 1.6){$0$}
		\htext(0.4 0.4){$1$}
		\htext(1.8 0.4){$1$}
		\htext(1 -0.3){$\underbrace{\rule{4.5em}{0em}}$}
		\htext(1 -0.7){$\bar{x}_2$}
		\move(2.5 1)\avec(4.5 1)
		\htext(3.5 1.8){	\fontsize{12}{12}\selectfont$\tilde{\mathfrak f}_1$}
		\esegment
		\esegment
		
		\move(8 0)
		\bsegment
		\move(-1 0)
		\bsegment
		\move(0 0)\lvec(2 0)
		\move(0 0)\lvec(0 2)
		\move(0 2)\lvec(2 2)
		\move(2 0)\lvec(2 2)
		\move(0.6 0)\lvec(0.6 2)
		\move(1.4 0)\lvec(1.4 2)
		\move(0 0)\lvec(0.6 2)
		\move(1.4 0)\lvec(2 2)
		\htext(0.4 0.4){$1$}
		\htext(1.8 0.4){$1$}
		\htext(1.05 1){$\cdots$}
		\htext(1 -0.3){$\underbrace{\rule{4.5em}{0em}}$}
		\htext(1 -0.7){$\bar{x}_1$}
		\htext(2.55 1){\fontsize{8}{8}\selectfont$\cdots$}
		\esegment
		\move(2 0)
		\bsegment
		\move(0 0)\lvec(2 0)
		\move(0 0)\lvec(0 2)
		\move(0 2)\lvec(2 2)
		\move(0.6 0)\lvec(0.6 2)
		\move(1.4 0)\lvec(1.4 2)
		\htext(1.05 1){$\cdots$}
		\htext(1.05 3){$\cdots$}
		\htext(1.05 5){$\cdots$}
		\vtext(1.05 6.5){$\cdots$}
		\htext(1.05 8){$\cdots$}
		\htext(1.05 10){$\cdots$}
		\move(0 0)\lvec(0.6 2)
		\move(1.4 0)\lvec(2 2)
		\move(2 0)\lvec(2 2)
		\move(2 2)\lvec(2 6)
		\move(1.4 2)\lvec(1.4 6)
		\move(0.6 2)\lvec(0.6 6)
		\move(0 2)\lvec(0 6)
		\move(0 6)\lvec(0 7)
		\move(2 6)\lvec(2 7)
		\move(2 7)\lvec(2 11)
		\move(1.4 7)\lvec(1.4 11)
		\move(0.6 7)\lvec(0.6 11)
		\move(0 7)\lvec(0 11)
		
		\move(0 2)\lvec(2 2)
		\move(0 11)\lvec(2 11)
		\move(0 4)\lvec(2 4)
		\move(0 6)\lvec(2 6)
		\move(0 7)\lvec(2 7)
		\move(0 9)\lvec(2 9)
		\htext(0.3 3){$2$}
		\htext(0.3 5){$3$}
		\htext(0.3 8){$3$}
		\htext(0.3 10){$2$}
		\htext(1.7 3){$2$}
		\htext(1.7 5){$3$}
		\htext(1.7 8){$3$}
		\htext(1.7 10){$2$}
		
		\htext(0.2 1.6){$0$}
		\htext(1.6 1.6){$0$}
		\htext(0.4 0.4){$1$}
		\htext(1.8 0.4){$1$}
		\htext(1 -0.3){$\underbrace{\rule{4.5em}{0em}}$}
		\htext(1 -0.7){$\bar{x}_2$}

		\move(1.4 11)\lvec(1.4 13)
		\move(1.4 13)\lvec(2 13)
		
		\move(1.4 11)\lvec(2 11)\lvec(2 13)\lvec(1.4 11) \lfill f:0.8
		
		\htext(1.8 11.4){$1$}
		\esegment
		\esegment
	\end{texdraw}
\end{center}

\vskip 5mm

\item\label{4} For types $A_{2n-1}^{(2)}$, $D_{n}^{(1)}$ and $B_{n}^{(1)}$, 

\vskip 2mm

if $x_2\ge \bar{x}_2$,  the map $\tilde{\mathfrak f}_0$ is given by

\vskip 10mm

\begin{center}
	\begin{texdraw}
		\fontsize{3}{3}\selectfont
		\drawdim em
		\setunitscale 2.3
		\arrowheadsize l:0.3 w:0.3
		\arrowheadtype t:V
		\move(0 0)
		\bsegment
		\move(-2 0)
		\bsegment
		\move(0 0)\lvec(2 0)
		\move(0 0)\lvec(0 2)
		\move(0 2)\lvec(2 2)
		\move(0.6 0)\lvec(0.6 2)
		\move(1.4 0)\lvec(1.4 2)
		\move(0 0)\lvec(0.6 2)
		\move(1.4 0)\lvec(2 2)
		\htext(0.2 1.6){$0$}
		\htext(1.6 1.6){$0$}
		\htext(1.05 1){$\cdots$}
		\htext(1 -0.3){$\underbrace{\rule{4.5em}{0em}}$}
		\htext(1 -0.7){$x_1$}
		\esegment
		\move(0 0)
		\bsegment
		\move(0 0)\lvec(2 0)
		\move(0 0)\lvec(0 2)
		\move(0 2)\lvec(2 2)
		\move(0.6 0)\lvec(0.6 2)
		\move(1.4 0)\lvec(1.4 2)
		\move(0 0)\lvec(0.6 2)
		\move(1.4 0)\lvec(2 2)
		\htext(0.4 0.4){$1$}
		\htext(1.8 0.4){$1$}
		\htext(1.05 1){$\cdots$}
		\htext(1 -0.3){$\underbrace{\rule{4.5em}{0em}}$}
		\htext(1 -0.7){$\bar{x}_1$}
		\esegment
		\move(2 0)
		\bsegment
		\move(0 0)\lvec(2 0)
		\move(0 0)\lvec(0 2)
		\move(0 2)\lvec(2 2)
		\move(0.6 0)\lvec(0.6 2)
		\move(1.4 0)\lvec(1.4 2)
		\htext(1.05 1){$\cdots$}
		\htext(0.2 1.6){$0$}
		\htext(1.6 1.6){$0$}
		\htext(0.4 0.4){$1$}
		\htext(1.8 0.4){$1$}
		\move(0 0)\lvec(0.6 2)
		\move(1.4 0)\lvec(2 2)
		\move(2 0)\lvec(2 2)
		\htext(1 -0.3){$\underbrace{\rule{4.5em}{0em}}$}
		\htext(1 -0.7){$x_2$}
		
		\move(2.5 1)\avec(4.5 1)
		\htext(3.5 1.8){	\fontsize{12}{12}\selectfont$\tilde{\mathfrak f}_0$}
		\esegment
		\esegment

		\move(9 0)
		\bsegment
		\move(-2 0)
		\bsegment
		\move(0 0)\lvec(2 0)
		\move(0 0)\lvec(0 2)
		\move(0 2)\lvec(2 2)
		\move(0.6 0)\lvec(0.6 2)
		\move(1.4 0)\lvec(1.4 2)
		\move(0 0)\lvec(0.6 2)
		\move(1.4 0)\lvec(2 2)
		\htext(0.2 1.6){$0$}
		\htext(1.6 1.6){$0$}
		\htext(1.05 1){$\cdots$}
		\htext(1 -0.3){$\underbrace{\rule{4.5em}{0em}}$}
		\htext(1 -0.7){$x_1$}
		\esegment
		\move(0 0)
		\bsegment
		\move(0 0)\lvec(2 0)
		\move(0 0)\lvec(0 2)
		\move(0 2)\lvec(2 2)
		\move(0.6 0)\lvec(0.6 2)
		\move(1.4 0)\lvec(1.4 2)
		\move(0 0)\lvec(0.6 2)
		\move(1.4 0)\lvec(2 2)
		\htext(0.4 0.4){$1$}
		\htext(1.8 0.4){$1$}
		\htext(1.05 1){$\cdots$}
		\htext(1 -0.3){$\underbrace{\rule{4.5em}{0em}}$}
		\htext(1 -0.7){$\bar{x}_1$}
		\esegment
		\move(2 0)
		\bsegment
		\move(0 0)\lvec(2 0)
		\move(0 0)\lvec(0 2)
		\move(0 2)\lvec(2 2)
		\move(0.6 0)\lvec(0.6 2)
		\move(1.4 0)\lvec(1.4 2)
		\htext(1.05 1){$\cdots$}
		\htext(0.2 1.6){$0$}
		\htext(1.6 1.6){$0$}
		\htext(0.4 0.4){$1$}
		\htext(1.8 0.4){$1$}
		\move(0 0)\lvec(0.6 2)
		\move(1.4 0)\lvec(2 2)
		\move(2 0)\lvec(2 2)
		\htext(1 -0.3){$\underbrace{\rule{4.5em}{0em}}$}
		\htext(1 -0.7){$x_2$}
		
		\move(-0.6 0)\lvec(-0.6 2)\lvec(0 2)\lvec(-0.6 0) \lfill f:0.8
		\htext(-0.4 1.6){$0$}
		\esegment
		\esegment
	\end{texdraw}
\end{center}

\vskip 2mm

if $x_2< \bar{x}_2$,  the map $\tilde{\mathfrak f}_0$ is given by

\vskip 2mm

\begin{center}
	\begin{texdraw}
		\fontsize{3}{3}\selectfont
		\drawdim em
		\setunitscale 2.3
		\arrowheadsize l:0.3 w:0.3
		\arrowheadtype t:V
		\move(0 0)
		\bsegment
		\move(-1 0)
		\bsegment
		\move(0 0)\lvec(2 0)
		\move(0 0)\lvec(0 2)
		\move(0 2)\lvec(2 2)
		\move(2 0)\lvec(2 2)
		\move(0.6 0)\lvec(0.6 2)
		\move(1.4 0)\lvec(1.4 2)
		\move(0 0)\lvec(0.6 2)
		\move(1.4 0)\lvec(2 2)
		\htext(0.2 1.6){$0$}
		\htext(1.6 1.6){$0$}
		\htext(1.05 1){$\cdots$}
		\htext(1 -0.3){$\underbrace{\rule{4.5em}{0em}}$}
		\htext(1 -0.7){$x_1$}
		\htext(2.55 1){\fontsize{8}{8}\selectfont$\cdots$}
		\esegment
		\move(2 0)
		\bsegment
		\move(0 0)\lvec(2 0)
		\move(0 0)\lvec(0 2)
		\move(0 2)\lvec(2 2)
		\move(0.6 0)\lvec(0.6 2)
		\move(1.4 0)\lvec(1.4 2)
		\htext(1.05 1){$\cdots$}
		\htext(1.05 3){$\cdots$}
		\htext(1.05 5){$\cdots$}
		\vtext(1.05 6.5){$\cdots$}
		\htext(1.05 8){$\cdots$}
		\htext(1.05 10){$\cdots$}
		\move(0 0)\lvec(0.6 2)
		\move(1.4 0)\lvec(2 2)
		\move(2 0)\lvec(2 2)
		\move(2 2)\lvec(2 6)
		\move(1.4 2)\lvec(1.4 6)
		\move(0.6 2)\lvec(0.6 6)
		\move(0 2)\lvec(0 6)
		\move(0 6)\lvec(0 7)
		\move(2 6)\lvec(2 7)
		\move(2 7)\lvec(2 11)
		\move(1.4 7)\lvec(1.4 11)
		\move(0.6 7)\lvec(0.6 11)
		\move(0 7)\lvec(0 11)
		
		\move(0 2)\lvec(2 2)
		\move(0 11)\lvec(2 11)
		\move(0 4)\lvec(2 4)
		\move(0 6)\lvec(2 6)
		\move(0 7)\lvec(2 7)
		\move(0 9)\lvec(2 9)
		\htext(0.3 3){$2$}
		\htext(0.3 5){$3$}
		\htext(0.3 8){$3$}
		\htext(0.3 10){$2$}
		\htext(1.7 3){$2$}
		\htext(1.7 5){$3$}
		\htext(1.7 8){$3$}
		\htext(1.7 10){$2$}
		
		\htext(0.2 1.6){$0$}
		\htext(1.6 1.6){$0$}
		\htext(0.4 0.4){$1$}
		\htext(1.8 0.4){$1$}
		\htext(1 -0.3){$\underbrace{\rule{4.5em}{0em}}$}
		\htext(1 -0.7){$\bar{x}_2$}
		\move(2.5 1)\avec(4.5 1)
		\htext(3.5 1.8){	\fontsize{12}{12}\selectfont$\tilde{\mathfrak f}_0$}
		\esegment
		\esegment
		
		\move(8 0)
		\bsegment
		\move(-1 0)
		\bsegment
		\move(0 0)\lvec(2 0)
		\move(0 0)\lvec(0 2)
		\move(0 2)\lvec(2 2)
		\move(2 0)\lvec(2 2)
		\move(0.6 0)\lvec(0.6 2)
		\move(1.4 0)\lvec(1.4 2)
		\move(0 0)\lvec(0.6 2)
		\move(1.4 0)\lvec(2 2)
		\htext(0.2 1.6){$0$}
		\htext(1.6 1.6){$0$}
		\htext(1.05 1){$\cdots$}
		\htext(1 -0.3){$\underbrace{\rule{4.5em}{0em}}$}
		\htext(1 -0.7){$x_1$}
		\htext(2.55 1){\fontsize{8}{8}\selectfont$\cdots$}
		\esegment
		\move(2 0)
		\bsegment
		\move(0 0)\lvec(2 0)
		\move(0 0)\lvec(0 2)
		\move(0 2)\lvec(2 2)
		\move(0.6 0)\lvec(0.6 2)
		\move(1.4 0)\lvec(1.4 2)
		\htext(1.05 1){$\cdots$}
		\htext(1.05 3){$\cdots$}
		\htext(1.05 5){$\cdots$}
		\vtext(1.05 6.5){$\cdots$}
		\htext(1.05 8){$\cdots$}
		\htext(1.05 10){$\cdots$}
		\move(0 0)\lvec(0.6 2)
		\move(1.4 0)\lvec(2 2)
		\move(2 0)\lvec(2 2)
		\move(2 2)\lvec(2 6)
		\move(1.4 2)\lvec(1.4 6)
		\move(0.6 2)\lvec(0.6 6)
		\move(0 2)\lvec(0 6)
		\move(0 6)\lvec(0 7)
		\move(2 6)\lvec(2 7)
		\move(2 7)\lvec(2 11)
		\move(1.4 7)\lvec(1.4 11)
		\move(0.6 7)\lvec(0.6 11)
		\move(0 7)\lvec(0 11)
		
		\move(0 2)\lvec(2 2)
		\move(0 11)\lvec(2 11)
		\move(0 4)\lvec(2 4)
		\move(0 6)\lvec(2 6)
		\move(0 7)\lvec(2 7)
		\move(0 9)\lvec(2 9)
		\htext(0.3 3){$2$}
		\htext(0.3 5){$3$}
		\htext(0.3 8){$3$}
		\htext(0.3 10){$2$}
		\htext(1.7 3){$2$}
		\htext(1.7 5){$3$}
		\htext(1.7 8){$3$}
		\htext(1.7 10){$2$}
		
		\htext(0.2 1.6){$0$}
		\htext(1.6 1.6){$0$}
		\htext(0.4 0.4){$1$}
		\htext(1.8 0.4){$1$}
		\htext(1 -0.3){$\underbrace{\rule{4.5em}{0em}}$}
		\htext(1 -0.7){$\bar{x}_2$}

		\move(2 11)\lvec(2 13)
		\move(1.4 11)\lvec(1.4 13)\lvec(2 13)\lvec(1.4 11)\lfill f:0.8
		\htext(1.6 12.6){$0$}
		\esegment
		\esegment
	\end{texdraw}
\end{center}

\item\label{5} For types $D_{n+1}^{(2)}$ and $B_{n}^{(1)}$, 

\vskip 2mm

if $x_0=0$,  the map $\tilde{\mathfrak f}_n$ is given by

\vskip 10mm

\begin{center}
	\begin{texdraw}
		\fontsize{3}{3}\selectfont
		\drawdim em
		\setunitscale 2.3
		\arrowheadsize l:0.3 w:0.3
		\arrowheadtype t:V
		\move(0 0)
		\bsegment
		\move(0 0)
		\bsegment
		\move(0 0)\lvec(2 0)
		\move(0 0)\lvec(0 2)
		\move(0 2)\lvec(2 2)
		\move(0 2)\lvec(0 7)
		\move(0 7)\lvec(2 7)
		\move(0 4)\lvec(2 4)
		\move(0 5)\lvec(2 5)
		\move(0.6 0)\lvec(0.6 4)
		\move(1.4 0)\lvec(1.4 4)
		\move(0.6 5)\lvec(0.6 7)
		\move(1.4 5)\lvec(1.4 7)
		\htext(1.05 1){$\cdots$}
		\htext(1.05 3){$\cdots$}
		\vtext(1.05 4.5){$\cdots$}
		\htext(1.05 6){$\cdots$}
		\htext(0.2 1.6){$0$}
		\htext(1.6 1.6){$0$}
		\htext(0.4 0.4){$*$}
		\htext(1.8 0.4){$*$}
		\move(0 0)\lvec(0.6 2)
		\move(1.4 0)\lvec(2 2)	
		\htext(0.3 3){$*$\!+\!$1$}
		\htext(1.7 3){$*$\!+\!$1$}
		\htext(0.3 6){$n$-$1$}
		\htext(1.7 6){$n$-$1$}
		\htext(1 -0.3){$\underbrace{\rule{4.5em}{0em}}$}
		\htext(1 -0.7){$x_n$}
		
		\esegment
		
		\move(2 0)
		\bsegment
		\move(0 0)\lvec(2 0)
		\move(0 0)\lvec(0 2)
		\move(0 2)\lvec(2 2)
		\move(0 2)\lvec(0 9)
		
		\move(0 9)\lvec(2 9)
		\move(0 5)\lvec(2 5)
		\move(0 4)\lvec(2 4)
		\move(0 7)\lvec(2 7)
		\move(0.6 0)\lvec(0.6 4)
		\move(1.4 0)\lvec(1.4 4)
		\move(0.6 5)\lvec(0.6 9)
		\move(1.4 5)\lvec(1.4 9)
		\htext(1.05 1){$\cdots$}
		\htext(1.05 3){$\cdots$}
		\vtext(1.05 4.5){$\cdots$}
		\htext(1.05 6){$\cdots$}
		\htext(1.05 8){$\cdots$}
		\htext(0.2 1.6){$0$}
		\htext(1.6 1.6){$0$}
		\htext(0.4 0.4){$*$}
		\htext(1.8 0.4){$*$}
		\move(0 0)\lvec(0.6 2)
		\move(1.4 0)\lvec(2 2)
		\htext(0.3 3){$*$\!+\!$1$}
		\htext(1.7 3){$*$\!+\!$1$}
		\htext(0.3 6){$n$-$1$}
		\htext(1.7 6){$n$-$1$}
		\move(0 7)\lvec(0.6 9)
		\move(1.4 7)\lvec(2 9)
		
		\htext(0.4 7.4){$n$}
		\htext(1.8 7.4){$n$}

		\htext(1 -0.3){$\underbrace{\rule{4.5em}{0em}}$}
		\htext(1 -0.7){$x_0$}

		\esegment
		
		\move(4 0)
		\bsegment
		\move(0 0)\lvec(2 0)
		\move(0 0)\lvec(0 2)
		\move(0 2)\lvec(2 2)
		\move(0 2)\lvec(0 9)
		\move(2 0)\lvec(2 9)
		\move(0 9)\lvec(2 9)
		\move(0 5)\lvec(2 5)
		\move(0 4)\lvec(2 4)
		\move(0 7)\lvec(2 7)
		\move(0.6 0)\lvec(0.6 4)
		\move(1.4 0)\lvec(1.4 4)
		\move(0.6 5)\lvec(0.6 9)
		\move(1.4 5)\lvec(1.4 9)
		\htext(1.05 1){$\cdots$}
		\htext(1.05 3){$\cdots$}
		\vtext(1.05 4.5){$\cdots$}
		\htext(1.05 6){$\cdots$}
		\htext(1.05 8){$\cdots$}
		\htext(0.2 1.6){$0$}
		\htext(1.6 1.6){$0$}
		\htext(0.4 0.4){$*$}
		\htext(1.8 0.4){$*$}
		\move(0 0)\lvec(0.6 2)
		\move(1.4 0)\lvec(2 2)
		\htext(0.3 3){$*$\!+\!$1$}
		\htext(1.7 3){$*$\!+\!$1$}
		\htext(0.3 6){$n$-$1$}
		\htext(1.7 6){$n$-$1$}

		\htext(1 -0.3){$\underbrace{\rule{4.5em}{0em}}$}
		\htext(1 -0.7){$\bar{x}_n$}
		
		\move(2.5 4.5)\avec(4.5 4.5)
		\htext(3.5 5.3){	\fontsize{12}{12}\selectfont$\tilde{\mathfrak f}_n$}
		
		\move(0 7)\lvec(0.6 9)
		\move(1.4 7)\lvec(2 9)
		\htext(0.2 8.6){$n$}
		\htext(1.6 8.6){$n$}
		\htext(0.4 7.4){$n$}
		\htext(1.8 7.4){$n$}
		
		\esegment
		\esegment
		
		\move(9 0)
		\bsegment
		\move(0 0)
		\bsegment
		\move(0 0)\lvec(2 0)
		\move(0 0)\lvec(0 2)
		\move(0 2)\lvec(2 2)
		\move(0 2)\lvec(0 7)
		\move(0 7)\lvec(2 7)
		\move(0 4)\lvec(2 4)
		\move(0 5)\lvec(2 5)
		\move(0.6 0)\lvec(0.6 4)
		\move(1.4 0)\lvec(1.4 4)
		\move(0.6 5)\lvec(0.6 7)
		\move(1.4 5)\lvec(1.4 7)
		\htext(1.05 1){$\cdots$}
		\htext(1.05 3){$\cdots$}
		\vtext(1.05 4.5){$\cdots$}
		\htext(1.05 6){$\cdots$}
		\htext(0.2 1.6){$0$}
		\htext(1.6 1.6){$0$}
		\htext(0.4 0.4){$*$}
		\htext(1.8 0.4){$*$}
		\move(0 0)\lvec(0.6 2)
		\move(1.4 0)\lvec(2 2)	
		\htext(0.3 3){$*$\!+\!$1$}
		\htext(1.7 3){$*$\!+\!$1$}
		\htext(0.3 6){$n$-$1$}
		\htext(1.7 6){$n$-$1$}
		\htext(1 -0.3){$\underbrace{\rule{4.5em}{0em}}$}
		\htext(1 -0.7){$x_n$}
		
		\move(1.4 7)\lvec(2 7)\lvec(2 9)\lvec(1.4 7) \lfill f:0.8
		
		\move(1.4 7)\lvec(1.4 9)\lvec(2 9)
		\htext(1.8 7.4){$n$}
		\esegment
		
		\move(2 0)
		\bsegment
		\move(0 0)\lvec(2 0)
		\move(0 0)\lvec(0 2)
		\move(0 2)\lvec(2 2)
		\move(0 2)\lvec(0 9)
		
		\move(0 9)\lvec(2 9)
		\move(0 5)\lvec(2 5)
		\move(0 4)\lvec(2 4)
		\move(0 7)\lvec(2 7)
		\move(0.6 0)\lvec(0.6 4)
		\move(1.4 0)\lvec(1.4 4)
		\move(0.6 5)\lvec(0.6 9)
		\move(1.4 5)\lvec(1.4 9)
		\htext(1.05 1){$\cdots$}
		\htext(1.05 3){$\cdots$}
		\vtext(1.05 4.5){$\cdots$}
		\htext(1.05 6){$\cdots$}
		\htext(1.05 8){$\cdots$}
		\htext(0.2 1.6){$0$}
		\htext(1.6 1.6){$0$}
		\htext(0.4 0.4){$*$}
		\htext(1.8 0.4){$*$}
		\move(0 0)\lvec(0.6 2)
		\move(1.4 0)\lvec(2 2)
		\htext(0.3 3){$*$\!+\!$1$}
		\htext(1.7 3){$*$\!+\!$1$}
		\htext(0.3 6){$n$-$1$}
		\htext(1.7 6){$n$-$1$}
		
		\htext(1 -0.3){$\underbrace{\rule{4.5em}{0em}}$}
		\htext(1 -0.7){$x_0$}

		\move(0 7)\lvec(0.6 9)
		\move(1.4 7)\lvec(2 9)
		
		\htext(0.4 7.4){$n$}
		\htext(1.8 7.4){$n$}
		\esegment
		
		\move(4 0)
		\bsegment
		\move(0 0)\lvec(2 0)
		\move(0 0)\lvec(0 2)
		\move(0 2)\lvec(2 2)
		\move(0 2)\lvec(0 9)
		\move(2 0)\lvec(2 9)
		\move(0 9)\lvec(2 9)
		\move(0 5)\lvec(2 5)
		\move(0 4)\lvec(2 4)
		\move(0 7)\lvec(2 7)
		\move(0.6 0)\lvec(0.6 4)
		\move(1.4 0)\lvec(1.4 4)
		\move(0.6 5)\lvec(0.6 9)
		\move(1.4 5)\lvec(1.4 9)
		\htext(1.05 1){$\cdots$}
		\htext(1.05 3){$\cdots$}
		\vtext(1.05 4.5){$\cdots$}
		\htext(1.05 6){$\cdots$}
		\htext(1.05 8){$\cdots$}
		\htext(0.2 1.6){$0$}
		\htext(1.6 1.6){$0$}
		\htext(0.4 0.4){$*$}
		\htext(1.8 0.4){$*$}
		\move(0 0)\lvec(0.6 2)
		\move(1.4 0)\lvec(2 2)
		\htext(0.3 3){$*$\!+\!$1$}
		\htext(1.7 3){$*$\!+\!$1$}
		\htext(0.3 6){$n$-$1$}
		\htext(1.7 6){$n$-$1$}
		
		\move(0 7)\lvec(0.6 9)
		\move(1.4 7)\lvec(2 9)
		\htext(0.2 8.6){$n$}
		\htext(1.6 8.6){$n$}
		\htext(0.4 7.4){$n$}
		\htext(1.8 7.4){$n$}	
		
		\htext(1 -0.3){$\underbrace{\rule{4.5em}{0em}}$}
		\htext(1 -0.7){$\bar{x}_n$}

		\esegment
		\esegment	
	\end{texdraw}
\end{center}

\vskip 2mm

if $x_0=1$,  the map $\tilde{\mathfrak f}_n$ is given by

\vskip 10mm

\begin{center}
	\begin{texdraw}
		\fontsize{3}{3}\selectfont
		\drawdim em
		\setunitscale 2.3
		\arrowheadsize l:0.3 w:0.3
		\arrowheadtype t:V
		\move(0 0)
		\bsegment
		\move(0 0)
		\bsegment
		\move(0 0)\lvec(2 0)
		\move(0 0)\lvec(0 2)
		\move(0 2)\lvec(2 2)
		\move(0 2)\lvec(0 7)
		\move(0 7)\lvec(2 7)
		\move(0 4)\lvec(2 4)
		\move(0 5)\lvec(2 5)
		\move(0.6 0)\lvec(0.6 4)
		\move(1.4 0)\lvec(1.4 4)
		\move(0.6 5)\lvec(0.6 7)
		\move(1.4 5)\lvec(1.4 7)
		\htext(1.05 1){$\cdots$}
		\htext(1.05 3){$\cdots$}
		\vtext(1.05 4.5){$\cdots$}
		\htext(1.05 6){$\cdots$}
		\htext(0.2 1.6){$0$}
		\htext(1.6 1.6){$0$}
		\htext(0.4 0.4){$*$}
		\htext(1.8 0.4){$*$}
		\move(0 0)\lvec(0.6 2)
		\move(1.4 0)\lvec(2 2)	
		\htext(0.3 3){$*$\!+\!$1$}
		\htext(1.7 3){$*$\!+\!$1$}
		\htext(0.3 6){$n$-$1$}
		\htext(1.7 6){$n$-$1$}
		\htext(1 -0.3){$\underbrace{\rule{4.5em}{0em}}$}
		\htext(1 -0.7){$x_n$}
		
		\esegment
		
		\move(2 0)
		\bsegment
		\move(0 0)\lvec(2 0)
		\move(0 0)\lvec(0 2)
		\move(0 2)\lvec(2 2)
		\move(0 2)\lvec(0 9)
		
		\move(0 9)\lvec(2 9)
		\move(0 5)\lvec(2 5)
		\move(0 4)\lvec(2 4)
		\move(0 7)\lvec(2 7)
		\move(0.6 0)\lvec(0.6 4)
		\move(1.4 0)\lvec(1.4 4)
		\move(0.6 5)\lvec(0.6 9)
		\move(1.4 5)\lvec(1.4 9)
		\htext(1.05 1){$\cdots$}
		\htext(1.05 3){$\cdots$}
		\vtext(1.05 4.5){$\cdots$}
		\htext(1.05 6){$\cdots$}
		\htext(1.05 8){$\cdots$}
		\htext(0.2 1.6){$0$}
		\htext(1.6 1.6){$0$}
		\htext(0.4 0.4){$*$}
		\htext(1.8 0.4){$*$}
		\move(0 0)\lvec(0.6 2)
		\move(1.4 0)\lvec(2 2)
		\htext(0.3 3){$*$\!+\!$1$}
		\htext(1.7 3){$*$\!+\!$1$}
		\htext(0.3 6){$n$-$1$}
		\htext(1.7 6){$n$-$1$}
		\move(0 7)\lvec(0.6 9)
		\move(1.4 7)\lvec(2 9)
		
		\htext(0.4 7.4){$n$}
		\htext(1.8 7.4){$n$}

		\htext(1 -0.3){$\underbrace{\rule{4.5em}{0em}}$}
		\htext(1 -0.7){$x_0$}

		\esegment
		
		\move(4 0)
		\bsegment
		\move(0 0)\lvec(2 0)
		\move(0 0)\lvec(0 2)
		\move(0 2)\lvec(2 2)
		\move(0 2)\lvec(0 9)
		\move(2 0)\lvec(2 9)
		\move(0 9)\lvec(2 9)
		\move(0 5)\lvec(2 5)
		\move(0 4)\lvec(2 4)
		\move(0 7)\lvec(2 7)
		\move(0.6 0)\lvec(0.6 4)
		\move(1.4 0)\lvec(1.4 4)
		\move(0.6 5)\lvec(0.6 9)
		\move(1.4 5)\lvec(1.4 9)
		\htext(1.05 1){$\cdots$}
		\htext(1.05 3){$\cdots$}
		\vtext(1.05 4.5){$\cdots$}
		\htext(1.05 6){$\cdots$}
		\htext(1.05 8){$\cdots$}
		\htext(0.2 1.6){$0$}
		\htext(1.6 1.6){$0$}
		\htext(0.4 0.4){$*$}
		\htext(1.8 0.4){$*$}
		\move(0 0)\lvec(0.6 2)
		\move(1.4 0)\lvec(2 2)
		\htext(0.3 3){$*$\!+\!$1$}
		\htext(1.7 3){$*$\!+\!$1$}
		\htext(0.3 6){$n$-$1$}
		\htext(1.7 6){$n$-$1$}

		\htext(1 -0.3){$\underbrace{\rule{4.5em}{0em}}$}
		\htext(1 -0.7){$\bar{x}_n$}
		
		\move(2.5 4.5)\avec(4.5 4.5)
		\htext(3.5 5.3){	\fontsize{12}{12}\selectfont$\tilde{\mathfrak f}_n$}
		
		\move(0 7)\lvec(0.6 9)
		\move(1.4 7)\lvec(2 9)
		\htext(0.2 8.6){$n$}
		\htext(1.6 8.6){$n$}
		\htext(0.4 7.4){$n$}
		\htext(1.8 7.4){$n$}
		
		\esegment
		\esegment
		
		\move(9 0)
		\bsegment
		\move(0 0)
		\bsegment
		\move(0 0)\lvec(2 0)
		\move(0 0)\lvec(0 2)
		\move(0 2)\lvec(2 2)
		\move(0 2)\lvec(0 7)
		\move(0 7)\lvec(2 7)
		\move(0 4)\lvec(2 4)
		\move(0 5)\lvec(2 5)
		\move(0.6 0)\lvec(0.6 4)
		\move(1.4 0)\lvec(1.4 4)
		\move(0.6 5)\lvec(0.6 7)
		\move(1.4 5)\lvec(1.4 7)
		\htext(1.05 1){$\cdots$}
		\htext(1.05 3){$\cdots$}
		\vtext(1.05 4.5){$\cdots$}
		\htext(1.05 6){$\cdots$}
		\htext(0.2 1.6){$0$}
		\htext(1.6 1.6){$0$}
		\htext(0.4 0.4){$*$}
		\htext(1.8 0.4){$*$}
		\move(0 0)\lvec(0.6 2)
		\move(1.4 0)\lvec(2 2)	
		\htext(0.3 3){$*$\!+\!$1$}
		\htext(1.7 3){$*$\!+\!$1$}
		\htext(0.3 6){$n$-$1$}
		\htext(1.7 6){$n$-$1$}
		\htext(1 -0.3){$\underbrace{\rule{4.5em}{0em}}$}
		\htext(1 -0.7){$x_n$}

		\esegment
		
		\move(2 0)
		\bsegment
		\move(0 0)\lvec(2 0)
		\move(0 0)\lvec(0 2)
		\move(0 2)\lvec(2 2)
		\move(0 2)\lvec(0 9)
		
		\move(0 9)\lvec(2 9)
		\move(0 5)\lvec(2 5)
		\move(0 4)\lvec(2 4)
		\move(0 7)\lvec(2 7)
		\move(0.6 0)\lvec(0.6 4)
		\move(1.4 0)\lvec(1.4 4)
		\move(0.6 5)\lvec(0.6 9)
		\move(1.4 5)\lvec(1.4 9)
		\htext(1.05 1){$\cdots$}
		\htext(1.05 3){$\cdots$}
		\vtext(1.05 4.5){$\cdots$}
		\htext(1.05 6){$\cdots$}
		\htext(1.05 8){$\cdots$}
		\htext(0.2 1.6){$0$}
		\htext(1.6 1.6){$0$}
		\htext(0.4 0.4){$*$}
		\htext(1.8 0.4){$*$}
		\move(0 0)\lvec(0.6 2)
		\move(1.4 0)\lvec(2 2)
		\htext(0.3 3){$*$\!+\!$1$}
		\htext(1.7 3){$*$\!+\!$1$}
		\htext(0.3 6){$n$-$1$}
		\htext(1.7 6){$n$-$1$}
		
		\htext(1 -0.3){$\underbrace{\rule{4.5em}{0em}}$}
		\htext(1 -0.7){$x_0$}
		\move(1.4 7)\lvec(1.4 9)\lvec(2 9)\lvec(1.4 7) \lfill f:0.8
		\htext(1.6 8.6){$n$}
		
		\move(0 7)\lvec(0.6 9)
		\move(1.4 7)\lvec(2 9)
		
		\htext(0.4 7.4){$n$}
		\htext(1.8 7.4){$n$}

		\esegment
		
		\move(4 0)
		\bsegment
		\move(0 0)\lvec(2 0)
		\move(0 0)\lvec(0 2)
		\move(0 2)\lvec(2 2)
		\move(0 2)\lvec(0 9)
		\move(2 0)\lvec(2 9)
		\move(0 9)\lvec(2 9)
		\move(0 5)\lvec(2 5)
		\move(0 4)\lvec(2 4)
		\move(0 7)\lvec(2 7)
		\move(0.6 0)\lvec(0.6 4)
		\move(1.4 0)\lvec(1.4 4)
		\move(0.6 5)\lvec(0.6 9)
		\move(1.4 5)\lvec(1.4 9)
		\htext(1.05 1){$\cdots$}
		\htext(1.05 3){$\cdots$}
		\vtext(1.05 4.5){$\cdots$}
		\htext(1.05 6){$\cdots$}
		\htext(1.05 8){$\cdots$}
		\htext(0.2 1.6){$0$}
		\htext(1.6 1.6){$0$}
		\htext(0.4 0.4){$*$}
		\htext(1.8 0.4){$*$}
		\move(0 0)\lvec(0.6 2)
		\move(1.4 0)\lvec(2 2)
		\htext(0.3 3){$*$\!+\!$1$}
		\htext(1.7 3){$*$\!+\!$1$}
		\htext(0.3 6){$n$-$1$}
		\htext(1.7 6){$n$-$1$}
		
		\move(0 7)\lvec(0.6 9)
		\move(1.4 7)\lvec(2 9)
		\htext(0.2 8.6){$n$}
		\htext(1.6 8.6){$n$}
		\htext(0.4 7.4){$n$}
		\htext(1.8 7.4){$n$}	
		
		\htext(1 -0.3){$\underbrace{\rule{4.5em}{0em}}$}
		\htext(1 -0.7){$\bar{x}_n$}

		\esegment
		\esegment	
	\end{texdraw}
\end{center}

\vskip 2mm

\item\label{6} For types $A_{2n}^{(2)}$, $C_{n}^{(1)}$, $A_{2n-1}^{(2)}$, the map $\tilde{\mathfrak f}_n$ is given by

\vskip 10mm

\begin{center}
	\begin{texdraw}
		\fontsize{3}{3}\selectfont
		\drawdim em
		\setunitscale 2.3
		\arrowheadsize l:0.3 w:0.3
		\arrowheadtype t:V
		\move(0 0)
		\bsegment
		\move(0 0)
		\bsegment
		\move(0 0)\lvec(2 0)
		\move(0 0)\lvec(0 2)
		\move(0 2)\lvec(2 2)
		\move(0 2)\lvec(0 7)
		\move(0 7)\lvec(2 7)
		\move(0 4)\lvec(2 4)
		\move(0 5)\lvec(2 5)
		\move(0.6 0)\lvec(0.6 4)
		\move(1.4 0)\lvec(1.4 4)
		\move(0.6 5)\lvec(0.6 7)
		\move(1.4 5)\lvec(1.4 7)
		\htext(1.05 1){$\cdots$}
		\htext(1.05 3){$\cdots$}
		\vtext(1.05 4.5){$\cdots$}
		\htext(1.05 6){$\cdots$}
		\htext(0.2 1.6){$0$}
		\htext(1.6 1.6){$0$}
		\htext(0.4 0.4){$*$}
		\htext(1.8 0.4){$*$}
		\move(0 0)\lvec(0.6 2)
		\move(1.4 0)\lvec(2 2)	
		\htext(0.3 3){$*$\!+\!$1$}
		\htext(1.7 3){$*$\!+\!$1$}
		\htext(0.3 6){$n$-$1$}
		\htext(1.7 6){$n$-$1$}
		\htext(1 -0.3){$\underbrace{\rule{4.5em}{0em}}$}
		\htext(1 -0.7){$x_n$}
		
		\esegment
		
		\move(2 0)
		\bsegment
		\move(0 0)\lvec(2 0)
		\move(0 0)\lvec(0 2)
		\move(0 2)\lvec(2 2)
		\move(0 2)\lvec(0 9)
		\move(2 0)\lvec(2 9)
		\move(0 9)\lvec(2 9)
		\move(0 5)\lvec(2 5)
		\move(0 4)\lvec(2 4)
		\move(0 7)\lvec(2 7)
		\move(0.6 0)\lvec(0.6 4)
		\move(1.4 0)\lvec(1.4 4)
		\move(0.6 5)\lvec(0.6 9)
		\move(1.4 5)\lvec(1.4 9)
		\htext(1.05 1){$\cdots$}
		\htext(1.05 3){$\cdots$}
		\vtext(1.05 4.5){$\cdots$}
		\htext(1.05 6){$\cdots$}
		\htext(1.05 8){$\cdots$}
		\htext(0.2 1.6){$0$}
		\htext(1.6 1.6){$0$}
		\htext(0.4 0.4){$*$}
		\htext(1.8 0.4){$*$}
		\move(0 0)\lvec(0.6 2)
		\move(1.4 0)\lvec(2 2)
		\htext(0.3 3){$*$\!+\!$1$}
		\htext(1.7 3){$*$\!+\!$1$}
		\htext(0.3 6){$n$-$1$}
		\htext(1.7 6){$n$-$1$}
		
		\htext(0.3 8){$n$}
		\htext(1.7 8){$n$}

		\htext(1 -0.3){$\underbrace{\rule{4.5em}{0em}}$}
		\htext(1 -0.7){$\bar{x}_n$}
		\move(2.5 4.5)\avec(4.5 4.5)
		\htext(3.5 5.3){	\fontsize{12}{12}\selectfont$\tilde{\mathfrak f}_n$}
		
		\esegment

		\esegment
		
		\move(7 0)
		\bsegment
		\move(0 0)
		\bsegment
		\move(0 0)\lvec(2 0)
		\move(0 0)\lvec(0 2)
		\move(0 2)\lvec(2 2)
		\move(0 2)\lvec(0 7)
		\move(0 7)\lvec(2 7)
		\move(0 4)\lvec(2 4)
		\move(0 5)\lvec(2 5)
		\move(0.6 0)\lvec(0.6 4)
		\move(1.4 0)\lvec(1.4 4)
		\move(0.6 5)\lvec(0.6 7)
		\move(1.4 5)\lvec(1.4 7)
		\htext(1.05 1){$\cdots$}
		\htext(1.05 3){$\cdots$}
		\vtext(1.05 4.5){$\cdots$}
		\htext(1.05 6){$\cdots$}
		\htext(0.2 1.6){$0$}
		\htext(1.6 1.6){$0$}
		\htext(0.4 0.4){$*$}
		\htext(1.8 0.4){$*$}
		\move(0 0)\lvec(0.6 2)
		\move(1.4 0)\lvec(2 2)	
		\htext(0.3 3){$*$\!+\!$1$}
		\htext(1.7 3){$*$\!+\!$1$}
		\htext(0.3 6){$n$-$1$}
		\htext(1.7 6){$n$-$1$}
		\htext(1 -0.3){$\underbrace{\rule{4.5em}{0em}}$}
		\htext(1 -0.7){$x_n$}
		\move(1.4 7)\lvec(1.4 9)\lvec(2 9)\lvec(2 7)\lvec(1.4 7) \lfill f:0.8
		\htext(1.7 8){$n$}
		\esegment
		
		\move(2 0)
		\bsegment
		\move(0 0)\lvec(2 0)
		\move(0 0)\lvec(0 2)
		\move(0 2)\lvec(2 2)
		\move(0 2)\lvec(0 9)
		\move(2 0)\lvec(2 9)
		\move(0 9)\lvec(2 9)
		\move(0 5)\lvec(2 5)
		\move(0 4)\lvec(2 4)
		\move(0 7)\lvec(2 7)
		\move(0.6 0)\lvec(0.6 4)
		\move(1.4 0)\lvec(1.4 4)
		\move(0.6 5)\lvec(0.6 9)
		\move(1.4 5)\lvec(1.4 9)
		\htext(1.05 1){$\cdots$}
		\htext(1.05 3){$\cdots$}
		\vtext(1.05 4.5){$\cdots$}
		\htext(1.05 6){$\cdots$}
		\htext(1.05 8){$\cdots$}
		\htext(0.2 1.6){$0$}
		\htext(1.6 1.6){$0$}
		\htext(0.4 0.4){$*$}
		\htext(1.8 0.4){$*$}
		\move(0 0)\lvec(0.6 2)
		\move(1.4 0)\lvec(2 2)
		\htext(0.3 3){$*$\!+\!$1$}
		\htext(1.7 3){$*$\!+\!$1$}
		\htext(0.3 6){$n$-$1$}
		\htext(1.7 6){$n$-$1$}
		\htext(0.3 8){$n$}
		\htext(1.7 8){$n$}
		\htext(1 -0.3){$\underbrace{\rule{4.5em}{0em}}$}
		\htext(1 -0.7){$\bar{x}_n$}

		\esegment

		\esegment	
	\end{texdraw}
\end{center}

\item\label{7} For type $D_{n}^{(1)}$, 

\vskip 2mm

if $x_n\ge 0$, $\bar{x}_n=0$, the map $\tilde{\mathfrak f}_{n-1}$ is given by

\vskip 10mm

\begin{center}
	\begin{texdraw}
		\fontsize{3}{3}\selectfont
		\drawdim em
		\setunitscale 2.3
		\arrowheadsize l:0.3 w:0.3
		\arrowheadtype t:V
		\move(0 0)
		\bsegment
		\move(0 0)
		\bsegment
		\move(0 0)\lvec(2 0)
		\move(0 0)\lvec(0 2)
		\move(0 2)\lvec(2 2)
		\move(0 2)\lvec(0 7)
		\move(0 7)\lvec(2 7)
		\move(0 4)\lvec(2 4)
		\move(0 5)\lvec(2 5)
		\move(0.6 0)\lvec(0.6 4)
		\move(1.4 0)\lvec(1.4 4)
		\move(0.6 5)\lvec(0.6 7)
		\move(1.4 5)\lvec(1.4 7)
		\htext(1.05 1){$\cdots$}
		\htext(1.05 3){$\cdots$}
		\vtext(1.05 4.5){$\cdots$}
		\htext(1.05 6){$\cdots$}
		\htext(0.2 1.6){$0$}
		\htext(1.6 1.6){$0$}
		\htext(0.4 0.4){$1$}
		\htext(1.8 0.4){$1$}
		\move(0 0)\lvec(0.6 2)
		\move(1.4 0)\lvec(2 2)	
		\htext(0.3 3){$2$}
		\htext(1.7 3){$2$}
		\htext(0.3 6){$n$-$2$}
		\htext(1.7 6){$n$-$2$}
		\htext(1 -0.3){$\underbrace{\rule{4.5em}{0em}}$}
		\htext(1 -0.7){$x_{n\!-\!1}$}
		
		\esegment
		
		\move(2 0)
		\bsegment
		\move(0 0)\lvec(2 0)
		\move(0 0)\lvec(0 2)
		\move(0 2)\lvec(2 2)
		\move(0 2)\lvec(0 9)
		\move(2 0)\lvec(2 9)
		\move(0 9)\lvec(2 9)
		\move(0 5)\lvec(2 5)
		\move(0 4)\lvec(2 4)
		\move(0 7)\lvec(2 7)
		\move(0.6 0)\lvec(0.6 4)
		\move(1.4 0)\lvec(1.4 4)
		\move(0.6 5)\lvec(0.6 9)
		\move(1.4 5)\lvec(1.4 9)
		\htext(1.05 1){$\cdots$}
		\htext(1.05 3){$\cdots$}
		\vtext(1.05 4.5){$\cdots$}
		\htext(1.05 6){$\cdots$}
		\htext(1.05 8){$\cdots$}
		\htext(0.2 1.6){$0$}
		\htext(1.6 1.6){$0$}
		\htext(0.4 0.4){$1$}
		\htext(1.8 0.4){$1$}
		\move(0 0)\lvec(0.6 2)
		\move(1.4 0)\lvec(2 2)
		\htext(0.3 3){$2$}
		\htext(1.7 3){$2$}
		\htext(0.3 6){$n$-$2$}
		\htext(1.7 6){$n$-$2$}
		\move(0 7)\lvec(0.6 9)
		\move(1.4 7)\lvec(2 9)
		\htext(1.62 8.6){$n$\!-\!$1$}
		\htext(0.22 8.6){$n$\!-\!$1$}
		\htext(1 -0.3){$\underbrace{\rule{4.5em}{0em}}$}
		\htext(1 -0.7){$x_n$}
		\move(2.5 4.5)\avec(4.5 4.5)
		\htext(3.5 5.3){	\fontsize{12}{12}\selectfont$\tilde{\mathfrak f}_{n-1}$}
		
		\esegment

		\esegment
		
		\move(7 0)
		\bsegment
		\move(0 0)
		\bsegment
		\move(0 0)\lvec(2 0)
		\move(0 0)\lvec(0 2)
		\move(0 2)\lvec(2 2)
		\move(0 2)\lvec(0 7)
		\move(0 7)\lvec(2 7)
		\move(0 4)\lvec(2 4)
		\move(0 5)\lvec(2 5)
		\move(0.6 0)\lvec(0.6 4)
		\move(1.4 0)\lvec(1.4 4)
		\move(0.6 5)\lvec(0.6 7)
		\move(1.4 5)\lvec(1.4 7)
		\htext(1.05 1){$\cdots$}
		\htext(1.05 3){$\cdots$}
		\vtext(1.05 4.5){$\cdots$}
		\htext(1.05 6){$\cdots$}
		\htext(0.2 1.6){$0$}
		\htext(1.6 1.6){$0$}
		\htext(0.4 0.4){$1$}
		\htext(1.8 0.4){$1$}
		\move(0 0)\lvec(0.6 2)
		\move(1.4 0)\lvec(2 2)	
		\htext(0.3 3){$2$}
		\htext(1.7 3){$2$}
		\htext(0.3 6){$n$-$2$}
		\htext(1.7 6){$n$-$2$}
		\htext(1 -0.3){$\underbrace{\rule{4.5em}{0em}}$}
		\htext(1 -0.7){$x_{n\!-\!1}$}
		\move(1.4 7)\lvec(1.4 9)\lvec(2 9)\lvec(1.4 7) \lfill f:0.8
		\htext(1.62 8.6){$n$\!-\!$1$}
		\esegment
		
		\move(2 0)
		\bsegment
		\move(0 0)\lvec(2 0)
		\move(0 0)\lvec(0 2)
		\move(0 2)\lvec(2 2)
		\move(0 2)\lvec(0 9)
		\move(2 0)\lvec(2 9)
		\move(0 9)\lvec(2 9)
		\move(0 5)\lvec(2 5)
		\move(0 4)\lvec(2 4)
		\move(0 7)\lvec(2 7)
		\move(0.6 0)\lvec(0.6 4)
		\move(1.4 0)\lvec(1.4 4)
		\move(0.6 5)\lvec(0.6 9)
		\move(1.4 5)\lvec(1.4 9)
		\htext(1.05 1){$\cdots$}
		\htext(1.05 3){$\cdots$}
		\vtext(1.05 4.5){$\cdots$}
		\htext(1.05 6){$\cdots$}
		\htext(1.05 8){$\cdots$}
		\htext(0.2 1.6){$0$}
		\htext(1.6 1.6){$0$}
		\htext(0.4 0.4){$1$}
		\htext(1.8 0.4){$1$}
		\move(0 0)\lvec(0.6 2)
		\move(1.4 0)\lvec(2 2)
		\htext(0.3 3){$2$}
		\htext(1.7 3){$2$}
		\htext(0.3 6){$n$-$2$}
		\htext(1.7 6){$n$-$2$}
		\move(0 7)\lvec(0.6 9)
		\move(1.4 7)\lvec(2 9)
		\htext(1.62 8.6){$n$\!-\!$1$}
		\htext(0.22 8.6){$n$\!-\!$1$}
		\htext(1 -0.3){$\underbrace{\rule{4.5em}{0em}}$}
		\htext(1 -0.7){$x_n$}

		\esegment

		\esegment	
	\end{texdraw}
\end{center}

\vskip 2mm

if $x_n= 0$, $\bar{x}_n\ge 1$, the map $\tilde{\mathfrak f}_{n-1}$ is given by

\vskip 10mm

\begin{center}
	\begin{texdraw}
		\fontsize{3}{3}\selectfont
		\drawdim em
		\setunitscale 2.3
		\arrowheadsize l:0.3 w:0.3
		\arrowheadtype t:V
		\move(0 0)
		\bsegment
		\move(0 0)
		\bsegment
		\move(0 0)\lvec(2 0)
		\move(0 0)\lvec(0 2)
		\move(0 2)\lvec(2 2)
		\move(0 2)\lvec(0 7)
		\move(0 7)\lvec(2 7)
		\move(0 4)\lvec(2 4)
		\move(0 5)\lvec(2 5)
		\move(0.6 0)\lvec(0.6 4)
		\move(1.4 0)\lvec(1.4 4)
		\move(0.6 5)\lvec(0.6 7)
		\move(1.4 5)\lvec(1.4 7)
		\htext(1.05 1){$\cdots$}
		\htext(1.05 3){$\cdots$}
		\vtext(1.05 4.5){$\cdots$}
		\htext(1.05 6){$\cdots$}
		\htext(1.05 8){$\cdots$}
		\htext(0.2 1.6){$0$}
		\htext(1.6 1.6){$0$}
		\htext(0.4 0.4){$1$}
		\htext(1.8 0.4){$1$}
		\move(0 0)\lvec(0.6 2)
		\move(1.4 0)\lvec(2 2)	
		\htext(0.3 3){$2$}
		\htext(1.7 3){$2$}
		\htext(0.3 6){$n$-$2$}
		\htext(1.7 6){$n$-$2$}
		\htext(1 -0.3){$\underbrace{\rule{4.5em}{0em}}$}
		\htext(1 -0.7){$\bar{x}_n$}

		\htext(0.4 7.4){$n$}
		\htext(1.8 7.4){$n$}
		
		\move(0 7)\lvec(0 9)
		\move(0.6 7)\lvec(0.6 9)
		\move(1.4 7)\lvec(1.4 9)
		\move(0 9)\lvec(2 9)			
		\move(0 7)\lvec(0.6 9)
		\move(1.4 7)\lvec(2 9)
		\esegment
		
		\move(2 0)
		\bsegment
		\move(0 0)\lvec(2 0)
		\move(0 0)\lvec(0 2)
		\move(0 2)\lvec(2 2)
		\move(0 2)\lvec(0 9)
		\move(2 0)\lvec(2 9)
		\move(0 9)\lvec(2 9)
		\move(0 5)\lvec(2 5)
		\move(0 4)\lvec(2 4)
		\move(0 7)\lvec(2 7)
		\move(0.6 0)\lvec(0.6 4)
		\move(1.4 0)\lvec(1.4 4)
		\move(0.6 5)\lvec(0.6 9)
		\move(1.4 5)\lvec(1.4 9)
		\htext(1.05 1){$\cdots$}
		\htext(1.05 3){$\cdots$}
		\vtext(1.05 4.5){$\cdots$}
		\htext(1.05 6){$\cdots$}
		\htext(1.05 8){$\cdots$}
		\htext(0.2 1.6){$0$}
		\htext(1.6 1.6){$0$}
		\htext(0.4 0.4){$1$}
		\htext(1.8 0.4){$1$}
		\move(0 0)\lvec(0.6 2)
		\move(1.4 0)\lvec(2 2)
		\htext(0.3 3){$2$}
		\htext(1.7 3){$2$}
		\htext(0.3 6){$n$-$2$}
		\htext(1.7 6){$n$-$2$}
		\move(0 7)\lvec(0.6 9)
		\move(1.4 7)\lvec(2 9)
		\htext(1.62 8.6){$n$\!-\!$1$}
		\htext(0.22 8.6){$n$\!-\!$1$}
		\htext(1 -0.3){$\underbrace{\rule{4.5em}{0em}}$}
		\htext(1 -0.7){$\bar{x}_{n\!-\!1}$}
		\move(2.5 4.5)\avec(4.5 4.5)
		\htext(3.5 5.3){	\fontsize{12}{12}\selectfont$\tilde{\mathfrak f}_{n-1}$}

		\htext(0.4 7.4){$n$}
		\htext(1.8 7.4){$n$}
		
		\esegment

		\esegment
		
		\move(7 0)
		\bsegment
		\move(0 0)
		\bsegment
		\move(0 0)\lvec(2 0)
		\move(0 0)\lvec(0 2)
		\move(0 2)\lvec(2 2)
		\move(0 2)\lvec(0 7)
		\move(0 7)\lvec(2 7)
		\move(0 4)\lvec(2 4)
		\move(0 5)\lvec(2 5)
		\move(0.6 0)\lvec(0.6 4)
		\move(1.4 0)\lvec(1.4 4)
		\move(0.6 5)\lvec(0.6 7)
		\move(1.4 5)\lvec(1.4 7)
		\htext(1.05 1){$\cdots$}
		\htext(1.05 3){$\cdots$}
		\vtext(1.05 4.5){$\cdots$}
		\htext(1.05 6){$\cdots$}
		\htext(1.05 8){$\cdots$}
		\htext(0.2 1.6){$0$}
		\htext(1.6 1.6){$0$}
		\htext(0.4 0.4){$1$}
		\htext(1.8 0.4){$1$}
		\move(0 0)\lvec(0.6 2)
		\move(1.4 0)\lvec(2 2)	
		\htext(0.3 3){$2$}
		\htext(1.7 3){$2$}
		\htext(0.3 6){$n$-$2$}
		\htext(1.7 6){$n$-$2$}
		\htext(1 -0.3){$\underbrace{\rule{4.5em}{0em}}$}
		\htext(1 -0.7){$\bar{x}_n$}

		\htext(0.4 7.4){$n$}
		\htext(1.8 7.4){$n$}
		
		\move(0 7)\lvec(0 9)
		\move(0.6 7)\lvec(0.6 9)
		\move(1.4 7)\lvec(1.4 9)
		\move(0 9)\lvec(2 9)			
		\move(0 7)\lvec(0.6 9)
		\move(1.4 7)\lvec(2 9)
		
		\move(1.4 7)\lvec(1.4 9)\lvec(2 9)\lvec(1.4 7)\lfill f:0.8
		\htext(1.62 8.6){$n$\!-\!$1$}
		
		\esegment
		
		\move(2 0)
		\bsegment
		\move(0 0)\lvec(2 0)
		\move(0 0)\lvec(0 2)
		\move(0 2)\lvec(2 2)
		\move(0 2)\lvec(0 9)
		\move(2 0)\lvec(2 9)
		\move(0 9)\lvec(2 9)
		\move(0 5)\lvec(2 5)
		\move(0 4)\lvec(2 4)
		\move(0 7)\lvec(2 7)
		\move(0.6 0)\lvec(0.6 4)
		\move(1.4 0)\lvec(1.4 4)
		\move(0.6 5)\lvec(0.6 9)
		\move(1.4 5)\lvec(1.4 9)
		\htext(1.05 1){$\cdots$}
		\htext(1.05 3){$\cdots$}
		\vtext(1.05 4.5){$\cdots$}
		\htext(1.05 6){$\cdots$}
		\htext(1.05 8){$\cdots$}
		\htext(0.2 1.6){$0$}
		\htext(1.6 1.6){$0$}
		\htext(0.4 0.4){$1$}
		\htext(1.8 0.4){$1$}
		\move(0 0)\lvec(0.6 2)
		\move(1.4 0)\lvec(2 2)
		\htext(0.3 3){$2$}
		\htext(1.7 3){$2$}
		\htext(0.3 6){$n$-$2$}
		\htext(1.7 6){$n$-$2$}
		\move(0 7)\lvec(0.6 9)
		\move(1.4 7)\lvec(2 9)
		\htext(1.62 8.6){$n$\!-\!$1$}
		\htext(0.22 8.6){$n$\!-\!$1$}
		\htext(1 -0.3){$\underbrace{\rule{4.5em}{0em}}$}
		\htext(1 -0.7){$\bar{x}_{n\!-\!1}$}		
		\htext(0.4 7.4){$n$}
		\htext(1.8 7.4){$n$}

		\esegment
		\esegment	
	\end{texdraw}
\end{center}

\vskip 2mm

if $x_n\ge 1$, $\bar{x}_n=0$, the map $\tilde{\mathfrak f}_n$ is given by

\vskip 10mm

\begin{center}
	\begin{texdraw}
		\fontsize{3}{3}\selectfont
		\drawdim em
		\setunitscale 2.3
		\arrowheadsize l:0.3 w:0.3
		\arrowheadtype t:V
		\move(0 0)
		\bsegment
		
		\move(-2 0)
		\bsegment
		\move(0 0)\lvec(2 0)
		\move(0 0)\lvec(0 2)
		\move(0 2)\lvec(2 2)
		\move(0 2)\lvec(0 7)
		\move(0 7)\lvec(2 7)
		\move(0 4)\lvec(2 4)
		\move(0 5)\lvec(2 5)
		\move(0.6 0)\lvec(0.6 4)
		\move(1.4 0)\lvec(1.4 4)
		\move(0.6 5)\lvec(0.6 7)
		\move(1.4 5)\lvec(1.4 7)
		\htext(1.05 1){$\cdots$}
		\htext(1.05 3){$\cdots$}
		\vtext(1.05 4.5){$\cdots$}
		\htext(1.05 6){$\cdots$}
		\htext(1.05 8){$\cdots$}
		\htext(0.2 1.6){$0$}
		\htext(1.6 1.6){$0$}
		\htext(0.4 0.4){$1$}
		\htext(1.8 0.4){$1$}
		\move(0 0)\lvec(0.6 2)
		\move(1.4 0)\lvec(2 2)	
		\htext(0.3 3){$2$}
		\htext(1.7 3){$2$}
		\htext(0.3 6){$n$-$2$}
		\htext(1.7 6){$n$-$2$}
		\htext(1 -0.3){$\underbrace{\rule{4.5em}{0em}}$}
		\htext(1 -0.7){$x_n$}
		
		\htext(1.62 8.6){$n$\!-\!$1$}
		\htext(0.22 8.6){$n$\!-\!$1$}

		\move(0 7)\lvec(0 9)
		\move(0.6 7)\lvec(0.6 9)
		\move(1.4 7)\lvec(1.4 9)
		\move(0 9)\lvec(2 9)			
		\move(0 7)\lvec(0.6 9)
		\move(1.4 7)\lvec(2 9)
		\esegment
		
		\move(0 0)
		\bsegment
		\move(0 0)\lvec(2 0)
		\move(0 0)\lvec(0 2)
		\move(0 2)\lvec(2 2)
		\move(0 2)\lvec(0 7)
		\move(0 7)\lvec(2 7)
		\move(0 4)\lvec(2 4)
		\move(0 5)\lvec(2 5)
		\move(0.6 0)\lvec(0.6 4)
		\move(1.4 0)\lvec(1.4 4)
		\move(0.6 5)\lvec(0.6 7)
		\move(1.4 5)\lvec(1.4 7)
		\htext(1.05 1){$\cdots$}
		\htext(1.05 3){$\cdots$}
		\vtext(1.05 4.5){$\cdots$}
		\htext(1.05 6){$\cdots$}
		\htext(1.05 8){$\cdots$}
		\htext(0.2 1.6){$0$}
		\htext(1.6 1.6){$0$}
		\htext(0.4 0.4){$1$}
		\htext(1.8 0.4){$1$}
		\move(0 0)\lvec(0.6 2)
		\move(1.4 0)\lvec(2 2)	
		\htext(0.3 3){$2$}
		\htext(1.7 3){$2$}
		\htext(0.3 6){$n$-$2$}
		\htext(1.7 6){$n$-$2$}
		\htext(1 -0.3){$\underbrace{\rule{4.5em}{0em}}$}
		\htext(1 -0.7){$\bar{x}_n$}

		\htext(0.4 7.4){$n$}
		\htext(1.8 7.4){$n$}
		
		\move(0 7)\lvec(0 9)
		\move(0.6 7)\lvec(0.6 9)
		\move(1.4 7)\lvec(1.4 9)
		\move(0 9)\lvec(2 9)			
		\move(0 7)\lvec(0.6 9)
		\move(1.4 7)\lvec(2 9)
		\esegment
		
		\move(2 0)
		\bsegment
		\move(0 0)\lvec(2 0)
		\move(0 0)\lvec(0 2)
		\move(0 2)\lvec(2 2)
		\move(0 2)\lvec(0 9)
		\move(2 0)\lvec(2 9)
		\move(0 9)\lvec(2 9)
		\move(0 5)\lvec(2 5)
		\move(0 4)\lvec(2 4)
		\move(0 7)\lvec(2 7)
		\move(0.6 0)\lvec(0.6 4)
		\move(1.4 0)\lvec(1.4 4)
		\move(0.6 5)\lvec(0.6 9)
		\move(1.4 5)\lvec(1.4 9)
		\htext(1.05 1){$\cdots$}
		\htext(1.05 3){$\cdots$}
		\vtext(1.05 4.5){$\cdots$}
		\htext(1.05 6){$\cdots$}
		\htext(1.05 8){$\cdots$}
		\htext(0.2 1.6){$0$}
		\htext(1.6 1.6){$0$}
		\htext(0.4 0.4){$1$}
		\htext(1.8 0.4){$1$}
		\move(0 0)\lvec(0.6 2)
		\move(1.4 0)\lvec(2 2)
		\htext(0.3 3){$2$}
		\htext(1.7 3){$2$}
		\htext(0.3 6){$n$-$2$}
		\htext(1.7 6){$n$-$2$}
		\move(0 7)\lvec(0.6 9)
		\move(1.4 7)\lvec(2 9)
		\htext(1.62 8.6){$n$\!-\!$1$}
		\htext(0.22 8.6){$n$\!-\!$1$}
		\htext(1 -0.3){$\underbrace{\rule{4.5em}{0em}}$}
		\htext(1 -0.7){$\bar{x}_{n\!-\!1}$}
		\move(2.5 4.5)\avec(4.5 4.5)
		\htext(3.5 5.3){	\fontsize{12}{12}\selectfont$\tilde{\mathfrak f}_{n}$}

		\htext(0.4 7.4){$n$}
		\htext(1.8 7.4){$n$}
		
		\esegment

		\esegment
		
		\move(9 0)
		\bsegment
		
		\move(-2 0)
		\bsegment
		\move(0 0)\lvec(2 0)
		\move(0 0)\lvec(0 2)
		\move(0 2)\lvec(2 2)
		\move(0 2)\lvec(0 7)
		\move(0 7)\lvec(2 7)
		\move(0 4)\lvec(2 4)
		\move(0 5)\lvec(2 5)
		\move(0.6 0)\lvec(0.6 4)
		\move(1.4 0)\lvec(1.4 4)
		\move(0.6 5)\lvec(0.6 7)
		\move(1.4 5)\lvec(1.4 7)
		\htext(1.05 1){$\cdots$}
		\htext(1.05 3){$\cdots$}
		\vtext(1.05 4.5){$\cdots$}
		\htext(1.05 6){$\cdots$}
		\htext(1.05 8){$\cdots$}
		\htext(0.2 1.6){$0$}
		\htext(1.6 1.6){$0$}
		\htext(0.4 0.4){$1$}
		\htext(1.8 0.4){$1$}
		\move(0 0)\lvec(0.6 2)
		\move(1.4 0)\lvec(2 2)	
		\htext(0.3 3){$2$}
		\htext(1.7 3){$2$}
		\htext(0.3 6){$n$-$2$}
		\htext(1.7 6){$n$-$2$}
		\htext(1 -0.3){$\underbrace{\rule{4.5em}{0em}}$}
		\htext(1 -0.7){$x_n$}
		
		\htext(1.62 8.6){$n$\!-\!$1$}
		\htext(0.22 8.6){$n$\!-\!$1$}

		\move(0 7)\lvec(0 9)
		\move(0.6 7)\lvec(0.6 9)
		\move(1.4 7)\lvec(1.4 9)
		\move(0 9)\lvec(2 9)			
		\move(0 7)\lvec(0.6 9)

		\htext(1.8 7.4){$n$}
		\move(1.4 7)\lvec(2 9)\lvec(2 7)\lvec(1.4 7)\lfill f:0.8
		
		\esegment
		
		\move(0 0)
		\bsegment
		\move(0 0)\lvec(2 0)
		\move(0 0)\lvec(0 2)
		\move(0 2)\lvec(2 2)
		\move(0 2)\lvec(0 7)
		\move(0 7)\lvec(2 7)
		\move(0 4)\lvec(2 4)
		\move(0 5)\lvec(2 5)
		\move(0.6 0)\lvec(0.6 4)
		\move(1.4 0)\lvec(1.4 4)
		\move(0.6 5)\lvec(0.6 7)
		\move(1.4 5)\lvec(1.4 7)
		\htext(1.05 1){$\cdots$}
		\htext(1.05 3){$\cdots$}
		\vtext(1.05 4.5){$\cdots$}
		\htext(1.05 6){$\cdots$}
		\htext(1.05 8){$\cdots$}
		\htext(0.2 1.6){$0$}
		\htext(1.6 1.6){$0$}
		\htext(0.4 0.4){$1$}
		\htext(1.8 0.4){$1$}
		\move(0 0)\lvec(0.6 2)
		\move(1.4 0)\lvec(2 2)	
		\htext(0.3 3){$2$}
		\htext(1.7 3){$2$}
		\htext(0.3 6){$n$-$2$}
		\htext(1.7 6){$n$-$2$}
		\htext(1 -0.3){$\underbrace{\rule{4.5em}{0em}}$}
		\htext(1 -0.7){$\bar{x}_n$}

		\htext(0.4 7.4){$n$}
		\htext(1.8 7.4){$n$}
		
		\move(0 7)\lvec(0 9)
		\move(0.6 7)\lvec(0.6 9)
		\move(1.4 7)\lvec(1.4 9)
		\move(0 9)\lvec(2 9)			
		\move(0 7)\lvec(0.6 9)
		\move(1.4 7)\lvec(2 9)
		\esegment
		
		\move(2 0)
		\bsegment
		\move(0 0)\lvec(2 0)
		\move(0 0)\lvec(0 2)
		\move(0 2)\lvec(2 2)
		\move(0 2)\lvec(0 9)
		\move(2 0)\lvec(2 9)
		\move(0 9)\lvec(2 9)
		\move(0 5)\lvec(2 5)
		\move(0 4)\lvec(2 4)
		\move(0 7)\lvec(2 7)
		\move(0.6 0)\lvec(0.6 4)
		\move(1.4 0)\lvec(1.4 4)
		\move(0.6 5)\lvec(0.6 9)
		\move(1.4 5)\lvec(1.4 9)
		\htext(1.05 1){$\cdots$}
		\htext(1.05 3){$\cdots$}
		\vtext(1.05 4.5){$\cdots$}
		\htext(1.05 6){$\cdots$}
		\htext(1.05 8){$\cdots$}
		\htext(0.2 1.6){$0$}
		\htext(1.6 1.6){$0$}
		\htext(0.4 0.4){$1$}
		\htext(1.8 0.4){$1$}
		\move(0 0)\lvec(0.6 2)
		\move(1.4 0)\lvec(2 2)
		\htext(0.3 3){$2$}
		\htext(1.7 3){$2$}
		\htext(0.3 6){$n$-$2$}
		\htext(1.7 6){$n$-$2$}
		\move(0 7)\lvec(0.6 9)
		\move(1.4 7)\lvec(2 9)
		\htext(1.62 8.6){$n$\!-\!$1$}
		\htext(0.22 8.6){$n$\!-\!$1$}
		\htext(1 -0.3){$\underbrace{\rule{4.5em}{0em}}$}
		\htext(1 -0.7){$\bar{x}_{n\!-\!1}$}
		
		\htext(0.4 7.4){$n$}
		\htext(1.8 7.4){$n$}
		
		\esegment
		
		\esegment	
	\end{texdraw}
\end{center}

\vskip 2mm

if $x_n=0$, $\bar{x}_n\ge 0$, the map $\tilde{\mathfrak f}_n$ is given by

\vskip 10mm

\begin{center}
	\begin{texdraw}
		\fontsize{3}{3}\selectfont
		\drawdim em
		\setunitscale 2.3
		\arrowheadsize l:0.3 w:0.3
		\arrowheadtype t:V
		\move(0 0)
		\bsegment
		
		\move(-2 0)
		\bsegment
		\move(0 0)\lvec(2 0)
		\move(0 0)\lvec(0 2)
		\move(0 2)\lvec(2 2)
		\move(0 2)\lvec(0 7)
		\move(0 7)\lvec(2 7)
		\move(0 4)\lvec(2 4)
		\move(0 5)\lvec(2 5)
		\move(0.6 0)\lvec(0.6 4)
		\move(1.4 0)\lvec(1.4 4)
		\move(0.6 5)\lvec(0.6 7)
		\move(1.4 5)\lvec(1.4 7)
		\htext(1.05 1){$\cdots$}
		\htext(1.05 3){$\cdots$}
		\vtext(1.05 4.5){$\cdots$}
		\htext(1.05 6){$\cdots$}
		
		\htext(0.2 1.6){$0$}
		\htext(1.6 1.6){$0$}
		\htext(0.4 0.4){$1$}
		\htext(1.8 0.4){$1$}
		\move(0 0)\lvec(0.6 2)
		\move(1.4 0)\lvec(2 2)	
		\htext(0.3 3){$2$}
		\htext(1.7 3){$2$}
		\htext(0.3 6){$n$-$2$}
		\htext(1.7 6){$n$-$2$}
		\htext(1 -0.3){$\underbrace{\rule{4.5em}{0em}}$}
		\htext(1 -0.7){$x_{n\!-\!1}$}
		\esegment
		
		\move(0 0)
		\bsegment
		\move(0 0)\lvec(2 0)
		\move(0 0)\lvec(0 2)
		\move(0 2)\lvec(2 2)
		\move(0 2)\lvec(0 7)
		\move(0 7)\lvec(2 7)
		\move(0 4)\lvec(2 4)
		\move(0 5)\lvec(2 5)
		\move(0.6 0)\lvec(0.6 4)
		\move(1.4 0)\lvec(1.4 4)
		\move(0.6 5)\lvec(0.6 7)
		\move(1.4 5)\lvec(1.4 7)
		\htext(1.05 1){$\cdots$}
		\htext(1.05 3){$\cdots$}
		\vtext(1.05 4.5){$\cdots$}
		\htext(1.05 6){$\cdots$}
		\htext(1.05 8){$\cdots$}
		\htext(0.2 1.6){$0$}
		\htext(1.6 1.6){$0$}
		\htext(0.4 0.4){$1$}
		\htext(1.8 0.4){$1$}
		\move(0 0)\lvec(0.6 2)
		\move(1.4 0)\lvec(2 2)	
		\htext(0.3 3){$2$}
		\htext(1.7 3){$2$}
		\htext(0.3 6){$n$-$2$}
		\htext(1.7 6){$n$-$2$}
		\htext(1 -0.3){$\underbrace{\rule{4.5em}{0em}}$}
		\htext(1 -0.7){$x_n$}

		\move(0 7)\lvec(0 9)
		\move(0.6 7)\lvec(0.6 9)
		\move(1.4 7)\lvec(1.4 9)
		\move(0 9)\lvec(2 9)			
		\move(0 7)\lvec(0.6 9)
		\move(1.4 7)\lvec(2 9)
		\htext(1.62 8.6){$n$\!-\!$1$}
		\htext(0.22 8.6){$n$\!-\!$1$}
		\esegment
		
		\move(2 0)
		\bsegment
		\move(0 0)\lvec(2 0)
		\move(0 0)\lvec(0 2)
		\move(0 2)\lvec(2 2)
		\move(0 2)\lvec(0 9)
		\move(2 0)\lvec(2 9)
		\move(0 9)\lvec(2 9)
		\move(0 5)\lvec(2 5)
		\move(0 4)\lvec(2 4)
		\move(0 7)\lvec(2 7)
		\move(0.6 0)\lvec(0.6 4)
		\move(1.4 0)\lvec(1.4 4)
		\move(0.6 5)\lvec(0.6 9)
		\move(1.4 5)\lvec(1.4 9)
		\htext(1.05 1){$\cdots$}
		\htext(1.05 3){$\cdots$}
		\vtext(1.05 4.5){$\cdots$}
		\htext(1.05 6){$\cdots$}
		\htext(1.05 8){$\cdots$}
		\htext(0.2 1.6){$0$}
		\htext(1.6 1.6){$0$}
		\htext(0.4 0.4){$1$}
		\htext(1.8 0.4){$1$}
		\move(0 0)\lvec(0.6 2)
		\move(1.4 0)\lvec(2 2)
		\htext(0.3 3){$2$}
		\htext(1.7 3){$2$}
		\htext(0.3 6){$n$-$2$}
		\htext(1.7 6){$n$-$2$}
		\move(0 7)\lvec(0.6 9)
		\move(1.4 7)\lvec(2 9)
		
		\htext(1 -0.3){$\underbrace{\rule{4.5em}{0em}}$}
		\htext(1 -0.7){$\bar{x}_n$}
		
		\htext(0.4 7.4){$n$}
		\htext(1.8 7.4){$n$}
		\move(2.5 4.5)\avec(4.5 4.5)
		\htext(3.5 5.3){	\fontsize{12}{12}\selectfont$\tilde{\mathfrak f}_{n}$}
		\esegment
		
		\esegment	
		
		\move(9 0)
		\bsegment
		
		\move(-2 0)
		\bsegment
		\move(0 0)\lvec(2 0)
		\move(0 0)\lvec(0 2)
		\move(0 2)\lvec(2 2)
		\move(0 2)\lvec(0 7)
		\move(0 7)\lvec(2 7)
		\move(0 4)\lvec(2 4)
		\move(0 5)\lvec(2 5)
		\move(0.6 0)\lvec(0.6 4)
		\move(1.4 0)\lvec(1.4 4)
		\move(0.6 5)\lvec(0.6 7)
		\move(1.4 5)\lvec(1.4 7)
		\htext(1.05 1){$\cdots$}
		\htext(1.05 3){$\cdots$}
		\vtext(1.05 4.5){$\cdots$}
		\htext(1.05 6){$\cdots$}
		\htext(0.2 1.6){$0$}
		\htext(1.6 1.6){$0$}
		\htext(0.4 0.4){$1$}
		\htext(1.8 0.4){$1$}
		\move(0 0)\lvec(0.6 2)
		\move(1.4 0)\lvec(2 2)	
		\htext(0.3 3){$2$}
		\htext(1.7 3){$2$}
		\htext(0.3 6){$n$-$2$}
		\htext(1.7 6){$n$-$2$}
		\htext(1 -0.3){$\underbrace{\rule{4.5em}{0em}}$}
		\htext(1 -0.7){$x_{n\!-\!1}$}

		\move(1.4 7)\lvec(1.4 9)
		\move(1.4 9)\lvec(2 9)

		\htext(1.8 7.4){$n$}
		\move(1.4 7)\lvec(2 9)\lvec(2 7)\lvec(1.4 7)\lfill f:0.8
		
		\esegment
		
		\move(0 0)
		\bsegment
		\move(0 0)\lvec(2 0)
		\move(0 0)\lvec(0 2)
		\move(0 2)\lvec(2 2)
		\move(0 2)\lvec(0 7)
		\move(0 7)\lvec(2 7)
		\move(0 4)\lvec(2 4)
		\move(0 5)\lvec(2 5)
		\move(0.6 0)\lvec(0.6 4)
		\move(1.4 0)\lvec(1.4 4)
		\move(0.6 5)\lvec(0.6 7)
		\move(1.4 5)\lvec(1.4 7)
		\htext(1.05 1){$\cdots$}
		\htext(1.05 3){$\cdots$}
		\vtext(1.05 4.5){$\cdots$}
		\htext(1.05 6){$\cdots$}
		\htext(1.05 8){$\cdots$}
		\htext(0.2 1.6){$0$}
		\htext(1.6 1.6){$0$}
		\htext(0.4 0.4){$1$}
		\htext(1.8 0.4){$1$}
		\move(0 0)\lvec(0.6 2)
		\move(1.4 0)\lvec(2 2)	
		\htext(0.3 3){$2$}
		\htext(1.7 3){$2$}
		\htext(0.3 6){$n$-$2$}
		\htext(1.7 6){$n$-$2$}
		\htext(1 -0.3){$\underbrace{\rule{4.5em}{0em}}$}
		\htext(1 -0.7){$x_n$}

		\move(0 7)\lvec(0 9)
		\move(0.6 7)\lvec(0.6 9)
		\move(1.4 7)\lvec(1.4 9)
		\move(0 9)\lvec(2 9)			
		\move(0 7)\lvec(0.6 9)
		\move(1.4 7)\lvec(2 9)
		\htext(1.62 8.6){$n$\!-\!$1$}
		\htext(0.22 8.6){$n$\!-\!$1$}
		\esegment
		
		\move(2 0)
		\bsegment
		\move(0 0)\lvec(2 0)
		\move(0 0)\lvec(0 2)
		\move(0 2)\lvec(2 2)
		\move(0 2)\lvec(0 9)
		\move(2 0)\lvec(2 9)
		\move(0 9)\lvec(2 9)
		\move(0 5)\lvec(2 5)
		\move(0 4)\lvec(2 4)
		\move(0 7)\lvec(2 7)
		\move(0.6 0)\lvec(0.6 4)
		\move(1.4 0)\lvec(1.4 4)
		\move(0.6 5)\lvec(0.6 9)
		\move(1.4 5)\lvec(1.4 9)
		\htext(1.05 1){$\cdots$}
		\htext(1.05 3){$\cdots$}
		\vtext(1.05 4.5){$\cdots$}
		\htext(1.05 6){$\cdots$}
		\htext(1.05 8){$\cdots$}
		\htext(0.2 1.6){$0$}
		\htext(1.6 1.6){$0$}
		\htext(0.4 0.4){$1$}
		\htext(1.8 0.4){$1$}
		\move(0 0)\lvec(0.6 2)
		\move(1.4 0)\lvec(2 2)
		\htext(0.3 3){$2$}
		\htext(1.7 3){$2$}
		\htext(0.3 6){$n$-$2$}
		\htext(1.7 6){$n$-$2$}
		\move(0 7)\lvec(0.6 9)
		\move(1.4 7)\lvec(2 9)
		
		\htext(1 -0.3){$\underbrace{\rule{4.5em}{0em}}$}
		\htext(1 -0.7){$\bar{x}_n$}
		
		\htext(0.4 7.4){$n$}
		\htext(1.8 7.4){$n$}
		
		\esegment
		
		\esegment	
	\end{texdraw}
\end{center}

\vskip 6mm

\item\label{8} For type $C_n^{(1)}$, 

\vskip 2mm

if $x_1\ge \bar{x}_1$, the map $\tilde{\mathfrak f}_0$ is given by

\vskip 10mm

\begin{center}
	\begin{texdraw}
		\fontsize{3}{3}\selectfont
		\drawdim em
		\setunitscale 2.3
		\move(0 0)
		\bsegment
		\move(-1.2 0)
		\bsegment
		\move(0 0)\lvec(2 0)
		\move(0 0)\lvec(0 2)
		\move(0 2)\lvec(2 2)
		\move(0.6 0)\lvec(0.6 2)
		\move(1.4 0)\lvec(1.4 2)
		\move(0 0)\lvec(0.6 2)
		\move(1.4 0)\lvec(2 2)
		\htext(0.4 0.4){$0$}
		\htext(1.8 0.4){$0$}

		\htext(2.4 0.4){$0$}
		\htext(3 0.4){$0$}

		\htext(1.05 1){$\cdots$}
		\htext(1 -0.3){$\underbrace{\rule{4.5em}{0em}}$}
		\htext(1 -0.6){$l'\!\!-\!\!2$}
		
		\move(2 0)\lvec(3.2 0)
		\move(2 2)\lvec(3.2 2)
		\move(2 0)\lvec(2 2)
		\move(2.6 0)\lvec(2.6 2)
		\move(2 0)\lvec(2.6 2)	
		\move(2.6 0)\lvec(3.2 2)			
		\esegment
		\move(2 0)
		\bsegment
		\move(0 0)\lvec(2 0)
		\move(0 0)\lvec(0 2)
		\move(0 2)\lvec(2 2)
		\move(2 0)\lvec(2 2)
		\move(0.6 0)\lvec(0.6 2)
		\move(1.4 0)\lvec(1.4 2)
		\move(0 0)\lvec(0.6 2)
			\move(1.4 0)\lvec(2 2)
			\htext(0.2 1.6){$0$}
		\htext(1.6 1.6){$0$}
		\htext(0.4 0.4){$0$}
		\htext(1.8 0.4){$0$}
		\htext(1.05 1){$\cdots$}
	
		\htext(1 -0.3){$\underbrace{\rule{4.5em}{0em}}$}
		\htext(1 -0.7){$x_1$}
		
		\move(2.5 1)\avec(4.5 1)
		\htext(3.5 1.8){	\fontsize{12}{12}\selectfont$\tilde{\mathfrak f}_{0}$}
		\esegment
		\esegment
		
		\move(8 0)
		\bsegment
		\move(-1.2 0)
		\bsegment
		\move(0 0)\lvec(2 0)
		\move(0 0)\lvec(0 2)
		\move(0 2)\lvec(2 2)
		\move(0.6 0)\lvec(0.6 2)
		\move(1.4 0)\lvec(1.4 2)
		\move(0 0)\lvec(0.6 2)
		\move(1.4 0)\lvec(2 2)
		\htext(0.4 0.4){$0$}
		\htext(1.8 0.4){$0$}

		\htext(2.4 0.4){$0$}
		\htext(3 0.4){$0$}		
		
		\htext(2.2 1.6){$0$}
		\htext(2.8 1.6){$0$}	
		\htext(1.05 1){$\cdots$}
		\htext(1 -0.3){$\underbrace{\rule{4.5em}{0em}}$}
		\htext(1 -0.6){$l'\!\!-\!\!2$}
		
		\move(2 0)\lvec(2 2)\lvec(2.6 2)\lvec(2 0) \lfill f:0.8
		\move(2.6 0)\lvec(2.6 2)\lvec(3.2 2)\lvec(2.6 0) \lfill f:0.8
		
		\move(2 0)\lvec(3.2 0)
		\move(2 2)\lvec(3.2 2)
		\move(2 0)\lvec(2 2)
		\move(2.6 0)\lvec(2.6 2)
		\move(2 0)\lvec(2.6 2)	
		\move(2.6 0)\lvec(3.2 2)			
		\esegment
		\move(2 0)
		\bsegment
		\move(0 0)\lvec(2 0)
		\move(0 0)\lvec(0 2)
		\move(0 2)\lvec(2 2)
		\move(2 0)\lvec(2 2)
		\move(0.6 0)\lvec(0.6 2)
		\move(1.4 0)\lvec(1.4 2)
		\htext(1.05 1){$\cdots$}
		\move(0 0)\lvec(0.6 2)
			\move(1.4 0)\lvec(2 2)
		\htext(0.2 1.6){$0$}
		\htext(1.6 1.6){$0$}
		\htext(0.4 0.4){$0$}
		\htext(1.8 0.4){$0$}
		\htext(1 -0.3){$\underbrace{\rule{4.5em}{0em}}$}
		\htext(1 -0.7){$x_1$}
		\esegment
		\esegment
	\end{texdraw}
\end{center}

where $l'=2l-\sum_{i=1}^{n}(x_i+\bar{x}_i)$.

\vskip 4mm

if $x_1= \bar{x}_1-1$, the map $\tilde{\mathfrak f}_0$ is given by

\begin{center}
	\begin{texdraw}
		\fontsize{3}{3}\selectfont
		\drawdim em
		\setunitscale 2.3
		\arrowheadsize l:0.3 w:0.3
		\arrowheadtype t:V
		\move(0 0)
		
		\bsegment

		\move(0 0)
		\bsegment
		\move(0 0)\lvec(2 0)
		\move(0 0)\lvec(0 2)
		\move(0 2)\lvec(2 2)
		\move(2 0)\lvec(2 2)
		\move(0.6 0)\lvec(0.6 2)
		\move(1.4 0)\lvec(1.4 2)
			\move(0 0)\lvec(0.6 2)
		\move(1.4 0)\lvec(2 2)
		\htext(0.2 1.6){$0$}
		\htext(1.6 1.6){$0$}
		\htext(0.4 0.4){$0$}
		\htext(1.8 0.4){$0$}
		\htext(1.05 1){$\cdots$}
		\htext(1 -0.3){$\underbrace{\rule{4.5em}{0em}}$}
		\htext(1 -0.6){$x_1$}
		\htext(2.55 1){\fontsize{8}{8}\selectfont$\cdots$}
		\esegment
		\move(3 0)
		\bsegment
		\move(0 0)\lvec(2 0)
		\move(0 0)\lvec(0 2)
		\move(0 2)\lvec(2 2)
		\move(0.6 0)\lvec(0.6 2)
		\move(1.4 0)\lvec(1.4 2)
		\htext(1.05 1){$\cdots$}
		\htext(1.05 3){$\cdots$}
		\htext(1.05 5){$\cdots$}
		\vtext(1.05 6.5){$\cdots$}
		\htext(1.05 8){$\cdots$}
		\htext(1.05 10){$\cdots$}
			\move(0 0)\lvec(0.6 2)
		\move(1.4 0)\lvec(2 2)
		\htext(0.2 1.6){$0$}
		\htext(1.6 1.6){$0$}
		\htext(0.4 0.4){$0$}
		\htext(1.8 0.4){$0$}
		\move(2 0)\lvec(2 2)
		\move(2 2)\lvec(2 6)
		\move(1.4 2)\lvec(1.4 6)
		\move(0.6 2)\lvec(0.6 6)
		\move(0 2)\lvec(0 6)
		\move(0 6)\lvec(0 7)
		\move(2 6)\lvec(2 7)
		\move(2 7)\lvec(2 11)
		\move(1.4 7)\lvec(1.4 11)
		\move(0.6 7)\lvec(0.6 11)
		\move(0 7)\lvec(0 11)
		
		\move(0 2)\lvec(2 2)
		\move(0 11)\lvec(2 11)
		\move(0 4)\lvec(2 4)
		\move(0 6)\lvec(2 6)
		\move(0 7)\lvec(2 7)
		\move(0 9)\lvec(2 9)
		\htext(0.3 3){$1$}
		\htext(0.3 5){$2$}
		\htext(0.3 8){$2$}
		\htext(0.3 10){$1$}
		\htext(1.7 3){$1$}
		\htext(1.7 5){$2$}
		\htext(1.7 8){$2$}
		\htext(1.7 10){$1$}

		\htext(1 -0.3){$\underbrace{\rule{4.5em}{0em}}$}
		\htext(1 -0.7){$\bar{x}_1$}
	
		\move(2.5 1)\avec(4.5 1)
		\htext(3.5 1.8){	\fontsize{12}{12}\selectfont$\tilde{\mathfrak f}_{0}$}
		
		\esegment
		\esegment
		
		\move(8 0)
		
		\bsegment
		
		\move(0 0)
		\bsegment
		\move(0 0)\lvec(2 0)
		\move(0 0)\lvec(0 2)
		\move(0 2)\lvec(2 2)
		\move(2 0)\lvec(2 2)
		\move(0.6 0)\lvec(0.6 2)
		\move(1.4 0)\lvec(1.4 2)
			\move(0 0)\lvec(0.6 2)
		\move(1.4 0)\lvec(2 2)
		\htext(0.2 1.6){$0$}
		\htext(1.6 1.6){$0$}
		\htext(0.4 0.4){$0$}
		\htext(1.8 0.4){$0$}
		\htext(1.05 1){$\cdots$}
		\htext(1 -0.3){$\underbrace{\rule{4.5em}{0em}}$}
		\htext(1 -0.6){$x_1$}
		\htext(2.55 1){\fontsize{8}{8}\selectfont$\cdots$}
		\esegment
		\move(3 0)
		\bsegment
		\move(0 0)\lvec(2 0)
		\move(0 0)\lvec(0 2)
		\move(0 2)\lvec(2 2)
			\move(0 0)\lvec(0.6 2)
		\move(1.4 0)\lvec(2 2)
		\htext(0.2 1.6){$0$}
		\htext(1.6 1.6){$0$}
		\htext(0.4 0.4){$0$}
		\htext(1.8 0.4){$0$}
		
		\htext(1.6 12.6){$0$}
			\htext(1.8 11.4){$0$}
		\move(0.6 0)\lvec(0.6 2)
		\move(1.4 0)\lvec(1.4 2)
		\htext(1.05 1){$\cdots$}
		\htext(1.05 3){$\cdots$}
		\htext(1.05 5){$\cdots$}
		\vtext(1.05 6.5){$\cdots$}
		\htext(1.05 8){$\cdots$}
		\htext(1.05 10){$\cdots$}
	
		\move(2 0)\lvec(2 2)
		\move(2 2)\lvec(2 6)
		\move(1.4 2)\lvec(1.4 6)
		\move(0.6 2)\lvec(0.6 6)
		\move(0 2)\lvec(0 6)
		\move(0 6)\lvec(0 7)
		\move(2 6)\lvec(2 7)
		\move(2 7)\lvec(2 11)
		\move(1.4 7)\lvec(1.4 11)
		\move(0.6 7)\lvec(0.6 11)
		\move(0 7)\lvec(0 11)
		
		\move(0 2)\lvec(2 2)
		\move(0 11)\lvec(2 11)
		\move(0 4)\lvec(2 4)
		\move(0 6)\lvec(2 6)
		\move(0 7)\lvec(2 7)
		\move(0 9)\lvec(2 9)
		\htext(0.3 3){$1$}
		\htext(0.3 5){$2$}
		\htext(0.3 8){$2$}
		\htext(0.3 10){$1$}
		\htext(1.7 3){$1$}
		\htext(1.7 5){$2$}
		\htext(1.7 8){$2$}
		\htext(1.7 10){$1$}

		\htext(1 -0.3){$\underbrace{\rule{4.5em}{0em}}$}
		\htext(1 -0.7){$\bar{x}_1$}
		\move(1.4 11)\lvec(1.4 13)\lvec(2 13) 	\lvec(1.4 11)
		\lfill f:0.8
	\move(2 11)\lvec(2 13)\lvec(1.4 11)\lvec(2 11)\lfill f:0.8

		\esegment
		\esegment
	\end{texdraw}
\end{center}

\vskip 4mm

if $x_1\le \bar{x}_1-2$, the map $\tilde{\mathfrak f}_0$ is given by

\begin{center}
	\begin{texdraw}
		\fontsize{3}{3}\selectfont
		\drawdim em
		\setunitscale 2.3
		\arrowheadsize l:0.3 w:0.3
		\arrowheadtype t:V
		\move(0 0)
		
		\bsegment
		
		\move(0 0)
		\bsegment
		\move(0 0)\lvec(2 0)
		\move(0 0)\lvec(0 2)
		\move(0 2)\lvec(2 2)
		\move(2 0)\lvec(2 2)
		\move(0.6 0)\lvec(0.6 2)
		\move(1.4 0)\lvec(1.4 2)
		\move(0 0)\lvec(0.6 2)
		\move(1.4 0)\lvec(2 2)
		\htext(0.4 0.4){$0$}
		\htext(1.8 0.4){$0$}
		\htext(1.05 1){$\cdots$}
		\htext(1 -0.3){$\underbrace{\rule{4.5em}{0em}}$}
		\htext(1 -0.6){$l'$}
		\htext(1.05 1){$\cdots$}
		\htext(1 -0.3){$\underbrace{\rule{4.5em}{0em}}$}
		
		\htext(2.55 1){\fontsize{8}{8}\selectfont$\cdots$}
		\esegment
		\move(3 0)
		\bsegment
		\move(0 0)\lvec(2 0)
		\move(0 0)\lvec(0 2)
		\move(0 2)\lvec(2 2)
		\move(0.6 0)\lvec(0.6 2)
		\move(1.4 0)\lvec(1.4 2)
		\htext(1.05 1){$\cdots$}
		\htext(1.05 3){$\cdots$}
		\htext(1.05 5){$\cdots$}
		\vtext(1.05 6.5){$\cdots$}
		\htext(1.05 8){$\cdots$}
		\htext(1.05 10){$\cdots$}
			
		\move(0 0)\lvec(0.6 2)
		\move(1.4 0)\lvec(2 2)
		\htext(0.2 1.6){$0$}
		\htext(1.6 1.6){$0$}
		\htext(0.4 0.4){$0$}
		\htext(1.8 0.4){$0$}
		\move(2 0)\lvec(2 2)
		\move(2 2)\lvec(2 6)
		\move(1.4 2)\lvec(1.4 6)
		\move(0.6 2)\lvec(0.6 6)
		\move(0 2)\lvec(0 6)
		\move(0 6)\lvec(0 7)
		\move(2 6)\lvec(2 7)
		\move(2 7)\lvec(2 11)
		\move(1.4 7)\lvec(1.4 11)
		\move(0.6 7)\lvec(0.6 11)
		\move(0 7)\lvec(0 11)
		
		\move(0 2)\lvec(2 2)
		\move(0 11)\lvec(2 11)
		\move(0 4)\lvec(2 4)
		\move(0 6)\lvec(2 6)
		\move(0 7)\lvec(2 7)
		\move(0 9)\lvec(2 9)
		\htext(0.3 3){$1$}
		\htext(0.3 5){$2$}
		\htext(0.3 8){$2$}
		\htext(0.3 10){$1$}
		\htext(1.7 3){$1$}
		\htext(1.7 5){$2$}
		\htext(1.7 8){$2$}
		\htext(1.7 10){$1$}

		\htext(1 -0.3){$\underbrace{\rule{4.5em}{0em}}$}
		\htext(1 -0.7){$\bar{x}_1\!\!-\!\!1$}
		
		\move(2.6 0)\lvec(2.6 11)
		\move(2 0)\lvec(2.6 0)
		\move(2 2)\lvec(2.6 2)
		\move(2 4)\lvec(2.6 4)
		\move(2 6)\lvec(2.6 6)	
		\move(2 7)\lvec(2.6 7)	
		\move(2 9)\lvec(2.6 9)	
		\move(2 11)\lvec(2.6 11)

		\move(2 0)\lvec(2.6 2)
	
		\htext(2.2 1.6){$0$}
	
		\htext(2.4 0.4){$0$}
		
		\htext(2.3 3){$1$}
		\htext(2.3 5){$2$}
		\htext(2.3 8){$2$}
		\htext(2.3 10){$1$}	
		\move(3.1 1)\avec(5.1 1)
		\htext(4.1 1.8){	\fontsize{12}{12}\selectfont$\tilde{\mathfrak f}_{0}$}	
		\esegment
		\esegment
		
		\move(8.6 0)
		
		\bsegment
		
		\move(0 0)
		\bsegment
		\move(0 0)\lvec(2 0)
		\move(0 0)\lvec(0 2)
		\move(0 2)\lvec(2 2)
		\move(2 0)\lvec(2 2)
		\move(0.6 0)\lvec(0.6 2)
		\move(1.4 0)\lvec(1.4 2)
		\move(0 0)\lvec(0.6 2)
		\move(1.4 0)\lvec(2 2)
		\htext(0.4 0.4){$0$}
		\htext(1.8 0.4){$0$}
		\htext(1.05 1){$\cdots$}
		\htext(1 -0.3){$\underbrace{\rule{4.5em}{0em}}$}
		\htext(1 -0.6){$l'$}
		\htext(2.55 1){\fontsize{8}{8}\selectfont$\cdots$}
		\esegment
		\move(3 0)
		\bsegment
		\move(0 0)\lvec(2 0)
		\move(0 0)\lvec(0 2)
		\move(0 2)\lvec(2 2)
		\move(0.6 0)\lvec(0.6 2)
		\move(1.4 0)\lvec(1.4 2)
		\htext(1.05 1){$\cdots$}
		\htext(1.05 3){$\cdots$}
		\htext(1.05 5){$\cdots$}
		\vtext(1.05 6.5){$\cdots$}
		\htext(1.05 8){$\cdots$}
		\htext(1.05 10){$\cdots$}
	  \move(0 0)\lvec(0.6 2)
	\move(1.4 0)\lvec(2 2)
	\htext(0.2 1.6){$0$}
	\htext(1.6 1.6){$0$}
	\htext(0.4 0.4){$0$}
	\htext(1.8 0.4){$0$}
	\move(2 0)\lvec(2.6 2)
	\htext(2.2 1.6){$0$}
	\htext(2.4 0.4){$0$}
		\move(2 0)\lvec(2 2)
		\move(2 2)\lvec(2 6)
		\move(1.4 2)\lvec(1.4 6)
		\move(0.6 2)\lvec(0.6 6)
		\move(0 2)\lvec(0 6)
		\move(0 6)\lvec(0 7)
		\move(2 6)\lvec(2 7)
		\move(2 7)\lvec(2 11)
		\move(1.4 7)\lvec(1.4 11)
		\move(0.6 7)\lvec(0.6 11)
		\move(0 7)\lvec(0 11)
		
		\move(0 2)\lvec(2 2)
		\move(0 11)\lvec(2 11)
		\move(0 4)\lvec(2 4)
		\move(0 6)\lvec(2 6)
		\move(0 7)\lvec(2 7)
		\move(0 9)\lvec(2 9)
		\htext(0.3 3){$1$}
		\htext(0.3 5){$2$}
		\htext(0.3 8){$2$}
		\htext(0.3 10){$1$}
		\htext(1.7 3){$1$}
		\htext(1.7 5){$2$}
		\htext(1.7 8){$2$}
		\htext(1.7 10){$1$}

		\htext(1 -0.3){$\underbrace{\rule{4.5em}{0em}}$}
		\htext(1 -0.7){$\bar{x}_1\!\!-\!\!1$}
		\move(1.4 11)\lvec(1.4 13)\lvec(2 13) \lvec(2 11)\lvec(1.4 11)
		
		\move(2.6 0)\lvec(2.6 13)
		\move(2 0)\lvec(2.6 0)
		\move(2 2)\lvec(2.6 2)
		\move(2 4)\lvec(2.6 4)
		\move(2 6)\lvec(2.6 6)	
		\move(2 7)\lvec(2.6 7)	
		\move(2 9)\lvec(2.6 9)	
		\move(2 11)\lvec(2.6 11)				
		\move(2 13)\lvec(2.6 13)
	
		\htext(2.3 3){$1$}
		\htext(2.3 5){$2$}
		\htext(2.3 8){$2$}
		\htext(2.3 10){$1$}

		\move(1.4 11)\lvec(2 11)\lvec(2 13)\lvec(1.4 11) \lfill f:0.8
		\move(2 11)\lvec(2.6 11)\lvec(2.6 13)\lvec(2 11) \lfill f:0.8
		\htext(1.8 11.4){$0$}
		\htext(2.4 11.4){$0$}
		\esegment
		\esegment
	\end{texdraw}
\end{center}

where $l'=2l-\sum_{i=1}^{n}(x_i+\bar{x}_i)$.
\end{enumerate}

\vskip 3mm

\begin{remark}
After we perform the  $\tilde{\mathfrak f}_i$-action on the level-$l$ reduced Young columns, we remove all removable $\delta$-slices and move some slices in the Young columns to keep the shapes of the level-$l$ reduced Young columns  in Section \ref{Level-$l$ reduced Young Column}.
\end{remark}

\vskip 1mm

\begin{example}
We consider the case $B_{4}^{(1)}$. Let  $l=15$ and $b=(2,1,2,2,1,1,1,2,3)$. The action of $\tilde{\mathfrak f}_{1}$ on the reduced Young Column $C_b$ is given below.
\vskip 2mm
\begin{center}
	\begin{texdraw}
		\fontsize{5}{5}\selectfont
		\drawdim em
		\setunitscale 2.1
		\move(2 0)
		\bsegment		 
		\move(-2 0)\rlvec(-15 0)	
		
		\move(-2 2)\rlvec(-15 0)
		
		\move(-2 0 )\rlvec(0 12)
		\move(-2 0 )\rlvec(0 12)
		\move(-3 0 )\rlvec(0 10)
		\move(-4 0 )\rlvec(0 10)
		\move(-5 0 )\rlvec(0 4)
		\move(-6 0 )\rlvec(0 8)
		\move(-7 0 )\rlvec(0 8)
		\move(-8 0 )\rlvec(0 6)
		\move(-9 0 )\rlvec(0 4)
		\move(-10 0 )\rlvec(0 4)
		\move(-11 0 )\rlvec(0 2)
		\move(-12 0 )\rlvec(0 2)
		\move(-13 0 )\rlvec(0 2)
		\move(-14 0 )\rlvec(0 2)
		\move(-15 0 )\rlvec(0 2)
		
		\move(-3 10)\rlvec(-1 0)
		
		\move(-3 12)\lvec(-2 12)
		\move(-3 10)\lvec(-2 10)	
		\move(-3 8)\lvec(-2 8)
		\move(-3 6)\lvec(-2 6)
		
		\move(-6 8)\rlvec(-1 0)

		\move(-2 4)\rlvec(-8 0)
		\move(-6 6)\rlvec(-2 0)
		\move(-3 6)\rlvec(-1 0)
		\move(-3 8)\rlvec(-1 0)

		\move(-3 0)\rlvec(1 2)
		\move(-3 10)\rlvec(0 2)
		\move(-5 8)\lvec(-5 10)
		\move(-5 10)\lvec(-4 10)
		\htext(-4.5 9){$3$}	
		\move(-6 6)\rlvec(1 0)
		\move(-6 8)\rlvec(1 0)
		\move(-6 6)\rlvec(1 2)
		\htext(-5.5 5){$3$}

		\move(-4 0)\rlvec(1 2)
		\move(-5 0)\rlvec(1 2)
		\move(-5 4)\rlvec(0 4)
		\move(-4 8)\rlvec(-1 0)
		\move(-4 6)\rlvec(-1 0)
		\htext(-4.5 5){$3$}
		\move(-5 6)\rlvec(1 2)
		\htext(-4.3 6.5){$4$}
		\htext(-4.7 7.5){$4$}
		\htext(-5.3 6.5){$4$}
		\htext(-5.7 7.5){$4$}
		
		\move(-6 0)\rlvec(1 2)
		\move(-7 0)\rlvec(1 2)
		\move(-8 0)\rlvec(1 2)
		\move(-9 0)\rlvec(1 2)
		\move(-10 0)\rlvec(1 2)
		\move(-11 0)\rlvec(1 2)
		\move(-12 0)\rlvec(1 2)
		\move(-13 0)\rlvec(1 2)
		\move(-14 0)\rlvec(1 2)
		\move(-15 0)\rlvec(1 2)
		\move(-7 6)\rlvec(1 2)
		
		\move(-4 6)\rlvec(1 2)
		
		\move(-11 2)\lvec(-11 4)
		\move(-11 4)\lvec(-10 4)
		\move(-16 0)\lvec(-16 2)
		\move(-17 0)\lvec(-17 2)
		\move(-17 0)\lvec(-16 2)
		\move(-16 0)\lvec(-15 2)
		\htext(-10.5 3){$2$}
		\htext(-2.3 0.5){$1$}
		\htext(-2.7 1.5){$0$}
		\htext(-2.5 3){$2$}	
		\htext(-2.5 5){$3$}
		\move(-3 6)\lvec(-2 8)	
		\htext(-2.3 6.5){$4$}
		\htext(-2.7 7.5){$4$}	
		\htext(-2.5 9){$3$}	
		\htext(-2.5 11){$2$}				
		\htext(-3.3 0.5){$1$}
		\htext(-3.7 1.5){$0$}
		\htext(-3.5 3){$2$}
		\htext(-3.5 5){$3$}	
		\htext(-3.3 6.5){$4$}
		\htext(-3.7 7.5){$4$}
		\htext(-3.5 9){$3$}
		
		\htext(-4.3 0.5){$1$}
		\htext(-4.7 1.5){$0$}
		\htext(-4.5 3){$2$}
		
		\htext(-5.3 0.5){$1$}
		\htext(-5.7 1.5){$0$}
		\htext(-5.5 3){$2$}
		
		\htext(-6.3 0.5){$1$}
		\htext(-6.7 1.5){$0$}
		\htext(-6.5 3){$2$}
		\htext(-6.5 5){$3$}
		\htext(-6.3 6.5){$4$}

		\htext(-7.3 0.5){$1$}
		\htext(-7.7 1.5){$0$}
		\htext(-7.5 3){$2$}
		\htext(-7.5 5){$3$}

		\htext(-8.3 0.5){$1$}
		\htext(-8.7 1.5){$0$}
		\htext(-8.5 3){$2$}

		\htext(-9.3 0.5){$1$}
		\htext(-9.7 1.5){$0$}
		\htext(-9.5 3){$2$}
		
		\htext(-11.7 1.5){$0$}
		\htext(-10.3 0.5){$1$}
		\htext(-11.3 0.5){$1$}
		\htext(-12.3 0.5){$1$}
		\htext(-13.3 0.5){$1$}
		\htext(-14.3 0.5){$1$}
		\htext(-10.7 1.5){$0$}
		
		\htext(-15.7 1.5){$0$}
		\htext(-16.7 1.5){$0$}
		\move(-4 10)\lvec(-4 12)
		\move(-4 12)\lvec(-3 12)
		\htext(-3.5 11){$2$}
		\move(-9 4)\lvec(-9 6)	
		\move(-9 6)\lvec(-8 6)
		\htext(-8.5 5){$3$}	
		\move(-3 12)\lvec(-3 14)
		\move(-3 14)\lvec(-2 14)
		
		\move(-3 12)\lvec(-2 14)\lvec(-2 12)\lvec(-3 12)\lfill f:0.8
		\htext(-2.3 12.5){$1$}
		\esegment
		
		\move(-16 0)
		\bsegment
		\move(-2 0)\rlvec(-15 0)	
		
		\move(-2 2)\rlvec(-15 0)
		
		\move(-2 0 )\rlvec(0 12)
		\move(-2 0 )\rlvec(0 12)
		\move(-3 0 )\rlvec(0 10)
		\move(-4 0 )\rlvec(0 10)
		\move(-5 0 )\rlvec(0 4)
		\move(-6 0 )\rlvec(0 8)
		\move(-7 0 )\rlvec(0 8)
		\move(-8 0 )\rlvec(0 6)
		\move(-9 0 )\rlvec(0 4)
		\move(-10 0 )\rlvec(0 4)
		\move(-11 0 )\rlvec(0 2)
		\move(-12 0 )\rlvec(0 2)
		\move(-13 0 )\rlvec(0 2)
		\move(-14 0 )\rlvec(0 2)
		\move(-15 0 )\rlvec(0 2)
		
		\move(-3 10)\rlvec(-1 0)
		
		\move(-3 12)\lvec(-2 12)
		\move(-3 10)\lvec(-2 10)	
		\move(-3 8)\lvec(-2 8)
		\move(-3 6)\lvec(-2 6)
		
		\move(-6 8)\rlvec(-1 0)

		\move(-2 4)\rlvec(-8 0)
		\move(-6 6)\rlvec(-2 0)
		\move(-3 6)\rlvec(-1 0)
		\move(-3 8)\rlvec(-1 0)

		\move(-3 0)\rlvec(1 2)
		\move(-3 10)\rlvec(0 2)
		\move(-5 8)\lvec(-5 10)
		\move(-5 10)\lvec(-4 10)
		\htext(-4.5 9){$3$}	
		\move(-6 6)\rlvec(1 0)
		\move(-6 8)\rlvec(1 0)
		\move(-6 6)\rlvec(1 2)
		\htext(-5.5 5){$3$}

		\move(-4 0)\rlvec(1 2)
		\move(-5 0)\rlvec(1 2)
		\move(-5 4)\rlvec(0 4)
		\move(-4 8)\rlvec(-1 0)
		\move(-4 6)\rlvec(-1 0)
		\htext(-4.5 5){$3$}
		\move(-5 6)\rlvec(1 2)
		\htext(-4.3 6.5){$4$}
		\htext(-4.7 7.5){$4$}
		\htext(-5.3 6.5){$4$}
		\htext(-5.7 7.5){$4$}
		
		\move(-6 0)\rlvec(1 2)
		\move(-7 0)\rlvec(1 2)
		\move(-8 0)\rlvec(1 2)
		\move(-9 0)\rlvec(1 2)
		\move(-10 0)\rlvec(1 2)
		\move(-11 0)\rlvec(1 2)
		\move(-12 0)\rlvec(1 2)
		\move(-13 0)\rlvec(1 2)
		\move(-14 0)\rlvec(1 2)
		\move(-15 0)\rlvec(1 2)
		\move(-7 6)\rlvec(1 2)
		
		\move(-4 6)\rlvec(1 2)
		
		\move(-11 2)\lvec(-11 4)
		\move(-11 4)\lvec(-10 4)
		\move(-16 0)\lvec(-16 2)
		\move(-17 0)\lvec(-17 2)
		\move(-17 0)\lvec(-16 2)
		\move(-16 0)\lvec(-15 2)
		\htext(-10.5 3){$2$}
		\htext(-2.3 0.5){$1$}
		\htext(-2.7 1.5){$0$}
		\htext(-2.5 3){$2$}	
		\htext(-2.5 5){$3$}
		\move(-3 6)\lvec(-2 8)	
		\htext(-2.3 6.5){$4$}
		\htext(-2.7 7.5){$4$}	
		\htext(-2.5 9){$3$}	
		\htext(-2.5 11){$2$}				
		\htext(-3.3 0.5){$1$}
		\htext(-3.7 1.5){$0$}
		\htext(-3.5 3){$2$}
		\htext(-3.5 5){$3$}	
		\htext(-3.3 6.5){$4$}
		\htext(-3.7 7.5){$4$}
		\htext(-3.5 9){$3$}
		
		\htext(-4.3 0.5){$1$}
		\htext(-4.7 1.5){$0$}
		\htext(-4.5 3){$2$}
		
		\htext(-5.3 0.5){$1$}
		\htext(-5.7 1.5){$0$}
		\htext(-5.5 3){$2$}
		
		\htext(-6.3 0.5){$1$}
		\htext(-6.7 1.5){$0$}
		\htext(-6.5 3){$2$}
		\htext(-6.5 5){$3$}
		\htext(-6.3 6.5){$4$}

		\htext(-7.3 0.5){$1$}
		\htext(-7.7 1.5){$0$}
		\htext(-7.5 3){$2$}
		\htext(-7.5 5){$3$}

		\htext(-8.3 0.5){$1$}
		\htext(-8.7 1.5){$0$}
		\htext(-8.5 3){$2$}

		\htext(-9.3 0.5){$1$}
		\htext(-9.7 1.5){$0$}
		\htext(-9.5 3){$2$}
		
		\htext(-11.7 1.5){$0$}
		\htext(-10.3 0.5){$1$}
		\htext(-11.3 0.5){$1$}
		\htext(-12.3 0.5){$1$}
		\htext(-13.3 0.5){$1$}
		\htext(-14.3 0.5){$1$}
		\htext(-10.7 1.5){$0$}
		
		\htext(-15.7 1.5){$0$}
		\htext(-16.7 1.5){$0$}
		\move(-4 10)\lvec(-4 12)
		\move(-4 12)\lvec(-3 12)
		\htext(-3.5 11){$2$}
		\move(-9 4)\lvec(-9 6)	
		\move(-9 6)\lvec(-8 6)
		\htext(-8.5 5){$3$}				
		\esegment	
		
		\move(2 -15)
		\bsegment
		\move(-2 0)\rlvec(-15 0)	
		
		\move(-2 2)\rlvec(-15 0)
		
		\move(-2 0 )\rlvec(0 2)
		
		\move(-3 0 )\rlvec(0 10)
		\move(-4 0 )\rlvec(0 10)
		\move(-5 0 )\rlvec(0 4)
		\move(-6 0 )\rlvec(0 8)
		\move(-7 0 )\rlvec(0 8)
		\move(-8 0 )\rlvec(0 6)
		\move(-9 0 )\rlvec(0 4)
		\move(-10 0 )\rlvec(0 4)
		\move(-11 0 )\rlvec(0 2)
		\move(-12 0 )\rlvec(0 2)
		\move(-13 0 )\rlvec(0 2)
		\move(-14 0 )\rlvec(0 2)
		\move(-15 0 )\rlvec(0 2)
		
		\move(-3 10)\rlvec(-1 0)
		
		\move(-6 8)\rlvec(-1 0)	
		\move(-3 4)\rlvec(-7 0)
		\move(-6 6)\rlvec(-2 0)
		\move(-3 6)\rlvec(-1 0)
		\move(-3 8)\rlvec(-1 0)

		\move(-3 0)\lvec(-2 2)\lvec(-2 0)\lvec(-3 0) \lfill f:0.8

		\move(-3 10)\rlvec(0 2)
		\move(-5 8)\lvec(-5 10)
		\move(-5 10)\lvec(-4 10)
		\htext(-4.5 9){$3$}	
		\move(-6 6)\rlvec(1 0)
		\move(-6 8)\rlvec(1 0)
		\move(-6 6)\rlvec(1 2)
		\htext(-5.5 5){$3$}

		\move(-4 0)\rlvec(1 2)
		\move(-5 0)\rlvec(1 2)
		\move(-5 4)\rlvec(0 4)
		\move(-4 8)\rlvec(-1 0)
		\move(-4 6)\rlvec(-1 0)
		\htext(-4.5 5){$3$}
		\move(-5 6)\rlvec(1 2)
		\htext(-4.3 6.5){$4$}
		\htext(-4.7 7.5){$4$}
		\htext(-5.3 6.5){$4$}
		\htext(-5.7 7.5){$4$}
		
		\move(-6 0)\rlvec(1 2)
		\move(-7 0)\rlvec(1 2)
		\move(-8 0)\rlvec(1 2)
		\move(-9 0)\rlvec(1 2)
		\move(-10 0)\rlvec(1 2)
		\move(-11 0)\rlvec(1 2)
		\move(-12 0)\rlvec(1 2)
		\move(-13 0)\rlvec(1 2)
		\move(-14 0)\rlvec(1 2)
		\move(-15 0)\rlvec(1 2)
		\move(-7 6)\rlvec(1 2)
		
		\move(-4 6)\rlvec(1 2)
		
		\move(-11 2)\lvec(-11 4)
		\move(-11 4)\lvec(-10 4)
		\move(-16 0)\lvec(-16 2)
		\move(-17 0)\lvec(-17 2)
		\move(-17 0)\lvec(-16 2)
		\move(-16 0)\lvec(-15 2)
		\htext(-10.5 3){$2$}
		\htext(-2.3 0.5){$1$}

		\htext(-3.3 0.5){$1$}
		\htext(-3.7 1.5){$0$}
		\htext(-3.5 3){$2$}
		\htext(-3.5 5){$3$}	
		\htext(-3.3 6.5){$4$}
		\htext(-3.7 7.5){$4$}
		\htext(-3.5 9){$3$}
		
		\htext(-4.3 0.5){$1$}
		\htext(-4.7 1.5){$0$}
		\htext(-4.5 3){$2$}
		
		\htext(-5.3 0.5){$1$}
		\htext(-5.7 1.5){$0$}
		\htext(-5.5 3){$2$}
		
		\htext(-6.3 0.5){$1$}
		\htext(-6.7 1.5){$0$}
		\htext(-6.5 3){$2$}
		\htext(-6.5 5){$3$}
		\htext(-6.3 6.5){$4$}

		\htext(-7.3 0.5){$1$}
		\htext(-7.7 1.5){$0$}
		\htext(-7.5 3){$2$}
		\htext(-7.5 5){$3$}

		\htext(-8.3 0.5){$1$}
		\htext(-8.7 1.5){$0$}
		\htext(-8.5 3){$2$}

		\htext(-9.3 0.5){$1$}
		\htext(-9.7 1.5){$0$}
		\htext(-9.5 3){$2$}
		
		\htext(-11.7 1.5){$0$}
		\htext(-10.3 0.5){$1$}
		\htext(-11.3 0.5){$1$}
		\htext(-12.3 0.5){$1$}
		\htext(-13.3 0.5){$1$}
		\htext(-14.3 0.5){$1$}
		\htext(-10.7 1.5){$0$}
		
		\htext(-15.7 1.5){$0$}
		\htext(-16.7 1.5){$0$}
		\move(-4 10)\lvec(-4 12)
		\move(-4 12)\lvec(-3 12)
		\htext(-3.5 11){$2$}
		\move(-9 4)\lvec(-9 6)	
		\move(-9 6)\lvec(-8 6)
		\htext(-8.5 5){$3$}

		\esegment

		\move(-15 -15)
		\bsegment
		\move(-3 0)\rlvec(-14 0)	
		
		\move(-3 2)\rlvec(-14 0)
		
		\move(-3 0 )\rlvec(0 10)
		\move(-4 0 )\rlvec(0 10)
		\move(-5 0 )\rlvec(0 4)
		\move(-6 0 )\rlvec(0 8)
		\move(-7 0 )\rlvec(0 8)
		\move(-8 0 )\rlvec(0 6)
		\move(-9 0 )\rlvec(0 4)
		\move(-10 0 )\rlvec(0 4)
		\move(-11 0 )\rlvec(0 2)
		\move(-12 0 )\rlvec(0 2)
		\move(-13 0 )\rlvec(0 2)
		\move(-14 0 )\rlvec(0 2)
		\move(-15 0 )\rlvec(0 2)
		
		\move(-3 10)\rlvec(-1 0)
		
		\move(-6 8)\rlvec(-1 0)	
		\move(-3 4)\rlvec(-7 0)
		\move(-6 6)\rlvec(-2 0)
		\move(-3 6)\rlvec(-1 0)
		\move(-3 8)\rlvec(-1 0)

		\move(-18 0)\lvec(-17 0)
		\move(-18 0)\lvec(-18 2)
		\move(-18 2)\lvec(-17 2)
		\move(-18 0)\lvec(-17 2)
		\htext(-16.7 1.5){$0$}
		\htext(-17.7 1.5){$0$}

		\move(-13 0)\lvec(-12 2)\lvec(-12 0)\lvec(-13 0) \lfill f:0.8
		\htext(-12.3 0.5){$1$}

		\move(-3 10)\rlvec(0 2)
		\move(-5 8)\lvec(-5 10)
		\move(-5 10)\lvec(-4 10)
		\htext(-4.5 9){$3$}	
		\move(-6 6)\rlvec(1 0)
		\move(-6 8)\rlvec(1 0)
		\move(-6 6)\rlvec(1 2)
		\htext(-5.5 5){$3$}

		\move(-4 0)\rlvec(1 2)
		\move(-5 0)\rlvec(1 2)
		\move(-5 4)\rlvec(0 4)
		\move(-4 8)\rlvec(-1 0)
		\move(-4 6)\rlvec(-1 0)
		\htext(-4.5 5){$3$}
		\move(-5 6)\rlvec(1 2)
		\htext(-4.3 6.5){$4$}
		\htext(-4.7 7.5){$4$}
		\htext(-5.3 6.5){$4$}
		\htext(-5.7 7.5){$4$}
		
		\move(-6 0)\rlvec(1 2)
		\move(-7 0)\rlvec(1 2)
		\move(-8 0)\rlvec(1 2)
		\move(-9 0)\rlvec(1 2)
		\move(-10 0)\rlvec(1 2)
		\move(-11 0)\rlvec(1 2)
		\move(-12 0)\rlvec(1 2)
		\move(-13 0)\rlvec(1 2)
		\move(-14 0)\rlvec(1 2)
		\move(-15 0)\rlvec(1 2)
		\move(-7 6)\rlvec(1 2)
		
		\move(-4 6)\rlvec(1 2)
		
		\move(-11 2)\lvec(-11 4)
		\move(-11 4)\lvec(-10 4)
		\move(-16 0)\lvec(-16 2)
		\move(-17 0)\lvec(-17 2)
		\move(-17 0)\lvec(-16 2)
		\move(-16 0)\lvec(-15 2)
		\htext(-10.5 3){$2$}

		\htext(-3.3 0.5){$1$}
		\htext(-3.7 1.5){$0$}
		\htext(-3.5 3){$2$}
		\htext(-3.5 5){$3$}	
		\htext(-3.3 6.5){$4$}
		\htext(-3.7 7.5){$4$}
		\htext(-3.5 9){$3$}
		
		\htext(-4.3 0.5){$1$}
		\htext(-4.7 1.5){$0$}
		\htext(-4.5 3){$2$}
		
		\htext(-5.3 0.5){$1$}
		\htext(-5.7 1.5){$0$}
		\htext(-5.5 3){$2$}
		
		\htext(-6.3 0.5){$1$}
		\htext(-6.7 1.5){$0$}
		\htext(-6.5 3){$2$}
		\htext(-6.5 5){$3$}
		\htext(-6.3 6.5){$4$}

		\htext(-7.3 0.5){$1$}
		\htext(-7.7 1.5){$0$}
		\htext(-7.5 3){$2$}
		\htext(-7.5 5){$3$}

		\htext(-8.3 0.5){$1$}
		\htext(-8.7 1.5){$0$}
		\htext(-8.5 3){$2$}

		\htext(-9.3 0.5){$1$}
		\htext(-9.7 1.5){$0$}
		\htext(-9.5 3){$2$}
		
		\htext(-11.7 1.5){$0$}
		\htext(-10.3 0.5){$1$}
		\htext(-11.3 0.5){$1$}
		\htext(-13.3 0.5){$1$}
		\htext(-14.3 0.5){$1$}
		\htext(-15.3 0.5){$1$}
		\htext(-10.7 1.5){$0$}
		
		\move(-4 10)\lvec(-4 12)
		\move(-4 12)\lvec(-3 12)
		\htext(-3.5 11){$2$}
		\move(-9 4)\lvec(-9 6)	
		\move(-9 6)\lvec(-8 6)
		\htext(-8.5 5){$3$}	
		\esegment				
		
		\move(-17.5 6)\ravec(2 0)\htext(-16.5 6.8){\fontsize{11}{11}\selectfont$\tilde{\mathfrak f}_{1}$}	
		
		\move(-7 -0.5)\ravec(0 -2)\htext(-3 -1.3){\fontsize{11}{11}\selectfont{remove the $\delta$-slice}}	
		
		\move(-15.5 -9)\ravec(-2 0)\htext(-14.6 -8.2){\fontsize{11}{11}\selectfont{move the slice}}			
	\end{texdraw}
\end{center}

	\end{example}

\vskip 3mm

Let $C\in\mathcal C^{(l)}$ be a level-$l$ reduced Young column. We define $\tilde{\mathfrak e}_iC=C'$ ($i\in I$) by removing the $i$-block in $C$ such that $\tilde{\mathfrak f}_iC'=C$. We denote the number of $i$-blocks which can be removed and added in $C$ by $\epsilon_i(C)$ and $\phi_i(C)$, respectively. We define $\wt(C)=\sum_{i=0}^{n}(\phi_i(C)-\epsilon_i(C))\Lambda_i$.

\vskip 2mm

According to the definitions of $\tilde{\mathfrak e}_i$, $\tilde{\mathfrak f}_i$, $\epsilon_i$, $\phi_i$ ($i\in I$) and $\wt$ on ${\mathcal C}^{(l)}$, we  have the following theorem:

\begin{theorem}\label{perfect crystal isomorphism}
For the quantum affine algebras of types $A_{2n}^{(2)}$, $D_{n+1}^{(2)}$, $A_{2n-1}^{(2)}$, $D_{n}^{(1)}$, $B_{n}^{(1)}$ and $C_{n}^{(1)}$, there is a crystal isomorphism 
$$
\Phi: {\mathcal C}^{(l)}\to\mathcal B^{(l)},\ \ C_{b}\to b.
$$	
	\end{theorem}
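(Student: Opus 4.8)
The plan is to show that $\Phi$ is a bijection which, together with its inverse, respects all the crystal data, and to reduce this to a finite case check. First I would note that $\Phi$ is a bijection of underlying sets: by construction the reduced Young columns listed in Section~\ref{Level-$l$ reduced Young Column} are indexed exactly by the tuples $(x_1,\dots,x_n|x_0|\bar x_n,\dots,\bar x_1)$ (or without the $x_0$ slot, according to type) subject to precisely the inequalities and parity conditions that define $\mathcal B^{(l)}$ in Section~\ref{Quantum affine algebra}, and the shape of $C_b$ recovers these numbers slice by slice, so $b\mapsto C_b$ is injective and surjective. Since on both $\mathcal C^{(l)}$ and $\mathcal B^{(l)}$ the operator $\tilde{\mathfrak e}_i$ (resp.\ $\tilde e_i$) is \emph{defined} as the inverse of $\tilde{\mathfrak f}_i$ (resp.\ $\tilde f_i$), and the weight is in both cases $\sum_{i\in I}(\varphi_i-\varepsilon_i)\Lambda_i$, it suffices to prove, for every $i\in I$ and every $b\in\mathcal B^{(l)}$, the three identities $\epsilon_i(C_b)=\varepsilon_i(b)$, $\phi_i(C_b)=\varphi_i(b)$ and $\Phi(\tilde{\mathfrak f}_iC_b)=\tilde f_ib$ (with the convention $\Phi(0)=0$, so that this last one covers the case where both sides vanish). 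These three make $\Phi$ a bijective strict morphism of crystals, hence an isomorphism; all are verified type by type and colour by colour, directly from the pictures.

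For the two counting identities, the idea is to read off, from the canonical shape of $C_b$ in Section~\ref{Level-$l$ reduced Young Column} and from the ``special rules'' stated after Picture~(\romannumeral3), which $i$-blocks of $C_b$ may legally be removed or added. For a generic colour $i$ (the range $1\le i\le n-1$ for $A_{2n}^{(2)}$, $D_{n+1}^{(2)}$, $C_n^{(1)}$, and the shifted ranges for $A_{2n-1}^{(2)}$, $D_n^{(1)}$, $B_n^{(1)}$) the addable $i$-blocks sit on the $x_i$-part of $C_b$ when $x_{i+1}\ge\bar x_{i+1}$ and on the $\bar x_{i+1}$-part otherwise, and counting them gives $\phi_i(C_b)=x_i+(\bar x_{i+1}-x_{i+1})_+$ and dually $\epsilon_i(C_b)=\bar x_i+(x_{i+1}-\bar x_{i+1})_+$, which agree with the perfect-crystal formulas (those displayed for $B_n^{(1)}$ in the excerpt, and their analogues from \cite{KKM,KMOTU} for the other types). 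The distinguished colours $0,1,2,n-1,n$ are then treated one at a time, using for $C_n^{(1)}$ the convention that a pair of $\frac1{2l}\times\frac12\times1$ $0$-blocks counts as one block (so $\epsilon_0,\phi_0$ come out right relative to the constraint $\sum(x_i+\bar x_i)\in2\mathbf Z$), and for the fold colour $n$ of $D_{n+1}^{(2)}$ and $B_n^{(1)}$ the auxiliary slot $x_0$, which is what produces the values $\varepsilon_n=2\bar x_n+x_0$, $\varphi_n=2x_n+x_0$ of the perfect crystal.

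For $\Phi(\tilde{\mathfrak f}_iC_b)=\tilde f_ib$ I would compare, case by case, the recipe for $\tilde{\mathfrak f}_i$ in Section~\ref{The crystal structure} with the formula for $\tilde f_i$ on $\mathcal B^{(l)}$. In every case $\tilde{\mathfrak f}_i$ adds one $i$-block to the $x_i$-part or the $\bar x_i$-part of $C_b$ according to whether $x_{i+1}\ge\bar x_{i+1}$ or $x_{i+1}<\bar x_{i+1}$ --- the very dichotomy governing the formula for $\tilde f_ib$ --- and then, following the Remark, removes every removable $\delta$-slice and relocates the leftover slices to restore the canonical reduced shape. The point to verify is that the column so normalized is $C_{\tilde f_ib}$, i.e.\ that the new slice counts are exactly the ones prescribed for $\tilde f_ib$, and that $\tilde{\mathfrak f}_iC_b=0$ holds precisely when the $i$-block cannot be legally added, which by the special rules and the defining (in)equalities of $\mathcal B^{(l)}$ is precisely when $\tilde f_ib=0$.

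The hard part will be exactly this last normalization step: showing that ``add an $i$-block, delete the removable $\delta$-slices, move the leftover slices'' always yields a column in the canonical form of Section~\ref{Level-$l$ reduced Young Column} whose parameter vector is $\tilde f_ib$ and not some other vector. This is where the delicate bookkeeping lives --- the $x_i=\bar x_i$ obstruction (which blocks the addition and removal of $(i-1)$-blocks, and likewise $0$- and $1$-blocks when $x_2=\bar x_2$, and $(n-1)$-blocks when $x_n=\bar x_n$), the various level constraints ($\le l$, $=l$, or $\le 2l$ with $\sum(x_i+\bar x_i)\in2\mathbf Z$), the toggling of $x_0$ between $0$ and $1$ at the fold colour, and the paired half-blocks of type $C_n^{(1)}$ --- and it must be carried out for all six families and every distinguished colour. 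Once that is settled, the remainder of the argument is a routine diagram chase.
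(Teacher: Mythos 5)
Your proposal is correct and follows essentially the same route as the paper's own proof: a direct, type-by-type comparison of the block-adding rules for $\tilde{\mathfrak f}_i$ with the explicit formulas for $\tilde f_i$ on $\mathcal B^{(l)}$, together with bijectivity of $C_b\mapsto b$ read off from the canonical reduced shapes. The paper's argument is simply a terser version of this check (carried out for $B_n^{(1)}$ with the other types declared similar), so your more explicit verification of $\epsilon_i,\phi_i$ and of the normalization step is just a fuller write-up of the same idea.
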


\begin{proof}
We now focus on the type $B_{n}^{(1)}$. Let $C_b$ be a level-$l$ reduced Young column in $\mathcal C^{(l)}$.	We assume $\tilde{\mathfrak f}_iC_b=C_{b'}$. According to the definitions of Kashiwara operators $\tilde{f}_i$ on $\mathcal B^{(l)}$ in \eqref{Kashiwara operators1}, \eqref{Kashiwara operators2} and the maps $\tilde{\mathfrak f}_i$ of type $B_{n}^{(1)}$ in 3.3\eqref{1}, 3.3\eqref{3}-3.3\eqref{5}, we have $\tilde{f}_ib=b'$. Hence $\Phi\tilde{\mathfrak f}_i=\tilde{f}_i\Phi$. 

\vskip 2mm

We can verify the commutative relation  $\Phi\tilde{\mathfrak e}_i=\tilde{e}_i\Phi$ by the same approach. The surjectivity of $\Phi$ is natural. The injectivity of $\Phi$ follows 
from the definitions of level-$l$ reduced Young columns. The proofs of other types  are similar.
\end{proof}
		
\vskip 5mm 

\section{Higher level Young walls}\label{Level-$l$ Young Wall}

\vskip 3mm 

In this section, we introduce the notions of ground-state walls, Young walls and reduced Young
walls. We show that the set of reduced Young
walls is isomorphic to $B(\lambda)$ as $U_{q}'({\mathfrak g})$-crystals. 

\vskip 2mm 

\subsection{Ground-state columns and ground-state walls}

\hfill

\vskip 2mm

The \textit{ground-state column} is a special case of Young column. The \textit{ground-state wall}  is constructed from the ground-state columns by extending to the left infinitely. Let $\lambda=\sum_{i=0}^{n}k_i\Lambda_i$ be a level $l$ dominant integral weight and let $b$ or $b'$ be the minimal vectors in ${\mathcal B}^{(l)}$. We list the constructions of the ground-state columns and ground-state walls for each type as follows: 

\vskip 2mm

\begin{enumerate}
	\item
	For type $A_{2n}^{(2)}$,  we have
	\begin{equation*}
	\langle\lambda,c\rangle=k_0+2k_1+\cdots+2k_{n-1}+2k_n=l.
	\end{equation*}
	Let $b=(k_1,\cdots,k_n,k_n,\cdots,k_1)$. Then the ground-state column is $C_b$ and the ground-state wall is
	\begin{equation*}	
	 Y_{\lambda}=(\cdots,C_b,C_b,C_b).
	 \end{equation*}
	 
	 \vskip 2mm 
	 
	\item 
    For type $D_{n+1}^{(2)}$,  we have
	\begin{equation*}
	\langle\lambda,c\rangle=k_0+2k_1+\cdots+2k_{n-1}+k_n=l.
\end{equation*}
	Let $b=(x_1,\cdots,x_n,x_0,\bar{x}_n,\cdots,\bar{x}_1)$ such that
	\begin{equation*}
	\begin{aligned}
		x_0&=\begin{cases}
			0 &\text{if $k_n$ is even,}\\
			1 &\text{if $k_n$ is odd,}
		\end{cases}\\
		x_i&=\bar{x}_i=k_i\  \ \text{for $i=1,2,\cdots,n-1$,}\\
		x_n&=\bar{x}_n=\frac{1}{2}(k_n-x_0).
	\end{aligned}
    \end{equation*}
	Then the ground-state column is $C_b$ and the ground-state wall is 
	\begin{equation*}
	Y_{\lambda}=(\cdots,C_b,C_b,C_b).
	\end{equation*}

\vskip 2mm 

	\item 
	For type $C_{n}^{(1)}$,  we have
	\begin{equation*}
	\langle\lambda,c\rangle=k_0+k_1+\cdots+k_{n-1}+k_n=l.
\end{equation*}
	Let $b=(k_1,\cdots,k_n,k_n,\cdots,k_1)$. Then the ground-state column is $C_b$ and the ground-state wall is
	\begin{equation*}
		 Y_{\lambda}=(\cdots,C_b,C_b,C_b).
\end{equation*}	
	
	\vskip 2mm
	
\item 
For type $D_{n}^{(1)}$,  we have
\begin{equation*}
\langle\lambda,c\rangle=k_0+k_1+2k_2+\cdots+2k_{n-2}+k_{n-1}+k_n=l.
\end{equation*}
Let $b=(x_1,\cdots,x_n,\bar{x}_n,\cdots,\bar{x}_1)$ such that
\begin{equation*}
\begin{aligned}
	x_1&=k_0,\  \bar{x}_1=k_1,\\
	x_i&=\bar{x}_i=k_i\ \  \text{for $i=2,\cdots,n-2$,}\\
	x_{n-1}&=\bar{x}_{n-1}={\rm min}(k_{n-1}, k_n),\\
	x_{n}&={(k_{n-1}-k_n)}_{+},\\
	\bar{x}_{n}&={(k_n-k_{n-1})}_{+}
\end{aligned}
\end{equation*}
and let  $b'=(y_1,\cdots,y_n,\bar{y}_n,\cdots,\bar{y}_1)$ such that
\begin{equation*}
\begin{aligned}
	y_1&=k_1,\  \bar{y}_1=k_0,\\
	y_i&=\bar{y}_i=k_i\ \  \text{for $i=2,\cdots,n-2$,}\\
	y_{n-1}&=\bar{y}_{n-1}={\rm min}(k_{n-1}, k_n),\\
	y_{n}&={(k_{n-1}-k_n)}_{+},\\
	\bar{y}_{n}&={(k_n-k_{n-1})}_{+}.
\end{aligned}
\end{equation*}
Then the ground-state columns are $C_b$ and $C_{b'}$ and the ground-state wall is
\begin{equation*}
Y_{\lambda}=(\cdots,C_b,C_{b'},C_b,C_{b'}).
\end{equation*}

\vskip 2mm 

\item 
For type $B_{n}^{(1)}$,  we have
\begin{equation*}
\langle\lambda,c\rangle=k_0+k_1+2k_2+\cdots+2k_{n-1}+k_n=l.
\end{equation*}
Let $b=(x_1,\cdots,x_n,x_0,\bar{x}_n,\cdots,\bar{x}_1)$ such that
\begin{equation*}
\begin{aligned}
	x_0&=\begin{cases}
		0 &\text{if $k_n$ is even,}\\
		1 &\text{if $k_n$ is odd,}
	\end{cases}\\
	x_1&=k_0,\ \ \bar{x}_1=k_1,\\
	x_i&=\bar{x}_i=k_i\  \ \text{for $i=2,\cdots,n-1$,}\\
	x_n&=\bar{x}_n=\frac{1}{2}(k_n-x_0)
\end{aligned}
\end{equation*}
and let  $b'=(y_1,\cdots,y_n,y_0,\bar{y}_n,\cdots,\bar{y}_1)$ such that
\begin{equation*}
\begin{aligned}
	y_0&=\begin{cases}
		0 &\text{if $k_n$ is even,}\\
		1 &\text{if $k_n$ is odd,}
	\end{cases}\\
	y_1&=k_1,\ \ \bar{y}_1=k_0,\\
	y_i&=\bar{y}_i=k_i\  \ \text{for $i=2,\cdots,n-1$,}\\
	y_n&=\bar{y}_n=\frac{1}{2}(k_n-y_0).
\end{aligned}
\end{equation*}
Then the  ground-state columns are $C_b$ and $C_{b'}$ and the ground-state wall is 
\begin{equation*}
Y_{\lambda}=(\cdots,C_b,C_{b'},C_b,C_{b'}).
\end{equation*}

\vskip 2mm 

\item 
For type $A_{2n-1}^{(2)}$,  we have
\begin{equation*}
\langle\lambda,c\rangle=k_0+k_1+2k_2+\cdots+2k_{n-1}+2k_n=l.
\end{equation*}
Let $b=(x_1,\cdots,x_n,\bar{x}_n,\cdots,\bar{x}_1)$ such that
\begin{equation*}
\begin{aligned}
	x_1&=k_0,\  \bar{x}_1=k_1,\\
	x_i&=\bar{x}_i=k_i\ \  \text{for $i=2,\cdots,n$}
\end{aligned}
\end{equation*}
and let $b'=(y_1,\cdots,y_n,\bar{y}_n,\cdots,\bar{y}_1)$ such that
\begin{equation*}
\begin{aligned}
	y_1&=k_1,\  \bar{y}_1=k_0,\\
	y_i&=\bar{y}_i=k_i\ \  \text{for $i=2,\cdots,n$.}
\end{aligned}
\end{equation*}
Then the ground-state columns are $C_b$ and $C_{b'}$ and the ground-state wall is 
\begin{equation*}
Y_{\lambda}=(\cdots,C_b,C_{b'},C_b,C_{b'}).
\end{equation*}
\end{enumerate}

\vskip 2mm 

\begin{remark}
The ground-state columns and ground-state walls constructed above correspond to the minimal vectors and ground-state paths in \cite{KMN2}.
	\end{remark}

\vskip 1mm

In the following examples, we 
list the ground-state walls of types $A_{4}^{(2)}$, $A_{5}^{(2)}$ and $B_3^{(1)}$ for the level-$2$ case.

\begin{example}
	For type $A_{4}^{(2)}$, the level-$2$ dominant integral weights are $2\Lambda_0$, $\Lambda_1$, $\Lambda_2$ and the corresponding minimal  vectors are
	
\begin{equation*}
	b_1=(0,0,0,0),\ b_2=(1,0,0,1),\ b_3=(0,1,1,0).
\end{equation*}

\vskip 4mm

The ground-state walls corresponding to these vectors are

\vskip 8mm

\begin{center}
	\begin{texdraw}
		\fontsize{3}{3}\selectfont
		\drawdim em
		\setunitscale 2.3
		
		\move(-0.2 0)
		\bsegment
		\move(0 0)\lvec(0.4 0.25)
		\move(0.4 0.25)\lvec(2.4 0.25)
		\move(0.4 0.25)\lvec(0.4 2.25)
		\move(0 0)\lvec(2 0)
		\move(2.4 0.25)\lvec(2 0)
		\move(2 0)\lvec(2.4 0.25)
		\move(0.4 2.25)\lvec(0.8 2.5)
		\move(0.4 2.25)\lvec(2.4 2.25)
		\move(2.4 2.25)\lvec(2.8 2.5)
		\move(2.8 2.5)\lvec(2.8 0.5)
		\move(2.4 0.25)\lvec(2.8 0.5)
		\move(2.4 0.25)\lvec(2.4 2.25)
		\move(0.8 2.5)\lvec(2.8 2.5)
		\move(1.4 2.25)\lvec(1.8 2.5)
		\move(1.4 0.25)\lvec(1.4 2.25)
		\move(1 0)\lvec(1.4 0.25)
		\htext(1.9 1.25){$0$}\htext(0.9 1.25){$0$}
		\esegment
		
		\move(-2.2 0)
		\bsegment
		\move(0 0)\lvec(0.4 0.25)
		\move(0.4 0.25)\lvec(2.4 0.25)
		\move(0.4 0.25)\lvec(0.4 2.25)
		\move(0 0)\lvec(2 0)
		\move(2.4 0.25)\lvec(2 0)
		\move(2 0)\lvec(2.4 0.25)
		\move(0.4 2.25)\lvec(0.8 2.5)
		\move(0.4 2.25)\lvec(2.4 2.25)
		\move(2.4 2.25)\lvec(2.8 2.5)
		
		\move(0.8 2.5)\lvec(2.8 2.5)
		\move(1.4 2.25)\lvec(1.8 2.5)
		\move(1.4 0.25)\lvec(1.4 2.25)
		\move(1 0)\lvec(1.4 0.25)
		\htext(1.9 1.25){$0$}\htext(0.9 1.25){$0$}
		\esegment
		
		\move(-4.2 0)
		\bsegment
		\move(0 0)\lvec(0.4 0.25)
		\move(0.4 0.25)\lvec(2.4 0.25)
		\move(0.4 0.25)\lvec(0.4 2.25)
		\move(0 0)\lvec(2 0)
		\move(2.4 0.25)\lvec(2 0)
		
		\move(0.4 2.25)\lvec(0.8 2.5)
		\move(0.4 2.25)\lvec(2.4 2.25)

		\move(0.8 2.5)\lvec(2.8 2.5)
		\move(1.4 2.25)\lvec(1.8 2.5)
		\move(1.4 0.25)\lvec(1.4 2.25)
		\move(1 0)\lvec(1.4 0.25)
		\htext(1.9 1.25){$0$}\htext(0.9 1.25){$0$}
		\esegment
		
		\move(-6.2 0)
		\bsegment
		\move(0 0)\lvec(0.4 0.25)
		\move(0.4 0.25)\lvec(2.4 0.25)
		\move(0.4 0.25)\lvec(0.4 2.25)
		\move(0 0)\lvec(2 0)
		\move(2.4 0.25)\lvec(2 0)
		
		\move(0.4 2.25)\lvec(0.8 2.5)
		\move(0.4 2.25)\lvec(2.4 2.25)

		\move(0.8 2.5)\lvec(2.8 2.5)
		\move(1.4 2.25)\lvec(1.8 2.5)
		\move(1.4 0.25)\lvec(1.4 2.25)
		\move(1 0)\lvec(1.4 0.25)
		\htext(1.9 1.25){$0$}\htext(0.9 1.25){$0$}
		\esegment
		\htext(-6.8 1.25){\fontsize{12}{12}\selectfont{$\cdots$}}
		
		\htext(-9.2 1.25){\fontsize{12}{12}\selectfont{$Y_{2\Lambda_0}=$}}
		
		\htext(3.2 1.25){\fontsize{12}{12}\selectfont{$=$}}
		
		\move(5.3 0.2)
		\bsegment
		\move(0 0)\lvec(0 2)
		\move(0 0)\lvec(4 0)
		\move(0 2)\lvec(4 2)
		\move(4 0)\lvec(4 2)
		\move(1 0)\lvec(1 2)
		\move(2 0)\lvec(2 2)
		\move(3 0)\lvec(3 2)
		\move(0 0)\lvec(1 2)
		\move(1 0)\lvec(2 2)
		\move(2 0)\lvec(3 2)
		\move(3 0)\lvec(4 2)
		\htext(0.7 0.5){$0$}
		\htext(1.7 0.5){$0$}
		\htext(2.7 0.5){$0$}
		\htext(3.7 0.5){$0$}
		\esegment
		\move(7.3 0.2)
		\bsegment
		\move(0 0)\lvec(4 0)
		\move(0 2)\lvec(4 2)
		\move(4 0)\lvec(4 2)
		\move(1 0)\lvec(1 2)
		\move(2 0)\lvec(2 2)
		\move(3 0)\lvec(3 2)
		\move(0 0)\lvec(1 2)
		\move(1 0)\lvec(2 2)
		\move(2 0)\lvec(3 2)
		\move(3 0)\lvec(4 2)
		\htext(0.7 0.5){$0$}
		\htext(1.7 0.5){$0$}
		\htext(2.7 0.5){$0$}
		\htext(3.7 0.5){$0$}
		\esegment
		\move(9.3 0.2)
		\bsegment
		\move(0 0)\lvec(4 0)
		\move(0 2)\lvec(4 2)
		\move(4 0)\lvec(4 2)
		\move(1 0)\lvec(1 2)
		\move(2 0)\lvec(2 2)
		\move(3 0)\lvec(3 2)
		\move(0 0)\lvec(1 2)
		\move(1 0)\lvec(2 2)
		\move(2 0)\lvec(3 2)
		\move(3 0)\lvec(4 2)
		\htext(0.7 0.5){$0$}
		\htext(1.7 0.5){$0$}
		\htext(2.7 0.5){$0$}
		\htext(3.7 0.5){$0$}
		\esegment
		\htext(4.5 1.29){\fontsize{12}{12}\selectfont{$\cdots$}}
	\end{texdraw}
\end{center}

\vskip 3mm

\begin{center}
	\begin{texdraw}
		\fontsize{3}{3}\selectfont
		\drawdim em
		\setunitscale 2.3
		\move(0 0)\lvec(1 0)\move(1 0)\lvec(2 0)
		\move(0 0)\lvec(0 2)\move(1 0)\lvec(1 2)
		\move(2 0)\lvec(2 2)\move(2 0)\lvec(2.4 0.25)	
		\move(2.4 0.25)\lvec(2.8 0.5)	\move(2.4 0.25)\lvec(2.4 2.25)\move(2.8 0.5)\lvec(2.8 2.5)
		\move(0 2)\lvec(2 2)\move(2 2)\lvec(2.8 2.5)
		\move(0.4  2.25)\lvec(1 2.25)\move(0.8  2.5)\lvec(1 2.5)\move(1  2)\lvec(1 8)\move(2  2)\lvec(2 8)	\move(2.8  2.5)\lvec(2.8 8.5)\move(1  4)\lvec(2 4)\move(2  4)\lvec(2.8 4.5)\move(1  6)\lvec(2 6)\move(2  6)\lvec(2.8 6.5)\move(1  8)\lvec(2 8)\move(2  8)\lvec(2.8 8.5)\move(1  8)\lvec(1.8 8.5)
		\move(1.8  8.5)\lvec(2.8 8.5)
		\htext(1.5 1){$0$}\htext(1.5 3){$1$}\htext(1.5 5){$2$}\htext(1.5 7){$1$}\htext(0.5 1){$0$}
		\htext(2.63 1.4){$0$}
		
		\move(-2 0)
		\bsegment
		\move(0 0)\lvec(1 0)\move(1 0)\lvec(2 0)
		\move(0 0)\lvec(0 2)\move(1 0)\lvec(1 2)
		\move(2 0)\lvec(2 2)	
		
		\move(0 2)\lvec(2 2)\move(2 2)\lvec(2.8 2.5)
		\move(0.4  2.25)\lvec(1 2.25)\move(0.8  2.5)\lvec(1 2.5)\move(1  2)\lvec(1 8)\move(2  2)\lvec(2 8)	\move(2.8  2.5)\lvec(2.8 8.5)\move(1  4)\lvec(2 4)\move(2  4)\lvec(2.8 4.5)\move(1  6)\lvec(2 6)\move(2  6)\lvec(2.8 6.5)\move(1  8)\lvec(2 8)\move(2  8)\lvec(2.8 8.5)\move(1  8)\lvec(1.8 8.5)
		\move(1.8  8.5)\lvec(2.8 8.5)
		\move(0 2)\lvec(0.8 2.5)
		\htext(1.5 1){$0$}\htext(1.5 3){$1$}\htext(1.5 5){$2$}\htext(1.5 7){$1$}\htext(0.5 1){$0$}
		\esegment
		
		\move(-4 0)
		\bsegment
		\move(0 0)\lvec(1 0)\move(1 0)\lvec(2 0)
		\move(0 0)\lvec(0 2)\move(1 0)\lvec(1 2)

		\move(0 2)\lvec(2 2)\move(2 2)\lvec(2.8 2.5)
		\move(0.4  2.25)\lvec(1 2.25)\move(0.8  2.5)\lvec(1 2.5)\move(1  2)\lvec(1 8)\move(2  2)\lvec(2 8)	\move(2.8  2.5)\lvec(2.8 8.5)\move(1  4)\lvec(2 4)\move(2  4)\lvec(2.8 4.5)\move(1  6)\lvec(2 6)\move(2  6)\lvec(2.8 6.5)\move(1  8)\lvec(2 8)\move(2  8)\lvec(2.8 8.5)\move(1  8)\lvec(1.8 8.5)
		\move(1.8  8.5)\lvec(2.8 8.5)
		\move(0 2)\lvec(0.8 2.5)
		\htext(1.5 1){$0$}\htext(1.5 3){$1$}\htext(1.5 5){$2$}\htext(1.5 7){$1$}\htext(0.5 1){$0$}
		\esegment
		
		\move(-6 0)
		\bsegment
		\move(0 0)\lvec(1 0)\move(1 0)\lvec(2 0)
		\move(0 0)\lvec(0 2)\move(1 0)\lvec(1 2)

		\move(0 2)\lvec(2 2)\move(2 2)\lvec(2.8 2.5)
		\move(0.4  2.25)\lvec(1 2.25)\move(0.8  2.5)\lvec(1 2.5)\move(1  2)\lvec(1 8)\move(2  2)\lvec(2 8)	\move(2.8  2.5)\lvec(2.8 8.5)\move(1  4)\lvec(2 4)\move(2  4)\lvec(2.8 4.5)\move(1  6)\lvec(2 6)\move(2  6)\lvec(2.8 6.5)\move(1  8)\lvec(2 8)\move(2  8)\lvec(2.8 8.5)\move(1  8)\lvec(1.8 8.5)
		\move(1.8  8.5)\lvec(2.8 8.5)
		\move(0 2)\lvec(0.8 2.5)
		\htext(1.5 1){$0$}\htext(1.5 3){$1$}\htext(1.5 5){$2$}\htext(1.5 7){$1$}\htext(0.5 1){$0$}
		\esegment
		
		\move(5.5 0)
		\bsegment
		\move(0 0)\lvec(1 0)\move(1 0)\lvec(2 0)
		\move(0 0)\lvec(0 2)\move(1 0)\lvec(1 2)
		\move(2 0)\lvec(2 2)\move(0 2)\lvec(2 2)
		\move(2 2)\lvec(2 8)\move(1 2)\lvec(1 8)
		\move(1 8)\lvec(2 8)\move(1 4)\lvec(2 4)
		\move(1 6)\lvec(2 6)\move(0 0)\lvec(1 2)
		\move(1 0)\lvec(2 2)
		\htext(0.7 0.5){$0$}\htext(1.7 0.5){$0$}
		\htext(0.35 1.5){$0$}\htext(1.35 1.5){$0$}
		\htext(1.5 3){$1$}\htext(1.5 5){$2$}\htext(1.5 7){$1$}
		\esegment
		\move(7.5 0)
		\bsegment
		\move(0 0)\lvec(1 0)\move(1 0)\lvec(2 0)
		\move(1 0)\lvec(1 2)
		\move(2 0)\lvec(2 2)\move(0 2)\lvec(2 2)
		\move(2 2)\lvec(2 8)\move(1 2)\lvec(1 8)
		\move(1 8)\lvec(2 8)\move(1 4)\lvec(2 4)
		\move(1 6)\lvec(2 6)\move(0 0)\lvec(1 2)
		\move(1 0)\lvec(2 2)
		\htext(0.7 0.5){$0$}\htext(1.7 0.5){$0$}
		\htext(0.35 1.5){$0$}\htext(1.35 1.5){$0$}
		\htext(1.5 3){$1$}\htext(1.5 5){$2$}\htext(1.5 7){$1$}
		\esegment
		\move(9.5 0)
		\bsegment
		\move(0 0)\lvec(1 0)\move(1 0)\lvec(2 0)
		\move(1 0)\lvec(1 2)
		\move(2 0)\lvec(2 2)\move(0 2)\lvec(2 2)
		\move(2 2)\lvec(2 8)\move(1 2)\lvec(1 8)
		\move(1 8)\lvec(2 8)\move(1 4)\lvec(2 4)
		\move(1 6)\lvec(2 6)\move(0 0)\lvec(1 2)
		\move(1 0)\lvec(2 2)
		\htext(0.7 0.5){$0$}\htext(1.7 0.5){$0$}
		\htext(0.35 1.5){$0$}\htext(1.35 1.5){$0$}
		\htext(1.5 3){$1$}\htext(1.5 5){$2$}\htext(1.5 7){$1$}
		\esegment
		\move(11.5 0)
		\bsegment
		\move(0 0)\lvec(1 0)\move(1 0)\lvec(2 0)
		\move(1 0)\lvec(1 2)
		\move(2 0)\lvec(2 2)\move(0 2)\lvec(2 2)
		\move(2 2)\lvec(2 8)\move(1 2)\lvec(1 8)
		\move(1 8)\lvec(2 8)\move(1 4)\lvec(2 4)
		\move(1 6)\lvec(2 6)\move(0 0)\lvec(1 2)
		\move(1 0)\lvec(2 2)
		\htext(0.7 0.5){$0$}\htext(1.7 0.5){$0$}
		\htext(0.35 1.5){$0$}\htext(1.35 1.5){$0$}
		\htext(1.5 3){$1$}\htext(1.5 5){$2$}\htext(1.5 7){$1$}
		\esegment
		\htext(3.5 4){\fontsize{12}{12}\selectfont{$=$}}
		\htext(5 4){\fontsize{12}{12}\selectfont{$\cdots$}}
		\htext(-6.6 4){\fontsize{12}{12}\selectfont{$\cdots$}}
		\htext(-9 4){\fontsize{12}{12}\selectfont{$Y_{\Lambda_1}=$}}
	\end{texdraw}
\end{center}

\vskip 3mm

\begin{center}
	\begin{texdraw}
		\fontsize{3}{3}\selectfont
		\drawdim em
		\setunitscale 2.3
		\move(0 0)\lvec(1 0)\move(1 0)\lvec(2 0)
		\move(0 0)\lvec(0 2)\move(1 0)\lvec(1 2)
		\move(2 0)\lvec(2 2)\move(2 0)\lvec(2.4 0.25)	
		\move(2.4 0.25)\lvec(2.8 0.5)
		\move(2.4 0.25)\lvec(2.4 2.25)
		\move(2.8 0.5)\lvec(2.8 2.5)
		\move(0 4)\lvec(1 4)
		\move(0.8 4.5)\lvec(1 4.5)
		\move(0 2)\lvec(2 2)
		\move(2 2)\lvec(2.8 2.5)
		\move(1  2)\lvec(1 6)
		\move(2  2)\lvec(2 6)	
		\move(2.8  2.5)\lvec(2.8 6.5)
		\move(1  4)\lvec(2 4)
		\move(2  4)\lvec(2.8 4.5)
		\move(1  6)\lvec(2 6)
		\move(2  6)\lvec(2.8 6.5)
		\move(1  6)\lvec(1.8 6.5)
		\move(1.8 6.5)\lvec(2.8 6.5)
		\htext(1.5 1){$0$}\htext(1.5 3){$1$}\htext(1.5 5){$2$}\htext(0.5 1){$0$}\htext(0.5 3){$1$}
		\htext(2.63 1.4){$0$}
		
		\move(-2 0)
		\bsegment
		\move(0 0)\lvec(1 0)\move(1 0)\lvec(2 0)
		\move(0 0)\lvec(0 2)\move(1 0)\lvec(1 2)
		\move(2 0)\lvec(2 2)	
		
		\move(0 2)\lvec(2 2)
		\move(1  2)\lvec(1 6)\move(2  2)\lvec(2 6)	\move(2.8  4.5)\lvec(2.8 6.5)\move(1  4)\lvec(2 4)\move(2  4)\lvec(2.8 4.5)\move(1  6)\lvec(2 6)\move(2  6)\lvec(2.8 6.5)
		\move(1 6)\lvec(1.8 6.5)
		\move(1.8 6.5)\lvec(2.8 6.5)
		\move(0 2)\lvec(0 4)\move(0 4)\lvec(1 4)
		\move(0 4)\lvec(0.8 4.5)\move(0.8 4.5)\lvec(1 4.5)
		\htext(1.5 1){$0$}\htext(1.5 3){$1$}\htext(1.5 5){$2$}\htext(0.5 1){$0$}\htext(0.5 3){$1$}
		\esegment
		
		\move(-4 0)
		\bsegment
		\move(0 0)\lvec(1 0)\move(1 0)\lvec(2 0)
		\move(0 0)\lvec(0 2)\move(1 0)\lvec(1 2)
		\move(2 0)\lvec(2 2)	
		
		\move(0 2)\lvec(2 2)
		\move(1  2)\lvec(1 6)\move(2  2)\lvec(2 6)	\move(2.8  4.5)\lvec(2.8 6.5)\move(1  4)\lvec(2 4)\move(2  4)\lvec(2.8 4.5)\move(1  6)\lvec(2 6)\move(2  6)\lvec(2.8 6.5)
		\move(1 6)\lvec(1.8 6.5)
		\move(1.8 6.5)\lvec(2.8 6.5)
		\move(0 2)\lvec(0 4)\move(0 4)\lvec(1 4)
		\move(0 4)\lvec(0.8 4.5)\move(0.8 4.5)\lvec(1 4.5)
		\htext(1.5 1){$0$}\htext(1.5 3){$1$}\htext(1.5 5){$2$}\htext(0.5 1){$0$}\htext(0.5 3){$1$}
		\esegment
		
		\move(-6 0)
		\bsegment
		\move(0 0)\lvec(1 0)\move(1 0)\lvec(2 0)
		\move(0 0)\lvec(0 2)\move(1 0)\lvec(1 2)
		\move(2 0)\lvec(2 2)	
		
		\move(0 2)\lvec(2 2)
		\move(1  2)\lvec(1 6)\move(2  2)\lvec(2 6)	\move(2.8  4.5)\lvec(2.8 6.5)\move(1  4)\lvec(2 4)\move(2  4)\lvec(2.8 4.5)\move(1  6)\lvec(2 6)\move(2  6)\lvec(2.8 6.5)
		\move(1 6)\lvec(1.8 6.5)
		\move(1.8 6.5)\lvec(2.8 6.5)
		\move(0 2)\lvec(0 4)\move(0 4)\lvec(1 4)
		\move(0 4)\lvec(0.8 4.5)\move(0.8 4.5)\lvec(1 4.5)
		\htext(1.5 1){$0$}\htext(1.5 3){$1$}\htext(1.5 5){$2$}\htext(0.5 1){$0$}\htext(0.5 3){$1$}
		\esegment
		
		\move(5.7 0)
		\bsegment
		\move(0 0)\lvec(1 0)\move(1 0)\lvec(2 0)
		\move(0 0)\lvec(0 2)\move(1 0)\lvec(1 2)
		\move(2 0)\lvec(2 2)\move(0 2)\lvec(2 2)
		\move(2 2)\lvec(2 6)\move(1 2)\lvec(1 6)
		\move(1 4)\lvec(2 4)
		\move(1 6)\lvec(2 6)\move(0 0)\lvec(1 2)
		\move(1 0)\lvec(2 2)
		\move(0 4)\lvec(1 4)\move(0 2)\lvec(0 4)
		\htext(0.7 0.5){$0$}\htext(1.7 0.5){$0$}
		\htext(0.35 1.5){$0$}\htext(1.35 1.5){$0$}\htext(0.5 3){$1$}
		\htext(1.5 3){$1$}\htext(1.5 5){$2$}
		\esegment
		\move(7.7 0)
		\bsegment
		\move(0 0)\lvec(1 0)\move(1 0)\lvec(2 0)
		\move(1 0)\lvec(1 2)
		\move(2 0)\lvec(2 2)\move(0 2)\lvec(2 2)
		\move(2 2)\lvec(2 6)\move(1 2)\lvec(1 6)
		\move(1 4)\lvec(2 4)
		\move(1 6)\lvec(2 6)\move(0 0)\lvec(1 2)
		\move(1 0)\lvec(2 2)\move(0 4)\lvec(1 4)
		\htext(0.7 0.5){$0$}\htext(1.7 0.5){$0$}
		\htext(0.35 1.5){$0$}\htext(1.35 1.5){$0$}\htext(0.5 3){$1$}
		\htext(1.5 3){$1$}\htext(1.5 5){$2$}
		\esegment
		\move(9.7 0)
		\bsegment
		\move(0 0)\lvec(1 0)\move(1 0)\lvec(2 0)
		\move(1 0)\lvec(1 2)
		\move(2 0)\lvec(2 2)\move(0 2)\lvec(2 2)
		\move(2 2)\lvec(2 6)\move(1 2)\lvec(1 6)
		\move(1 4)\lvec(2 4)
		\move(1 6)\lvec(2 6)\move(0 0)\lvec(1 2)
		\move(1 0)\lvec(2 2)\move(0 4)\lvec(1 4)
		\htext(0.7 0.5){$0$}\htext(1.7 0.5){$0$}
		\htext(0.35 1.5){$0$}\htext(1.35 1.5){$0$}\htext(0.5 3){$1$}
		\htext(1.5 3){$1$}\htext(1.5 5){$2$}
		\esegment
		\move(11.7 0)
		\bsegment
		\move(0 0)\lvec(1 0)\move(1 0)\lvec(2 0)
		\move(1 0)\lvec(1 2)
		\move(2 0)\lvec(2 2)\move(0 2)\lvec(2 2)
		\move(2 2)\lvec(2 6)\move(1 2)\lvec(1 6)
		\move(1 4)\lvec(2 4)
		\move(1 6)\lvec(2 6)\move(0 0)\lvec(1 2)
		\move(1 0)\lvec(2 2)\move(0 4)\lvec(1 4)
		\htext(0.7 0.5){$0$}\htext(1.7 0.5){$0$}
		\htext(0.35 1.5){$0$}\htext(1.35 1.5){$0$}\htext(0.5 3){$1$}
		\htext(1.5 3){$1$}\htext(1.5 5){$2$}
		\esegment
		\htext(3.5 4){\fontsize{12}{12}\selectfont{$=$}}
		\htext(5 4){\fontsize{12}{12}\selectfont{$\cdots$}}
		\htext(-6.6 4){\fontsize{12}{12}\selectfont{$\cdots$}}
		\htext(-9 4){\fontsize{12}{12}\selectfont{$Y_{\Lambda_2}=$}}
	\end{texdraw}
\end{center}

	\end{example}
	
	\vskip 2mm

\begin{example}
	For type $A_{5}^{(2)}$, the level-$2$ dominant integral weights are $2\Lambda_0$, $2\Lambda_1$, $\Lambda_0+\Lambda_1$, $\Lambda_2$, $\Lambda_3$ and the corresponding minimal vectors are
	
	\begin{equation*}
\begin{aligned}
	&b_1=(2,0,0,0,0,0), b'_1=(0,0,0,0,0,2),\\ &b_2=(0,0,0,0,0,2),\ b'_2=(2,0,0,0,0,0),\\
	 &b_3=b'_3=(1,0,0,0,0,1),\\
    &b_4=b'_4=(0,1,0,0,1,0),\\
    &b_5=b'_5=(0,0,1,1,0,0). 
\end{aligned}
\end{equation*}

\vskip 3mm

The ground-state walls corresponding to these vectors are	

\vskip 3mm

\begin{center}
	\begin{texdraw}
		\fontsize{3}{3}\selectfont
		\drawdim em
		\setunitscale 2.3
		\move(0 0)\lvec(0 2)
		\move(0 2)\lvec(0.4 2.25)
		\move(0 0)\lvec(2 0)
		\move(0 2)\lvec(2 2)
		\move(0.4 2.25)\lvec(2.4 2.25)
		\move(1 0)\lvec(1 2)
		\move(1 2)\lvec(1.4 2.25)
		\move(2 0)\lvec(2 2)
		\move(2 2)\lvec(2.4 2.25)
		\move(2 0)\lvec(2.4 0.25)
		\move(2.4 0.25)\lvec(2.4 2.25)
		\move(2.4 0.25)\lvec(4.4 0.25)
		\move(3.4 0.25)\lvec(3.4 2.25)	
		\move(4.4 0.25)\lvec(4.4 2.25)
		\move(2.4 2.25)\lvec(4.4 2.25)
		\move(2.4 2.25)\lvec(2.8 2.5)
		\move(4.4 2.25)\lvec(4.8 2.5)
		\move(2.8 2.5)\lvec(4.8 2.5)
		\move(3.4 2.25)\lvec(3.8 2.5)
		\move(4.8 0.5)\lvec(4.8 2.5)	
		\move(4.4 0.25)\lvec(4.8 0.5)
		\htext(0.5 1){$0$}
		\htext(1.5 1){$0$}
		\htext(2.9 1.25){$1$}
		\htext(3.9 1.25){$1$}	
		
		\move(-4 0)	
		\bsegment
		\move(0 0)\lvec(0 2)
		\move(0 2)\lvec(0.4 2.25)
		\move(0 0)\lvec(2 0)
		\move(0 2)\lvec(2 2)
		\move(0.4 2.25)\lvec(2.4 2.25)
		\move(1 0)\lvec(1 2)
		\move(1 2)\lvec(1.4 2.25)
		\move(2 0)\lvec(2 2)
		\move(2 2)\lvec(2.4 2.25)
		\move(2 0)\lvec(2.4 0.25)
		\move(2.4 0.25)\lvec(2.4 2.25)
		\move(2.4 0.25)\lvec(4 0.25)
		\move(3.4 0.25)\lvec(3.4 2.25)	
		
		\move(2.4 2.25)\lvec(4.4 2.25)
		\move(2.4 2.25)\lvec(2.8 2.5)
		\move(4.4 2.25)\lvec(4.8 2.5)
		\move(2.8 2.5)\lvec(4.8 2.5)
		\move(3.4 2.25)\lvec(3.8 2.5)
		\move(4.8 2.25)\lvec(4.8 2.5)
		
		\htext(0.5 1){$0$}
		\htext(1.5 1){$0$}
		\htext(2.9 1.25){$1$}
		\htext(3.9 1.25){$1$}
		\esegment
		
		\move(8 0.2)
		\bsegment
		\move(0 0)\lvec(0 2)
		\move(0 0)\lvec(4 0)
		\move(0 2)\lvec(4 2)
		\move(4 0)\lvec(4 2)
		\move(1 0)\lvec(1 2)
		\move(2 0)\lvec(2 2)
		\move(3 0)\lvec(3 2)
		\move(0 0)\lvec(1 2)
		\move(1 0)\lvec(2 2)
		\move(2 0)\lvec(3 2)
		\move(3 0)\lvec(4 2)
		\htext(0.4 1.5){$0$}
		\htext(1.4 1.5){$0$}
		
		\esegment
		\move(10 0.2)
		\bsegment
		\move(0 0)\lvec(4 0)
		\move(0 2)\lvec(4 2)
		\move(4 0)\lvec(4 2)
		\move(1 0)\lvec(1 2)
		\move(2 0)\lvec(2 2)
		\move(3 0)\lvec(3 2)
		\move(0 0)\lvec(1 2)
		\move(1 0)\lvec(2 2)
		\move(2 0)\lvec(3 2)
		\move(3 0)\lvec(4 2)
		\htext(0.7 0.5){$1$}
		\htext(1.7 0.5){$1$}
		
		\esegment
		\move(12 0.2)
		\bsegment
		\move(0 0)\lvec(4 0)
		\move(0 2)\lvec(4 2)
		\move(4 0)\lvec(4 2)
		\move(1 0)\lvec(1 2)
		\move(2 0)\lvec(2 2)
		\move(3 0)\lvec(3 2)
		\move(0 0)\lvec(1 2)
		\move(1 0)\lvec(2 2)
		\move(2 0)\lvec(3 2)
		\move(3 0)\lvec(4 2)
		\htext(0.4 1.5){$0$}
		\htext(1.4 1.5){$0$}
		\htext(2.7 0.5){$1$}
		\htext(3.7 0.5){$1$}
		\esegment
		\htext(5.5 1.25){\fontsize{12}{12}\selectfont{$=$}}
		\htext(7 1.25){\fontsize{12}{12}\selectfont{$\cdots$}}
		\htext(-5 1.25){\fontsize{12}{12}\selectfont{$\cdots$}}
		
		\htext(-7.5 1.25){\fontsize{12}{12}\selectfont{$Y_{2\Lambda_0}=$}}
	\end{texdraw}
\end{center}

\vskip 3mm

\begin{center}
	\begin{texdraw}
		\fontsize{3}{3}\selectfont
		\drawdim em
		\setunitscale 2.3
		\move(-2 0)	
		\bsegment
		\move(0 0)\lvec(0 2)
		\move(0 2)\lvec(0.4 2.25)
		\move(0 0)\lvec(2 0)
		\move(0 2)\lvec(2 2)
		\move(0.4 2.25)\lvec(2.4 2.25)
		\move(1 0)\lvec(1 2)
		\move(1 2)\lvec(1.4 2.25)
		\move(2 0)\lvec(2 2)
		\move(2 2)\lvec(2.4 2.25)
		\move(2 0)\lvec(2.4 0.25)
		\move(2.4 0.25)\lvec(2.4 2.25)
		\htext(0.5 1){$0$}
		\htext(1.5 1){$0$}
		\esegment

		\move(-6 0)	
		\bsegment
		\move(2.4 0.25)\lvec(2.4 2.25)
		\move(2.4 0.25)\lvec(4 0.25)
		\move(3.4 0.25)\lvec(3.4 2.25)	
		
		\move(2.4 2.25)\lvec(4.4 2.25)
		\move(2.4 2.25)\lvec(2.8 2.5)
		\move(4.4 2.25)\lvec(4.8 2.5)
		\move(2.8 2.5)\lvec(4.8 2.5)
		\move(3.4 2.25)\lvec(3.8 2.5)
		\move(4.8 2.25)\lvec(4.8 2.5)
		\htext(2.9 1.25){$1$}
		\htext(3.8 1.25){$1$}
		\esegment
		
		\move(-2 0)	
		\bsegment
		\move(2.4 0.25)\lvec(2.4 2.25)
		\move(2.4 0.25)\lvec(4 0.25)
		\move(3.4 0.25)\lvec(3.4 2.25)	
		
		\move(2.4 2.25)\lvec(4.4 2.25)
		\move(2.4 2.25)\lvec(2.8 2.5)
		\move(4.4 2.25)\lvec(4.8 2.5)
		\move(2.8 2.5)\lvec(4.8 2.5)
		\move(3.4 2.25)\lvec(3.8 2.5)
		\move(4.8 2.25)\lvec(4.8 2.5)
		\htext(2.9 1.25){$1$}
		\htext(3.8 1.25){$1$}
		\esegment
		
		\move(2 0)	
		\bsegment
		\move(0 0)\lvec(0 2)
		\move(0 2)\lvec(0.4 2.25)
		\move(0 0)\lvec(2 0)
		\move(0 2)\lvec(2 2)
		\move(0.4 2.25)\lvec(2.4 2.25)
		\move(1 0)\lvec(1 2)
		\move(1 2)\lvec(1.4 2.25)
		\move(2 0)\lvec(2 2)
		\move(2 2)\lvec(2.4 2.25)
		\move(2 0)\lvec(2.4 0.25)
		\move(2.4 0.25)\lvec(2.4 2.25)
		\htext(0.5 1){$0$}
		\htext(1.5 1){$0$}
		\esegment
		
		\move(8 0.2)
		\bsegment
		\move(0 0)\lvec(0 2)
		\move(0 0)\lvec(4 0)
		\move(0 2)\lvec(4 2)
		\move(4 0)\lvec(4 2)
		\move(1 0)\lvec(1 2)
		\move(2 0)\lvec(2 2)
		\move(3 0)\lvec(3 2)
		\move(0 0)\lvec(1 2)
		\move(1 0)\lvec(2 2)
		\move(2 0)\lvec(3 2)
		\move(3 0)\lvec(4 2)
		\htext(0.7 0.5){$1$}
		\htext(1.7 0.5){$1$}
		
		\esegment
		\move(10 0.2)
		\bsegment
		\move(0 0)\lvec(4 0)
		\move(0 2)\lvec(4 2)
		\move(4 0)\lvec(4 2)
		\move(1 0)\lvec(1 2)
		\move(2 0)\lvec(2 2)
		\move(3 0)\lvec(3 2)
		\move(0 0)\lvec(1 2)
		\move(1 0)\lvec(2 2)
		\move(2 0)\lvec(3 2)
		\move(3 0)\lvec(4 2)
		\htext(0.4 1.5){$0$}
		\htext(1.4 1.5){$0$}
		
		\esegment
		\move(12 0.2)
		\bsegment
		\move(0 0)\lvec(4 0)
		\move(0 2)\lvec(4 2)
		\move(4 0)\lvec(4 2)
		\move(1 0)\lvec(1 2)
		\move(2 0)\lvec(2 2)
		\move(3 0)\lvec(3 2)
		\move(0 0)\lvec(1 2)
		\move(1 0)\lvec(2 2)
		\move(2 0)\lvec(3 2)
		\move(3 0)\lvec(4 2)
		\htext(2.4 1.5){$0$}
		\htext(3.4 1.5){$0$}
		\htext(0.7 0.5){$1$}
		\htext(1.7 0.5){$1$}
		\esegment
		\htext(5.5 1.25){\fontsize{12}{12}\selectfont{$=$}}
		\htext(7 1.25){\fontsize{12}{12}\selectfont{$\cdots$}}
		\htext(-5 1.25){\fontsize{12}{12}\selectfont{$\cdots$}}
		
		\htext(-7.5 1.25){\fontsize{12}{12}\selectfont{$Y_{2\Lambda_1}=$}}
	\end{texdraw}
\end{center}

\vskip 3mm

\begin{center}
	\begin{texdraw}
		\fontsize{3}{3}\selectfont
		\drawdim em
		\setunitscale 2.3
		\move(2 0)
		\bsegment	
		\move(0 0)\lvec(1 0)
		\move(1 0)\lvec(1 2)
		\move(0 0)\lvec(0 2)
		\move(0 2)\lvec(1 2)
		\move(0 2)\lvec(0.4 2.25)
		\move(1 2)\lvec(1.4 2.25)
		\move(0.4 2.25)\lvec(1.4 2.25)
		\move(1 0)\lvec(1.4 0.25)
		\move(1.4 0.25)\lvec(1.4 2.25)
		\move(1.4 0.25)\lvec(2.4 0.25)
		\move(2.4 0.25)\lvec(2.8 0.5)
		\move(2.4 0.25)\lvec(2.4 2.25)
		\move(2.8 0.5)\lvec(2.8 2.5)
		\move(2.4 2.25)\lvec(2.8 2.5)
		\move(1.4 2.25)\lvec(2.4 2.25)
		\move(1.4 2.25)\lvec(1.8 2.5)
		\move(1.8 2.5)\lvec(2.8 2.5)
		\htext(0.5 1){$0$}	
		\htext(1.9 1.25){$1$}
		\esegment
		
		\move(0 0)
		\bsegment
		\move(0 0)\lvec(1 0)
		\move(1 0)\lvec(1 2)
		\move(0 0)\lvec(0 2)
		\move(0 2)\lvec(1 2)
		\move(0 2)\lvec(0.4 2.25)
		\move(1 2)\lvec(1.4 2.25)
		\move(0.4 2.25)\lvec(1.4 2.25)
		\move(1 0)\lvec(1.4 0.25)
		\move(1.4 0.25)\lvec(1.4 2.25)
		\move(1.4 0.25)\lvec(2 0.25)

		\move(2.8 2.25)\lvec(2.8 2.5)
		\move(2.4 2.25)\lvec(2.8 2.5)
		\move(1.4 2.25)\lvec(2.4 2.25)
		\move(1.4 2.25)\lvec(1.8 2.5)
		\move(1.8 2.5)\lvec(2.8 2.5)
		\htext(0.5 1){$0$}	
		\htext(1.8 1.25){$1$}
		\esegment
		
		\move(-2 0)
		\bsegment
		\move(0 0)\lvec(1 0)
		\move(1 0)\lvec(1 2)
		\move(0 0)\lvec(0 2)
		\move(0 2)\lvec(1 2)
		\move(0 2)\lvec(0.4 2.25)
		\move(1 2)\lvec(1.4 2.25)
		\move(0.4 2.25)\lvec(1.4 2.25)
		\move(1 0)\lvec(1.4 0.25)
		\move(1.4 0.25)\lvec(1.4 2.25)
		\move(1.4 0.25)\lvec(2 0.25)

		\move(2.8 2.25)\lvec(2.8 2.5)
		\move(2.4 2.25)\lvec(2.8 2.5)
		\move(1.4 2.25)\lvec(2.4 2.25)
		\move(1.4 2.25)\lvec(1.8 2.5)
		\move(1.8 2.5)\lvec(2.8 2.5)
		\htext(0.5 1){$0$}	
		\htext(1.8 1.25){$1$}
		\esegment
		
		\move(-4 0)
		\bsegment
		\move(0 0)\lvec(1 0)
		\move(1 0)\lvec(1 2)
		\move(0 0)\lvec(0 2)
		\move(0 2)\lvec(1 2)
		\move(0 2)\lvec(0.4 2.25)
		\move(1 2)\lvec(1.4 2.25)
		\move(0.4 2.25)\lvec(1.4 2.25)
		\move(1 0)\lvec(1.4 0.25)
		\move(1.4 0.25)\lvec(1.4 2.25)
		\move(1.4 0.25)\lvec(2 0.25)

		\move(2.8 2.25)\lvec(2.8 2.5)
		\move(2.4 2.25)\lvec(2.8 2.5)
		\move(1.4 2.25)\lvec(2.4 2.25)
		\move(1.4 2.25)\lvec(1.8 2.5)
		\move(1.8 2.5)\lvec(2.8 2.5)
		\htext(0.5 1){$0$}	
		\htext(1.8 1.25){$1$}
		\esegment
		
		\move(7.8 0.2)
		\bsegment
		\move(0 0)\lvec(0 2)
		\move(0 0)\lvec(4 0)
		\move(0 2)\lvec(4 2)
		\move(4 0)\lvec(4 2)
		\move(1 0)\lvec(1 2)
		\move(2 0)\lvec(2 2)
		\move(3 0)\lvec(3 2)
		\move(0 0)\lvec(1 2)
		\move(1 0)\lvec(2 2)
		\move(2 0)\lvec(3 2)
		\move(3 0)\lvec(4 2)
		\htext(1.7 0.5){$1$}
		\htext(3.7 0.5){$1$}
		
		\esegment
		\move(9.8 0.2)
		\bsegment
		\move(0 0)\lvec(4 0)
		\move(0 2)\lvec(4 2)
		\move(4 0)\lvec(4 2)
		\move(1 0)\lvec(1 2)
		\move(2 0)\lvec(2 2)
		\move(3 0)\lvec(3 2)
		\move(0 0)\lvec(1 2)
		\move(1 0)\lvec(2 2)
		\move(2 0)\lvec(3 2)
		\move(3 0)\lvec(4 2)
		\htext(-1.6 1.5){$0$}
		\htext(0.4 1.5){$0$}
		
		\esegment
		\move(11.8 0.2)
		\bsegment
		\move(0 0)\lvec(4 0)
		\move(0 2)\lvec(4 2)
		\move(4 0)\lvec(4 2)
		\move(1 0)\lvec(1 2)
		\move(2 0)\lvec(2 2)
		\move(3 0)\lvec(3 2)
		\move(0 0)\lvec(1 2)
		\move(1 0)\lvec(2 2)
		\move(2 0)\lvec(3 2)
		\move(3 0)\lvec(4 2)
		\htext(2.4 1.5){$0$}
		\htext(0.4 1.5){$0$}
		\htext(1.7 0.5){$1$}
		\htext(3.7 0.5){$1$}
		\esegment
		\htext(5.5 1.25){\fontsize{12}{12}\selectfont{$=$}}
		\htext(7 1.25){\fontsize{12}{12}\selectfont{$\cdots$}}
		\htext(-4.8 1.25){\fontsize{12}{12}\selectfont{$\cdots$}}
		
		\htext(-7.8 1.25){\fontsize{12}{12}\selectfont{$Y_{\Lambda_0+\Lambda_1}=$}}	
		
	\end{texdraw}
	
\end{center}
	
\vskip 3mm

\begin{center}
	\begin{texdraw}
		\fontsize{3}{3}\selectfont
		\drawdim em
		\setunitscale 2.3
		\move(0 0)\lvec(1 0)\move(1 0)\lvec(2 0)
		\move(0 0)\lvec(0 2)\move(1 0)\lvec(1 2)
		\move(2 0)\lvec(2 2)\move(2 0)\lvec(2.4 0.25)	
		\move(2.4 0.25)\lvec(2.8 0.5)	\move(2.4 0.25)\lvec(2.4 2.25)\move(2.8 0.5)\lvec(2.8 2.5)
		\move(0 2)\lvec(2 2)\move(2 2)\lvec(2.8 2.5)
		\move(0.4  2.25)\lvec(1 2.25)\move(0.8  2.5)\lvec(1 2.5)\move(1  2)\lvec(1 8)\move(2  2)\lvec(2 8)	\move(2.8  2.5)\lvec(2.8 8.5)\move(1  4)\lvec(2 4)\move(2  4)\lvec(2.8 4.5)\move(1  6)\lvec(2 6)\move(2  6)\lvec(2.8 6.5)\move(1  8)\lvec(2 8)\move(2  8)\lvec(2.8 8.5)\move(1  8)\lvec(1.8 8.5)
		\move(1.8  8.5)\lvec(2.8 8.5)
		\htext(1.5 1){$0$}\htext(1.5 3){$2$}\htext(1.5 5){$3$}\htext(1.5 7){$2$}\htext(0.5 1){$0$}
		\htext(2.63 1.4){$1$}
		
		\move(-2 0)
		\bsegment
		\move(0 0)\lvec(1 0)\move(1 0)\lvec(2 0)
		\move(0 0)\lvec(0 2)\move(1 0)\lvec(1 2)
		\move(2 0)\lvec(2 2)	
		
		\move(0 2)\lvec(2 2)\move(2 2)\lvec(2.8 2.5)
		\move(0.4  2.25)\lvec(1 2.25)\move(0.8  2.5)\lvec(1 2.5)\move(1  2)\lvec(1 8)\move(2  2)\lvec(2 8)	\move(2.8  2.5)\lvec(2.8 8.5)\move(1  4)\lvec(2 4)\move(2  4)\lvec(2.8 4.5)\move(1  6)\lvec(2 6)\move(2  6)\lvec(2.8 6.5)\move(1  8)\lvec(2 8)\move(2  8)\lvec(2.8 8.5)\move(1  8)\lvec(1.8 8.5)
		\move(1.8  8.5)\lvec(2.8 8.5)
		\move(0 2)\lvec(0.8 2.5)
		\htext(1.5 1){$0$}\htext(1.5 3){$2$}\htext(1.5 5){$3$}\htext(1.5 7){$2$}\htext(0.5 1){$0$}
		\esegment
		
		\move(-4 0)
		\bsegment
		\move(0 0)\lvec(1 0)\move(1 0)\lvec(2 0)
		\move(0 0)\lvec(0 2)\move(1 0)\lvec(1 2)

		\move(0 2)\lvec(2 2)\move(2 2)\lvec(2.8 2.5)
		\move(0.4  2.25)\lvec(1 2.25)\move(0.8  2.5)\lvec(1 2.5)\move(1  2)\lvec(1 8)\move(2  2)\lvec(2 8)	\move(2.8  2.5)\lvec(2.8 8.5)\move(1  4)\lvec(2 4)\move(2  4)\lvec(2.8 4.5)\move(1  6)\lvec(2 6)\move(2  6)\lvec(2.8 6.5)\move(1  8)\lvec(2 8)\move(2  8)\lvec(2.8 8.5)\move(1  8)\lvec(1.8 8.5)
		\move(1.8  8.5)\lvec(2.8 8.5)
		\move(0 2)\lvec(0.8 2.5)
		\htext(1.5 1){$0$}\htext(1.5 3){$2$}\htext(1.5 5){$3$}\htext(1.5 7){$2$}\htext(0.5 1){$0$}
		\esegment
		
		\move(-6 0)
		\bsegment
		\move(0 0)\lvec(1 0)\move(1 0)\lvec(2 0)
		\move(0 0)\lvec(0 2)\move(1 0)\lvec(1 2)

		\move(0 2)\lvec(2 2)\move(2 2)\lvec(2.8 2.5)
		\move(0.4  2.25)\lvec(1 2.25)\move(0.8  2.5)\lvec(1 2.5)\move(1  2)\lvec(1 8)\move(2  2)\lvec(2 8)	\move(2.8  2.5)\lvec(2.8 8.5)\move(1  4)\lvec(2 4)\move(2  4)\lvec(2.8 4.5)\move(1  6)\lvec(2 6)\move(2  6)\lvec(2.8 6.5)\move(1  8)\lvec(2 8)\move(2  8)\lvec(2.8 8.5)\move(1  8)\lvec(1.8 8.5)
		\move(1.8  8.5)\lvec(2.8 8.5)
		\move(0 2)\lvec(0.8 2.5)
		\htext(1.5 1){$0$}\htext(1.5 3){$2$}\htext(1.5 5){$3$}\htext(1.5 7){$2$}\htext(0.5 1){$0$}
		\esegment
		
		\move(5.5 0)
		\bsegment
		\move(0 0)\lvec(1 0)\move(1 0)\lvec(2 0)
		\move(0 0)\lvec(0 2)\move(1 0)\lvec(1 2)
		\move(2 0)\lvec(2 2)\move(0 2)\lvec(2 2)
		\move(2 2)\lvec(2 8)\move(1 2)\lvec(1 8)
		\move(1 8)\lvec(2 8)\move(1 4)\lvec(2 4)
		\move(1 6)\lvec(2 6)\move(0 0)\lvec(1 2)
		\move(1 0)\lvec(2 2)
		\htext(0.7 0.5){$1$}\htext(1.7 0.5){$1$}
		\htext(0.35 1.5){$0$}\htext(1.35 1.5){$0$}
		\htext(1.5 3){$2$}\htext(1.5 5){$3$}\htext(1.5 7){$2$}
		\esegment
		\move(7.5 0)
		\bsegment
		\move(0 0)\lvec(1 0)\move(1 0)\lvec(2 0)
		\move(1 0)\lvec(1 2)
		\move(2 0)\lvec(2 2)\move(0 2)\lvec(2 2)
		\move(2 2)\lvec(2 8)\move(1 2)\lvec(1 8)
		\move(1 8)\lvec(2 8)\move(1 4)\lvec(2 4)
		\move(1 6)\lvec(2 6)\move(0 0)\lvec(1 2)
		\move(1 0)\lvec(2 2)
		\htext(0.7 0.5){$1$}\htext(1.7 0.5){$1$}
		\htext(0.35 1.5){$0$}\htext(1.35 1.5){$0$}
		\htext(1.5 3){$2$}\htext(1.5 5){$3$}\htext(1.5 7){$2$}
		\esegment
		\move(9.5 0)
		\bsegment
		\move(0 0)\lvec(1 0)\move(1 0)\lvec(2 0)
		\move(1 0)\lvec(1 2)
		\move(2 0)\lvec(2 2)\move(0 2)\lvec(2 2)
		\move(2 2)\lvec(2 8)\move(1 2)\lvec(1 8)
		\move(1 8)\lvec(2 8)\move(1 4)\lvec(2 4)
		\move(1 6)\lvec(2 6)\move(0 0)\lvec(1 2)
		\move(1 0)\lvec(2 2)
		\htext(0.7 0.5){$1$}\htext(1.7 0.5){$1$}
		\htext(0.35 1.5){$0$}\htext(1.35 1.5){$0$}
		\htext(1.5 3){$2$}\htext(1.5 5){$3$}\htext(1.5 7){$2$}
		\esegment
		\move(11.5 0)
		\bsegment
		\move(0 0)\lvec(1 0)\move(1 0)\lvec(2 0)
		\move(1 0)\lvec(1 2)
		\move(2 0)\lvec(2 2)\move(0 2)\lvec(2 2)
		\move(2 2)\lvec(2 8)\move(1 2)\lvec(1 8)
		\move(1 8)\lvec(2 8)\move(1 4)\lvec(2 4)
		\move(1 6)\lvec(2 6)\move(0 0)\lvec(1 2)
		\move(1 0)\lvec(2 2)
		\htext(0.7 0.5){$1$}\htext(1.7 0.5){$1$}
		\htext(0.35 1.5){$0$}\htext(1.35 1.5){$0$}
		\htext(1.5 3){$2$}\htext(1.5 5){$3$}\htext(1.5 7){$2$}
		\esegment
		\htext(3.5 4){\fontsize{12}{12}\selectfont{$=$}}
		\htext(5 4){\fontsize{12}{12}\selectfont{$\cdots$}}
		\htext(-6.6 4){\fontsize{12}{12}\selectfont{$\cdots$}}
		\htext(-9 4){\fontsize{12}{12}\selectfont{$Y_{\Lambda_2}=$}}
	\end{texdraw}
\end{center}

\vskip 3mm

\begin{center}
	\begin{texdraw}
		\fontsize{3}{3}\selectfont
		\drawdim em
		\setunitscale 2.3
		\move(0 0)\lvec(1 0)\move(1 0)\lvec(2 0)
		\move(0 0)\lvec(0 2)\move(1 0)\lvec(1 2)
		\move(2 0)\lvec(2 2)\move(2 0)\lvec(2.4 0.25)	
		\move(2.4 0.25)\lvec(2.8 0.5)
		\move(2.4 0.25)\lvec(2.4 2.25)
		\move(2.8 0.5)\lvec(2.8 2.5)
		\move(0 4)\lvec(1 4)
		\move(0.8 4.5)\lvec(1 4.5)
		\move(0 2)\lvec(2 2)
		\move(2 2)\lvec(2.8 2.5)
		\move(1  2)\lvec(1 6)
		\move(2  2)\lvec(2 6)	
		\move(2.8  2.5)\lvec(2.8 6.5)
		\move(1  4)\lvec(2 4)
		\move(2  4)\lvec(2.8 4.5)
		\move(1  6)\lvec(2 6)
		\move(2  6)\lvec(2.8 6.5)
		\move(1  6)\lvec(1.8 6.5)
		\move(1.8 6.5)\lvec(2.8 6.5)
		\htext(1.5 1){$0$}\htext(1.5 3){$2$}\htext(1.5 5){$3$}\htext(0.5 1){$0$}\htext(0.5 3){$2$}
		\htext(2.63 1.4){$1$}
		
		\move(-2 0)
		\bsegment
		\move(0 0)\lvec(1 0)\move(1 0)\lvec(2 0)
		\move(0 0)\lvec(0 2)\move(1 0)\lvec(1 2)
		\move(2 0)\lvec(2 2)	
		
		\move(0 2)\lvec(2 2)
		\move(1  2)\lvec(1 6)\move(2  2)\lvec(2 6)	\move(2.8  4.5)\lvec(2.8 6.5)\move(1  4)\lvec(2 4)\move(2  4)\lvec(2.8 4.5)\move(1  6)\lvec(2 6)\move(2  6)\lvec(2.8 6.5)
		\move(1 6)\lvec(1.8 6.5)
		\move(1.8 6.5)\lvec(2.8 6.5)
		\move(0 2)\lvec(0 4)\move(0 4)\lvec(1 4)
		\move(0 4)\lvec(0.8 4.5)\move(0.8 4.5)\lvec(1 4.5)
		\htext(1.5 1){$0$}\htext(1.5 3){$2$}\htext(1.5 5){$3$}\htext(0.5 1){$0$}\htext(0.5 3){$2$}
		\esegment
		
		\move(-4 0)
		\bsegment
		\move(0 0)\lvec(1 0)\move(1 0)\lvec(2 0)
		\move(0 0)\lvec(0 2)\move(1 0)\lvec(1 2)
		\move(2 0)\lvec(2 2)	
		
		\move(0 2)\lvec(2 2)
		\move(1  2)\lvec(1 6)\move(2  2)\lvec(2 6)	\move(2.8  4.5)\lvec(2.8 6.5)\move(1  4)\lvec(2 4)\move(2  4)\lvec(2.8 4.5)\move(1  6)\lvec(2 6)\move(2  6)\lvec(2.8 6.5)
		\move(1 6)\lvec(1.8 6.5)
		\move(1.8 6.5)\lvec(2.8 6.5)
		\move(0 2)\lvec(0 4)\move(0 4)\lvec(1 4)
		\move(0 4)\lvec(0.8 4.5)\move(0.8 4.5)\lvec(1 4.5)
		\htext(1.5 1){$0$}\htext(1.5 3){$2$}\htext(1.5 5){$3$}\htext(0.5 1){$0$}\htext(0.5 3){$2$}
		\esegment
		
		\move(-6 0)
		\bsegment
		\move(0 0)\lvec(1 0)\move(1 0)\lvec(2 0)
		\move(0 0)\lvec(0 2)\move(1 0)\lvec(1 2)
		\move(2 0)\lvec(2 2)	
		
		\move(0 2)\lvec(2 2)
		\move(1  2)\lvec(1 6)\move(2  2)\lvec(2 6)	\move(2.8  4.5)\lvec(2.8 6.5)\move(1  4)\lvec(2 4)\move(2  4)\lvec(2.8 4.5)\move(1  6)\lvec(2 6)\move(2  6)\lvec(2.8 6.5)
		\move(1 6)\lvec(1.8 6.5)
		\move(1.8 6.5)\lvec(2.8 6.5)
		\move(0 2)\lvec(0 4)\move(0 4)\lvec(1 4)
		\move(0 4)\lvec(0.8 4.5)\move(0.8 4.5)\lvec(1 4.5)
		\htext(1.5 1){$0$}\htext(1.5 3){$2$}\htext(1.5 5){$3$}\htext(0.5 1){$0$}\htext(0.5 3){$2$}
		\esegment
		
		\move(5.7 0)
		\bsegment
		\move(0 0)\lvec(1 0)\move(1 0)\lvec(2 0)
		\move(0 0)\lvec(0 2)\move(1 0)\lvec(1 2)
		\move(2 0)\lvec(2 2)\move(0 2)\lvec(2 2)
		\move(2 2)\lvec(2 6)\move(1 2)\lvec(1 6)
		\move(1 4)\lvec(2 4)
		\move(1 6)\lvec(2 6)\move(0 0)\lvec(1 2)
		\move(1 0)\lvec(2 2)
		\move(0 4)\lvec(1 4)\move(0 2)\lvec(0 4)
		\htext(0.7 0.5){$1$}\htext(1.7 0.5){$1$}
		\htext(0.35 1.5){$0$}\htext(1.35 1.5){$0$}\htext(0.5 3){$2$}
		\htext(1.5 3){$2$}\htext(1.5 5){$3$}
		\esegment
		\move(7.7 0)
		\bsegment
		\move(0 0)\lvec(1 0)\move(1 0)\lvec(2 0)
		\move(1 0)\lvec(1 2)
		\move(2 0)\lvec(2 2)\move(0 2)\lvec(2 2)
		\move(2 2)\lvec(2 6)\move(1 2)\lvec(1 6)
		\move(1 4)\lvec(2 4)
		\move(1 6)\lvec(2 6)\move(0 0)\lvec(1 2)
		\move(1 0)\lvec(2 2)\move(0 4)\lvec(1 4)
		\htext(0.7 0.5){$1$}\htext(1.7 0.5){$1$}
		\htext(0.35 1.5){$0$}\htext(1.35 1.5){$0$}\htext(0.5 3){$2$}
		\htext(1.5 3){$2$}\htext(1.5 5){$3$}
		\esegment
		\move(9.7 0)
		\bsegment
		\move(0 0)\lvec(1 0)\move(1 0)\lvec(2 0)
		\move(1 0)\lvec(1 2)
		\move(2 0)\lvec(2 2)\move(0 2)\lvec(2 2)
		\move(2 2)\lvec(2 6)\move(1 2)\lvec(1 6)
		\move(1 4)\lvec(2 4)
		\move(1 6)\lvec(2 6)\move(0 0)\lvec(1 2)
		\move(1 0)\lvec(2 2)\move(0 4)\lvec(1 4)
		\htext(0.7 0.5){$1$}\htext(1.7 0.5){$1$}
		\htext(0.35 1.5){$0$}\htext(1.35 1.5){$0$}\htext(0.5 3){$2$}
		\htext(1.5 3){$2$}\htext(1.5 5){$3$}
		\esegment
		\move(11.7 0)
		\bsegment
		\move(0 0)\lvec(1 0)\move(1 0)\lvec(2 0)
		\move(1 0)\lvec(1 2)
		\move(2 0)\lvec(2 2)\move(0 2)\lvec(2 2)
		\move(2 2)\lvec(2 6)\move(1 2)\lvec(1 6)
		\move(1 4)\lvec(2 4)
		\move(1 6)\lvec(2 6)\move(0 0)\lvec(1 2)
		\move(1 0)\lvec(2 2)\move(0 4)\lvec(1 4)
		\htext(0.7 0.5){$1$}\htext(1.7 0.5){$1$}
		\htext(0.35 1.5){$0$}\htext(1.35 1.5){$0$}\htext(0.5 3){$2$}
		\htext(1.5 3){$2$}\htext(1.5 5){$3$}
		\esegment
		\htext(3.5 4){\fontsize{12}{12}\selectfont{$=$}}
		\htext(5 4){\fontsize{12}{12}\selectfont{$\cdots$}}
		\htext(-6.6 4){\fontsize{12}{12}\selectfont{$\cdots$}}
		\htext(-9 4){\fontsize{12}{12}\selectfont{$Y_{\Lambda_3}=$}}
	\end{texdraw}
\end{center}

\end{example}

\vskip 2mm 

\begin{example}
	For type $B_{3}^{(1)}$, the level-$2$ dominant integral weights are $2\Lambda_0$, $2\Lambda_1$, $\Lambda_0+\Lambda_1$, $\Lambda_2$, $2\Lambda_3$, $\Lambda_0+\Lambda_3$, $\Lambda_1+\Lambda_3$ and the corresponding minimal vectors are
	
	\vskip 3mm
	
	\begin{equation*}
		\begin{aligned}
			&b_1=(2,0,0,0,0,0,0),\ b'_1=(0,0,0,0,0,0,2),\\ &b_2=(0,0,0,0,0,0,2),\ b'_2=(2,0,0,0,0,0,0),\\
			&b_3=b'_3=(1,0,0,0,0,0,1),\\
			&b_4=b'_4=(0,1,0,0,0,1,0),\\
			&b_5=b'_5=(0,0,1,0,1,0,0),\\
			&b_6=(1,0,0,1,0,0,0),\ b'_6=(0,0,0,1,0,0,1),\\
			&b_7=(0,0,0,1,0,0,1),\ b'_7=(1,0,0,1,0,0,0).\\
		\end{aligned}
	\end{equation*}

	\vskip 3mm
	
	The ground-state walls corresponding to these vectors are	
	
\vskip 7mm

	\begin{center}
	\begin{texdraw}
		\fontsize{3}{3}\selectfont
		\drawdim em
		\setunitscale 2.3
		\move(0 0)\lvec(0 2)
		\move(0 2)\lvec(0.4 2.25)
		\move(0 0)\lvec(2 0)
		\move(0 2)\lvec(2 2)
		\move(0.4 2.25)\lvec(2.4 2.25)
		\move(1 0)\lvec(1 2)
		\move(1 2)\lvec(1.4 2.25)
		\move(2 0)\lvec(2 2)
		\move(2 2)\lvec(2.4 2.25)
		\move(2 0)\lvec(2.4 0.25)
		\move(2.4 0.25)\lvec(2.4 2.25)
		\move(2.4 0.25)\lvec(4.4 0.25)
		\move(3.4 0.25)\lvec(3.4 2.25)	
		\move(4.4 0.25)\lvec(4.4 2.25)
		\move(2.4 2.25)\lvec(4.4 2.25)
		\move(2.4 2.25)\lvec(2.8 2.5)
		\move(4.4 2.25)\lvec(4.8 2.5)
		\move(2.8 2.5)\lvec(4.8 2.5)
		\move(3.4 2.25)\lvec(3.8 2.5)
		\move(4.8 0.5)\lvec(4.8 2.5)	
		\move(4.4 0.25)\lvec(4.8 0.5)
		\htext(0.5 1){$0$}
		\htext(1.5 1){$0$}
		\htext(2.9 1.25){$1$}
		\htext(3.9 1.25){$1$}	
		
		\move(-4 0)	
		\bsegment
		\move(0 0)\lvec(0 2)
		\move(0 2)\lvec(0.4 2.25)
		\move(0 0)\lvec(2 0)
		\move(0 2)\lvec(2 2)
		\move(0.4 2.25)\lvec(2.4 2.25)
		\move(1 0)\lvec(1 2)
		\move(1 2)\lvec(1.4 2.25)
		\move(2 0)\lvec(2 2)
		\move(2 2)\lvec(2.4 2.25)
		\move(2 0)\lvec(2.4 0.25)
		\move(2.4 0.25)\lvec(2.4 2.25)
		\move(2.4 0.25)\lvec(4 0.25)
		\move(3.4 0.25)\lvec(3.4 2.25)	
		
		\move(2.4 2.25)\lvec(4.4 2.25)
		\move(2.4 2.25)\lvec(2.8 2.5)
		\move(4.4 2.25)\lvec(4.8 2.5)
		\move(2.8 2.5)\lvec(4.8 2.5)
		\move(3.4 2.25)\lvec(3.8 2.5)
		\move(4.8 2.25)\lvec(4.8 2.5)
		
		\htext(0.5 1){$0$}
		\htext(1.5 1){$0$}
		\htext(2.9 1.25){$1$}
		\htext(3.9 1.25){$1$}
		\esegment
		
		\move(8 0.2)
		\bsegment
		\move(0 0)\lvec(0 2)
		\move(0 0)\lvec(4 0)
		\move(0 2)\lvec(4 2)
		\move(4 0)\lvec(4 2)
		\move(1 0)\lvec(1 2)
		\move(2 0)\lvec(2 2)
		\move(3 0)\lvec(3 2)
		\move(0 0)\lvec(1 2)
		\move(1 0)\lvec(2 2)
		\move(2 0)\lvec(3 2)
		\move(3 0)\lvec(4 2)
		\htext(0.4 1.5){$0$}
		\htext(1.4 1.5){$0$}
		
		\esegment
		\move(10 0.2)
		\bsegment
		\move(0 0)\lvec(4 0)
		\move(0 2)\lvec(4 2)
		\move(4 0)\lvec(4 2)
		\move(1 0)\lvec(1 2)
		\move(2 0)\lvec(2 2)
		\move(3 0)\lvec(3 2)
		\move(0 0)\lvec(1 2)
		\move(1 0)\lvec(2 2)
		\move(2 0)\lvec(3 2)
		\move(3 0)\lvec(4 2)
		\htext(0.7 0.5){$1$}
		\htext(1.7 0.5){$1$}
		
		\esegment
		\move(12 0.2)
		\bsegment
		\move(0 0)\lvec(4 0)
		\move(0 2)\lvec(4 2)
		\move(4 0)\lvec(4 2)
		\move(1 0)\lvec(1 2)
		\move(2 0)\lvec(2 2)
		\move(3 0)\lvec(3 2)
		\move(0 0)\lvec(1 2)
		\move(1 0)\lvec(2 2)
		\move(2 0)\lvec(3 2)
		\move(3 0)\lvec(4 2)
		\htext(0.4 1.5){$0$}
		\htext(1.4 1.5){$0$}
		\htext(2.7 0.5){$1$}
		\htext(3.7 0.5){$1$}
		\esegment
		\htext(5.5 1.25){\fontsize{12}{12}\selectfont{$=$}}
		\htext(7 1.25){\fontsize{12}{12}\selectfont{$\cdots$}}
		\htext(-5 1.25){\fontsize{12}{12}\selectfont{$\cdots$}}
		
		\htext(-7.5 1.25){\fontsize{12}{12}\selectfont{$Y_{2\Lambda_0}=$}}
	\end{texdraw}
\end{center}

\vskip 3mm

\begin{center}
	\begin{texdraw}
		\fontsize{3}{3}\selectfont
		\drawdim em
		\setunitscale 2.3
		\move(-2 0)	
		\bsegment
		\move(0 0)\lvec(0 2)
		\move(0 2)\lvec(0.4 2.25)
		\move(0 0)\lvec(2 0)
		\move(0 2)\lvec(2 2)
		\move(0.4 2.25)\lvec(2.4 2.25)
		\move(1 0)\lvec(1 2)
		\move(1 2)\lvec(1.4 2.25)
		\move(2 0)\lvec(2 2)
		\move(2 2)\lvec(2.4 2.25)
		\move(2 0)\lvec(2.4 0.25)
		\move(2.4 0.25)\lvec(2.4 2.25)
		\htext(0.5 1){$0$}
		\htext(1.5 1){$0$}
		\esegment

		\move(-6 0)	
		\bsegment
		\move(2.4 0.25)\lvec(2.4 2.25)
		\move(2.4 0.25)\lvec(4 0.25)
		\move(3.4 0.25)\lvec(3.4 2.25)	
		
		\move(2.4 2.25)\lvec(4.4 2.25)
		\move(2.4 2.25)\lvec(2.8 2.5)
		\move(4.4 2.25)\lvec(4.8 2.5)
		\move(2.8 2.5)\lvec(4.8 2.5)
		\move(3.4 2.25)\lvec(3.8 2.5)
		\move(4.8 2.25)\lvec(4.8 2.5)
		\htext(2.9 1.25){$1$}
		\htext(3.8 1.25){$1$}
		\esegment
		
		\move(-2 0)	
		\bsegment
		\move(2.4 0.25)\lvec(2.4 2.25)
		\move(2.4 0.25)\lvec(4 0.25)
		\move(3.4 0.25)\lvec(3.4 2.25)	
		
		\move(2.4 2.25)\lvec(4.4 2.25)
		\move(2.4 2.25)\lvec(2.8 2.5)
		\move(4.4 2.25)\lvec(4.8 2.5)
		\move(2.8 2.5)\lvec(4.8 2.5)
		\move(3.4 2.25)\lvec(3.8 2.5)
		\move(4.8 2.25)\lvec(4.8 2.5)
		\htext(2.9 1.25){$1$}
		\htext(3.8 1.25){$1$}
		\esegment
		
		\move(2 0)	
		\bsegment
		\move(0 0)\lvec(0 2)
		\move(0 2)\lvec(0.4 2.25)
		\move(0 0)\lvec(2 0)
		\move(0 2)\lvec(2 2)
		\move(0.4 2.25)\lvec(2.4 2.25)
		\move(1 0)\lvec(1 2)
		\move(1 2)\lvec(1.4 2.25)
		\move(2 0)\lvec(2 2)
		\move(2 2)\lvec(2.4 2.25)
		\move(2 0)\lvec(2.4 0.25)
		\move(2.4 0.25)\lvec(2.4 2.25)
		\htext(0.5 1){$0$}
		\htext(1.5 1){$0$}
		\esegment
		
		\move(8 0.2)
		\bsegment
		\move(0 0)\lvec(0 2)
		\move(0 0)\lvec(4 0)
		\move(0 2)\lvec(4 2)
		\move(4 0)\lvec(4 2)
		\move(1 0)\lvec(1 2)
		\move(2 0)\lvec(2 2)
		\move(3 0)\lvec(3 2)
		\move(0 0)\lvec(1 2)
		\move(1 0)\lvec(2 2)
		\move(2 0)\lvec(3 2)
		\move(3 0)\lvec(4 2)
		\htext(0.7 0.5){$1$}
		\htext(1.7 0.5){$1$}
		
		\esegment
		\move(10 0.2)
		\bsegment
		\move(0 0)\lvec(4 0)
		\move(0 2)\lvec(4 2)
		\move(4 0)\lvec(4 2)
		\move(1 0)\lvec(1 2)
		\move(2 0)\lvec(2 2)
		\move(3 0)\lvec(3 2)
		\move(0 0)\lvec(1 2)
		\move(1 0)\lvec(2 2)
		\move(2 0)\lvec(3 2)
		\move(3 0)\lvec(4 2)
		\htext(0.4 1.5){$0$}
		\htext(1.4 1.5){$0$}
		
		\esegment
		\move(12 0.2)
		\bsegment
		\move(0 0)\lvec(4 0)
		\move(0 2)\lvec(4 2)
		\move(4 0)\lvec(4 2)
		\move(1 0)\lvec(1 2)
		\move(2 0)\lvec(2 2)
		\move(3 0)\lvec(3 2)
		\move(0 0)\lvec(1 2)
		\move(1 0)\lvec(2 2)
		\move(2 0)\lvec(3 2)
		\move(3 0)\lvec(4 2)
		\htext(2.4 1.5){$0$}
		\htext(3.4 1.5){$0$}
		\htext(0.7 0.5){$1$}
		\htext(1.7 0.5){$1$}
		\esegment
		\htext(5.5 1.25){\fontsize{12}{12}\selectfont{$=$}}
		\htext(7 1.25){\fontsize{12}{12}\selectfont{$\cdots$}}
		\htext(-5 1.25){\fontsize{12}{12}\selectfont{$\cdots$}}
		
		\htext(-7.5 1.25){\fontsize{12}{12}\selectfont{$Y_{2\Lambda_1}=$}}
	\end{texdraw}
\end{center}

\vskip 3mm

\begin{center}
	\begin{texdraw}
		\fontsize{3}{3}\selectfont
		\drawdim em
		\setunitscale 2.3
		\move(2 0)
		\bsegment	
		\move(0 0)\lvec(1 0)
		\move(1 0)\lvec(1 2)
		\move(0 0)\lvec(0 2)
		\move(0 2)\lvec(1 2)
		\move(0 2)\lvec(0.4 2.25)
		\move(1 2)\lvec(1.4 2.25)
		\move(0.4 2.25)\lvec(1.4 2.25)
		\move(1 0)\lvec(1.4 0.25)
		\move(1.4 0.25)\lvec(1.4 2.25)
		\move(1.4 0.25)\lvec(2.4 0.25)
		\move(2.4 0.25)\lvec(2.8 0.5)
		\move(2.4 0.25)\lvec(2.4 2.25)
		\move(2.8 0.5)\lvec(2.8 2.5)
		\move(2.4 2.25)\lvec(2.8 2.5)
		\move(1.4 2.25)\lvec(2.4 2.25)
		\move(1.4 2.25)\lvec(1.8 2.5)
		\move(1.8 2.5)\lvec(2.8 2.5)
		\htext(0.5 1){$0$}	
		\htext(1.9 1.25){$1$}
		\esegment
		
		\move(0 0)
		\bsegment
		\move(0 0)\lvec(1 0)
		\move(1 0)\lvec(1 2)
		\move(0 0)\lvec(0 2)
		\move(0 2)\lvec(1 2)
		\move(0 2)\lvec(0.4 2.25)
		\move(1 2)\lvec(1.4 2.25)
		\move(0.4 2.25)\lvec(1.4 2.25)
		\move(1 0)\lvec(1.4 0.25)
		\move(1.4 0.25)\lvec(1.4 2.25)
		\move(1.4 0.25)\lvec(2 0.25)

		\move(2.8 2.25)\lvec(2.8 2.5)
		\move(2.4 2.25)\lvec(2.8 2.5)
		\move(1.4 2.25)\lvec(2.4 2.25)
		\move(1.4 2.25)\lvec(1.8 2.5)
		\move(1.8 2.5)\lvec(2.8 2.5)
		\htext(0.5 1){$0$}	
		\htext(1.8 1.25){$1$}
		\esegment
		
		\move(-2 0)
		\bsegment
		\move(0 0)\lvec(1 0)
		\move(1 0)\lvec(1 2)
		\move(0 0)\lvec(0 2)
		\move(0 2)\lvec(1 2)
		\move(0 2)\lvec(0.4 2.25)
		\move(1 2)\lvec(1.4 2.25)
		\move(0.4 2.25)\lvec(1.4 2.25)
		\move(1 0)\lvec(1.4 0.25)
		\move(1.4 0.25)\lvec(1.4 2.25)
		\move(1.4 0.25)\lvec(2 0.25)

		\move(2.8 2.25)\lvec(2.8 2.5)
		\move(2.4 2.25)\lvec(2.8 2.5)
		\move(1.4 2.25)\lvec(2.4 2.25)
		\move(1.4 2.25)\lvec(1.8 2.5)
		\move(1.8 2.5)\lvec(2.8 2.5)
		\htext(0.5 1){$0$}	
		\htext(1.8 1.25){$1$}
		\esegment
		
		\move(-4 0)
		\bsegment
		\move(0 0)\lvec(1 0)
		\move(1 0)\lvec(1 2)
		\move(0 0)\lvec(0 2)
		\move(0 2)\lvec(1 2)
		\move(0 2)\lvec(0.4 2.25)
		\move(1 2)\lvec(1.4 2.25)
		\move(0.4 2.25)\lvec(1.4 2.25)
		\move(1 0)\lvec(1.4 0.25)
		\move(1.4 0.25)\lvec(1.4 2.25)
		\move(1.4 0.25)\lvec(2 0.25)

		\move(2.8 2.25)\lvec(2.8 2.5)
		\move(2.4 2.25)\lvec(2.8 2.5)
		\move(1.4 2.25)\lvec(2.4 2.25)
		\move(1.4 2.25)\lvec(1.8 2.5)
		\move(1.8 2.5)\lvec(2.8 2.5)
		\htext(0.5 1){$0$}	
		\htext(1.8 1.25){$1$}
		\esegment
		
		\move(7.8 0.2)
		\bsegment
		\move(0 0)\lvec(0 2)
		\move(0 0)\lvec(4 0)
		\move(0 2)\lvec(4 2)
		\move(4 0)\lvec(4 2)
		\move(1 0)\lvec(1 2)
		\move(2 0)\lvec(2 2)
		\move(3 0)\lvec(3 2)
		\move(0 0)\lvec(1 2)
		\move(1 0)\lvec(2 2)
		\move(2 0)\lvec(3 2)
		\move(3 0)\lvec(4 2)
		\htext(1.7 0.5){$1$}
		\htext(3.7 0.5){$1$}
		
		\esegment
		\move(9.8 0.2)
		\bsegment
		\move(0 0)\lvec(4 0)
		\move(0 2)\lvec(4 2)
		\move(4 0)\lvec(4 2)
		\move(1 0)\lvec(1 2)
		\move(2 0)\lvec(2 2)
		\move(3 0)\lvec(3 2)
		\move(0 0)\lvec(1 2)
		\move(1 0)\lvec(2 2)
		\move(2 0)\lvec(3 2)
		\move(3 0)\lvec(4 2)
		\htext(-1.6 1.5){$0$}
		\htext(0.4 1.5){$0$}
		
		\esegment
		\move(11.8 0.2)
		\bsegment
		\move(0 0)\lvec(4 0)
		\move(0 2)\lvec(4 2)
		\move(4 0)\lvec(4 2)
		\move(1 0)\lvec(1 2)
		\move(2 0)\lvec(2 2)
		\move(3 0)\lvec(3 2)
		\move(0 0)\lvec(1 2)
		\move(1 0)\lvec(2 2)
		\move(2 0)\lvec(3 2)
		\move(3 0)\lvec(4 2)
		\htext(2.4 1.5){$0$}
		\htext(0.4 1.5){$0$}
		\htext(1.7 0.5){$1$}
		\htext(3.7 0.5){$1$}
		\esegment
		\htext(5.5 1.25){\fontsize{12}{12}\selectfont{$=$}}
		\htext(7 1.25){\fontsize{12}{12}\selectfont{$\cdots$}}
		\htext(-4.8 1.25){\fontsize{12}{12}\selectfont{$\cdots$}}
		
		\htext(-7.8 1.25){\fontsize{12}{12}\selectfont{$Y_{\Lambda_0+\Lambda_1}=$}}	
		
	\end{texdraw}
	
\end{center}

\vskip 3mm

\begin{center}
	\begin{texdraw}
		\fontsize{3}{3}\selectfont
		\drawdim em
		\setunitscale 2.3
		\move(0 0)\lvec(1 0)\move(1 0)\lvec(2 0)
		\move(0 0)\lvec(0 2)\move(1 0)\lvec(1 2)
		\move(2 0)\lvec(2 2)\move(2 0)\lvec(2.4 0.25)	
		\move(2.4 0.25)\lvec(2.8 0.5)	\move(2.4 0.25)\lvec(2.4 2.25)\move(2.8 0.5)\lvec(2.8 2.5)
		\move(0 2)\lvec(2 2)\move(2 2)\lvec(2.8 2.5)
		\move(0.4  2.25)\lvec(1 2.25)\move(0.8  2.5)\lvec(1 2.5)\move(1  2)\lvec(1 8)\move(2  2)\lvec(2 8)	\move(2.8  2.5)\lvec(2.8 8.5)\move(1  4)\lvec(2 4)\move(2  4)\lvec(2.8 4.5)\move(1  6)\lvec(2 6)\move(2  6)\lvec(2.8 6.5)\move(1  8)\lvec(2 8)\move(2  8)\lvec(2.8 8.5)\move(1  8)\lvec(1.8 8.5)
		\move(1.8  8.5)\lvec(2.8 8.5)
		\htext(1.5 1){$0$}\htext(1.5 3){$2$}\htext(1.5 5){$3$}\htext(1.5 7){$2$}\htext(0.5 1){$0$}
		\htext(2.63 1.4){$1$}
		\move(2.4  4.25)\lvec(2.4  6.25)
		\htext(2.62 5.4){$3$}
		
		\move(-2 0)
		\bsegment
		\move(0 0)\lvec(1 0)\move(1 0)\lvec(2 0)
		\move(0 0)\lvec(0 2)\move(1 0)\lvec(1 2)
		\move(2 0)\lvec(2 2)	
		
		\move(0 2)\lvec(2 2)\move(2 2)\lvec(2.8 2.5)
		\move(0.4  2.25)\lvec(1 2.25)\move(0.8  2.5)\lvec(1 2.5)\move(1  2)\lvec(1 8)\move(2  2)\lvec(2 8)	\move(2.8  2.5)\lvec(2.8 8.5)\move(1  4)\lvec(2 4)\move(2  4)\lvec(2.8 4.5)\move(1  6)\lvec(2 6)\move(2  6)\lvec(2.8 6.5)\move(1  8)\lvec(2 8)\move(2  8)\lvec(2.8 8.5)\move(1  8)\lvec(1.8 8.5)
		\move(1.8  8.5)\lvec(2.8 8.5)
		\move(0 2)\lvec(0.8 2.5)
		\move(2.4  4.25)\lvec(2.4  6.25)
		\htext(2.62 5.4){$3$}
		\htext(1.5 1){$0$}\htext(1.5 3){$2$}\htext(1.5 5){$3$}\htext(1.5 7){$2$}\htext(0.5 1){$0$}
		\esegment
		
		\move(-4 0)
		\bsegment
		\move(0 0)\lvec(1 0)\move(1 0)\lvec(2 0)
		\move(0 0)\lvec(0 2)\move(1 0)\lvec(1 2)

		\move(0 2)\lvec(2 2)\move(2 2)\lvec(2.8 2.5)
		\move(0.4  2.25)\lvec(1 2.25)\move(0.8  2.5)\lvec(1 2.5)\move(1  2)\lvec(1 8)\move(2  2)\lvec(2 8)	\move(2.8  2.5)\lvec(2.8 8.5)\move(1  4)\lvec(2 4)\move(2  4)\lvec(2.8 4.5)\move(1  6)\lvec(2 6)\move(2  6)\lvec(2.8 6.5)\move(1  8)\lvec(2 8)\move(2  8)\lvec(2.8 8.5)\move(1  8)\lvec(1.8 8.5)
		\move(1.8  8.5)\lvec(2.8 8.5)
		\move(0 2)\lvec(0.8 2.5)
		\move(2.4  4.25)\lvec(2.4  6.25)
		\htext(2.62 5.4){$3$}
		\htext(1.5 1){$0$}\htext(1.5 3){$2$}\htext(1.5 5){$3$}\htext(1.5 7){$2$}\htext(0.5 1){$0$}
		\esegment
		
		\move(-6 0)
		\bsegment
		\move(0 0)\lvec(1 0)\move(1 0)\lvec(2 0)
		\move(0 0)\lvec(0 2)\move(1 0)\lvec(1 2)

		\move(0 2)\lvec(2 2)\move(2 2)\lvec(2.8 2.5)
		\move(0.4  2.25)\lvec(1 2.25)\move(0.8  2.5)\lvec(1 2.5)\move(1  2)\lvec(1 8)\move(2  2)\lvec(2 8)	\move(2.8  2.5)\lvec(2.8 8.5)\move(1  4)\lvec(2 4)\move(2  4)\lvec(2.8 4.5)\move(1  6)\lvec(2 6)\move(2  6)\lvec(2.8 6.5)\move(1  8)\lvec(2 8)\move(2  8)\lvec(2.8 8.5)\move(1  8)\lvec(1.8 8.5)
		\move(1.8  8.5)\lvec(2.8 8.5)
		\move(0 2)\lvec(0.8 2.5)
		\move(2.4  4.25)\lvec(2.4  6.25)
		\htext(2.62 5.4){$3$}
		\htext(1.5 1){$0$}\htext(1.5 3){$2$}\htext(1.5 5){$3$}\htext(1.5 7){$2$}\htext(0.5 1){$0$}
		\esegment
		
		\move(5.5 0)
		\bsegment
		\move(0 0)\lvec(1 0)\move(1 0)\lvec(2 0)
		\move(0 0)\lvec(0 2)\move(1 0)\lvec(1 2)
		\move(2 0)\lvec(2 2)\move(0 2)\lvec(2 2)
		\move(2 2)\lvec(2 8)\move(1 2)\lvec(1 8)
		\move(1 8)\lvec(2 8)\move(1 4)\lvec(2 4)
		\move(1 6)\lvec(2 6)\move(0 0)\lvec(1 2)
		\move(1 0)\lvec(2 2)
		\htext(0.7 0.5){$1$}\htext(1.7 0.5){$1$}
		\htext(0.35 1.5){$0$}\htext(1.35 1.5){$0$}
		\htext(1.5 3){$2$}
		\move(1 4)\lvec(2 6)
		\htext(1.7 4.5){$3$}
		\htext(1.35 5.5){$3$}
		\htext(1.5 7){$2$}
		\esegment
		\move(7.5 0)
		\bsegment
		\move(0 0)\lvec(1 0)\move(1 0)\lvec(2 0)
		\move(1 0)\lvec(1 2)
		\move(2 0)\lvec(2 2)\move(0 2)\lvec(2 2)
		\move(2 2)\lvec(2 8)\move(1 2)\lvec(1 8)
		\move(1 8)\lvec(2 8)\move(1 4)\lvec(2 4)
		\move(1 6)\lvec(2 6)\move(0 0)\lvec(1 2)
		\move(1 0)\lvec(2 2)
		\htext(0.7 0.5){$1$}\htext(1.7 0.5){$1$}
		\htext(0.35 1.5){$0$}\htext(1.35 1.5){$0$}
		\htext(1.5 3){$2$}\move(1 4)\lvec(2 6)
		\htext(1.7 4.5){$3$}
		\htext(1.35 5.5){$3$}\htext(1.5 7){$2$}
		\esegment
		\move(9.5 0)
		\bsegment
		\move(0 0)\lvec(1 0)\move(1 0)\lvec(2 0)
		\move(1 0)\lvec(1 2)
		\move(2 0)\lvec(2 2)\move(0 2)\lvec(2 2)
		\move(2 2)\lvec(2 8)\move(1 2)\lvec(1 8)
		\move(1 8)\lvec(2 8)\move(1 4)\lvec(2 4)
		\move(1 6)\lvec(2 6)\move(0 0)\lvec(1 2)
		\move(1 0)\lvec(2 2)
		\htext(0.7 0.5){$1$}\htext(1.7 0.5){$1$}
		\htext(0.35 1.5){$0$}\htext(1.35 1.5){$0$}
		\htext(1.5 3){$2$}\move(1 4)\lvec(2 6)
		\htext(1.7 4.5){$3$}
		\htext(1.35 5.5){$3$}\htext(1.5 7){$2$}
		\esegment
		\move(11.5 0)
		\bsegment
		\move(0 0)\lvec(1 0)\move(1 0)\lvec(2 0)
		\move(1 0)\lvec(1 2)
		\move(2 0)\lvec(2 2)\move(0 2)\lvec(2 2)
		\move(2 2)\lvec(2 8)\move(1 2)\lvec(1 8)
		\move(1 8)\lvec(2 8)\move(1 4)\lvec(2 4)
		\move(1 6)\lvec(2 6)\move(0 0)\lvec(1 2)
		\move(1 0)\lvec(2 2)
		\htext(0.7 0.5){$1$}\htext(1.7 0.5){$1$}
		\htext(0.35 1.5){$0$}\htext(1.35 1.5){$0$}
		\htext(1.5 3){$2$}\move(1 4)\lvec(2 6)
		\htext(1.7 4.5){$3$}
		\htext(1.35 5.5){$3$}\htext(1.5 7){$2$}
		\esegment
		\htext(3.5 4){\fontsize{12}{12}\selectfont{$=$}}
		\htext(5 4){\fontsize{12}{12}\selectfont{$\cdots$}}
		\htext(-6.6 4){\fontsize{12}{12}\selectfont{$\cdots$}}
		\htext(-9 4){\fontsize{12}{12}\selectfont{$Y_{\Lambda_2}=$}}
	\end{texdraw}
\end{center}

\vskip 3mm

\begin{center}
	\begin{texdraw}
		\fontsize{3}{3}\selectfont
		\drawdim em
		\setunitscale 2.3
		\move(0 0)\lvec(1 0)\move(1 0)\lvec(2 0)
		\move(0 0)\lvec(0 2)\move(1 0)\lvec(1 2)
		\move(2 0)\lvec(2 2)\move(2 0)\lvec(2.4 0.25)	
		\move(2.4 0.25)\lvec(2.8 0.5)
		\move(2.4 0.25)\lvec(2.4 2.25)
		\move(2.8 0.5)\lvec(2.8 2.5)
		\move(0 4)\lvec(1 4)
		\move(0.8 4.5)\lvec(1 4.5)
		\move(0 2)\lvec(2 2)
		\move(2 2)\lvec(2.8 2.5)
		\move(1  2)\lvec(1 6)
		\move(2  2)\lvec(2 6)	
		\move(2.8  2.5)\lvec(2.8 6.5)
		\move(1  4)\lvec(2 4)
		\move(2  4)\lvec(2.8 4.5)
		\move(1  6)\lvec(2 6)
		\move(2  6)\lvec(2.8 6.5)
		\move(1  6)\lvec(1.8 6.5)
		\move(1.8 6.5)\lvec(2.8 6.5)
		\move(2.4  4.25)\lvec(2.4  6.25)
		\move(1.4  6.25)\lvec(2.4  6.25)
		\htext(2.62 5.4){$3$}
		\htext(1.5 1){$0$}\htext(1.5 3){$2$}\htext(1.5 5){$3$}\htext(0.5 1){$0$}\htext(0.5 3){$2$}
		\htext(2.63 1.4){$1$}
		
		\move(-2 0)
		\bsegment
		\move(0 0)\lvec(1 0)\move(1 0)\lvec(2 0)
		\move(0 0)\lvec(0 2)\move(1 0)\lvec(1 2)
		\move(2 0)\lvec(2 2)	
		
		\move(0 2)\lvec(2 2)
		\move(1  2)\lvec(1 6)\move(2  2)\lvec(2 6)	\move(2.8  4.5)\lvec(2.8 6.5)\move(1  4)\lvec(2 4)\move(2  4)\lvec(2.8 4.5)\move(1  6)\lvec(2 6)\move(2  6)\lvec(2.8 6.5)
		\move(1 6)\lvec(1.8 6.5)
		\move(1.8 6.5)\lvec(2.8 6.5)
		\move(0 2)\lvec(0 4)\move(0 4)\lvec(1 4)
		\move(0 4)\lvec(0.8 4.5)\move(0.8 4.5)\lvec(1 4.5)
		\move(2.4  4.25)\lvec(2.4  6.25)
		\move(1.4  6.25)\lvec(2.4  6.25)
		\htext(2.62 5.4){$3$}
		\htext(1.5 1){$0$}\htext(1.5 3){$2$}\htext(1.5 5){$3$}\htext(0.5 1){$0$}\htext(0.5 3){$2$}
		\esegment
		
		\move(-4 0)
		\bsegment
		\move(0 0)\lvec(1 0)\move(1 0)\lvec(2 0)
		\move(0 0)\lvec(0 2)\move(1 0)\lvec(1 2)
		\move(2 0)\lvec(2 2)	
		
		\move(0 2)\lvec(2 2)
		\move(1  2)\lvec(1 6)\move(2  2)\lvec(2 6)	\move(2.8  4.5)\lvec(2.8 6.5)\move(1  4)\lvec(2 4)\move(2  4)\lvec(2.8 4.5)\move(1  6)\lvec(2 6)\move(2  6)\lvec(2.8 6.5)
		\move(1 6)\lvec(1.8 6.5)
		\move(1.8 6.5)\lvec(2.8 6.5)
		\move(0 2)\lvec(0 4)\move(0 4)\lvec(1 4)
		\move(0 4)\lvec(0.8 4.5)\move(0.8 4.5)\lvec(1 4.5)
		\move(2.4  4.25)\lvec(2.4  6.25)
		\move(1.4  6.25)\lvec(2.4  6.25)
		\htext(2.62 5.4){$3$}
		\htext(1.5 1){$0$}\htext(1.5 3){$2$}\htext(1.5 5){$3$}\htext(0.5 1){$0$}\htext(0.5 3){$2$}
		\esegment
		
		\move(-6 0)
		\bsegment
		\move(0 0)\lvec(1 0)\move(1 0)\lvec(2 0)
		\move(0 0)\lvec(0 2)\move(1 0)\lvec(1 2)
		\move(2 0)\lvec(2 2)	
		
		\move(0 2)\lvec(2 2)
		\move(1  2)\lvec(1 6)\move(2  2)\lvec(2 6)	\move(2.8  4.5)\lvec(2.8 6.5)\move(1  4)\lvec(2 4)\move(2  4)\lvec(2.8 4.5)\move(1  6)\lvec(2 6)\move(2  6)\lvec(2.8 6.5)
		\move(1 6)\lvec(1.8 6.5)
		\move(1.8 6.5)\lvec(2.8 6.5)
		\move(0 2)\lvec(0 4)\move(0 4)\lvec(1 4)
		\move(0 4)\lvec(0.8 4.5)\move(0.8 4.5)\lvec(1 4.5)
		\move(2.4  4.25)\lvec(2.4  6.25)
		\move(1.4  6.25)\lvec(2.4  6.25)
		\htext(2.62 5.4){$3$}
		\htext(1.5 1){$0$}\htext(1.5 3){$2$}\htext(1.5 5){$3$}\htext(0.5 1){$0$}\htext(0.5 3){$2$}
		\esegment
		
		\move(5.7 0)
		\bsegment
		\move(0 0)\lvec(1 0)\move(1 0)\lvec(2 0)
		\move(0 0)\lvec(0 2)\move(1 0)\lvec(1 2)
		\move(2 0)\lvec(2 2)\move(0 2)\lvec(2 2)
		\move(2 2)\lvec(2 6)\move(1 2)\lvec(1 6)
		\move(1 4)\lvec(2 4)
		\move(1 6)\lvec(2 6)\move(0 0)\lvec(1 2)
		\move(1 0)\lvec(2 2)
		\move(0 4)\lvec(1 4)\move(0 2)\lvec(0 4)
		\htext(0.7 0.5){$1$}\htext(1.7 0.5){$1$}
		\htext(0.35 1.5){$0$}\htext(1.35 1.5){$0$}\htext(0.5 3){$2$}
		\htext(1.5 3){$2$}\move(1 4)\lvec(2 6)
		\htext(1.7 4.5){$3$}
		\htext(1.35 5.5){$3$}
		\esegment
		\move(7.7 0)
		\bsegment
		\move(0 0)\lvec(1 0)\move(1 0)\lvec(2 0)
		\move(1 0)\lvec(1 2)
		\move(2 0)\lvec(2 2)\move(0 2)\lvec(2 2)
		\move(2 2)\lvec(2 6)\move(1 2)\lvec(1 6)
		\move(1 4)\lvec(2 4)
		\move(1 6)\lvec(2 6)\move(0 0)\lvec(1 2)
		\move(1 0)\lvec(2 2)\move(0 4)\lvec(1 4)
		\htext(0.7 0.5){$1$}\htext(1.7 0.5){$1$}
		\htext(0.35 1.5){$0$}\htext(1.35 1.5){$0$}\htext(0.5 3){$2$}
		\htext(1.5 3){$2$}\move(1 4)\lvec(2 6)
		\htext(1.7 4.5){$3$}
		\htext(1.35 5.5){$3$}
		\esegment
		\move(9.7 0)
		\bsegment
		\move(0 0)\lvec(1 0)\move(1 0)\lvec(2 0)
		\move(1 0)\lvec(1 2)
		\move(2 0)\lvec(2 2)\move(0 2)\lvec(2 2)
		\move(2 2)\lvec(2 6)\move(1 2)\lvec(1 6)
		\move(1 4)\lvec(2 4)
		\move(1 6)\lvec(2 6)\move(0 0)\lvec(1 2)
		\move(1 0)\lvec(2 2)\move(0 4)\lvec(1 4)
		\htext(0.7 0.5){$1$}\htext(1.7 0.5){$1$}
		\htext(0.35 1.5){$0$}\htext(1.35 1.5){$0$}\htext(0.5 3){$2$}
		\htext(1.5 3){$2$}\move(1 4)\lvec(2 6)
		\htext(1.7 4.5){$3$}
		\htext(1.35 5.5){$3$}
		\esegment
		\move(11.7 0)
		\bsegment
		\move(0 0)\lvec(1 0)\move(1 0)\lvec(2 0)
		\move(1 0)\lvec(1 2)
		\move(2 0)\lvec(2 2)\move(0 2)\lvec(2 2)
		\move(2 2)\lvec(2 6)\move(1 2)\lvec(1 6)
		\move(1 4)\lvec(2 4)
		\move(1 6)\lvec(2 6)\move(0 0)\lvec(1 2)
		\move(1 0)\lvec(2 2)\move(0 4)\lvec(1 4)
		\htext(0.7 0.5){$1$}\htext(1.7 0.5){$1$}
		\htext(0.35 1.5){$0$}\htext(1.35 1.5){$0$}\htext(0.5 3){$2$}
		\htext(1.5 3){$2$}\move(1 4)\lvec(2 6)
		\htext(1.7 4.5){$3$}
		\htext(1.35 5.5){$3$}
		\esegment
		\htext(3.5 4){\fontsize{12}{12}\selectfont{$=$}}
		\htext(5 4){\fontsize{12}{12}\selectfont{$\cdots$}}
		\htext(-6.6 4){\fontsize{12}{12}\selectfont{$\cdots$}}
		\htext(-9 4){\fontsize{12}{12}\selectfont{$Y_{2\Lambda_3}=$}}
	\end{texdraw}
\end{center}

\vskip 3mm

\begin{center}
	\begin{texdraw}
		\fontsize{3}{3}\selectfont
		\drawdim em
		\setunitscale 2.3
		\move(-0.9 0)
		\bsegment
		\move(0 0)\lvec(0 2)
		\move(0 0)\lvec(1 0)
		\move(1 0)\lvec(2 0)
		\move(1 0)\lvec(1 2)
		\move(2 0)\lvec(2 2)
		\move(0 2)\lvec(2 2)
		\move(0 2)\lvec(0.4 2.25)
		\move(0.4 2.25)\lvec(1 2.25)
		\move(1 2)\lvec(1 4)
		\move(2 2)\lvec(2 4)
		\move(1 4)\lvec(2 4)
		\move(1 4)\lvec(1.4 4.25)
		\move(2 4)\lvec(2.4 4.25)
		\move(1.4 4.25)\lvec(2.4 4.25)
		\move(2.4 4.25)\lvec(2.8 4.5)
		\move(2.4 4.25)\lvec(2.4 6.25)
		\move(1.4 4.25)\lvec(1.4 6.25)
		\move(1.4 6.25)\lvec(2.4 6.25)
		\move(2.4 6.25)\lvec(2.8 6.5)
		\move(1.4 6.25)\lvec(1.8 6.5)
		\move(1.8 6.5)\lvec(2.8 6.5)
		\move(2.8 2.5)\lvec(2.8 6.5)
		\move(2 2)\lvec(2.8 2.5)
		\move(2 0)\lvec(2.4 0.25)
		\move(2.4 0.25)\lvec(3 0.25)
		\move(2.4 2.25)\lvec(3 2.25)
		\move(2.8 2.5)\lvec(3 2.5)
		\move(2.4 0.25)\lvec(2.4 2.25)
		\move(3 0)\lvec(3 4)
		\move(3 0)\lvec(4 0)
		\move(4 0)\lvec(4 4)
		\move(3 2)\lvec(4 2)
		\move(3 4)\lvec(4 4)
		\move(4 0)\lvec(4.4 0.25)
		\move(4.4 0.25)\lvec(4.8 0.5)
		\move(4.4 0.25)\lvec(4.4 2.25)
		\move(4.8 0.5)\lvec(4.8 2.5)
		\move(4.8 2.5)\lvec(4.8 6.5)
		\move(4 2)\lvec(4.8 2.5)
		\move(4 4)\lvec(4.8 4.5)
		\move(3 4)\lvec(3.4 4.25)
		\move(3.4 4.25)\lvec(4.4 4.25)
		\move(4.4 4.25)\lvec(4.4 6.25)
		\move(3.4 4.25)\lvec(3.4 6.25)		
		\move(3.4 6.25)\lvec(4.4 6.25)	
		\move(4.4 6.25)\lvec(4.8 6.5)
		\move(3.4 6.25)\lvec(3.8 6.5)
		\move(3.8 6.5)\lvec(4.8 6.5)
		\htext(0.5 1){$0$}
		\htext(1.5 1){$0$}
		\htext(3.5 1){$0$}
		\htext(1.5 3){$2$}
		\htext(3.5 3){$2$}
		\htext(1.9 5.25){$3$}
		\htext(3.9 5.25){$3$}
		\htext(2.8 1.25){$1$}
		\htext(4.65 1.375){$1$}
		\esegment
		
		\move(-4.9 0)
		\bsegment
		\move(0 0)\lvec(0 2)
		\move(0 0)\lvec(1 0)
		\move(1 0)\lvec(2 0)
		\move(1 0)\lvec(1 2)
		\move(2 0)\lvec(2 2)
		\move(0 2)\lvec(2 2)
		\move(0 2)\lvec(0.4 2.25)
		\move(0.4 2.25)\lvec(1 2.25)
		\move(1 2)\lvec(1 4)
		\move(2 2)\lvec(2 4)
		\move(1 4)\lvec(2 4)
		\move(1 4)\lvec(1.4 4.25)
		\move(2 4)\lvec(2.4 4.25)
		\move(1.4 4.25)\lvec(2.4 4.25)
		\move(2.4 4.25)\lvec(2.8 4.5)
		\move(2.4 4.25)\lvec(2.4 6.25)
		\move(1.4 4.25)\lvec(1.4 6.25)
		\move(1.4 6.25)\lvec(2.4 6.25)
		\move(2.4 6.25)\lvec(2.8 6.5)
		\move(1.4 6.25)\lvec(1.8 6.5)
		\move(1.8 6.5)\lvec(2.8 6.5)
		\move(2.8 2.5)\lvec(2.8 6.5)
		\move(2 2)\lvec(2.8 2.5)
		\move(2 0)\lvec(2.4 0.25)
		\move(2.4 0.25)\lvec(3 0.25)
		\move(2.4 2.25)\lvec(3 2.25)
		\move(2.8 2.5)\lvec(3 2.5)
		\move(2.4 0.25)\lvec(2.4 2.25)
		\move(3 0)\lvec(3 4)
		\move(3 0)\lvec(4 0)
		\move(4 0)\lvec(4 4)
		\move(3 2)\lvec(4 2)
		\move(3 4)\lvec(4 4)
		\move(4.8 2.25)\lvec(4.8 2.5)
		\move(4.8 2.5)\lvec(4.8 6.5)
		\move(4 2)\lvec(4.8 2.5)
		\move(4 4)\lvec(4.8 4.5)
		\move(3 4)\lvec(3.4 4.25)
		\move(3.4 4.25)\lvec(4.4 4.25)
		\move(4.4 4.25)\lvec(4.4 6.25)
		\move(3.4 4.25)\lvec(3.4 6.25)		
		\move(3.4 6.25)\lvec(4.4 6.25)	
		\move(4.4 6.25)\lvec(4.8 6.5)
		\move(3.4 6.25)\lvec(3.8 6.5)
		\move(3.8 6.5)\lvec(4.8 6.5)
		\htext(0.5 1){$0$}
		\htext(1.5 1){$0$}
		\htext(3.5 1){$0$}
		\htext(1.5 3){$2$}
		\htext(3.5 3){$2$}
		\htext(1.9 5.25){$3$}
		\htext(3.9 5.25){$3$}
		\htext(2.8 1.25){$1$}
		\esegment

		\move(6.8 0)
		\bsegment
		\move(0 0)\lvec(1 0)\move(1 0)\lvec(2 0)
		\move(0 0)\lvec(0 2)\move(1 0)\lvec(1 2)
		\move(2 0)\lvec(2 2)\move(0 2)\lvec(2 2)
		\move(2 2)\lvec(2 6)\move(1 2)\lvec(1 6)
		\move(1 4)\lvec(2 4)
		\move(1 6)\lvec(2 6)\move(0 0)\lvec(1 2)
		\move(1 0)\lvec(2 2)
		
		\htext(1.7 0.5){$1$}
		\htext(0.35 1.5){$0$}\htext(1.35 1.5){$0$}
		\htext(1.5 3){$2$}\move(1 4)\lvec(2 6)
		\htext(1.7 4.5){$3$}
		
		\esegment
		
		\move(8.8 0)
		\bsegment
		\move(0 0)\lvec(1 0)\move(1 0)\lvec(2 0)
		\move(0 0)\lvec(0 2)\move(1 0)\lvec(1 2)
		\move(2 0)\lvec(2 2)\move(0 2)\lvec(2 2)
		\move(2 2)\lvec(2 6)\move(1 2)\lvec(1 6)
		\move(1 4)\lvec(2 4)
		\move(1 6)\lvec(2 6)\move(0 0)\lvec(1 2)
		\move(1 0)\lvec(2 2)
		
		\htext(0.7 0.5){$1$}\htext(1.7 0.5){$1$}
		\htext(1.35 1.5){$0$}
		\htext(1.5 3){$2$}\move(1 4)\lvec(2 6)
		\htext(1.7 4.5){$3$}
		
		\esegment
		
		\move(10.8 0)
		\bsegment
		\move(0 0)\lvec(1 0)\move(1 0)\lvec(2 0)
		\move(0 0)\lvec(0 2)\move(1 0)\lvec(1 2)
		\move(2 0)\lvec(2 2)\move(0 2)\lvec(2 2)
		\move(2 2)\lvec(2 6)\move(1 2)\lvec(1 6)
		\move(1 4)\lvec(2 4)
		\move(1 6)\lvec(2 6)\move(0 0)\lvec(1 2)
		\move(1 0)\lvec(2 2)
		
		\htext(1.7 0.5){$1$}
		\htext(0.35 1.5){$0$}\htext(1.35 1.5){$0$}
		\htext(1.5 3){$2$}\move(1 4)\lvec(2 6)
		\htext(1.7 4.5){$3$}
		
		\esegment
		\move(12.8 0)
		\bsegment
		\move(0 0)\lvec(1 0)\move(1 0)\lvec(2 0)
		\move(0 0)\lvec(0 2)\move(1 0)\lvec(1 2)
		\move(2 0)\lvec(2 2)\move(0 2)\lvec(2 2)
		\move(2 2)\lvec(2 6)\move(1 2)\lvec(1 6)
		\move(1 4)\lvec(2 4)
		\move(1 6)\lvec(2 6)\move(0 0)\lvec(1 2)
		\move(1 0)\lvec(2 2)
		
		\htext(0.7 0.5){$1$}\htext(1.7 0.5){$1$}
		\htext(1.35 1.5){$0$}
		\htext(1.5 3){$2$}\move(1 4)\lvec(2 6)
		\htext(1.7 4.5){$3$}
		
		\esegment
		\htext(5.5 4){\fontsize{12}{12}\selectfont{$=\cdots$}}

		\htext(-7.9 4){\fontsize{12}{12}\selectfont{$Y_{\Lambda_0+\Lambda_3}=\cdots$}}	
		
	\end{texdraw}
\end{center}

\vskip 3mm

\begin{center}
	\begin{texdraw}
		\fontsize{3}{3}\selectfont
		\drawdim em
		\setunitscale 2.3
		\move(-0.9 0)
		\bsegment
		\move(2 0)\lvec(2 2)
		\move(2 0)\lvec(2 0)
		\move(1 0)\lvec(2 0)
		\move(1 0)\lvec(1 2)
		\move(2 0)\lvec(2 2)
		\move(1 2)\lvec(2 2)
		\move(0 2)\lvec(0.4 2.25)
		\move(2.4 2.25)\lvec(3 2.25)
		\move(1 2)\lvec(1 4)
		\move(2 2)\lvec(2 4)
		\move(1 4)\lvec(2 4)
		\move(1 4)\lvec(1.4 4.25)
		\move(2 4)\lvec(2.4 4.25)
		\move(1.4 4.25)\lvec(2.4 4.25)
		\move(2.4 4.25)\lvec(2.8 4.5)
		\move(2.4 4.25)\lvec(2.4 6.25)
		\move(1.4 4.25)\lvec(1.4 6.25)
		\move(1.4 6.25)\lvec(2.4 6.25)
		\move(2.4 6.25)\lvec(2.8 6.5)
		\move(1.4 6.25)\lvec(1.8 6.5)
		\move(1.8 6.5)\lvec(2.8 6.5)
		\move(2 0)\lvec(3 0)
		\move(2 2)\lvec(3 2)
		\move(2.8 2.5)\lvec(2.8 6.5)
		\move(2 2)\lvec(2.8 2.5)
		\move(0 0)\lvec(0.4 0.25)
		\move(0.4 0.25)\lvec(1 0.25)
		\move(0.4 2.25)\lvec(1 2.25)
		\move(0.8 2.5)\lvec(1 2.5)
		\move(0.4 0.25)\lvec(0.4 2.25)
		\move(3 0)\lvec(3 4)
		\move(3 0)\lvec(4 0)
		\move(4 0)\lvec(4 4)
		\move(3 2)\lvec(4 2)
		\move(3 4)\lvec(4 4)
		\move(4 0)\lvec(4.4 0.25)
		\move(4.4 0.25)\lvec(4.8 0.5)
		\move(4.4 0.25)\lvec(4.4 2.25)
		\move(4.8 0.5)\lvec(4.8 2.5)
		\move(4.8 2.5)\lvec(4.8 6.5)
		\move(4 2)\lvec(4.8 2.5)
		\move(4 4)\lvec(4.8 4.5)
		\move(3 4)\lvec(3.4 4.25)
		\move(3.4 4.25)\lvec(4.4 4.25)
		\move(4.4 4.25)\lvec(4.4 6.25)
		\move(3.4 4.25)\lvec(3.4 6.25)		
		\move(3.4 6.25)\lvec(4.4 6.25)	
		\move(4.4 6.25)\lvec(4.8 6.5)
		\move(3.4 6.25)\lvec(3.8 6.5)
		\move(3.8 6.5)\lvec(4.8 6.5)
		\move(2.8 2.25)\lvec(2.8 2.5)
		\htext(2.5 1){$0$}
		\htext(1.5 1){$0$}
		\htext(3.5 1){$0$}
		\htext(1.5 3){$2$}
		\htext(3.5 3){$2$}
		\htext(1.9 5.25){$3$}
		\htext(3.9 5.25){$3$}
		\htext(0.8 1.25){$1$}
		\htext(4.65 1.375){$1$}
		\esegment
		
		\move(-4.9 0)
		\bsegment
		\move(2 0)\lvec(2 2)
		\move(2 0)\lvec(2 0)
		\move(1 0)\lvec(2 0)
		\move(1 0)\lvec(1 2)
		\move(2 0)\lvec(2 2)
		\move(1 2)\lvec(2 2)
		
		\move(2.4 2.25)\lvec(3 2.25)
		\move(1 2)\lvec(1 4)
		\move(2 2)\lvec(2 4)
		\move(1 4)\lvec(2 4)
		\move(1 4)\lvec(1.4 4.25)
		\move(2 4)\lvec(2.4 4.25)
		\move(1.4 4.25)\lvec(2.4 4.25)
		\move(2.4 4.25)\lvec(2.8 4.5)
		\move(2.4 4.25)\lvec(2.4 6.25)
		\move(1.4 4.25)\lvec(1.4 6.25)
		\move(1.4 6.25)\lvec(2.4 6.25)
		\move(2.4 6.25)\lvec(2.8 6.5)
		\move(1.4 6.25)\lvec(1.8 6.5)
		\move(1.8 6.5)\lvec(2.8 6.5)
		\move(2 0)\lvec(3 0)
		\move(2 2)\lvec(3 2)
		\move(2.8 2.5)\lvec(2.8 6.5)
		\move(2 2)\lvec(2.8 2.5)
		\move(0.4 2.25)\lvec(0.8 2.5)
		\move(0.4 0.25)\lvec(1 0.25)
		\move(0.4 2.25)\lvec(1 2.25)
		\move(0.8 2.5)\lvec(1 2.5)
		\move(0.4 0.25)\lvec(0.4 2.25)
		\move(3 0)\lvec(3 4)
		\move(3 0)\lvec(4 0)
		\move(4 0)\lvec(4 4)
		\move(3 2)\lvec(4 2)
		\move(3 4)\lvec(4 4)
		\move(4 0)\lvec(4.4 0.25)
		
		\move(2.8 2.25)\lvec(2.8 2.5)
		
		\move(4.8 2.5)\lvec(4.8 6.5)
		\move(4 2)\lvec(4.8 2.5)
		\move(4 4)\lvec(4.8 4.5)
		\move(3 4)\lvec(3.4 4.25)
		\move(3.4 4.25)\lvec(4.4 4.25)
		\move(4.4 4.25)\lvec(4.4 6.25)
		\move(3.4 4.25)\lvec(3.4 6.25)		
		\move(3.4 6.25)\lvec(4.4 6.25)	
		\move(4.4 6.25)\lvec(4.8 6.5)
		\move(3.4 6.25)\lvec(3.8 6.5)
		\move(3.8 6.5)\lvec(4.8 6.5)
		\htext(2.5 1){$0$}
		\htext(1.5 1){$0$}
		\htext(3.5 1){$0$}
		\htext(1.5 3){$2$}
		\htext(3.5 3){$2$}
		\htext(1.9 5.25){$3$}
		\htext(3.9 5.25){$3$}
		\htext(0.8 1.25){$1$}
		\esegment

		\move(6.8 0)
		\bsegment
		\move(0 0)\lvec(1 0)\move(1 0)\lvec(2 0)
		\move(0 0)\lvec(0 2)\move(1 0)\lvec(1 2)
		\move(2 0)\lvec(2 2)\move(0 2)\lvec(2 2)
		\move(2 2)\lvec(2 6)\move(1 2)\lvec(1 6)
		\move(1 4)\lvec(2 4)
		\move(1 6)\lvec(2 6)\move(0 0)\lvec(1 2)
		\move(1 0)\lvec(2 2)
		
		\htext(1.7 0.5){$1$}
		\htext(0.7 0.5){$1$}
		\htext(1.35 1.5){$0$}
		\htext(1.5 3){$2$}
		\move(1 4)\lvec(2 6)
		\htext(1.7 4.5){$3$}
		
		\esegment
		
		\move(8.8 0)
		\bsegment
		\move(0 0)\lvec(1 0)\move(1 0)\lvec(2 0)
		\move(0 0)\lvec(0 2)\move(1 0)\lvec(1 2)
		\move(2 0)\lvec(2 2)\move(0 2)\lvec(2 2)
		\move(2 2)\lvec(2 6)\move(1 2)\lvec(1 6)
		\move(1 4)\lvec(2 4)
		\move(1 6)\lvec(2 6)\move(0 0)\lvec(1 2)
		\move(1 0)\lvec(2 2)
		
		\htext(1.7 0.5){$1$}
		\htext(1.35 1.5){$0$}
		\htext(0.35 1.5){$0$}
		\htext(1.5 3){$2$}\move(1 4)\lvec(2 6)
		\htext(1.7 4.5){$3$}
		
		\esegment
		
		\move(10.8 0)
		\bsegment
		\move(0 0)\lvec(1 0)\move(1 0)\lvec(2 0)
		\move(0 0)\lvec(0 2)\move(1 0)\lvec(1 2)
		\move(2 0)\lvec(2 2)\move(0 2)\lvec(2 2)
		\move(2 2)\lvec(2 6)\move(1 2)\lvec(1 6)
		\move(1 4)\lvec(2 4)
		\move(1 6)\lvec(2 6)\move(0 0)\lvec(1 2)
		\move(1 0)\lvec(2 2)
		
		\htext(1.7 0.5){$1$}
		\htext(0.7 0.5){$1$}
		\htext(1.35 1.5){$0$}
		\htext(1.5 3){$2$}\move(1 4)\lvec(2 6)
		\htext(1.7 4.5){$3$}
		
		\esegment
		\move(12.8 0)
		\bsegment
		\move(0 0)\lvec(1 0)\move(1 0)\lvec(2 0)
		\move(0 0)\lvec(0 2)\move(1 0)\lvec(1 2)
		\move(2 0)\lvec(2 2)\move(0 2)\lvec(2 2)
		\move(2 2)\lvec(2 6)\move(1 2)\lvec(1 6)
		\move(1 4)\lvec(2 4)
		\move(1 6)\lvec(2 6)\move(0 0)\lvec(1 2)
		\move(1 0)\lvec(2 2)
		
		\htext(1.7 0.5){$1$}
		\htext(1.35 1.5){$0$}
		\htext(0.35 1.5){$0$}
		\htext(1.5 3){$2$}\move(1 4)\lvec(2 6)
		\htext(1.7 4.5){$3$}
		
		\esegment
		\htext(5.5 4){\fontsize{12}{12}\selectfont{$=\cdots$}}

		\htext(-7.9 4){\fontsize{12}{12}\selectfont{$Y_{\Lambda_1+\Lambda_3}=\cdots$}}	
		
	\end{texdraw}
\end{center}

\end{example}

\vskip 5mm 

\subsection{New Young wall realization of the crystal  $B(\lambda)$}\label{Construction of Young Wall}

\hfill

\vskip 2mm 

A \textit{level-$l$ Young wall} consists of infinitely many level-$l$ Young columns which are extended to the left. We denote a level-$l$ Young wall by $Y=(\cdots,C_2,C_1,C_0)$, where $C_i\ (i\ge 0)$ are  level-$l$ Young columns.

\vskip 2mm

The rules for building Young walls are given as follows:

\vskip 2mm

\begin{enumerate}
	\item The Young walls should be built on top of one of the ground-state walls.
	\vspace{6pt}
	\item The colored blocks should be stacked by the stacking patterns in Definition \ref{stacking patterns}.
	\vspace{6pt}
	\item  No block with unit depth can be placed on top of a block with half-unit depth.
	\vspace{6pt}
	\item There is no $i$-block added on the $k$-th Young column if $\phi_i(C_k)\le\epsilon_i(C_{k-1})$ ($k\ge 1$).
	\vspace{6pt}
	\item For two adjacent Young columns $C_k$ and $C_{k-1}$, the height of the $i$-th slice in $C_{k-1}$ is not less than  the height of the $(i+l)$-th slice in $C_k$.
\end{enumerate}

\vskip 2mm 

We say that a slice in a Young wall $Y$ has a {\it removable $\delta$-slice} if we may remove a $\delta$-slice and still get a Young wall. A Young wall $Y$ is called {\it reduced} if there is no removable $\delta$-slice in $Y$. 
We always assume that the ground-state walls are reduced. 

\vskip 2mm

\begin{example}
We give an example of level-$2$ reduced Young wall built on the ground-state wall $Y_{2\Lambda_0}$ of type $B_3^{(1)}$.

\vskip 4mm

\begin{center}
	\begin{texdraw}
		\fontsize{3}{3}\selectfont
		\drawdim em
		\setunitscale 2.3
		\move(-6 0)
		\bsegment
		\move(0 0)\lvec(1 0)	
		\move(1 0)\lvec(2 0)
		\move(1 0)\lvec(1 2)
		\move(0 0)\lvec(0 2)
		\move(2 0)\lvec(2 2)
		\move(0 2)\lvec(1 2)
		\move(1 2)\lvec(2 2)
		\move(0 2)\lvec(0.4 2.25)
		\move(1 2)\lvec(1.4 2.25)
		\move(2 2)\lvec(2.4 2.25)
		\move(0.4 2.25)\lvec(2.4 2.25)
		\move(2 0)\lvec(2.4 0.25)
		\move(2.4 0.25)\lvec(2.4 2.25)
		\move(2.4 0.25)\lvec(3 0.25)
		\move(2.4 2.25)\lvec(3.4 2.25)
		\move(2.8 2.5)\lvec(3.8 2.5)
		\move(2.4 2.25)\lvec(2.8 2.5)
		\move(3.4 2.25)\lvec(3.8 2.5)
		\move(3 0)\lvec(3 2)
		\move(3 2)\lvec(3.4 2.25)
		\move(3 0)\lvec(4 0)
		\move(4 0)\lvec(4 2)
		\move(3 2)\lvec(4 2)
		\move(3.4 2.25)\lvec(4.4 2.25)
		\move(3.8 2.5)\lvec(4.8 2.5)
		\move(4 2)\lvec(4.8 2.5)
		\move(4 2)\lvec(4.8 2.5)
		\move(4.8 2.25)\lvec(4.8 2.5)
		\move(4.4 2.25)\lvec(5 2.25)
		\move(4 2)\lvec(5 2)
		\move(4 0)\lvec(5 0)
		\move(5 0)\lvec(5 4)
		\move(6 0)\lvec(6 4)
		\move(5 0)\lvec(6 0)
		\move(5 2)\lvec(6 2)
		\move(5 4)\lvec(6 4)
		\move(5 4)\lvec(5.8 4.5)
		\move(5.8 4.5)\lvec(6.8 4.5)
		\move(6 4)\lvec(6.8 4.5)
		\move(6 2)\lvec(6.8 2.5)
		\move(6.8 2.5)\lvec(6.8 4.5)
		\move(6 0)\lvec(6.4 0.25)
		\move(6.4 0.25)\lvec(6.4 2.25)
		\move(6.4 0.25)\lvec(7 0.25)
		\move(6.4 2.25)\lvec(7 2.25)
		\move(6.8 2.5)\lvec(7 2.5)
		\move(7 0)\lvec(7 6)
		\move(7 0)\lvec(8 0)
		\move(8 0)\lvec(8 6)
		\move(8 0)\lvec(8.8 0.5)
		\move(8.8 0.5)\lvec(8.8 6.5)
		\move(7 2)\lvec(8 2)
		\move(7 4)\lvec(8 4)
		\move(7 6)\lvec(8 6)
		\move(8 6)\lvec(8.8 6.5)
		\move(7 6)\lvec(7.8 6.5)
		\move(7.8 6.5)\lvec(8.8 6.5)
		\move(8 4)\lvec(8.8 4.5)
		\move(8 2)\lvec(8.8 2.5)
		\move(8.4 0.25)\lvec(8.4 2.25)
		\move(8.4 4.25)\lvec(8.4 6.25)
		\move(7.4 6.25)\lvec(8.4 6.25)
		\htext(0.5 1){$0$}
		\htext(1.5 1){$0$}
		\htext(3.5 1){$0$}
		\htext(4.5 1){$0$}
		\htext(5.5 1){$0$}
		\htext(7.5 1){$0$}
		\htext(2.7 1.375){$1$}
		\htext(6.7 1.375){$1$}
		\htext(8.6 1.375){$1$}
		\htext(7.5 3){$2$}
		\htext(7.5 5){$3$}
		\htext(8.57 5.375){$3$}
		\htext(5.5 3){$2$}
		\esegment

		\move(5.7 0)
		\bsegment
		\move(0 0)\lvec(1 0)\move(1 0)\lvec(2 0)
		\move(0 0)\lvec(0 2)\move(1 0)\lvec(1 2)
		\move(2 0)\lvec(2 2)\move(0 2)\lvec(2 2)

		\move(0 0)\lvec(1 2)
		\move(1 0)\lvec(2 2)

		\htext(0.35 1.5){$0$}\htext(1.35 1.5){$0$}
		
		\esegment
		
		\move(7.7 0)
		\bsegment
		\move(0 0)\lvec(1 0)\move(1 0)\lvec(2 0)
		\move(1 0)\lvec(1 2)
		\move(2 0)\lvec(2 2)\move(0 2)\lvec(2 2)

		\move(0 0)\lvec(1 2)
		\move(1 0)\lvec(2 2)
		\htext(0.7 0.5){$1$}\htext(1.7 0.5){$1$}
		\htext(1.35 1.5){$0$}
		\esegment

		\move(9.7 0)
		\bsegment
		\move(0 0)\lvec(1 0)\move(1 0)\lvec(2 0)
		\move(1 0)\lvec(1 2)
		\move(2 0)\lvec(2 2)\move(0 2)\lvec(2 2)
		\move(2 2)\lvec(2 4)\move(1 2)\lvec(1 4)
		\move(1 4)\lvec(2 4)
		\move(0 0)\lvec(1 2)
		\move(1 0)\lvec(2 2)
		\htext(1.7 0.5){$1$}
		\htext(0.35 1.5){$0$}\htext(1.35 1.5){$0$}
		\htext(1.5 3){$2$}
		
		\esegment
		\move(11.7 0)
		\bsegment
		\move(0 0)\lvec(1 0)\move(1 0)\lvec(2 0)
		\move(1 0)\lvec(1 2)
		\move(2 0)\lvec(2 2)\move(0 2)\lvec(2 2)
		\move(2 2)\lvec(2 6)\move(1 2)\lvec(1 6)
		\move(1 4)\lvec(2 4)
		\move(1 6)\lvec(2 6)\move(0 0)\lvec(1 2)
		\move(1 0)\lvec(2 2)
		\htext(0.7 0.5){$1$}\htext(1.7 0.5){$1$}
		\htext(1.35 1.5){$0$}
		\htext(1.5 3){$2$}\move(1 4)\lvec(2 6)
		\htext(1.7 4.5){$3$}
		\htext(1.35 5.5){$3$}
		\esegment
		\htext(4.1 3){\fontsize{12}{12}\selectfont{$=$}}
	\end{texdraw}
\end{center}

\end{example}

\vskip 4mm 

\begin{remark}
The removable $\delta$-slices in Young walls are different from the removable $\delta$-slices in Young columns.  For types $A_{2n}^{(2)}$, $D_{n+1}^{(2)}$ and $C_n^{(1)}$, we make a $\delta$-slice 
only when we add a $0$-block. 
For types $A_{2n-1}^{(2)}$, $D_{n}^{(1)}$ and $B_n^{(1)}$, we can make a removable $\delta$-slice only when we add both $0$-block and $1$-block together. 
\end{remark}

\vskip 2mm 

We define the {\it $i$-signature} of a Young column $C$ to be the pair of integers 
 ${\rm Sign}_{i}(C)=(\epsilon_i(C),\phi_i(C))$. To express the $i$-signature, we will also use a sequence of $\epsilon_i(C)$ many $-$'s followed by $\phi_i(C)$ many $+$'s. 

\vskip 2mm 

Let $Y=(\cdots,C_2,C_1,C_0)$ be a Young Wall.  Cancel out all  $(+,-)$-pairs in this sequence $(\cdots,{\rm Sign}_i(C_2),{\rm Sign}_i(C_1),{\rm Sign}_i(C_0))$. The remaining finite sequence of $-$'s followed by 
$+$'s is called the {\it $i$-signature} of $Y$.  We denote the $i$-signature of $Y$ by ${\rm Sign}_i(Y)$.

\vskip 2mm

We define $\tilde{E}_i Y$ $(i\in I)$ to be the Young wall obtained by removing an $i$-block  from the Young column of $Y$ that corresponds to the rightmost $-$ in ${\rm Sign}_i(Y)$. We define $\tilde{E}_i Y = 0$ 
if there is no $-$ in ${\rm Sign}_i(Y)$.

\vskip 2mm

We define $\tilde{F}_i Y$ $(i\in I)$ to be the Young wall obtained by adding an $i$-block to the Young column of $Y$ that corresponds to the leftmost $+$ in ${\rm Sign}_i(Y)$.
We define $\tilde{F}_i Y = 0$  if there is no $+$ in ${\rm Sign}_i(Y)$.

\vskip 3mm

Let $\mathbf{Y}(\lambda)$ be the set of all level-$l$ reduced Young walls. We define the maps

\begin{equation*}
	\wt : \mathbf{Y}(\lambda) \longrightarrow P,\quad 
	\varepsilon_i : \mathbf{Y}(\lambda) \longrightarrow \mathbf Z, \quad 
	\varphi_i : \mathbf{Y}(\lambda) \longrightarrow \mathbf Z
\end{equation*}
by

$$
\! \! \! \! \! \! \! \!   \wt(Y) = \lambda - \sum_{i\in I} k_i \alpha_i,\qquad\qquad\qquad\qquad\qquad\qquad
$$

$$
\varepsilon_i(Y) = \text{the number of $-$'s in the $i$-signature of $Y$,}
$$

$$
\varphi_i(Y) = \text{the number of $+$'s in the $i$-signature of $Y$,}
$$
$$
$$
where $k_i$ is the number of $i$-blocks in $Y$ that have been added to
the ground-state wall $Y_\lambda$.

\vskip 3mm

\begin{theorem}\label{crystal structure on Young walls}
	The maps $\wt: \mathbf{Y}(\lambda) \rightarrow P$,
	$\tilde{E}_i,\tilde{F}_i : \mathbf{Y}(\lambda) \rightarrow \mathbf{Y}(\lambda)\cup \{0\}$ 
	and $\varepsilon_i,\varphi_i : \mathbf{Y}(\lambda) \rightarrow\mathbf Z$ define a crystal
	structure on $\mathbf{Y}(\lambda)$.
\end{theorem}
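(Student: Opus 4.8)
The plan is to verify the seven conditions of the crystal definition by transporting structure through an explicit bijection $\Theta\colon\mathbf{Y}(\lambda)\to\mathcal{P}(\lambda)$, using as the one structural input the fact that a reduced Young wall is, column by column, an eventually constant tensor of perfect-crystal elements. Given $Y=(\cdots,C_2,C_1,C_0)\in\mathbf{Y}(\lambda)$, each $C_k$ is a level-$l$ reduced Young column sitting on the appropriate ground-state column, hence by Theorem \ref{perfect crystal isomorphism} determines $p_k:=\Phi(C_k)\in\mathcal{B}^{(l)}$, and since $\Phi$ is a crystal isomorphism we have $\epsilon_i(C_k)=\varepsilon_i(p_k)$ and $\phi_i(C_k)=\varphi_i(p_k)$. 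Building rules (1) and (5) together with reducedness force $C_k$ to agree with the ground-state column for $k\gg0$, so $\Theta(Y):=(p_k)_{k\ge0}$ is a $\lambda$-path and $\Theta(Y_\lambda)=\mathbf{b}_\lambda$. The first task is to show $\Theta$ is a bijection: surjectivity is clear, and injectivity amounts to checking that building rule (4) (``no $i$-block on $C_k$ if $\phi_i(C_k)\le\epsilon_i(C_{k-1})$'') together with ``no removable $\delta$-slice'' pins down, among all Young walls with a prescribed column sequence in $\mathcal{B}^{(l)}$, a unique reduced representative.

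The key steps then are: (i) establish this bijection; (ii) show it intertwines $\tilde{F}_i,\tilde{E}_i$ with the tensor-product operators of \eqref{eq:tensor} on $\cdots\otimes p_2\otimes p_1\otimes p_0$; and (iii) match $\wt(Y)=\lambda-\sum_i k_i\alpha_i$ with the path weight. For (ii) one observes that the $i$-signature rule defining $\tilde{F}_i$ (resp.\ $\tilde{E}_i$) — concatenate from the left the strings of $\epsilon_i(C_k)$ minus signs followed by $\phi_i(C_k)$ plus signs, cancel adjacent $(+,-)$ pairs, and act on the column of the leftmost surviving $+$ (resp.\ rightmost surviving $-$) — is exactly the combinatorial form of the tensor Kashiwara operator applied to the eventually constant tensor; and that adding or removing the designated $i$-block of $C_k$ is, via Theorem \ref{perfect crystal isomorphism} and the explicit pictures for $\tilde{\mathfrak{f}}_i$ in 3.3\eqref{1}--3.3\eqref{8}, precisely $\tilde{f}_i p_k$ or $\tilde{e}_i p_k$, while the normalization described in the Remark (delete all removable $\delta$-slices, slide slices back to reduced shape) re-selects the reduced representative of the new path. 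For (iii), the count $k_i$ of $i$-blocks added above the ground-state wall equals $\sum_k$(number of $i$-blocks of $C_k$ beyond the ground-state column), and summing $\phi_i(C_k)-\epsilon_i(C_k)=\langle h_i,\overline{\wt}(p_k)\rangle$ over $k$ — using that signature cancellation preserves the difference (number of $+$'s)$-$(number of $-$'s) — yields condition~(1), $\varphi_i(Y)=\varepsilon_i(Y)+\langle h_i,\wt(Y)\rangle$; here $\wt(Y)\in P$ is the honest affine weight refining, via $\mathrm{cl}$, the $P_{\mathrm{cl}}$-weight of $\Theta(Y)$. Granting (i)--(iii), conditions (2)--(6) are inherited from the crystal $\mathcal{P}(\lambda)$ (equivalently, are immediate from the sign-cancellation description: $\tilde{F}_i$ consumes one $+$, and the inversion property $\tilde{F}_iY=Y'\iff Y=\tilde{E}_iY'$ is built into the rule), and condition~(7) is vacuous since all $\varepsilon_i,\varphi_i$ are finite.

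The main obstacle is step (ii), specifically the claim that the post-action normalization is well-defined (independent of the order in which removable $\delta$-slices are deleted and slices are moved) and produces exactly the Young wall predicted by the tensor rule — in particular that $\tilde{F}_i$ never pushes a column below its ground-state column and never violates rule (3) (no full-depth block on a half-depth block) or rule (5). This is a finite, local check, but it must be carried out separately for the special colors at the two ends of each stacking pattern — colors $0$ and $1$ at the bottom, and colors $n-1$ and $n$ at the top, including the exceptional behaviour for $D_{n+1}^{(2)}$, $B_n^{(1)}$ and $D_n^{(1)}$ — using the explicit $\tilde{\mathfrak{f}}_i$-pictures; the interior colors are uniform and routine. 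A secondary point worth isolating is that reducedness of the ground-state walls (assumed) together with rules (1) and (4)--(5) genuinely make $\mathbf{Y}(\lambda)$ a complete set of representatives in bijection with $\mathcal{P}(\lambda)$, rather than merely injecting into it; this is what guarantees that $\tilde{E}_i$ and $\tilde{F}_i$ are mutually inverse partial maps on $\mathbf{Y}(\lambda)$ itself, not only on the larger set of all Young walls, and hence that condition~(6) holds on the nose.
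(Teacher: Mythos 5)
Your overall strategy (read each column through the isomorphism $\Phi$ of Theorem \ref{perfect crystal isomorphism}, identify the signature rule with the tensor product rule \eqref{eq:tensor}, and transport the crystal axioms from $\mathcal{P}(\lambda)$) is essentially the paper's proof of Theorem \ref{main theorem}, and for axioms (1)--(7) it is fine. But the entire content of the paper's proof of \emph{this} theorem is a point your proposal does not actually establish: that $\tilde{F}_i Y$ is again \emph{reduced}, i.e.\ that the $i$-block added at the leftmost $+$ never completes a removable $\delta$-slice in the wall, so that no renormalization is ever performed at the wall level. You instead write that ``the normalization described in the Remark \dots re-selects the reduced representative of the new path.'' That remark concerns the column-level operators $\tilde{\mathfrak f}_i$ only; if a wall-level $\delta$-slice ever had to be removed after adding the block, then $\tilde{F}_i$ as defined would leave $\mathbf{Y}(\lambda)$, and since $\wt$ is valued in $P$ (not $P_{\mathrm{cl}}$) the weight would change by $-\alpha_i+\delta$ rather than $-\alpha_i$, so axioms (2)--(3) would fail. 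Hence the needed statement is that the removal is never necessary, and this must be proved, not built into the operator.

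Moreover, this is not the ``finite, local check'' per color that you describe. Removability of the newly created $\delta$-slice in $C_k$ depends on the neighbouring column $C_{k+1}$, and the paper's argument is type-by-type: the leftmost-$+$ rule forces $\varphi_i(C_{k+1})\le \epsilon_i(C_k)$ (for $i=0$, resp.\ for $i=0,1$ acting together, resp.\ the special $0$-pair count for $C_n^{(1)}$), and combining this with the explicit block counts of reduced columns in terms of $x_j,\bar{x}_j$ shows that removability of the $\delta$-slice would force $C_{k+1}$ to contain strictly more $0$-blocks (resp.\ $0$- and $1$-blocks) than $C_k$, contradicting the tensor product rule/wall conditions. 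Note also that only the bottom colors can complete a $\delta$-slice ($0$ alone for $A_{2n}^{(2)}$, $D_{n+1}^{(2)}$, $C_n^{(1)}$; $0$ and $1$ together for $A_{2n-1}^{(2)}$, $D_n^{(1)}$, $B_n^{(1)}$), so your checklist involving the top colors $n-1$, $n$ is beside the point for reducedness, while the genuinely needed two-column inequality argument is absent from your outline. Your insistence on verifying injectivity of the wall-to-path map is reasonable (the paper treats it as clear), but it does not substitute for the closure argument above.
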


\begin{proof}
	We only need to prove that  $\mathbf{Y}(\lambda) \cup \{0\}$ is closed under the maps 
 $\tilde{F}_i$ $(i \in I)$. 
	 Let $Y=(\cdots, C_2, C_1,C_0)$ be a level-$l$ reduced Young wall. 
	 
	\vskip 2mm
	
	For types $A_{2n}^{(2)}$ and $D_{n+1}^{(2)}$, only $\tilde{F}_0$-action can produce $\delta$-slices. 

	\vskip 2mm	
	
	Suppose $\tilde{F}_0$ acts on the Young column $C_k$ in $Y$. Then $\phi_0(C_{k+1})\le\epsilon_0(C_{k})$ and $x_1<\bar{x}_1$ in $C_k$.  Thus we have

\vskip -1mm
	\begin{equation*}
	 2l-(\phi_0(C_{k+1})-2{(\bar{x}_1'-x_1')}_{+})\ge 2l-\phi_0(C_{k+1})\ge2l-\epsilon_0(C_{k}).
     \end{equation*}  
\vskip 2mm

	If we can remove the  $\delta$-slice in $C_k$, we obtain

	\begin{equation*}
	2l-(\phi_0(C_{k+1})-2{(\bar{x}_1'-x_1')}_{+})\ge 2l-\phi_0(C_{k+1})>2l-\epsilon_0(C_{k})-2,
	\end{equation*}
\vskip 3mm
\noindent which means the number of $0$-blocks in $C_{k+1}$ is bigger than the number of $0$-blocks in $C_{k}$. This is a contradiction to the tensor product rule. 
	Hence $\tilde{F}_0Y$ is reduced. 
	
	\vskip 2mm

	For types $A_{2n-1}^{(2)}$, $D_{n}^{(1)}$ and $B_{n}^{(1)}$, only the simultaneous action of $\tilde{F}_0$ and $\tilde{F}_1$ can produce removable $\delta$-slices. 
	
	\vskip 2mm	
	
	Suppose  $\tilde{F}_0$ and $\tilde{F}_1$ act on the Young column $C_k$. Then $\phi_0(C_{k+1})\le\epsilon_0(C_{k})$, $\phi_1(C_{k+1})\le\epsilon_1(C_{k})$ and $x_2<\bar{x}_2$ in $C_k$. 
	Thus we obtain
	\vskip -1mm
$$
l-(\phi_0(C_{k+1})-{(\bar{x}_2'-x_2')}_{+})\ge l-\phi_0(C_{k+1})\ge l-\epsilon_0(C_k),
$$
\vskip -2mm
$$
l-(\phi_1(C_{k+1})-{(\bar{x}_2'-x_2')}_{+})\ge l-\phi_1(C_{k+1})\ge l-\epsilon_1(C_k),
$$
which implies the number of $0$-blocks and $1$-blocks in $C_{k+1}$ is no less than 
	the number of $0$-blocks and $1$-blocks in $C_{k}$.

\vskip 2mm

	If we remove a $\delta$-slice in $C_k$, the number of $0$-blocks and $1$-blocks in $C_k$ will be reduced by two. This is also a contradiction to the tensor product rule. Hence $\tilde{F}_0Y$ and  $\tilde{F}_1Y$ are reduced.
	
	\vskip 2mm

	For type $C_{n}^{(1)}$, only  $\tilde{F}_0$-action can produce $\delta$-slices. Let $b=(x_1,\cdots,x_n,\bar{x}_n,\cdots,\bar{x}_1)$ and $b=(x_1',\cdots,x_n',\bar{x}_n',\cdots,\bar{x}_1')$ denote the vectors in the perfect crystal corresponding to the Young columns $C_k$ and $C_{k+1}$, respectively.  
	
	\vskip 2mm

	Suppose  $\tilde{F}_0$ act on the Young column $C_k$ and $x_1=\bar{x}_1-1$ or $x_1\le\bar{x}_1-2$ in $C_k$.  Then $\phi_0(C_{k+1})\le\epsilon_0(C_{k})$ and we obtain the following inequality
	\vskip -1mm
	
	\begin{equation}\label{type C}
	\sum_{i=1}^{n}(x_i+\bar{x}_i)\le \sum_{i=1}^{n}(x_i'+\bar{x}_i')-2{(\bar{x}_1'-x_1')}_{+}.
	\end{equation}

	\vskip 2mm
	
	The  number of $0$-blocks in $C_{k}$ is $2l+\sum_{i=1}^{n}(x_i+\bar{x}_i)$ and the number of $0$-blocks in $C_{k+1}$ is $2l+\sum_{i=1}^{n}(x'_i+\bar{x}'_i)$.
	It follows that

	\begin{equation*}
	2l+\sum_{i=1}^{n}(x'_i+\bar{x}'_i)\le 2l+\sum_{i=1}^{n}(x_i+\bar{x}_i).
	\end{equation*}

	\vskip 2mm
	
	Assume that we can remove a $\delta$-slice and still get a Young wall. 
	
	\vskip 2mm 

	If $x_1=\bar{x}_1-1$ and remove a $\delta$-slice, then we must have 

	\vskip -1mm	
	
\begin{equation*}
		2l+\sum_{i=1}^{n}(x'_i+\bar{x}'_i)\le 2l+\sum_{i=1}^{n}(x_i+\bar{x}_i)-2.
\end{equation*}

	\vskip 1mm
	
    If $x_1\le\bar{x}_1-2$ and remove two $\delta$-slices, then we must have
    
 	\vskip 1mm   
     
\begin{equation*}
	2l+\sum_{i=1}^{n}(x'_i+\bar{x}'_i)\le 2l+\sum_{i=1}^{n}(x_i+\bar{x}_i)-4.
\end{equation*}

	\vskip 2mm
	
	Both cases will contradict to the inequality \eqref{type C}. Hence $\tilde{F}_0Y$ is reduced.
\end{proof}

\vskip 2mm

\begin{theorem}\label{main theorem}
There exists a  $U_q'(\mathfrak g)$-crystal isomorphism 
\vskip -1mm
\begin{equation*}
\Psi: \mathbf{Y}(\lambda) \stackrel{\sim} \longrightarrow B(\lambda)
	\quad \text{given by} \quad 
	Y_{\lambda} \longmapsto u_{\lambda}.
\end{equation*}		

	\end{theorem}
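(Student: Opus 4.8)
The plan is to factor $\Psi$ through the path realization of $B(\lambda)$. Recall from Proposition \ref{perfect crystal isomorphism}'s neighbours — namely the path realization isomorphism of \cite{KMN1,KMN2} stated above — that there is a $U_q'(\mathfrak g)$-crystal isomorphism $\Phi_\lambda\colon B(\lambda)\stackrel{\sim}{\longrightarrow}\mathcal P(\lambda)$ with $u_\lambda\mapsto\mathbf b_\lambda$, and from Theorem \ref{perfect crystal isomorphism} that there is a crystal isomorphism $\Phi\colon\mathcal C^{(l)}\to\mathcal B^{(l)}$, $C_b\mapsto b$. First I would define
\[
\Theta\colon\mathbf Y(\lambda)\longrightarrow(\mathcal B^{(l)})^{\otimes\infty},\qquad \Theta(\cdots,C_2,C_1,C_0)=\cdots\otimes\Phi(C_2)\otimes\Phi(C_1)\otimes\Phi(C_0),
\]
and show its image lies in $\mathcal P(\lambda)$. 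Since $\wt(Y)\in P$ forces any $Y\in\mathbf Y(\lambda)$ to be obtained from the ground-state wall $Y_\lambda$ by adding only finitely many blocks, every sufficiently left column $C_k$ is the corresponding ground-state column; by the remark identifying ground-state columns and walls with the minimal vectors and ground-state paths of \cite{KMN2}, $\Phi(C_k)$ then agrees with the $k$-th term of $\mathbf b_\lambda$ for $k\gg 0$, so $\Theta(Y)\in\mathcal P(\lambda)$. In particular $\Theta(Y_\lambda)=\mathbf b_\lambda=\Phi_\lambda(u_\lambda)$, so putting $\Psi:=\Phi_\lambda^{-1}\circ\Theta$ will give $\Psi(Y_\lambda)=u_\lambda$ as required.

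Next I would show that $\Theta\colon\mathbf Y(\lambda)\to\mathcal P(\lambda)$ is a bijection intertwining the crystal structures. Injectivity is immediate from injectivity of $\Phi$, since the columns $C_k=\Phi^{-1}(p_k)$ are recovered from $\Theta(Y)=(p_k)_{k\ge0}$. For surjectivity, given $\mathbf p=(p_k)_{k\ge0}\in\mathcal P(\lambda)$ I would set $C_k:=\Phi^{-1}(p_k)\in\mathcal C^{(l)}$ and verify that $(\cdots,C_2,C_1,C_0)$ satisfies the five building rules for level-$l$ Young walls and is reduced; rules (1)–(3) hold by construction and because for $k\gg0$ the $C_k$ are ground-state columns stacked on $Y_\lambda$, while rules (4)–(5) and reducedness must be matched against the defining condition $p_k=b_k$ for $k\gg0$ together with the explicit column descriptions in Section \ref{Level-$l$ reduced Young Column}. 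For the crystal-structure compatibility, recall that $\tilde E_iY$ and $\tilde F_iY$ are defined by cancelling $(+,-)$-pairs in the concatenated $i$-signatures $\mathrm{Sign}_i(C_k)=(\epsilon_i(C_k),\phi_i(C_k))$; since Theorem \ref{perfect crystal isomorphism} gives $\Phi\tilde{\mathfrak e}_i=\tilde e_i\Phi$, $\Phi\tilde{\mathfrak f}_i=\tilde f_i\Phi$, $\epsilon_i(C)=\varepsilon_i(\Phi(C))$ and $\phi_i(C)=\varphi_i(\Phi(C))$, this signature cancellation is precisely the iterated application of the tensor product rule \eqref{eq:tensor} to $\cdots\otimes\Phi(C_2)\otimes\Phi(C_1)\otimes\Phi(C_0)$. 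Hence $\Theta\tilde E_i=\tilde e_i\Theta$ and $\Theta\tilde F_i=\tilde f_i\Theta$, and the equalities $\varepsilon_i(Y)=\varepsilon_i(\Theta(Y))$, $\varphi_i(Y)=\varphi_i(\Theta(Y))$, $\wt(Y)=\wt(\Theta(Y))$ follow from the definitions together with $\wt\Phi(C)=\sum_i(\phi_i(C)-\epsilon_i(C))\Lambda_i$. Composing with $\Phi_\lambda^{-1}$ then produces the desired $U_q'(\mathfrak g)$-crystal isomorphism.

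I expect the main obstacle to be the surjectivity step: verifying that the geometric stacking constraints (rule (5), that slice heights are weakly decreasing from right to left within each residue class, and the prohibition in rule (4)) together with the reducedness condition on Young walls cut out of $\prod_k\mathcal C^{(l)}$ exactly the set $\mathcal P(\lambda)$ of $\lambda$-paths, neither more nor less. This requires reconciling the notion of a removable $\delta$-slice in a Young wall with that in a Young column — which, as the remark on removable $\delta$-slices points out, differ — and checking that the former occurs precisely when adding a $0$-block (types $A_{2n}^{(2)}$, $D_{n+1}^{(2)}$, $C_n^{(1)}$) or a $0$-block together with a $1$-block (types $A_{2n-1}^{(2)}$, $D_n^{(1)}$, $B_n^{(1)}$) would violate the tensor-product ordering. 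Much of this bookkeeping is already implicit in the proof of Theorem \ref{crystal structure on Young walls}, which establishes that $\mathbf Y(\lambda)$ is closed under $\tilde F_i$; I would complete the remaining verification type by type, using the explicit column shapes of Section \ref{Level-$l$ reduced Young Column} and the explicit formulas for the $\tilde f_i$ on $\mathcal B^{(l)}$.
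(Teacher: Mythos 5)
Your proposal takes essentially the same route as the paper: the paper's proof also reduces the statement to $\mathbf{Y}(\lambda)\cong\mathcal P(\lambda)$, defines the map $Y=(\cdots,C_2,C_1,C_0)\mapsto(\cdots,\Phi(C_2),\Phi(C_1),\Phi(C_0))$ using the column isomorphism $\Phi$ of Theorem \ref{perfect crystal isomorphism}, and checks compatibility with the Kashiwara operators via the tensor product rule. The additional care you devote to surjectivity and reducedness is precisely the part the paper compresses into the assertion that the map is ``clearly'' a bijection, so your outline is a correct (and somewhat more explicit) version of the paper's own argument.
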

\vskip 0.2mm	
\begin{proof}
Since $B(\lambda)\cong \mathcal P(\lambda)$, we have only to prove $\mathbf{Y}(\lambda)\cong\mathcal P(\lambda)$. Thanks to  the crystal isomorphism $\Phi$ in Theorem \ref{perfect crystal isomorphism}, we may define a map $\Psi:\mathbf{Y}(\lambda)\to \mathcal P(\lambda)$ by

\vskip -1mm 

\begin{equation*}
Y=(\cdots, C_2, C_1, C_0)\longmapsto P_Y=(\cdots, \Phi(C_2), \Phi(C_1), \Phi(C_0)).
\end{equation*}

\vskip 2mm 

Clearly, the map $\Psi$ is a bijection. Using the tensor product rule, it is straightforward to 
check that $\Psi$ commutes with the Kashiwara operators; i.e., 

\vskip -1mm 
\begin{equation*}
\tilde{e}_{i} \circ \Psi = \Psi \circ \tilde{E}_{i},  \quad \tilde{f}_{i} \circ \Psi = \Psi \circ \tilde{F}_{i} \ \ \ \text{for all} \ i \in I.
\end{equation*}
\end{proof}

In the following examples, the arrows represent the action of Kashiwara operators $\tilde{f}_i\ (i\in I)$.

\newpage

\begin{example}
The top part of the  crystal $\mathbf{Y}(2\Lambda_0)$ for $B_{3}^{(1)}$ is given below. 

\vskip 15mm

\begin{center}
	\begin{texdraw}
		\drawdim em
		\arrowheadsize l:0.3 w:0.3
		\arrowheadtype t:V
		\fontsize{4}{4}\selectfont
		\drawdim em
		\setunitscale 2
		%
		\move(0 9)
		\bsegment
		\setsegscale 0.8
		\move(0 0)\rlvec(0 2)\move(0 0)\rlvec(-2 0)\move(-2 0)\rlvec(0 2)\move(-2 2)\rlvec(2 0)\move(0 0)\rlvec(2 0)\move(2 0)\rlvec(0 2)\move(2 2)\rlvec(-2 0)\move(-1 0)\rlvec(0 2)\move(1 0)\rlvec(0 2)	\move(-2 0)\rlvec(1 2) \move(-1 0)\rlvec(1 2) \move(0 0)\rlvec(1 2) \move(1 0)\rlvec(1 2) 
		\htext(-1.7 1.5){$0$}\htext(-0.7 1.5){$0$}\htext(0.7 0.5){$1$}\htext(1.7 0.5){$1$}
		
		\esegment
		\move(0 4.5)
		\bsegment
		\setsegscale 0.8
		
		\move(0 0)\rlvec(0 2)\move(0 0)\rlvec(-2 0)\move(-2 0)\rlvec(0 2)\move(-2 2)\rlvec(2 0)\move(0 0)\rlvec(2 0)\move(2 0)\rlvec(0 2)\move(2 2)\rlvec(-2 0)\move(-1 0)\rlvec(0 2)\move(1 0)\rlvec(0 2)	\move(-2 0)\rlvec(1 2) \move(-1 0)\rlvec(1 2) \move(0 0)\rlvec(1 2) \move(1 0)\rlvec(1 2) 
		\htext(-1.7 1.5){$0$}\htext(-0.7 1.5){$0$}
		\htext(1.3 1.5){$0$}
		
		\htext(0.7 0.5){$1$}\htext(1.7 0.5){$1$}
		
		\esegment
		
		\move(0 0)
		\bsegment
		\move(-3 0)
		\bsegment
		\setsegscale 0.8
		
		\move(0 0)\rlvec(0 2)\move(0 0)\rlvec(-2 0)\move(-2 0)\rlvec(0 2)\move(-2 2)\rlvec(2 0)\move(0 0)\rlvec(2 0)\move(2 0)\rlvec(0 2)\move(2 2)\rlvec(-2 0)\move(-1 0)\rlvec(0 2)\move(1 0)\rlvec(0 2)	\move(-2 0)\rlvec(1 2) \move(-1 0)\rlvec(1 2) \move(0 0)\rlvec(1 2) \move(1 0)\rlvec(1 2) 
		\htext(-1.7 1.5){$0$}\htext(-0.7 1.5){$0$}\htext(1.3 1.5){$0$}\htext(0.7 0.5){$1$}\htext(1.7 0.5){$1$}
		
		\move(0 2)\rlvec(0 2)\move(0 4)\rlvec(2 0)\move(2 2)\rlvec(0 2)
		\move(1 2)\rlvec(0 2)\htext(1.5 3){$2$}
		
		\esegment
		\move(3 0)
		\bsegment
		\setsegscale 0.8
		
		\move(0 0)\rlvec(0 2)\move(0 0)\rlvec(-2 0)\move(-2 0)\rlvec(0 2)\move(-2 2)\rlvec(2 0)\move(0 0)\rlvec(2 0)\move(2 0)\rlvec(0 2)\move(2 2)\rlvec(-2 0)\move(-1 0)\rlvec(0 2)\move(1 0)\rlvec(0 2)	\move(-2 0)\rlvec(1 2) \move(-1 0)\rlvec(1 2) \move(0 0)\rlvec(1 2) \move(1 0)\rlvec(1 2) 
		\htext(-1.7 1.5){$0$}\htext(-0.7 1.5){$0$}\htext(0.3 1.5){$0$}\htext(1.3 1.5){$0$}\htext(0.7 0.5){$1$}\htext(1.7 0.5){$1$}
		
		\esegment
		\esegment
		
		\move(0 -7)
		\bsegment
		\move(-6 0)
		\bsegment
		\setsegscale 0.8
		
		\move(0 0)\rlvec(0 2)\move(0 0)\rlvec(-2 0)\move(-2 0)\rlvec(0 2)\move(-2 2)\rlvec(2 0)\move(0 0)\rlvec(2 0)\move(2 0)\rlvec(0 2)\move(2 2)\rlvec(-2 0)\move(-1 0)\rlvec(0 2)\move(1 0)\rlvec(0 2)	\move(-2 0)\rlvec(1 2) \move(-1 0)\rlvec(1 2) \move(0 0)\rlvec(1 2) \move(1 0)\rlvec(1 2) 
		\htext(-1.7 1.5){$0$}\htext(-0.7 1.5){$0$}\htext(1.3 1.5){$0$}\htext(0.7 0.5){$1$}\htext(1.7 0.5){$1$}
		
		\move(0 2)\rlvec(0 2)\move(0 4)\rlvec(2 0)\move(2 2)\rlvec(0 2)
		\move(1 2)\rlvec(0 2)\htext(1.5 3){$2$}\htext(1.7 4.5){$3$}
		
		\move(0 4)\rlvec(0 2)\move(0 6)\rlvec(2 0)\move(2 4)\rlvec(0 2)\move(1 4)\rlvec(0 2)\move(1 4)\rlvec(1 2)
		
		\esegment
		\move(0 0)
		\bsegment
		\setsegscale 0.8
		
		\move(0 0)\rlvec(0 2)\move(0 0)\rlvec(-2 0)\move(-2 0)\rlvec(0 2)\move(-2 2)\rlvec(2 0)\move(0 0)\rlvec(2 0)\move(2 0)\rlvec(0 2)\move(2 2)\rlvec(-2 0)\move(-1 0)\rlvec(0 2)\move(1 0)\rlvec(0 2)	\move(-2 0)\rlvec(1 2) \move(-1 0)\rlvec(1 2) \move(0 0)\rlvec(1 2) \move(1 0)\rlvec(1 2) 
		\htext(-1.7 1.5){$0$}\htext(-0.7 1.5){$0$}\htext(0.3 1.5){$0$}\htext(1.3 1.5){$0$}\htext(0.7 0.5){$1$}\htext(1.7 0.5){$1$}
		
		\move(0 2)\rlvec(0 2)\move(0 4)\rlvec(2 0)\move(2 2)\rlvec(0 2)
		\move(1 2)\rlvec(0 2)\htext(1.5 3){$2$}
		
		\esegment
		\move(6 0)
		\bsegment
		\setsegscale 0.8
		
		\move(0 0)\rlvec(0 2)\move(0 0)\rlvec(-2 0)\move(-2 0)\rlvec(0 2)\move(-2 2)\rlvec(2 0)\move(0 0)\rlvec(2 0)\move(2 0)\rlvec(0 2)\move(2 2)\rlvec(-2 0)\move(-1 0)\rlvec(0 2)\move(1 0)\rlvec(0 2)	\move(-2 0)\rlvec(1 2) \move(-1 0)\rlvec(1 2) \move(0 0)\rlvec(1 2) \move(1 0)\rlvec(1 2) 
		\htext(-1.7 1.5){$0$}\htext(-0.7 1.5){$0$}\htext(1.3 1.5){$0$}\htext(-0.3 0.5){$1$}\htext(0.7 0.5){$1$}\htext(1.7 0.5){$1$}
		
		\move(0 2)\rlvec(0 2)\move(0 4)\rlvec(2 0)\move(2 2)\rlvec(0 2)
		\move(1 2)\rlvec(0 2)\htext(1.5 3){$2$}
		
		\esegment	
		
		\esegment

		\move(0 -14)
		\bsegment
		\move(-9 0)
		\bsegment
		\setsegscale 0.8
		\move(0 0)\rlvec(0 2)\move(0 0)\rlvec(-2 0)\move(-2 0)\rlvec(0 2)\move(-2 2)\rlvec(2 0)\move(0 0)\rlvec(2 0)\move(2 0)\rlvec(0 2)\move(2 2)\rlvec(-2 0)\move(-1 0)\rlvec(0 2)\move(1 0)\rlvec(0 2)	\move(-2 0)\rlvec(1 2) \move(-1 0)\rlvec(1 2) \move(0 0)\rlvec(1 2) \move(1 0)\rlvec(1 2) 
		\htext(-1.7 1.5){$0$}\htext(-0.7 1.5){$0$}\htext(1.3 1.5){$0$}\htext(0.7 0.5){$1$}\htext(1.7 0.5){$1$}
		
		\move(0 2)\rlvec(0 2)\move(0 4)\rlvec(2 0)\move(2 2)\rlvec(0 2)
		\move(1 2)\rlvec(0 2)\htext(1.5 3){$2$}\htext(1.7 4.5){$3$}\htext(1.3 5.5){$3$}
		
		\move(0 4)\rlvec(0 2)\move(0 6)\rlvec(2 0)\move(2 4)\rlvec(0 2)\move(1 4)\rlvec(0 2)\move(1 4)\rlvec(1 2)
		\esegment
		\move(-4.5 0)
		\bsegment
		\setsegscale 0.8
		\move(0 0)\rlvec(0 2)\move(0 0)\rlvec(-2 0)\move(-2 0)\rlvec(0 2)\move(-2 2)\rlvec(2 0)\move(0 0)\rlvec(2 0)\move(2 0)\rlvec(0 2)\move(2 2)\rlvec(-2 0)\move(-1 0)\rlvec(0 2)\move(1 0)\rlvec(0 2)	\move(-2 0)\rlvec(1 2) \move(-1 0)\rlvec(1 2) \move(0 0)\rlvec(1 2) \move(1 0)\rlvec(1 2) 
		\htext(-1.7 1.5){$0$}\htext(-0.7 1.5){$0$}\htext(0.3 1.5){$0$}\htext(1.3 1.5){$0$}\htext(0.7 0.5){$1$}\htext(1.7 0.5){$1$}
		
		\move(0 2)\rlvec(0 2)\move(0 4)\rlvec(2 0)\move(2 2)\rlvec(0 2)
		\move(1 2)\rlvec(0 2)\htext(1.5 3){$2$}\htext(1.7 4.5){$3$}
		
		\move(0 4)\rlvec(0 2)\move(0 6)\rlvec(2 0)\move(2 4)\rlvec(0 2)\move(1 4)\rlvec(0 2)\move(1 4)\rlvec(1 2)
		\esegment
		\move(0 0)
		\bsegment
		\setsegscale 0.8
		\move(0 0)\rlvec(0 2)\move(0 0)\rlvec(-2 0)\move(-2 0)\rlvec(0 2)\move(-2 2)\rlvec(2 0)\move(0 0)\rlvec(2 0)\move(2 0)\rlvec(0 2)\move(2 2)\rlvec(-2 0)\move(-1 0)\rlvec(0 2)\move(1 0)\rlvec(0 2)	\move(-2 0)\rlvec(1 2) \move(-1 0)\rlvec(1 2) \move(0 0)\rlvec(1 2) \move(1 0)\rlvec(1 2) 
		\htext(-1.7 1.5){$0$}\htext(-0.7 1.5){$0$}
		\htext(1.3 1.5){$0$}\htext(-0.3 0.5){$1$}\htext(0.7 0.5){$1$}\htext(1.7 0.5){$1$}
		
		\move(0 2)\rlvec(0 2)\move(0 4)\rlvec(2 0)\move(2 2)\rlvec(0 2)
		\move(1 2)\rlvec(0 2)\htext(1.5 3){$2$}\htext(1.7 4.5){$3$}
		
		\move(0 4)\rlvec(0 2)\move(0 6)\rlvec(2 0)\move(2 4)\rlvec(0 2)\move(1 4)\rlvec(0 2)\move(1 4)\rlvec(1 2)
		\esegment
		\move(4.5 0)
		\bsegment
		\setsegscale 0.8
		\move(0 0)\rlvec(0 2)\move(0 0)\rlvec(-2 0)\move(-2 0)\rlvec(0 2)\move(-2 2)\rlvec(2 0)\move(0 0)\rlvec(2 0)\move(2 0)\rlvec(0 2)\move(2 2)\rlvec(-2 0)\move(-1 0)\rlvec(0 2)\move(1 0)\rlvec(0 2)	\move(-2 0)\rlvec(1 2) \move(-1 0)\rlvec(1 2) \move(0 0)\rlvec(1 2) \move(1 0)\rlvec(1 2) 
		\htext(-1.7 1.5){$0$}\htext(-0.7 1.5){$0$}\htext(0.3 1.5){$0$}
		\htext(1.3 1.5){$0$}\htext(-0.3 0.5){$1$}\htext(0.7 0.5){$1$}\htext(1.7 0.5){$1$}
		
		\move(0 2)\rlvec(0 2)\move(0 4)\rlvec(2 0)\move(2 2)\rlvec(0 2)
		\move(1 2)\rlvec(0 2)\htext(1.5 3){$2$}
		
		\esegment
		\move(9 0)
		\bsegment
		\setsegscale 0.8
		\move(0 0)\rlvec(0 2)\move(0 0)\rlvec(-2 0)\move(-2 0)\rlvec(0 2)\move(-2 2)\rlvec(2 0)\move(0 0)\rlvec(2 0)\move(2 0)\rlvec(0 2)\move(2 2)\rlvec(-2 0)\move(-1 0)\rlvec(0 2)\move(1 0)\rlvec(0 2)	\move(-2 0)\rlvec(1 2) \move(-1 0)\rlvec(1 2) \move(0 0)\rlvec(1 2) \move(1 0)\rlvec(1 2) 
		\htext(-1.7 1.5){$0$}\htext(-0.7 1.5){$0$}\htext(0.3 1.5){$0$}
		\htext(1.3 1.5){$0$}\htext(0.7 0.5){$1$}\htext(1.7 0.5){$1$}
		
		\move(0 2)\rlvec(0 2)\move(0 4)\rlvec(2 0)\move(2 2)\rlvec(0 2)
		\move(1 2)\rlvec(0 2)\htext(0.5 3){$2$}
		\htext(1.5 3){$2$}
		\esegment
		
		\esegment
		\move(0 -21.5)
		\bsegment
		\move(-12 0)
		\bsegment
		\setsegscale 0.8
		\move(0 0)\rlvec(0 2)\move(0 0)\rlvec(-2 0)\move(-2 0)\rlvec(0 2)\move(-2 2)\rlvec(2 0)\move(0 0)\rlvec(2 0)\move(2 0)\rlvec(0 2)\move(2 2)\rlvec(-2 0)\move(-1 0)\rlvec(0 2)\move(1 0)\rlvec(0 2)	\move(-2 0)\rlvec(1 2) \move(-1 0)\rlvec(1 2) \move(0 0)\rlvec(1 2) \move(1 0)\rlvec(1 2) 
		\htext(-1.7 1.5){$0$}\htext(-0.7 1.5){$0$}\htext(1.3 1.5){$0$}\htext(0.7 0.5){$1$}\htext(1.7 0.5){$1$}
		
		\move(0 2)\rlvec(0 2)\move(0 4)\rlvec(2 0)\move(2 2)\rlvec(0 2)
		\move(1 2)\rlvec(0 2)\htext(1.5 3){$2$}\htext(1.7 4.5){$3$}\htext(1.3 5.5){$3$}
		
		\move(0 4)\rlvec(0 2)\move(0 6)\rlvec(2 0)\move(2 4)\rlvec(0 2)\move(1 4)\rlvec(0 2)\move(1 4)\rlvec(1 2)
		
		\move(0 6)\rlvec(0 2)\move(0 8)\rlvec(2 0)\move(2 6)\rlvec(0 2)\move(1 6)\rlvec(0 2)\htext(1.5 7){$2$}
		\esegment
		\move(-8 0)
		\bsegment
		\setsegscale 0.8
		\move(0 0)\rlvec(0 2)\move(0 0)\rlvec(-2 0)\move(-2 0)\rlvec(0 2)\move(-2 2)\rlvec(2 0)\move(0 0)\rlvec(2 0)\move(2 0)\rlvec(0 2)\move(2 2)\rlvec(-2 0)\move(-1 0)\rlvec(0 2)\move(1 0)\rlvec(0 2)	\move(-2 0)\rlvec(1 2) \move(-1 0)\rlvec(1 2) \move(0 0)\rlvec(1 2) \move(1 0)\rlvec(1 2) 
		\htext(-1.7 1.5){$0$}\htext(-0.7 1.5){$0$}\htext(0.3 1.5){$0$}\htext(1.3 1.5){$0$}\htext(0.7 0.5){$1$}\htext(1.7 0.5){$1$}
		
		\move(0 2)\rlvec(0 2)\move(0 4)\rlvec(2 0)\move(2 2)\rlvec(0 2)
		\move(1 2)\rlvec(0 2)\htext(1.5 3){$2$}\htext(1.7 4.5){$3$}\htext(1.3 5.5){$3$}
		
		\move(0 4)\rlvec(0 2)\move(0 6)\rlvec(2 0)\move(2 4)\rlvec(0 2)\move(1 4)\rlvec(0 2)\move(1 4)\rlvec(1 2)
		\esegment
		\move(-4 0)
		\bsegment
		\setsegscale 0.8
		
		\move(0 0)\rlvec(0 2)\move(0 0)\rlvec(-2 0)\move(-2 0)\rlvec(0 2)\move(-2 2)\rlvec(2 0)\move(0 0)\rlvec(2 0)\move(2 0)\rlvec(0 2)\move(2 2)\rlvec(-2 0)\move(-1 0)\rlvec(0 2)\move(1 0)\rlvec(0 2)	\move(-2 0)\rlvec(1 2) \move(-1 0)\rlvec(1 2) \move(0 0)\rlvec(1 2) \move(1 0)\rlvec(1 2) 
		\htext(-1.7 1.5){$0$}\htext(-0.7 1.5){$0$}\htext(1.3 1.5){$0$}\htext(-0.3 0.5){$1$}\htext(0.7 0.5){$1$}\htext(1.7 0.5){$1$}
		
		\move(0 2)\rlvec(0 2)\move(0 4)\rlvec(2 0)\move(2 2)\rlvec(0 2)
		\move(1 2)\rlvec(0 2)\htext(1.5 3){$2$}\htext(1.7 4.5){$3$}\htext(1.3 5.5){$3$}
		
		\move(0 4)\rlvec(0 2)\move(0 6)\rlvec(2 0)\move(2 4)\rlvec(0 2)\move(1 4)\rlvec(0 2)\move(1 4)\rlvec(1 2)
		\esegment
		\move(0 0)
		\bsegment
		\setsegscale 0.8
		
		\move(0 0)\rlvec(0 2)\move(0 0)\rlvec(-2 0)\move(-2 0)\rlvec(0 2)\move(-2 2)\rlvec(2 0)\move(0 0)\rlvec(2 0)\move(2 0)\rlvec(0 2)\move(2 2)\rlvec(-2 0)\move(-1 0)\rlvec(0 2)\move(1 0)\rlvec(0 2)	\move(-2 0)\rlvec(1 2) \move(-1 0)\rlvec(1 2) \move(0 0)\rlvec(1 2) \move(1 0)\rlvec(1 2) 
		\htext(-1.7 1.5){$0$}\htext(-0.7 1.5){$0$}\htext(0.3 1.5){$0$}\htext(1.3 1.5){$0$}\htext(-0.3 0.5){$1$}\htext(0.7 0.5){$1$}\htext(1.7 0.5){$1$}
		
		\move(0 2)\rlvec(0 2)\move(0 4)\rlvec(2 0)\move(2 2)\rlvec(0 2)
		\move(1 2)\rlvec(0 2)\htext(1.5 3){$2$}\htext(1.7 4.5){$3$}
		
		\move(0 4)\rlvec(0 2)\move(0 6)\rlvec(2 0)\move(2 4)\rlvec(0 2)\move(1 4)\rlvec(0 2)\move(1 4)\rlvec(1 2)
		
		\esegment
		\move(4 0)
		\bsegment
		\setsegscale 0.8
		
		\move(0 0)\rlvec(0 2)\move(0 0)\rlvec(-2 0)\move(-2 0)\rlvec(0 2)\move(-2 2)\rlvec(2 0)\move(0 0)\rlvec(2 0)\move(2 0)\rlvec(0 2)\move(2 2)\rlvec(-2 0)\move(-1 0)\rlvec(0 2)\move(1 0)\rlvec(0 2)	\move(-2 0)\rlvec(1 2) \move(-1 0)\rlvec(1 2) \move(0 0)\rlvec(1 2) \move(1 0)\rlvec(1 2) 
		\htext(-1.7 1.5){$0$}\htext(-0.7 1.5){$0$}\htext(0.3 1.5){$0$}\htext(1.3 1.5){$0$}\htext(0.7 0.5){$1$}\htext(1.7 0.5){$1$}
		
		\move(0 2)\rlvec(0 2)\move(0 4)\rlvec(2 0)\move(2 2)\rlvec(0 2)
		\move(1 2)\rlvec(0 2)\htext(0.5 3){$2$}\htext(1.5 3){$2$}\htext(1.7 4.5){$3$}
		
		\move(0 4)\rlvec(0 2)\move(0 6)\rlvec(2 0)\move(2 4)\rlvec(0 2)\move(1 4)\rlvec(0 2)\move(1 4)\rlvec(1 2)	
		
		\esegment
		\move(8 0)
		\bsegment
		\setsegscale 0.8
		
		\move(0 0)\rlvec(0 2)\move(0 0)\rlvec(-2 0)\move(-2 0)\rlvec(0 2)\move(-2 2)\rlvec(2 0)\move(0 0)\rlvec(2 0)\move(2 0)\rlvec(0 2)\move(2 2)\rlvec(-2 0)\move(-1 0)\rlvec(0 2)\move(1 0)\rlvec(0 2)	\move(-2 0)\rlvec(1 2) \move(-1 0)\rlvec(1 2) \move(0 0)\rlvec(1 2) \move(1 0)\rlvec(1 2) 
		\htext(-1.7 1.5){$0$}\htext(-0.7 1.5){$0$}\htext(1.3 1.5){$0$}\htext(-0.3 0.5){$1$}\htext(0.7 0.5){$1$}\htext(1.7 0.5){$1$}
		
		\move(0 2)\rlvec(0 2)\move(0 4)\rlvec(2 0)\move(2 2)\rlvec(0 2)
		\move(1 2)\rlvec(0 2)\htext(1.5 3){$2$}\htext(1.7 4.5){$3$}
		
		\move(0 4)\rlvec(0 2)\move(0 6)\rlvec(2 0)\move(2 4)\rlvec(0 2)\move(1 4)\rlvec(0 2)\move(1 4)\rlvec(1 2)
		
		\move(-2 2)\rlvec(0 2)	\move(-2 4)\rlvec(2 0) \move(-1 2)\rlvec(0 2)
		\htext(-0.5 3){$2$}
		\esegment
		\move(12 0)
		\bsegment
		\setsegscale 0.8
		
		\move(0 0)\rlvec(0 2)\move(0 0)\rlvec(-2 0)\move(-2 0)\rlvec(0 2)\move(-2 2)\rlvec(2 0)\move(0 0)\rlvec(2 0)\move(2 0)\rlvec(0 2)\move(2 2)\rlvec(-2 0)\move(-1 0)\rlvec(0 2)\move(1 0)\rlvec(0 2)	\move(-2 0)\rlvec(1 2) \move(-1 0)\rlvec(1 2) \move(0 0)\rlvec(1 2) \move(1 0)\rlvec(1 2) 
		\htext(-1.7 1.5){$0$}\htext(-0.7 1.5){$0$}\htext(0.3 1.5){$0$}\htext(1.3 1.5){$0$}\htext(-0.3 0.5){$1$}\htext(0.7 0.5){$1$}\htext(1.7 0.5){$1$}
		
		\move(0 2)\rlvec(0 2)\move(0 4)\rlvec(2 0)\move(2 2)\rlvec(0 2)
		\move(1 2)\rlvec(0 2)\htext(0.5 3){$2$} \htext(1.5 3){$2$}
		\esegment

		\esegment
		\move(0 -29.5)
		\bsegment
		\move(-18.6 0)
		\bsegment
		\setsegscale 0.8
		\move(0 0)\rlvec(0 2)\move(0 0)\rlvec(-2 0)\move(-2 0)\rlvec(0 2)\move(-2 2)\rlvec(2 0)\move(0 0)\rlvec(2 0)\move(2 0)\rlvec(0 2)\move(2 2)\rlvec(-2 0)\move(-1 0)\rlvec(0 2)\move(1 0)\rlvec(0 2)	\move(-2 0)\rlvec(1 2) \move(-1 0)\rlvec(1 2) \move(0 0)\rlvec(1 2) \move(1 0)\rlvec(1 2) 
		\htext(-1.7 1.5){$0$}\htext(-0.7 1.5){$0$}\htext(1.3 1.5){$0$}\htext(0.7 0.5){$1$}\htext(1.7 0.5){$1$}
		
		\move(0 2)\rlvec(0 2)\move(0 4)\rlvec(2 0)\move(2 2)\rlvec(0 2)
		\move(1 2)\rlvec(0 2)\htext(1.5 3){$2$}\htext(1.7 4.5){$3$}\htext(1.3 5.5){$3$}
		
		\move(0 4)\rlvec(0 2)\move(0 6)\rlvec(2 0)\move(2 4)\rlvec(0 2)\move(1 4)\rlvec(0 2)\move(1 4)\rlvec(1 2)
		
		\move(0 6)\rlvec(0 2)\move(0 8)\rlvec(2 0)\move(2 6)\rlvec(0 2)\move(1 6)\rlvec(0 2)\htext(1.5 7){$2$}
		
		\move(0 8)\rlvec(0 2)\move(0 10)\rlvec(2 0)\move(2 8)\rlvec(0 2)\move(1 8)\rlvec(0 2)\move(1 8)\rlvec(1 2)\htext(1.3 9.5){$0$}
		\esegment
		\move(-15.1 0)
		\bsegment
		\setsegscale 0.8
		\move(0 0)\rlvec(0 2)\move(0 0)\rlvec(-2 0)\move(-2 0)\rlvec(0 2)\move(-2 2)\rlvec(2 0)\move(0 0)\rlvec(2 0)\move(2 0)\rlvec(0 2)\move(2 2)\rlvec(-2 0)\move(-1 0)\rlvec(0 2)\move(1 0)\rlvec(0 2)	\move(-2 0)\rlvec(1 2) \move(-1 0)\rlvec(1 2) \move(0 0)\rlvec(1 2) \move(1 0)\rlvec(1 2) 
		\htext(-1.7 1.5){$0$}\htext(-0.7 1.5){$0$}\htext(1.3 1.5){$0$}\htext(-0.3 0.5){$1$}\htext(0.7 0.5){$1$}\htext(1.7 0.5){$1$}
		
		\move(0 2)\rlvec(0 2)\move(0 4)\rlvec(2 0)\move(2 2)\rlvec(0 2)
		\move(1 2)\rlvec(0 2)\htext(1.5 3){$2$}\htext(1.7 4.5){$3$}\htext(1.3 5.5){$3$}
		
		\move(0 4)\rlvec(0 2)\move(0 6)\rlvec(2 0)\move(2 4)\rlvec(0 2)\move(1 4)\rlvec(0 2)\move(1 4)\rlvec(1 2)
		
		\move(0 6)\rlvec(0 2)\move(0 8)\rlvec(2 0)\move(2 6)\rlvec(0 2)\move(1 6)\rlvec(0 2)\htext(1.5 7){$2$}
		\esegment
		\move(-11.6 0)
		\bsegment
		\setsegscale 0.8
		
		\move(0 0)\rlvec(0 2)\move(0 0)\rlvec(-2 0)\move(-2 0)\rlvec(0 2)\move(-2 2)\rlvec(2 0)\move(0 0)\rlvec(2 0)\move(2 0)\rlvec(0 2)\move(2 2)\rlvec(-2 0)\move(-1 0)\rlvec(0 2)\move(1 0)\rlvec(0 2)	\move(-2 0)\rlvec(1 2) \move(-1 0)\rlvec(1 2) \move(0 0)\rlvec(1 2) \move(1 0)\rlvec(1 2) 
		\htext(-1.7 1.5){$0$}\htext(-0.7 1.5){$0$}\htext(0.3 1.5){$0$}\htext(1.3 1.5){$0$}\htext(0.7 0.5){$1$}\htext(1.7 0.5){$1$}
		
		\move(0 2)\rlvec(0 2)\move(0 4)\rlvec(2 0)\move(2 2)\rlvec(0 2)
		\move(1 2)\rlvec(0 2)\htext(1.5 3){$2$}\htext(1.7 4.5){$3$}\htext(1.3 5.5){$3$}
		
		\move(0 4)\rlvec(0 2)\move(0 6)\rlvec(2 0)\move(2 4)\rlvec(0 2)\move(1 4)\rlvec(0 2)\move(1 4)\rlvec(1 2)
		
		\move(0 6)\rlvec(0 2)\move(0 8)\rlvec(2 0)\move(2 6)\rlvec(0 2)\move(1 6)\rlvec(0 2)\htext(1.5 7){$2$}
		\esegment
		\move(-8.1 0)
		\bsegment
		\setsegscale 0.8
		
		\move(0 0)\rlvec(0 2)\move(0 0)\rlvec(-2 0)\move(-2 0)\rlvec(0 2)\move(-2 2)\rlvec(2 0)\move(0 0)\rlvec(2 0)\move(2 0)\rlvec(0 2)\move(2 2)\rlvec(-2 0)\move(-1 0)\rlvec(0 2)\move(1 0)\rlvec(0 2)	\move(-2 0)\rlvec(1 2) \move(-1 0)\rlvec(1 2) \move(0 0)\rlvec(1 2) \move(1 0)\rlvec(1 2) 
		\htext(-1.7 1.5){$0$}\htext(-0.7 1.5){$0$}\htext(0.3 1.5){$0$}\htext(1.3 1.5){$0$}\htext(-0.3 0.5){$1$}\htext(0.7 0.5){$1$}\htext(1.7 0.5){$1$}
		
		\move(0 2)\rlvec(0 2)\move(0 4)\rlvec(2 0)\move(2 2)\rlvec(0 2)
		\move(1 2)\rlvec(0 2)\htext(1.5 3){$2$}\htext(1.7 4.5){$3$}\htext(1.3 5.5){$3$}
		
		\move(0 4)\rlvec(0 2)\move(0 6)\rlvec(2 0)\move(2 4)\rlvec(0 2)\move(1 4)\rlvec(0 2)\move(1 4)\rlvec(1 2)	
		
		\esegment
		\move(-4.6 0)
		\bsegment
		\setsegscale 0.8
		
		\move(0 0)\rlvec(0 2)\move(0 0)\rlvec(-2 0)\move(-2 0)\rlvec(0 2)\move(-2 2)\rlvec(2 0)\move(0 0)\rlvec(2 0)\move(2 0)\rlvec(0 2)\move(2 2)\rlvec(-2 0)\move(-1 0)\rlvec(0 2)\move(1 0)\rlvec(0 2)	\move(-2 0)\rlvec(1 2) \move(-1 0)\rlvec(1 2) \move(0 0)\rlvec(1 2) \move(1 0)\rlvec(1 2) 
		\htext(-1.7 1.5){$0$}\htext(-0.7 1.5){$0$}\htext(1.3 1.5){$0$}\htext(-0.3 0.5){$1$}\htext(0.7 0.5){$1$}\htext(1.7 0.5){$1$}
		
		\move(0 2)\rlvec(0 2)\move(0 4)\rlvec(2 0)\move(2 2)\rlvec(0 2)
		\move(1 2)\rlvec(0 2)\htext(1.5 3){$2$}\htext(1.7 4.5){$3$}
		\htext(1.3 5.5){$3$}
		
		\move(0 4)\rlvec(0 2)\move(0 6)\rlvec(2 0)\move(2 4)\rlvec(0 2)\move(1 4)\rlvec(0 2)\move(1 4)\rlvec(1 2)
		
		\move(-2 2)\rlvec(0 2)	\move(-2 4)\rlvec(2 0) \move(-1 2)\rlvec(0 2)
		\htext(-0.5 3){$2$}
		
		\esegment
		\move(-1.1 0)
		\bsegment
		\setsegscale 0.8
		
		\move(0 0)\rlvec(0 2)\move(0 0)\rlvec(-2 0)\move(-2 0)\rlvec(0 2)\move(-2 2)\rlvec(2 0)\move(0 0)\rlvec(2 0)\move(2 0)\rlvec(0 2)\move(2 2)\rlvec(-2 0)\move(-1 0)\rlvec(0 2)\move(1 0)\rlvec(0 2)	\move(-2 0)\rlvec(1 2) \move(-1 0)\rlvec(1 2) \move(0 0)\rlvec(1 2) \move(1 0)\rlvec(1 2) 
		\htext(-1.7 1.5){$0$}\htext(-0.7 1.5){$0$}\htext(0.3 1.5){$0$}\htext(1.3 1.5){$0$}\htext(-0.3 0.5){$1$}\htext(0.7 0.5){$1$}\htext(1.7 0.5){$1$}
		
		\move(0 2)\rlvec(0 2)\move(0 4)\rlvec(2 0)\move(2 2)\rlvec(0 2)
		\move(1 2)\rlvec(0 2)\htext(1.5 3){$2$}\htext(1.7 4.5){$3$}
		
		\move(0 4)\rlvec(0 2)\move(0 6)\rlvec(2 0)\move(2 4)\rlvec(0 2)\move(1 4)\rlvec(0 2)\move(1 4)\rlvec(1 2)
		
		\move(-2 2)\rlvec(0 2)	\move(-2 4)\rlvec(2 0) \move(-1 2)\rlvec(0 2)
		\htext(-0.5 3){$2$}
		
		\esegment
		\move(2.37 0)
		\bsegment
		\setsegscale 0.8
		
		\move(0 0)\rlvec(0 2)\move(0 0)\rlvec(-2 0)\move(-2 0)\rlvec(0 2)\move(-2 2)\rlvec(2 0)\move(0 0)\rlvec(2 0)\move(2 0)\rlvec(0 2)\move(2 2)\rlvec(-2 0)\move(-1 0)\rlvec(0 2)\move(1 0)\rlvec(0 2)	\move(-2 0)\rlvec(1 2) \move(-1 0)\rlvec(1 2) \move(0 0)\rlvec(1 2) \move(1 0)\rlvec(1 2) 
		\htext(-1.7 1.5){$0$}\htext(-0.7 1.5){$0$}\htext(0.3 1.5){$0$}\htext(1.3 1.5){$0$}\htext(0.7 0.5){$1$}\htext(1.7 0.5){$1$}
		
		\move(0 2)\rlvec(0 2)\move(0 4)\rlvec(2 0)\move(2 2)\rlvec(0 2)
		\move(1 2)\rlvec(0 2)\htext(0.5 3){$2$}\htext(1.5 3){$2$}\htext(1.7 4.5){$3$}
		\htext(1.3 5.5){$3$}
		
		\move(0 4)\rlvec(0 2)\move(0 6)\rlvec(2 0)\move(2 4)\rlvec(0 2)\move(1 4)\rlvec(0 2)\move(1 4)\rlvec(1 2)

		\esegment
		\move(5.85 0)
		\bsegment
		\setsegscale 0.8
		
		\move(0 0)\rlvec(0 2)\move(0 0)\rlvec(-2 0)\move(-2 0)\rlvec(0 2)\move(-2 2)\rlvec(2 0)\move(0 0)\rlvec(2 0)\move(2 0)\rlvec(0 2)\move(2 2)\rlvec(-2 0)\move(-1 0)\rlvec(0 2)\move(1 0)\rlvec(0 2)	\move(-2 0)\rlvec(1 2) \move(-1 0)\rlvec(1 2) \move(0 0)\rlvec(1 2) \move(1 0)\rlvec(1 2) 
		\htext(-1.7 1.5){$0$}\htext(-0.7 1.5){$0$}\htext(0.3 1.5){$0$}\htext(1.3 1.5){$0$}\htext(-0.3 0.5){$1$}\htext(0.7 0.5){$1$}\htext(1.7 0.5){$1$}
		
		\move(0 2)\rlvec(0 2)\move(0 4)\rlvec(2 0)\move(2 2)\rlvec(0 2)
		\move(1 2)\rlvec(0 2)\htext(0.5 3){$2$}\htext(1.5 3){$2$}\htext(1.7 4.5){$3$}
		
		\move(0 4)\rlvec(0 2)\move(0 6)\rlvec(2 0)\move(2 4)\rlvec(0 2)\move(1 4)\rlvec(0 2)\move(1 4)\rlvec(1 2)

		\esegment
		
		\move(10.95 0)
		\bsegment
		\setsegscale 0.8
		
		\move(0 0)\rlvec(0 2)\move(0 0)\rlvec(-2 0)\move(-2 0)\rlvec(0 2)\move(-2 2)\rlvec(2 0)\move(0 0)\rlvec(2 0)\move(2 0)\rlvec(0 2)\move(2 2)\rlvec(-2 0)\move(-1 0)\rlvec(0 2)\move(1 0)\rlvec(0 2)	\move(-2 0)\rlvec(1 2) \move(-1 0)\rlvec(1 2) \move(0 0)\rlvec(1 2) \move(1 0)\rlvec(1 2) 
		\htext(-1.7 1.5){$0$}\htext(-0.7 1.5){$0$}\htext(1.3 1.5){$0$}\htext(-0.3 0.5){$1$}\htext(0.7 0.5){$1$}\htext(1.7 0.5){$1$}
		
		\move(0 2)\rlvec(0 2)\move(0 4)\rlvec(2 0)\move(2 2)\rlvec(0 2)
		\move(1 2)\rlvec(0 2)\htext(1.5 3){$2$}\htext(1.7 4.5){$3$}
		
		\move(0 4)\rlvec(0 2)\move(0 6)\rlvec(2 0)\move(2 4)\rlvec(0 2)\move(1 4)\rlvec(0 2)\move(1 4)\rlvec(1 2)
		
		\move(-2 2)\rlvec(0 2)	\move(-2 4)\rlvec(2 0) \move(-1 2)\rlvec(0 2)
		\htext(-0.5 3){$2$}
		
		\move(-4 0)\rlvec(2 0)\move(-4 0)\rlvec(0 2)\move(-4 2)\rlvec(2 0)\move(-3 0)\rlvec(0 2)\move(-4 0)\rlvec(1 2)\move(-3 0)\rlvec(1 2)
		\htext(-3.3 0.5){$1$}\htext(-2.3 0.5){$1$}\htext(-2.7 1.5){$0$}
		\esegment
		\move(14.5 0)
		\bsegment
		\setsegscale 0.8
		
		\move(0 0)\rlvec(0 2)\move(0 0)\rlvec(-2 0)\move(-2 0)\rlvec(0 2)\move(-2 2)\rlvec(2 0)\move(0 0)\rlvec(2 0)\move(2 0)\rlvec(0 2)\move(2 2)\rlvec(-2 0)\move(-1 0)\rlvec(0 2)\move(1 0)\rlvec(0 2)	\move(-2 0)\rlvec(1 2) \move(-1 0)\rlvec(1 2) \move(0 0)\rlvec(1 2) \move(1 0)\rlvec(1 2) 
		\htext(-1.7 1.5){$0$}\htext(-0.7 1.5){$0$}\htext(1.3 1.5){$0$}\htext(-0.3 0.5){$1$}\htext(0.7 0.5){$1$}\htext(1.7 0.5){$1$}
		
		\move(0 2)\rlvec(0 2)\move(0 4)\rlvec(2 0)\move(2 2)\rlvec(0 2)
		\move(1 2)\rlvec(0 2)\htext(1.5 3){$2$}\htext(1.7 4.5){$3$}
		
		\move(0 4)\rlvec(0 2)\move(0 6)\rlvec(2 0)\move(2 4)\rlvec(0 2)\move(1 4)\rlvec(0 2)\move(1 4)\rlvec(1 2)
		
		\move(-2 2)\rlvec(0 2)	\move(-2 4)\rlvec(2 0) \move(-1 2)\rlvec(0 2)
		\htext(-0.5 3){$2$}
		
		\move(-2 4)\rlvec(0 2)\move(-2 6)\rlvec(2 0)\move(0 4)\rlvec(0 2)\move(-1 4)\rlvec(0 2)\move(-1 4)\rlvec(1 2)\htext(-0.3 4.5){$3$}
		
		\esegment
		\move(18 0)
		\bsegment
		\setsegscale 0.8
		
		\move(0 0)\rlvec(0 2)\move(0 0)\rlvec(-2 0)\move(-2 0)\rlvec(0 2)\move(-2 2)\rlvec(2 0)\move(0 0)\rlvec(2 0)\move(2 0)\rlvec(0 2)\move(2 2)\rlvec(-2 0)\move(-1 0)\rlvec(0 2)\move(1 0)\rlvec(0 2)	\move(-2 0)\rlvec(1 2) \move(-1 0)\rlvec(1 2) \move(0 0)\rlvec(1 2) \move(1 0)\rlvec(1 2) 
		\htext(-1.7 1.5){$0$}\htext(-0.7 1.5){$0$}\htext(0.3 1.5){$0$}\htext(1.3 1.5){$0$}\htext(-1.3 0.5){$1$}\htext(-0.3 0.5){$1$}\htext(0.7 0.5){$1$}\htext(1.7 0.5){$1$}
		
		\move(0 2)\rlvec(0 2)\move(0 4)\rlvec(2 0)\move(2 2)\rlvec(0 2)
		\move(1 2)\rlvec(0 2)\htext(0.5 3){$2$} \htext(1.5 3){$2$}	
		
		\esegment
		
		\esegment	
		\move(-1.2 0)
		\bsegment
		\setsegscale 0.8
		\move(1.5 11)\ravec(0 -2.9)\htext(1.9 9.6){$0$}
		
		\move(1.1 5.4)\ravec(-1 -3)\htext(1.1 4){$2$}
		
		\move(1.9 5.4)\ravec(3 -3)\htext(4 4){$0$}
		
		\move(-2.2 -0.5)\ravec(-1.5 -4)\htext(-2.4 -2.4){$3$}
		\move(-1.8 -0.5)\ravec(4 -4)\htext(0.8 -2.4){$0$}
		\move(-1.2 -0.5)\ravec(10 -4)\htext(5.8 -2.8){$1$}
		\move(5.3 -0.5)\ravec(-2.6 -4)\htext(3.8 -3.7){$2$}
		\move(-6 -9.1)\ravec(-1.5 -4)\htext(-6.4 -11.1){$3$}
		\move(-5.5 -9.1)\ravec(1 -4)\htext(-4.7 -11.1){$0$}
		\move(-5 -9.1)\ravec(6.3 -4)\htext(-3 -10){$1$}
		\move(1 -9.1)\ravec(-2.9 -4)\htext(0.2 -11){$3$}
		\move(1.4 -9.1)\ravec(5.7 -4)\htext(4.7 -11){$1$}
		\move(2 -9.1)\ravec(10 -4)\htext(4.7 -9.7){$2$}
		\move(8.5 -9.1)\ravec(-4.7 -4)\htext(7.5 -10.5){$3$}
		\move(8.9 -9.1)\ravec(-0.7 -4)\htext(9 -10.5){$0$}
		\move(-9.8 -17.8)\ravec(-1.5 -4)\htext(-10.2 -19.8){$2$}
		\move(-9.4 -17.8)\ravec(0.6 -4)\htext(-8.7 -19.8){$0$}
		\move(-9 -17.8)\ravec(5.2 -4)\htext(-7 -19.8){$1$}
		\move(-4.6 -17.8)\ravec(-1.7 -4)\htext(-5 -19.8){$3$}
		\move(-4.2 -17.8)\ravec(5.5 -4)\htext(-2.2 -19.8){$1$}
		\move(1 -17.8)\ravec(-2.2 -4)\htext(0.2 -20.2){$3$}
		\move(-3.6 -17.8)\ravec(9.8 -4)\htext(-1.5 -18.3){$2$}
		\move(1.4 -17.8)\ravec(0.8 -2.9)\htext(2 -18.8){$0$}
		\move(1.8 -17.8)\ravec(9.5 -3.5)\htext(6.7 -20){$2$}
		\move(7 -17.8)\ravec(-3.3 -4)\htext(4.7 -20){$3$}
		\move(7.8 -17.8)\ravec(9 -4.5)\htext(9.5 -18.3){$2$}
		\move(12.6 -17.8)\ravec(-3.9 -4)\htext(9.9 -19.9){$3$}
		\move(13 -17.8)\ravec(4.5 -4.5)\htext(15.5 -19.9){$1$}
		\move(-14 -27.1)\ravec(-5.6 -2)\htext(-17 -27.7){$0$}
		\move(-13.6 -27.1)\ravec(-1.5 -4)\htext(-14.8 -29){$1$}
		\move(-9 -27.1)\ravec(-1.7 -4)\htext(-10.3 -29){$2$}
		\move(-8.2 -27.1)\ravec(0.2 -3.5)\htext(-8.5 -29){$1$}
		\move(-3.8 -27.1)\ravec(-3.8 -3.5)\htext(-6.5 -29){$0$}
		\move(-3.4 -27.1)\ravec(-0.2 -3.5)\htext(-3.1 -28){$2$}
		\move(1 -27.1)\ravec(-7.9 -3.5)\htext(-1.2 -28.5){$3$}
		\move(1.4 -27.1)\ravec(-1.3 -3.5)\htext(1.3 -28.5){$2$}
		\move(6.4 -27.1)\ravec(-1.5 -3.5)\htext(5.3 -28.6){$3$}
		\move(6.8 -27.1)\ravec(2.5 -3.5)\htext(7.3 -28.5){$1$}
		\move(11.4 -27.1)\ravec(4 -3.5)\htext(14.6 -29.5){$0$}
		\move(11.8 -27.1)\ravec(6.5 -3.5)\htext(16.9 -29.5){$3$}
		\move(16.4 -27.1)\ravec(-6.5 -3.5)\htext(11 -29.5){$3$}
		\move(16.8 -27.1)\ravec(6.9 -5)\htext(20.8 -29.5){$1$}
		\esegment
		
		\vtext(-18.6 -30.5){\fontsize{8}{8}\selectfont{$\cdots$}}
		\vtext(-15.1 -30.5){\fontsize{8}{8}\selectfont{$\cdots$}}
		\vtext(-11.6 -30.5){\fontsize{8}{8}\selectfont{$\cdots$}}
		\vtext(-8.1 -30.5){\fontsize{8}{8}\selectfont{$\cdots$}}
		\vtext(-4.6 -30.5){\fontsize{8}{8}\selectfont{$\cdots$}}
		\vtext(-1.1 -30.5){\fontsize{8}{8}\selectfont{$\cdots$}}
		\vtext(2.4 -30.5){\fontsize{8}{8}\selectfont{$\cdots$}}
		\vtext(5.85 -30.5){\fontsize{8}{8}\selectfont{$\cdots$}}
		\vtext(10.18 -30.5){\fontsize{8}{8}\selectfont{$\cdots$}}
		\vtext(14.5 -30.5){\fontsize{8}{8}\selectfont{$\cdots$}}
		\vtext(18.02 -30.5){\fontsize{8}{8}\selectfont{$\cdots$}}
	\end{texdraw}
\end{center}

\end{example}

\clearpage
\begin{example}
The top part of the  crystal $\mathbf{Y}(2\Lambda_0)$ for $A_{2}^{(2)}$ is given below.

\vskip 15mm

\begin{center}
	\begin{texdraw}
		\drawdim em
		\arrowheadsize l:0.3 w:0.3
		\arrowheadtype t:V
		\fontsize{4}{4}\selectfont
		\drawdim em
		\setunitscale 1.7
		\move(0 5)
		\bsegment
		\move(0 0)\lvec(2 0)
		\move(0 0)\lvec(0 2)
		\move(0 2)\lvec(2 2)
		\move(2 0)\lvec(2 2)
		\move(1 0)\lvec(1 2)
		\move(0 0)\lvec(1 2)
		\move(1 0)\lvec(2 2)
		\htext(0.7 0.5){$0$}
		\htext(1.7 0.5){$0$}
		
		\esegment
		\move(0 -1)
		\bsegment
		\move(0 0)\lvec(2 0)
		\move(0 0)\lvec(0 2)
		\move(0 2)\lvec(2 2)
		\move(2 0)\lvec(2 2)
		\move(1 0)\lvec(1 2)
		\move(0 0)\lvec(1 2)
		\move(1 0)\lvec(2 2)
		\htext(0.7 0.5){$0$}
		\htext(1.7 0.5){$0$}
		\htext(1.31 1.45){$0$}
		\esegment
		\move(0 -7)
		\bsegment
		\move(5 0)
		\bsegment
		\move(0 0)\lvec(2 0)
		\move(0 0)\lvec(0 2)
		\move(0 2)\lvec(2 2)
		\move(2 0)\lvec(2 2)
		\move(1 0)\lvec(1 2)
		\move(0 0)\lvec(1 2)
		\move(1 0)\lvec(2 2)
		\htext(0.7 0.5){$0$}
		\htext(1.7 0.5){$0$}
		\htext(1.31 1.45){$0$}
		\move(0 2)\lvec(0 4)
		\move(2 2)\lvec(2 4)
		\move(0 4)\lvec(2 4)
		\move(1 2)\lvec(1 4)
		\htext(1.5 3){$1$}
		\esegment
		
		\move(-5 0)
		\bsegment
		\move(0 0)\lvec(2 0)
		\move(0 0)\lvec(0 2)
		\move(0 2)\lvec(2 2)
		\move(2 0)\lvec(2 2)
		\move(1 0)\lvec(1 2)
		\move(0 0)\lvec(1 2)
		\move(1 0)\lvec(2 2)
		\htext(0.7 0.5){$0$}
		\htext(1.7 0.5){$0$}
		\htext(1.31 1.45){$0$}
		\htext(0.31 1.45){$0$}
		\esegment
		
		\esegment
		\move(0 -15)
		\bsegment
		\move(5 0)
		\bsegment
		\move(0 0)\lvec(2 0)
		\move(0 0)\lvec(0 2)
		\move(0 2)\lvec(2 2)
		\move(2 0)\lvec(2 2)
		\move(1 0)\lvec(1 2)
		\move(0 0)\lvec(1 2)
		\move(1 0)\lvec(2 2)
		\move(0 2)\lvec(0 4)
		\move(2 2)\lvec(2 4)
		\move(0 4)\lvec(2 4)
		\move(1 2)\lvec(1 4)
		
		\htext(0.7 0.5){$0$}
		\htext(1.7 0.5){$0$}
		\htext(1.31 1.45){$0$}
		
		\move(-2 0)\lvec(0 0)
		\move(-2 0)\lvec(-2 2)
		\move(-2 2)\lvec(0 2)
		\move(-1 0)\lvec(-1 2)
		\move(-1 0)\lvec(0 2)
		\move(-2 0)\lvec(-1 2)
		\htext(-1.3 0.5){$0$}
		\htext(-0.3 0.5){$0$}
		\htext(-0.69 1.45){$0$}
		\htext(1.5 3){$1$}
		\esegment
		
		\move(-5 0)
		\bsegment
		\move(0 0)\lvec(2 0)
		\move(0 0)\lvec(0 2)
		\move(0 2)\lvec(2 2)
		\move(2 0)\lvec(2 2)
		\move(1 0)\lvec(1 2)
		\move(0 0)\lvec(1 2)
		\move(1 0)\lvec(2 2)
		\move(0 2)\lvec(0 4)
		\move(2 2)\lvec(2 4)
		\move(0 4)\lvec(2 4)
		\move(1 2)\lvec(1 4)
		\htext(0.7 0.5){$0$}
		\htext(1.7 0.5){$0$}
		\htext(1.31 1.45){$0$}
		\htext(1.5 3){$1$}
		\htext(0.31 1.45){$0$}
		\esegment
		
		\esegment
		
		\move(0 -23)
		\bsegment
		
		\move(-8 0)
		\bsegment
		\move(0 0)\lvec(2 0)
		\move(0 0)\lvec(0 2)
		\move(0 2)\lvec(2 2)
		\move(2 0)\lvec(2 2)
		\move(1 0)\lvec(1 2)
		\move(0 0)\lvec(1 2)
		\move(1 0)\lvec(2 2)
		\move(0 2)\lvec(0 4)
		\move(2 2)\lvec(2 4)
		\move(0 4)\lvec(2 4)
		\move(1 2)\lvec(1 4)
		\htext(0.7 0.5){$0$}
		\htext(1.7 0.5){$0$}
		\htext(1.31 1.45){$0$}
		\htext(0.31 1.45){$0$}
		\move(-2 0)\lvec(0 0)
		\move(-2 0)\lvec(-2 2)
		\move(-2 2)\lvec(0 2)
		\move(-1 0)\lvec(-1 2)
		\move(-1 0)\lvec(0 2)
		\move(-2 0)\lvec(-1 2)
		\htext(-1.3 0.5){$0$}
		\htext(-0.3 0.5){$0$}
		\htext(-0.69 1.45){$0$}
		\htext(1.5 3){$1$}
		\esegment
		
		\move(0 0)
		\bsegment
		\move(0 0)\lvec(2 0)
		\move(0 0)\lvec(0 2)
		\move(0 2)\lvec(2 2)
		\move(2 0)\lvec(2 2)
		\move(1 0)\lvec(1 2)
		\move(0 0)\lvec(1 2)
		\move(1 0)\lvec(2 2)
		\move(0 2)\lvec(0 4)
		\move(2 2)\lvec(2 4)
		\move(0 4)\lvec(2 4)
		\move(1 2)\lvec(1 4)
		\htext(0.7 0.5){$0$}
		\htext(1.7 0.5){$0$}
		\htext(1.31 1.45){$0$}
		\htext(1.5 3){$1$}
		\htext(0.5 3){$1$}
		\htext(0.31 1.45){$0$}
		\esegment
		
		\move(10 0)
		\bsegment
		\move(0 0)\lvec(2 0)
		\move(0 0)\lvec(0 2)
		\move(0 2)\lvec(2 2)
		\move(2 0)\lvec(2 2)
		\move(1 0)\lvec(1 2)
		\move(0 0)\lvec(1 2)
		\move(1 0)\lvec(2 2)
		\move(0 2)\lvec(0 4)
		\move(2 2)\lvec(2 4)
		\move(0 4)\lvec(2 4)
		\move(1 2)\lvec(1 4)
		\move(0 4)\lvec(0 6)
		\move(2 4)\lvec(2 6)
		\move(0 6)\lvec(2 6)
		\move(1 4)\lvec(1 6)
		\htext(0.7 0.5){$0$}
		\htext(1.7 0.5){$0$}
		\htext(1.31 1.45){$0$}
		\htext(1.7 4.5){$0$}
		\move(1 4)\lvec(2 6)
		\move(0 4)\lvec(1 6)
		\move(-2 0)\lvec(0 0)
		\move(-2 0)\lvec(-2 2)
		\move(-2 2)\lvec(0 2)
		\move(-1 0)\lvec(-1 2)
		\move(-1 0)\lvec(0 2)
		\move(-2 0)\lvec(-1 2)
		\htext(-1.3 0.5){$0$}
		\htext(-0.3 0.5){$0$}
		\htext(-0.69 1.45){$0$}
		\htext(1.5 3){$1$}
		\esegment
		
		\esegment
		\move(0 -31)
		\bsegment
		
		\move(-12 0)
		\bsegment
		\move(0 0)\lvec(2 0)
		\move(0 0)\lvec(0 2)
		\move(0 2)\lvec(2 2)
		\move(2 0)\lvec(2 2)
		\move(1 0)\lvec(1 2)
		\move(0 0)\lvec(1 2)
		\move(1 0)\lvec(2 2)
		\move(0 2)\lvec(0 4)
		\move(2 2)\lvec(2 4)
		\move(0 4)\lvec(2 4)
		\move(1 2)\lvec(1 4)
		\htext(0.7 0.5){$0$}
		\htext(1.7 0.5){$0$}
		\htext(1.31 1.45){$0$}
		\htext(0.31 1.45){$0$}
		\move(-2 0)\lvec(0 0)
		\move(-2 0)\lvec(-2 2)
		\move(-2 2)\lvec(0 2)
		\move(-1 0)\lvec(-1 2)
		\move(-1 0)\lvec(0 2)
		\move(-2 0)\lvec(-1 2)
		\htext(-1.3 0.5){$0$}
		\htext(-0.3 0.5){$0$}
		\htext(-0.69 1.45){$0$}
		\htext(-1.69 1.45){$0$}
		\htext(1.5 3){$1$}
		\esegment
		
		\move(-3 0)
		\bsegment
		\move(0 0)\lvec(2 0)
		\move(0 0)\lvec(0 2)
		\move(0 2)\lvec(2 2)
		\move(2 0)\lvec(2 2)
		\move(1 0)\lvec(1 2)
		\move(0 0)\lvec(1 2)
		\move(1 0)\lvec(2 2)
		\move(0 2)\lvec(0 4)
		\move(2 2)\lvec(2 4)
		\move(0 4)\lvec(2 4)
		\move(1 2)\lvec(1 4)
		\htext(0.7 0.5){$0$}
		\htext(1.7 0.5){$0$}
		\htext(1.31 1.45){$0$}
		\htext(0.31 1.45){$0$}
		\move(-2 0)\lvec(0 0)
		\move(-2 0)\lvec(-2 2)
		\move(-2 2)\lvec(0 2)
		\move(-1 0)\lvec(-1 2)
		\move(-1 0)\lvec(0 2)
		\move(-2 0)\lvec(-1 2)
		\htext(-1.3 0.5){$0$}
		\htext(-0.3 0.5){$0$}
		\htext(-0.69 1.45){$0$}
		
		\htext(1.5 3){$1$}
		\htext(0.5 3){$1$}
		\esegment
		
		\move(6 0)
		\bsegment
		\move(0 0)\lvec(2 0)
		\move(0 0)\lvec(0 2)
		\move(0 2)\lvec(2 2)
		\move(2 0)\lvec(2 2)
		\move(1 0)\lvec(1 2)
		\move(0 0)\lvec(1 2)
		\move(1 0)\lvec(2 2)
		\move(0 2)\lvec(0 4)
		\move(2 2)\lvec(2 4)
		\move(0 4)\lvec(2 4)
		\move(1 2)\lvec(1 4)
		\move(0 4)\lvec(0 6)
		\move(2 4)\lvec(2 6)
		\move(0 6)\lvec(2 6)
		\move(1 4)\lvec(1 6)
		\htext(0.7 0.5){$0$}
		\htext(1.7 0.5){$0$}
		\htext(1.31 1.45){$0$}
		\htext(1.7 4.5){$0$}
		\move(1 4)\lvec(2 6)
		\move(0 4)\lvec(1 6)
		\move(-2 0)\lvec(0 0)
		\move(-2 0)\lvec(-2 2)
		\move(-2 2)\lvec(0 2)
		\move(-1 0)\lvec(-1 2)
		\move(-1 0)\lvec(0 2)
		\move(-2 0)\lvec(-1 2)
		\move(-1 2)\lvec(-1 4)
		\move(-2 2)\lvec(-2 4)
		\move(-2 4)\lvec(0 4)
		\htext(-1.3 0.5){$0$}
		\htext(-0.3 0.5){$0$}
		\htext(-0.69 1.45){$0$}
		\htext(1.5 3){$1$}
		\htext(-0.5 3){$1$}
		\esegment
		
		\move(15 0)
		\bsegment
		\move(0 0)\lvec(2 0)
		\move(0 0)\lvec(0 2)
		\move(0 2)\lvec(2 2)
		\move(2 0)\lvec(2 2)
		\move(1 0)\lvec(1 2)
		\move(0 0)\lvec(1 2)
		\move(1 0)\lvec(2 2)
		\move(0 2)\lvec(0 4)
		\move(2 2)\lvec(2 4)
		\move(0 4)\lvec(2 4)
		\move(1 2)\lvec(1 4)
		\move(0 4)\lvec(0 6)
		\move(2 4)\lvec(2 6)
		\move(0 6)\lvec(2 6)
		\move(1 4)\lvec(1 6)
		\htext(0.7 0.5){$0$}
		\htext(1.7 0.5){$0$}
		\htext(1.31 1.45){$0$}
		\htext(1.7 4.5){$0$}
		\move(1 4)\lvec(2 6)
		\move(0 4)\lvec(1 6)
		\move(-2 0)\lvec(0 0)
		\move(-2 0)\lvec(-2 2)
		\move(-2 2)\lvec(0 2)
		\move(-1 0)\lvec(-1 2)
		\move(-1 0)\lvec(0 2)
		\move(-2 0)\lvec(-1 2)
		\htext(-1.3 0.5){$0$}
		\htext(-0.3 0.5){$0$}
		\htext(-0.69 1.45){$0$}
		\htext(1.5 3){$1$}
		\htext(1.31 5.45){$0$}
		\esegment

		\esegment
		
		\move(0 -39)
		\bsegment
		
		\move(-16 0)
		\bsegment
		\move(0 0)\lvec(2 0)
		\move(0 0)\lvec(0 2)
		\move(0 2)\lvec(2 2)
		\move(2 0)\lvec(2 2)
		\move(1 0)\lvec(1 2)
		\move(0 0)\lvec(1 2)
		\move(1 0)\lvec(2 2)
		\move(0 2)\lvec(0 4)
		\move(2 2)\lvec(2 4)
		\move(0 4)\lvec(2 4)
		\move(1 2)\lvec(1 4)
		\move(-2 2)\lvec(-2 4)
		\move(-2 4)\lvec(0 4)
		\move(-1 2)\lvec(-1 4)
		\htext(0.7 0.5){$0$}
		\htext(1.7 0.5){$0$}
		\htext(1.31 1.45){$0$}
		\htext(0.31 1.45){$0$}
		\move(-2 0)\lvec(0 0)
		\move(-2 0)\lvec(-2 2)
		\move(-2 2)\lvec(0 2)
		\move(-1 0)\lvec(-1 2)
		\move(-1 0)\lvec(0 2)
		\move(-2 0)\lvec(-1 2)
		\htext(-1.3 0.5){$0$}
		\htext(-0.3 0.5){$0$}
		\htext(-0.69 1.45){$0$}
		\htext(-1.69 1.45){$0$}
		\htext(1.5 3){$1$}
		\htext(-0.5 3){$1$}
		\esegment
		
		\move(-8 0)
		\bsegment
		\move(0 0)\lvec(2 0)
		\move(0 0)\lvec(0 2)
		\move(0 2)\lvec(2 2)
		\move(2 0)\lvec(2 2)
		\move(1 0)\lvec(1 2)
		\move(0 0)\lvec(1 2)
		\move(1 0)\lvec(2 2)
		\move(0 2)\lvec(0 4)
		\move(2 2)\lvec(2 4)
		\move(0 4)\lvec(2 4)
		\move(1 2)\lvec(1 4)
		\htext(0.7 0.5){$0$}
		\htext(1.7 0.5){$0$}
		\htext(1.31 1.45){$0$}
		\htext(0.31 1.45){$0$}
		\move(-2 0)\lvec(0 0)
		\move(-2 0)\lvec(-2 2)
		\move(-2 2)\lvec(0 2)
		\move(-1 0)\lvec(-1 2)
		\move(-1 0)\lvec(0 2)
		\move(-2 0)\lvec(-1 2)
		\htext(-1.3 0.5){$0$}
		\htext(-0.3 0.5){$0$}
		\htext(-0.69 1.45){$0$}
		\htext(-1.69 1.45){$0$}
		\htext(1.5 3){$1$}
		\htext(0.5 3){$1$}
		\esegment
		
		\move(2 0)
		\bsegment
		\move(0 0)\lvec(2 0)
		\move(0 0)\lvec(0 2)
		\move(0 2)\lvec(2 2)
		\move(2 0)\lvec(2 2)
		\move(1 0)\lvec(1 2)
		\move(0 0)\lvec(1 2)
		\move(1 0)\lvec(2 2)
		\move(0 2)\lvec(0 4)
		\move(2 2)\lvec(2 4)
		\move(0 4)\lvec(2 4)
		\move(1 2)\lvec(1 4)
		\move(0 4)\lvec(0 6)
		\move(2 4)\lvec(2 6)
		\move(0 6)\lvec(2 6)
		\move(1 4)\lvec(1 6)
		\htext(0.7 0.5){$0$}
		\htext(1.7 0.5){$0$}
		\htext(1.31 1.45){$0$}
		\htext(1.7 4.5){$0$}
		\move(1 4)\lvec(2 6)
		\move(0 4)\lvec(1 6)
		\move(-2 0)\lvec(0 0)
		\move(-2 0)\lvec(-2 2)
		\move(-2 2)\lvec(0 2)
		\move(-1 0)\lvec(-1 2)
		\move(-1 0)\lvec(0 2)
		\move(-2 0)\lvec(-1 2)
		\move(-1 2)\lvec(-1 4)
		\move(-2 2)\lvec(-2 4)
		\move(-2 4)\lvec(0 4)
		\move(-4 0)\lvec(-2 0)
		\move(-4 0)\lvec(-4 2)
		\move(-4 2)\lvec(-2 2)
		\move(-3 0)\lvec(-3 2)
		\move(-4 0)\lvec(-3 2)
		\move(-3 0)\lvec(-2 2)
		\htext(-3.3 0.5){$0$}
		\htext(-2.3 0.5){$0$}
		\htext(-2.69 1.45){$0$}
		\htext(-1.3 0.5){$0$}
		\htext(-0.3 0.5){$0$}
		\htext(-0.69 1.45){$0$}
		\htext(1.5 3){$1$}
		\htext(-0.5 3){$1$}
		\esegment
		
		\move(10 0)
		\bsegment
		\move(0 0)\lvec(2 0)
		\move(0 0)\lvec(0 2)
		\move(0 2)\lvec(2 2)
		\move(2 0)\lvec(2 2)
		\move(1 0)\lvec(1 2)
		\move(0 0)\lvec(1 2)
		\move(1 0)\lvec(2 2)
		\move(0 2)\lvec(0 4)
		\move(2 2)\lvec(2 4)
		\move(0 4)\lvec(2 4)
		\move(1 2)\lvec(1 4)
		\move(0 4)\lvec(0 6)
		\move(2 4)\lvec(2 6)
		\move(0 6)\lvec(2 6)
		\move(1 4)\lvec(1 6)
		\htext(0.7 0.5){$0$}
		\htext(1.7 0.5){$0$}
		\htext(1.31 1.45){$0$}
		\htext(0.31 1.45){$0$}
		\htext(1.7 4.5){$0$}
		\move(1 4)\lvec(2 6)
		\move(0 4)\lvec(1 6)
		\move(-2 0)\lvec(0 0)
		\move(-2 0)\lvec(-2 2)
		\move(-2 2)\lvec(0 2)
		\move(-1 0)\lvec(-1 2)
		\move(-1 0)\lvec(0 2)
		\move(-2 0)\lvec(-1 2)
		\htext(-1.3 0.5){$0$}
		\htext(-0.3 0.5){$0$}
		\htext(-0.69 1.45){$0$}
		\htext(1.5 3){$1$}
		\htext(1.31 5.45){$0$}
		\esegment
		
		\move(18 0)
		\bsegment
		\move(0 0)\lvec(2 0)
		\move(0 0)\lvec(0 2)
		\move(0 2)\lvec(2 2)
		\move(2 0)\lvec(2 2)
		\move(1 0)\lvec(1 2)
		\move(0 0)\lvec(1 2)
		\move(1 0)\lvec(2 2)
		\move(0 2)\lvec(0 4)
		\move(2 2)\lvec(2 4)
		\move(0 4)\lvec(2 4)
		\move(1 2)\lvec(1 4)
		\move(0 4)\lvec(0 6)
		\move(2 4)\lvec(2 6)
		\move(0 6)\lvec(2 6)
		\move(1 4)\lvec(1 6)
		\htext(0.7 0.5){$0$}
		\htext(1.7 0.5){$0$}
		\htext(1.31 1.45){$0$}
		\htext(1.31 5.45){$0$}
		\htext(1.7 4.5){$0$}
		\move(1 4)\lvec(2 6)
		\move(0 4)\lvec(1 6)
		\move(-2 0)\lvec(0 0)
		\move(-2 0)\lvec(-2 2)
		\move(-2 2)\lvec(0 2)
		\move(-1 0)\lvec(-1 2)
		\move(-1 0)\lvec(0 2)
		\move(-2 0)\lvec(-1 2)
		\move(-1 2)\lvec(-1 4)
		\move(-2 2)\lvec(-2 4)
		\move(-2 4)\lvec(0 4)
		\htext(-1.3 0.5){$0$}
		\htext(-0.3 0.5){$0$}
		\htext(-0.69 1.45){$0$}
		\htext(1.5 3){$1$}
		\htext(-0.5 3){$1$}
		\esegment
		
		\esegment
		\move(0 -48)
		\bsegment
		
		\move(-18 0)
		\bsegment
		\move(0 0)\lvec(2 0)
		\move(0 0)\lvec(0 2)
		\move(0 2)\lvec(2 2)
		\move(2 0)\lvec(2 2)
		\move(1 0)\lvec(1 2)
		\move(0 0)\lvec(1 2)
		\move(1 0)\lvec(2 2)
		\move(0 2)\lvec(0 4)
		\move(2 2)\lvec(2 4)
		\move(0 4)\lvec(2 4)
		\move(1 2)\lvec(1 4)
		\move(-2 2)\lvec(-2 4)
		\move(-2 4)\lvec(0 4)
		\move(-1 2)\lvec(-1 4)
		\htext(0.7 0.5){$0$}
		\htext(1.7 0.5){$0$}
		\htext(1.31 1.45){$0$}
		\htext(0.31 1.45){$0$}
		\move(-2 0)\lvec(0 0)
		\move(-2 0)\lvec(-2 2)
		\move(-2 2)\lvec(0 2)
		\move(-1 0)\lvec(-1 2)
		\move(-1 0)\lvec(0 2)
		\move(-2 0)\lvec(-1 2)
		\htext(-1.3 0.5){$0$}
		\htext(-0.3 0.5){$0$}
		\htext(-0.69 1.45){$0$}
		\htext(-1.69 1.45){$0$}
		\htext(1.5 3){$1$}
		\htext(-0.5 3){$1$}
		\move(-4 0)\lvec(-2 0)
		\move(-4 0)\lvec(-4 2)
		\move(-4 2)\lvec(0 2)
		\move(-3 0)\lvec(-3 2)
		\move(-4 0)\lvec(-3 2)
		\move(-3 0)\lvec(-2 2)
		\htext(-3.3 0.5){$0$}
		\htext(-2.3 0.5){$0$}
		\htext(-2.69 1.45){$0$}
		
		\vtext(0 -1){\fontsize{8}{8}\selectfont$\cdots$}
		\esegment
		
		\move(-12 0)
		\bsegment
		\move(0 0)\lvec(2 0)
		\move(0 0)\lvec(0 2)
		\move(0 2)\lvec(2 2)
		\move(2 0)\lvec(2 2)
		\move(1 0)\lvec(1 2)
		\move(0 0)\lvec(1 2)
		\move(1 0)\lvec(2 2)
		\move(0 2)\lvec(0 4)
		\move(2 2)\lvec(2 4)
		\move(0 4)\lvec(2 4)
		\move(1 2)\lvec(1 4)
		\move(-2 2)\lvec(-2 4)
		\move(-2 4)\lvec(0 4)
		\move(-1 2)\lvec(-1 4)
		
		\htext(0.7 0.5){$0$}
		\htext(1.7 0.5){$0$}
		\htext(1.31 1.45){$0$}
		\htext(0.31 1.45){$0$}
		\move(-2 0)\lvec(0 0)
		\move(-2 0)\lvec(-2 2)
		\move(-2 2)\lvec(0 2)
		\move(-1 0)\lvec(-1 2)
		\move(-1 0)\lvec(0 2)
		\move(-2 0)\lvec(-1 2)
		\htext(-1.3 0.5){$0$}
		\htext(-0.3 0.5){$0$}
		\htext(-0.69 1.45){$0$}
		\htext(-1.69 1.45){$0$}
		\htext(1.5 3){$1$}
		\htext(0.5 3){$1$}
		\htext(-0.5 3){$1$}
		
		\vtext(0 -1){\fontsize{8}{8}\selectfont$\cdots$}
		\esegment
		
		\move(-6 0)
		\bsegment
		\move(0 0)\lvec(2 0)
		\move(0 0)\lvec(0 2)
		\move(0 2)\lvec(2 2)
		\move(2 0)\lvec(2 2)
		\move(1 0)\lvec(1 2)
		\move(0 0)\lvec(1 2)
		\move(1 0)\lvec(2 2)
		\move(0 2)\lvec(0 4)
		\move(2 2)\lvec(2 4)
		\move(0 4)\lvec(2 4)
		\move(1 2)\lvec(1 4)
		\move(0 4)\lvec(0 6)
		\move(2 4)\lvec(2 6)
		\move(0 6)\lvec(2 6)
		\move(1 4)\lvec(1 6)
		\htext(0.7 0.5){$0$}
		\htext(1.7 0.5){$0$}
		\htext(1.31 1.45){$0$}
		\htext(0.31 1.45){$0$}
		\htext(1.7 4.5){$0$}
		\move(1 4)\lvec(2 6)
		\move(0 4)\lvec(1 6)
		\move(-2 0)\lvec(0 0)
		\move(-2 0)\lvec(-2 2)
		\move(-2 2)\lvec(0 2)
		\move(-1 0)\lvec(-1 2)
		\move(-1 0)\lvec(0 2)
		\move(-2 0)\lvec(-1 2)
		\htext(-1.3 0.5){$0$}
		\htext(-0.3 0.5){$0$}
		\htext(-0.69 1.45){$0$}
		\htext(-1.69 1.45){$0$}
		\htext(1.5 3){$1$}
		\htext(0.5 3){$1$}
		
		\vtext(0 -1){\fontsize{8}{8}\selectfont$\cdots$}
		\esegment
		
		\move(2 0)
		\bsegment
		
		\move(0 0)\lvec(2 0)
		\move(0 0)\lvec(0 2)
		\move(0 2)\lvec(2 2)
		\move(2 0)\lvec(2 2)
		\move(1 0)\lvec(1 2)
		\move(0 0)\lvec(1 2)
		\move(1 0)\lvec(2 2)
		\move(0 2)\lvec(0 4)
		\move(2 2)\lvec(2 4)
		\move(0 4)\lvec(2 4)
		\move(1 2)\lvec(1 4)
		\move(0 4)\lvec(0 6)
		\move(2 4)\lvec(2 6)
		\move(0 6)\lvec(2 6)
		\move(1 4)\lvec(1 6)
		\htext(0.7 0.5){$0$}
		\htext(1.7 0.5){$0$}
		\htext(1.31 1.45){$0$}
		\htext(1.7 4.5){$0$}
		\htext(-0.3 4.5){$0$}
		\move(-2 4)\lvec(-2 6)
		\move(-2 6)\lvec(0 6)
		\move(-1 4)\lvec(-1 6)
		\move(-1 4)\lvec(0 6)
		\move(-2 4)\lvec(-1 6)
		\move(1 4)\lvec(2 6)
		\move(0 4)\lvec(1 6)
		\move(-2 0)\lvec(0 0)
		\move(-2 0)\lvec(-2 2)
		\move(-2 2)\lvec(0 2)
		\move(-1 0)\lvec(-1 2)
		\move(-1 0)\lvec(0 2)
		\move(-2 0)\lvec(-1 2)
		\move(-1 2)\lvec(-1 4)
		\move(-2 2)\lvec(-2 4)
		\move(-2 4)\lvec(0 4)
		\move(-4 0)\lvec(-2 0)
		\move(-4 0)\lvec(-4 2)
		\move(-4 2)\lvec(-2 2)
		\move(-3 0)\lvec(-3 2)
		\move(-4 0)\lvec(-3 2)
		\move(-3 0)\lvec(-2 2)
		\htext(-3.3 0.5){$0$}
		\htext(-2.3 0.5){$0$}
		\htext(-2.69 1.45){$0$}
		\htext(-1.3 0.5){$0$}
		\htext(-0.3 0.5){$0$}
		\htext(-0.69 1.45){$0$}
		\htext(1.5 3){$1$}
		\htext(-0.5 3){$1$}

		\vtext(0 -1){\fontsize{8}{8}\selectfont$\cdots$}
		
		\esegment
		
		\move(8 0)
		\bsegment
		\move(0 0)\lvec(2 0)
		\move(0 0)\lvec(0 2)
		\move(0 2)\lvec(2 2)
		\move(2 0)\lvec(2 2)
		\move(1 0)\lvec(1 2)
		\move(0 0)\lvec(1 2)
		\move(1 0)\lvec(2 2)
		\move(0 2)\lvec(0 4)
		\move(2 2)\lvec(2 4)
		\move(0 4)\lvec(2 4)
		\move(1 2)\lvec(1 4)
		\move(0 4)\lvec(0 6)
		\move(2 4)\lvec(2 6)
		\move(0 6)\lvec(2 6)
		\move(1 4)\lvec(1 6)
		\htext(0.7 0.5){$0$}
		\htext(1.7 0.5){$0$}
		\htext(1.31 1.45){$0$}
		\htext(0.31 1.45){$0$}
		\htext(1.31 5.45){$0$}
		\htext(1.7 4.5){$0$}
		\move(1 4)\lvec(2 6)
		\move(0 4)\lvec(1 6)
		\move(-2 0)\lvec(0 0)
		\move(-2 0)\lvec(-2 2)
		\move(-2 2)\lvec(0 2)
		\move(-1 0)\lvec(-1 2)
		\move(-1 0)\lvec(0 2)
		\move(-2 0)\lvec(-1 2)
		\move(-1 2)\lvec(-1 4)
		\move(-2 2)\lvec(-2 4)
		\move(-2 4)\lvec(0 4)
		\htext(-1.3 0.5){$0$}
		\htext(-0.3 0.5){$0$}
		\htext(-0.69 1.45){$0$}
		\htext(1.5 3){$1$}
		\htext(-0.5 3){$1$}
		
		\vtext(0 -1){\fontsize{8}{8}\selectfont$\cdots$}
		\esegment
		
		\move(16 0)
		\bsegment
		\move(0 0)\lvec(2 0)
		\move(0 0)\lvec(0 2)
		\move(0 2)\lvec(2 2)
		\move(2 0)\lvec(2 2)
		\move(1 0)\lvec(1 2)
		\move(0 0)\lvec(1 2)
		\move(1 0)\lvec(2 2)
		\move(0 2)\lvec(0 4)
		\move(2 2)\lvec(2 4)
		\move(0 4)\lvec(2 4)
		\move(1 2)\lvec(1 4)
		\move(0 4)\lvec(0 6)
		\move(2 4)\lvec(2 6)
		\move(0 6)\lvec(2 6)
		\move(1 4)\lvec(1 6)
		\htext(0.7 0.5){$0$}
		\htext(1.7 0.5){$0$}
		\htext(1.31 1.45){$0$}
		\htext(1.31 5.45){$0$}
		\htext(1.7 4.5){$0$}
		\move(1 4)\lvec(2 6)
		\move(0 4)\lvec(1 6)
		\move(-2 0)\lvec(0 0)
		\move(-2 0)\lvec(-2 2)
		\move(-2 2)\lvec(0 2)
		\move(-1 0)\lvec(-1 2)
		\move(-1 0)\lvec(0 2)
		\move(-2 0)\lvec(-1 2)
		\move(-1 2)\lvec(-1 4)
		\move(-2 2)\lvec(-2 4)
		\move(-2 4)\lvec(0 4)
		\move(-4 0)\lvec(-2 0)
		\move(-4 0)\lvec(-4 2)
		\move(-4 2)\lvec(-2 2)
		\move(-3 0)\lvec(-3 2)
		\move(-4 0)\lvec(-3 2)
		\move(-3 0)\lvec(-2 2)
		\htext(-3.3 0.5){$0$}
		\htext(-2.3 0.5){$0$}
		\htext(-2.69 1.45){$0$}
		\htext(-1.3 0.5){$0$}
		\htext(-0.3 0.5){$0$}
		\htext(-0.69 1.45){$0$}
		\htext(1.5 3){$1$}
		\htext(-0.5 3){$1$}

		\vtext(0 -1){\fontsize{8}{8}\selectfont$\cdots$}
		\esegment

		\move(22 0)
		\bsegment
		\move(0 0)\lvec(2 0)
		\move(0 0)\lvec(0 2)
		\move(0 2)\lvec(2 2)
		\move(2 0)\lvec(2 2)
		\move(1 0)\lvec(1 2)
		\move(0 0)\lvec(1 2)
		\move(1 0)\lvec(2 2)
		\move(0 2)\lvec(0 4)
		\move(2 2)\lvec(2 4)
		\move(0 4)\lvec(2 4)
		\move(1 2)\lvec(1 4)
		\move(0 4)\lvec(0 6)
		\move(2 4)\lvec(2 6)
		\move(0 6)\lvec(2 6)
		\move(1 4)\lvec(1 6)
		\htext(0.7 0.5){$0$}
		\htext(1.7 0.5){$0$}
		\htext(1.31 1.45){$0$}
		\htext(1.31 5.45){$0$}
		\htext(1.7 4.5){$0$}
		\move(1 4)\lvec(2 6)
		\move(0 4)\lvec(1 6)
		\move(-2 0)\lvec(0 0)
		\move(-2 0)\lvec(-2 2)
		\move(-2 2)\lvec(0 2)
		\move(-1 0)\lvec(-1 2)
		\move(-1 0)\lvec(0 2)
		\move(-2 0)\lvec(-1 2)
		\move(-1 2)\lvec(-1 4)
		\move(-2 2)\lvec(-2 4)
		\move(-2 4)\lvec(0 4)
		\move(0 6)\lvec(0 8)
		\move(1 6)\lvec(1 8)
		\move(2 6)\lvec(2 8)
		\move(0 8)\lvec(2 8)
		\htext(1.5 7){$1$}
		\htext(-1.3 0.5){$0$}
		\htext(-0.3 0.5){$0$}
		\htext(-0.69 1.45){$0$}
		\htext(1.5 3){$1$}
		\htext(-0.5 3){$1$}

		\vtext(0 -1){\fontsize{8}{8}\selectfont$\cdots$}
		\esegment
		\esegment
		\move(1 4.5)\avec(1 1.5)\htext(1.3 3){$0$}
		\move(0.5 -1.5)\avec(-2.5 -5)\htext(-0.3 -3){$0$}
		\move(1.5 -1.5)\avec(4.5 -5)\htext(3.3 -3){$1$}
		\move(-4 -7.5)\avec(-4 -10.5)\htext(-3.7 -9){$1$}
		\move(6 -7.5)\avec(6 -10.5)\htext(6.3 -9){$0$}
		\move(-4.5 -15.5)\avec(-7 -18.5)\htext(-5.3 -17){$0$}
		\move(-3.5 -15.5)\avec(1 -18.5)\htext(-0.5 -17){$1$}
		\move(5 -15.5)\avec(9.5 -19)\htext(7.5 -17){$0$}
		\move(-8.5 -23.5)\avec(-11 -26.5)\htext(-9.3 -25){$0$}
		\move(-7.5 -23.5)\avec(-3.5 -26.5)\htext(-4.9 -25){$1$}
		\move(1 -23.5)\avec(-2 -26.5)\htext(0 -25){$0$}
		\move(9.5 -23.5)\avec(8.5 -27)\htext(9.4 -25){$1$}
		\move(10.5 -23.5)\avec(14.5 -27)\htext(12.8 -25){$0$}
		\move(-12 -31.5)\avec(-16 -34.5)\htext(-13.4 -33){$1$}
		\move(-3 -31.5)\avec(-7 -34.5)\htext(-4.4 -33){$0$}
		\move(6 -31.5)\avec(4.5 -35)\htext(5.7 -33){$0$}
		\move(14.5 -31.5)\avec(12.5 -35)\htext(14 -33){$0$}
		\move(15.5 -31.5)\avec(17.5 -34.5)\htext(16.9 -33){$1$}
		\move(-16.5 -39.5)\avec(-18 -43.5)\htext(-16.9 -41.5){$0$}
		\move(-15.5 -39.5)\avec(-12 -43.5)\htext(-13.3 -41.5){$1$}
		\move(-8 -39.5)\avec(-6.5 -44)\htext(-7 -41.5){$0$}
		\move(2 -39.5)\avec(2 -41.5)\htext(2.3 -40.5){$0$}
		\move(10 -39.5)\avec(9 -41.5)\htext(9.8 -40.5){$1$}
		\move(17.5 -39.5)\avec(17 -41.5)\htext(17.6 -40.5){$0$}
		\move(18.5 -39.5)\avec(21.5 -42)\htext(20.3 -40.5){$1$}
	\end{texdraw}
\end{center}

\end{example}

\clearpage
\begin{example}
	The top part of the  crystal $\mathbf{Y}(2\Lambda_0)$ for $D_{3}^{(2)}$ is given below.	

\vskip 15mm

	\begin{center}
	\begin{texdraw}
		\drawdim em
		\arrowheadsize l:0.3 w:0.3
		\arrowheadtype t:V
		\fontsize{4}{4}\selectfont
		\drawdim em
		\setunitscale 1.45
		\move(0 5)
		\bsegment
		\move(3 0)
		\bsegment
		\move(0 0)\lvec(2 0)
		\move(0 0)\lvec(0 2)
		\move(0 2)\lvec(2 2)
		\move(2 0)\lvec(2 2)
		\move(1 0)\lvec(1 2)
		\move(0 0)\lvec(1 2)
		\move(1 0)\lvec(2 2)
		\htext(0.7 0.5){$0$}
		\htext(1.7 0.5){$0$}
		\esegment
		\esegment
		\move(0 -1)
		\bsegment
		\move(3 0)
		\bsegment	
		\move(0 0)\lvec(2 0)
		\move(0 0)\lvec(0 2)
		\move(0 2)\lvec(2 2)
		\move(2 0)\lvec(2 2)
		\move(1 0)\lvec(1 2)
		\move(0 0)\lvec(1 2)
		\move(1 0)\lvec(2 2)
		\htext(0.7 0.5){$0$}
		\htext(1.7 0.5){$0$}
		\htext(1.31 1.45){$0$}
		\esegment
		\esegment
		\move(0 -7)
		\bsegment
		\move(8 0)
		\bsegment
		\move(0 0)\lvec(2 0)
		\move(0 0)\lvec(0 2)
		\move(0 2)\lvec(2 2)
		\move(2 0)\lvec(2 2)
		\move(1 0)\lvec(1 2)
		\move(0 0)\lvec(1 2)
		\move(1 0)\lvec(2 2)
		\htext(0.7 0.5){$0$}
		\htext(1.7 0.5){$0$}
		\htext(1.31 1.45){$0$}
		\move(0 2)\lvec(0 4)
		\move(2 2)\lvec(2 4)
		\move(0 4)\lvec(2 4)
		\move(1 2)\lvec(1 4)
		\htext(1.5 3){$1$}
		\esegment
		
		\move(-2 0)
		\bsegment
		\move(0 0)\lvec(2 0)
		\move(0 0)\lvec(0 2)
		\move(0 2)\lvec(2 2)
		\move(2 0)\lvec(2 2)
		\move(1 0)\lvec(1 2)
		\move(0 0)\lvec(1 2)
		\move(1 0)\lvec(2 2)
		\htext(0.7 0.5){$0$}
		\htext(1.7 0.5){$0$}
		\htext(1.31 1.45){$0$}
		\htext(0.31 1.45){$0$}
		\esegment
		
		\esegment
		\move(0 -15)
		\bsegment
		\move(-6 0)
		\bsegment
		\move(0 0)\lvec(2 0)
		\move(0 0)\lvec(0 2)
		\move(0 2)\lvec(2 2)
		\move(2 0)\lvec(2 2)
		\move(1 0)\lvec(1 2)
		\move(0 0)\lvec(1 2)
		\move(1 0)\lvec(2 2)
		\move(0 2)\lvec(0 4)
		\move(2 2)\lvec(2 4)
		\move(0 4)\lvec(2 4)
		\move(1 2)\lvec(1 4)
		\htext(0.7 0.5){$0$}
		\htext(1.7 0.5){$0$}
		\htext(1.31 1.45){$0$}
		\htext(0.31 1.45){$0$}
		
		\htext(1.5 3){$1$}
		\esegment
		
		\move(4 0)
		\bsegment
		\move(0 0)\lvec(2 0)
		\move(0 0)\lvec(0 2)
		\move(0 2)\lvec(2 2)
		\move(2 0)\lvec(2 2)
		\move(1 0)\lvec(1 2)
		\move(0 0)\lvec(1 2)
		\move(1 0)\lvec(2 2)
		\move(0 2)\lvec(0 4)
		\move(2 2)\lvec(2 4)
		\move(0 4)\lvec(2 4)
		\move(1 2)\lvec(1 4)
		\htext(0.7 0.5){$0$}
		\htext(1.7 0.5){$0$}
		\htext(1.31 1.45){$0$}
		
		\move(-2 0)\lvec(0 0)
		\move(-2 0)\lvec(-2 2)
		\move(-2 2)\lvec(0 2)
		\move(-1 0)\lvec(-1 2)
		\move(-1 0)\lvec(0 2)
		\move(-2 0)\lvec(-1 2)
		\htext(-1.3 0.5){$0$}
		\htext(-0.3 0.5){$0$}
		\htext(-0.69 1.45){$0$}
		\htext(1.5 3){$1$}
		\esegment
		
		\move(14 0)
		\bsegment
		\move(0 0)\lvec(2 0)
		\move(0 0)\lvec(0 2)
		\move(0 2)\lvec(2 2)
		\move(2 0)\lvec(2 2)
		\move(1 0)\lvec(1 2)
		\move(0 0)\lvec(1 2)
		\move(1 0)\lvec(2 2)
		\move(0 2)\lvec(0 4)
		\move(2 2)\lvec(2 4)
		\move(0 4)\lvec(2 4)
		\move(1 2)\lvec(1 4)
		\move(0 4)\lvec(0 6)
		\move(2 4)\lvec(2 6)
		\move(0 6)\lvec(2 6)
		\move(1 4)\lvec(1 6)
		\htext(0.7 0.5){$0$}
		\htext(1.7 0.5){$0$}
		\htext(1.31 1.45){$0$}
		\htext(1.7 4.5){$2$}
		\move(1 4)\lvec(2 6)
		\move(0 4)\lvec(1 6)

		\htext(1.5 3){$1$}
		\esegment
		
		\esegment
		
		\move(0 -24)
		\bsegment
		
		\move(-13 0)
		\bsegment
		\move(0 0)\lvec(2 0)
		\move(0 0)\lvec(0 2)
		\move(0 2)\lvec(2 2)
		\move(2 0)\lvec(2 2)
		\move(1 0)\lvec(1 2)
		\move(0 0)\lvec(1 2)
		\move(1 0)\lvec(2 2)
		\move(0 2)\lvec(0 4)
		\move(2 2)\lvec(2 4)
		\move(0 4)\lvec(2 4)
		\move(1 2)\lvec(1 4)
		\move(0 4)\lvec(0 6)
		\move(2 4)\lvec(2 6)
		\move(0 6)\lvec(2 6)
		\move(1 4)\lvec(1 6)
		\htext(0.7 0.5){$0$}
		\htext(1.7 0.5){$0$}
		\htext(1.31 1.45){$0$}
		\htext(0.31 1.45){$0$}
		\htext(1.7 4.5){$2$}
		\move(1 4)\lvec(2 6)
		\move(0 4)\lvec(1 6)
		
		\htext(1.5 3){$1$}
		
		\esegment
		
		\move(-6 0)
		\bsegment
		\move(0 0)\lvec(2 0)
		\move(0 0)\lvec(0 2)
		\move(0 2)\lvec(2 2)
		\move(2 0)\lvec(2 2)
		\move(1 0)\lvec(1 2)
		\move(0 0)\lvec(1 2)
		\move(1 0)\lvec(2 2)
		\move(0 2)\lvec(0 4)
		\move(2 2)\lvec(2 4)
		\move(0 4)\lvec(2 4)
		\move(1 2)\lvec(1 4)
		\htext(0.7 0.5){$0$}
		\htext(1.7 0.5){$0$}
		\htext(1.31 1.45){$0$}
		\htext(0.31 1.45){$0$}
		
		\htext(1.5 3){$1$}
		\htext(0.5 3){$1$}
		\esegment
		
		\move(4 0)
		\bsegment
		\move(0 0)\lvec(2 0)
		\move(0 0)\lvec(0 2)
		\move(0 2)\lvec(2 2)
		\move(2 0)\lvec(2 2)
		\move(1 0)\lvec(1 2)
		\move(0 0)\lvec(1 2)
		\move(1 0)\lvec(2 2)
		\move(0 2)\lvec(0 4)
		\move(2 2)\lvec(2 4)
		\move(0 4)\lvec(2 4)
		\move(1 2)\lvec(1 4)
		\htext(0.7 0.5){$0$}
		\htext(1.7 0.5){$0$}
		\htext(1.31 1.45){$0$}
		\htext(0.31 1.45){$0$}
		\move(-2 0)\lvec(0 0)
		\move(-2 0)\lvec(-2 2)
		\move(-2 2)\lvec(0 2)
		\move(-1 0)\lvec(-1 2)
		\move(-1 0)\lvec(0 2)
		\move(-2 0)\lvec(-1 2)
		\htext(-1.3 0.5){$0$}
		\htext(-0.3 0.5){$0$}
		\htext(-0.69 1.45){$0$}
		
		\htext(1.5 3){$1$}
		
		\esegment
		
		\move(14 0)
		\bsegment
		\move(0 0)\lvec(2 0)
		\move(0 0)\lvec(0 2)
		\move(0 2)\lvec(2 2)
		\move(2 0)\lvec(2 2)
		\move(1 0)\lvec(1 2)
		\move(0 0)\lvec(1 2)
		\move(1 0)\lvec(2 2)
		\move(0 2)\lvec(0 4)
		\move(2 2)\lvec(2 4)
		\move(0 4)\lvec(2 4)
		\move(1 2)\lvec(1 4)
		\move(0 4)\lvec(0 6)
		\move(2 4)\lvec(2 6)
		\move(0 6)\lvec(2 6)
		\move(1 4)\lvec(1 6)
		\htext(0.7 0.5){$0$}
		\htext(1.7 0.5){$0$}
		\htext(1.31 1.45){$0$}
		
		\htext(1.7 4.5){$2$}
		\move(1 4)\lvec(2 6)
		\move(0 4)\lvec(1 6)
		\move(-2 0)\lvec(0 0)
		\move(-2 0)\lvec(-2 2)
		\move(-2 2)\lvec(0 2)
		\move(-1 0)\lvec(-1 2)
		\move(-1 0)\lvec(0 2)
		\move(-2 0)\lvec(-1 2)
		\htext(-1.3 0.5){$0$}
		\htext(-0.3 0.5){$0$}
		\htext(-0.69 1.45){$0$}
		\htext(1.5 3){$1$}
		
		\esegment
		
		\move(22 0)
		\bsegment
		\move(0 0)\lvec(2 0)
		\move(0 0)\lvec(0 2)
		\move(0 2)\lvec(2 2)
		\move(2 0)\lvec(2 2)
		\move(1 0)\lvec(1 2)
		\move(0 0)\lvec(1 2)
		\move(1 0)\lvec(2 2)
		\move(0 2)\lvec(0 4)
		\move(2 2)\lvec(2 4)
		\move(0 4)\lvec(2 4)
		\move(1 2)\lvec(1 4)
		\move(0 4)\lvec(0 6)
		\move(2 4)\lvec(2 6)
		\move(0 6)\lvec(2 6)
		\move(1 4)\lvec(1 6)
		\htext(0.7 0.5){$0$}
		\htext(1.7 0.5){$0$}
		\htext(1.31 1.45){$0$}
		\htext(1.31 5.45){$2$}
		\htext(1.7 4.5){$2$}
		\move(1 4)\lvec(2 6)
		\move(0 4)\lvec(1 6)
		\htext(1.5 3){$1$}
		
		\esegment

		\esegment
		\move(0 -35)
		\bsegment
		
		\move(-17 0)
		\bsegment
		\move(0 0)\lvec(2 0)
		\move(0 0)\lvec(0 2)
		\move(0 2)\lvec(2 2)
		\move(2 0)\lvec(2 2)
		\move(1 0)\lvec(1 2)
		\move(0 0)\lvec(1 2)
		\move(1 0)\lvec(2 2)
		\move(0 2)\lvec(0 4)
		\move(2 2)\lvec(2 4)
		\move(0 4)\lvec(2 4)
		\move(1 2)\lvec(1 4)
		\move(0 4)\lvec(0 6)
		\move(2 4)\lvec(2 6)
		\move(0 6)\lvec(2 6)
		\move(1 4)\lvec(1 6)
		\htext(0.7 0.5){$0$}
		\htext(1.7 0.5){$0$}
		\htext(1.31 1.45){$0$}
		\htext(0.31 1.45){$0$}
		\htext(1.31 5.45){$2$}
		\htext(1.7 4.5){$2$}
		\move(1 4)\lvec(2 6)
		\move(0 4)\lvec(1 6)
		
		\htext(1.5 3){$1$}
		
		\esegment
		
		\move(-10 0)
		\bsegment
		\move(0 0)\lvec(2 0)
		\move(0 0)\lvec(0 2)
		\move(0 2)\lvec(2 2)
		\move(2 0)\lvec(2 2)
		\move(1 0)\lvec(1 2)
		\move(0 0)\lvec(1 2)
		\move(1 0)\lvec(2 2)
		\move(0 2)\lvec(0 4)
		\move(2 2)\lvec(2 4)
		\move(0 4)\lvec(2 4)
		\move(1 2)\lvec(1 4)
		\move(0 4)\lvec(0 6)
		\move(2 4)\lvec(2 6)
		\move(0 6)\lvec(2 6)
		\move(1 4)\lvec(1 6)
		\htext(0.7 0.5){$0$}
		\htext(1.7 0.5){$0$}
		\htext(1.31 1.45){$0$}
		\htext(0.31 1.45){$0$}
		
		\htext(1.7 4.5){$2$}
		\move(1 4)\lvec(2 6)
		\move(0 4)\lvec(1 6)
		\htext(0.5 3){$1$}
		\htext(1.5 3){$1$}
		\esegment
		
		\move(-1 0)
		\bsegment
		\move(0 0)\lvec(2 0)
		\move(0 0)\lvec(0 2)
		\move(0 2)\lvec(2 2)
		\move(2 0)\lvec(2 2)
		\move(1 0)\lvec(1 2)
		\move(0 0)\lvec(1 2)
		\move(1 0)\lvec(2 2)
		\move(0 2)\lvec(0 4)
		\move(2 2)\lvec(2 4)
		\move(0 4)\lvec(2 4)
		\move(1 2)\lvec(1 4)
		\htext(0.7 0.5){$0$}
		\htext(1.7 0.5){$0$}
		\htext(1.31 1.45){$0$}
		\htext(0.31 1.45){$0$}
		\move(-2 0)\lvec(0 0)
		\move(-2 0)\lvec(-2 2)
		\move(-2 2)\lvec(0 2)
		\move(-1 0)\lvec(-1 2)
		\move(-1 0)\lvec(0 2)
		\move(-2 0)\lvec(-1 2)
		\htext(-1.3 0.5){$0$}
		\htext(-0.3 0.5){$0$}
		\htext(-0.69 1.45){$0$}
		\htext(0.5 3){$1$}
		\htext(1.5 3){$1$}
		
		\esegment
		
		\move(8 0)
		\bsegment
		\move(0 0)\lvec(2 0)
		\move(0 0)\lvec(0 2)
		\move(0 2)\lvec(2 2)
		\move(2 0)\lvec(2 2)
		\move(1 0)\lvec(1 2)
		\move(0 0)\lvec(1 2)
		\move(1 0)\lvec(2 2)
		\move(0 2)\lvec(0 4)
		\move(2 2)\lvec(2 4)
		\move(0 4)\lvec(2 4)
		\move(1 2)\lvec(1 4)
		\move(0 4)\lvec(0 6)
		\move(2 4)\lvec(2 6)
		\move(0 6)\lvec(2 6)
		\move(1 4)\lvec(1 6)
		\htext(0.7 0.5){$0$}
		\htext(1.7 0.5){$0$}
		\htext(1.31 1.45){$0$}
		\htext(0.31 1.45){$0$}
		\htext(1.7 4.5){$2$}
		\move(1 4)\lvec(2 6)
		\move(0 4)\lvec(1 6)
		\move(-2 0)\lvec(0 0)
		\move(-2 0)\lvec(-2 2)
		\move(-2 2)\lvec(0 2)
		\move(-1 0)\lvec(-1 2)
		\move(-1 0)\lvec(0 2)
		\move(-2 0)\lvec(-1 2)
		\htext(-1.3 0.5){$0$}
		\htext(-0.3 0.5){$0$}
		\htext(-0.69 1.45){$0$}
		\htext(1.5 3){$1$}
		
		\esegment
		
		\move(17 0)
		\bsegment
		\move(0 0)\lvec(2 0)
		\move(0 0)\lvec(2 0)
		\move(0 0)\lvec(0 2)
		\move(0 2)\lvec(2 2)
		\move(2 0)\lvec(2 2)
		\move(1 0)\lvec(1 2)
		\move(0 0)\lvec(1 2)
		\move(1 0)\lvec(2 2)
		\move(0 2)\lvec(0 4)
		\move(2 2)\lvec(2 4)
		\move(0 4)\lvec(2 4)
		\move(1 2)\lvec(1 4)
		\move(0 4)\lvec(0 6)
		\move(2 4)\lvec(2 6)
		\move(0 6)\lvec(2 6)
		\move(1 4)\lvec(1 6)
		\htext(0.7 0.5){$0$}
		\htext(1.7 0.5){$0$}
		\htext(1.31 1.45){$0$}
		\htext(1.31 5.45){$2$}
		\htext(1.7 4.5){$2$}
		\move(1 4)\lvec(2 6)
		\move(0 4)\lvec(1 6)
		\move(-2 0)\lvec(0 0)
		\move(-2 0)\lvec(-2 2)
		\move(-2 2)\lvec(0 2)
		\move(-1 0)\lvec(-1 2)
		\move(-1 0)\lvec(0 2)
		\move(-2 0)\lvec(-1 2)
		\htext(-1.3 0.5){$0$}
		\htext(-0.3 0.5){$0$}
		\htext(-0.69 1.45){$0$}
		\htext(1.5 3){$1$}
		\esegment	
		
		\move(24 0)
		\bsegment
		\move(0 0)\lvec(2 0)
		\move(0 0)\lvec(0 2)
		\move(0 2)\lvec(2 2)
		\move(2 0)\lvec(2 2)
		\move(1 0)\lvec(1 2)
		\move(0 0)\lvec(1 2)
		\move(1 0)\lvec(2 2)
		\move(0 2)\lvec(0 4)
		\move(2 2)\lvec(2 4)
		\move(0 4)\lvec(2 4)
		\move(1 2)\lvec(1 4)
		\move(0 4)\lvec(0 6)
		\move(2 4)\lvec(2 6)
		\move(0 6)\lvec(2 6)
		\move(1 4)\lvec(1 6)
		\htext(0.7 0.5){$0$}
		\htext(1.7 0.5){$0$}
		\htext(1.31 1.45){$0$}
		\htext(1.31 5.45){$2$}
		\htext(1.7 4.5){$2$}
		\move(1 4)\lvec(2 6)
		\move(0 4)\lvec(1 6)
		
		\move(0 6)\lvec(0 8)
		\move(1 6)\lvec(1 8)
		\move(2 6)\lvec(2 8)
		\move(0 8)\lvec(2 8)
		\htext(1.5 7){$1$}
		
		\htext(1.5 3){$1$}
		
		\esegment		
		\esegment
		
		\move(0 -46)
		\bsegment
		
		\move(-20 0)
		\bsegment
		\move(0 0)\lvec(2 0)
		\move(0 0)\lvec(0 2)
		\move(0 2)\lvec(2 2)
		\move(2 0)\lvec(2 2)
		\move(1 0)\lvec(1 2)
		\move(0 0)\lvec(1 2)
		\move(1 0)\lvec(2 2)
		\move(0 2)\lvec(0 4)
		\move(2 2)\lvec(2 4)
		\move(0 4)\lvec(2 4)
		\move(1 2)\lvec(1 4)
		\move(0 4)\lvec(0 6)
		\move(2 4)\lvec(2 6)
		\move(0 6)\lvec(2 6)
		\move(1 4)\lvec(1 6)
		\htext(0.7 0.5){$0$}
		\htext(1.7 0.5){$0$}
		\htext(1.31 1.45){$0$}
		\htext(0.31 1.45){$0$}
		\htext(1.31 5.45){$2$}
		\htext(1.7 4.5){$2$}
		\move(1 4)\lvec(2 6)
		\move(0 4)\lvec(1 6)
		
		\move(0 6)\lvec(0 8)
		\move(1 6)\lvec(1 8)
		\move(2 6)\lvec(2 8)
		\move(0 8)\lvec(2 8)
		\htext(1.5 7){$1$}
		
		\htext(1.5 3){$1$}
		\esegment
		
		\move(-15 0)
		\bsegment
		\move(0 0)\lvec(2 0)
		\move(0 0)\lvec(0 2)
		\move(0 2)\lvec(2 2)
		\move(2 0)\lvec(2 2)
		\move(1 0)\lvec(1 2)
		\move(0 0)\lvec(1 2)
		\move(1 0)\lvec(2 2)
		\move(0 2)\lvec(0 4)
		\move(2 2)\lvec(2 4)
		\move(0 4)\lvec(2 4)
		\move(1 2)\lvec(1 4)
		\move(0 4)\lvec(0 6)
		\move(2 4)\lvec(2 6)
		\move(0 6)\lvec(2 6)
		\move(1 4)\lvec(1 6)
		\htext(0.7 0.5){$0$}
		\htext(1.7 0.5){$0$}
		\htext(1.31 1.45){$0$}
		\htext(0.31 1.45){$0$}
		\htext(1.7 4.5){$2$}
		\htext(1.31 5.45){$2$}
		\move(1 4)\lvec(2 6)
		\move(0 4)\lvec(1 6)
		\htext(0.5 3){$1$}
		\htext(1.5 3){$1$}
		\esegment
		
		\move(-8 0)
		\bsegment
		\move(0 0)\lvec(2 0)
		\move(0 0)\lvec(0 2)
		\move(0 2)\lvec(2 2)
		\move(2 0)\lvec(2 2)
		\move(1 0)\lvec(1 2)
		\move(0 0)\lvec(1 2)
		\move(1 0)\lvec(2 2)
		\move(0 2)\lvec(0 4)
		\move(2 2)\lvec(2 4)
		\move(0 4)\lvec(2 4)
		\move(1 2)\lvec(1 4)
		\move(0 4)\lvec(0 6)
		\move(2 4)\lvec(2 6)
		\move(0 6)\lvec(2 6)
		\move(1 4)\lvec(1 6)
		\htext(0.7 0.5){$0$}
		\htext(1.7 0.5){$0$}
		\htext(1.31 1.45){$0$}
		\htext(0.31 1.45){$0$}
		\htext(1.7 4.5){$2$}
		\move(1 4)\lvec(2 6)
		\move(0 4)\lvec(1 6)
		\move(-2 0)\lvec(0 0)
		\move(-2 0)\lvec(-2 2)
		\move(-2 2)\lvec(0 2)
		\move(-1 0)\lvec(-1 2)
		\move(-1 0)\lvec(0 2)
		\move(-2 0)\lvec(-1 2)
		\htext(-1.3 0.5){$0$}
		\htext(-0.3 0.5){$0$}
		\htext(-0.69 1.45){$0$}
		
		\htext(1.5 3){$1$}
		\htext(0.5 3){$1$}
		\esegment
		
		\move(-1 0)
		\bsegment
		
		\move(0 0)\lvec(2 0)
		\move(0 0)\lvec(0 2)
		\move(0 2)\lvec(2 2)
		\move(2 0)\lvec(2 2)
		\move(1 0)\lvec(1 2)
		\move(0 0)\lvec(1 2)
		\move(1 0)\lvec(2 2)
		\move(0 2)\lvec(0 4)
		\move(2 2)\lvec(2 4)
		\move(0 4)\lvec(2 4)
		\move(1 2)\lvec(1 4)
		\htext(0.7 0.5){$0$}
		\htext(1.7 0.5){$0$}
		\htext(1.31 1.45){$0$}
		\htext(0.31 1.45){$0$}
		\move(-2 0)\lvec(0 0)
		\move(-2 0)\lvec(-2 2)
		\move(-2 2)\lvec(0 2)
		\move(-1 0)\lvec(-1 2)
		\move(-1 0)\lvec(0 2)
		\move(-2 0)\lvec(-1 2)
		\htext(-1.3 0.5){$0$}
		\htext(-0.3 0.5){$0$}
		\htext(-0.69 1.45){$0$}
		\htext(-1.69 1.45){$0$}
		\htext(0.5 3){$1$}
		\htext(1.5 3){$1$}
		\esegment
		
		\move(6 0)
		\bsegment
		\move(0 0)\lvec(2 0)
		\move(0 0)\lvec(0 2)
		\move(0 2)\lvec(2 2)
		\move(2 0)\lvec(2 2)
		\move(1 0)\lvec(1 2)
		\move(0 0)\lvec(1 2)
		\move(1 0)\lvec(2 2)
		\move(0 2)\lvec(0 4)
		\move(2 2)\lvec(2 4)
		\move(0 4)\lvec(2 4)
		\move(1 2)\lvec(1 4)
		\move(0 4)\lvec(0 6)
		\move(2 4)\lvec(2 6)
		\move(0 6)\lvec(2 6)
		\move(1 4)\lvec(1 6)
		\htext(0.7 0.5){$0$}
		\htext(1.7 0.5){$0$}
		\htext(1.31 1.45){$0$}
		\htext(0.31 1.45){$0$}
		
		\htext(1.7 4.5){$2$}
		\move(1 4)\lvec(2 6)
		\move(0 4)\lvec(1 6)
		\move(-2 0)\lvec(0 0)
		\move(-2 0)\lvec(-2 2)
		\move(-2 2)\lvec(0 2)
		\move(-1 0)\lvec(-1 2)
		\move(-1 0)\lvec(0 2)
		\move(-2 0)\lvec(-1 2)
		\move(-1 2)\lvec(-1 4)
		\move(-2 2)\lvec(-2 4)
		\move(-2 4)\lvec(0 4)
		\htext(-1.3 0.5){$0$}
		\htext(-0.3 0.5){$0$}
		\htext(-0.69 1.45){$0$}
		\htext(1.5 3){$1$}
		\htext(-0.5 3){$1$}
		\esegment
		
		\move(13 0)
		\bsegment
		\move(0 0)\lvec(2 0)
		\move(0 0)\lvec(0 2)
		\move(0 2)\lvec(2 2)
		\move(2 0)\lvec(2 2)
		\move(1 0)\lvec(1 2)
		\move(0 0)\lvec(1 2)
		\move(1 0)\lvec(2 2)
		\move(0 2)\lvec(0 4)
		\move(2 2)\lvec(2 4)
		\move(0 4)\lvec(2 4)
		\move(1 2)\lvec(1 4)
		\move(0 4)\lvec(0 6)
		\move(2 4)\lvec(2 6)
		\move(0 6)\lvec(2 6)
		\move(1 4)\lvec(1 6)
		\htext(0.7 0.5){$0$}
		\htext(1.7 0.5){$0$}
		\htext(1.31 1.45){$0$}
		\htext(0.31 1.45){$0$}
		\htext(1.31 5.45){$2$}
		\htext(1.7 4.5){$2$}
		\move(1 4)\lvec(2 6)
		\move(0 4)\lvec(1 6)
		\move(-2 0)\lvec(0 0)
		\move(-2 0)\lvec(-2 2)
		\move(-2 2)\lvec(0 2)
		\move(-1 0)\lvec(-1 2)
		\move(-1 0)\lvec(0 2)
		\move(-2 0)\lvec(-1 2)
		
		\htext(-1.3 0.5){$0$}
		\htext(-0.3 0.5){$0$}
		\htext(-0.69 1.45){$0$}
		\htext(1.5 3){$1$}
		
		\esegment

		\move(20 0)
		\bsegment
		\move(0 0)\lvec(2 0)
		\move(0 0)\lvec(0 2)
		\move(0 2)\lvec(2 2)
		\move(2 0)\lvec(2 2)
		\move(1 0)\lvec(1 2)
		\move(0 0)\lvec(1 2)
		\move(1 0)\lvec(2 2)
		\move(0 2)\lvec(0 4)
		\move(2 2)\lvec(2 4)
		\move(0 4)\lvec(2 4)
		\move(1 2)\lvec(1 4)
		\move(0 4)\lvec(0 6)
		\move(2 4)\lvec(2 6)
		\move(0 6)\lvec(2 6)
		\move(1 4)\lvec(1 6)
		\htext(0.7 0.5){$0$}
		\htext(1.7 0.5){$0$}
		\htext(1.31 1.45){$0$}
		
		\htext(1.31 5.45){$2$}
		\htext(1.7 4.5){$2$}
		\move(1 4)\lvec(2 6)
		\move(0 4)\lvec(1 6)
		\move(-2 0)\lvec(0 0)
		\move(-2 0)\lvec(-2 2)
		\move(-2 2)\lvec(0 2)
		\move(-1 0)\lvec(-1 2)
		\move(-1 0)\lvec(0 2)
		\move(-2 0)\lvec(-1 2)
		\move(-1 2)\lvec(-1 4)
		\move(-2 2)\lvec(-2 4)
		\move(-2 4)\lvec(0 4)
		\htext(-1.3 0.5){$0$}
		\htext(-0.3 0.5){$0$}
		\htext(-0.69 1.45){$0$}
		\htext(1.5 3){$1$}
		\htext(-0.5 3){$1$}
		\esegment
		
		\move(27 0)
		\bsegment
		\move(0 0)\lvec(2 0)
		\move(0 0)\lvec(0 2)
		\move(0 2)\lvec(2 2)
		\move(2 0)\lvec(2 2)
		\move(1 0)\lvec(1 2)
		\move(0 0)\lvec(1 2)
		\move(1 0)\lvec(2 2)
		\move(0 2)\lvec(0 4)
		\move(2 2)\lvec(2 4)
		\move(0 4)\lvec(2 4)
		\move(1 2)\lvec(1 4)
		\move(0 4)\lvec(0 6)
		\move(2 4)\lvec(2 6)
		\move(0 6)\lvec(2 6)
		\move(1 4)\lvec(1 6)
		\htext(0.7 0.5){$0$}
		\htext(1.7 0.5){$0$}
		\htext(1.31 1.45){$0$}
		\htext(1.31 5.45){$2$}
		\htext(1.7 4.5){$2$}
		\move(1 4)\lvec(2 6)
		\move(0 4)\lvec(1 6)
		\move(-2 0)\lvec(0 0)
		\move(-2 0)\lvec(-2 2)
		\move(-2 2)\lvec(0 2)
		\move(-1 0)\lvec(-1 2)
		\move(-1 0)\lvec(0 2)
		\move(-2 0)\lvec(-1 2)
		
		\move(0 6)\lvec(0 8)
		\move(1 6)\lvec(1 8)
		\move(2 6)\lvec(2 8)
		\move(0 8)\lvec(2 8)
		\htext(1.5 7){$1$}
		\htext(-1.3 0.5){$0$}
		\htext(-0.3 0.5){$0$}
		\htext(-0.69 1.45){$0$}
		\htext(1.5 3){$1$}
		
		\esegment
		
		\esegment
		\move(0 -58)
		\bsegment
		\move(-20.5 0)
		\bsegment
		\move(0 0)\lvec(2 0)
		\move(0 0)\lvec(0 2)
		\move(0 2)\lvec(2 2)
		\move(2 0)\lvec(2 2)
		\move(1 0)\lvec(1 2)
		\move(0 0)\lvec(1 2)
		\move(1 0)\lvec(2 2)
		\move(0 2)\lvec(0 4)
		\move(2 2)\lvec(2 4)
		\move(0 4)\lvec(2 4)
		\move(1 2)\lvec(1 4)
		\move(0 4)\lvec(0 6)
		\move(2 4)\lvec(2 6)
		\move(0 6)\lvec(2 6)
		\move(1 4)\lvec(1 6)
		\htext(0.7 0.5){$0$}
		\htext(1.7 0.5){$0$}
		\htext(1.31 1.45){$0$}
		\htext(0.31 1.45){$0$}
		\htext(1.31 5.45){$2$}
		\htext(1.7 4.5){$2$}
		\move(1 4)\lvec(2 6)
		\move(0 4)\lvec(1 6)
		
		\move(0 6)\lvec(0 8)
		\move(1 6)\lvec(1 8)
		\move(2 6)\lvec(2 8)
		\move(0 8)\lvec(2 8)
		\htext(1.5 7){$1$}
		
		\htext(1.5 3){$1$}
		\move(-2 0)\lvec(0 0)
		\move(-2 0)\lvec(-2 2)
		\move(-2 2)\lvec(0 2)
		\move(-1 0)\lvec(-1 2)
		\move(-1 0)\lvec(0 2)
		\move(-2 0)\lvec(-1 2)
		\htext(-1.3 0.5){$0$}
		\htext(-0.3 0.5){$0$}
		\htext(-0.69 1.45){$0$}
		
		\vtext(0 -1){\fontsize{8}{8}\selectfont$\cdots$}
		\esegment
		
		\move(-18 0)
		\bsegment
		\move(0 0)\lvec(2 0)
		\move(0 0)\lvec(0 2)
		\move(0 2)\lvec(2 2)
		\move(2 0)\lvec(2 2)
		\move(1 0)\lvec(1 2)
		\move(0 0)\lvec(1 2)
		\move(1 0)\lvec(2 2)
		\move(0 2)\lvec(0 4)
		\move(2 2)\lvec(2 4)
		\move(0 4)\lvec(2 4)
		\move(1 2)\lvec(1 4)
		\move(0 4)\lvec(0 6)
		\move(2 4)\lvec(2 6)
		\move(0 6)\lvec(2 6)
		\move(1 4)\lvec(1 6)
		\htext(0.7 0.5){$0$}
		\htext(1.7 0.5){$0$}
		\htext(1.31 1.45){$0$}
		\htext(0.31 1.45){$0$}
		\htext(1.31 5.45){$2$}
		\htext(1.7 4.5){$2$}
		\move(1 4)\lvec(2 6)
		\move(0 4)\lvec(1 6)
		
		\move(0 6)\lvec(0 8)
		\move(1 6)\lvec(1 8)
		\move(2 6)\lvec(2 8)
		\move(0 8)\lvec(2 8)
		\htext(1.5 7){$1$}
		\htext(0.5 3){$1$}
		\htext(1.5 3){$1$}
		
		\vtext(0 -1){\fontsize{8}{8}\selectfont$\cdots$}
		\esegment
		
		\move(-15.5 0)
		\bsegment
		\move(0 0)\lvec(2 0)
		\move(0 0)\lvec(0 2)
		\move(0 2)\lvec(2 2)
		\move(2 0)\lvec(2 2)
		\move(1 0)\lvec(1 2)
		\move(0 0)\lvec(1 2)
		\move(1 0)\lvec(2 2)
		\move(0 2)\lvec(0 4)
		\move(2 2)\lvec(2 4)
		\move(0 4)\lvec(2 4)
		\move(1 2)\lvec(1 4)
		\move(0 4)\lvec(0 6)
		\move(2 4)\lvec(2 6)
		\move(0 6)\lvec(2 6)
		\move(1 4)\lvec(1 6)
		\htext(0.7 0.5){$0$}
		\htext(1.7 0.5){$0$}
		\htext(1.31 1.45){$0$}
		\htext(0.31 1.45){$0$}
		\htext(1.7 4.5){$2$}
		\htext(0.7 4.5){$2$}
		\move(1 4)\lvec(2 6)
		\move(0 4)\lvec(1 6)
		\htext(1.31 5.45){$2$}
		
		\htext(1.5 3){$1$}
		\htext(0.5 3){$1$}
		
		\vtext(0 -1){\fontsize{8}{8}\selectfont$\cdots$}
		\esegment
		
		\move(-11 0)
		\bsegment
		\move(0 0)\lvec(2 0)
		\move(0 0)\lvec(0 2)
		\move(0 2)\lvec(2 2)
		\move(2 0)\lvec(2 2)
		\move(1 0)\lvec(1 2)
		\move(0 0)\lvec(1 2)
		\move(1 0)\lvec(2 2)
		\move(0 2)\lvec(0 4)
		\move(2 2)\lvec(2 4)
		\move(0 4)\lvec(2 4)
		\move(1 2)\lvec(1 4)
		\move(0 4)\lvec(0 6)
		\move(2 4)\lvec(2 6)
		\move(0 6)\lvec(2 6)
		\move(1 4)\lvec(1 6)
		\htext(0.7 0.5){$0$}
		\htext(1.7 0.5){$0$}
		\htext(1.31 1.45){$0$}
		\htext(0.31 1.45){$0$}
		\htext(1.7 4.5){$2$}
		
		\move(1 4)\lvec(2 6)
		\move(0 4)\lvec(1 6)
		\htext(1.31 5.45){$2$}
		
		\htext(1.5 3){$1$}
		\htext(0.5 3){$1$}
		
		\move(-2 0)\lvec(0 0)
		\move(-2 0)\lvec(-2 2)
		\move(-2 2)\lvec(0 2)
		\move(-1 0)\lvec(-1 2)
		\move(-1 0)\lvec(0 2)
		\move(-2 0)\lvec(-1 2)
		\htext(-1.3 0.5){$0$}
		\htext(-0.3 0.5){$0$}
		\htext(-0.69 1.45){$0$}
		
		\vtext(0 -1){\fontsize{8}{8}\selectfont$\cdots$}
		\esegment

		\move(-6.5 0)
		\bsegment
		
		\move(0 0)\lvec(2 0)
		\move(0 0)\lvec(0 2)
		\move(0 2)\lvec(2 2)
		\move(2 0)\lvec(2 2)
		\move(1 0)\lvec(1 2)
		\move(0 0)\lvec(1 2)
		\move(1 0)\lvec(2 2)
		\move(0 2)\lvec(0 4)
		\move(2 2)\lvec(2 4)
		\move(0 4)\lvec(2 4)
		\move(1 2)\lvec(1 4)
		\htext(0.7 0.5){$0$}
		\htext(1.7 0.5){$0$}
		\htext(1.31 1.45){$0$}
		\htext(0.31 1.45){$0$}
		\move(-2 0)\lvec(0 0)
		\move(-2 0)\lvec(-2 2)
		\move(-2 2)\lvec(0 2)
		\move(-1 0)\lvec(-1 2)
		\move(-1 0)\lvec(0 2)
		\move(-2 0)\lvec(-1 2)
		\htext(-1.3 0.5){$0$}
		\htext(-0.3 0.5){$0$}
		\htext(-0.69 1.45){$0$}
		\htext(-1.69 1.45){$0$}
		\htext(0.5 3){$1$}
		\htext(1.5 3){$1$}
		\move(0 4)\lvec(0 6)
		\move(2 4)\lvec(2 6)
		\move(0 6)\lvec(2 6)
		\move(1 4)\lvec(1 6)
		\htext(1.7 4.5){$2$}
		\move(1 4)\lvec(2 6)
		\move(0 4)\lvec(1 6)
		
		\vtext(0 -1){\fontsize{8}{8}\selectfont$\cdots$}
		\esegment
		
		\move(0 0)
		\bsegment
		\move(0 0)\lvec(2 0)
		\move(0 0)\lvec(0 2)
		\move(0 2)\lvec(2 2)
		\move(2 0)\lvec(2 2)
		\move(1 0)\lvec(1 2)
		\move(0 0)\lvec(1 2)
		\move(1 0)\lvec(2 2)
		\move(0 2)\lvec(0 4)
		\move(2 2)\lvec(2 4)
		\move(0 4)\lvec(2 4)
		\move(1 2)\lvec(1 4)
		\move(0 4)\lvec(0 6)
		\move(2 4)\lvec(2 6)
		\move(0 6)\lvec(2 6)
		\move(1 4)\lvec(1 6)
		\htext(0.7 0.5){$0$}
		\htext(1.7 0.5){$0$}
		\htext(1.31 1.45){$0$}
		\htext(0.31 1.45){$0$}
		
		\htext(1.7 4.5){$2$}
		\move(1 4)\lvec(2 6)
		\move(0 4)\lvec(1 6)
		\move(-2 0)\lvec(0 0)
		\move(-2 0)\lvec(-2 2)
		\move(-2 2)\lvec(0 2)
		\move(-1 0)\lvec(-1 2)
		\move(-1 0)\lvec(0 2)
		\move(-2 0)\lvec(-1 2)
		\move(-1 2)\lvec(-1 4)
		\move(-2 2)\lvec(-2 4)
		\move(-2 4)\lvec(0 4)
		\htext(-1.3 0.5){$0$}
		\htext(-0.3 0.5){$0$}
		\htext(-0.69 1.45){$0$}
		\htext(1.5 3){$1$}
		\htext(-0.5 3){$1$}
		
		\move(-4 0)\lvec(-2 0)
		\move(-4 0)\lvec(-4 2)
		\move(-4 2)\lvec(-2 2)
		\move(-3 0)\lvec(-3 2)
		\move(-3 0)\lvec(-2 2)
		\move(-4 0)\lvec(-3 2)
		\htext(-3.3 0.5){$0$}
		\htext(-2.3 0.5){$0$}
		\htext(-2.69 1.45){$0$}
		
		\vtext(0 -1){\fontsize{8}{8}\selectfont$\cdots$}
		\esegment
		
		\move(4.5 0)
		\bsegment
		\move(0 0)\lvec(2 0)
		\move(0 0)\lvec(0 2)
		\move(0 2)\lvec(2 2)
		\move(2 0)\lvec(2 2)
		\move(1 0)\lvec(1 2)
		\move(0 0)\lvec(1 2)
		\move(1 0)\lvec(2 2)
		\move(0 2)\lvec(0 4)
		\move(2 2)\lvec(2 4)
		\move(0 4)\lvec(2 4)
		\move(1 2)\lvec(1 4)
		\move(0 4)\lvec(0 6)
		\move(2 4)\lvec(2 6)
		\move(0 6)\lvec(2 6)
		\move(1 4)\lvec(1 6)
		\htext(0.7 0.5){$0$}
		\htext(1.7 0.5){$0$}
		\htext(1.31 1.45){$0$}
		\htext(0.31 1.45){$0$}
		
		\htext(1.7 4.5){$2$}
		\move(1 4)\lvec(2 6)
		\move(0 4)\lvec(1 6)
		\move(-2 0)\lvec(0 0)
		\move(-2 0)\lvec(-2 2)
		\move(-2 2)\lvec(0 2)
		\move(-1 0)\lvec(-1 2)
		\move(-1 0)\lvec(0 2)
		\move(-2 0)\lvec(-1 2)
		\move(-1 2)\lvec(-1 4)
		\move(-2 2)\lvec(-2 4)
		\move(-2 4)\lvec(0 4)
		\htext(-1.3 0.5){$0$}
		\htext(-0.3 0.5){$0$}
		\htext(-0.69 1.45){$0$}
		\htext(1.5 3){$1$}
		\htext(0.5 3){$1$}
		\htext(-0.5 3){$1$}
		
		\vtext(0 -1){\fontsize{8}{8}\selectfont$\cdots$}
		\esegment
		
		\move(9 0)
		\bsegment
		\move(0 0)\lvec(2 0)
		\move(0 0)\lvec(0 2)
		\move(0 2)\lvec(2 2)
		\move(2 0)\lvec(2 2)
		\move(1 0)\lvec(1 2)
		\move(0 0)\lvec(1 2)
		\move(1 0)\lvec(2 2)
		\move(0 2)\lvec(0 4)
		\move(2 2)\lvec(2 4)
		\move(0 4)\lvec(2 4)
		\move(1 2)\lvec(1 4)
		\move(0 4)\lvec(0 6)
		\move(2 4)\lvec(2 6)
		\move(0 6)\lvec(2 6)
		\move(1 4)\lvec(1 6)
		\htext(0.7 0.5){$0$}
		\htext(1.7 0.5){$0$}
		\htext(1.31 1.45){$0$}
		\htext(0.31 1.45){$0$}
		
		\htext(1.7 4.5){$2$}
		\htext(-0.3 4.5){$2$}
		\move(1 4)\lvec(2 6)
		\move(0 4)\lvec(1 6)
		\move(-2 0)\lvec(0 0)
		\move(-2 0)\lvec(-2 2)
		\move(-2 2)\lvec(0 2)
		\move(-1 0)\lvec(-1 2)
		\move(-1 0)\lvec(0 2)
		\move(-2 0)\lvec(-1 2)
		\move(-1 2)\lvec(-1 4)
		\move(-2 2)\lvec(-2 4)
		\move(-2 4)\lvec(0 4)
		\htext(-1.3 0.5){$0$}
		\htext(-0.3 0.5){$0$}
		\htext(-0.69 1.45){$0$}
		\htext(1.5 3){$1$}
		
		\htext(-0.5 3){$1$}
		\move(-2 4)\lvec(-2 6)
		\move(-2 6)\lvec(0 6)
		\move(-1 4)\lvec(-1 6)
		\move(-2 4)\lvec(-1 6) 
		\move(-1 4)\lvec(0 6) 
		
		\vtext(0 -1){\fontsize{8}{8}\selectfont$\cdots$}          
		\esegment

		\move(13.5 0)
		\bsegment
		\move(0 0)\lvec(2 0)
		\move(0 0)\lvec(0 2)
		\move(0 2)\lvec(2 2)
		\move(2 0)\lvec(2 2)
		\move(1 0)\lvec(1 2)
		\move(0 0)\lvec(1 2)
		\move(1 0)\lvec(2 2)
		\move(0 2)\lvec(0 4)
		\move(2 2)\lvec(2 4)
		\move(0 4)\lvec(2 4)
		\move(1 2)\lvec(1 4)
		\move(0 4)\lvec(0 6)
		\move(2 4)\lvec(2 6)
		\move(0 6)\lvec(2 6)
		\move(1 4)\lvec(1 6)
		\htext(0.7 0.5){$0$}
		\htext(1.7 0.5){$0$}
		\htext(1.31 1.45){$0$}
		\htext(0.31 1.45){$0$}
		\htext(1.31 5.45){$2$}
		\htext(1.7 4.5){$2$}
		\move(1 4)\lvec(2 6)
		\move(0 4)\lvec(1 6)
		\move(-2 0)\lvec(0 0)
		\move(-2 0)\lvec(-2 2)
		\move(-2 2)\lvec(0 2)
		\move(-1 0)\lvec(-1 2)
		\move(-1 0)\lvec(0 2)
		\move(-2 0)\lvec(-1 2)
		\move(-1 2)\lvec(-1 4)
		\move(-2 2)\lvec(-2 4)
		\move(-2 4)\lvec(0 4)
		\htext(-1.3 0.5){$0$}
		\htext(-0.3 0.5){$0$}
		\htext(-0.69 1.45){$0$}
		\htext(1.5 3){$1$}
		\htext(-0.5 3){$1$}
		
		\vtext(0 -1){\fontsize{8}{8}\selectfont$\cdots$}
		\esegment
		
		\move(20 0)
		\bsegment
		\move(0 0)\lvec(2 0)
		\move(0 0)\lvec(0 2)
		\move(0 2)\lvec(2 2)
		\move(2 0)\lvec(2 2)
		\move(1 0)\lvec(1 2)
		\move(0 0)\lvec(1 2)
		\move(1 0)\lvec(2 2)
		\move(0 2)\lvec(0 4)
		\move(2 2)\lvec(2 4)
		\move(0 4)\lvec(2 4)
		\move(1 2)\lvec(1 4)
		\move(0 4)\lvec(0 6)
		\move(2 4)\lvec(2 6)
		\move(0 6)\lvec(2 6)
		\move(1 4)\lvec(1 6)
		\htext(0.7 0.5){$0$}
		\htext(1.7 0.5){$0$}
		\htext(1.31 1.45){$0$}
		
		\htext(1.31 5.45){$2$}
		\htext(1.7 4.5){$2$}
		\move(1 4)\lvec(2 6)
		\move(0 4)\lvec(1 6)
		\move(-2 0)\lvec(0 0)
		\move(-2 0)\lvec(-2 2)
		\move(-2 2)\lvec(0 2)
		\move(-1 0)\lvec(-1 2)
		\move(-1 0)\lvec(0 2)
		\move(-2 0)\lvec(-1 2)
		\move(-1 2)\lvec(-1 4)
		\move(-2 2)\lvec(-2 4)
		\move(-2 4)\lvec(0 4)
		\htext(-1.3 0.5){$0$}
		\htext(-0.3 0.5){$0$}
		\htext(-0.69 1.45){$0$}
		\htext(1.5 3){$1$}
		\htext(-0.5 3){$1$}
		\move(-4 0)\lvec(-2 0)
		\move(-4 0)\lvec(-4 2)
		\move(-4 2)\lvec(-2 2)
		\move(-3 0)\lvec(-3 2)
		\move(-3 0)\lvec(-2 2)
		\move(-4 0)\lvec(-3 2)
		\htext(-3.3 0.5){$0$}
		\htext(-2.3 0.5){$0$}
		\htext(-2.69 1.45){$0$}
		
		\vtext(0 -1){\fontsize{8}{8}\selectfont$\cdots$}
		\esegment
		
		\move(24.5 0)
		\bsegment
		\move(0 0)\lvec(2 0)
		\move(0 0)\lvec(0 2)
		\move(0 2)\lvec(2 2)
		\move(2 0)\lvec(2 2)
		\move(1 0)\lvec(1 2)
		\move(0 0)\lvec(1 2)
		\move(1 0)\lvec(2 2)
		\move(0 2)\lvec(0 4)
		\move(2 2)\lvec(2 4)
		\move(0 4)\lvec(2 4)
		\move(1 2)\lvec(1 4)
		\move(0 4)\lvec(0 6)
		\move(2 4)\lvec(2 6)
		\move(0 6)\lvec(2 6)
		\move(1 4)\lvec(1 6)
		\htext(0.7 0.5){$0$}
		\htext(1.7 0.5){$0$}
		\htext(1.31 1.45){$0$}
		
		\htext(1.31 5.45){$2$}
		\htext(1.7 4.5){$2$}
		\move(1 4)\lvec(2 6)
		\move(0 4)\lvec(1 6)
		\move(-2 0)\lvec(0 0)
		\move(-2 0)\lvec(-2 2)
		\move(-2 2)\lvec(0 2)
		\move(-1 0)\lvec(-1 2)
		\move(-1 0)\lvec(0 2)
		\move(-2 0)\lvec(-1 2)
		\move(-1 2)\lvec(-1 4)
		\move(-2 2)\lvec(-2 4)
		\move(-2 4)\lvec(0 4)
		\htext(-1.3 0.5){$0$}
		\htext(-0.3 0.5){$0$}
		\htext(-0.69 1.45){$0$}
		\htext(1.5 3){$1$}
		\htext(-0.5 3){$1$}
		\move(0 6)\lvec(0 8)
		\move(1 6)\lvec(1 8)
		\move(2 6)\lvec(2 8)
		\move(0 8)\lvec(2 8) 
		\htext(1.5 7){$1$} 
		
		\vtext(0 -1){\fontsize{8}{8}\selectfont$\cdots$} 
		\esegment
		
		\move(29 0)
		\bsegment
		\move(0 0)\lvec(2 0)
		\move(0 0)\lvec(0 2)
		\move(0 2)\lvec(2 2)
		\move(2 0)\lvec(2 2)
		\move(1 0)\lvec(1 2)
		\move(0 0)\lvec(1 2)
		\move(1 0)\lvec(2 2)
		\move(0 2)\lvec(0 4)
		\move(2 2)\lvec(2 4)
		\move(0 4)\lvec(2 4)
		\move(1 2)\lvec(1 4)
		\move(0 4)\lvec(0 6)
		\move(2 4)\lvec(2 6)
		\move(0 6)\lvec(2 6)
		\move(1 4)\lvec(1 6)
		\htext(0.7 0.5){$0$}
		\htext(1.7 0.5){$0$}
		\htext(1.7 8.5){$0$}
		\htext(1.31 1.45){$0$}
		
		\htext(1.31 5.45){$2$}
		\htext(1.7 4.5){$2$}
		\move(1 4)\lvec(2 6)
		\move(0 4)\lvec(1 6)
		\move(-2 0)\lvec(0 0)
		\move(-2 0)\lvec(-2 2)
		\move(-2 2)\lvec(0 2)
		\move(-1 0)\lvec(-1 2)
		\move(-1 0)\lvec(0 2)
		\move(-2 0)\lvec(-1 2)
		\move(0 8)\lvec(0 10)
		\move(1 8)\lvec(1 10)
		\move(2 8)\lvec(2 10)
		\move(0 10)\lvec(2 10)
		\move(0 8)\lvec(1 10)  
		\move(1 8)\lvec(2 10)
		
		\htext(-1.3 0.5){$0$}
		\htext(-0.3 0.5){$0$}
		\htext(-0.69 1.45){$0$}
		\htext(1.5 3){$1$}
		
		\move(0 6)\lvec(0 8)
		\move(1 6)\lvec(1 8)
		\move(2 6)\lvec(2 8)
		\move(0 8)\lvec(2 8) 
		\htext(1.5 7){$1$}  
		
		\vtext(0 -1){\fontsize{8}{8}\selectfont$\cdots$}
		\esegment
		
		\esegment
		\move(4 4.5)\avec(4 1.5)\htext(4.3 3){$0$}
		\move(3.5 -1.5)\avec(0.5 -5)\htext(2.7 -3){$0$}
		\move(4.5 -1.5)\avec(7.5 -5)\htext(6.3 -3){$1$}
		\move(-1 -7.5)\avec(-4 -10.5)\htext(-1.7 -9){$1$}
		\move(8.5 -7.5)\avec(6 -10.5)\htext(8 -9){$0$}
		\move(9.5 -7.5)\avec(13 -11)\htext(11.7 -9){$2$}
		\move(-6.2 -15.5)\avec(-10.5 -19)\htext(-7.9 -17.7){$2$}
		\move(-5 -15.5)\avec(-5 -19)\htext(-4.5 -17.5){$1$}
		\move(4 -15.5)\avec(4 -19)\htext(4.5 -17.5){$0$}
		\move(6 -15.5)\avec(13 -19.5)\htext(10.3 -17.5){$2$}
		\move(15 -15.5)\avec(15 -17.5)\htext(15.5 -16.4){$0$}
		\move(16 -15.5)\avec(21 -19.5)\htext(19.3 -17.5){$2$}	
		\move(-12.5 -24.5)\avec(-15.2 -28.5)\htext(-13.2 -26.5){$2$}
		\move(-11.5 -24.5)\avec(-9.5 -28.5)\htext(-10 -26.5){$1$}	
		
		\move(-5.5 -24.5)\avec(-8.5 -28.5)\htext(-6.4 -26.5){$2$}	
		
		\move(-4.5 -24.5)\avec(-1 -30.5)\htext(-2.8 -26.5){$0$}	
		
		\move(3.5 -24.5)\avec(0 -30.5)\htext(3 -26.5){$1$}
		
		\move(4.5 -24.5)\avec(8.5 -28.5)\htext(7.2 -26.5){$2$}
		
		\move(14 -24.5)\avec(9.5 -28.5)\htext(12.5 -26.5){$0$}
		
		\move(22.5 -24.5)\avec(18 -28.5)\htext(21 -26.5){$0$}
		
		\move(23.5 -24.5)\avec(25 -26.5)\htext(24.5 -25.1){$1$}
		
		\move(-16.5 -35.5)\avec(-17.5 -39.5)\htext(-16.6 -38){$1$}
		
		\move(-9.5 -35.5)\avec(-14 -39.5)\htext(-11.5 -38){$2$}
		
		\move(-8.5 -35.5)\avec(-7.5 -39.5)\htext(-7.4 -38){$0$}
		
		\move(-2 -35.5)\avec(-6.5 -39.5)\htext(-4 -38){$2$}
		
		\move(0 -35.5)\avec(0 -41.5)\htext(0.5 -38){$0$}
		
		\move(7.5 -35.5)\avec(6 -39.5)\htext(7.1 -38){$1$}
		
		\move(8.5 -35.5)\avec(13.5 -39.5)\htext(12.3 -38){$2$}
		
		\move(16.5 -35.5)\avec(14.5 -39.5)\htext(15.8 -38){$0$}	
		
		\move(17.5 -35.5)\avec(20 -39.5)\htext(19.5 -38){$1$}
		
		\move(25 -35.5)\avec(26.5 -39.5)\htext(26.2 -37.5){$0$}	
		\move(-19 -46.5)\avec(-19.5 -49.5)\htext(-18.8 -48){$0$}
		
		\move(-18.5 -46.5)\avec(-17 -49.5)\htext(-17.3 -48){$1$}
		
		\move(-14 -46.5)\avec(-14.5 -51.5)\htext(-13.8 -49){$2$}
		
		\move(-13.5 -46.5)\avec(-10.5 -51.5)\htext(-11.5 -49){$0$}
		
		\move(-8.5 -46.5)\avec(-9.5 -51.5)\htext(-8.5 -49){$2$}
		
		\move(-7.5 -46.5)\avec(-6 -51.5)\htext(-6.2 -49){$0$}
		
		\move(-1 -46.5)\avec(-5 -51.5)\htext(-2.4 -49){$2$}
		
		\move(5 -46.5)\avec(1 -51.5)\htext(3.6 -49){$0$}
		
		\move(6 -46.5)\avec(5.5 -51.5)\htext(6.1 -49){$1$}
		
		\move(7 -46.5)\avec(9 -51.5)\htext(8.6 -49){$2$}
		
		\move(13 -46.5)\avec(13.5 -51.5)\htext(13.7 -49){$1$}
		
		\move(19.5 -46.5)\avec(19 -51.5)\htext(19.7 -49){$0$}
		
		\move(20.5 -46.5)\avec(24 -51.5)\htext(22.8 -49){$1$}
		
		\move(27 -46.5)\avec(28.5 -51.5)\htext(28.2 -49){$0$}
	\end{texdraw}
\end{center}

\end{example}

\clearpage
\begin{example}
	The top part of the  crystal $\mathbf{Y}(2\Lambda_0)$ for $C_{2}^{(1)}$ is given below.	

\vskip 15mm

	\begin{center}
	\begin{texdraw}
		\drawdim em
		\arrowheadsize l:0.3 w:0.3
		\arrowheadtype t:V
		\fontsize{4}{4}\selectfont
		\drawdim em
		\setunitscale 1.4
		\move(0 5)
		\bsegment
		\move(3 0)
		\bsegment
		\move(0 0)\lvec(2 0)
		\move(0 0)\lvec(0 2)
		\move(0 2)\lvec(2 2)
		\move(2 0)\lvec(2 2)
		\move(1 0)\lvec(1 2)
		\move(0 0)\lvec(1 2)
		\move(1 0)\lvec(2 2)
		\move(2 0)\lvec(4 0)
		\move(4 0)\lvec(4 2)
		\move(2 2)\lvec(4 2)
		\move(3 0)\lvec(3 2)
		\move(2 0)\lvec(3 2)
		\move(3 0)\lvec(4 2)
		\htext(2.7 0.5){$0$}
		\htext(3.7 0.5){$0$}				
		\htext(0.7 0.5){$0$}
		\htext(1.7 0.5){$0$}
		\esegment
		\esegment
		\move(0 -1)
		\bsegment
		\move(3 0)
		\bsegment	
		\move(0 0)\lvec(2 0)
		\move(0 0)\lvec(0 2)
		\move(0 2)\lvec(2 2)
		\move(2 0)\lvec(2 2)
		\move(1 0)\lvec(1 2)
		\move(0 0)\lvec(1 2)
		\move(1 0)\lvec(2 2)
		\move(2 0)\lvec(4 0)
		\move(4 0)\lvec(4 2)
		\move(2 2)\lvec(4 2)
		\move(3 0)\lvec(3 2)
		\move(2 0)\lvec(3 2)
		\move(3 0)\lvec(4 2)
		\htext(2.7 0.5){$0$}
		\htext(3.7 0.5){$0$}				
		\htext(0.7 0.5){$0$}
		\htext(1.7 0.5){$0$}
		
		\htext(3.31 1.45){$0$}
		\htext(2.31 1.45){$0$}
		\esegment
		\esegment
		\move(0 -7.5)
		\bsegment
		\move(8 0)
		\bsegment
		\move(0 0)\lvec(2 0)
		\move(0 0)\lvec(0 2)
		\move(0 2)\lvec(2 2)
		\move(2 0)\lvec(2 2)
		\move(1 0)\lvec(1 2)
		\move(0 0)\lvec(1 2)
		\move(1 0)\lvec(2 2)
		\move(2 0)\lvec(4 0)
		\move(4 0)\lvec(4 2)
		\move(2 2)\lvec(4 2)
		\move(3 0)\lvec(3 2)
		\move(2 0)\lvec(3 2)
		\move(3 0)\lvec(4 2)
		\htext(2.7 0.5){$0$}
		\htext(3.7 0.5){$0$}				
		\htext(0.7 0.5){$0$}
		\htext(1.7 0.5){$0$}
		\htext(3.5 3){$1$}
		\htext(3.31 1.45){$0$}
		\htext(2.31 1.45){$0$}
		\move(2 2)\lvec(2 4)
		\move(3 2)\lvec(3 4)
		\move(4 2)\lvec(4 4)
		\move(2 4)\lvec(4 4)
		\esegment
		
		\move(-2 0)
		\bsegment
		\move(0 0)\lvec(2 0)
		\move(0 0)\lvec(0 2)
		\move(0 2)\lvec(2 2)
		\move(2 0)\lvec(2 2)
		\move(1 0)\lvec(1 2)
		\move(0 0)\lvec(1 2)
		\move(1 0)\lvec(2 2)
		\move(2 0)\lvec(4 0)
		\move(4 0)\lvec(4 2)
		\move(2 2)\lvec(4 2)
		\move(3 0)\lvec(3 2)
		\move(2 0)\lvec(3 2)
		\move(3 0)\lvec(4 2)
		\htext(2.7 0.5){$0$}
		\htext(3.7 0.5){$0$}				
		\htext(0.7 0.5){$0$}
		\htext(1.7 0.5){$0$}
		\htext(1.31 1.45){$0$}
		\htext(0.31 1.45){$0$}
		\htext(3.31 1.45){$0$}
		\htext(2.31 1.45){$0$}
		\esegment
		
		\esegment
		\move(0 -16)
		\bsegment
		\move(-7 0)
		\bsegment
		\move(0 0)\lvec(2 0)
		\move(0 0)\lvec(0 2)
		\move(0 2)\lvec(2 2)
		\move(2 0)\lvec(2 2)
		\move(1 0)\lvec(1 2)
		\move(0 0)\lvec(1 2)
		\move(1 0)\lvec(2 2)
		\move(2 0)\lvec(4 0)
		\move(4 0)\lvec(4 2)
		\move(2 2)\lvec(4 2)
		\move(3 0)\lvec(3 2)
		\move(2 0)\lvec(3 2)
		\move(3 0)\lvec(4 2)
		\htext(2.7 0.5){$0$}
		\htext(3.7 0.5){$0$}				
		\htext(0.7 0.5){$0$}
		\htext(1.7 0.5){$0$}
		\htext(3.5 3){$1$}
		\htext(1.31 1.45){$0$}
		\htext(0.31 1.45){$0$}
		\htext(3.31 1.45){$0$}
		\htext(2.31 1.45){$0$}
		\move(2 2)\lvec(2 4)
		\move(3 2)\lvec(3 4)
		\move(4 2)\lvec(4 4)
		\move(2 4)\lvec(4 4)
		\esegment
		
		\move(3 0)
		\bsegment
		\move(0 0)\lvec(2 0)
		\move(0 0)\lvec(0 2)
		\move(0 2)\lvec(2 2)
		\move(2 0)\lvec(2 2)
		\move(1 0)\lvec(1 2)
		\move(0 0)\lvec(1 2)
		\move(1 0)\lvec(2 2)
		\move(2 0)\lvec(4 0)
		\move(4 0)\lvec(4 2)
		\move(2 2)\lvec(4 2)
		\move(3 0)\lvec(3 2)
		\move(2 0)\lvec(3 2)
		\move(3 0)\lvec(4 2)
		\htext(2.7 0.5){$0$}
		\htext(3.7 0.5){$0$}				
		\htext(0.7 0.5){$0$}
		\htext(1.7 0.5){$0$}
		\htext(3.5 3){$1$}
		
		\htext(3.31 1.45){$0$}
		\htext(2.31 1.45){$0$}
		\move(2 2)\lvec(2 4)
		\move(3 2)\lvec(3 4)
		\move(4 2)\lvec(4 4)
		\move(2 4)\lvec(4 4)
		\move(2 4)\lvec(2 6)
		\move(3 4)\lvec(3 6)
		\move(4 4)\lvec(4 6)
		\move(2 6)\lvec(4 6)
		\htext(3.5 5){$2$}
		\esegment
		
		\move(13 0)
		\bsegment
		\move(0 0)\lvec(2 0)
		\move(0 0)\lvec(0 2)
		\move(0 2)\lvec(2 2)
		\move(2 0)\lvec(2 2)
		\move(1 0)\lvec(1 2)
		\move(0 0)\lvec(1 2)
		\move(1 0)\lvec(2 2)
		\move(2 0)\lvec(4 0)
		\move(4 0)\lvec(4 2)
		\move(2 2)\lvec(4 2)
		\move(3 0)\lvec(3 2)
		\move(2 0)\lvec(3 2)
		\move(3 0)\lvec(4 2)
		\htext(2.7 0.5){$0$}
		\htext(3.7 0.5){$0$}				
		\htext(0.7 0.5){$0$}
		\htext(1.7 0.5){$0$}
		\htext(3.5 3){$1$}
		\htext(2.5 3){$1$}
		\htext(3.31 1.45){$0$}
		\htext(2.31 1.45){$0$}
		\move(2 2)\lvec(2 4)
		\move(3 2)\lvec(3 4)
		\move(4 2)\lvec(4 4)
		\move(2 4)\lvec(4 4)
		\esegment
		
		\esegment
		
		\move(0 -27)
		\bsegment
		
		\move(-14 0)
		\bsegment
		
		\move(0 0)\lvec(2 0)
		\move(0 0)\lvec(0 2)
		\move(0 2)\lvec(2 2)
		\move(2 0)\lvec(2 2)
		\move(1 0)\lvec(1 2)
		\move(0 0)\lvec(1 2)
		\move(1 0)\lvec(2 2)
		\move(2 0)\lvec(4 0)
		\move(4 0)\lvec(4 2)
		\move(2 2)\lvec(4 2)
		\move(3 0)\lvec(3 2)
		\move(2 0)\lvec(3 2)
		\move(3 0)\lvec(4 2)
		\htext(2.7 0.5){$0$}
		\htext(3.7 0.5){$0$}				
		\htext(0.7 0.5){$0$}
		\htext(1.7 0.5){$0$}
		\htext(3.5 3){$1$}
		\htext(1.31 1.45){$0$}
		\htext(0.31 1.45){$0$}
		\htext(3.31 1.45){$0$}
		\htext(2.31 1.45){$0$}
		\move(2 2)\lvec(2 4)
		\move(3 2)\lvec(3 4)
		\move(4 2)\lvec(4 4)
		\move(2 4)\lvec(4 4)
		\move(2 4)\lvec(2 6)
		\move(3 4)\lvec(3 6)
		\move(4 4)\lvec(4 6)
		\move(2 6)\lvec(4 6)
		\htext(3.5 5){$2$}		
		\esegment
		
		\move(-6 0)
		\bsegment
		\move(0 0)\lvec(2 0)
		\move(0 0)\lvec(0 2)
		\move(0 2)\lvec(2 2)
		\move(2 0)\lvec(2 2)
		\move(1 0)\lvec(1 2)
		\move(0 0)\lvec(1 2)
		\move(1 0)\lvec(2 2)
		\move(2 0)\lvec(4 0)
		\move(4 0)\lvec(4 2)
		\move(2 2)\lvec(4 2)
		\move(3 0)\lvec(3 2)
		\move(2 0)\lvec(3 2)
		\move(3 0)\lvec(4 2)
		\htext(2.7 0.5){$0$}
		\htext(3.7 0.5){$0$}				
		\htext(0.7 0.5){$0$}
		\htext(1.7 0.5){$0$}
		\htext(3.5 3){$1$}
		\htext(2.5 3){$1$}
		\htext(1.31 1.45){$0$}
		\htext(0.31 1.45){$0$}
		\htext(3.31 1.45){$0$}
		\htext(2.31 1.45){$0$}
		\move(2 2)\lvec(2 4)
		\move(3 2)\lvec(3 4)
		\move(4 2)\lvec(4 4)
		\move(2 4)\lvec(4 4)

		\esegment
		
		\move(6 0)
		\bsegment
		\move(0 0)\lvec(2 0)
		\move(0 0)\lvec(0 2)
		\move(0 2)\lvec(2 2)
		\move(2 0)\lvec(2 2)
		\move(1 0)\lvec(1 2)
		\move(0 0)\lvec(1 2)
		\move(1 0)\lvec(2 2)
		\move(2 0)\lvec(4 0)
		\move(4 0)\lvec(4 2)
		\move(2 2)\lvec(4 2)
		\move(3 0)\lvec(3 2)
		\move(2 0)\lvec(3 2)
		\move(3 0)\lvec(4 2)
		\htext(2.7 0.5){$0$}
		\htext(3.7 0.5){$0$}				
		\htext(0.7 0.5){$0$}
		\htext(1.7 0.5){$0$}
		\htext(3.5 3){$1$}
		\htext(2.5 3){$1$}
		\move(-4 0)\lvec(0 0)
		\move(-4 2)\lvec(0 2)
		\move(-4 0)\lvec(-4 2)
		\move(-3 0)\lvec(-3 2)
		\move(-2 0)\lvec(-2 2)
		\move(-1 0)\lvec(-1 2)
		\move(-4 0)\lvec(-3 2)
		\move(-3 0)\lvec(-2 2)
		\move(-2 0)\lvec(-1 2)
		\move(-1 0)\lvec(0 2)
		\htext(-0.69 1.45){$0$}
		\htext(-1.69 1.45){$0$}
		\htext(-1.3 0.5){$0$}
		\htext(-0.3 0.5){$0$}				
		\htext(-3.3 0.5){$0$}
		\htext(-2.3 0.5){$0$}
		
		\htext(3.31 1.45){$0$}
		\htext(2.31 1.45){$0$}
		\move(2 2)\lvec(2 4)
		\move(3 2)\lvec(3 4)
		\move(4 2)\lvec(4 4)
		\move(2 4)\lvec(4 4)
		
		\esegment
		
		\move(14 0)
		\bsegment
		
		\move(0 0)\lvec(2 0)
		\move(0 0)\lvec(0 2)
		\move(0 2)\lvec(2 2)
		\move(2 0)\lvec(2 2)
		\move(1 0)\lvec(1 2)
		\move(0 0)\lvec(1 2)
		\move(1 0)\lvec(2 2)
		\move(2 0)\lvec(4 0)
		\move(4 0)\lvec(4 2)
		\move(2 2)\lvec(4 2)
		\move(3 0)\lvec(3 2)
		\move(2 0)\lvec(3 2)
		\move(3 0)\lvec(4 2)
		\htext(2.7 0.5){$0$}
		\htext(3.7 0.5){$0$}				
		\htext(0.7 0.5){$0$}
		\htext(1.7 0.5){$0$}
		\htext(3.5 3){$1$}
		
		\htext(3.31 1.45){$0$}
		\htext(2.31 1.45){$0$}
		\move(2 2)\lvec(2 4)
		\move(3 2)\lvec(3 4)
		\move(4 2)\lvec(4 4)
		\move(2 4)\lvec(4 4)
		\move(2 4)\lvec(2 6)
		\move(3 4)\lvec(3 6)
		\move(4 4)\lvec(4 6)
		\move(2 6)\lvec(4 6)
		\htext(3.5 5){$2$}
		\htext(3.5 7){$1$}
		\move(2 6)\lvec(2 8)
		\move(3 6)\lvec(3 8)
		\move(4 6)\lvec(4 8)
		\move(2 8)\lvec(4 8)

		\esegment
		
		\move(22 0)
		\bsegment
		\move(0 0)\lvec(2 0)
		\move(0 0)\lvec(0 2)
		\move(0 2)\lvec(2 2)
		\move(2 0)\lvec(2 2)
		\move(1 0)\lvec(1 2)
		\move(0 0)\lvec(1 2)
		\move(1 0)\lvec(2 2)
		\move(2 0)\lvec(4 0)
		\move(4 0)\lvec(4 2)
		\move(2 2)\lvec(4 2)
		\move(3 0)\lvec(3 2)
		\move(2 0)\lvec(3 2)
		\move(3 0)\lvec(4 2)
		\htext(2.7 0.5){$0$}
		\htext(3.7 0.5){$0$}				
		\htext(0.7 0.5){$0$}
		\htext(1.7 0.5){$0$}
		\htext(3.5 3){$1$}
		\htext(2.5 3){$1$}
		\htext(3.31 1.45){$0$}
		\htext(2.31 1.45){$0$}
		\move(2 2)\lvec(2 4)
		\move(3 2)\lvec(3 4)
		\move(4 2)\lvec(4 4)
		\move(2 4)\lvec(4 4)
		\move(2 4)\lvec(2 6)
		\move(3 4)\lvec(3 6)
		\move(4 4)\lvec(4 6)
		\move(2 6)\lvec(4 6)
		\htext(3.5 5){$2$}
		\esegment

		\esegment
		\move(0 -38)
		\bsegment
		
		\move(-20 0)
		\bsegment
		\move(0 0)\lvec(2 0)
		\move(0 0)\lvec(0 2)
		\move(0 2)\lvec(2 2)
		\move(2 0)\lvec(2 2)
		\move(1 0)\lvec(1 2)
		\move(0 0)\lvec(1 2)
		\move(1 0)\lvec(2 2)
		\move(2 0)\lvec(4 0)
		\move(4 0)\lvec(4 2)
		\move(2 2)\lvec(4 2)
		\move(3 0)\lvec(3 2)
		\move(2 0)\lvec(3 2)
		\move(3 0)\lvec(4 2)
		\htext(2.7 0.5){$0$}
		\htext(3.7 0.5){$0$}				
		\htext(0.7 0.5){$0$}
		\htext(1.7 0.5){$0$}
		\htext(3.5 3){$1$}
		\htext(1.31 1.45){$0$}
		\htext(0.31 1.45){$0$}
		\htext(3.31 1.45){$0$}
		\htext(2.31 1.45){$0$}
		\move(2 2)\lvec(2 4)
		\move(3 2)\lvec(3 4)
		\move(4 2)\lvec(4 4)
		\move(2 4)\lvec(4 4)
		\move(2 4)\lvec(2 6)
		\move(3 4)\lvec(3 6)
		\move(4 4)\lvec(4 6)
		\move(2 6)\lvec(4 6)
		\htext(3.5 5){$2$}
		\htext(3.5 7){$1$}
		\move(2 6)\lvec(2 8)
		\move(3 6)\lvec(3 8)
		\move(4 6)\lvec(4 8)
		\move(2 8)\lvec(4 8)	
		
		\esegment
		
		\move(-15 0)
		\bsegment
		\move(0 0)\lvec(2 0)
		\move(0 0)\lvec(0 2)
		\move(0 2)\lvec(2 2)
		\move(2 0)\lvec(2 2)
		\move(1 0)\lvec(1 2)
		\move(0 0)\lvec(1 2)
		\move(1 0)\lvec(2 2)
		\move(2 0)\lvec(4 0)
		\move(4 0)\lvec(4 2)
		\move(2 2)\lvec(4 2)
		\move(3 0)\lvec(3 2)
		\move(2 0)\lvec(3 2)
		\move(3 0)\lvec(4 2)
		\htext(2.7 0.5){$0$}
		\htext(3.7 0.5){$0$}				
		\htext(0.7 0.5){$0$}
		\htext(1.7 0.5){$0$}
		\htext(3.5 3){$1$}
		\htext(2.5 3){$1$}
		\htext(1.31 1.45){$0$}
		\htext(0.31 1.45){$0$}
		\htext(3.31 1.45){$0$}
		\htext(2.31 1.45){$0$}
		\move(2 2)\lvec(2 4)
		\move(3 2)\lvec(3 4)
		\move(4 2)\lvec(4 4)
		\move(2 4)\lvec(4 4)
		\move(2 4)\lvec(2 6)
		\move(3 4)\lvec(3 6)
		\move(4 4)\lvec(4 6)
		\move(2 6)\lvec(4 6)
		\htext(3.5 5){$2$}
		\esegment
		
		\move(-10 0)
		\bsegment
		
		\move(0 0)\lvec(2 0)
		\move(0 0)\lvec(0 2)
		\move(0 2)\lvec(2 2)
		\move(2 0)\lvec(2 2)
		\move(1 0)\lvec(1 2)
		\move(0 0)\lvec(1 2)
		\move(1 0)\lvec(2 2)
		\move(2 0)\lvec(4 0)
		\move(4 0)\lvec(4 2)
		\move(2 2)\lvec(4 2)
		\move(3 0)\lvec(3 2)
		\move(2 0)\lvec(3 2)
		\move(3 0)\lvec(4 2)
		\htext(2.7 0.5){$0$}
		\htext(3.7 0.5){$0$}				
		\htext(0.7 0.5){$0$}
		\htext(1.7 0.5){$0$}
		\htext(3.5 3){$1$}
		\htext(2.5 3){$1$}
		\htext(1.5 3){$1$}
		\htext(1.31 1.45){$0$}
		\htext(0.31 1.45){$0$}
		\htext(3.31 1.45){$0$}
		\htext(2.31 1.45){$0$}
		\move(0 2)\lvec(0 4)
		\move(1 2)\lvec(1 4)
		\move(0 4)\lvec(2 4)		
		\move(2 2)\lvec(2 4)
		\move(3 2)\lvec(3 4)
		\move(4 2)\lvec(4 4)
		\move(2 4)\lvec(4 4)

		\esegment
		
		\move(-1 0)
		\bsegment
		\move(0 0)\lvec(2 0)
		\move(0 0)\lvec(0 2)
		\move(0 2)\lvec(2 2)
		\move(2 0)\lvec(2 2)
		\move(1 0)\lvec(1 2)
		\move(0 0)\lvec(1 2)
		\move(1 0)\lvec(2 2)
		\move(2 0)\lvec(4 0)
		\move(4 0)\lvec(4 2)
		\move(2 2)\lvec(4 2)
		\move(3 0)\lvec(3 2)
		\move(2 0)\lvec(3 2)
		\move(3 0)\lvec(4 2)
		\htext(2.7 0.5){$0$}
		\htext(3.7 0.5){$0$}				
		\htext(0.7 0.5){$0$}
		\htext(1.7 0.5){$0$}
		\htext(3.5 3){$1$}
		\htext(2.5 3){$1$}
		\move(-4 0)\lvec(0 0)
		\move(-4 2)\lvec(0 2)
		\move(-4 0)\lvec(-4 2)
		\move(-3 0)\lvec(-3 2)
		\move(-2 0)\lvec(-2 2)
		\move(-1 0)\lvec(-1 2)
		\move(-4 0)\lvec(-3 2)
		\move(-3 0)\lvec(-2 2)
		\move(-2 0)\lvec(-1 2)
		\move(-1 0)\lvec(0 2)
		\htext(-0.69 1.45){$0$}
		\htext(-1.69 1.45){$0$}
		\htext(-1.3 0.5){$0$}
		\htext(-0.3 0.5){$0$}				
		\htext(-3.3 0.5){$0$}
		\htext(-2.3 0.5){$0$}
		\htext(1.31 1.45){$0$}
		\htext(0.31 1.45){$0$}
		\htext(3.31 1.45){$0$}
		\htext(2.31 1.45){$0$}
		\move(2 2)\lvec(2 4)
		\move(3 2)\lvec(3 4)
		\move(4 2)\lvec(4 4)
		\move(2 4)\lvec(4 4)

		\esegment
		
		\move(8 0)
		\bsegment
		\move(0 0)\lvec(2 0)
		\move(0 0)\lvec(0 2)
		\move(0 2)\lvec(2 2)
		\move(2 0)\lvec(2 2)
		\move(1 0)\lvec(1 2)
		\move(0 0)\lvec(1 2)
		\move(1 0)\lvec(2 2)
		\move(2 0)\lvec(4 0)
		\move(4 0)\lvec(4 2)
		\move(2 2)\lvec(4 2)
		\move(3 0)\lvec(3 2)
		\move(2 0)\lvec(3 2)
		\move(3 0)\lvec(4 2)
		\htext(2.7 0.5){$0$}
		\htext(3.7 0.5){$0$}				
		\htext(0.7 0.5){$0$}
		\htext(1.7 0.5){$0$}
		\htext(3.5 3){$1$}
		\htext(3.5 5){$2$}
		\htext(2.5 3){$1$}
		\move(-4 0)\lvec(0 0)
		\move(-4 2)\lvec(0 2)
		\move(-4 0)\lvec(-4 2)
		\move(-3 0)\lvec(-3 2)
		\move(-2 0)\lvec(-2 2)
		\move(-1 0)\lvec(-1 2)
		\move(-4 0)\lvec(-3 2)
		\move(-3 0)\lvec(-2 2)
		\move(-2 0)\lvec(-1 2)
		\move(-1 0)\lvec(0 2)
		\htext(-0.69 1.45){$0$}
		\htext(-1.69 1.45){$0$}
		\htext(-1.3 0.5){$0$}
		\htext(-0.3 0.5){$0$}				
		\htext(-3.3 0.5){$0$}
		\htext(-2.3 0.5){$0$}
		
		\htext(3.31 1.45){$0$}
		\htext(2.31 1.45){$0$}
		\move(2 2)\lvec(2 4)
		\move(3 2)\lvec(3 4)
		\move(4 2)\lvec(4 4)
		\move(2 4)\lvec(4 4)
		\move(2 4)\lvec(2 6)
		\move(3 4)\lvec(3 6)
		\move(4 4)\lvec(4 6)
		
		\move(2 6)\lvec(4 6)
		\esegment

		\move(17 0)
		\bsegment
		\move(0 0)\lvec(2 0)
		\move(0 0)\lvec(0 2)
		\move(0 2)\lvec(2 2)
		\move(2 0)\lvec(2 2)
		\move(1 0)\lvec(1 2)
		\move(0 0)\lvec(1 2)
		\move(1 0)\lvec(2 2)
		\move(2 0)\lvec(4 0)
		\move(4 0)\lvec(4 2)
		\move(2 2)\lvec(4 2)
		\move(3 0)\lvec(3 2)
		\move(2 0)\lvec(3 2)
		\move(3 0)\lvec(4 2)
		\htext(2.7 0.5){$0$}
		\htext(3.7 0.5){$0$}				
		\htext(0.7 0.5){$0$}
		\htext(1.7 0.5){$0$}
		\htext(3.5 3){$1$}
		\htext(3.5 5){$2$}
		\htext(3.5 7){$1$}
		\move(-4 0)\lvec(0 0)
		\move(-4 2)\lvec(0 2)
		\move(-4 0)\lvec(-4 2)
		\move(-3 0)\lvec(-3 2)
		\move(-2 0)\lvec(-2 2)
		\move(-1 0)\lvec(-1 2)
		\move(-4 0)\lvec(-3 2)
		\move(-3 0)\lvec(-2 2)
		\move(-2 0)\lvec(-1 2)
		\move(-1 0)\lvec(0 2)
		\htext(-0.69 1.45){$0$}
		\htext(-1.69 1.45){$0$}
		\htext(-1.3 0.5){$0$}
		\htext(-0.3 0.5){$0$}				
		\htext(-3.3 0.5){$0$}
		\htext(-2.3 0.5){$0$}
		
		\htext(3.31 1.45){$0$}
		\htext(2.31 1.45){$0$}
		\move(2 2)\lvec(2 4)
		\move(3 2)\lvec(3 4)
		\move(4 2)\lvec(4 4)
		\move(2 4)\lvec(4 4)
		\move(2 4)\lvec(2 8)
		\move(3 4)\lvec(3 8)
		\move(4 4)\lvec(4 8)
		\move(2 8)\lvec(4 8)
		\move(2 6)\lvec(4 6)
		\esegment

		\move(22 0)
		\bsegment
		\move(0 0)\lvec(2 0)
		\move(0 0)\lvec(0 2)
		\move(0 2)\lvec(2 2)
		\move(2 0)\lvec(2 2)
		\move(1 0)\lvec(1 2)
		\move(0 0)\lvec(1 2)
		\move(1 0)\lvec(2 2)
		\move(2 0)\lvec(4 0)
		\move(4 0)\lvec(4 2)
		\move(2 2)\lvec(4 2)
		\move(3 0)\lvec(3 2)
		\move(2 0)\lvec(3 2)
		\move(3 0)\lvec(4 2)
		\htext(2.7 0.5){$0$}
		\htext(3.7 0.5){$0$}				
		\htext(0.7 0.5){$0$}
		\htext(1.7 0.5){$0$}
		\htext(3.5 3){$1$}
		\htext(3.5 5){$2$}
		\htext(3.5 7){$1$}
		\htext(2.5 3){$1$}
		\htext(3.31 1.45){$0$}
		\htext(2.31 1.45){$0$}
		\move(2 2)\lvec(2 4)
		\move(3 2)\lvec(3 4)
		\move(4 2)\lvec(4 4)
		\move(2 4)\lvec(4 4)
		\move(2 4)\lvec(2 8)
		\move(3 4)\lvec(3 8)
		\move(4 4)\lvec(4 8)
		\move(2 8)\lvec(4 8)
		\move(2 6)\lvec(4 6)
		
		\esegment
		
		\move(27 0)
		\bsegment
		\move(0 0)\lvec(2 0)
		\move(0 0)\lvec(0 2)
		\move(0 2)\lvec(2 2)
		\move(2 0)\lvec(2 2)
		\move(1 0)\lvec(1 2)
		\move(0 0)\lvec(1 2)
		\move(1 0)\lvec(2 2)
		\move(2 0)\lvec(4 0)
		\move(4 0)\lvec(4 2)
		\move(2 2)\lvec(4 2)
		\move(3 0)\lvec(3 2)
		\move(2 0)\lvec(3 2)
		\move(3 0)\lvec(4 2)
		\htext(2.7 0.5){$0$}
		\htext(3.7 0.5){$0$}				
		\htext(0.7 0.5){$0$}
		\htext(1.7 0.5){$0$}
		\htext(3.5 3){$1$}
		\htext(3.5 5){$2$}
		\htext(2.5 5){$2$}
		\htext(2.5 3){$1$}
		\htext(3.31 1.45){$0$}
		\htext(2.31 1.45){$0$}
		\move(2 2)\lvec(2 4)
		\move(3 2)\lvec(3 4)
		\move(4 2)\lvec(4 4)
		\move(2 4)\lvec(4 4)
		\move(2 4)\lvec(2 6)
		\move(3 4)\lvec(3 6)
		\move(4 4)\lvec(4 6)
		
		\move(2 6)\lvec(4 6)
		
		\esegment						
		\esegment
		
		\move(0 -51)
		\bsegment
		
		\move(-22 0)
		\bsegment
		\move(0 0)\lvec(2 0)
		\move(0 0)\lvec(0 2)
		\move(0 2)\lvec(2 2)
		\move(2 0)\lvec(2 2)
		\move(1 0)\lvec(1 2)
		\move(0 0)\lvec(1 2)
		\move(1 0)\lvec(2 2)
		\move(2 0)\lvec(4 0)
		\move(4 0)\lvec(4 2)
		\move(2 2)\lvec(4 2)
		\move(3 0)\lvec(3 2)
		\move(2 0)\lvec(3 2)
		\move(3 0)\lvec(4 2)
		\htext(2.7 0.5){$0$}
		\htext(3.7 0.5){$0$}				
		\htext(0.7 0.5){$0$}
		\htext(1.7 0.5){$0$}
		\htext(3.5 3){$1$}
		\htext(2.5 3){$1$}
		\htext(1.31 1.45){$0$}
		\htext(0.31 1.45){$0$}
		\htext(3.31 1.45){$0$}
		\htext(2.31 1.45){$0$}
		\move(2 2)\lvec(2 4)
		\move(3 2)\lvec(3 4)
		\move(4 2)\lvec(4 4)
		\move(2 4)\lvec(4 4)
		\move(2 4)\lvec(2 6)
		\move(3 4)\lvec(3 6)
		\move(4 4)\lvec(4 6)
		\move(2 6)\lvec(4 6)
		\htext(3.5 5){$2$}
		\htext(3.5 7){$1$}
		\move(2 6)\lvec(2 8)
		\move(3 6)\lvec(3 8)
		\move(4 6)\lvec(4 8)
		\move(2 8)\lvec(4 8)
		\vtext(2 -1){\fontsize{8}{8}\selectfont$\cdots$}	
		\esegment

		\move(-14 0)
		\bsegment
		\move(0 0)\lvec(2 0)
		\move(0 0)\lvec(0 2)
		\move(0 2)\lvec(2 2)
		\move(2 0)\lvec(2 2)
		\move(1 0)\lvec(1 2)
		\move(0 0)\lvec(1 2)
		\move(1 0)\lvec(2 2)
		\move(2 0)\lvec(4 0)
		\move(4 0)\lvec(4 2)
		\move(2 2)\lvec(4 2)
		\move(3 0)\lvec(3 2)
		\move(2 0)\lvec(3 2)
		\move(3 0)\lvec(4 2)
		\htext(2.7 0.5){$0$}
		\htext(3.7 0.5){$0$}				
		\htext(0.7 0.5){$0$}
		\htext(1.7 0.5){$0$}
		\htext(3.5 3){$1$}
		\htext(2.5 3){$1$}
		\htext(1.5 3){$1$}
		\htext(1.31 1.45){$0$}
		\htext(0.31 1.45){$0$}
		\htext(3.31 1.45){$0$}
		\htext(2.31 1.45){$0$}
		\move(0 2)\lvec(0 4)
		\move(1 2)\lvec(1 4)
		\move(0 4)\lvec(2 4)	
		\move(2 2)\lvec(2 4)
		\move(3 2)\lvec(3 4)
		\move(4 2)\lvec(4 4)
		\move(2 4)\lvec(4 4)
		\move(2 4)\lvec(2 6)
		\move(3 4)\lvec(3 6)
		\move(4 4)\lvec(4 6)
		\move(2 6)\lvec(4 6)
		\htext(3.5 5){$2$}
		
		\vtext(2 -1){\fontsize{8}{8}\selectfont$\cdots$}
		\esegment
		
		\move(-2 0)
		\bsegment
		
		\move(0 0)\lvec(2 0)
		\move(0 0)\lvec(0 2)
		\move(0 2)\lvec(2 2)
		\move(2 0)\lvec(2 2)
		\move(1 0)\lvec(1 2)
		\move(0 0)\lvec(1 2)
		\move(1 0)\lvec(2 2)
		\move(2 0)\lvec(4 0)
		\move(4 0)\lvec(4 2)
		\move(2 2)\lvec(4 2)
		\move(3 0)\lvec(3 2)
		\move(2 0)\lvec(3 2)
		\move(3 0)\lvec(4 2)
		\htext(2.7 0.5){$0$}
		\htext(3.7 0.5){$0$}				
		\htext(0.7 0.5){$0$}
		\htext(1.7 0.5){$0$}
		\htext(3.5 3){$1$}
		\htext(2.5 3){$1$}
		\htext(1.5 3){$1$}
		\move(-4 0)\lvec(0 0)
		\move(-4 2)\lvec(0 2)
		\move(-4 0)\lvec(-4 2)
		\move(-3 0)\lvec(-3 2)
		\move(-2 0)\lvec(-2 2)
		\move(-1 0)\lvec(-1 2)
		\move(-4 0)\lvec(-3 2)
		\move(-3 0)\lvec(-2 2)
		\move(-2 0)\lvec(-1 2)
		\move(-1 0)\lvec(0 2)
		\htext(-0.69 1.45){$0$}
		\htext(-1.69 1.45){$0$}
		\htext(-1.3 0.5){$0$}
		\htext(-0.3 0.5){$0$}				
		\htext(-3.3 0.5){$0$}
		\htext(-2.3 0.5){$0$}
		\htext(1.31 1.45){$0$}
		\htext(0.31 1.45){$0$}
		\htext(3.31 1.45){$0$}
		\htext(2.31 1.45){$0$}
		\move(2 2)\lvec(2 4)
		\move(3 2)\lvec(3 4)
		\move(4 2)\lvec(4 4)
		\move(2 4)\lvec(4 4)
		\move(0 2)\lvec(0 4)
		\move(1 2)\lvec(1 4)
		\move(0 4)\lvec(2 4)
		\vtext(0 -1){\fontsize{8}{8}\selectfont$\cdots$}
		\esegment
		
		\move(9.5 0)
		\bsegment
		\move(0 0)\lvec(2 0)
		\move(0 0)\lvec(0 2)
		\move(0 2)\lvec(2 2)
		\move(2 0)\lvec(2 2)
		\move(1 0)\lvec(1 2)
		\move(0 0)\lvec(1 2)
		\move(1 0)\lvec(2 2)
		\move(2 0)\lvec(4 0)
		\move(4 0)\lvec(4 2)
		\move(2 2)\lvec(4 2)
		\move(3 0)\lvec(3 2)
		\move(2 0)\lvec(3 2)
		\move(3 0)\lvec(4 2)
		\htext(2.7 0.5){$0$}
		\htext(3.7 0.5){$0$}				
		\htext(0.7 0.5){$0$}
		\htext(1.7 0.5){$0$}
		\htext(3.5 3){$1$}
		\htext(3.5 5){$2$}
		\htext(2.5 3){$1$}
		\move(-4 0)\lvec(0 0)
		\move(-4 2)\lvec(0 2)
		\move(-4 0)\lvec(-4 2)
		\move(-3 0)\lvec(-3 2)
		\move(-2 0)\lvec(-2 2)
		\move(-1 0)\lvec(-1 2)
		\move(-4 0)\lvec(-3 2)
		\move(-3 0)\lvec(-2 2)
		\move(-2 0)\lvec(-1 2)
		\move(-1 0)\lvec(0 2)
		\htext(-0.69 1.45){$0$}
		\htext(-1.69 1.45){$0$}
		\htext(-1.3 0.5){$0$}
		\htext(-0.3 0.5){$0$}				
		\htext(-3.3 0.5){$0$}
		\htext(-2.3 0.5){$0$}
		\htext(1.31 1.45){$0$}
		\htext(0.31 1.45){$0$}
		\htext(3.31 1.45){$0$}
		\htext(2.31 1.45){$0$}
		\move(2 2)\lvec(2 4)
		\move(3 2)\lvec(3 4)
		\move(4 2)\lvec(4 4)
		\move(2 4)\lvec(4 4)
		\move(2 4)\lvec(2 6)
		\move(3 4)\lvec(3 6)
		\move(4 4)\lvec(4 6)
		\move(2 6)\lvec(4 6)
		
		\vtext(0 -1){\fontsize{8}{8}\selectfont$\cdots$}
		\esegment
		
		\move(21 0)
		\bsegment
		\move(0 0)\lvec(2 0)
		\move(0 0)\lvec(0 2)
		\move(0 2)\lvec(2 2)
		\move(2 0)\lvec(2 2)
		\move(1 0)\lvec(1 2)
		\move(0 0)\lvec(1 2)
		\move(1 0)\lvec(2 2)
		\move(2 0)\lvec(4 0)
		\move(4 0)\lvec(4 2)
		\move(2 2)\lvec(4 2)
		\move(3 0)\lvec(3 2)
		\move(2 0)\lvec(3 2)
		\move(3 0)\lvec(4 2)
		\htext(2.7 0.5){$0$}
		\htext(3.7 0.5){$0$}				
		\htext(0.7 0.5){$0$}
		\htext(1.7 0.5){$0$}
		\htext(3.5 3){$1$}
		\htext(3.5 5){$2$}
		\htext(3.5 7){$1$}

		\move(-4 0)\lvec(0 0)
		\move(-4 2)\lvec(0 2)
		\move(-4 0)\lvec(-4 2)
		\move(-3 0)\lvec(-3 2)
		\move(-2 0)\lvec(-2 2)
		\move(-1 0)\lvec(-1 2)
		\move(-4 0)\lvec(-3 2)
		\move(-3 0)\lvec(-2 2)
		\move(-2 0)\lvec(-1 2)
		\move(-1 0)\lvec(0 2)
		\htext(-0.69 1.45){$0$}
		\htext(-1.69 1.45){$0$}
		\htext(1.31 1.45){$0$}
		\htext(0.31 1.45){$0$}
		
		\htext(-1.3 0.5){$0$}
		\htext(-0.3 0.5){$0$}				
		\htext(-3.3 0.5){$0$}
		\htext(-2.3 0.5){$0$}
		
		\htext(3.31 1.45){$0$}
		\htext(2.31 1.45){$0$}
		\move(2 2)\lvec(2 4)
		\move(3 2)\lvec(3 4)
		\move(4 2)\lvec(4 4)
		\move(2 4)\lvec(4 4)
		\move(2 4)\lvec(2 8)
		\move(3 4)\lvec(3 8)
		\move(4 4)\lvec(4 8)
		\move(2 8)\lvec(4 8)
		\move(2 6)\lvec(4 6)
	    \vtext(0 -1){\fontsize{8}{8}\selectfont$\cdots$}
		\esegment
		
		\move(29 0)
		\bsegment
		\move(0 0)\lvec(2 0)
		\move(0 0)\lvec(0 2)
		\move(0 2)\lvec(2 2)
		\move(2 0)\lvec(2 2)
		\move(1 0)\lvec(1 2)
		\move(0 0)\lvec(1 2)
		\move(1 0)\lvec(2 2)
		\move(2 0)\lvec(4 0)
		\move(4 0)\lvec(4 2)
		\move(2 2)\lvec(4 2)
		\move(3 0)\lvec(3 2)
		\move(2 0)\lvec(3 2)
		\move(3 0)\lvec(4 2)
		\htext(2.7 0.5){$0$}
		\htext(3.7 0.5){$0$}				
		\htext(0.7 0.5){$0$}
		\htext(1.7 0.5){$0$}
		\htext(3.5 3){$1$}
		\htext(3.5 5){$2$}
		\htext(2.5 5){$2$}
		\htext(3.5 7){$1$}
		\htext(2.5 3){$1$}
		\htext(3.31 1.45){$0$}
		\htext(2.31 1.45){$0$}
		\move(2 2)\lvec(2 4)
		\move(3 2)\lvec(3 4)
		\move(4 2)\lvec(4 4)
		\move(2 4)\lvec(4 4)
		\move(2 4)\lvec(2 8)
		\move(3 4)\lvec(3 8)
		\move(4 4)\lvec(4 8)
		\move(2 8)\lvec(4 8)
		\move(2 6)\lvec(4 6)
		\vtext(2 -1){\fontsize{8}{8}\selectfont$\cdots$}
		\esegment

		\esegment
		\move(0 -62)
		\bsegment
		\move(-22 0)
		\bsegment
		\move(0 0)\lvec(2 0)
		\move(0 0)\lvec(0 2)
		\move(0 2)\lvec(2 2)
		\move(2 0)\lvec(2 2)
		\move(1 0)\lvec(1 2)
		\move(0 0)\lvec(1 2)
		\move(1 0)\lvec(2 2)
		\move(2 0)\lvec(4 0)
		\move(4 0)\lvec(4 2)
		\move(2 2)\lvec(4 2)
		\move(3 0)\lvec(3 2)
		\move(2 0)\lvec(3 2)
		\move(3 0)\lvec(4 2)
		\htext(2.7 0.5){$0$}
		\htext(3.7 0.5){$0$}				
		\htext(0.7 0.5){$0$}
		\htext(1.7 0.5){$0$}
		\htext(3.5 3){$1$}
		\htext(2.5 3){$1$}
		\htext(1.31 1.45){$0$}
		\htext(0.31 1.45){$0$}
		\htext(3.31 1.45){$0$}
		\htext(2.31 1.45){$0$}
		\move(2 2)\lvec(2 4)
		\move(3 2)\lvec(3 4)
		\move(4 2)\lvec(4 4)
		\move(2 4)\lvec(4 4)
		\move(2 4)\lvec(2 6)
		\move(3 4)\lvec(3 6)
		\move(4 4)\lvec(4 6)
		\move(2 6)\lvec(4 6)
		\htext(3.5 5){$2$}
		\htext(2.5 5){$2$}
		
		\vtext(2 -1){\fontsize{8}{8}\selectfont$\cdots$}
		\esegment

		\move(-15 0)
		\bsegment
		\move(0 0)\lvec(2 0)
		\move(0 0)\lvec(0 2)
		\move(0 2)\lvec(2 2)
		\move(2 0)\lvec(2 2)
		\move(1 0)\lvec(1 2)
		\move(0 0)\lvec(1 2)
		\move(1 0)\lvec(2 2)
		\move(2 0)\lvec(4 0)
		\move(4 0)\lvec(4 2)
		\move(2 2)\lvec(4 2)
		\move(3 0)\lvec(3 2)
		\move(2 0)\lvec(3 2)
		\move(3 0)\lvec(4 2)
		\htext(2.7 0.5){$0$}
		\htext(3.7 0.5){$0$}				
		\htext(0.7 0.5){$0$}
		\htext(1.7 0.5){$0$}
		\htext(3.5 3){$1$}
		\htext(2.5 3){$1$}
		\htext(1.5 3){$1$}
		\htext(0.5 3){$1$}
		\htext(1.31 1.45){$0$}
		\htext(0.31 1.45){$0$}
		\htext(3.31 1.45){$0$}
		\htext(2.31 1.45){$0$}
		\move(0 2)\lvec(0 4)
		\move(1 2)\lvec(1 4)
		\move(0 4)\lvec(2 4)		
		\move(2 2)\lvec(2 4)
		\move(3 2)\lvec(3 4)
		\move(4 2)\lvec(4 4)
		\move(2 4)\lvec(4 4)
		
		\vtext(2 -1){\fontsize{8}{8}\selectfont$\cdots$}	
		\esegment
		
		\move(-4 0)
		\bsegment

		\move(0 0)\lvec(2 0)
		\move(0 0)\lvec(0 2)
		\move(0 2)\lvec(2 2)
		\move(2 0)\lvec(2 2)
		\move(1 0)\lvec(1 2)
		\move(0 0)\lvec(1 2)
		\move(1 0)\lvec(2 2)
		\move(2 0)\lvec(4 0)
		\move(4 0)\lvec(4 2)
		\move(2 2)\lvec(4 2)
		\move(3 0)\lvec(3 2)
		\move(2 0)\lvec(3 2)
		\move(3 0)\lvec(4 2)
		\htext(2.7 0.5){$0$}
		\htext(3.7 0.5){$0$}				
		\htext(0.7 0.5){$0$}
		\htext(1.7 0.5){$0$}
		\htext(3.5 3){$1$}
		\htext(3.5 5){$2$}
		\htext(2.5 3){$1$}
		\move(-4 0)\lvec(0 0)
		\move(-4 2)\lvec(0 2)
		\move(-4 0)\lvec(-4 2)
		\move(-3 0)\lvec(-3 2)
		\move(-2 0)\lvec(-2 2)
		\move(-1 0)\lvec(-1 2)
		\move(-4 0)\lvec(-3 2)
		\move(-3 0)\lvec(-2 2)
		\move(-2 0)\lvec(-1 2)
		\move(-1 0)\lvec(0 2)
		\htext(-0.69 1.45){$0$}
		\htext(-1.69 1.45){$0$}
		\htext(-1.3 0.5){$0$}
		\htext(-0.3 0.5){$0$}				
		\htext(-3.3 0.5){$0$}
		\htext(-2.3 0.5){$0$}
		
		\htext(3.31 1.45){$0$}
		\htext(2.31 1.45){$0$}
		\move(2 2)\lvec(2 4)
		\move(3 2)\lvec(3 4)
		\move(4 2)\lvec(4 4)
		\move(2 4)\lvec(4 4)
		\move(2 4)\lvec(2 6)
		\move(3 4)\lvec(3 6)
		\move(4 4)\lvec(4 6)
		\move(2 6)\lvec(4 6)
		\move(-2 2)\lvec(-2 4)
		\move(-1 2)\lvec(-1 4)
		\move(0 2)\lvec(0 4)
		\move(-2 4)\lvec(0 4)
		\htext(-0.5 3){$1$}

		\vtext(0 -1){\fontsize{8}{8}\selectfont$\cdots$}		
		
		\esegment
		
		\move(7 0)
		\bsegment
		\move(0 0)\lvec(2 0)
		\move(0 0)\lvec(0 2)
		\move(0 2)\lvec(2 2)
		\move(2 0)\lvec(2 2)
		\move(1 0)\lvec(1 2)
		\move(0 0)\lvec(1 2)
		\move(1 0)\lvec(2 2)
		\move(2 0)\lvec(4 0)
		\move(4 0)\lvec(4 2)
		\move(2 2)\lvec(4 2)
		\move(3 0)\lvec(3 2)
		\move(2 0)\lvec(3 2)
		\move(3 0)\lvec(4 2)
		\htext(2.7 0.5){$0$}
		\htext(3.7 0.5){$0$}				
		\htext(0.7 0.5){$0$}
		\htext(1.7 0.5){$0$}
		\htext(3.5 3){$1$}
		\htext(-0.5 3){$1$}
		\htext(3.5 5){$2$}
		\htext(3.5 7){$1$}
		\move(-4 0)\lvec(0 0)
		\move(-4 2)\lvec(0 2)
		\move(-4 0)\lvec(-4 2)
		\move(-3 0)\lvec(-3 2)
		\move(-2 0)\lvec(-2 2)
		\move(-1 0)\lvec(-1 2)
		\move(-4 0)\lvec(-3 2)
		\move(-3 0)\lvec(-2 2)
		\move(-2 0)\lvec(-1 2)
		\move(-1 0)\lvec(0 2)
		\htext(-0.69 1.45){$0$}
		\htext(-1.69 1.45){$0$}
		\htext(-1.3 0.5){$0$}
		\htext(-0.3 0.5){$0$}				
		\htext(-3.3 0.5){$0$}
		\htext(-2.3 0.5){$0$}
		
		\htext(3.31 1.45){$0$}
		\htext(2.31 1.45){$0$}
		\move(2 2)\lvec(2 4)
		\move(3 2)\lvec(3 4)
		\move(4 2)\lvec(4 4)
		\move(2 4)\lvec(4 4)
		\move(2 4)\lvec(2 8)
		\move(3 4)\lvec(3 8)
		\move(4 4)\lvec(4 8)
		\move(2 8)\lvec(4 8)
		\move(2 6)\lvec(4 6)
		\move(-2 2)\lvec(-2 4)
		\move(-1 2)\lvec(-1 4)
		\move(0 2)\lvec(0 4)
		\move(-2 4)\lvec(0 4)	
		
		\vtext(0 -1){\fontsize{8}{8}\selectfont$\cdots$}	
		\esegment
		
		\move(18 0)
		\bsegment
		\move(0 0)\lvec(2 0)
		\move(0 0)\lvec(0 2)
		\move(0 2)\lvec(2 2)
		\move(2 0)\lvec(2 2)
		\move(1 0)\lvec(1 2)
		\move(0 0)\lvec(1 2)
		\move(1 0)\lvec(2 2)
		\move(2 0)\lvec(4 0)
		\move(4 0)\lvec(4 2)
		\move(2 2)\lvec(4 2)
		\move(3 0)\lvec(3 2)
		\move(2 0)\lvec(3 2)
		\move(3 0)\lvec(4 2)
		\htext(2.7 0.5){$0$}
		\htext(3.7 0.5){$0$}				
		\htext(0.7 0.5){$0$}
		\htext(1.7 0.5){$0$}
		\htext(3.5 3){$1$}
		\htext(3.5 5){$2$}
		\htext(2.5 5){$2$}
		\htext(2.5 3){$1$}
		\move(-4 0)\lvec(0 0)
		\move(-4 2)\lvec(0 2)
		\move(-4 0)\lvec(-4 2)
		\move(-3 0)\lvec(-3 2)
		\move(-2 0)\lvec(-2 2)
		\move(-1 0)\lvec(-1 2)
		\move(-4 0)\lvec(-3 2)
		\move(-3 0)\lvec(-2 2)
		\move(-2 0)\lvec(-1 2)
		\move(-1 0)\lvec(0 2)
		\htext(-0.69 1.45){$0$}
		\htext(-1.69 1.45){$0$}
		\htext(-1.3 0.5){$0$}
		\htext(-0.3 0.5){$0$}				
		\htext(-3.3 0.5){$0$}
		\htext(-2.3 0.5){$0$}
		
		\htext(3.31 1.45){$0$}
		\htext(2.31 1.45){$0$}
		\move(2 2)\lvec(2 4)
		\move(3 2)\lvec(3 4)
		\move(4 2)\lvec(4 4)
		\move(2 4)\lvec(4 4)
		\move(2 4)\lvec(2 6)
		\move(3 4)\lvec(3 6)
		\move(4 4)\lvec(4 6)
		\move(2 6)\lvec(4 6)
		
		\vtext(0 -1){\fontsize{8}{8}\selectfont$\cdots$}	
		\esegment

		\move(29 0)
		\bsegment
		\move(0 0)\lvec(2 0)
		\move(0 0)\lvec(0 2)
		\move(0 2)\lvec(2 2)
		\move(2 0)\lvec(2 2)
		\move(1 0)\lvec(1 2)
		\move(0 0)\lvec(1 2)
		\move(1 0)\lvec(2 2)
		\move(2 0)\lvec(4 0)
		\move(4 0)\lvec(4 2)
		\move(2 2)\lvec(4 2)
		\move(3 0)\lvec(3 2)
		\move(2 0)\lvec(3 2)
		\move(3 0)\lvec(4 2)
		\htext(2.7 0.5){$0$}
		\htext(3.7 0.5){$0$}				
		\htext(0.7 0.5){$0$}
		\htext(1.7 0.5){$0$}
		\htext(3.5 3){$1$}
		\htext(2.5 3){$1$}
		\htext(3.5 5){$2$}
		\htext(3.5 7){$1$}
		\move(-4 0)\lvec(0 0)
		\move(-4 2)\lvec(0 2)
		\move(-4 0)\lvec(-4 2)
		\move(-3 0)\lvec(-3 2)
		\move(-2 0)\lvec(-2 2)
		\move(-1 0)\lvec(-1 2)
		\move(-4 0)\lvec(-3 2)
		\move(-3 0)\lvec(-2 2)
		\move(-2 0)\lvec(-1 2)
		\move(-1 0)\lvec(0 2)
		\htext(-0.69 1.45){$0$}
		\htext(-1.69 1.45){$0$}
		\htext(-1.3 0.5){$0$}
		\htext(-0.3 0.5){$0$}				
		\htext(-3.3 0.5){$0$}
		\htext(-2.3 0.5){$0$}
		
		\htext(3.31 1.45){$0$}
		\htext(2.31 1.45){$0$}
		\move(2 2)\lvec(2 4)
		\move(3 2)\lvec(3 4)
		\move(4 2)\lvec(4 4)
		\move(2 4)\lvec(4 4)
		\move(2 4)\lvec(2 8)
		\move(3 4)\lvec(3 8)
		\move(4 4)\lvec(4 8)
		\move(2 8)\lvec(4 8)
		\move(2 6)\lvec(4 6)
		
		\vtext(0 -1){\fontsize{8}{8}\selectfont$\cdots$}
		\esegment
		
		\esegment
		
		\move(5 4.5)\avec(5 1.5)\htext(5.3 3){$0$}
		\move(4.5 -1.5)\avec(1.5 -5)\htext(3.7 -3){$0$}
		\move(5.5 -1.5)\avec(8.5 -5)\htext(7.3 -3){$1$}
		\move(0 -8)\avec(-3.5 -11.5)\htext(-0.9 -9.5){$1$}
		\move(9 -8)\avec(-2.5 -12)\htext(3.5 -10.4){$0$}
		\move(10 -8)\avec(7.5 -12)\htext(9.5 -9.5){$2$}
		\move(11 -8)\avec(14.5 -12)\htext(13 -9.5){$1$}
		\move(-5.5 -16.5)\avec(-10 -20.5)\htext(-6.6 -18){$2$}

		\move(-4.5 -16.5)\avec(-3 -22.5)\htext(-3.7 -18){$1$}
		
		\move(4 -16.5)\avec(-9.5 -23)\htext(1 -18.5){$0$}

		\move(6 -16.5)\avec(15.5 -23)\htext(10 -18.5){$1$}

		\move(14.5 -16.5)\avec(6 -24.5)\htext(13.2 -18.5){$0$}

		\move(15.5 -16.5)\avec(23.5 -21.5)\htext(19 -18){$2$}	
		\move(-13 -27.5)\avec(-15.5 -32)\htext(-13.6 -29.5){$1$}

		\move(-4.5 -27.5)\avec(-10.5 -33)\htext(-7.6 -31){$2$}	
		
		\move(-3.5 -27.5)\avec(-7.5 -33.4)\htext(-5.4 -31){$1$}
		
		\move(5 -27.5)\avec(-1 -35)\htext(2.8 -31){$0$}
		
		\move(7 -27.5)\avec(9.5 -34)\htext(8.9 -31){$2$}
		
		\move(15 -27.5)\avec(18.5 -34)\htext(17.9 -32){$0$}
		
		\move(17 -27.5)\avec(23.5 -31)\htext(22 -29.6){$1$}
		
		\move(23 -27.5)\avec(12.5 -33)\htext(15.3 -32){$0$}
		
		\move(25 -27.5)\avec(28.5 -32.5)\htext(27 -29.6){$2$}		
		
		\move(-18 -38.5)\avec(-19 -42.5)\htext(-18 -40.5){$1$}
		
		\move(-13.5 -38.5)\avec(-19 -55.5)\htext(-17.7 -53){$2$}
		
		\move(-12.5 -38.5)\avec(-12 -44.5)\htext(-11.9 -40.5){$1$}
		
		\move(-9 -38.5)\avec(-10.5 -44.5)\htext(-9 -40.5){$2$}	
		
		\move(-8 -38.5)\avec(-2.5 -47.5)\htext(-5 -42.5){$0$}		
		
		\move(-7 -38.5)\avec(-11 -57.5)\htext(-9.6 -53){$1$}	
		
		\move(-1 -38.5)\avec(-1.5 -46.5)\htext(-0.7 -40.5){$1$}	
		
		\move(0 -38.5)\avec(8 -48.5)\htext(2.2 -40.5){$2$}			
		
		\move(10 -38.5)\avec(0.5 -58)\htext(3.4 -53){$1$}
		
		\move(8 -38.5)\avec(10 -48.5)\htext(9.3 -42.5){$0$}	
	
		\move(11 -38.5)\avec(19.5 -58)\htext(17.8 -53){$2$}	
		
		\move(19.5 -38.5)\avec(11.5 -58)\htext(14 -53){$1$}
		
		\move(17 -38.5)\avec(22.5 -46.5)\htext(20.3 -42.5){$0$}	
		
		\move(30.5 -38.5)\avec(22.7 -58)\htext(25.2 -53){$0$}	
		
		\move(23 -38.5)\avec(30.5 -45)\htext(27.4 -41.5){$2$}
		
		\move(29 -38.5)\avec(32 -42.5)\htext(31 -40.5){$1$}	
		
		\move(25 -38.5)\avec(30.5 -58)\htext(29.5 -53){$0$}				
	\end{texdraw}
\end{center}

\end{example}

\clearpage
\section{New Young wall realization of the cryatal $B(\infty)$}

\vskip 2mm

Let $B(\infty)$ denote the crystal of the negative half $U_{q}^{-}(\mathfrak g)$ of $U_{q}(\mathfrak g)$. In this section, we will give a new Young wall realization of  $B(\infty)$. 

\vskip 2mm 

Let $(B_{\infty}, b_{\infty})$ be the crystal  limit of coherent family of
perfect crystals $\{{\mathcal B}^{(l)}\}_{l\ge 1}$ \cite[Lemma 4.3]{KKM}. One could find an explicit description of $B_{\infty}$ in \cite[Section 5]{KKM}. Then there is a $U_{q}'(\mathfrak g)$-crystal  isomorphism 

\begin{equation}\label{B(infty) crystal isomorphism}
	B(\infty)\cong B(\infty)\otimes	B_{\infty},\quad u_{\infty}\mapsto u_{\infty}\otimes b_{\infty},
\end{equation}
where $u_{\infty}$ is the element in $B(\infty)$ corresponding to $1\in U^{-}_q(\mathfrak g)$.

\vskip 2mm

By applying the isomorphism \eqref{B(infty) crystal isomorphism} repeatedly, we have the crystal isomorphism as follows:

\begin{equation*}\label{B(infty) crystal isomorphism repeat}
	\psi_k:	B(\infty)\cong B(\infty)\otimes	B^{k}_{\infty},\quad u_{\infty}\mapsto u_{\infty}\otimes  \underbrace{b_{\infty}\otimes\cdots\otimes b_{\infty}}_{k}.
\end{equation*}

The infinite sequence $(\cdots, b_{\infty}, b_{\infty}, b_{\infty})$ in $B^{\otimes \infty}_{\infty}$ is called the \textit{ground-state path}. A {\it path} in $B^{\otimes \infty}_{\infty}$ is an infinite sequence $p={(p_k)}_{k\ge 1}$ such that $p_k=b_{\infty}$ for $k\gg 0$. Let ${\mathcal P}(\infty)$ denote the set of all paths in $B^{\otimes \infty}_{\infty}$.
\vskip 2mm
The crystal $B(\infty)$ has the following path realization.

\begin{theorem} [\cite{KKM}]
	There exists a $U_{q}'(\mathfrak g)$-crystal isomorphism 
	
	\begin{equation*}
		B(\infty)\longrightarrow  {\mathcal P}(\infty)
		\ \ \text{given by} \ \ b\to \cdots\otimes p_k\otimes\cdots\otimes p_2\otimes p_1,
	\end{equation*}
	where $\psi_k(b)=u_{\infty}\otimes p_k\otimes\cdots p_1$ for $k\gg 0$.
\end{theorem}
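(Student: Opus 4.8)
The plan is to realize the asserted isomorphism as a stabilization limit of the iterated maps $\psi_k$, exactly parallel to the path realization of $B(\lambda)$ in \cite{KMN1,KMN2} but carried out for $B(\infty)$. Let $\Theta\colon B(\infty)\to B(\infty)\otimes B_\infty$ denote the isomorphism in \eqref{B(infty) crystal isomorphism}. First I would record the recursive compatibility of the $\psi_k$: since $\psi_{k+1}$ is obtained from $\psi_k$ by applying $\Theta$ to the leftmost tensor factor, writing $\psi_k(b)=b^{(k)}\otimes p_k\otimes\cdots\otimes p_1$ gives $\psi_{k+1}(b)=\Theta(b^{(k)})\otimes p_k\otimes\cdots\otimes p_1$. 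Thus the rightmost $k$ components are unchanged in passing from $\psi_k$ to $\psi_{k+1}$, so each component $p_i$ is independent of the chosen $k\ge i$, and the assignment $b\mapsto(p_k)_{k\ge1}$ is unambiguous.

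The second step is to show that this sequence is a genuine path, i.e. $p_k=b_\infty$ for $k\gg0$, and correspondingly $b^{(k)}=u_\infty$ for $k\gg0$. Here I would argue by a depth/finiteness bound: the element $b\in B(\infty)$ is obtained from $u_\infty$ by finitely many lowering operators $\tilde f_i$, and the factorization via $\Theta$ can strip a nontrivial factor $p_{k+1}\ne b_\infty$ off the left only at the cost of strictly decreasing this depth, as follows from the tensor product rule \eqref{eq:tensor} together with the explicit description of $B_\infty$ and of $\varepsilon_i(b_\infty)$, $\varphi_i(b_\infty)$ in \cite{KKM}. Hence after finitely many steps $b^{(k)}=u_\infty$, and the normalization $\Theta(u_\infty)=u_\infty\otimes b_\infty$ from \eqref{B(infty) crystal isomorphism} then forces $b^{(k')}=u_\infty$ and $p_{k'}=b_\infty$ for all $k'>k$. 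This both produces an element of $\mathcal P(\infty)$ and identifies the ground-state path $(\cdots,b_\infty,b_\infty,b_\infty)$ as the image of $u_\infty$.

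With $\Omega\colon B(\infty)\to\mathcal P(\infty)$, $b\mapsto\cdots\otimes p_k\otimes\cdots\otimes p_1$, defined, I would verify it is a $U_q'(\mathfrak g)$-crystal morphism. Because each $\psi_k$ is a crystal isomorphism and the Kashiwara operators on $\mathcal P(\infty)\subset B_\infty^{\otimes\infty}$ are computed by the tensor product rule \eqref{eq:tensor}, which for a fixed $i$ inspects only finitely many factors before a decisive sign occurs, the action of $\tilde e_i$, $\tilde f_i$, $\varepsilon_i$, $\varphi_i$, $\wt$ on $\Omega(b)$ agrees with the action on $\psi_k(b)$ for all sufficiently large $k$; transporting along $\psi_k$ yields $\Omega\circ\tilde e_i=\tilde e_i\circ\Omega$ and likewise for $\tilde f_i$ and the remaining data. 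Bijectivity then follows from the surjectivity and injectivity of the $\psi_k$: any path $p$ with $p_k=b_\infty$ for $k>K$ is the image under $\Omega$ of $\psi_K^{-1}(u_\infty\otimes p_K\otimes\cdots\otimes p_1)$, while two elements with the same image have the same $\psi_k(\cdot)$ for large $k$, hence coincide.

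The main obstacle I anticipate is the stabilization step: making rigorous that $b^{(k)}$ reaches $u_\infty$ after finitely many factorizations requires controlling precisely how $\Theta$ distributes weight between its two tensor factors, which rests on the explicit combinatorial model of $B_\infty$ and the values $\varepsilon_i(b_\infty)$, $\varphi_i(b_\infty)$ supplied by \cite{KKM}. A secondary technical point is checking that $\mathcal P(\infty)$ genuinely carries a crystal structure, i.e. that the signature computation for each $\tilde e_i$, $\tilde f_i$ terminates on every path; this reduces once more to the stability of $b_\infty$ under the Kashiwara operators of $B_\infty$, which is precisely the feature that makes the semi-infinite tensor product well defined.
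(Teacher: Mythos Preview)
The paper does not prove this theorem at all: it is stated with the citation \cite{KKM} and used as a black box, so there is no ``paper's own proof'' to compare against. Your proposal is a reasonable reconstruction of the standard argument behind the path realization of $B(\infty)$ as developed in \cite{KKM}, and the overall architecture---stabilization of the components $p_k$, eventual triviality $b^{(k)}=u_\infty$, transport of the crystal structure through the $\psi_k$, and bijectivity via inversion at a large finite stage---is correct in outline.

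That said, your stabilization step is underspecified in a way that matters. You argue that a nontrivial factor $p_{k+1}\ne b_\infty$ can be split off only by ``strictly decreasing depth,'' but in $B(\infty)$ the crystal $B_\infty$ has $\varepsilon_i(b_\infty)$ and $\varphi_i(b_\infty)$ typically equal to $0$ (or $-\infty$ in some conventions) rather than positive, and the tensor product rule in $B(\infty)\otimes B_\infty$ does not automatically give a monovariant ``depth'' that drops. The actual argument in \cite{KKM} uses the specific weight grading on $B(\infty)$: writing $\wt(b)=-\sum k_i\alpha_i$ with $k_i\ge 0$, one shows that if $\Theta(b)=b'\otimes p$ with $p\ne b_\infty$, then $\sum k_i$ strictly decreases in passing from $b$ to $b'$, because $p$ carries nonzero weight in the negative root lattice. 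This is the concrete invariant you should name; the vague appeal to ``depth'' and the signature rule is not quite enough, since the relevant fact is about how $\Theta$ respects the $Q_-$-grading, not about $\varepsilon_i$, $\varphi_i$ values of $b_\infty$ per se.
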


For all  $i\in I$, the following colored blocks will be used to construct Young columns corresponding to the vectors in $B_{\infty}$.

\vskip 5mm

\begin{center}
	\begin{texdraw}
		\fontsize{3}{3}\selectfont
		\drawdim em
		\setunitscale 2.3
		\move(-8 0)
		\bsegment
		\move(0 0)\lvec(-0.6 0)
		\move(0 0)\lvec(0 2)
		\move(-0.6 0)\lvec(-0.6 2)
		
		\move(-0.6 2)\lvec(0 2)
		
		\htext(-0.3 1){$i$}
		\htext(0.7 1){\fontsize{12}{12}\selectfont{$=$}}
		
		\htext(1 -0.7){\fontsize{8}{8}\selectfont{$1/\infty\times1\times1$}}
		
		\move(1.4 0)\lvec(2 0)
		
		\move(1.4 2)\lvec(2 2)
		
		\move(2 0)\lvec(2 2)
		
		\move(2 2)\lvec(2.8 2.5)
		
		\move(2 0)\lvec(2.8 0.5)
		
		\move(2.8 2.5)\lvec(2.8 0.5)
		
		\move(2.2 2.5)\lvec(2.8 2.5)
		
		\move(1.4 0)\lvec(1.4 2)
		\move(1.4 2)\lvec(2.2 2.5)
		\htext(1.7 1){$i$}
		\esegment

		\move(0 0)
		\bsegment
		\move(0 0)\lvec(-0.6 0)
		\move(0 0)\lvec(0 2)
		\move(-0.6 0)\lvec(-0.6 2)
		
		\move(-0.6 2)\lvec(0 2)
		\move(-0.6 0)\lvec(0 2)

		\htext(-0.35 1.5){$i$}
		\htext(0.8 1){\fontsize{12}{12}\selectfont{$=$}}
		
		\htext(1 -0.7){\fontsize{8}{8}\selectfont{$1/\infty\times1/2\times1$}}
		
		\move(1.4 0)\lvec(2 0)
		\move(1.4 2)\lvec(2 2)
		\move(2 0)\lvec(2 2)
		\move(2 2)\lvec(2.4 2.25)
		\move(2 0)\lvec(2.8 0.5)
		\move(2.4 0.25)\lvec(2.4 2.25)
		\move(2.4 2.25)\lvec(1.8 2.25)
		\move(1.4 0)\lvec(1.4 2)
		\move(1.4 2)\lvec(1.8 2.25)
		\move(2.4 0.5)\lvec(2.8 0.5)
		\htext(1.7 1){$i$}

		\esegment
		
		\move(8 0)
		\bsegment
		\move(0 0)\lvec(-0.6 0)
		\move(0 0)\lvec(0 2)
		\move(-0.6 0)\lvec(-0.6 2)
		
		\move(-0.6 2)\lvec(0 2)
		\move(-0.6 0)\lvec(0 2)
		
		\htext(-0.18 0.5){$i$}
		
		\htext(0.8 1){\fontsize{12}{12}\selectfont{$=$}}
		
		\htext(1 -0.7){\fontsize{8}{8}\selectfont{$1/\infty\times1/2\times1$}}
		
		\move(1.4 0)\lvec(2 0)

		\move(2.4 2.25)\lvec(2.8 2.5)
		\move(2 0)\lvec(2.8 0.5)
		\move(1.4 0)\lvec(1.8 0.25)
		\move(2.8 2.5)\lvec(2.8 0.5)
		\move(2.4 0.25)\lvec(2.4 2.25)
		\move(1.8 0.25)\lvec(1.8 2.25)
		\htext(2.1 1.25){$i$}
		\move(2.4 2.25)\lvec(1.8 2.25)
		\move(2.4 0.25)\lvec(1.8 0.25)
		
		\move(2.2 2.5)\lvec(2.8 2.5)

		\move(1.8 2.25)\lvec(2.2 2.5)

		\esegment
	\end{texdraw}
\end{center}   

\vskip 4mm

Here, the notation $\frac{1}{\infty}$ is just a formal symbol. We can think of it as a constant.

\vskip 3mm 

To define the notion of Young columns for $B(\infty)$, we need to make some changes of the
definitions in  Section \ref{Level-$l$ reduced Young Column}. 

\vskip 3mm

\begin{enumerate}
\item For type $A_{2n}^{(2)}$, let $l'=-\sum_{i=1}^{n}(x_i+\bar{x}_i)$ and $x_i,\bar{x}_i\in\mathbf Z$.
\vspace{6pt}
\item For type $D_{n+1}^{(2)}$, let $l'=-\sum_{i=1}^{n}(x_i+\bar{x}_i)-x_0$, $x_0=0$ or $1$ and  $x_i,\bar{x}_i\in\mathbf Z$.
\vspace{6pt}
\item For type $A_{2n-1}^{(2)}$, let $\sum_{i=1}^{n}(x_i+\bar{x}_i)=0$ and $x_i,\bar{x}_i\in\mathbf Z$.
\vspace{6pt}
\item For type $D_n^{(1)}$, let $\sum_{i=1}^{n}(x_i+\bar{x}_i)=0$, $x_n=0$ or $\bar{x}_n=0$ and $x_i,\bar{x}_i\in\mathbf Z$.
\vspace{6pt}
\item For type $B_{n}^{(1)}$, let $x_0+\sum_{i=1}^{n}(x_i+\bar{x}_i)=0$, $x_0=0$ or $1$ and $x_i,\bar{x}_i\in\mathbf Z$.
\vspace{6pt}
\item For type $C_{n}^{(1)}$, let $l'=-\sum_{i=1}^{n}(x_i+\bar{x}_i)$, $x_0=0$ and $x_i,\bar{x}_i\in\mathbf Z$.
\end{enumerate}

\vskip 3mm 

The modified Young columns are called the \textit{virtual Young columns}. In this case, we allow {\it negative} integers to count the number of blocks or slices that appear in a virtual Young column. 

\vskip 2mm 

Let $\widehat{\mathbf Z}_{\geq 0}={\mathbf Z}_{\geq 0}\cup \{\infty\}$. We define the notion of  \textit{extended Young columns}  as follows. We use $u_i,v_i, \bar{u}_i,\bar{v}_i$ to represent the number of slices in the blank areas. We use the rightmost column to mark the colors of the blocks in a Young column.

\vskip 2mm

For type $A_{2n}^{(2)}$, let $C_{(x_1,\cdots,x_n,\bar{x}_n,\cdots,\bar{x}_1)}$ be a virtual Young column. We choose $u_i,v_i, \bar{u}_i,\bar{v}_i\in \widehat{\mathbf Z}_{\geq 0}$ such that 

\begin{equation*}
\bar{u}_i-\bar{v}_i=\bar{x}_i,\  u_i-v_i=x_i,\  u_{n+1}-v_{n+1}=-\sum_{i=1}^{n}(x_i+\bar{x}_i).
	\end{equation*}
	
Then the corresponding extended Young column is given as

\vskip 10mm

\begin{center}
	\begin{texdraw}
		\fontsize{3}{3}\selectfont
		\drawdim em
		\setunitscale 1.7
		\move(0 0)
		\bsegment
		\move(0 0)\lvec(0 2)
		\move(0 0)\lvec(2 0)
		\move(0 2)\lvec(2 2)
		\move(1 0)\lvec(1 2)
		\htext(0.5 -0.3){$\underbrace{\rule{1.5em}{0em}}$}
		\htext(0.5 -0.7){$v_{n\!+\!1}$}
		
		\htext(1.5 -0.3){$\underbrace{\rule{1.5em}{0em}}$}
		\htext(1.6 -0.7){$u_{n\!+\!1}$}
		\esegment
		
		\move(2 0)
		\bsegment
		\move(0 0)\lvec(0 2)
		\move(0 0)\lvec(2 0)
		\move(0 2)\lvec(2 2)
		\move(1 0)\lvec(1 2)
		\htext(0.5 -0.3){$\underbrace{\rule{1.5em}{0em}}$}
		\htext(0.6 -0.7){$v_{1}$}
		
		\htext(1.5 -0.3){$\underbrace{\rule{1.5em}{0em}}$}
		\htext(1.7 -0.7){$u_{1}$}
		\esegment
		
		\move(4 0)
		\bsegment
		\move(0 0)\lvec(0 2)
		\move(0 0)\lvec(2 0)
		\move(0 2)\lvec(2 2)
		\move(1 0)\lvec(1 2)
		
		\move(0 2)\lvec(0 4)
		\move(1 2)\lvec(1 4)
		\move(0 4)\lvec(2 4)
		
		\htext(0.5 -0.3){$\underbrace{\rule{1.5em}{0em}}$}
		\htext(0.6 -0.7){$v_{2}$}
		
		\htext(1.5 -0.3){$\underbrace{\rule{1.5em}{0em}}$}
		\htext(1.7 -0.7){$u_{2}$}
		\esegment
		
		\move(6 0)
		\bsegment
		\move(0 0)\lvec(0 2)
		\move(0 0)\lvec(2 0)
		\move(0 2)\lvec(2 2)
		\move(1 0)\lvec(1 2)
		
		\move(0 2)\lvec(0 4)
		\move(1 2)\lvec(1 4)
		\move(0 4)\lvec(2 4)
		
		\move(0 4)\lvec(0 6)
		\move(1 4)\lvec(1 6)
		\move(2 0)\lvec(2 6)
		\move(0 6)\lvec(2 6)
		\htext(0.5 -0.3){$\underbrace{\rule{1.5em}{0em}}$}
		\htext(0.6 -0.7){$v_{3}$}
		
		\htext(1.5 -0.3){$\underbrace{\rule{1.5em}{0em}}$}
		\htext(1.7 -0.7){$u_{3}$}
		\htext(3 1){\fontsize{8}{8}\selectfont$\cdots$}
		\esegment
		
		\move(10 0)
		\bsegment
		\move(0 0)\lvec(0 2)
		\move(0 0)\lvec(2 0)
		\move(0 2)\lvec(2 2)
		\move(1 0)\lvec(1 2)
		
		\move(0 2)\lvec(0 4)
		\move(1 2)\lvec(1 4)
		\move(0 4)\lvec(2 4)
		
		\move(0 4)\lvec(0 6)
		\move(1 4)\lvec(1 6)
		
		\move(0 6)\lvec(2 6)
		
		\move(0 6)\lvec(0 16)
		\move(1 6)\lvec(1 16)
		\move(0 16)\lvec(2 16)
		\move(0 14)\lvec(2 14)
		\move(0 13)\lvec(2 13)
		\move(0 11)\lvec(2 11)
		\move(0 9)\lvec(2 9)
		\move(0 7)\lvec(2 7)
		\htext(0.5 -0.3){$\underbrace{\rule{1.5em}{0em}}$}
		\htext(0.6 -0.7){$\bar{v}_{3}$}
		
		\htext(1.5 -0.3){$\underbrace{\rule{1.5em}{0em}}$}
		\htext(1.7 -0.7){$\bar{u}_{3}$}
		
		\esegment
		
		\move(12 0)
		\bsegment
		\move(0 0)\lvec(0 2)
		\move(0 0)\lvec(2 0)
		\move(0 2)\lvec(2 2)
		\move(1 0)\lvec(1 2)
		
		\move(0 2)\lvec(0 4)
		\move(1 2)\lvec(1 4)
		\move(0 4)\lvec(2 4)
		
		\move(0 4)\lvec(0 6)
		\move(1 4)\lvec(1 6)
		
		\move(0 6)\lvec(2 6)
		
		\move(0 6)\lvec(0 16)
		\move(1 6)\lvec(1 16)
		\move(0 16)\lvec(2 16)
		\move(0 14)\lvec(2 14)
		\move(0 13)\lvec(2 13)
		\move(0 11)\lvec(2 11)
		\move(0 9)\lvec(2 9)
		\move(0 7)\lvec(2 7)
		
		\move(0 16)\lvec(0 18)
		\move(1 16)\lvec(1 18)
		\move(0 18)\lvec(2 18)
		
		\htext(0.5 -0.3){$\underbrace{\rule{1.5em}{0em}}$}
		\htext(0.6 -0.7){$\bar{v}_{2}$}
		
		\htext(1.5 -0.3){$\underbrace{\rule{1.5em}{0em}}$}
		\htext(1.7 -0.7){$\bar{u}_{2}$}
		\esegment
		
		\move(14 0)
		\bsegment
		\move(0 0)\lvec(0 2)
		\move(0 0)\lvec(2 0)
		\move(0 2)\lvec(2 2)
		\move(1 0)\lvec(1 2)
		
		\move(0 2)\lvec(0 4)
		\move(1 2)\lvec(1 4)
		\move(0 4)\lvec(2 4)
		
		\move(0 4)\lvec(0 6)
		\move(1 4)\lvec(1 6)
		
		\move(0 6)\lvec(2 6)
		
		\move(0 6)\lvec(0 16)
		\move(1 6)\lvec(1 16)
		\move(0 16)\lvec(2 16)
		\move(0 14)\lvec(2 14)
		\move(0 13)\lvec(2 13)
		\move(0 11)\lvec(2 11)
		\move(0 9)\lvec(2 9)
		\move(0 7)\lvec(2 7)
		
		\move(0 16)\lvec(0 18)
		\move(1 16)\lvec(1 18)
		\move(0 18)\lvec(2 18)
		\move(0 18)\lvec(0 20)
		\move(1 18)\lvec(1 20)
		\move(2 0)\lvec(2 20)
		\move(0 20)\lvec(2 20)
		
		\htext(0.5 -0.3){$\underbrace{\rule{1.5em}{0em}}$}
		\htext(0.6 -0.7){$\bar{v}_{1}$}
		
		\htext(1.5 -0.3){$\underbrace{\rule{1.5em}{0em}}$}
		\htext(1.7 -0.7){$\bar{u}_{1}$}
		\esegment

		\move(16 0)
		\bsegment
		\move(0 0)\lvec(1 0)
		\move(0 2)\lvec(1 2)
		\move(0 4)\lvec(1 4)
		\move(0 6)\lvec(1 6)
		\move(0 7)\lvec(1 7)
		\move(0 9)\lvec(1 9)
		\move(0 11)\lvec(1 11)
		\move(0 13)\lvec(1 13)
		\move(0 14)\lvec(1 14)
		\move(0 16)\lvec(1 16)
		\move(0 18)\lvec(1 18)
		\move(0 20)\lvec(1 20)
		\move(1 0)\lvec(1 20)
		
		\move(0 0)\lvec(1 2)
		
		\htext(0.7 0.5){$0$}	
		\htext(0.35 1.55){$0$}
		\htext(0.5 3){$1$}
		\htext(0.5 5){$2$}
		\vtext(0.5 6.57){$\cdots$}
		\htext(0.5 8){$n$-$1$}
		\htext(0.5 10){$n$}
		\htext(0.5 12){$n$-$1$}
		\vtext(0.5 13.57){$\cdots$}
		\htext(0.5 15){$3$}
		\htext(0.5 17){$2$}
		\htext(0.5 19){$1$}
		\esegment
	\end{texdraw}
\end{center}

\vskip 2mm

For type $D_{n+1}^{(2)}$, let $C_{(x_1,\cdots,x_n,x_0,\bar{x}_n,\cdots,\bar{x}_1)}$ be a virtual Young column. We choose $u_i,v_i, \bar{u}_i,\bar{v}_i\in \widehat{\mathbf Z}_{\geq 0}$ such that 
\begin{equation*}
	\bar{u}_i-\bar{v}_i=\bar{x}_i,\  u_i-v_i=x_i,\  u_{n+1}-v_{n+1}=-\sum_{i=1}^{n}(x_i+\bar{x}_i)-x_0.
\end{equation*}

Then the corresponding extended Young column is given as 

\vskip 10mm

\begin{center}
	\begin{texdraw}
		\fontsize{3}{3}\selectfont
		\drawdim em
		\setunitscale 1.7
		\move(0 0)
		\bsegment
		\move(0 0)\lvec(0 2)
		\move(0 0)\lvec(2 0)
		\move(0 2)\lvec(2 2)
		\move(1 0)\lvec(1 2)
		\htext(0.5 -0.3){$\underbrace{\rule{1.5em}{0em}}$}
		\htext(0.5 -0.7){$v_{n\!+\!1}$}
		
		\htext(1.5 -0.3){$\underbrace{\rule{1.5em}{0em}}$}
		\htext(1.6 -0.7){$u_{n\!+\!1}$}
		\esegment
		
		\move(2 0)
		\bsegment
		\move(0 0)\lvec(0 2)
		\move(0 0)\lvec(2 0)
		\move(0 2)\lvec(2 2)
		\move(1 0)\lvec(1 2)
		\htext(0.5 -0.3){$\underbrace{\rule{1.5em}{0em}}$}
		\htext(0.6 -0.7){$v_{1}$}
		
		\htext(1.5 -0.3){$\underbrace{\rule{1.5em}{0em}}$}
		\htext(1.7 -0.7){$u_{1}$}
		\esegment
		
		\move(4 0)
		\bsegment
		\move(0 0)\lvec(0 2)
		\move(0 0)\lvec(2 0)
		\move(0 2)\lvec(2 2)
		\move(1 0)\lvec(1 2)
		
		\move(0 2)\lvec(0 4)
		\move(1 2)\lvec(1 4)
		\move(0 4)\lvec(2 4)
		
		\htext(0.5 -0.3){$\underbrace{\rule{1.5em}{0em}}$}
		\htext(0.6 -0.7){$v_{2}$}
		
		\htext(1.5 -0.3){$\underbrace{\rule{1.5em}{0em}}$}
		\htext(1.7 -0.7){$u_{2}$}
		\esegment
		
		\move(6 0)
		\bsegment
		\move(0 0)\lvec(0 2)
		\move(0 0)\lvec(2 0)
		\move(0 2)\lvec(2 2)
		\move(1 0)\lvec(1 2)
		
		\move(0 2)\lvec(0 4)
		\move(1 2)\lvec(1 4)
		\move(0 4)\lvec(2 4)
		
		\move(0 4)\lvec(0 6)
		\move(1 4)\lvec(1 6)
		\move(2 0)\lvec(2 6)
		\move(0 6)\lvec(2 6)
		\htext(0.5 -0.3){$\underbrace{\rule{1.5em}{0em}}$}
		\htext(0.6 -0.7){$v_{3}$}
		
		\htext(1.5 -0.3){$\underbrace{\rule{1.5em}{0em}}$}
		\htext(1.7 -0.7){$u_{3}$}
		\htext(3 1){\fontsize{8}{8}\selectfont$\cdots$}
		\esegment
		
		\move(10 0)
		\bsegment
		\move(0 0)\lvec(0 2)
		\move(0 0)\lvec(2 0)
		\move(0 2)\lvec(2 2)
		\move(1 0)\lvec(1 2)
		
		\move(0 2)\lvec(0 4)
		\move(1 2)\lvec(1 4)
		\move(0 4)\lvec(2 4)
		
		\move(0 4)\lvec(0 6)
		\move(1 4)\lvec(1 6)
		
		\move(0 6)\lvec(2 6)
		
		\move(0 6)\lvec(0 16)
		\move(1 6)\lvec(1 16)
		\move(0 16)\lvec(2 16)
		\move(0 14)\lvec(2 14)
		\move(0 13)\lvec(2 13)
		\move(0 11)\lvec(2 11)
		\move(0 9)\lvec(2 9)
		\move(0 7)\lvec(2 7)
		\htext(0.5 -0.3){$\underbrace{\rule{1.5em}{0em}}$}
		\htext(0.6 -0.7){$\bar{v}_{3}$}
		
		\htext(1.5 -0.3){$\underbrace{\rule{1.5em}{0em}}$}
		\htext(1.7 -0.7){$\bar{u}_{3}$}
		
		\esegment
		
		\move(12 0)
		\bsegment
		\move(0 0)\lvec(0 2)
		\move(0 0)\lvec(2 0)
		\move(0 2)\lvec(2 2)
		\move(1 0)\lvec(1 2)
		
		\move(0 2)\lvec(0 4)
		\move(1 2)\lvec(1 4)
		\move(0 4)\lvec(2 4)
		
		\move(0 4)\lvec(0 6)
		\move(1 4)\lvec(1 6)
		
		\move(0 6)\lvec(2 6)
		
		\move(0 6)\lvec(0 16)
		\move(1 6)\lvec(1 16)
		\move(0 16)\lvec(2 16)
		\move(0 14)\lvec(2 14)
		\move(0 13)\lvec(2 13)
		\move(0 11)\lvec(2 11)
		\move(0 9)\lvec(2 9)
		\move(0 7)\lvec(2 7)
		
		\move(0 16)\lvec(0 18)
		\move(1 16)\lvec(1 18)
		\move(0 18)\lvec(2 18)
		
		\htext(0.5 -0.3){$\underbrace{\rule{1.5em}{0em}}$}
		\htext(0.6 -0.7){$\bar{v}_{2}$}
		
		\htext(1.5 -0.3){$\underbrace{\rule{1.5em}{0em}}$}
		\htext(1.7 -0.7){$\bar{u}_{2}$}
		\esegment
		
		\move(14 0)
		\bsegment
		\move(0 0)\lvec(0 2)
		\move(0 0)\lvec(2 0)
		\move(0 2)\lvec(2 2)
		\move(1 0)\lvec(1 2)
		
		\move(0 2)\lvec(0 4)
		\move(1 2)\lvec(1 4)
		\move(0 4)\lvec(2 4)
		
		\move(0 4)\lvec(0 6)
		\move(1 4)\lvec(1 6)
		
		\move(0 6)\lvec(2 6)
		
		\move(0 6)\lvec(0 16)
		\move(1 6)\lvec(1 16)
		\move(0 16)\lvec(2 16)
		\move(0 14)\lvec(2 14)
		\move(0 13)\lvec(2 13)
		\move(0 11)\lvec(2 11)
		\move(0 9)\lvec(2 9)
		\move(0 7)\lvec(2 7)
		
		\move(0 16)\lvec(0 18)
		\move(1 16)\lvec(1 18)
		\move(0 18)\lvec(2 18)
		\move(0 18)\lvec(0 20)
		\move(1 18)\lvec(1 20)
		\move(2 0)\lvec(2 20)
		\move(0 20)\lvec(2 20)
		
		\htext(0.5 -0.3){$\underbrace{\rule{1.5em}{0em}}$}
		\htext(0.6 -0.7){$\bar{v}_{1}$}
		
		\htext(1.5 -0.3){$\underbrace{\rule{1.5em}{0em}}$}
		\htext(1.7 -0.7){$\bar{u}_{1}$}
		\esegment

		\move(16 0)
		\bsegment
		\move(0 0)\lvec(1 0)
		\move(0 2)\lvec(1 2)
		\move(0 4)\lvec(1 4)
		\move(0 6)\lvec(1 6)
		\move(0 7)\lvec(1 7)
		\move(0 9)\lvec(1 9)
		\move(0 11)\lvec(1 11)
		\move(0 13)\lvec(1 13)
		\move(0 14)\lvec(1 14)
		\move(0 16)\lvec(1 16)
		\move(0 18)\lvec(1 18)
		\move(0 20)\lvec(1 20)
		\move(1 0)\lvec(1 20)
		
		\move(0 0)\lvec(1 2)
		
		\htext(0.7 0.5){$0$}	
		\htext(0.35 1.55){$0$}
		\htext(0.5 3){$1$}
		\htext(0.5 5){$2$}
		\vtext(0.5 6.57){$\cdots$}
		\htext(0.5 8){$n$-$1$}
		
		\move(0 9)\lvec(1 11)
		\htext(0.7 9.5){$n$}	
		\htext(0.35 10.55){$n$}
		
		\htext(0.5 12){$n$-$1$}
		\vtext(0.5 13.57){$\cdots$}
		\htext(0.5 15){$3$}
		\htext(0.5 17){$2$}
		\htext(0.5 19){$1$}
		\esegment
	\end{texdraw}
\end{center}

\vskip 6mm

For type $A_{2n-1}^{(2)}$, let $C_{(x_1,\cdots,x_n,\bar{x}_n,\cdots,\bar{x}_1)}$ be a virtual Young column. We choose $u_i,v_i, \bar{u}_i,\bar{v}_i\in \widehat{\mathbf Z}_{\geq 0}$ such that 

\begin{equation*}
	\bar{u}_i-\bar{v}_i=\bar{x}_i,\  u_i-v_i=x_i.
\end{equation*}

Then the corresponding extended Young Column is given as

\vskip 10mm

\begin{center}
	\begin{texdraw}
		\fontsize{3}{3}\selectfont
		\drawdim em
		\setunitscale 1.7
		\move(-2 0)
		\bsegment
		\move(0 0)\lvec(0 2)
		\move(0 0)\lvec(2 0)
		\move(0 2)\lvec(2 2)
		\move(1 0)\lvec(1 2)
		\htext(0.5 -0.3){$\underbrace{\rule{1.5em}{0em}}$}
		\htext(0.6 -0.7){$v_{1}$}
		
		\htext(1.5 -0.3){$\underbrace{\rule{1.5em}{0em}}$}
		\htext(1.7 -0.7){$u_{1}$}
		\esegment
		\move(0 0)
		\bsegment
		\move(0 0)\lvec(0 2)
		\move(0 0)\lvec(2 0)
		\move(0 2)\lvec(2 2)
		\move(1 0)\lvec(1 2)
		\htext(0.5 -0.3){$\underbrace{\rule{1.5em}{0em}}$}
		\htext(0.6 -0.7){$\bar{v}_{1}$}
		
		\htext(1.5 -0.3){$\underbrace{\rule{1.5em}{0em}}$}
		\htext(1.7 -0.7){$\bar{u}_{1}$}
		\esegment
		
		\move(2 0)
		\bsegment
		\move(0 0)\lvec(0 2)
		\move(0 0)\lvec(2 0)
		\move(0 2)\lvec(2 2)
		\move(1 0)\lvec(1 2)
		\htext(0.5 -0.3){$\underbrace{\rule{1.5em}{0em}}$}
		\htext(0.6 -0.7){$v_{2}$}
		
		\htext(1.5 -0.3){$\underbrace{\rule{1.5em}{0em}}$}
		\htext(1.7 -0.7){$u_{2}$}
		\esegment
		
		\move(4 0)
		\bsegment
		\move(0 0)\lvec(0 2)
		\move(0 0)\lvec(2 0)
		\move(0 2)\lvec(2 2)
		\move(1 0)\lvec(1 2)
		
		\move(0 2)\lvec(0 4)
		\move(1 2)\lvec(1 4)
		\move(0 4)\lvec(2 4)
		
		\htext(0.5 -0.3){$\underbrace{\rule{1.5em}{0em}}$}
		\htext(0.6 -0.7){$v_{3}$}
		
		\htext(1.5 -0.3){$\underbrace{\rule{1.5em}{0em}}$}
		\htext(1.7 -0.7){$u_{3}$}
		\esegment
		
		\move(6 0)
		\bsegment
		\move(0 0)\lvec(0 2)
		\move(0 0)\lvec(2 0)
		\move(0 2)\lvec(2 2)
		\move(1 0)\lvec(1 2)
		
		\move(0 2)\lvec(0 4)
		\move(1 2)\lvec(1 4)
		\move(0 4)\lvec(2 4)
		
		\move(0 4)\lvec(0 6)
		\move(1 4)\lvec(1 6)
		\move(2 0)\lvec(2 6)
		\move(0 6)\lvec(2 6)
		\htext(0.5 -0.3){$\underbrace{\rule{1.5em}{0em}}$}
		\htext(0.6 -0.7){$v_{4}$}
		
		\htext(1.5 -0.3){$\underbrace{\rule{1.5em}{0em}}$}
		\htext(1.7 -0.7){$u_{4}$}
		\htext(3 1){\fontsize{8}{8}\selectfont$\cdots$}
		\esegment
		
		\move(10 0)
		\bsegment
		\move(0 0)\lvec(0 2)
		\move(0 0)\lvec(2 0)
		\move(0 2)\lvec(2 2)
		\move(1 0)\lvec(1 2)
		
		\move(0 2)\lvec(0 4)
		\move(1 2)\lvec(1 4)
		\move(0 4)\lvec(2 4)
		
		\move(0 4)\lvec(0 6)
		\move(1 4)\lvec(1 6)
		
		\move(0 6)\lvec(2 6)
		
		\move(0 6)\lvec(0 16)
		\move(1 6)\lvec(1 16)
		\move(0 16)\lvec(2 16)
		\move(0 14)\lvec(2 14)
		\move(0 13)\lvec(2 13)
		\move(0 11)\lvec(2 11)
		\move(0 9)\lvec(2 9)
		\move(0 7)\lvec(2 7)
		\htext(0.5 -0.3){$\underbrace{\rule{1.5em}{0em}}$}
		\htext(0.6 -0.7){$\bar{v}_{4}$}
		
		\htext(1.5 -0.3){$\underbrace{\rule{1.5em}{0em}}$}
		\htext(1.7 -0.7){$\bar{u}_{4}$}
		
		\esegment
		
		\move(12 0)
		\bsegment
		\move(0 0)\lvec(0 2)
		\move(0 0)\lvec(2 0)
		\move(0 2)\lvec(2 2)
		\move(1 0)\lvec(1 2)
		
		\move(0 2)\lvec(0 4)
		\move(1 2)\lvec(1 4)
		\move(0 4)\lvec(2 4)
		
		\move(0 4)\lvec(0 6)
		\move(1 4)\lvec(1 6)
		
		\move(0 6)\lvec(2 6)
		
		\move(0 6)\lvec(0 16)
		\move(1 6)\lvec(1 16)
		\move(0 16)\lvec(2 16)
		\move(0 14)\lvec(2 14)
		\move(0 13)\lvec(2 13)
		\move(0 11)\lvec(2 11)
		\move(0 9)\lvec(2 9)
		\move(0 7)\lvec(2 7)
		
		\move(0 16)\lvec(0 18)
		\move(1 16)\lvec(1 18)
		\move(0 18)\lvec(2 18)
		
		\htext(0.5 -0.3){$\underbrace{\rule{1.5em}{0em}}$}
		\htext(0.6 -0.7){$\bar{v}_{3}$}
		
		\htext(1.5 -0.3){$\underbrace{\rule{1.5em}{0em}}$}
		\htext(1.7 -0.7){$\bar{u}_{3}$}
		\esegment
		
		\move(14 0)
		\bsegment
		\move(0 0)\lvec(0 2)
		\move(0 0)\lvec(2 0)
		\move(0 2)\lvec(2 2)
		\move(1 0)\lvec(1 2)
		
		\move(0 2)\lvec(0 4)
		\move(1 2)\lvec(1 4)
		\move(0 4)\lvec(2 4)
		
		\move(0 4)\lvec(0 6)
		\move(1 4)\lvec(1 6)
		
		\move(0 6)\lvec(2 6)
		
		\move(0 6)\lvec(0 16)
		\move(1 6)\lvec(1 16)
		\move(0 16)\lvec(2 16)
		\move(0 14)\lvec(2 14)
		\move(0 13)\lvec(2 13)
		\move(0 11)\lvec(2 11)
		\move(0 9)\lvec(2 9)
		\move(0 7)\lvec(2 7)
		
		\move(0 16)\lvec(0 18)
		\move(1 16)\lvec(1 18)
		\move(0 18)\lvec(2 18)
		\move(0 18)\lvec(0 20)
		\move(1 18)\lvec(1 20)
		\move(2 0)\lvec(2 20)
		\move(0 20)\lvec(2 20)
		
		\htext(0.5 -0.3){$\underbrace{\rule{1.5em}{0em}}$}
		\htext(0.6 -0.7){$\bar{v}_{2}$}
		
		\htext(1.5 -0.3){$\underbrace{\rule{1.5em}{0em}}$}
		\htext(1.7 -0.7){$\bar{u}_{2}$}
		\esegment

		\move(16 0)
		\bsegment
		\move(0 0)\lvec(1 0)
		\move(0 2)\lvec(1 2)
		\move(0 4)\lvec(1 4)
		\move(0 6)\lvec(1 6)
		\move(0 7)\lvec(1 7)
		\move(0 9)\lvec(1 9)
		\move(0 11)\lvec(1 11)
		\move(0 13)\lvec(1 13)
		\move(0 14)\lvec(1 14)
		\move(0 16)\lvec(1 16)
		\move(0 18)\lvec(1 18)
		\move(0 20)\lvec(1 20)
		\move(1 0)\lvec(1 20)
		
		\move(0 0)\lvec(1 2)
		
		\htext(0.7 0.5){$1$}	
		\htext(0.35 1.55){$0$}
		\htext(0.5 3){$2$}
		\htext(0.5 5){$3$}
		\vtext(0.5 6.57){$\cdots$}
		\htext(0.5 8){$n$-$1$}
		\htext(0.5 10){$n$}
		\htext(0.5 12){$n$-$1$}
		\vtext(0.5 13.57){$\cdots$}
		\htext(0.5 15){$4$}
		\htext(0.5 17){$3$}
		\htext(0.5 19){$2$}
		\esegment
	\end{texdraw}
\end{center}

\vskip 2mm 

For type $D_{n}^{(1)}$, let $C_{(x_1,\cdots,x_n,\bar{x}_n,\cdots,\bar{x}_1)}$ be a virtual Young column. We choose $u_i,v_i, \bar{u}_i,\bar{v}_i\in \widehat{\mathbf Z}_{\geq 0}$ such that 

\begin{equation*}
	\bar{u}_i-\bar{v}_i=\bar{x}_i,\  u_i-v_i=x_i.
\end{equation*}

Then the corresponding  extended Young column is given as 

\vskip 10mm

\begin{center}
	\begin{texdraw}
		\fontsize{3}{3}\selectfont
		\drawdim em
		\setunitscale 1.7
		\move(-2 0)
		\bsegment
		\move(0 0)\lvec(0 2)
		\move(0 0)\lvec(2 0)
		\move(0 2)\lvec(2 2)
		\move(1 0)\lvec(1 2)
		\htext(0.5 -0.3){$\underbrace{\rule{1.5em}{0em}}$}
		\htext(0.6 -0.7){$v_{1}$}
		
		\htext(1.5 -0.3){$\underbrace{\rule{1.5em}{0em}}$}
		\htext(1.7 -0.7){$u_{1}$}
		\esegment
		\move(0 0)
		\bsegment
		\move(0 0)\lvec(0 2)
		\move(0 0)\lvec(2 0)
		\move(0 2)\lvec(2 2)
		\move(1 0)\lvec(1 2)
		\htext(0.5 -0.3){$\underbrace{\rule{1.5em}{0em}}$}
		\htext(0.6 -0.7){$\bar{v}_{1}$}
		
		\htext(1.5 -0.3){$\underbrace{\rule{1.5em}{0em}}$}
		\htext(1.7 -0.7){$\bar{u}_{1}$}
		\esegment
		
		\move(2 0)
		\bsegment
		\move(0 0)\lvec(0 2)
		\move(0 0)\lvec(2 0)
		\move(0 2)\lvec(2 2)
		\move(1 0)\lvec(1 2)
		\htext(0.5 -0.3){$\underbrace{\rule{1.5em}{0em}}$}
		\htext(0.6 -0.7){$v_{2}$}
		
		\htext(1.5 -0.3){$\underbrace{\rule{1.5em}{0em}}$}
		\htext(1.7 -0.7){$u_{2}$}
		\esegment
		
		\move(4 0)
		\bsegment
		\move(0 0)\lvec(0 2)
		\move(0 0)\lvec(2 0)
		\move(0 2)\lvec(2 2)
		\move(1 0)\lvec(1 2)
		
		\move(0 2)\lvec(0 4)
		\move(1 2)\lvec(1 4)
		\move(0 4)\lvec(2 4)
		
		\htext(0.5 -0.3){$\underbrace{\rule{1.5em}{0em}}$}
		\htext(0.6 -0.7){$v_{3}$}
		
		\htext(1.5 -0.3){$\underbrace{\rule{1.5em}{0em}}$}
		\htext(1.7 -0.7){$u_{3}$}
		\esegment
		
		\move(6 0)
		\bsegment
		\move(0 0)\lvec(0 2)
		\move(0 0)\lvec(2 0)
		\move(0 2)\lvec(2 2)
		\move(1 0)\lvec(1 2)
		
		\move(0 2)\lvec(0 4)
		\move(1 2)\lvec(1 4)
		\move(0 4)\lvec(2 4)
		
		\move(0 4)\lvec(0 6)
		\move(1 4)\lvec(1 6)
		\move(2 0)\lvec(2 6)
		\move(0 6)\lvec(2 6)
		\htext(0.5 -0.3){$\underbrace{\rule{1.5em}{0em}}$}
		\htext(0.6 -0.7){$v_{4}$}
		
		\htext(1.5 -0.3){$\underbrace{\rule{1.5em}{0em}}$}
		\htext(1.7 -0.7){$u_{4}$}
		\htext(3 1){\fontsize{8}{8}\selectfont$\cdots$}
		\esegment
		
		\move(10 0)
		\bsegment
		\move(0 0)\lvec(0 2)
		\move(0 0)\lvec(2 0)
		\move(0 2)\lvec(2 2)
		\move(1 0)\lvec(1 2)
		
		\move(0 2)\lvec(0 4)
		\move(1 2)\lvec(1 4)
		\move(0 4)\lvec(2 4)
		
		\move(0 4)\lvec(0 6)
		\move(1 4)\lvec(1 6)
		
		\move(0 6)\lvec(2 6)
		
		\move(0 6)\lvec(0 16)
		\move(1 6)\lvec(1 16)
		\move(0 16)\lvec(2 16)
		\move(0 14)\lvec(2 14)
		\move(0 13)\lvec(2 13)
		\move(0 11)\lvec(2 11)
		\move(0 9)\lvec(2 9)
		\move(0 7)\lvec(2 7)
		\htext(0.5 -0.3){$\underbrace{\rule{1.5em}{0em}}$}
		\htext(0.6 -0.7){$\bar{v}_{4}$}
		
		\htext(1.5 -0.3){$\underbrace{\rule{1.5em}{0em}}$}
		\htext(1.7 -0.7){$\bar{u}_{4}$}
		
		\esegment
		
		\move(12 0)
		\bsegment
		\move(0 0)\lvec(0 2)
		\move(0 0)\lvec(2 0)
		\move(0 2)\lvec(2 2)
		\move(1 0)\lvec(1 2)
		
		\move(0 2)\lvec(0 4)
		\move(1 2)\lvec(1 4)
		\move(0 4)\lvec(2 4)
		
		\move(0 4)\lvec(0 6)
		\move(1 4)\lvec(1 6)
		
		\move(0 6)\lvec(2 6)
		
		\move(0 6)\lvec(0 16)
		\move(1 6)\lvec(1 16)
		\move(0 16)\lvec(2 16)
		\move(0 14)\lvec(2 14)
		\move(0 13)\lvec(2 13)
		\move(0 11)\lvec(2 11)
		\move(0 9)\lvec(2 9)
		\move(0 7)\lvec(2 7)
		
		\move(0 16)\lvec(0 18)
		\move(1 16)\lvec(1 18)
		\move(0 18)\lvec(2 18)
		
		\htext(0.5 -0.3){$\underbrace{\rule{1.5em}{0em}}$}
		\htext(0.6 -0.7){$\bar{v}_{3}$}
		
		\htext(1.5 -0.3){$\underbrace{\rule{1.5em}{0em}}$}
		\htext(1.7 -0.7){$\bar{u}_{3}$}
		\esegment
		
		\move(14 0)
		\bsegment
		\move(0 0)\lvec(0 2)
		\move(0 0)\lvec(2 0)
		\move(0 2)\lvec(2 2)
		\move(1 0)\lvec(1 2)
		
		\move(0 2)\lvec(0 4)
		\move(1 2)\lvec(1 4)
		\move(0 4)\lvec(2 4)
		
		\move(0 4)\lvec(0 6)
		\move(1 4)\lvec(1 6)
		
		\move(0 6)\lvec(2 6)
		
		\move(0 6)\lvec(0 16)
		\move(1 6)\lvec(1 16)
		\move(0 16)\lvec(2 16)
		\move(0 14)\lvec(2 14)
		\move(0 13)\lvec(2 13)
		\move(0 11)\lvec(2 11)
		\move(0 9)\lvec(2 9)
		\move(0 7)\lvec(2 7)
		
		\move(0 16)\lvec(0 18)
		\move(1 16)\lvec(1 18)
		\move(0 18)\lvec(2 18)
		\move(0 18)\lvec(0 20)
		\move(1 18)\lvec(1 20)
		\move(2 0)\lvec(2 20)
		\move(0 20)\lvec(2 20)
		
		\htext(0.5 -0.3){$\underbrace{\rule{1.5em}{0em}}$}
		\htext(0.6 -0.7){$\bar{v}_{2}$}
		
		\htext(1.5 -0.3){$\underbrace{\rule{1.5em}{0em}}$}
		\htext(1.7 -0.7){$\bar{u}_{2}$}
		\esegment

		\move(16 0)
		\bsegment
		\move(0 0)\lvec(1 0)
		\move(0 2)\lvec(1 2)
		\move(0 4)\lvec(1 4)
		\move(0 6)\lvec(1 6)
		\move(0 7)\lvec(1 7)
		\move(0 9)\lvec(1 9)
		\move(0 11)\lvec(1 11)
		\move(0 13)\lvec(1 13)
		\move(0 14)\lvec(1 14)
		\move(0 16)\lvec(1 16)
		\move(0 18)\lvec(1 18)
		\move(0 20)\lvec(1 20)
		\move(1 0)\lvec(1 20)
		
		\move(0 0)\lvec(1 2)
		
		\htext(0.7 0.5){$1$}	
		\htext(0.35 1.55){$0$}
		\htext(0.5 3){$2$}
		\htext(0.5 5){$3$}
		\vtext(0.5 6.57){$\cdots$}
		\htext(0.5 8){$n$-$2$}
		\move(0 9)\lvec(1 11)
		\htext(0.7 9.5){$n$}	
		\htext(0.35 10.55){$n$-$1$}
		
		\htext(0.5 12){$n$-$2$}
		\vtext(0.5 13.57){$\cdots$}
		\htext(0.5 15){$4$}
		\htext(0.5 17){$3$}
		\htext(0.5 19){$2$}
		\esegment
	\end{texdraw}
\end{center}

\vskip 2mm 

For type $B_{n}^{(1)}$, let $C_{(x_1,\cdots,x_n,x_0,\bar{x}_n,\cdots,\bar{x}_1)}$ be a virtual Young column. We choose $u_i,v_i, \bar{u}_i,\bar{v}_i\in \widehat{\mathbf Z}_{\geq 0}$ such that 

\begin{equation*}
	\bar{u}_i-\bar{v}_i=\bar{x}_i,\  u_i-v_i=x_i.
\end{equation*}

Then the corresponding  extended Young column is given as 

\vskip 10mm

\begin{center}
	\begin{texdraw}
		\fontsize{3}{3}\selectfont
		\drawdim em
		\setunitscale 1.7
		\move(-2 0)
		\bsegment
		\move(0 0)\lvec(0 2)
		\move(0 0)\lvec(2 0)
		\move(0 2)\lvec(2 2)
		\move(1 0)\lvec(1 2)
		\htext(0.5 -0.3){$\underbrace{\rule{1.5em}{0em}}$}
		\htext(0.6 -0.7){$v_{1}$}
		
		\htext(1.5 -0.3){$\underbrace{\rule{1.5em}{0em}}$}
		\htext(1.7 -0.7){$u_{1}$}
		\esegment
		\move(0 0)
		\bsegment
		\move(0 0)\lvec(0 2)
		\move(0 0)\lvec(2 0)
		\move(0 2)\lvec(2 2)
		\move(1 0)\lvec(1 2)
		\htext(0.5 -0.3){$\underbrace{\rule{1.5em}{0em}}$}
		\htext(0.6 -0.7){$\bar{v}_{1}$}
		
		\htext(1.5 -0.3){$\underbrace{\rule{1.5em}{0em}}$}
		\htext(1.7 -0.7){$\bar{u}_{1}$}
		\esegment
		
		\move(2 0)
		\bsegment
		\move(0 0)\lvec(0 2)
		\move(0 0)\lvec(2 0)
		\move(0 2)\lvec(2 2)
		\move(1 0)\lvec(1 2)
		\htext(0.5 -0.3){$\underbrace{\rule{1.5em}{0em}}$}
		\htext(0.6 -0.7){$v_{2}$}
		
		\htext(1.5 -0.3){$\underbrace{\rule{1.5em}{0em}}$}
		\htext(1.7 -0.7){$u_{2}$}
		\esegment
		
		\move(4 0)
		\bsegment
		\move(0 0)\lvec(0 2)
		\move(0 0)\lvec(2 0)
		\move(0 2)\lvec(2 2)
		\move(1 0)\lvec(1 2)
		
		\move(0 2)\lvec(0 4)
		\move(1 2)\lvec(1 4)
		\move(0 4)\lvec(2 4)
		
		\htext(0.5 -0.3){$\underbrace{\rule{1.5em}{0em}}$}
		\htext(0.6 -0.7){$v_{3}$}
		
		\htext(1.5 -0.3){$\underbrace{\rule{1.5em}{0em}}$}
		\htext(1.7 -0.7){$u_{3}$}
		\esegment
		
		\move(6 0)
		\bsegment
		\move(0 0)\lvec(0 2)
		\move(0 0)\lvec(2 0)
		\move(0 2)\lvec(2 2)
		\move(1 0)\lvec(1 2)
		
		\move(0 2)\lvec(0 4)
		\move(1 2)\lvec(1 4)
		\move(0 4)\lvec(2 4)
		
		\move(0 4)\lvec(0 6)
		\move(1 4)\lvec(1 6)
		\move(2 0)\lvec(2 6)
		\move(0 6)\lvec(2 6)
		\htext(0.5 -0.3){$\underbrace{\rule{1.5em}{0em}}$}
		\htext(0.6 -0.7){$v_{4}$}
		
		\htext(1.5 -0.3){$\underbrace{\rule{1.5em}{0em}}$}
		\htext(1.7 -0.7){$u_{4}$}
		\htext(3 1){\fontsize{8}{8}\selectfont$\cdots$}
		\esegment
		
		\move(10 0)
		\bsegment
		\move(0 0)\lvec(0 2)
		\move(0 0)\lvec(2 0)
		\move(0 2)\lvec(2 2)
		\move(1 0)\lvec(1 2)
		
		\move(0 2)\lvec(0 4)
		\move(1 2)\lvec(1 4)
		\move(0 4)\lvec(2 4)
		
		\move(0 4)\lvec(0 6)
		\move(1 4)\lvec(1 6)
		
		\move(0 6)\lvec(2 6)
		
		\move(0 6)\lvec(0 16)
		\move(1 6)\lvec(1 16)
		\move(0 16)\lvec(2 16)
		\move(0 14)\lvec(2 14)
		\move(0 13)\lvec(2 13)
		\move(0 11)\lvec(2 11)
		\move(0 9)\lvec(2 9)
		\move(0 7)\lvec(2 7)
		\htext(0.5 -0.3){$\underbrace{\rule{1.5em}{0em}}$}
		\htext(0.6 -0.7){$\bar{v}_{4}$}
		
		\htext(1.5 -0.3){$\underbrace{\rule{1.5em}{0em}}$}
		\htext(1.7 -0.7){$\bar{u}_{4}$}
		
		\esegment
		
		\move(12 0)
		\bsegment
		\move(0 0)\lvec(0 2)
		\move(0 0)\lvec(2 0)
		\move(0 2)\lvec(2 2)
		\move(1 0)\lvec(1 2)
		
		\move(0 2)\lvec(0 4)
		\move(1 2)\lvec(1 4)
		\move(0 4)\lvec(2 4)
		
		\move(0 4)\lvec(0 6)
		\move(1 4)\lvec(1 6)
		
		\move(0 6)\lvec(2 6)
		
		\move(0 6)\lvec(0 16)
		\move(1 6)\lvec(1 16)
		\move(0 16)\lvec(2 16)
		\move(0 14)\lvec(2 14)
		\move(0 13)\lvec(2 13)
		\move(0 11)\lvec(2 11)
		\move(0 9)\lvec(2 9)
		\move(0 7)\lvec(2 7)
		
		\move(0 16)\lvec(0 18)
		\move(1 16)\lvec(1 18)
		\move(0 18)\lvec(2 18)
		
		\htext(0.5 -0.3){$\underbrace{\rule{1.5em}{0em}}$}
		\htext(0.6 -0.7){$\bar{v}_{3}$}
		
		\htext(1.5 -0.3){$\underbrace{\rule{1.5em}{0em}}$}
		\htext(1.7 -0.7){$\bar{u}_{3}$}
		\esegment
		
		\move(14 0)
		\bsegment
		\move(0 0)\lvec(0 2)
		\move(0 0)\lvec(2 0)
		\move(0 2)\lvec(2 2)
		\move(1 0)\lvec(1 2)
		
		\move(0 2)\lvec(0 4)
		\move(1 2)\lvec(1 4)
		\move(0 4)\lvec(2 4)
		
		\move(0 4)\lvec(0 6)
		\move(1 4)\lvec(1 6)
		
		\move(0 6)\lvec(2 6)
		
		\move(0 6)\lvec(0 16)
		\move(1 6)\lvec(1 16)
		\move(0 16)\lvec(2 16)
		\move(0 14)\lvec(2 14)
		\move(0 13)\lvec(2 13)
		\move(0 11)\lvec(2 11)
		\move(0 9)\lvec(2 9)
		\move(0 7)\lvec(2 7)
		
		\move(0 16)\lvec(0 18)
		\move(1 16)\lvec(1 18)
		\move(0 18)\lvec(2 18)
		\move(0 18)\lvec(0 20)
		\move(1 18)\lvec(1 20)
		\move(2 0)\lvec(2 20)
		\move(0 20)\lvec(2 20)
		
		\htext(0.5 -0.3){$\underbrace{\rule{1.5em}{0em}}$}
		\htext(0.6 -0.7){$\bar{v}_{2}$}
		
		\htext(1.5 -0.3){$\underbrace{\rule{1.5em}{0em}}$}
		\htext(1.7 -0.7){$\bar{u}_{2}$}
		\esegment

		\move(16 0)
		\bsegment
		\move(0 0)\lvec(1 0)
		\move(0 2)\lvec(1 2)
		\move(0 4)\lvec(1 4)
		\move(0 6)\lvec(1 6)
		\move(0 7)\lvec(1 7)
		\move(0 9)\lvec(1 9)
		\move(0 11)\lvec(1 11)
		\move(0 13)\lvec(1 13)
		\move(0 14)\lvec(1 14)
		\move(0 16)\lvec(1 16)
		\move(0 18)\lvec(1 18)
		\move(0 20)\lvec(1 20)
		\move(1 0)\lvec(1 20)
		
		\move(0 0)\lvec(1 2)
		
		\htext(0.7 0.5){$1$}	
		\htext(0.35 1.55){$0$}
		\htext(0.5 3){$2$}
		\htext(0.5 5){$3$}
		\vtext(0.5 6.57){$\cdots$}
		\htext(0.5 8){$n$-$1$}
		
		\htext(0.5 10){$n$}

		\htext(0.5 12){$n$-$1$}
		\vtext(0.5 13.57){$\cdots$}
		\htext(0.5 15){$4$}
		\htext(0.5 17){$3$}
		\htext(0.5 19){$2$}
		\esegment
	\end{texdraw}
\end{center}

\vskip 2mm

For type $C_{n}^{(1)}$, let $C_{(x_1,\cdots,x_n,\bar{x}_n,\cdots,\bar{x}_1)}$ be a virtual Young column. We choose $u_i,v_i, \bar{u}_i,\bar{v}_i \in \widehat{\mathbf Z}_{\geq 0}$ such that
 
\begin{equation*}
	\bar{u}_i-\bar{v}_i=\bar{x}_i,\  u_i-v_i=x_i,\  u_{n+1}-v_{n+1}=-\sum_{i=1}^{n}(x_i+\bar{x}_i).
\end{equation*}

Then the corresponding extended Young column is given as

\vskip 10mm

\begin{center}
	\begin{texdraw}
		\fontsize{3}{3}\selectfont
		\drawdim em
		\setunitscale 1.7
		\move(0 0)
		\bsegment
		\move(0 0)\lvec(0 2)
		\move(0 0)\lvec(2 0)
		\move(0 2)\lvec(2 2)
		\move(1 0)\lvec(1 2)
		\htext(0.5 -0.3){$\underbrace{\rule{1.5em}{0em}}$}
		\htext(0.5 -0.7){$v_{n\!+\!1}$}
		
		\htext(1.5 -0.3){$\underbrace{\rule{1.5em}{0em}}$}
		\htext(1.6 -0.7){$u_{n\!+\!1}$}
		\esegment
		
		\move(2 0)
		\bsegment
		\move(0 0)\lvec(0 2)
		\move(0 0)\lvec(2 0)
		\move(0 2)\lvec(2 2)
		\move(1 0)\lvec(1 2)
		\htext(0.5 -0.3){$\underbrace{\rule{1.5em}{0em}}$}
		\htext(0.6 -0.7){$v_{1}$}
		
		\htext(1.5 -0.3){$\underbrace{\rule{1.5em}{0em}}$}
		\htext(1.7 -0.7){$u_{1}$}
		\esegment
		
		\move(4 0)
		\bsegment
		\move(0 0)\lvec(0 2)
		\move(0 0)\lvec(2 0)
		\move(0 2)\lvec(2 2)
		\move(1 0)\lvec(1 2)
		
		\move(0 2)\lvec(0 4)
		\move(1 2)\lvec(1 4)
		\move(0 4)\lvec(2 4)
		
		\htext(0.5 -0.3){$\underbrace{\rule{1.5em}{0em}}$}
		\htext(0.6 -0.7){$v_{2}$}
		
		\htext(1.5 -0.3){$\underbrace{\rule{1.5em}{0em}}$}
		\htext(1.7 -0.7){$u_{2}$}
		\esegment
		
		\move(6 0)
		\bsegment
		\move(0 0)\lvec(0 2)
		\move(0 0)\lvec(2 0)
		\move(0 2)\lvec(2 2)
		\move(1 0)\lvec(1 2)
		
		\move(0 2)\lvec(0 4)
		\move(1 2)\lvec(1 4)
		\move(0 4)\lvec(2 4)
		
		\move(0 4)\lvec(0 6)
		\move(1 4)\lvec(1 6)
		\move(2 0)\lvec(2 6)
		\move(0 6)\lvec(2 6)
		\htext(0.5 -0.3){$\underbrace{\rule{1.5em}{0em}}$}
		\htext(0.6 -0.7){$v_{3}$}
		
		\htext(1.5 -0.3){$\underbrace{\rule{1.5em}{0em}}$}
		\htext(1.7 -0.7){$u_{3}$}
		\htext(3 1){\fontsize{8}{8}\selectfont$\cdots$}
		\esegment
		
		\move(10 0)
		\bsegment
		\move(0 0)\lvec(0 2)
		\move(0 0)\lvec(2 0)
		\move(0 2)\lvec(2 2)
		\move(1 0)\lvec(1 2)
		
		\move(0 2)\lvec(0 4)
		\move(1 2)\lvec(1 4)
		\move(0 4)\lvec(2 4)
		
		\move(0 4)\lvec(0 6)
		\move(1 4)\lvec(1 6)
		
		\move(0 6)\lvec(2 6)
		
		\move(0 6)\lvec(0 16)
		\move(1 6)\lvec(1 16)
		\move(0 16)\lvec(2 16)
		\move(0 14)\lvec(2 14)
		\move(0 13)\lvec(2 13)
		\move(0 11)\lvec(2 11)
		\move(0 9)\lvec(2 9)
		\move(0 7)\lvec(2 7)
		\htext(0.5 -0.3){$\underbrace{\rule{1.5em}{0em}}$}
		\htext(0.6 -0.7){$\bar{v}_{3}$}
		
		\htext(1.5 -0.3){$\underbrace{\rule{1.5em}{0em}}$}
		\htext(1.7 -0.7){$\bar{u}_{3}$}
		
		\esegment
		
		\move(12 0)
		\bsegment
		\move(0 0)\lvec(0 2)
		\move(0 0)\lvec(2 0)
		\move(0 2)\lvec(2 2)
		\move(1 0)\lvec(1 2)
		
		\move(0 2)\lvec(0 4)
		\move(1 2)\lvec(1 4)
		\move(0 4)\lvec(2 4)
		
		\move(0 4)\lvec(0 6)
		\move(1 4)\lvec(1 6)
		
		\move(0 6)\lvec(2 6)
		
		\move(0 6)\lvec(0 16)
		\move(1 6)\lvec(1 16)
		\move(0 16)\lvec(2 16)
		\move(0 14)\lvec(2 14)
		\move(0 13)\lvec(2 13)
		\move(0 11)\lvec(2 11)
		\move(0 9)\lvec(2 9)
		\move(0 7)\lvec(2 7)
		
		\move(0 16)\lvec(0 18)
		\move(1 16)\lvec(1 18)
		\move(0 18)\lvec(2 18)
		
		\htext(0.5 -0.3){$\underbrace{\rule{1.5em}{0em}}$}
		\htext(0.6 -0.7){$\bar{v}_{2}$}
		
		\htext(1.5 -0.3){$\underbrace{\rule{1.5em}{0em}}$}
		\htext(1.7 -0.7){$\bar{u}_{2}$}
		\esegment
		
		\move(14 0)
		\bsegment
		\move(0 0)\lvec(0 2)
		\move(0 0)\lvec(2 0)
		\move(0 2)\lvec(2 2)
		\move(1 0)\lvec(1 2)
		
		\move(0 2)\lvec(0 4)
		\move(1 2)\lvec(1 4)
		\move(0 4)\lvec(2 4)
		
		\move(0 4)\lvec(0 6)
		\move(1 4)\lvec(1 6)
		
		\move(0 6)\lvec(2 6)
		
		\move(0 6)\lvec(0 16)
		\move(1 6)\lvec(1 16)
		\move(0 16)\lvec(2 16)
		\move(0 14)\lvec(2 14)
		\move(0 13)\lvec(2 13)
		\move(0 11)\lvec(2 11)
		\move(0 9)\lvec(2 9)
		\move(0 7)\lvec(2 7)
		
		\move(0 16)\lvec(0 18)
		\move(1 16)\lvec(1 18)
		\move(0 18)\lvec(2 18)
		\move(0 18)\lvec(0 20)
		\move(1 18)\lvec(1 20)
		\move(2 0)\lvec(2 20)
		\move(0 20)\lvec(2 20)
		
		\htext(0.5 -0.3){$\underbrace{\rule{1.5em}{0em}}$}
		\htext(0.6 -0.7){$\bar{v}_{1}$}
		
		\htext(1.5 -0.3){$\underbrace{\rule{1.5em}{0em}}$}
		\htext(1.7 -0.7){$\bar{u}_{1}$}
		\esegment

		\move(16 0)
		\bsegment
		\move(0 0)\lvec(1 0)
		\move(0 2)\lvec(1 2)
		\move(0 4)\lvec(1 4)
		\move(0 6)\lvec(1 6)
		\move(0 7)\lvec(1 7)
		\move(0 9)\lvec(1 9)
		\move(0 11)\lvec(1 11)
		\move(0 13)\lvec(1 13)
		\move(0 14)\lvec(1 14)
		\move(0 16)\lvec(1 16)
		\move(0 18)\lvec(1 18)
		\move(0 20)\lvec(1 20)
		\move(1 0)\lvec(1 20)
		
		\move(0 0)\lvec(1 2)
		
		\htext(0.7 0.5){$0$}	
		\htext(0.35 1.55){$0$}
		\htext(0.5 3){$1$}
		\htext(0.5 5){$2$}
		\vtext(0.5 6.57){$\cdots$}
		\htext(0.5 8){$n$-$1$}
		\htext(0.5 10){$n$}
		\htext(0.5 12){$n$-$1$}
		\vtext(0.5 13.57){$\cdots$}
		\htext(0.5 15){$3$}
		\htext(0.5 17){$2$}
		\htext(0.5 19){$1$}
		\esegment
	\end{texdraw}
\end{center}

\vskip 4mm 

In the extended Young columns defined above, the vertical line separating $u_i(\bar{u}_i)$ and $v_i(\bar{v}_i)$ is called a \textit{virtual line}.

\begin{remark}
The stacking patterns for extended Young columns  follow the patterns given in Definition \ref{stacking patterns}.
	\end{remark}

We define the action of Kashiwara operators on extended Young columns in the same way as in Section \ref{The crystal structure}. But we only add or remove blocks on the right side of each virtual line.

\vskip 2mm

 We can read off the same virtual Young column from different  extended Young columns. 
We say that two extended Young columns are {\it equivalent} if they correspond to the same virtual Young column. The extended Young columns given below form an equivalence class of extended Young columns.

\vskip 2mm 

In each type, set $u_i=v_i=\bar{u}_i=\bar{v}_i=\infty$. The extended Young columns in this case are called the {\it ground-state  extended Young columns}. The virtual Young column corresponding to the ground-state  extended Young column is an empty Young column, which can be regarded as the {\it ground-state virtual Young column}. 

\vskip 2mm 

We can define the notions of virtual Young walls, reduced virtual Young walls,  extended Young walls,  reduced extended Young walls, $i$-signatures and Kashiwara operators in a similar way as in Section \ref{Construction of Young Wall}. Practically, we usually consider the Kashiwara operators acting on  extended Young walls rather than  virtual Young walls.

\vskip 2mm 

Let $Y_{\infty}$ denote the extended Young wall consisting of infinitely many ground-state extended Young columns extending to the left.

\vskip 2mm

For a virtual Young wall or an extended Young wall $Y$, we define

\vskip 3mm

\begin{enumerate}
	\item $\wt(Y) = -\sum_{i\in I}k_i \alpha_i$, where $k_i$ is the number of $i$-blocks added on $Y_{\infty}$,
	\vspace{6pt}
	\item $\varepsilon_i (Y)$ is the number of $-$'s in the $i$-signature of $Y$,
	\vspace{6pt}	
	\item $\varphi_i (Y)$ is the number of $+$'s in the $i$-signature of $Y$.
\end{enumerate}

\vskip 2mm

Let ${\mathcal Y}(\infty)$ and $\widehat{\mathcal Y}(\infty)$ denote the set of all reduced virtual Young walls and reduced extended Young walls, respectively.
Let $Y_{\emptyset}$ be the virtual Young wall consisting of infinitely many empty Young columns extending to the left. 

\vskip 2mm

By a similar argument as in the proof of Theorem \ref{main theorem}, we have the following theorem.

\begin{theorem} \hfill

\vskip 2mm 

(a) The sets ${\mathcal Y}(\infty)$ and $\widehat{\mathcal Y}(\infty)$ are $U_q'(\mathfrak g)$-crystals.

\vskip 2mm 

(b) There exist $U_{q}'(\mathfrak g)$-crystal isomorphisms
\begin{align*}
{\mathcal Y}(\infty)&\stackrel{\sim}{\longrightarrow} \widehat{\mathcal Y}(\infty)\stackrel{\sim}{\longrightarrow} B(\infty)\\
 Y_{\emptyset}\ \ \! \  &\longmapsto\ \ \!  Y_{\infty}\quad \!\!  \longmapsto \   u_{\infty}.
 \end{align*}
\end{theorem}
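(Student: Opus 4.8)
The plan is to follow the three-step template used in the proof of Theorem~\ref{main theorem}, replacing the perfect crystal ${\mathcal B}^{(l)}$ by its crystal limit $B_\infty$ and the path realization of $B(\lambda)$ by that of $B(\infty)$. For part~(a), I would define $\wt$, $\tilde{E}_i$, $\tilde{F}_i$, $\varepsilon_i$, $\varphi_i$ on $\widehat{\mathcal Y}(\infty)$ exactly as on $\mathbf Y(\lambda)$ in Section~\ref{Construction of Young Wall}, using the $i$-signature obtained by concatenating the column $i$-signatures $(\epsilon_i(C),\phi_i(C))$ and cancelling $(+,-)$-pairs; the only new feature is that blocks are added or removed solely to the right of the virtual lines. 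One then checks the crystal axioms and, in particular, that $\widehat{\mathcal Y}(\infty)\cup\{0\}$ is closed under $\tilde F_i$. This closure argument is the direct analogue of the proof of Theorem~\ref{crystal structure on Young walls}: for types $A^{(2)}_{2n}$, $D^{(2)}_{n+1}$, $C^{(1)}_n$ only $\tilde F_0$ can produce a removable $\delta$-slice, and for $A^{(2)}_{2n-1}$, $D^{(1)}_n$, $B^{(1)}_n$ only the simultaneous action of $\tilde F_0$ and $\tilde F_1$ can, and the same inequalities comparing the number of $0$-blocks (and $1$-blocks) in adjacent columns — now with values in $\widehat{\mathbf Z}_{\geq 0}$ — contradict the tensor product rule if a removable $\delta$-slice existed after the action. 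The crystal structure on ${\mathcal Y}(\infty)$ is then induced from that of $\widehat{\mathcal Y}(\infty)$, as discussed in the last paragraph.

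For part~(b), the key ingredient is a column-level crystal isomorphism
\[
\Phi_\infty\colon \widehat{\mathcal C}(\infty)\longrightarrow B_\infty,\qquad C_b\longmapsto b,
\]
where $\widehat{\mathcal C}(\infty)$ is the set of reduced extended Young columns and $b$ ranges over the explicit description of $B_\infty$ in \cite[Section~5]{KKM}. I would prove this exactly as Theorem~\ref{perfect crystal isomorphism}, by matching the definitions of $\tilde{\mathfrak e}_i$, $\tilde{\mathfrak f}_i$, $\epsilon_i$, $\phi_i$ on extended Young columns — which are the level-$l$ formulas of Section~\ref{The crystal structure} specialized to $l=0$ with the parameters now allowed to be negative or $\infty$ — against the Kashiwara operators on $B_\infty$, the restriction of the action to the right of the virtual lines being precisely what keeps $u_i,v_i,\bar u_i,\bar v_i$ in $\widehat{\mathbf Z}_{\geq 0}$. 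Granting $\Phi_\infty$, one defines
\[
\Psi_\infty\colon \widehat{\mathcal Y}(\infty)\longrightarrow {\mathcal P}(\infty),\qquad
(\cdots,C_2,C_1,C_0)\longmapsto(\cdots,\Phi_\infty(C_2),\Phi_\infty(C_1),\Phi_\infty(C_0)),
\]
which sends $Y_\infty$ to the ground-state path $(\cdots,b_\infty,b_\infty,b_\infty)$ and lands in ${\mathcal P}(\infty)$ since $C_k$ is a ground-state extended column for $k\gg 0$. Bijectivity of $\Psi_\infty$ follows from that of $\Phi_\infty$, and commutation with $\tilde E_i$, $\tilde F_i$ follows from the tensor product rule \eqref{eq:tensor} exactly as in the proof of Theorem~\ref{main theorem}, because the $i$-signature of $Y$ encodes the outcome of the iterated tensor product rule applied to $\cdots\otimes\Phi_\infty(C_1)\otimes\Phi_\infty(C_0)$. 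Composing with the path realization ${\mathcal P}(\infty)\cong B(\infty)$ of \cite{KKM} yields the isomorphism $\widehat{\mathcal Y}(\infty)\cong B(\infty)$ with $Y_\infty\mapsto u_\infty$.

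Finally, for the isomorphism ${\mathcal Y}(\infty)\cong\widehat{\mathcal Y}(\infty)$: each reduced extended Young wall determines a reduced virtual Young wall by reading off the virtual columns, and conversely a reduced virtual Young wall comes from a unique reduced extended Young wall once the parameters $u_i,v_i,\bar u_i,\bar v_i$ are fixed by the canonical normalization imposed by the reduced condition, giving a bijection sending $Y_\emptyset$ to $Y_\infty$. The point requiring care is that two equivalent extended Young columns have equivalent images under $\tilde{\mathfrak e}_i$ and $\tilde{\mathfrak f}_i$; this holds because these operators touch only blocks to the right of the virtual lines and leave the blank regions untouched, so the equivalence relation is a crystal congruence and the two crystal structures agree. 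I expect the main obstacle to be precisely this bookkeeping — verifying that the $l=0$ specialization with negative and infinite counts genuinely realizes $B_\infty$ at the column level, and that passing to equivalence classes is compatible with all the Kashiwara operators; once these are settled, the remainder is a routine transcription of the finite-level arguments of Theorems~\ref{perfect crystal isomorphism}, \ref{crystal structure on Young walls} and \ref{main theorem}.
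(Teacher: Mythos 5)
Your proposal is correct and follows essentially the same route as the paper: part (a) is the closure-under-$\tilde F_i$ argument transplanted from Theorem \ref{crystal structure on Young walls} (the paper writes out only the $A^{(2)}_{2n}$ case and declares the other types similar, exactly as you describe), and part (b) is the column-level isomorphism with $B_\infty$ followed by the path realization of $B(\infty)$ from \cite{KKM}, i.e.\ the same template as Theorems \ref{perfect crystal isomorphism} and \ref{main theorem}. Your extra care about the equivalence of extended Young columns and the compatibility of Kashiwara operators with that equivalence is a point the paper dispatches with ``it is easy to see,'' so you are only filling in detail, not diverging in method.
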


\begin{proof}
For part (a), we follow a similar approach in the proof of Theorem \ref{crystal structure on Young walls}. We need to prove that the set $\widehat{\mathcal Y}(\infty) \cup \{0\}$ is closed under the maps 
$\tilde{F}_i$ $(i \in I)$. 
Let $Y=(\cdots, C_2, C_1,C_0)$ be a reduced extended Young wall in $\widehat{\mathcal Y}(\infty)$. 

\vskip 2mm

	For  type $A_{2n}^{(2)}$, only $\tilde{F}_0$-action can produce $\delta$-slices. Assume $\tilde{F}_0$ acts on the extended										 Young column $C_k$ in $Y$, then we have
\begin{equation*}
\phi_0(C_{k+1})\le\epsilon_0(C_{k}),\quad x_1<\bar{x}_1.	
	\end{equation*}

 Thus we have
\begin{equation*}
	-(\phi_0(C_{k+1})-2{(\bar{x}_1'-x_1')}_{+})\ge -\phi_0(C_{k+1})\ge -\epsilon_0(C_{k}).
\end{equation*}  

If we remove the  $\delta$-slice in $C_k$, we obtain
\begin{equation*}
	-(\phi_0(C_{k+1})-2{(\bar{x}_1'-x_1')}_{+})\ge -\phi_0(C_{k+1})>-\epsilon_0(C_{k})-2,
\end{equation*}
which means the number of $0$-blocks in $C_{k+1}$ is bigger than the number of blocks in $C_{k}$. This will be contradict to the tensor product rule. 
Hence $\tilde{F}_0Y$ is reduced. 

\vskip 2mm

The crystal structure on $\widehat{\mathcal Y}(\infty)$ of other types can also be proved in a similar way. Since virtual Young walls can be obtained from extended Young walls,  the crystal structure on ${\mathcal Y}(\infty)$ follows from the crystal structure on $\widehat{\mathcal Y}(\infty)$.

\vskip 2mm

For part (b), it is easy to see that the crystal ${\mathcal Y}(\infty)$ is isomorphic to the crystal $\widehat{\mathcal Y}(\infty)$. We can prove that the crystal $\widehat{\mathcal Y}(\infty)$ and $B(\infty)$ are isomorphic using a similar  approach in the proof of Theorem \ref{main theorem}.
\end{proof}

\begin{example}
We illustrate the top part of the crystal $B(\infty)$ for type $D_{3}^{(2)}$ in terms of reduced virtual Young walls. For each reduced virtual Young wall, the number below the slice represents the number of slices.
	
\vskip 4mm

\begin{center}
	\begin{texdraw}
		\drawdim em
		\arrowheadsize l:0.3 w:0.3
		\arrowheadtype t:V
		\fontsize{4}{4}\selectfont
		\drawdim em
		\setunitscale 1.2
		\move(0 11)	
		\bsegment
		\htext(21 0){\fontsize{12}{12}\selectfont{$Y_{\emptyset}$}}
		\esegment
		\move(8 0)	
		\bsegment
		\move(0 0)\lvec(2 0)
		\move(0 0)\lvec(0 2)
		\move(1 0)\lvec(1 2)
		\move(0 2)\lvec(2 2)
		\move(2 0)\lvec(2 2)
		\move(1 4)\lvec(2 4)
		\move(0 0)\lvec(1 2)
		\move(1 0)\lvec(2 2)
		\move(1 2)\lvec(1 4)
		\move(2 2)\lvec(2 4)
		
		\htext(1.5 3){$1$}
		\htext(0.7 0.5){$0$}
		\htext(0.35 1.5){$0$}
		\htext(1.7 0.5){$0$}
		\htext(1.35 1.5){$0$}
		\htext(0.5 -0.5){-$1$}
		\htext(1.5 -0.5){$1$}
		\esegment
		
		\move(20 0)	
		\bsegment
		\move(0 0)\lvec(2 0)
		\move(0 0)\lvec(0 2)
		\move(1 0)\lvec(1 2)
		\move(0 2)\lvec(2 2)
		\move(2 0)\lvec(2 2)
		\move(0 2)\lvec(1 2)
		\move(0 0)\lvec(1 2)
		\move(1 0)\lvec(2 2)
		\htext(0.7 0.5){$0$}
		
		\htext(1.7 0.5){$0$}
		\htext(1.35 1.5){$0$}
		\htext(0.5 -0.5){-$1$}
		\htext(1.5 -0.5){$1$}
		\esegment
		
		\move(32 0)	
		\bsegment
		\move(0 0)\lvec(2 0)
		\move(0 0)\lvec(0 4)
		\move(1 0)\lvec(1 4)
		\move(0 2)\lvec(2 2)
		\move(2 0)\lvec(2 2)
		\move(0 4)\lvec(1 4)
		\move(0 0)\lvec(1 2)
		\move(1 0)\lvec(2 2)
		\move(1 4)\lvec(2 4)
		\move(2 2)\lvec(2 6)
		\move(1 4)\lvec(1 6)
		\move(1 6)\lvec(2 6)
		\move(1 4)\lvec(2 6)
		\htext(0.5 3){$1$}
		\htext(1.5 3){$1$}
		\htext(0.7 0.5){$0$}
		\htext(0.35 1.5){$0$}
		\htext(1.7 0.5){$0$}
		\htext(1.35 1.5){$0$}
		\htext(0.5 -0.5){-$1$}
		\htext(1.5 -0.5){$1$}
		\htext(1.7 4.5){$2$}
		
		\esegment
		
		
		\move(0 -12)
		\bsegment
		\move(-4 0)
		\bsegment
		\move(0 0)\lvec(2 0)
		\move(0 0)\lvec(0 2)
		\move(1 0)\lvec(1 2)
		\move(0 2)\lvec(2 2)
		\move(2 0)\lvec(2 2)
		\move(0 2)\lvec(1 2)
		\move(0 0)\lvec(1 2)
		\move(1 0)\lvec(2 2)
		
		\move(2 2)\lvec(2 4)
		\move(2 4)\lvec(3 4)
		\move(2 0)\lvec(4 0)
		\move(4 0)\lvec(4 8)
		\move(3 0)\lvec(3 2)
		\move(2 2)\lvec(4 2)
		\move(3 2)\lvec(3 8)
		\move(3 4)\lvec(4 4)
		\move(3 6)\lvec(4 6)
		\move(3 8)\lvec(4 8)
		\move(2 0)\lvec(3 2)
		\move(3 0)\lvec(4 2)
		\move(3 4)\lvec(4 6)
		\htext(3.7 4.5){$2$}
		\htext(3.35 5.5){$2$}

		\htext(2.7 0.5){$0$}
		\htext(2.35 1.5){$0$}
		\htext(3.7 0.5){$0$}
		\htext(3.35 1.5){$0$}
		
		\htext(2.5 3){$1$}
		\htext(3.5 3){$1$}
		\htext(3.5 7){$1$}
		
		\htext(0.7 0.5){$0$}
		\htext(1.7 0.5){$0$}
		\htext(1.35 1.5){$0$}
		\htext(0.5 -0.5){$1$}
		\htext(1.5 -0.5){-$1$}
		\htext(2.5 -0.5){$1$}
		\htext(3.5 -0.5){-$1$}
		\esegment
		\move(4 0)
		\bsegment
		\move(0 0)\lvec(2 0)
		\move(0 0)\lvec(0 2)
		\move(1 0)\lvec(1 2)
		\move(0 2)\lvec(2 2)
		\move(2 0)\lvec(2 2)
		\move(0 2)\lvec(1 2)
		\move(0 0)\lvec(1 2)
		\move(1 0)\lvec(2 2)
		\htext(0.7 0.5){$0$}
		\htext(1.5 3){$1$}
		\htext(1.7 0.5){$0$}
		\htext(1.35 1.5){$0$}
		\htext(0.35 1.5){$0$}
		\htext(0.5 -0.5){-$2$}
		\htext(1.5 -0.5){$2$}
		\move(2 2)\lvec(2 4)
		\move(1 2)\lvec(1 4)
		\move(1 4)\lvec(2 4)
		
		\esegment

		\move(10 0)
		\bsegment
		\move(0 0)\lvec(2 0)
		\move(0 0)\lvec(0 2)
		\move(1 0)\lvec(1 2)
		\move(0 2)\lvec(2 2)
		\move(2 0)\lvec(2 2)
		\move(0 2)\lvec(1 2)
		\move(0 0)\lvec(1 2)
		\move(1 0)\lvec(2 2)
		\htext(0.7 0.5){$0$}
		\htext(1.5 3){$1$}
		\htext(1.7 0.5){$0$}
		\htext(1.35 1.5){$0$}
		\htext(0.35 1.5){$0$}
		\htext(0.5 -0.5){-$1$}
		\htext(1.5 -0.5){$1$}
		\move(1 4)\lvec(1 6)
		\move(1 2)\lvec(1 4)
		\move(1 4)\lvec(2 4)
		\move(2 2)\lvec(2 6)
		\move(2 4)\lvec(2 6)
		\move(1 6)\lvec(2 6)
		\move(1 4)\lvec(2 6)
		\htext(1.7 4.5){$2$}
		
		\esegment
		
		\move(16 0)
		\bsegment
		\move(0 0)\lvec(2 0)
		\move(0 0)\lvec(0 2)
		\move(1 0)\lvec(1 2)
		\move(0 2)\lvec(2 2)
		\move(2 0)\lvec(2 2)
		\move(0 2)\lvec(1 2)
		\move(0 0)\lvec(1 2)
		\move(1 0)\lvec(2 2)
		\move(1 2)\lvec(1 4)
		\move(2 2)\lvec(2 4)
		\move(1 4)\lvec(2 4)
		\htext(0.7 0.5){$0$}
		\htext(1.5 3){$1$}
		\htext(1.7 0.5){$0$}
		\htext(1.35 1.5){$0$}
		\htext(0.5 -0.5){-$1$}
		\htext(1.5 -0.5){$1$}
		
		\esegment
		
		\move(22 0)
		\bsegment
		\move(0 0)\lvec(2 0)
		\move(0 0)\lvec(0 2)
		\move(1 0)\lvec(1 2)
		\move(0 2)\lvec(2 2)
		\move(2 0)\lvec(2 2)
		\move(0 2)\lvec(1 2)
		\move(0 0)\lvec(1 2)
		\move(1 0)\lvec(2 2)
		\htext(0.7 0.5){$0$}
		
		\htext(1.7 0.5){$0$}
		\htext(1.35 1.5){$0$}
		\htext(0.5 -0.5){-$2$}
		\htext(1.5 -0.5){$2$}
		
		\esegment
		
		\move(28 0)
		\bsegment
		\move(0 0)\lvec(2 0)
		\move(0 0)\lvec(0 2)
		\move(1 0)\lvec(1 2)
		\move(0 2)\lvec(2 2)
		\move(2 0)\lvec(2 2)
		\move(0 2)\lvec(1 2)
		\move(0 0)\lvec(1 2)
		\move(1 0)\lvec(2 2)
		
		\move(2 2)\lvec(2 4)
		\move(3 0)\lvec(3 6)
		\move(4 0)\lvec(4 6)
		\move(3 6)\lvec(4 6)
		\move(2 0)\lvec(4 0)
		\move(2 2)\lvec(4 2)
		\move(2 4)\lvec(4 4)

		\move(2 0)\lvec(3 2)
		\move(3 0)\lvec(4 2)
		\move(3 4)\lvec(4 6)
		\htext(2.5 3){$1$}
		\htext(3.5 3){$1$}
		\htext(3.7 4.5){$2$}
		
		\htext(0.7 0.5){$0$}
		\htext(2.7 0.5){$0$}
		\htext(2.35 1.5){$0$}
		\htext(3.7 0.5){$0$}
		\htext(3.35 1.5){$0$}
		\htext(1.7 0.5){$0$}
		\htext(1.35 1.5){$0$}
		\htext(0.5 -0.5){-$1$}
		\htext(1.5 -0.5){$1$}
		\htext(2.5 -0.5){-$1$}
		\htext(3.5 -0.5){$1$}
		\esegment

		\move(36 0)
		\bsegment
		\move(0 0)\lvec(2 0)
		\move(0 0)\lvec(0 2)
		\move(1 0)\lvec(1 2)
		\move(0 2)\lvec(2 2)
		\move(2 0)\lvec(2 2)
		\move(0 2)\lvec(1 2)
		\move(0 0)\lvec(1 2)
		\move(1 0)\lvec(2 2)
		\move(0 2)\lvec(0 4)
		\move(0 4)\lvec(2 4)
		\move(1 6)\lvec(2 6)
		\move(2 2)\lvec(2 6)
		\move(1 2)\lvec(1 6)
		\move(1 4)\lvec(2 6)
		\htext(0.5 3){$1$}
		\htext(1.5 3){$1$}
		\htext(1.7 4.5){$2$}
		\htext(1.35 5.5){$2$}
		\htext(0.7 0.5){$0$}
		\htext(0.35 1.5){$0$}
		\htext(1.7 0.5){$0$}
		\htext(1.35 1.5){$0$}
		\htext(0.5 -0.5){-$1$}
		\htext(1.5 -0.5){$1$}
		
		\esegment
		
		\move(42 0)
		\bsegment
		\move(0 0)\lvec(2 0)
		\move(0 0)\lvec(0 2)
		\move(1 0)\lvec(1 2)
		\move(0 2)\lvec(2 2)
		\move(2 0)\lvec(2 2)
		\move(0 2)\lvec(1 2)
		\move(0 0)\lvec(1 2)
		\move(1 0)\lvec(2 2)
		\move(1 2)\lvec(1 4)
		\move(2 2)\lvec(2 4)
		\move(1 4)\lvec(2 4)
		\move(2 0)\lvec(4 0)
		\move(4 0)\lvec(4 8)
		\move(3 0)\lvec(3 2)
		\move(2 2)\lvec(4 2)
		\move(3 2)\lvec(3 8)
		\move(3 4)\lvec(4 4)
		\move(3 6)\lvec(4 6)
		\move(3 8)\lvec(4 8)
		\move(2 0)\lvec(3 2)
		\move(3 0)\lvec(4 2)
		\move(3 4)\lvec(4 6)
		\move(2 4)\lvec(2 6)
		\move(1 4)\lvec(1 6)
		\move(1 6)\lvec(2 6)
		\move(2 6)\lvec(3 6)
		\move(2 4)\lvec(3 4)
		\move(0 2)\lvec(0 4)
		\move(0 4)\lvec(1 4)
		\move(2 4)\lvec(3 6)
		\move(1 4)\lvec(2 6)
		
		\htext(0.5 3){$1$}
		\htext(2.5 3){$1$}
		
		\htext(3.7 4.5){$2$}
		\htext(3.35 5.5){$2$}
		\htext(2.7 4.5){$2$}
		\htext(2.35 5.5){$2$}
		\htext(1.7 4.5){$2$}
		\htext(0.35 1.5){$0$}
		\htext(2.7 0.5){$0$}
		\htext(2.35 1.5){$0$}
		\htext(3.7 0.5){$0$}
		\htext(3.35 1.5){$0$}
		
		\htext(1.5 3){$1$}
		\htext(3.5 3){$1$}
		\htext(3.5 7){$1$}
		
		\htext(0.7 0.5){$0$}
		\htext(1.7 0.5){$0$}
		\htext(1.35 1.5){$0$}
		\htext(0.5 -0.5){-$1$}
		\htext(1.5 -0.5){$1$}
		\htext(2.5 -0.5){-$1$}
		\htext(3.5 -0.5){$1$}
		
		\esegment

		\esegment

		
		\move(0 -24)
		\bsegment
		\move(-9 0)
		\bsegment
		\move(0 0)\lvec(2 0)
		\move(0 0)\lvec(0 2)
		\move(1 0)\lvec(1 2)
		\move(0 2)\lvec(2 2)
		\move(2 0)\lvec(2 2)
		\move(0 2)\lvec(1 2)
		\move(0 0)\lvec(1 2)
		\move(1 0)\lvec(2 2)
		
		\move(2 2)\lvec(2 4)
		\move(2 4)\lvec(3 4)
		\move(2 0)\lvec(4 0)
		\move(4 0)\lvec(4 8)
		\move(3 0)\lvec(3 2)
		\move(2 2)\lvec(4 2)
		\move(3 2)\lvec(3 8)
		\move(3 4)\lvec(4 4)
		\move(3 6)\lvec(4 6)
		\move(3 8)\lvec(4 8)
		\move(2 0)\lvec(3 2)
		\move(3 0)\lvec(4 2)
		\move(3 4)\lvec(4 6)
		\htext(3.7 4.5){$2$}
		\htext(3.35 5.5){$2$}

		\htext(2.7 0.5){$0$}
		\htext(2.35 1.5){$0$}
		\htext(3.7 0.5){$0$}
		\htext(3.35 1.5){$0$}
		
		\htext(2.5 3){$1$}
		
		\htext(3.5 3){$1$}
		\htext(3.5 7){$1$}
		
		\htext(0.7 0.5){$0$}
		\htext(1.7 0.5){$0$}
		\htext(1.35 1.5){$0$}
		\htext(0.5 -0.5){$1$}
		\htext(1.5 -0.5){-$2$}
		\htext(2.5 -0.5){$2$}
		\htext(3.5 -0.5){-$1$}
		\esegment
		\move(-4 0)
		\bsegment
		\move(0 0)\lvec(2 0)
		\move(0 0)\lvec(0 2)
		\move(1 0)\lvec(1 2)
		\move(0 2)\lvec(2 2)
		\move(2 0)\lvec(2 2)
		\move(0 2)\lvec(1 2)
		\move(0 0)\lvec(1 2)
		\move(1 0)\lvec(2 2)
		\move(0 2)\lvec(0 4)
		\move(1 2)\lvec(1 8)
		\move(2 2)\lvec(2 8)
		\move(0 4)\lvec(2 4)
		\move(1 6)\lvec(2 6)
		\move(1 8)\lvec(2 8)
		\move(1 4)\lvec(2 6)
		
		\htext(1.5 7){$1$}
		\htext(0.5 3){$1$}
		\htext(1.5 3){$1$}
		\htext(0.7 0.5){$0$}
		\htext(0.35 1.5){$0$}
		\htext(1.7 0.5){$0$}
		\htext(1.35 1.5){$0$}
		\htext(0.5 -0.5){$1$}
		\htext(1.5 -0.5){-$1$}
		\htext(1.7 4.5){$2$}
		\htext(1.35 5.5){$2$}
		\esegment

		\move(-1 0)
		\bsegment
		\move(0 0)\lvec(2 0)
		\move(0 0)\lvec(0 2)
		\move(1 0)\lvec(1 2)
		\move(0 2)\lvec(2 2)
		\move(2 0)\lvec(2 2)
		\move(0 2)\lvec(1 2)
		\move(0 0)\lvec(1 2)
		\move(1 0)\lvec(2 2)
		\htext(0.7 0.5){$0$}
		\htext(1.5 3){$1$}
		\htext(1.7 0.5){$0$}
		\htext(1.35 1.5){$0$}
		\htext(0.35 1.5){$0$}
		\htext(0.5 -0.5){-$3$}
		\htext(1.5 -0.5){$3$}
		\move(2 2)\lvec(2 4)
		\move(1 2)\lvec(1 4)
		\move(1 4)\lvec(2 4)
		\esegment

		\move(2 0)
		\bsegment
		\move(0 0)\lvec(2 0)
		\move(0 0)\lvec(0 2)
		\move(1 0)\lvec(1 2)
		\move(0 2)\lvec(2 2)
		\move(2 0)\lvec(2 2)
		\move(0 2)\lvec(1 2)
		\move(0 0)\lvec(1 2)
		\move(1 0)\lvec(2 2)
		
		\move(2 2)\lvec(2 4)
		\move(2 4)\lvec(3 4)
		\move(2 0)\lvec(4 0)
		\move(4 0)\lvec(4 8)
		\move(3 0)\lvec(3 2)
		\move(2 2)\lvec(4 2)
		\move(3 2)\lvec(3 8)
		\move(3 4)\lvec(4 4)
		\move(3 6)\lvec(4 6)
		\move(3 8)\lvec(4 8)
		\move(2 0)\lvec(3 2)
		\move(3 0)\lvec(4 2)
		\move(3 4)\lvec(4 6)

		\move(2 4)\lvec(2 6)
		\move(2 6)\lvec(3 6)
		\move(2 4)\lvec(3 6)
		
		\htext(3.7 4.5){$2$}
		\htext(3.35 5.5){$2$}

		\htext(2.7 0.5){$0$}
		\htext(2.35 1.5){$0$}
		\htext(3.7 0.5){$0$}
		\htext(3.35 1.5){$0$}
		
		\htext(2.5 3){$1$}
		\htext(3.5 3){$1$}
		\htext(3.5 7){$1$}
		\htext(2.7 4.5){$2$}
		\htext(0.7 0.5){$0$}
		\htext(1.7 0.5){$0$}
		\htext(1.35 1.5){$0$}
		\htext(0.5 -0.5){$1$}
		\htext(1.5 -0.5){-$1$}
		\htext(2.5 -0.5){$1$}
		\htext(3.5 -0.5){-$1$}
		\esegment
		\move(8 0)
		\bsegment
		\move(0 0)\lvec(2 0)
		\move(0 0)\lvec(0 2)
		\move(1 0)\lvec(1 2)
		\move(0 2)\lvec(2 2)
		\move(2 0)\lvec(2 2)
		\move(0 2)\lvec(1 2)
		\move(0 0)\lvec(1 2)
		\move(1 0)\lvec(2 2)
		\htext(0.7 0.5){$0$}
		\htext(1.5 3){$1$}
		\htext(1.7 0.5){$0$}
		\htext(1.35 1.5){$0$}
		\htext(0.35 1.5){$0$}
		\htext(0.5 -0.5){$1$}
		\htext(-0.5 -0.5){-$2$}
		\htext(1.5 -0.5){$1$}
		\move(2 2)\lvec(2 4)
		
		\move(1 4)\lvec(2 4)
		\move(2 4)\lvec(2 6)
		\move(1 2)\lvec(1 6)
		\move(1 6)\lvec(2 6)
		\move(1 4)\lvec(2 6)
		\htext(1.7 4.5){$2$}
		\move(-1 0)\lvec(0 0)
		\move(-1 0)\lvec(-1 2)
		\move(-1 2)\lvec(0 2)
		\move(0 2)\lvec(0 4)
		\move(0 4)\lvec(1 4)
		
		\move(-1 0)\lvec(0 2)
		\htext(-0.3 0.5){$0$}
		\htext(-0.65 1.5){$0$}
		\htext(0.5 3){$1$}
		
		\esegment

		\move(12 0)
		\bsegment
		\move(0 0)\lvec(2 0)
		\move(0 0)\lvec(0 2)
		\move(1 0)\lvec(1 2)
		\move(0 2)\lvec(2 2)
		\move(2 0)\lvec(2 2)
		\move(0 2)\lvec(1 2)
		\move(0 0)\lvec(1 2)
		\move(1 0)\lvec(2 2)
		\move(1 2)\lvec(1 4)
		\move(2 2)\lvec(2 4)
		\move(1 4)\lvec(2 4)
		\move(-1 0)\lvec(0 0)
		\move(-1 0)\lvec(-1 2)
		\move(-1 2)\lvec(0 2)
		\move(-1 0)\lvec(0 2)
		\htext(-0.3 0.5){$0$}
		\htext(0.7 0.5){$0$}
		\htext(1.5 3){$1$}
		\htext(1.7 0.5){$0$}
		\htext(1.35 1.5){$0$}
		\htext(0.35 1.5){$0$}
		\htext(-0.5 -0.5){-$1$}
		\htext(0.5 -0.5){-$1$}
		\htext(1.5 -0.5){$2$}
		
		\esegment
		\move(15 0)
		\bsegment
		\move(0 0)\lvec(2 0)
		\move(0 0)\lvec(0 2)
		\move(1 0)\lvec(1 2)
		\move(0 2)\lvec(2 2)
		\move(2 0)\lvec(2 2)
		\move(0 2)\lvec(1 2)
		\move(0 0)\lvec(1 2)
		\move(1 0)\lvec(2 2)
		\htext(0.7 0.5){$0$}
		
		\htext(1.7 0.5){$0$}
		\htext(1.35 1.5){$0$}
		\htext(0.5 -0.5){-$1$}
		\htext(1.5 -0.5){$1$}
		\move(1 2)\lvec(1 6)
		\move(2 2)\lvec(2 6)
		\move(1 4)\lvec(2 4)
		\move(1 6)\lvec(2 6)
		\htext(1.5 3){$1$}
		\move(1 4)\lvec(2 6)
		\htext(1.7 4.5){$2$}
		
		\esegment
		
		\move(20 0)
		\bsegment
		\move(0 0)\lvec(2 0)
		\move(0 0)\lvec(0 2)
		\move(1 0)\lvec(1 2)
		\move(0 2)\lvec(2 2)
		\move(2 0)\lvec(2 2)
		\move(0 2)\lvec(1 2)
		\move(0 0)\lvec(1 2)
		\move(1 0)\lvec(2 2)
		\htext(0.7 0.5){$0$}
		\htext(1.5 3){$1$}
		\htext(1.7 0.5){$0$}
		\htext(1.35 1.5){$0$}
		\htext(0.35 1.5){$0$}
		\htext(0.5 -0.5){-$1$}
		\htext(-0.5 -0.5){$2$}
		\htext(-1.5 -0.5){-$2$}
		\htext(1.5 -0.5){$1$}
		\move(2 2)\lvec(2 4)
		
		\move(1 4)\lvec(2 4)
		\move(2 4)\lvec(2 6)
		\move(1 2)\lvec(1 6)
		\move(1 6)\lvec(2 6)
		\move(1 4)\lvec(2 6)
		\htext(1.7 4.5){$2$}
		\move(-1 0)\lvec(0 0)
		\move(-1 0)\lvec(-1 2)
		\move(-1 2)\lvec(0 2)
		\move(0 2)\lvec(0 4)
		\move(0 4)\lvec(1 4)
		
		\move(-2 0)\lvec(-2 2)
		\move(-2 0)\lvec(-1 0)
		\move(-2 2)\lvec(-1 2)
		\move(-1 0)\lvec(0 2)
		\move(-2 0)\lvec(-1 2)
		\htext(-1.3 0.5){$0$}
		\htext(-0.3 0.5){$0$}
		\htext(-0.65 1.5){$0$}
		\htext(0.5 3){$1$}
		
		\esegment
		
		\move(23 0)
		\bsegment
		\move(0 0)\lvec(2 0)
		\move(0 0)\lvec(0 2)
		\move(1 0)\lvec(1 2)
		\move(0 2)\lvec(2 2)
		\move(2 0)\lvec(2 2)
		\move(0 2)\lvec(1 2)
		\move(0 0)\lvec(1 2)
		\move(1 0)\lvec(2 2)
		\htext(0.7 0.5){$0$}
		
		\htext(1.7 0.5){$0$}
		\htext(1.35 1.5){$0$}
		\htext(0.5 -0.5){-$3$}
		\htext(1.5 -0.5){$3$}
		\esegment

		\move(28 0)
		\bsegment
		\move(0 0)\lvec(2 0)
		\move(0 0)\lvec(0 2)
		\move(1 0)\lvec(1 2)
		\move(0 2)\lvec(2 2)
		\move(2 0)\lvec(2 2)
		\move(0 2)\lvec(1 2)
		\move(0 0)\lvec(1 2)
		\move(1 0)\lvec(2 2)
		\htext(0.7 0.5){$0$}
		\htext(1.5 3){$1$}
		\htext(1.7 0.5){$0$}
		\htext(1.35 1.5){$0$}
		\htext(1.35 5.5){$2$}
		\htext(0.35 1.5){$0$}
		\htext(0.5 -0.5){-$1$}
		\htext(-0.5 -0.5){$1$}
		\htext(-1.5 -0.5){-$1$}
		\htext(1.5 -0.5){$1$}
		\move(2 2)\lvec(2 4)
		
		\move(1 4)\lvec(2 4)
		\move(2 4)\lvec(2 6)
		\move(1 2)\lvec(1 6)
		\move(1 6)\lvec(2 6)
		\move(1 4)\lvec(2 6)
		\htext(1.7 4.5){$2$}
		\move(-1 0)\lvec(0 0)
		\move(-1 0)\lvec(-1 2)
		\move(-1 2)\lvec(0 2)
		\move(0 2)\lvec(0 4)
		\move(0 4)\lvec(1 4)
		
		\move(-2 0)\lvec(-2 2)
		\move(-2 0)\lvec(-1 0)
		\move(-2 2)\lvec(-1 2)
		\move(-1 0)\lvec(0 2)
		\move(-2 0)\lvec(-1 2)
		\htext(-1.3 0.5){$0$}
		\htext(-0.3 0.5){$0$}
		\htext(-0.65 1.5){$0$}
		\htext(0.5 3){$1$}
		\esegment
		\move(31 0)
		\bsegment
		\move(0 0)\lvec(2 0)
		\move(0 0)\lvec(0 2)
		\move(1 0)\lvec(1 2)
		\move(0 2)\lvec(2 2)
		\move(2 0)\lvec(2 2)
		\move(0 2)\lvec(1 2)
		\move(0 0)\lvec(1 2)
		\move(1 0)\lvec(2 2)
		\move(0 2)\lvec(0 4)
		\move(0 4)\lvec(2 4)
		\move(1 6)\lvec(2 6)
		\move(2 2)\lvec(2 6)
		\move(1 2)\lvec(1 6)
		\move(1 4)\lvec(2 6)
		\move(1 6)\lvec(1 8)
		\move(2 6)\lvec(2 8)
		\move(1 8)\lvec(2 8)
		\htext(1.5 7){$1$}
		\htext(0.5 3){$1$}
		\htext(1.5 3){$1$}
		\htext(1.7 4.5){$2$}
		\htext(1.35 5.5){$2$}
		\htext(0.7 0.5){$0$}
		\htext(0.35 1.5){$0$}
		\htext(1.7 0.5){$0$}
		\htext(1.35 1.5){$0$}
		\htext(0.5 -0.5){-$1$}
		\htext(1.5 -0.5){$1$}
		\esegment
		
		\move(34 0)
		\bsegment
		\move(0 0)\lvec(2 0)
		\move(0 0)\lvec(0 2)
		\move(1 0)\lvec(1 2)
		\move(0 2)\lvec(2 2)
		\move(2 0)\lvec(2 2)
		\move(0 2)\lvec(1 2)
		\move(0 0)\lvec(1 2)
		\move(1 0)\lvec(2 2)
		\move(0 2)\lvec(0 4)
		\move(0 4)\lvec(2 4)
		\move(1 6)\lvec(2 6)
		\move(2 2)\lvec(2 6)
		\move(1 2)\lvec(1 6)
		\move(1 4)\lvec(2 6)
		\move(2 0)\lvec(3 0)
		\move(2 2)\lvec(3 2)
		\move(2 4)\lvec(3 4)
		\move(2 6)\lvec(3 6)
		\move(3 0)\lvec(3 6)
		\move(2 0)\lvec(3 2)
		\move(2 4)\lvec(3 6)
		
		\htext(2.7 0.5){$0$}
		\htext(2.35 1.5){$0$}
		\htext(2.7 4.5){$2$}
		\htext(2.35 5.5){$2$}
		
		\htext(0.5 3){$1$}
		\htext(1.5 3){$1$}
		\htext(2.5 3){$1$}
		\htext(1.7 4.5){$2$}
		
		\htext(0.7 0.5){$0$}
		\htext(0.35 1.5){$0$}
		\htext(1.7 0.5){$0$}
		\htext(1.35 1.5){$0$}
		\htext(0.5 -0.5){-$2$}
		\htext(1.5 -0.5){$1$}
		\htext(2.5 -0.5){$1$}
		\esegment
		
		\move(38 0)
		\bsegment
		\move(0 0)\lvec(2 0)
		\move(0 0)\lvec(0 2)
		\move(1 0)\lvec(1 2)
		\move(0 2)\lvec(2 2)
		\move(2 0)\lvec(2 2)
		\move(0 2)\lvec(1 2)
		\move(0 0)\lvec(1 2)
		\move(1 0)\lvec(2 2)
		\move(1 2)\lvec(1 4)
		\move(2 2)\lvec(2 4)
		\move(1 4)\lvec(2 4)
		\move(2 0)\lvec(4 0)
		\move(4 0)\lvec(4 8)
		\move(3 0)\lvec(3 2)
		\move(2 2)\lvec(4 2)
		\move(3 2)\lvec(3 8)
		\move(3 4)\lvec(4 4)
		\move(3 6)\lvec(4 6)
		\move(3 8)\lvec(4 8)
		\move(2 0)\lvec(3 2)
		\move(3 0)\lvec(4 2)
		\move(3 4)\lvec(4 6)
		\move(2 4)\lvec(2 6)
		\move(1 4)\lvec(1 6)
		\move(1 6)\lvec(2 6)
		\move(2 6)\lvec(3 6)
		\move(2 4)\lvec(3 4)
		\move(2 4)\lvec(3 6)
		\move(1 4)\lvec(2 6)

		\htext(2.5 3){$1$}
		\htext(3.7 4.5){$2$}
		\htext(3.35 5.5){$2$}
		\htext(2.7 4.5){$2$}
		\htext(2.35 5.5){$2$}
		\htext(1.7 4.5){$2$}
		\htext(0.35 1.5){$0$}
		\htext(2.7 0.5){$0$}
		\htext(2.35 1.5){$0$}
		\htext(3.7 0.5){$0$}
		\htext(3.35 1.5){$0$}
		\htext(1.5 3){$1$}
		\htext(3.5 3){$1$}
		\htext(3.5 7){$1$}
		\htext(0.7 0.5){$0$}
		\htext(1.7 0.5){$0$}
		\htext(1.35 1.5){$0$}
		\htext(0.5 -0.5){-$1$}
		\htext(1.5 -0.5){$1$}
		\htext(2.5 -0.5){-$1$}
		\htext(3.5 -0.5){$1$}
		
		\esegment
		\move(44 0)
		\bsegment
		\move(0 0)\lvec(2 0)
		\move(0 0)\lvec(0 2)
		\move(1 0)\lvec(1 2)
		\move(0 2)\lvec(2 2)
		\move(2 0)\lvec(2 2)
		\move(0 2)\lvec(1 2)
		\move(0 0)\lvec(1 2)
		\move(1 0)\lvec(2 2)
		\move(0 2)\lvec(0 4)
		\move(0 4)\lvec(2 4)
		\move(1 6)\lvec(2 6)
		\move(2 2)\lvec(2 6)
		\move(1 2)\lvec(1 6)
		\move(1 4)\lvec(2 6)
		\move(2 0)\lvec(3 0)
		\move(2 2)\lvec(3 2)
		\move(2 4)\lvec(3 4)
		\move(2 6)\lvec(3 6)
		\move(3 0)\lvec(3 6)
		\move(2 0)\lvec(3 2)
		\move(2 4)\lvec(3 6)
		\move(-1 0)\lvec(0 0)
		\move(-1 0)\lvec(-1 2)
		\move(-1 2)\lvec(0 2)
		\move(-1 0)\lvec(0 2)
		\htext(-0.3 0.5){$0$}
		
		\htext(2.7 0.5){$0$}
		\htext(2.35 1.5){$0$}
		\htext(2.7 4.5){$2$}
		\htext(2.35 5.5){$2$}
		
		\htext(0.5 3){$1$}
		\htext(1.5 3){$1$}
		\htext(2.5 3){$1$}
		\htext(1.7 4.5){$2$}
		
		\htext(0.7 0.5){$0$}
		\htext(0.35 1.5){$0$}
		\htext(1.7 0.5){$0$}
		\htext(1.35 1.5){$0$}
		\htext(-0.5 -0.5){$1$}
		\htext(0.5 -0.5){-$1$}
		\htext(1.5 -0.5){$1$}
		\htext(2.5 -0.5){-$1$}
		
		\esegment
		\move(50 0)
		\bsegment
		\move(0 0)\lvec(2 0)
		\move(0 0)\lvec(0 2)
		\move(1 0)\lvec(1 2)
		\move(0 2)\lvec(2 2)
		\move(2 0)\lvec(2 2)
		\move(0 2)\lvec(1 2)
		\move(0 0)\lvec(1 2)
		\move(1 0)\lvec(2 2)
		\move(1 2)\lvec(1 4)
		\move(2 2)\lvec(2 4)
		\move(1 4)\lvec(2 4)
		\move(2 0)\lvec(4 0)
		\move(4 0)\lvec(4 8)
		\move(3 0)\lvec(3 2)
		\move(2 2)\lvec(4 2)
		\move(3 2)\lvec(3 8)
		\move(3 4)\lvec(4 4)
		\move(3 6)\lvec(4 6)
		\move(3 8)\lvec(4 8)
		\move(2 0)\lvec(3 2)
		\move(3 0)\lvec(4 2)
		\move(3 4)\lvec(4 6)
		\move(2 4)\lvec(2 6)
		\move(1 4)\lvec(1 6)
		\move(1 6)\lvec(2 6)
		\move(2 6)\lvec(3 6)
		\move(2 4)\lvec(3 4)
		\move(0 2)\lvec(0 4)
		\move(0 4)\lvec(1 4)
		\move(2 4)\lvec(3 6)
		\move(1 4)\lvec(2 6)
		\move(-2 0)\lvec(0 0)
		\move(0 4)\lvec(0 6)
		\move(-1 0)\lvec(-1 6)
		\move(-2 0)\lvec(-2 4)
		\move(-2 4)\lvec(0 4)
		\move(-1 6)\lvec(0 6)
		\move(-2 2)\lvec(0 2)
		\move(-2 0)\lvec(-1 2)
		\move(-1 0)\lvec(0 2)
		\move(-1 4)\lvec(0 6)
		\htext(-0.3 4.5){$2$}
		\htext(0.5 3){$1$}
		\htext(2.5 3){$1$}
		
		\htext(3.7 4.5){$2$}
		\htext(3.35 5.5){$2$}
		\htext(2.7 4.5){$2$}
		\htext(2.35 5.5){$2$}
		\htext(1.7 4.5){$2$}
		\htext(0.35 1.5){$0$}
		\htext(2.7 0.5){$0$}
		\htext(2.35 1.5){$0$}
		\htext(3.7 0.5){$0$}
		\htext(3.35 1.5){$0$}
		
		\htext(-1.5 3){$1$}
		\htext(-0.5 3){$1$}
		\htext(1.5 3){$1$}
		\htext(3.5 3){$1$}
		\htext(3.5 7){$1$}
		
		\htext(0.7 0.5){$0$}
		\htext(1.7 0.5){$0$}
		\htext(1.35 1.5){$0$}
		\htext(-1.5 -0.5){-$1$}
		\htext(-0.5 -0.5){$1$}
		\htext(0.5 -0.5){-$1$}
		\htext(1.5 -0.5){$1$}
		\htext(2.5 -0.5){-$1$}
		\htext(3.5 -0.5){$1$}
		\htext(-0.3 0.5){$0$}
		\htext(-0.65 1.5){$0$}
		\htext(-1.3 0.5){$0$}
		\htext(-1.65 1.5){$0$}

		\esegment

		\esegment

		
		\move(0 -36)
		\bsegment
		
		\move(9 0)
		\bsegment
		\move(0 0)\lvec(2 0)
		\move(0 0)\lvec(0 2)
		\move(1 0)\lvec(1 2)
		\move(0 2)\lvec(2 2)
		\move(2 0)\lvec(2 2)
		\move(0 2)\lvec(1 2)
		\move(0 0)\lvec(1 2)
		\move(1 0)\lvec(2 2)
		\htext(0.7 0.5){$0$}
		\htext(1.5 3){$1$}
		
		\htext(1.7 0.5){$0$}
		\htext(1.35 1.5){$0$}
		\htext(0.35 1.5){$0$}
		\htext(0.5 -0.5){-$1$}
		\htext(1.5 -0.5){$1$}
		\move(2 2)\lvec(2 4)
		\move(1 2)\lvec(1 4)
		\move(1 4)\lvec(1 6)
		\move(2 4)\lvec(2 6)
		\move(1 4)\lvec(2 4)
		\move(1 6)\lvec(2 6)
		\move(1 4)\lvec(2 6)
		\htext(1.7 4.5){$2$}
		\htext(1.35 5.5){$2$}

		\esegment
		
		\move(16 0)
		\bsegment
		\move(0 0)\lvec(2 0)
		\move(0 0)\lvec(0 2)
		\move(1 0)\lvec(1 2)
		\move(0 2)\lvec(2 2)
		\move(2 0)\lvec(2 2)
		\move(0 2)\lvec(1 2)
		\move(0 0)\lvec(1 2)
		\move(1 0)\lvec(2 2)
		\move(1 2)\lvec(1 4)
		\move(2 2)\lvec(2 4)
		\move(1 4)\lvec(2 4)
		\move(-1 0)\lvec(0 0)
		\move(-1 0)\lvec(-1 2)
		\move(-1 2)\lvec(0 2)
		\move(-1 0)\lvec(0 2)
		\htext(-0.3 0.5){$0$}
		\htext(0.7 0.5){$0$}
		\htext(1.5 3){$1$}
		\htext(1.7 0.5){$0$}
		\htext(1.35 1.5){$0$}
		
		\htext(-0.65 1.5){$0$}
		\htext(-1.5 -0.5){-$1$}
		\htext(-0.5 -0.5){$1$}
		\htext(0.5 -0.5){-$1$}
		\htext(1.5 -0.5){$1$}
		\move(-2 0)\lvec(-1 0)
		\move(-2 0)\lvec(-2 2)
		\move(-2 2)\lvec(-1 2)
		\move(-2 0)\lvec(-1 2)
		\htext(-1.3 0.5){$0$}
		
		\esegment

		\move(21 0)
		\bsegment
		\move(0 0)\lvec(2 0)
		\move(0 0)\lvec(0 2)
		\move(1 0)\lvec(1 2)
		\move(0 2)\lvec(2 2)
		\move(2 0)\lvec(2 2)
		\move(0 2)\lvec(1 2)
		\move(0 0)\lvec(1 2)
		\move(1 0)\lvec(2 2)
		\move(3 2)\lvec(3 4)
		\move(2 2)\lvec(2 4)
		\move(2 4)\lvec(3 4)
		\htext(0.7 0.5){$0$}
		\htext(2.5 3){$1$}
		
		\htext(1.7 0.5){$0$}
		\htext(1.35 1.5){$0$}
		\htext(0.5 -0.5){-$2$}
		\htext(1.5 -0.5){$1$}
		\htext(2.5 -0.5){$1$}
		\move(2 0)\lvec(3 0)
		\move(3 0)\lvec(3 2)
		\move(2 2)\lvec(3 2)
		\move(2 0)\lvec(3 2)
		\htext(2.7 0.5){$0$}
		\htext(2.35 1.5){$0$}
		\esegment

		\move(29 0)
		\bsegment
		\move(0 0)\lvec(2 0)
		\move(0 0)\lvec(0 2)
		\move(1 0)\lvec(1 2)
		\move(0 2)\lvec(2 2)
		\move(2 0)\lvec(2 2)
		\move(0 2)\lvec(1 2)
		\move(0 0)\lvec(1 2)
		\move(1 0)\lvec(2 2)
		\move(1 2)\lvec(1 4)
		\move(2 2)\lvec(2 4)
		\move(1 4)\lvec(2 4)
		\move(2 0)\lvec(4 0)
		\move(4 0)\lvec(4 8)
		\move(3 0)\lvec(3 2)
		\move(2 2)\lvec(4 2)
		\move(3 2)\lvec(3 8)
		\move(3 4)\lvec(4 4)
		\move(3 6)\lvec(4 6)
		\move(3 8)\lvec(4 8)
		\move(2 0)\lvec(3 2)
		\move(3 0)\lvec(4 2)
		\move(3 4)\lvec(4 6)
		\move(2 4)\lvec(2 6)
		\move(2 4)\lvec(3 6)
		\move(2 4)\lvec(3 4)
		\move(2 6)\lvec(3 6)
		\move(1 6)\lvec(2 6)
		\move(1 4)\lvec(1 6)
		
		\move(1 4)\lvec(2 6)
		\move(0 2)\lvec(0 4)
		
		\move(0 4)\lvec(1 4)
		
		\move(-2 0)\lvec(0 0)
		\move(-2 2)\lvec(0 2)
		\move(-2 0)\lvec(-2 2)
		\move(-1 0)\lvec(-1 2)
		
		\move(-1 0)\lvec(0 2)
		\move(-2 0)\lvec(-1 2)

		\htext(0.5 3){$1$}
		\htext(1.5 3){$1$}
		\htext(3.7 4.5){$2$}
		\htext(3.35 5.5){$2$}
		\htext(2.7 4.5){$2$}
		\htext(2.35 5.5){$2$}
		\htext(1.7 4.5){$2$}
		\htext(2.5 3){$1$}
		
		\htext(2.7 0.5){$0$}
		\htext(2.35 1.5){$0$}
		\htext(3.7 0.5){$0$}
		\htext(3.35 1.5){$0$}
		
		\htext(3.5 3){$1$}
		\htext(3.5 7){$1$}
		\htext(0.7 0.5){$0$}
		\htext(-0.3 0.5){$0$}
		\htext(-1.3 0.5){$0$}
		\htext(1.7 0.5){$0$}
		\htext(1.35 1.5){$0$}
		\htext(0.35 1.5){$0$}
		\htext(-0.65 1.5){$0$}
		\htext(-1.5 -0.5){-$1$}
		\htext(-0.5 -0.5){$1$}
		\htext(0.5 -0.5){-$1$}
		\htext(1.5 -0.5){$1$}
		\htext(2.5 -0.5){-$1$}
		\htext(3.5 -0.5){$1$}
		
		\esegment

		\esegment
		\move(19 9)\avec(11 4)\htext(14.5 7){$1$}
		\move(21 9)\avec(21 4)\htext(20.6 7){$0$}
		\move(23 9)\avec(31 4)\htext(25.2 7){$2$}
		\move(8 -1)\avec(1 -6)\htext(4 -3){$0$}
		\move(9 -1)\avec(6 -7)\htext(6.4 -5){$1$}
		\move(10 -1)\avec(10.7 -5.4)\htext(9.8 -3.7){$2$}
		
		\move(20 -1)\avec(18 -7)\htext(18.7 -3){$1$}
		\move(21 -1)\avec(22.8 -9)\htext(21.4 -5){$0$}
		\move(22 -1)\avec(28 -8)\htext(24.3 -4.7){$2$}
		
		\move(32 -1)\avec(31 -5.5)\htext(31.1 -3){$0$}
		\move(33 -1)\avec(36 -6)\htext(33.9 -3.5){$2$}
		\move(34 -1)\avec(42 -6.5)\htext(37.3 -4){$1$}
		
		\move(-3 -13)\avec(-5.3 -15.5)\htext(-4.4 -13.8){$1$}
		\move(-2 -13)\avec(-2.4 -15.8)\htext(-2.5 -13.8){$0$}
		\move(-1 -13)\avec(3.5 -17.5)\htext(1 -15.8){$2$}
		
		\move(4 -13)\avec(-5 -15.7)\htext(2.2 -14.2){$0$}
		\move(4.8 -13)\avec(0 -19.7)\htext(4 -15){$1$}
		\move(5.5 -13)\avec(8 -19.3)\htext(6.8 -15){$2$}
		
		\move(10 -13)\avec(6.5 -18)\htext(9.2 -15){$0$}
		\move(10.5 -13)\avec(9.5 -17.5)\htext(10.3 -16.3){$1$}
		\move(11 -13)\avec(10.2 -29.5)\htext(11.1 -20){$2$}
		
		\move(16.5 -13)\avec(13.3 -19)\htext(14.7 -15.5){$1$}
		\move(17 -13)\avec(16.5 -17.5)\htext(16.3 -15.5){$2$}
		\move(17.6 -13)\avec(17.5 -31)\htext(18.1 -19){$0$}
		
		\move(22.5 -13)\avec(21 -17.5)\htext(22.2 -15.5){$2$}
		\move(23 -13)\avec(22 -31)\htext(23 -21){$1$}
		\move(23.5 -13)\avec(24 -21.5)\htext(24.3 -18){$0$}
		
		\move(29 -13)\avec(22 -17.5)\htext(26.1 -15.5){$0$}
		\move(30 -13)\avec(31 -29)\htext(30.8 -19){$1$}
		\move(29.5 -13)\avec(28.3 -19)\htext(28.3 -16){$2$}
		
		\move(36 -13)\avec(29.3 -17.5)\htext(33.8 -15.2){$0$}
		\move(37 -13)\avec(33.4 -17.5)\htext(35.8 -15.2){$1$}
		\move(37.5 -13)\avec(36 -17.5)\htext(37.3 -15.2){$2$}
		
		\move(43 -13)\avec(41.5 -15.5)\htext(42.7 -14.5){$1$}
		\move(44 -13)\avec(45 -17.5)\htext(44.8 -14.5){$0$}
		\move(45 -13)\avec(50 -17.5)\htext(47.4 -14.5){$2$}
		
		\vtext(-7 -26){\fontsize{8}{8}\selectfont{$\cdots$}}
		\vtext(-3 -26){\fontsize{8}{8}\selectfont{$\cdots$}}
		\vtext(0 -26){\fontsize{8}{8}\selectfont{$\cdots$}}
		\vtext(4 -26){\fontsize{8}{8}\selectfont{$\cdots$}}
		\vtext(8.5 -26){\fontsize{8}{8}\selectfont{$\cdots$}}
		\vtext(12.5 -26){\fontsize{8}{8}\selectfont{$\cdots$}}
		\vtext(16 -26){\fontsize{8}{8}\selectfont{$\cdots$}}
		\vtext(20 -26){\fontsize{8}{8}\selectfont{$\cdots$}}
		\vtext(24 -26){\fontsize{8}{8}\selectfont{$\cdots$}}
		\vtext(28 -26){\fontsize{8}{8}\selectfont{$\cdots$}}
		\vtext(32 -26){\fontsize{8}{8}\selectfont{$\cdots$}}
		\vtext(35.5 -26){\fontsize{8}{8}\selectfont{$\cdots$}}
		\vtext(40 -26){\fontsize{8}{8}\selectfont{$\cdots$}}
		\vtext(45 -26){\fontsize{8}{8}\selectfont{$\cdots$}}
		\vtext(51 -26){\fontsize{8}{8}\selectfont{$\cdots$}}

		\vtext(10 -38){\fontsize{8}{8}\selectfont{$\cdots$}}
		\vtext(16 -38){\fontsize{8}{8}\selectfont{$\cdots$}}
		\vtext(22.5 -38){\fontsize{8}{8}\selectfont{$\cdots$}}
		\vtext(30 -38){\fontsize{8}{8}\selectfont{$\cdots$}}
		
	\end{texdraw}
\end{center}
\end{example}

\end{document}